\renewcommand{\epsilon}{\varepsilon}
\renewcommand{\phi}{\varphi}
\newtheorem{theorem}{Theorem}[section]
\newtheorem{proposition}[theorem]{Proposition}
\newtheorem{lemma}[theorem]{Lemma}
\newtheorem{corollary}[theorem]{Corollary}
\theoremstyle{definition}
\newtheorem{definition}[theorem]{Definition}
\newtheorem{example}[theorem]{Example}
\theoremstyle{remark}
\newtheorem{remark}[theorem]{Remark}
\newcommand{\oldul}[1]{{\underline{#1}}}
\newcommand{\ul}[1]{{\underline{\smash{#1}}\vphantom{#1}}}
\newcommand{\ud}[1]{{%
    \text{%
      \tikz[baseline=(todotted.base)]{
        \node[inner ysep=1.8pt,inner xsep=0pt,outer sep=0pt]
        (todotted) {\smash{${#1}$}};
        \draw[-,densely dotted, thick] (todotted.south west) --
        (todotted.south east);
      }%
    }%
}}%
\newcommand{\oldud}[1]{{%
    \text{%
      \tikz[baseline=(todotted.base)]{
        \node[inner ysep=1.8pt,inner xsep=0pt,outer sep=0pt]
        (todotted) {{${#1}$}};
        \draw[-,densely dotted, thick] (todotted.south west) --
        (todotted.south east);
      }%
    }%
}}%
\DeclareMathAccent{\wtilde}{\mathord}{largesymbols}{"65}
\newcommand{\Yo}{{\op{Yo}}}
\newcommand{\Ho}{{\op{Ho}}}
\newcommand{\ohe}{{\op{ohe}}}
\newcommand{\iCone}{\op{iCone}}
\newcommand{\fussnote}[1]{}
\newcommand{\II}{{\mathbb{I}}}
\newcommand{\ii}{{\mathbf{i}}}
\newcommand{\iotaii}{{\upiota}}
\newcommand{\FF}{{\mathbb{F}}}
\newcommand{\EE}{{\mathbb{E}}}
\newcommand{\ee}{{\mathbf{e}}}
\newcommand{\epsilonee}{{\upepsilon}}
\DeclareMathOperator{\ccof}{\mathbf{cof}}
\newcommand{\gammacc}{{\upgamma}}
\DeclareMathOperator{\ffib}{\mathbf{fib}}
\newcommand{\phiff}{{\upphi}}
\newcommand{\R}{{\mathsf{R}}}
\newcommand{\RR}{{\mathscr{R}}}
\newcommand{\kk}{{\mathsf{k}}}
\newcommand{\KK}{{\mathscr{K}}}
\newcommand{\MMod}{{\mathbb{M}\op{od}}}
\newcommand{\IIMod}{{\mathbb{IM}\op{od}}}
\newcommand{\PPMod}{{\mathbb{PM}\op{od}}}
\newcommand{\pp}{{\mathbf{p}}}
\newcommand{\PP}{{\mathbb{P}}}
\newcommand{\ms}[1]{{\mathscr{#1}}}
\newcommand{\ulms}[1]{{\ul{\ms{#1}}}}
\newcommand{\sptag}[1]{\href{http://stacks.math.columbia.edu/tag/#1}{#1}}
\newcommand{\citestacks}[1]{\cite[\sptag{#1}]{stacks-project}}
\newcommand{\op}[1]{{\operatorname{#1}}}
\newcommand{\Sets}{{\op{Sets}}}
\newcommand{\id}{{\op{id}}}
\newcommand{\subtimes}[1]{\underset{{#1}}{\times}}
\newcommand{\subsqcup}[1]{\underset{{#1}}{\bigsqcup}}
\newcommand{\ra}{\rightarrow}
\newcommand{\la}{\leftarrow}
\newcommand{\sra}{\twoheadrightarrow}
\newcommand{\hra}{\hookrightarrow}
\newcommand{\xra}[2][]{\xrightarrow[{#1}]{#2}}
\newcommand{\xla}[2][]{\xleftarrow[{#1}]{#2}}
\newcommand{\sira}{\xra{\sim}}
\newcommand{\xsira}[1]{\xrightarrow[\sim]{#1}}
\newcommand{\sila}{\xla{\sim}}
\newcommand{\xlongra}[2][]{\xlongrightarrow[{#1}]{#2}}
\newcommand{\xlongla}[2][]{\xlongleftarrow[{#1}]{#2}}
\newcommand{\xlongsira}[1]{\xlongrightarrow[\sim]{#1}}
\newcommand{\xsra}[1]{\overset{#1}{\twoheadrightarrow}}
\newcommand{\xhra}[1]{{\xhookrightarrow{#1}}}
\NewDocumentCommand{\xrightleftarrows}{ O{}O{} }{%
\mathrel{%
\vcenter{\hbox{%
\begin{tikzpicture}
  \node[minimum width=1cm,minimum height=1ex,anchor=south,align=center] (a){\text{\vphantom{hg}#1}\\[0.5ex] \vphantom{hg}#2};
  \draw[->] ([yshift=0.35ex]a.west) -- ([yshift=0.35ex]a.east);
  \draw[<-] ([yshift=-0.35ex]a.west) -- ([yshift=-0.35ex]a.east);
\end{tikzpicture}
}}%
}%
}
\newcommand{\dR}{\mathbf{R}}
\newcommand{\dL}{\mathbf{L}}
\newcommand{\bZ}{\mathbb{Z}}
\newcommand{\bN}{\mathbb{N}}
\DeclareMathOperator{\Mor}{Mor}
\newcommand{\Hom}{{\op{Hom}}}
\newcommand{\Sh}{{\op{Sh}}}
\newcommand{\Mod}{{\op{Mod}}}
\newcommand{\C}{{\op{C}}}
\newcommand{\D}{{\op{D}}}
\newcommand{\qc}{{\op{qc}}}
\renewcommand{\L}{{\op{L}}}
\newcommand{\ac}{{\op{ac}}}
\newcommand{\sheafHom}{{\mathcal{H}{om}}}
\newcommand{\End}{\op{End}}
\newcommand{\ol}[1]{{\overline{#1}}}
\newcommand{\Kokern}{\op{cok}}
\newcommand{\inv}{^{-1}}
\newcommand{\Ab}{\op{Ab}}
\newcommand{\cof}{{\op{cof}}}
\newcommand{\fib}{{\op{fib}}}
\newcommand{\bifib}{{\op{bifib}}}
\newcommand{\tp}{{\op{t}}}
\DeclareMathOperator{\Obj}{Obj}
\newcommand{\cat}{\op{cat}}
\newcommand{\trcat}{\op{trcat}}
\newcommand{\dgcat}{\op{dgcat}}
\newcommand{\enh}{\op{enh}}
\newcommand{\fml}{\op{fml}}
\newcommand{\CAT}{\op{CAT}}
\newcommand{\TRCAT}{\op{TRCAT}}
\newcommand{\DGCAT}{\op{DGCAT}}
\newcommand{\ENH}{\op{ENH}}
\newcommand{\FML}{\op{FML}}
\newcommand{\card}{\op{card}}
\newcommand{\pt}{{\op{pt}}}
\newcommand{\tildew}[1]{\widetilde{#1}}
\newcommand{\colim}{\operatorname{colim}}
\newcommand{\Fun}{\op{Fun}}
\newcommand{\opp}{{\op{op}}}
\newcommand{\co}{{\op{co}}}
\newcommand{\define}[1]{{\textbf{#1}}}
\newcommand{\hinj}{{\op{hinj}}}
\newcommand{\whinj}{{\op{whinj}}}
\newcommand{\hflat}{{\op{hflat}}}
\newcommand{\Cone}{\op{Cone}}
\newcommand{\pretr}{{\op{pre-tr}}}
\newcommand{\leqnomode}{\tagsleft@true\let\veqno\@@leqno}
\newcommand{\reqnomode}{\tagsleft@false\let\veqno\@@eqno}
\newcounter{tablerow}
\newcommand{\labelrow}[2]{%
  \begin{minipage}[b][1ex]{12mm}
    \reqnomode
    \stepcounter{tablerow}
    \begin{equation}
      \label{#2} \tag{T#1.\thetablerow}
    \end{equation}
    \leqnomode
  \end{minipage}
}
\numberwithin{equation}{section}
\title[Six operations on dg enhancements]{Six operations on dg
  enhancements\\
  of derived categories of sheaves}  
\author{Olaf M.~Schn{\"u}rer}
\address{
  Mathematisches Institut\\ 
  Universit{\"a}t Bonn\\
  Endenicher Allee 60\\
  53115 Bonn\\
  Germany
}
\email{olaf.schnuerer@math.uni-bonn.de}
\begin{document}

\begin{abstract}
  We lift Grothendieck--Verdier--Spaltenstein's six functor formalism for derived
  categories of sheaves on ringed spaces over a field 
  to 
  differential graded enhancements.
  Our main tools come from enriched model category theory.
\end{abstract}

\maketitle

\setcounter{tocdepth}{1}

\tableofcontents

\section{Introduction}
\label{sec:introduction}

\subsection{}

Grothendieck--Verdier--Spaltenstein's six functor formalism in the topological setting
concerns the six functors $\otimes^\dL$, $\dR\sheafHom$,
$\dL \alpha^*$, $\dR \alpha_*$, $\dR \alpha_!$, $\alpha^!$
between derived categories 
of sheaves on ringed spaces
and their relations
\cite{verdier-dualite-bourbaki,spaltenstein,KS,wolfgang-olaf-locallyproper}.
Nowadays, triangulated categories
are often replaced by suitable differential graded (dg)
enhancements 
because some useful constructions can be performed
with dg categories but not with triangulated categories
\cite{bondal-kapranov-enhancements,keller-on-dg-categories-ICM}. 
Therefore it is natural to ask 
whether Grothendieck--Verdier--Spaltenstein's six functor formalism lifts to the level
of dg enhancements.
We give an affirmative answer to this question
if we fix a field
$\kk$ and work with $\kk$-ringed spaces, i.\,e.\ pairs $(X,
\mathcal{O})$ consisting of a topological space $X$ and
a sheaf $\mathcal{O}$ of commutative $\kk$-algebras on $X$. 
Here is our main result.

\begin{theorem}
  \label{t:main}
  Let $\kk$ be a field. Then
  Grothendieck--Verdier--Spaltenstein's six functor formalism for \mbox{$\kk$-ringed} spaces
  lifts to dg $\kk$-enhancements.
  The formalism involving the first four functors
  lifts 
  more generally
  for $\kk$-ringed topoi.
\end{theorem}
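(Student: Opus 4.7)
The plan is to use enriched model category theory, specifically a monoidal $\C(\kk)$-enriched model structure on complexes of sheaves, to produce Quillen adjunctions between the same model categories from which the six functors derive, and then pass to dg enhancements via the enriched homotopy category machinery.

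First, for each $\kk$-ringed topos $(X,\ms{O})$ I would equip the category $\C(\ms{O})$ of complexes of $\ms{O}$-modules with a suitable $\C(\kk)$-enriched symmetric monoidal model structure, with quasi-isomorphisms as weak equivalences. The natural candidates are a Spaltenstein-type ``h-injective'' structure (good for $\bR\sheafHom$ and $\bR\alpha_*$) and possibly a companion ``h-flat'' or ``h-projective''-type structure (good for $\otimes^\bL$ and $\bL\alpha^*$). The essential conditions to check are: (i) the pushout-product/pullback-hom axiom for $\otimes$ and $\sheafHom$, so that these form a Quillen bifunctor pair; (ii) compatibility of the $\C(\kk)$-enrichment, so that Hom-complexes between bifibrant objects compute derived mapping complexes. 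The full subcategory of bifibrant objects is then the dg $\kk$-enhancement of $\D(\ms{O})$, carrying dg versions of $\otimes^\bL$ and $\bR\sheafHom$.

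Second, for a morphism $\alpha\colon (X,\ms{O}_X)\to(Y,\ms{O}_Y)$ of $\kk$-ringed topoi, I would verify that the sheaf-theoretic $(\alpha^*,\alpha_*)$ is a $\C(\kk)$-enriched Quillen adjunction between these model structures (using that $\alpha_*$ preserves h-injectivity, or equivalently that $\alpha^*$ preserves the chosen cofibrations). Restricting to bifibrant objects then yields dg functors lifting $(\bL\alpha^*,\bR\alpha_*)$ together with a dg adjunction. Coherences such as compatibility with composition, monoidality of $\bL\alpha^*$, and the projection formula would be deduced from corresponding point-set statements between the model categories, using that the monoidal and enriched structures in play descend to the enhancement without further choices. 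This already establishes the four-functor part of the theorem, valid for topoi.

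Third, to upgrade to the full six-functor formalism on $\kk$-ringed spaces one must produce dg lifts of $\bR\alpha_!$ and $\alpha^!$. Here I would follow the Verdier/Spaltenstein/KS strategy: for an open immersion $j$, the extension-by-zero $j_!$ is exact with right adjoint $j^!$, and one checks this is a $\C(\kk)$-Quillen adjunction with respect to the chosen model structures. For a proper map the functor $\alpha_!=\alpha_*$ is already handled. For a general separated continuous map of locally compact spaces, a compactification $\alpha=\bar\alpha\circ j$ gives $\bR\alpha_!=\bR\bar\alpha_*\circ j_!$ at the dg level as a composite of the previously constructed dg functors, and $\alpha^!$ is produced as the composite right adjoint $j^!\circ \bar\alpha^!$, where the proper-map $\bar\alpha^!$ arises by invoking an adjoint-functor/Brown-representability argument inside the dg enhancement (using that the enhancements are compactly generated in an appropriate sense). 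One then verifies independence of the compactification, base change, and the projection formula by lifting the classical triangulated arguments through the enriched model machinery.

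The main obstacle I expect is twofold. The first hard point is establishing a single $\C(\kk)$-enriched monoidal model structure (or a compatible pair) in which \emph{all} relevant adjunctions are simultaneously Quillen and in which the $\C(\kk)$-enrichment is homotopically correct; this is delicate because h-injective fibrant replacement and h-flat cofibrant replacement do not usually live in the same model structure, forcing one to pass between two Quillen-equivalent enhancements and to check that the six functors match up under these equivalences. The second, harder point is the dg construction of $\alpha^!$: it is not the right derived functor of a Quillen left adjoint in the naive sense, so one must combine the open-immersion and proper-map constructions via compactification and use an adjoint-functor-type argument internal to the dg enhancement, together with a verification that the result is independent of choices -- this is where the hypothesis that $\kk$ is a field (hence all objects flat over $\kk$, and all relevant Hom-complexes behave well) becomes essential.
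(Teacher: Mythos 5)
Your overall strategy (enriched model structures on complexes of sheaves, an injective-type structure for $\bR\sheafHom$, $\bR\alpha_*$ and a flat-type structure for $\otimes^\bL$, $\bL\alpha^*$) is the same general framework the paper uses, but there are two genuine gaps. The first is the step ``restricting to bifibrant objects then yields dg functors.'' A ($\C(\kk)$-enriched) Quillen adjunction does not restrict to a dg functor between the subcategories of bifibrant objects: to define, say, $\ul\alpha_*=\ii\alpha_*$ or $\ul\alpha^*=\ii\alpha^*\ee$ as honest dg functors on $\ul\II(X)$ you need fibrant and cofibrant replacement functors $\ii$, $\ee$ that are themselves \emph{dg-enriched} (in particular additive) together with dg-natural transformations $\id\to\ii$ and $\ee\to\id$. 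This is not a consequence of the pushout-product axiom or of the enrichment being homotopically correct; it is a separate statement about enriched functorial factorizations, and it is precisely here -- not in flatness of Hom-complexes -- that the field hypothesis is essential. Over $\DZ$ it fails outright: there is no additive functor $M\mapsto\ii(M)$ with a natural quasi-isomorphism $M\to\ii(M)$ into h-injectives, because additivity forces $2\cdot\id_{\ii(\DZ/2)}=0$ while divisibility of injectives forces multiplication by $2$ to be surjective, so $\ii(\DZ/2)=0$. Over a field the factorizations can be made enriched because every object of $\C(\kk)$ is a coproduct of shifts of $\kk$ and of the cone on $\id_\kk$, which reduces the tensoring axiom to stability of (trivial) cofibrations under shifts and identity-cones. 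Your proposal never isolates this point, and without it the passage from Quillen adjunctions to dg functors does not go through. Relatedly, once the functors are defined as composites like $\ii\alpha^*\ee$, the lifted relations (units, counits, projection formula, etc.) are not dg natural transformations but zig-zags whose wrong-way arrows are objectwise homotopy equivalences; one needs a formal device (a localization of the categories of dg natural transformations) to invert these and to verify the coherence diagrams, which your appeal to ``corresponding point-set statements'' does not supply.

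The second gap is the construction of $\alpha^!$. Your route -- compactify $\alpha=\bar\alpha\circ j$, handle $j^!$ for open immersions, and produce $\bar\alpha^!$ by a Brown-representability argument ``internal to the dg enhancement'' -- would at best give a right adjoint at the triangulated level; upgrading a triangulated adjoint to a dg functor with dg adjunction data is exactly the kind of lifting problem the theorem is supposed to solve, so this is circular as stated, and independence of the compactification adds further unresolved coherence issues. The paper instead works with separated, locally proper morphisms whose $\alpha_!$ has finite cohomological dimension, fixes a bounded complex $\mathcal{L}$ of flat $\alpha$-c-soft sheaves resolving the constant sheaf, and observes that the twisted functor $\alpha_!(\mathcal{L}\otimes(-))$ admits an honest chain-level right adjoint preserving h-injectives; this gives $\ul\alpha^!$ directly as a dg functor with a genuine dg adjunction, no representability argument needed. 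If you want to salvage your approach you would have to either adopt such an explicit chain-level adjoint or prove a dg-enriched adjoint functor theorem strong enough to produce the adjoint together with all the coherence data.
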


Our main tools to lift the six functors 
come from
enriched model category theory. These tools are not applicable if
we work over the integers. More details and the precise 
meaning of this theorem are explained later on in this
introduction.

Our original interest in this lifting problem arose when
we studied the relation between geometric and
homological smoothness and translated Fourier-Mukai
functors to the dg world
\cite{lunts-categorical-resolution,valery-olaf-new-enhancements}.
For the arguments given there and in many other instances it is
necessary to lift certain 
isomorphisms from the triangulated level to the dg level. 
Our dg enhanced six functor formalism provides
a solution to this lifting problem, see section~\ref{sec:lifting-actions-dg}.

The need to address lifting
problems as mentioned above is well-known to the experts. For
example, Drinfeld writes in the introduction to
\cite{drinfeld-dg-quotients-arxiv-newer-than-published}:
``Hopefully the part of homological algebra most relevant for
algebraic geometry will be rewritten using dg categories or
rather the more flexible notion of $A_\infty$-category.'' In this
article we mostly work in the topological setting. However, our
techniques can be extended to many other contexts. For example,
they can be used to lift the formalism involving the various
derived categories associated to schemes (or algebraic stacks) over
a field $\kk$ to dg $\kk$-enhancements. Presumably, more details will
appear
in forthcoming articles. Some preparatory results concerning
modules over dg $\kk$-categories and, in
particular, modules over $\kk$-algebras are
already given here.

On the higher categorical level, six functor formalisms have
been considered in the context of stable $\infty$-categories by
Liu and Zheng \cite{liu-zheng-enhanced-six-operations} and 
in the context of derivators
by H\"ormann 
\cite{hoermann-fibered-multiderivators-and-six-functors}.
The notion of a derived functor in the dg setting is discussed by
Drinfeld
\cite{drinfeld-dg-quotients-arxiv-newer-than-published}. To the
best of our knowledge, however, a six functor formalism in the
context of dg categories has not been established elsewhere in
the literature.  Based on Weibel's results \cite[App.]{weibel-htpy-alg-K-theory},
one out of the six functors, the derived inverse image, is lifted
to dg enhancements by Cirone \cite{cirone-strictly-functorial}.  There
are several ad hoc constructions lifting certain morphisms
from the triangulated level to the dg level, see e.\,g.\
\cite{guillermou-dg-methods-microlocalization,
  kuznetsov-height,
  valery-olaf-new-enhancements,
  polishchuk-vandenbergh-sod-equiv-coh-sheaves-reflection-gps}.
Let us also mention the claim without proof in
\cite[2.2]{nadler-microlocal-branes} that the six functor
formalism can be lifted to dg enhancements.

\subsection{}
\label{sec:short-overview}

Grothendieck--Verdier--Spaltenstein's six functor formalism takes place in the
$\kk$-linear 2-multicategory $\TRCAT_\kk$ of triangulated
$\kk$-categories 
(the prefix ``multi'' takes care of functors with several inputs
like $\otimes^\dL$ and $\dR\sheafHom$; $\kk$-linearity refers to
the $\kk$-vector spaces of natural transformations). The relevant objects are
the derived categories $\D(X)$ of sheaves of
$\mathcal{O}$-modules on $X$. The six functors $\otimes^\dL$,
$\dR\sheafHom$, $\dL \alpha^*$, $\dR \alpha_*$, $\dR \alpha_!$,
$\alpha^!$ are 1-morphisms between these objects.  The relations
between these functors are encoded in two ways: first, by
2-morphisms like $\id \ra \dR\alpha_*\dL\alpha^*$ and
2-isomorphisms like
$\dR\alpha_*\dR\sheafHom(-, \alpha^!(-)) \sira
\dR\sheafHom(\dR\alpha_!(-),-)$;
second, by commutative diagrams constructed from these
2-morphisms (i.\,e.\ by equalities between compositions of
2-morphisms): they encode for example that 
$(\dL \alpha^*, \dR \alpha_*)$ is a pair of adjoint functors or
that $(\D(X), \otimes^\dL)$ is a symmetric monoidal category.

Ideally,
one would hope to find a
corresponding formalism in some subcategory of the
$\kk$-linear 2-multicategory $\DGCAT_\kk$ of dg $\kk$-categories.
We achieve this goal up to inverting some 2-morphisms as
follows. 

We define a $\kk$-linear 2-multicategory $\ENH_\kk$ of dg
$\kk$-enhancements. 
Its objects are additive pretriangulated dg
$\kk$-categories, its 1-morphisms coincide with the
corresponding 1-morphisms in $\DGCAT_\kk$, i.\,e.\ with dg
$\kk$-functors, and its 2-morphisms 
can be represented
by zig-zags of 2-morphisms in $\DGCAT_\kk$, i.\,e.\
by zig-zags of honest
dg $\kk$-natural transformations,
where the arrows pointing in the
wrong direction 
are objectwise homotopy equivalences
(see section~\ref{sec:2-mult}).
More precisely, the morphism categories of $\ENH_\kk$ are
obtained from the corresponding morphism categories of
$\DGCAT_\kk$ by inverting the objectwise homotopy equivalences. 
Mapping a dg $\kk$-category $\ulms{M}$ to its homotopy category
$[\ulms{M}]$ defines a functor
\begin{equation}
  \label{eq:intro:[-]-ENH-TRCAT}
  [-] \colon \ENH_\kk \ra \TRCAT_\kk. 
\end{equation}
We establish a six functor
formalism in $\ENH_\kk$ and show that it corresponds to
Gro\-then\-dieck--Verdier--Spaltenstein's six 
functor formalism under the functor
\eqref{eq:intro:[-]-ENH-TRCAT}.
The objects of $\ENH_\kk$ that enhance the derived categories $\D(X)$
are the dg $\kk$-categories $\ul\II(X)$ of h-injective complexes
of injective sheaves on $X$: the homotopy category $[\ul\II(X)]$ of
$\ul\II(X)$ is equivalent to $\D(X)$.  The 1-morphisms in $\ENH_\kk$
that enhance the six functors $\otimes^\dL$, $\dR\sheafHom$,
$\dL \alpha^*$, $\dR \alpha_*$, $\dR \alpha_!$, $\alpha^!$ are dg
$\kk$-functors $\ul\otimes$, $\ul\sheafHom$, $\ul\alpha^*$,
$\ul\alpha_*$, $\ul\alpha_!$, $\ul\alpha^!$. Their definition
relies on the fact that $\kk$ is a field. 
We lift all the standard 2-(iso)morphisms in $\TRCAT_\kk$ in the
right columns of tables~\ref{tab:main} and \ref{tab:subsequent}
on pages \pageref{tab:main} and \pageref{tab:subsequent} to
2-(iso)morphisms in $\ENH_\kk$ in the middle columns of these
tables.
All these lifts are defined by explicit zig-zags of 2-morphisms
in $\DGCAT_\kk$; the lifts marked as 2-isomorphisms come from
zig-zags of objectwise homotopy equivalences (with one exception,
namely the 2-isomorphism 
$\ul{\beta}'_*\ul\alpha'^! \sira \ul\alpha^!\ul\beta_*$
in row \eqref{eq:tab:proper-base-change-upper-shriek-ENH} of
table~\ref{tab:subsequent}).
We prove commutativity of certain
diagrams constructed from these lifts (see
section~\ref{sec:some-comm-diagr} for some of these diagrams);
for example we show that 
$(\ul\alpha^*, \ul\alpha_*)$ forms a pair of adjoint 1-morphisms
and 
that $(\ul\II(X), \ul\otimes)$ is a symmetric
monoidal object of
$\ENH_\kk$. 
\fussnote{ 
  is it a symmetric closed monoidal object?
} 

We encourage the reader to look at
section~\ref{sec:description-the-main} where 
we give a very precise description of our results. 
In 
section~\ref{sec:users-guide-dg} we give a user's guide
to the dg 
$\kk$-enhanced six functor formalism. We also describe the
$\kk$-linear 2-multicategory $\FML_\kk$ of formulas there 
(see section~\ref{sec:2-mult-form}).
It is our main formal tool
to describe all relations between functors between
derived categories of sheaves that we can lift to dg 
$\kk$-enhancements. We use it to explain the precise
meaning of Theorem~\ref{t:main}
(see Theorem~\ref{t:intro:interprete-compare-FML-ENH}
and
section~\ref{sec:mean-theorem-main}).
In section \ref{sec:lifting-actions-dg} we explain how to lift certain
isomorphisms from the triangulated level to the dg level.
We then comment on the
ingredients from enriched model category theory in
section~\ref{sec:model-categories-dg}. Let us also mention
Lemma~\ref{l:no-additive-injective-repl-for-Ab},
which explains why we need to work over a field.

\subsection{}
\label{sec:appendices}

Our article contains three appendices. The first two appendices
contain foundational statements of independent
interest. 
In appendix~\ref{sec:spalt-results-ring} we extend 
some results of Spaltenstein \cite{spaltenstein} from ringed
spaces to ringed topoi.
In appendix~\ref{sec:change-universe} we explain that passing to
a higher Grothendieck universe yields fully faithful embeddings
on derived categories of sheaves; more precisely, we
show that the change of Grothendieck 
universe functor preserves h-injective complexes of
sheaves. Surprisingly, these results seem not
to be available elsewhere in the literature.

\subsection{Plan of the article}
\label{sec:plan-the-article}

We consider section~\ref{sec:description-the-main} described above
as an extended introduction.  The aim of
section~\ref{sec:dg-enrich-funct} is to formulate and prove
Theorem~\ref{t:existence-enriched-functorial-fact}. It provides
criteria to ensure that a model structure on a dg $\kk$-category
admits dg $\kk$-enriched functorial factorizations.  This result
is applied in section~\ref{sec:applications} where we consider
various model structures on categories of complexes of
sheaves 
and on categories of dg modules and show that they are compatible
with the dg
$\kk$-enrichments of these categories. 
The $\kk$-linear
2-multicategory $\ENH_\kk$ of dg $\kk$-enhancements is defined in
section~\ref{sec:some-2-mult}.  We then lift the six functor
formalism: the four functors $\otimes^\dL$, $\dR\sheafHom$,
$\dL \alpha^*$, $\dR \alpha_*$ associated to $\kk$-ringed topoi
and their morphisms are treated in
section~\ref{sec:four-operations}; the two functors
$\dR \alpha_!$, $\alpha^!$ associated to suitable morphisms of
$\kk$-ringed spaces (cf.\ \cite{wolfgang-olaf-locallyproper}) are
included into the picture in section~\ref{sec:two-operations}.
The $\kk$-linear 2-multicategory of formulas $\FML_\kk$ is
defined and used in section~\ref{sec:2-multicat-formulas}
to state the summarizing
Theorem~\ref{t:compare-interpretations-FML}. 
In section~\ref{sec:some-remarks-2-morphisms-ENH} we explain some
useful facts concerning 2-morphisms in $\ENH_\kk$; for example,
we provide some instances where such 2-morphisms can be
represented by 
roofs of 2-morphisms in $\DGCAT_\kk$ where the arrow pointing in
the wrong direction is an objectwise homotopy equivalence.
In three appendices we prove some facts we could not locate in
the literature. The content of 
the appendices~\ref{sec:spalt-results-ring} and
\ref{sec:change-universe} was already discussed in
section~\ref{sec:appendices}. 
Some basic model categorical facts are proved in
appendix~\ref{sec:some-results-model}. 

The results of the two appendices~\ref{sec:change-universe} and
\ref{sec:some-results-model} 
are only used in
section~\ref{sec:some-remarks-2-morphisms-ENH}.
This section is not used elsewhere in this article, and the
reader is advised to skip it on a first reading.
This also has the advantage that Grothendieck universes
and related set-theoretical issues can be safely ignored.

\subsection{Acknowledgements}
\label{sec:acknowledgements}

We thank Valery Lunts for many inspiring discussions.
He was hoping very much that a theory as presented in
this work should exist.
We thank Michael Mandell for discussions
and Emily Riehl and Michael Shulman for useful correspondence
concerning model categories. 
We thank Timothy Logvinenko, Hanno Becker, Alexander Efimov, 
James Gillespie, Greg Stevenson, Pierre-Yves \mbox{Gaillard},
Lorenzo Ramero and Amnon Neeman for useful discussions.
Hanno Becker and Jan Weidner shared
an observation
which led to
Lemma~\ref{l:no-additive-injective-repl-for-Ab}.  
Fr\'ed\'eric D\'eglise answered a question concerning
Theorem~\ref{t:E-model-structure}. 
We thank the referee for very detailed comments, in particular for
drawing our attention to set-theoretical problems concerning
functor categories, and for suggesting a more intrinsic 
definition of the 2-multicategory $\ENH_\kk$ of dg enhancements.

\fussnote{Barcelona reference has appeared}


The author was supported by a postdoctoral
fellowship of the DFG,
and by SPP 1388 and SFB/TR 45 of the DFG.

\subsection{Conventions}
\label{sec:conventions}

All rings considered are assumed to be associative and unital. 
The symbol $\R$ always denotes a commutative ring, and 
$\kk$ always denotes a field. 
We often write $\otimes$ instead of $\otimes_\R$ or
$\otimes_\kk$ or $\otimes_{\mathcal{O}_X}$ and hope that its meaning
is clear from the context.

Starting from \ref{sec:notation}
we use the notation
$\RR$ for the category
of complexes of $\R$-modules (in some Grothendieck universe)
and $\ul\RR$ for the dg $\R$-category of such complexes.
Similarly, $\KK$ denotes the category of complexes of
$\kk$-vector spaces and $\ul\KK$ the dg $\kk$-category of
complexes of $\kk$-vector spaces.

Starting from \ref{sec:diff-grad-categ}
we will mostly use the term
$\RR$-category for a category
enriched in $\RR$. This is just another name for dg
$\R$-category. So $\ul\RR$ is an $\RR$-category.
Similarly, we will mostly say $\KK$-category
instead of dg $\kk$-category. So $\ul\KK$ is a $\KK$-category.

\subsubsection{Set-theoretical conventions}
\label{sec:set-theor-conv}

We use (Grothendieck) universes in order to handle set-theoretical
issues \cite{SGA4-1, KS-cat-sh}.
These issues are most relevant in
section~\ref{sec:some-remarks-2-morphisms-ENH}; the reader can
safely ignore them before reading this section.
To be more precise, we work in a
model of Tarski-Grothendieck set theory. 
All 
universes we consider are assumed to contain the set $\bN$ of
natural numbers, the ring $\R$ 
and the field $\kk$ as elements.


Given a universe $\mathscr{U}$, we use the following terminology
(which follows
\cite{gabber-ramero-found-almost-v12} but 
differs from \cite{SGA4-1}): a set (or an algebraic
structure like a module over some ring) is called
\define{$\mathscr{U}$-small} if it is an element of 
$\mathscr{U}$; a category $\mathcal{C}$ 
\define{has $\mathscr{U}$-small $\Hom$-sets} 
if 
$\mathcal{C}(A,B)$ is $\mathscr{U}$-small for all objects $A, B
\in \mathcal{C}$; a category $\mathcal{C}$ is 
\define{$\mathscr{U}$-small} if  
it has $\mathscr{U}$-small $\Hom$-sets and if 
its set $\Obj \mathcal{C}$ of objects is $\mathscr{U}$-small;
a category $\mathcal{C}$ \define{has objects in $\mathscr{U}$} if
$\Obj \mathcal{C}$ is a subset of $\mathscr{U}$.

We use the convention of 
\cite[Rem.~1.1.12.(iii)]{gabber-ramero-found-almost-v12}   
that a map 
$f \colon S \ra S'$ of sets is given by its graph (and not by the
triple consisting of $S$, $S'$ and its graph); this has the small
technical advantage that 
Remark~\ref{rem:functor-categories-size} is true.
Similarly, we do not assume that the $\Hom$-sets of a category
$\mathcal{C}$ 
are disjoint; nevertheless we refer to 
$\Mor(\mathcal{C}):= \{(A,B,f) \mid A,B \in \Obj \mathcal{C}, f
\in \mathcal{C}(A,B)\}$ as the set of morphisms in $\mathcal{C}$ 
(with source and target remembered).

We try to mention size issues whenever relevant.
This increase in correctness hopefully makes up for some 
decrease in readability.


 
\newpage

\thispagestyle{empty}

\begin{landscape}
  \tiny
  \setcounter{tablerow}{0}
  \begin{table}
    \caption{Interpretation of 2-morphisms in $\FML_\kk$ in
      $\ENH_\kk$ and $\TRCAT_\kk$}
    \label{tab:main}
    \begin{tabular}{%
      r@{}|>{$}r<{$}@{\;}>{$}c<{$}@{\;}>{$}l<{$}|%
      >{$}r<{$}@{\;}>{$}c<{$}@{\;}>{$}l<{$} c|%
      >{$}r<{$}@{\;}>{$}c<{$}@{\;}>{$}l<{$}|}
      \cline{2-11}
      & \multicolumn{3}{|>{$}c<{$}|}{\FML_\kk}
      & \multicolumn{3}{|>{$}c<{$}}{\ENH_\kk}
      & {}
      & \multicolumn{3}{|>{$}c<{$}|}{\TRCAT_\kk}
      \\
      \cline{2-11}
      \cline{2-11}
      & \multicolumn{10}{|l|}{\text{Let $(\mathcal{X},
        \mathcal{O})$ be 
      a $\kk$-ringed site.}}
      \\
      \cline{2-11}
      \labelrow{\ref{tab:main}}{eq:tab:left}
      & (\ud{\mathcal{O}} \;\ud\otimes -)
      & \sira 
      & \id
      &(\ul{\mathcal{O}} \;\ul\otimes -)
      & \sira 
      & \id
      & \eqref{eq:left}
      &(\mathcal{O} \otimes^\dL -)
      & \sira 
      & \id
      \\
      \labelrow{\ref{tab:main}}{eq:tab:right}
      & (- \ud\otimes \; \ud{\mathcal{O}})
      & \sira 
      & \id 
      & (- \ul\otimes \; \ul{\mathcal{O}})
      & \sira 
      & \id 
      & \eqref{eq:right}
      & (- \otimes^\dL \; \mathcal{O})
      & \sira 
      & \id 
      \\
      \labelrow{\ref{tab:main}}{eq:tab:ass-and-symm}
      & ((- \ud\otimes -) \ud\otimes -)
      & \sira
      &  (- \ud\otimes (- \ud\otimes -))
      & ((- \ul\otimes -) \ul\otimes -)
      & \sira
      &  (- \ul\otimes (- \ul\otimes -))
      & \eqref{eq:ass-and-symm}
      & ((- \otimes^\dL -) \otimes^\dL -)
      & \sira
      &  (- \otimes^\dL (- \otimes^\dL -))
      \\
      \labelrow{\ref{tab:main}}{eq:tab:ulotimes-swap}
      & (- \ud\otimes ?) 
      & \sira
      & (? \ud\otimes -)
      &  (- \ul\otimes ?) 
      & \sira
      & (? \ul\otimes -)
      & \eqref{eq:ulotimes-swap}
      &  (- \otimes^\dL ?) 
      & \sira
      & (? \otimes^\dL -)
      \\
      \labelrow{\ref{tab:main}}{eq:tab:48}
      & \ud\sheafHom(- \ud\otimes -, -)
      & \sira
      &  \ud\sheafHom(-, \ud\sheafHom(-,-))
      & \ul\sheafHom(- \ul\otimes -, -)
      & \sira
      &  \ul\sheafHom(-, \ul\sheafHom(-,-))
      & \eqref{eq:48}
      & \dR\sheafHom(- \otimes^\dL -, -)
      & \sira
      &  \dR\sheafHom(-, \dR\sheafHom(-,-))
      \\
      \labelrow{\ref{tab:main}}{eq:tab:46}
      & \ud\Hom(-,-) 
      & \sira
      & \ud\Gamma \; \ud\sheafHom(-,-) 
      & \ul\Hom(-,-) 
      & \sira
      & \ul\Gamma \; \ul\sheafHom(-,-) 
      & \eqref{eq:46}
      & \dR\Hom(-,-) 
      & \sira
      & \dR\Gamma \; \dR\sheafHom(-,-) 
      \\
      \cline{2-11}
      & \multicolumn{10}{|l|}{\text{Let
      $(\Sh(\mathcal{Y}), \mathcal{O}_\mathcal{Y}) \xra{\alpha}
      (\Sh(\mathcal{X}), \mathcal{O}_\mathcal{X})$
      be a morphism of $\kk$-ringed topoi.}}
      \\
      \cline{2-11}
      \labelrow{\ref{tab:main}}{eq:tab:aa-id}
      & \ud\alpha^* \ud\alpha_* 
      & \ra 
      & \id  
      & \ul\alpha^* \ul\alpha_* 
      & \ra 
      & \id  
      & \eqref{eq:aa-id}
      & \dL\alpha^* \dR\alpha_* 
      & \ra 
      & \id  
      \\
      \labelrow{\ref{tab:main}}{eq:tab:id-aa}
      & \id 
      & \ra 
      & \ud\alpha_* \ud\alpha^* 
      & \id 
      & \ra 
      & \ul\alpha_* \ul\alpha^* 
      & \eqref{eq:id-aa}
      & \id 
      & \ra 
      & \dR\alpha_* \dL\alpha^* 
      \\
      \labelrow{\ref{tab:main}}{eq:tab:ul-alpha^*-otimes}
      & \ud\alpha^* (- \ud\otimes -)
      & \sira
      &  (\ud\alpha^* -) \ud\otimes (\ud\alpha^* -) 
      & \ul\alpha^* (- \ul\otimes -)
      & \sira
      &  (\ul\alpha^* -) \ul\otimes (\ul\alpha^* -) 
      & \eqref{eq:ul-alpha^*-otimes}
      & \dL\alpha^* (- \otimes^\dL -)
      & \sira
      &  (\dL\alpha^* -) \otimes^\dL (\dL\alpha^* -) 
      \\
      \labelrow{\ref{tab:main}}{eq:tab:pull-push-sheafHom-ENH}
      & \ud\alpha_* \ud\sheafHom(\ud\alpha^* -, -)
      & \sira
      & \ud\sheafHom(-, \ud\alpha_*-)
      & \ul\alpha_* \ul\sheafHom(\ul\alpha^* -, -)
      & \sira
      & \ul\sheafHom(-, \ul\alpha_*-)
      & \eqref{eq:pull-push-sheafHom-ENH}
      & \dR\alpha_* \dR\sheafHom(\dL\alpha^* -, -)
      & \sira
      & \dR\sheafHom(-, \dR\alpha_*-)
      \\
      \cline{2-11}
      & \multicolumn{10}{|l|}{\text{Let $(\Sh(\mathcal{Z}),
      \mathcal{O}_\mathcal{Z}) \xra{\beta} 
      (\Sh(\mathcal{Y}), \mathcal{O}_\mathcal{Y}) \xra{\alpha}
      (\Sh(\mathcal{X}), \mathcal{O}_\mathcal{X})$ be morphisms of
      $\kk$-ringed 
      topoi.}}
      \\
      \cline{2-11}
      \labelrow{\ref{tab:main}}{eq:tab:id_*-ENH}
      & \ud\id_* 
      & \sira 
      & \id 
      & \ul\id_* 
      & \sira 
      & \id
      & \eqref{eq:id_*-ENH}
      & \dR\id_* 
      & \sira 
      & \id 
      \\
      \labelrow{\ref{tab:main}}{eq:tab:alphabeta_*-ENH}
      & \ud{(\alpha \beta)}_* 
      & \sira 
      & \ud\alpha_* \ud\beta_*,
      & \ul{(\alpha \beta)}_* 
      & \sira 
      & \ul\alpha_* \ul\beta_*,
      & \eqref{eq:alphabeta_*-ENH}
      & \dR{(\alpha \beta)}_* 
      & \sira 
      & \dR\alpha_* \dR\beta_*,
      \\
      \labelrow{\ref{tab:main}}{eq:tab:id^*-ENH}
      & \ud\id^*
      & \sila 
      & \id
      & \ul\id^*
      & \sila 
      & \id
      & \eqref{eq:id^*-ENH}
      & \dL\id^*
      & \sila 
      & \id
      \\
      \labelrow{\ref{tab:main}}{eq:tab:alphabeta^*-ENH}
      & \ud{(\alpha\beta)}^*
      & \sila 
      & \ud\beta^*\ud\alpha^*
      & \ul{(\alpha\beta)}^*
      & \sila 
      & \ul\beta^*\ul\alpha^*
      & \eqref{eq:alphabeta^*-ENH}
      & \dL{(\alpha\beta)}^*
      & \sila 
      & \dL\beta^*\dL\alpha^*
      \\
      \cline{2-11}
      & \multicolumn{10}{|l|}{\text{Let $A$ be a $\kk$-algebra and
      $Y \xra{\alpha} X$ a morphism 
      of topological spaces.}}
      \\
      \cline{2-11}
      \labelrow{\ref{tab:main}}{eq:tab:ulalpha*-inv}
      & \ud\alpha^* 
      & \sira 
      & \ud\alpha\inv
      & \ul\alpha^* 
      & \sira 
      & \ul\alpha\inv
      & \eqref{eq:ulalpha*-inv}
      & \dL\alpha^* 
      & \sira 
      & \dL\alpha\inv
      \\
      \labelrow{\ref{tab:main}}{eq:tab:alpha_!-to-alpha_*-not-proper}
      & \text{If $\alpha$ is not proper:}\quad
      \ud\alpha_! 
      & \ra 
      & \ud\alpha_*
      & \ul\alpha_! 
      & \ra 
      & \ul\alpha_*
      & \eqref{eq:alpha_!-to-alpha_*}
      & \dR\alpha_! 
      & \ra 
      & \dR\alpha_*
      \\
      \labelrow{\ref{tab:main}}{eq:tab:alpha_!-to-alpha_*-proper}
      & \text{If $\alpha$ is proper:}\quad
      \ud\alpha_! 
      & \sira 
      & \ud\alpha_*
      & \ul\alpha_! 
      & \sira 
      & \ul\alpha_*
      & \eqref{eq:alpha_!-to-alpha_*}
      & \dR\alpha_! 
      & \sira 
      & \dR\alpha_*
      \\
      \cline{2-11}
      & \multicolumn{10}{|l|}{\text{Let $A$ be a
      $\kk$-algebra and
      $Y \xra{\alpha} X$ a separated, locally proper
      morphism of 
      top.\ spaces with
      $\alpha_! \colon 
      \Mod(Y_A) \ra \Mod(X_A)$ 
      of finite cohomological dimension.}}
      \\
      \cline{2-11}
      \labelrow{\ref{tab:main}}{eq:tab:a_!a^!-id}
      & \ud\alpha_! \ud\alpha^! 
      & \ra 
      & \id
      & \ul\alpha_! \ul\alpha^! 
      & \ra 
      & \id
      & \eqref{eq:a_!a^!-id}
      & (\dR\alpha_!) \alpha^! 
      & \ra 
      & \id
      \\
      \labelrow{\ref{tab:main}}{eq:tab:id-a^!a_!}
      & \id 
      & \ra 
      & \ud\alpha^! \ud\alpha_! 
      & \id 
      & \ra 
      & \ul\alpha^! \ul\alpha_! 
      & \eqref{eq:id-a^!a_!}
      & \id 
      & \ra 
      & \alpha^! \dR\alpha_! 
      \\
      \labelrow{\ref{tab:main}}{eq:tab:projection-fml-ENH}
      & \ud{\alpha}_!(-) \ud\otimes (-)
      & \sira
      & \ud\alpha_!((-) \ud\otimes \;\ud\alpha\inv(-))
      & \ul{\alpha}_!(-) \ul\otimes (-)
      & \sira
      & \ul\alpha_!((-) \ul\otimes \;\ul\alpha\inv(-))
      & \eqref{eq:projection-fml-ENH}
      & \dR{\alpha}_!(-) \otimes^\dL (-)
      & \sira
      & \dR\alpha_!((-) \otimes^\dL \dL\alpha\inv(-))
      \\
      \labelrow{\ref{tab:main}}{eq:tab:!-adjunction-sheafHom-ENH}
      & \ud\alpha_*\ud\sheafHom(-, \ud\alpha^!(-)) 
      & \sira
      & \ud\sheafHom(\ud\alpha_!(-),-) 
      & \ul\alpha_*\ul\sheafHom(-, \ul\alpha^!(-)) 
      & \sira
      & \ul\sheafHom(\ul\alpha_!(-),-) 
      & \eqref{eq:!-adjunction-sheafHom-ENH}
      & \dR\alpha_*\dR\sheafHom(-, \alpha^!(-)) 
      & \sira
      & \dR\sheafHom(\dR\alpha_!(-),-) 
      \\
      \labelrow{\ref{tab:main}}{eq:tab:upper-!-sheafHom-ENH}
      & \ud\sheafHom(\ud\alpha\inv(-), \ud\alpha^!(-)) 
      & \sira
      & \ud\alpha^!\ud\sheafHom(-,-) 
      & \ul\sheafHom(\ul\alpha\inv(-), \ul\alpha^!(-)) 
      & \sira
      & \ul\alpha^!\ul\sheafHom(-,-) 
      & \eqref{eq:upper-!-sheafHom-ENH}
      & \dR\sheafHom(\dL\alpha\inv(-), \alpha^!(-)) 
      & \sira
      & \alpha^!\dR\sheafHom(-,-) 
      \\ 
      \cline{2-11}
      & \multicolumn{10}{|l|}{\text{Let $A$ be a
      $\kk$-algebra,
      $Z \xra{\beta} Y \xra{\alpha} X$ sep., loc.\ proper
      morphisms of 
      top.\ spaces with
      $\alpha_! \colon 
      \Mod(Y_A) \ra \Mod(X_A)$, $\beta_! \colon 
      \Mod(Z_A) \ra \Mod(Y_A)$ 
      of finite cohom.\ dimension.}}
      \\
      \cline{2-11}
      \labelrow{\ref{tab:main}}{eq:tab:id_!-ENH}
      & \ud\id_! 
      & \sila
      & \id
      & \ul\id_! 
      & \sila
      & \id
      & \eqref{eq:id_!-ENH}
      & \dR\id_! 
      & \sila
      & \id
      \\
      \labelrow{\ref{tab:main}}{eq:tab:alphabeta_!-ENH} 
      & \ud{(\alpha \beta)}_! 
      & \sila 
      & \ud\alpha_! \ud\beta_!
      & \ul{(\alpha \beta)}_! 
      & \sila 
      & \ul\alpha_! \ul\beta_!
      & \eqref{eq:alphabeta_!-ENH}
      & \dR{(\alpha \beta)}_! 
      & \sila 
      & \dR\alpha_! \dR\beta_!
      \\
      \labelrow{\ref{tab:main}}{eq:tab:id^!-ENH}
      & \ud\id^!
      & \sira 
      & \id
      & \ul\id^!
      & \sira 
      & \id
      & \eqref{eq:id^!-ENH}
      & \id^!
      & \sira 
      & \id
      \\
      \labelrow{\ref{tab:main}}{eq:tab:alphabeta^!-ENH} 
      & \ud{(\alpha\beta)}^!
      & \sira 
      & \ud\beta^!\ud\alpha^!
      & \ul{(\alpha\beta)}^!
      & \sira 
      & \ul\beta^!\ul\alpha^!
      & \eqref{eq:alphabeta^!-ENH}
      & (\alpha\beta)^!
      & \sira 
      & \beta^!\alpha^!
      \\ 
      \cline{2-11}
      & \multicolumn{10}{|l|}{\text{Let  $A$ be a
      $\kk$-algebra and
      $\xymatrix@=1pc{
      {Y'} \ar[r]^-{\beta'} \ar[d]^-{\alpha'} 
      & {Y} \ar[d]^-{\alpha} \\
      {X'} \ar[r]^-{\beta} 
      & {X}}$
        a cartesian diagram of top.\ spaces with $\alpha$
        separated, loc. proper, and $\alpha_! \colon
        \Mod(Y_A) \ra \Mod(X_A)$ of finite cohom.\ dimension.}}
      \\
      \cline{2-11}
      \labelrow{\ref{tab:main}}{eq:tab:proper-base-change-ENH} 
      & \ud{\beta}\inv \ud\alpha_! 
      & \sira 
      & \ud\alpha'_! \ud\beta'^{-1}
      & \ul{\beta}\inv \ul\alpha_! 
      & \sira 
      & \ul\alpha'_! \ul\beta'^{-1}
      & \eqref{eq:proper-base-change-ENH}
      & \dL{\beta}\inv \dR\alpha_! 
      & \sira 
      & \dR\alpha'_! \dL\beta'^{-1}
      \\
      \cline{2-11}
    \end{tabular}
  \end{table}
\end{landscape}

\newpage

\thispagestyle{empty}
\begin{landscape}
  \tiny{
    \setcounter{tablerow}{0}
    \begin{table}
      \caption{Some subsequently constructed 2-(iso)morphisms and
        their interpretation}
      \label{tab:subsequent}
      \begin{tabular}{%
        r@{}|>{$}r<{$}@{\;}>{$}c<{$}@{\;}>{$}l<{$}|%
        >{$}r<{$}@{\;}>{$}c<{$}@{\;}>{$}l<{$} c|%
        >{$}r<{$}@{\;}>{$}c<{$}@{\;}>{$}l<{$}|}
        \cline{2-11}
        & \multicolumn{3}{|>{$}c<{$}|}{\FML_\kk}
        & \multicolumn{3}{|>{$}c<{$}}{\ENH_\kk}
        & {}
        & \multicolumn{3}{|>{$}c<{$}|}{\TRCAT_\kk}
        \\
        \cline{2-11}
        \cline{2-11}
        & \multicolumn{10}{|l|}{\text{Let $(\mathcal{X},
        \mathcal{O})$ be 
        a $\kk$-ringed site.}}
        \\
        \cline{2-11}
        \labelrow{\ref{tab:subsequent}}{eq:tab:28}
        & \ud\Hom(- \ud\otimes -, -)
        & \sira
        & \ud\Hom(-, \ud\sheafHom(-,-)) 
        & \ul\Hom(- \ul\otimes -, -)
        & \sira
        & \ul\Hom(-, \ul\sheafHom(-,-)) 
        & \eqref{eq:28}
        & \dR\Hom(- \otimes^\dL -, -)
        & \sira
        & \dR\Hom(-, \dR\sheafHom(-,-))
        \\ 
        \cline{2-11}
        & \multicolumn{10}{|l|}{\text{Let
        $(\Sh(\mathcal{Y}), \mathcal{O}_\mathcal{Y}) \xra{\alpha}
        (\Sh(\mathcal{X}), \mathcal{O}_\mathcal{X})$
        be a morphism of $\kk$-ringed topoi.}}
        \\
        \cline{2-11}
        \labelrow{\ref{tab:subsequent}}{eq:tab:enhanced-pull-push-ulHom}
        & \ud\Hom(\ud\alpha^* -, -)
        & \sira
        &  \ud\Hom(-, \ul\alpha_*-)
        & \ul\Hom(\ul\alpha^* -, -)
        & \sira
        &  \ul\Hom(-, \ul\alpha_*-)
        & \eqref{eq:enhanced-pull-push-ulHom}
        & \dR\Hom(\dL\alpha^* -, -)
        & \sira
        & \dR\Hom(-, \dR\alpha_*-)
        \\
        \labelrow{\ref{tab:subsequent}}{eq:tab:push-sheafHom-ENH}
        & \ud\alpha_* \ud\sheafHom(-, -)
        & \ra
        & \ud\sheafHom(\ud\alpha_*(-), \ud\alpha_*(-))
        & \ul\alpha_* \ul\sheafHom(-, -)
        & \ra
        & \ul\sheafHom(\ul\alpha_*(-), \ul\alpha_*(-))
        & \eqref{eq:push-sheafHom-ENH}
        & \dR\alpha_* \dR\sheafHom(-, -)
        & \ra
        & \dR\sheafHom(\dR\alpha_*(-), \dR\alpha_*(-))
        \\
        \cline{2-11}
        & \multicolumn{10}{|l|}{\text{Let $A$ be a $\kk$-algebra and
        $Y \xra{\alpha} X$ a morphism 
        of topological spaces.}}
        \\
        \cline{2-11}
        \labelrow{\ref{tab:subsequent}}{eq:tab:ainva-id}
        & \ud\alpha\inv \ud\alpha_* 
        & \ra 
        & \id
        & \ul\alpha\inv \ul\alpha_* 
        & \ra 
        & \id
        & \eqref{eq:ainva-id}
        & \dL\alpha\inv \dR\alpha_* 
        & \ra 
        & \id
        \\
        \labelrow{\ref{tab:subsequent}}{eq:tab:id-aainv}
        & \id 
        & \ra 
        & \ud\alpha_* \ud\alpha\inv
        & \id 
        & \ra 
        & \ul\alpha_* \ul\alpha\inv
        & \eqref{eq:id-aainv}
        & \id 
        & \ra 
        & \dR\alpha_* \dL\alpha\inv
        \\
        \cline{2-11}
        & \multicolumn{10}{|l|}{\text{Let $A$ be a
        $\kk$-algebra and
        $Y \xra{\alpha} X$ a separated, locally proper
        morphism of 
        top.\ spaces with
        $\alpha_! \colon 
        \Mod(Y_A) \ra \Mod(X_A)$ 
        of finite cohomological dimension.}}
        \\
        \cline{2-11}
        \labelrow{\ref{tab:subsequent}}{eq:tab:!-adjunction-Hom-ENH}
        & \ud\Hom(-, \ud\alpha^!-)
        & \sira
        & \ud\Hom(\ud\alpha_!-,-) 
        & \ul\Hom(-, \ul\alpha^!-)
        & \sira
        & \ul\Hom(\ul\alpha_!-,-) 
        & \eqref{eq:!-adjunction-Hom-ENH}
        & \dR\Hom(-, \alpha^!-)
        & \sira
        & \dR\Hom(\dR\alpha_!-,-) 
        \\
        \cline{2-11}
        & \multicolumn{10}{|l|}{\text{Let  $A$ be a
        $\kk$-algebra and
        $\xymatrix@=1pc{
        {Y'} \ar[r]^-{\beta'} \ar[d]^-{\alpha'} 
        & {Y} \ar[d]^-{\alpha} \\
        {X'} \ar[r]^-{\beta} 
        & {X}}$
          a cartesian diagram of top.\ spaces with $\alpha$
          separated, loc. proper, and $\alpha_! \colon
          \Mod(Y_A) \ra \Mod(X_A)$ of finite cohom.\ dimension.}}
        \\
        \cline{2-11}
        \labelrow{\ref{tab:subsequent}}{eq:tab:alpha_!beta'_*-beta_*alpha'_!-ENH}
        & \ud\alpha_!\ud\beta'_* 
        & \ra
        & \ud\beta_*\ud\alpha'_!
        & \ul\alpha_!\ul\beta'_* 
        & \ra
        & \ul\beta_*\ul\alpha'_!
        & \eqref{eq:alpha_!beta'_*-beta_*alpha'_!-ENH}
        & \dR\alpha_!\dR\beta'_* 
        & \ra
        & \dR\beta_*\dR\alpha'_!
        \\
        \labelrow{\ref{tab:subsequent}}{eq:tab:proper-base-change-upper-shriek-ENH}
        & \ud{\beta}'_*\ud\alpha'^! 
        & \ra
        & \ud\alpha^!\ud\beta_* 
        & \ul{\beta}'_*\ul\alpha'^! 
        & \sira
        & \ul\alpha^!\ul\beta_* 
        & \eqref{eq:proper-base-change-upper-shriek-ENH}
        & (\dR{\beta}'_*)\alpha'^! 
        & \sira
        & \alpha^!\dR\beta_* 
        \\
        \cline{2-11}
      \end{tabular}
    \end{table}
  }
\end{landscape}

\subsection{Some commutative diagrams in
  \texorpdfstring{$\ENH_\kk$}{ENHk}} 
\label{sec:some-comm-diagr}

\subsubsection{}
As mentioned before, some of the relations between the six
functors are encoded by commutative diagrams. For example, 
given a morphism
$(\Sh(\mathcal{Y}), \mathcal{O}_\mathcal{Y}) \xra{\alpha}
(\Sh(\mathcal{X}), \mathcal{O}_\mathcal{X})$
of $\kk$-ringed topoi,
the
fact that  
$(\dL \alpha^*, \dR \alpha_*)$ is a pair of adjoint functors is
more precisely encoded by a quadruple
$(\dL \alpha^*, \dR \alpha_*, \id \xra{\eta} \dR\alpha_* \dL\alpha^*,
\dL\alpha^* \dR\alpha_* \xra{\theta} \id)$ where $\eta$ and
$\theta$ are unit and counit of the adjunction such that the two
diagrams 
\begin{equation}
  \label{eq:intro:alpha*-triangles-derived}
  \xymatrix@C-2mm{
    {\dR\alpha_*}
    \ar@/^4ex/[rrrr]^-{\id}
    \ar[rr]_-{\eta\dR\alpha_*}
    &&
    {\dR\alpha_*\dL\alpha^*\dR\alpha_*}
    \ar[rr]_-{\dR\alpha_*\theta}
    &&
    {\dR\alpha_*,}
  }
  \qquad
  \xymatrix@C-2mm{
    {\dL\alpha^*}
    \ar@/^4ex/[rrrr]^-{\id}
    \ar[rr]_-{\dL\alpha^*\eta}
    &&
    {\dL\alpha^*\dR\alpha_*\dL\alpha^*}
    \ar[rr]_-{\theta\dL\alpha^*}
    &&
    {\dL\alpha^*}
  }
\end{equation}
are commutative (these are the triangle identities). 
The unit and counit 2-morphisms in $\TRCAT_\kk$ 
appear as the rightmost entries
of rows \eqref{eq:tab:id-aa} and
\eqref{eq:tab:aa-id} 
in table~\ref{tab:main}.
They are lifted to the corresponding middle entries 
$\id \xra{\eqref{eq:tab:id-aa}} \ul\alpha_* \ul\alpha^*$ and
$\ul\alpha^* \ul\alpha_* \xra{\eqref{eq:tab:aa-id}} \id$ of
these rows 
which are 2-morphisms in
$\ENH_\kk$. Therefore it makes sense to ask whether the datum $(\ul\alpha^*,
\ul\alpha_*, \eqref{eq:tab:id-aa},
\eqref{eq:tab:aa-id})$ defines an adjunction in $\ENH_\kk$. This
is indeed the case,
the
two diagrams
\begin{equation}
  \label{eq:intro:alpha*-triangles}
  \xymatrix@C-2mm{
    {\ul\alpha_*}
    \ar@/^4ex/[rrrr]^-{\id}
    \ar[rr]_-{\eqref{eq:tab:id-aa}\ul\alpha_*}
    &&
    {\ul\alpha_*\ul\alpha^*\ul\alpha_*}
    \ar[rr]_-{\ul\alpha_*\eqref{eq:tab:aa-id}}
    &&
    {\ul\alpha_*,}
  }
  \qquad
  \xymatrix@C-2mm{
    {\ul\alpha^*}
    \ar@/^4ex/[rrrr]^-{\id}
    \ar[rr]_-{\ul\alpha^*\eqref{eq:tab:id-aa}}
    &&
    {\ul\alpha^*\ul\alpha_*\ul\alpha^*}
    \ar[rr]_-{\eqref{eq:tab:aa-id}\ul\alpha^*}
    &&
    {\ul\alpha^*}
  }
\end{equation}
in $\ENH_\kk$ are commutative (see
Proposition~\ref{p:adjunction-*-pull-push-in-ENH}).

\subsubsection{}
In the rest of this section~\ref{sec:some-comm-diagr} 
we give further similar examples of commutative diagrams
in $\ENH_\kk$ but do not display the corresponding commutative
diagrams in $\TRCAT_\kk$. As above, when referring to a row of
table~\ref{tab:main}, we often just mean the entry in the obvious
column. 

\subsubsection{}
If $(\mathcal{X},
\mathcal{O})$ is 
a $\kk$-ringed site, 
$(\ul\II(\mathcal{X}), \ul\otimes, \ul{\mathcal{O}},
\eqref{eq:tab:ass-and-symm}, \eqref{eq:tab:left},
\eqref{eq:tab:right}, 
\eqref{eq:tab:ulotimes-swap})$ 
is a symmetric monoidal object in 
$\ENH_\kk$, see Lemma~\ref{l:monoidal-enhancement}. This means
that the following diagrams in $\ENH_\kk$ commute.
\begin{equation}
  \label{eq:intro:ulotimes-ass}
  \xymatrix{
    {(((- \ul\otimes -) \ul\otimes -) \ul\otimes -)}
    \ar[rr]^-{\eqref{eq:tab:ass-and-symm} (\ul\otimes, \id,\id)}_-{\sim}
    \ar[d]_-{\eqref{eq:tab:ass-and-symm} \ul\otimes \id}^-{\sim}
    &&
    {(- \ul\otimes -) \ul\otimes (- \ul\otimes -)}
    \ar[rr]^-{\eqref{eq:tab:ass-and-symm}(\id,\id, \ul\otimes)}_-{\sim}
    &&
    {(- \ul\otimes (- \ul\otimes (- \ul\otimes -)))}
    \\
    {((- \ul\otimes (- \ul\otimes -)) \ul\otimes -)}
    \ar[rrrr]^-{\eqref{eq:tab:ass-and-symm} (\id, \ul\otimes, \id)}_-{\sim}
    &&&&
    {(- \ul\otimes ((- \ul\otimes -) \ul\otimes -))}
    \ar[u]_-{\id \ul\otimes \eqref{eq:tab:ass-and-symm}}^-{\sim}
  }
\end{equation}
\begin{equation}
  \label{eq:intro:ulotimes-unital}
  \xymatrix{
    {((- \ul\otimes \; \ul{\mathcal{O}}) \ul\otimes -)}
    \ar@/^4ex/[rrrr]^-{\eqref{eq:tab:ass-and-symm}(\id, \ul{\mathcal{O}},\id)}_-{\sim}
    \ar[rr]_-{\eqref{eq:tab:right} \otimes \id}^-{\sim}
    &&
    {(- \ul\otimes -)}
    &&
    {(- \ul\otimes (\ul{\mathcal{O}} \; \ul\otimes -))}
    \ar[ll]^-{\id \otimes \eqref{eq:tab:left}}_-{\sim}
  }
\end{equation}
\begin{equation}
  \label{eq:intro:ulotimes-symm}
  \quad
  \xymatrix{
    {(- \ul\otimes ?)}
    \ar@/^4ex/[rrrr]^-{\id}
    \ar[rr]_-{\eqref{eq:tab:ulotimes-swap}}^-{\sim}
    &&
    {(? \ul\otimes -)}
    \ar[rr]_-{\eqref{eq:tab:ulotimes-swap}}^-{\sim}
    &&
    {(- \ul\otimes ?)}
  }
\end{equation}

\subsubsection{}
If
$(\Sh(\mathcal{Z}), \mathcal{O}_\mathcal{Z}) \xra{\beta}
(\Sh(\mathcal{Y}), \mathcal{O}_\mathcal{Y}) \xra{\alpha}
(\Sh(\mathcal{X}), \mathcal{O}_\mathcal{X})$
are morphisms of $\kk$-ringed topoi, the 2-isomorphisms
$\ul{(\alpha \beta)}_* \xsira{\eqref{eq:tab:alphabeta_*-ENH}}
\ul\alpha_* \ul\beta_*$
and
$\ul\beta^*\ul\alpha^* \xsira{\eqref{eq:tab:alphabeta^*-ENH}}
\ul{(\alpha\beta)}^*$ 
are 
conjugate (with respect to the adjunctions
$(\ul{(\alpha\beta)}^*, \ul{(\alpha\beta)}_*)$ and 
$(\ul\beta^*\ul\alpha^*, \ul\alpha_*\ul\beta_*)$), see
Proposition~\ref{p:pullback-comp-adjoint-pushforward-comp}.
This means
that the following diagram in $\ENH_\kk$ commutes.
\begin{equation}
  \label{eq:intro:pullback-comp-adjoint-pushforward-comp}
  \xymatrix{
    {\id} 
    \ar[r]^-{\eqref{eq:tab:id-aa}}
    \ar[d]_-{\eqref{eq:tab:id-aa}}
    &
    {\ul\alpha_*\ul\alpha^*}
    \ar[rr]^-{\ul\alpha_*\eqref{eq:tab:id-aa}\ul\alpha^*}
    &&
    {\ul\alpha_*\ul\beta_*\ul\beta^*\ul\alpha^*}
    \ar[d]^-{\ul\alpha_*\ul\beta_*\eqref{eq:tab:alphabeta^*-ENH}}_-{\sim} 
    \\
    {\ul{(\alpha\beta)}_*\ul{(\alpha\beta)}^*}
    \ar[rrr]^-{\eqref{eq:tab:alphabeta_*-ENH}\ul{(\alpha\beta)}^*}_-{\sim}  
    &&&
    {\ul\alpha_*\ul\beta_*\ul{(\alpha\beta)}^*}
  }
\end{equation}

\subsubsection{}
If $(\mathcal{X},
\mathcal{O})$ is 
a $\kk$-ringed site, 
the 2-isomorphisms
$\ul\id_* \xsira{\eqref{eq:tab:id_*-ENH}} \id$ and $\id
\xsira{\eqref{eq:tab:id^*-ENH}} \ul\id^*$ 
are conjugate
(with respect to the adjunctions $(\ul\id^*,
\ul\id_*)$ and $(\id,\id)$), see
Proposition~\ref{p:pullback-comp-adjoint-pushforward-comp}.
This means
that the following diagram in $\ENH_\kk$ commutes.
\begin{equation}
  \label{eq:intro:pullback-id-adjoint-pushforward-id}
  \xymatrix{
    {\id} 
    \ar[rr]^-{\id}
    \ar[d]_-{\eqref{eq:tab:id-aa}}
    &&
    {\id}
    \ar[d]^-{\eqref{eq:tab:id^*-ENH}}_-{\sim} 
    \\
    {\ul\id_*\ul\id^*}
    \ar[rr]^-{\eqref{eq:tab:id_*-ENH}\ul\id^*}_-{\sim}  
    &&
    {\ul\id^*}
  }
\end{equation}

\subsubsection{}
If $A$ is a commutative
$\kk$-algebra and
$\alpha \colon Y \ra X$ is a separated, locally proper
morphism of 
topological spaces with
$\alpha_! \colon 
\Mod(Y_A) \ra \Mod(X_A)$ 
of finite cohomological dimension,
then the datum $(\ul\alpha_!, \ul\alpha^!,
\eqref{eq:tab:id-a^!a_!},
\eqref{eq:tab:a_!a^!-id})$
is an adjunction in $\ENH_\kk$, see
Proposition~\ref{p:adjunction-!-push-pull-in-ENH}.
This means
that the following diagrams in $\ENH_\kk$ commute.
\begin{equation}
  \label{eq:intro:alpha!-triangles}
  \xymatrix@C-2mm{
    {\ul\alpha^!}
    \ar@/^4ex/[rrrr]^-{\id}
    \ar[rr]_-{\eqref{eq:tab:id-a^!a_!}\ul\alpha^!}
    &&
    {\ul\alpha^!\ul\alpha_!\ul\alpha^!}
    \ar[rr]_-{\ul\alpha^!\eqref{eq:tab:a_!a^!-id}}
    &&
    {\ul\alpha^!,}
  }
  \qquad
  \xymatrix@C-2mm{
    {\ul\alpha_!}
    \ar@/^4ex/[rrrr]^-{\id}
    \ar[rr]_-{\ul\alpha_!\eqref{eq:tab:id-a^!a_!}}
    &&
    {\ul\alpha_!\ul\alpha^!\ul\alpha_!}
    \ar[rr]_-{\eqref{eq:tab:a_!a^!-id}\ul\alpha_!}
    &&
    {\ul\alpha_!}
  }
\end{equation}

\section{Description of the main results}
\label{sec:description-the-main}

Our aim in \ref{sec:users-guide-dg} is to enable the reader to use the
dg $\kk$-enhanced six functor formalism without having to digest
all technical details of this long article.
In \ref{sec:lifting-actions-dg} we explain how certain
isomorphisms lift from the triangulated level to the dg level.
The ingredients from enriched model category theory we use are
then explained in \ref{sec:model-categories-dg}. Remember that
$\kk$ always denotes a field.

\subsection{User's guide to the dg \texorpdfstring{$\kk$}{k}-enhanced six functor
  formalism}
\label{sec:users-guide-dg}

\subsubsection{Dg \texorpdfstring{$\kk$}{k}-enhancements
  considered -- lifts of derived categories of sheaves}
\label{sec:enhanc-cons-1}

Let $X$ be a $\kk$-ringed space.  Let $\ul\C(X)$ be the dg
$\kk$-category of complexes of sheaves on $X$ and $\ul\II(X)$ its
full dg $\kk$-subcategory of h-injective complexes of injective
sheaves. Then $\ul\II(X)$ is a strongly pretriangulated dg
$\kk$-category. Its homotopy category
$[\ul\II(X)]$ is a triangulated $\kk$-category and the obvious
functor is an equivalence $[\ul\II(X)] \sira \D(X)$ of triangulated
$\kk$-categories. This means that $\ul\II(X)$ is a dg
$\kk$-enhancement of $\D(X)$.  In
\ref{sec:dg-texorpdfstr-enric-1} and \ref{sec:definitions} we
construct an equivalence
\begin{equation}
  \label{eq:intro:DX-[ulIIX]}
  \ol{[\ii]} \colon \D(X) \sira [\ul\II(X)]  
\end{equation}
of triangulated $\kk$-categories in the other direction.
Fixing this equivalence 
allows a very precise formulation of our results. 

\subsubsection{Lifts of the six functors}
\label{sec:lifts-six-functors}

We provide dg
$\kk$-functors
\begin{align}
  \label{eq:intro:def-lifts-six-functors}
  (-)\ul\otimes(-) \colon \ul\II(X) \otimes \ul\II(X) 
  & \ra \ul\II(X),
  & \ul\sheafHom(-,-) \colon \ul\II(X)^\opp \otimes \ul\II(X) 
  & \ra \ul\II(X),
  \\
  \notag
  \ul\alpha^* \colon \ul\II(X) 
  & \ra \ul\II(Y),
  & \ul\alpha_* \colon \ul\II(Y) 
  & \ra \ul\II(X),
  \\
  \notag
  \ul\alpha_! \colon \ul\II(Y) 
  & \ra \ul\II(X),
  & \ul\alpha^! \colon \ul\II(X) 
  & \ra \ul\II(Y)
\end{align}
where $\alpha \colon Y \ra X$ is a morphism of $\kk$-ringed
spaces
which is assumed to satisfy some
additional properties whenever !-functors are involved, cf.\
\ref{sec:some-explanations-1}. 
The construction of these dg $\kk$-functors is 
explained in \ref{sec:dg-texorpdfstr-enric-1} and
\ref{sec:definition-lifts-six}. 
They
lift the triangulated $\kk$-functors
$\otimes^\dL$, $\dR\sheafHom$, $\dL \alpha^*$, $\dR \alpha_*$,
$\dR \alpha_!$, $\alpha^!$ in the following sense.
The functors $\ul\alpha^*$ and
$\ul\alpha_*$ induce, by passing to homotopy categories, functors  
$[\ul\alpha^*] \colon [\ul\II(X)] \ra [\ul\II(Y)]$ and
$[\ul\alpha_*] \colon [\ul\II(Y)] \ra [\ul\II(X)]$ 
of triangulated $\kk$-categories, and
the diagrams
\begin{equation}
  \label{eq:intro:omega-alpha^*_*}
  \xymatrix{
    {\D(X)} \ar[rr]^-{\dL \alpha^*}
    \ar[d]^-{\sim}_-{\ol{[\ii]}}
    &&
    {\D(Y)}
    \ar[d]^-{\sim}_-{\ol{[\ii]}}
    \\
    [\ul\II(X)] 
    \ar[rr]_-{[\ul\alpha^*]} 
    &&
    {[\ul\II(Y)],} 
  }
  \quad \quad \quad
  \xymatrix{
    {\D(Y)} 
    \ar[rr]^-{\dR \alpha_*} 
    \ar[d]^-{\sim}_-{\ol{[\ii]}}
    &&
    {\D(X)} 
    \ar[d]^-{\sim}_-{\ol{[\ii]}}
    \\
    {[\ul\II(Y)]}
    \ar[rr]_-{[\ul\alpha_*]}
    &&
    {[\ul\II(X)]}
  }
\end{equation}
in $\TRCAT_\kk$ commute up to canonical
2-isomorphisms; there are similar canonically 2-commutative
diagrams relating 
the dg $\kk$-functors $\ul\otimes$, $\ul\sheafHom$,
$\ul\alpha_!$, $\ul\alpha^!$ to the 
functors
$\otimes^\dL$, $\dR\sheafHom$, $\dR \alpha_!$, $\alpha^!$,
see \ref{sec:lifts-deriv-funct}, \ref{sec:lifts-deriv-funct-2}.

\subsubsection{The 2-multicategory
  \texorpdfstring{$\ENH_\kk$}{ENHk}
  of enhancements}
\label{sec:2-mult}

In order to lift the relations between the six functors
we introduce the $\kk$-linear 2-multicategory $\ENH_\kk$ of enhancements
(see \ref{sec:2-mult-enhanc}). Its objects are additive
pretriangulated dg $\kk$-categories. The
objects relevant here are the dg $\kk$-categories
$\ul\II(X)$ 
introduced above. To define the morphism
$\kk$-categories of $\ENH_\kk$ we need a definition.

Given objects 
$\ul\II(X)$ and $\ul\II(Y)$
let $\tau \colon F' \ra 
F$ be a 
morphism in 
$\DGCAT_\kk(\ul\II(X); \ul\II(Y))$ as illustrated by the picture
\begin{equation}
  \xymatrix{
    {\ul\II(Y)}
    && 
    {\ul\II(X).}
    \ar@/^1pc/[ll]^-{F'}_-{}="f"
    \ar@/_1pc/[ll]_-{F}^-{}="g"
    \ar@{=>}_-{\tau}"f";"g"
  }
\end{equation}
We say that $\tau$ is an objectwise homotopy equivalence if 
$\tau_I \colon F'(I) \ra F(I)$ is a homotopy equivalence (or,
equivalently, a 
quasi-isomorphism) for all $I \in \ul\II(X)$. An equivalent
condition is that the induced 
morphism $[\tau] \colon [F'] \ra [F]$ in 
$\TRCAT_\kk([\ul\II(X)]; [\ul\II(Y)])$ 
is an isomorphism. 

We define the morphism $\kk$-category $\ENH_\kk(\ul\II(X);
\ul\II(Y))$ as the target of 
the additive $\kk$-lo\-cal\-iza\-tion 
(see Definition~\ref{d:R-localization})
of $\DGCAT_\kk(\ul\II(X); \ul\II(Y))$ with
respect to the set
of objectwise homotopy equivalences; a similar
definition is used when several source objects are involved.
Note that the set of 1-morphisms in $\ENH_\kk$ with fixed sources and
target coincides with the corresponding set of 1-morphisms in
$\DGCAT_\kk$. The six dg $\kk$-functors 
$\ul\otimes$, $\ul\sheafHom$, $\ul\alpha^*$, $\ul\alpha_*$,
$\ul\alpha_!$, $\ul\alpha^!$
are 
1-morphisms in $\ENH_\kk$.
 
Clearly, taking homotopy
categories defines a functor
$[-] \colon \ENH_\kk \ra \TRCAT_\kk$. This is the functor   
\eqref{eq:intro:[-]-ENH-TRCAT} mentioned above.


\subsubsection{Lifts of relations}
\label{sec:lifts-relations}

Explicit zig-zags of dg $\kk$-natural transformations define a
2-morphism $\id \ra \ul\alpha_* \ul\alpha^*$ and a 
2-isomorphism
$\ul\alpha_*\ul\sheafHom(-,\ul\alpha^!(-)) \sira
\ul\sheafHom(\ul\alpha_!(-),-)$
in $\ENH_\kk$ whose images under \eqref{eq:intro:[-]-ENH-TRCAT}
coincide modulo the equivalences \eqref{eq:intro:DX-[ulIIX]} and
the canonical 2-isomorphisms 
(cf.\ \eqref{eq:intro:omega-alpha^*_*}) with the 2-morphism
$\id \ra \dR\alpha_*\dL\alpha^*$ and the Verdier duality
2-isomorphism 
$\dR\alpha_*\dR\sheafHom(-, \alpha^!(-)) \sira
\dR\sheafHom(\dR\alpha_!(-),-)$, respectively.
Similarly, we define all the 2-(iso)morphisms in the middle
column of table~\ref{tab:main} on page \pageref{tab:main} and
show that they lift the corresponding 2-(iso)morphisms in the
right column.
Combining these
2-(iso)morphisms
provides many other lifts. We have
assembled some of them in table~\ref{tab:subsequent} on page
\pageref{tab:subsequent}. The equation
numbers in the middle columns of these tables refer to the
corresponding 2-(iso)morphisms in the main body of this article.
See \ref{sec:some-explanations-1} for more explanations
concerning these tables.

Moreover, we establish that $(\ul\alpha^*, \ul\alpha_*)$ together
with the 2-morphisms 
$\id \ra \ul\alpha_*\ul\alpha^*$ 
and 
$\ul\alpha^*\ul\alpha_* \ra \id$ 
(in rows 
\eqref{eq:tab:id-aa} and \eqref{eq:tab:aa-id}) form an adjunction
in $\ENH_\kk$. We also show that 
$(\ul\II(X), \ul\otimes)$ together with 
$\ul{\mathcal{O}}$ and the middle entries of rows
\eqref{eq:tab:ass-and-symm}, \eqref{eq:tab:left},
\eqref{eq:tab:right}, 
\eqref{eq:tab:ulotimes-swap}
is a symmetric monoidal object in $\ENH_\kk$. 
These two facts are encoded in commutativity of the
five diagrams in \eqref{eq:intro:alpha*-triangles},
\eqref{eq:intro:ulotimes-ass},
\eqref{eq:intro:ulotimes-unital},
\eqref{eq:intro:ulotimes-symm}
in 
\ref{sec:some-comm-diagr}.
We also prove commutativity of the other
diagrams in
\eqref{eq:intro:pullback-comp-adjoint-pushforward-comp},
\eqref{eq:intro:pullback-id-adjoint-pushforward-id},
\eqref{eq:intro:alpha!-triangles}
there. They lift standard relations between some of the six
functors. There are many other commutative diagrams relating the
six functors. We strongly believe that their counterparts in
$\ENH_\kk$ can be shown to commute using our techniques.
We have restricted ourselves to the
list in \ref{sec:some-comm-diagr} for reasons of space and time.

\subsubsection{2-Morphisms in \texorpdfstring{$\ENH_\kk$}{ENHk}}
\label{sec:2-morph-ENH}

All 2-morphisms in $\ENH_\kk$ can be represented 
by zig-zags of 2-morphisms in $\DGCAT_\kk$
where the arrows pointing in the
wrong direction 
are objectwise homotopy equivalences.
As already mentioned, all 2-morphisms in the middle columns of
tables~\ref{tab:main} and \ref{tab:subsequent} 
are explicitly
defined in this way; the 2-morphisms marked as 2-isomorphisms
there are
defined by zig-zags of objectwise homotopy
equivalences, with one exception, namely the 2-isomorphism
$\ul{\beta}'_*\ul\alpha'^! \sira \ul\alpha^!\ul\beta_*$
in \eqref{eq:tab:proper-base-change-upper-shriek-ENH}.

In general, we do not know whether any invertible 2-morphism in
$\ENH_\kk$ can be represented by a zig-zag (or even a roof) 
of objectwise homotopy equivalences. 
We also would like to know whether the functor
$[-] \colon \ENH_\kk \ra \TRCAT_\kk$ reflects
2-isomorphisms since an affirmative answer to this question 
would give a
useful method to show that certain 2-morphisms in $\ENH_\kk$ are
invertible.
These problems seem to be of set-theoretical
origin and have partial solutions explained in
\ref{sec:some-remarks-2-morphisms-ENH}; a more precise discussion
of properties we would like to have is also given there.

Therefore, it might be useful to remember the definition
of certain 2-(iso)morphisms in $\ENH_\kk$ in applications.
Nevertheless, the definition of the 2-category $\ENH_\kk$ seems
very natural to us; for example, it would be very
annoying to formulate the neat statement that
$(\ul\alpha^*, \ul\alpha_*)$ is an adjunction in $\ENH_\kk$
in terms of zig-zags: this would amount to the big commutative
diagram~\eqref{eq:eta-alpha_*-alpha_*-theta-big-diagram} (plus
the argument that blue and green arrow there coincide plus the same
amount of work for the other triangle identity). 

\subsubsection{\texorpdfstring{$\kk$}{k}-ringed topoi}
\label{sec:texorpdfstr-ring-top}

More generally, all our constructions involving the four functors
$\otimes^\dL$, 
$\dR\sheafHom$, $\dL \alpha^*$, $\dR \alpha_*$ work for
$\kk$-ringed sites $(X, \mathcal{O}_X)$ 
and for morphisms
$\alpha \colon (\Sh(Y), \mathcal{O}_Y) \ra
(\Sh(X), \mathcal{O}_X)$
of $\kk$-ringed topoi. In the rest of this section we use
this language and extend our notation in the obvious
way from $\kk$-ringed spaces to $\kk$-ringed sites.

\subsubsection{Some explanations concerning the
  tables
  and \texorpdfstring{$!$}{!}-functors}
\label{sec:some-explanations-1}

The appearance of $\kk$-ringed sites and morphisms of
$\kk$-ringed topoi in tables~\ref{tab:main} 
and \ref{tab:subsequent} is justified by
\ref{sec:texorpdfstr-ring-top}. All $\kk$-algebras in these
tables are assumed to be commutative.

The symbol $\ul{\mathcal{O}}$ 
is an h-injective complex of injective sheaves
lifting the structure
sheaf $\mathcal{O}$ (see \ref{sec:definition-lifts-six} for the
definition, and also 
Remark~\ref{rem:ENH-emptysource}).  
The 1-morphisms $\ul\Gamma$ and $\ul\Hom$ in $\ENH_\kk$ 
enhancing
$\dR\Gamma$ and $\dR\Hom$ are defined in
\ref{sec:definition-lifts-six}.

We also need to explain the symbol $\ul\alpha\inv$.
Whenever a commutative $\kk$-algebra $A$ and a morphism 
$\alpha \colon
Y \ra X$ 
of topological spaces are given we view $\alpha$ as
a morphism $X_A \ra Y_A$ of $\kk$-ringed spaces in the obvious
way where $X_A$ and $Y_A$ denote 
$X$ and $Y$ equipped with the constant sheaf of $\kk$-algebras
with stalk 
$A$. Then $\alpha^*=\alpha\inv$ is exact (and $\alpha$ is
flat). In this case we define a 
1-morphism $\ul\alpha\inv$ (see \ref{sec:definition-lifts-six})
lifting $\dL \alpha\inv$
and a 2-isomorphism $\ul\alpha^* \sira 
\ul\alpha\inv$ 
lifting $\dL\alpha^* \sira
\dL\alpha\inv$
(see row \eqref{eq:tab:ulalpha*-inv}).

When working with $!$-functors we follow the approach of
\cite{wolfgang-olaf-locallyproper} to impose conditions on
maps rather than spaces (which are traditionally assumed to
be locally compact), cf.\ \ref{sec:proper-direct-image}. We then 
only consider morphisms of $\kk$-ringed 
spaces coming from a morphism of topological spaces and a
$\kk$-algebra as explained above; the only reason for this
restriction is that a more general theory is not yet documented
in the literature.

\subsubsection{The 2-multicategory of formulas}
\label{sec:2-mult-form}

The $\kk$-linear 2-multicategory $\FML_\kk$ of formulas
is our formal tool
to describe all relations between functors between derived
categories 
of sheaves that we can lift to dg
$\kk$-enhancements. 
We explain how to summarize our lifting results using this
multicategory. 

Essentially, $\FML_\kk$ is the free
$\kk$-linear 2-multicategory with relations whose objects are symbols
$\ud X$ (and $\ud X^\opp$),
for each $\kk$-ringed site $X$, whose
generating 
1-morphisms are symbols
$\ud{\otimes}$, $\ud\sheafHom$, $\ud\alpha^*$, $\ud\alpha_*$,
$\ud\alpha_!$, $\ud\alpha^!$ (and $\ud{\mathcal{O}}$,
$\ud\alpha\inv$, $\ud\Gamma$, $\ud\Hom$)
and whose generating 2-morphisms are 
the ``formulas'' given by the entries of the left column of
table~\ref{tab:main};
the relations we impose just say that all the
generating 2-morphisms labeled $\sim$ are invertible.
We refer the reader to \ref{sec:2-multicat-formulas} for a
precise definition of $\FML_\kk$ (there are a few more generating
2-morphisms and relations that are not important and ignored here).

There is an obvious interpretation functor 
\begin{equation}
  \label{eq:intro:interprete-TRCAT}
  \FML_\kk \ra
  \TRCAT_\kk   
\end{equation}
of $\kk$-linear 2-multicategories
mapping $\ud{X}$ to $\D(X)$, $\ud{X}^\opp$ to 
$\D(X)^\opp$, mapping the six generating 1-morphisms to the six
functors
$\otimes^\dL$, $\dR\sheafHom$,
$\dL \alpha^*$, $\dR \alpha_*$, $\dR \alpha_!$, $\alpha^!$, and
mapping the generating 2-morphisms to the corresponding
2-morphisms in the right column in table~\ref{tab:main}.

\begin{theorem}
  [{cf.\ Theorems~\ref{t:interprete-FML-ENH},
    \ref{t:compare-interpretations-FML},
    Remark~\ref{rem:comm-diagrams}}]
  \label{t:intro:interprete-compare-FML-ENH}
  There is an interpretation functor
  \begin{equation}
    \label{eq:intro:interprete-ENH}
    \FML_\kk \ra
    \ENH_\kk   
  \end{equation}
  of $\kk$-linear 2-multicategories mapping $\ud{X}$ to
  $\ul\II(X)$, 
  mapping the six generating 1-morphisms to the six
  dg $\kk$-functors
  $\ul\otimes$, $\ul\sheafHom$,
  $\ul\alpha^*$, $\ul\alpha_*$, $\ul\alpha_!$, $\ul\alpha^!$, and
  mapping the generating 2-morphisms to the corresponding
  2-morphisms in the middle column of table~\ref{tab:main}.
  If we view all
  diagrams in \ref{sec:some-comm-diagr} 
  (except diagram \eqref{eq:intro:alpha*-triangles-derived})
  as diagrams in $\FML_\kk$
  by replacing underlines by underdots, the interpretation
  functor
  \eqref{eq:intro:interprete-ENH}
  maps these diagrams to commutative diagrams (namely to the
  commutative diagrams  
  in \ref{sec:some-comm-diagr}), and similarly for the
  commutative diagrams in 
  Lemmas~\ref{l:ul-object-4-functors} and
  \ref{l:ul-object-2-functors}.
  Moreover, 
  there is a
  pseudo-natural transformation $\omega$ as illustrated by the
  diagram
  \begin{equation}
    \label{eq:intro:pseudonatural-trafo-omega}
    \begin{tikzpicture}[baseline=(current bounding box.center),
      description/.style={fill=white,inner sep=2pt},
      natural/.style={double,double equal sign distance,-implies} ]
      
      \matrix (m) [matrix of math nodes, row sep=4em,
      column sep=2em, text height=1.5ex, text depth=0.25ex]
      { && \ENH_\kk \\
        \FML_\kk && \TRCAT_\kk\\};
      \path[->,font=\scriptsize]
      (m-2-1) edge node[above left] {\eqref{eq:intro:interprete-ENH}} (m-1-3)
      (m-2-1) edge node[below] (mitte) {\eqref{eq:intro:interprete-TRCAT}} (m-2-3)
      (m-1-3) edge node[right] {$[-]$} (m-2-3);

      \path[->,font=\scriptsize]
      (mitte) edge[natural,shorten >=1.5em,shorten <=1.0em]
      node[below right] {$\omega$} (m-1-3);
    \end{tikzpicture}
  \end{equation}
  %
  %
  that maps an object $X$
  to the 1-morphism 
  \eqref{eq:intro:DX-[ulIIX]}
  (which is an equivalence of triangulated $\kk$-categories) 
  and that maps the generating 1-morphisms $\ud\alpha^*$,
  $\ud\alpha_*$
  to the 
  canonical 2-isomorphisms making the diagrams in
  \eqref{eq:intro:omega-alpha^*_*} commutative, respectively,
  and similarly for the other generating 1-morphisms.
\end{theorem}

Theorem~\ref{t:intro:interprete-compare-FML-ENH}
follows immediately from our 
lifting results 
explained in  
\ref{sec:enhanc-cons-1},
\ref{sec:lifts-six-functors},
\ref{sec:lifts-relations}.

For the sake of completeness let us mention that the entries of
the left column in table~\ref{tab:subsequent} are defined in the
obvious way as
compositions of suitable generating 2-morphisms 
such that they are mapped to the entries in the
other two columns under our interpretation functors.

\subsubsection{The meaning of
  Theorem~\ref{t:main}}
\label{sec:mean-theorem-main}

The precise meaning of 
Theorem~\ref{t:main} consists of
\begin{enumerate}
\item 
  the definition of the $\kk$-linear 2-multicategory $\ENH_\kk$;
\item 
  the definition of the 1-morphisms
  $\ul\otimes$, $\ul\sheafHom$,
  $\ul\alpha^*$, $\ul\alpha_*$, $\ul\alpha_!$, $\ul\alpha^!$ in
  $\ENH_\kk$;
\item 
  the definition of all the 2-(iso)morphisms in $\ENH_\kk$ in the
  middle column of   
  table~\ref{tab:main} by explicit zig-zags;
\item 
  the definition of the $\kk$-linear 2-multicategory $\FML_\kk$;
\item 
  the statement of
  Theorem~\ref{t:intro:interprete-compare-FML-ENH} 
  saying in a very precise way that the previous data lift the
  part of  
  Grothendieck--Verdier--Spaltenstein's six functor 
  formalism considered in this article to the $\kk$-linear
  2-multicategory $\ENH_\kk$. 
\end{enumerate}

\subsection{Lifting actions and (iso)morphisms 
  from the derived level 
  to the dg level} 
\label{sec:lifting-actions-dg}

It may be helpful to look at the example in \ref{sec:an-example-1}
first.

\subsubsection{Lifting actions}
\label{sec:lifting-actions}

Let 
\begin{equation}
  \label{eq:M-as-1-morphism}
  M \colon (\D(X_1)^{\epsilon_1}, \dots, \D(X_n)^{\epsilon_n}) \ra \D(Y)
\end{equation}
be a 1-morphism in $\TRCAT_\kk$ where $X_1, \dots, X_n, Y$ are
$\kk$-ringed sites and $\epsilon_i \in \{\emptyset, \opp\}$.
Then $M$ gives rise to a $\kk$-functor
\begin{equation}
  \label{eq:M-as-module}
  M \colon \D(X_1)^{\epsilon_1} \otimes \dots \otimes
  \D(X_n)^{\epsilon_n} \ra \D(Y),
\end{equation}
i.\,e.\ a ``left $\D(X_1)^{\epsilon_1} \otimes \dots \otimes
  \D(X_n)^{\epsilon_n}$-module with values in $\D(Y)$''.
If $M$ is 
a composition of 1-morphisms which can be lifted to $\ENH_\kk$,
i.\,e.\ if $M$ is in the image of the interpretation functor \eqref{eq:intro:interprete-TRCAT},
there is a dg $\kk$-functor
\begin{equation}
  \ul{M} \colon \ul\II(X_1)^{\epsilon_1} \otimes \dots \otimes
  \ul\II(X_n)^{\epsilon_n} \ra \ul\II(Y), 
\end{equation}
i.\,e.\ a ``left dg $\ul\II(X_1)^{\epsilon_1} \otimes \dots \otimes
\ul\II(X_n)^{\epsilon_n}$-module with values in $\ul\II(Y)$'',
such that
\begin{equation}
  [\ul{M}] \colon [\ul\II(X_1)^{\epsilon_1}] \otimes \dots \otimes
  [\ul\II(X_n)^{\epsilon_n}] \ra [\ul\II(Y)] 
\end{equation}
is canonically identified with \eqref{eq:M-as-module}.
In the special case that $Y=(\pt,\kk)$ we have $\ul\II(Y)=\ul\C(\kk)$
and $\ul{M}$ is  
a left dg $\ul\II(X_1)^{\epsilon_1} \otimes \dots \otimes
\ul\II(X_n)^{\epsilon_n}$-module.

\subsubsection{Lifting (iso)morphisms}
\label{sec:lifting-morphisms}

Let 
$\rho \colon M_1 \ra M_2$
be a 2-morphism in $\TRCAT_\kk$ where $M_1$ and $M_2$ are as in
\eqref{eq:M-as-1-morphism}. In particular, we can view $\rho$ as a
morphism of left $\D(X_1)^{\epsilon_1} \otimes \dots \otimes
\D(X_n)^{\epsilon_n}$-module with values in $\D(Y)$.
If $\rho$ and $M_1$ and $M_2$ are in the image of the
interpretation
functor
\eqref{eq:intro:interprete-TRCAT},
there is a 2-morphism 
$\ul{\rho} \colon \ul{M}_1 \ra \ul{M}_2$
in $\ENH_\kk$ such that $[\ul{\rho}] \colon [\ul{M}_1] \ra
[\ul{M}_2]$ is canonically identified with $\rho$.

If we know that $\ul{\rho}$ can be represented by a zig-zag of
objectwise homotopy equivalences, then
$\ul{M}_1$ and
$\ul{M}_2$ are isomorphic in the ''derived category of left dg $\ul\II(X_1)^{\epsilon_1} \otimes \dots \otimes
\ul\II(X_n)^{\epsilon_n}$-modules with values in $\ul\II(Y)$''.
In the special case that $Y=(\pt,\kk)$ they are isomorphic
in the more familiar derived category of left dg
$\ul\II(X_1)^{\epsilon_1} \otimes \dots \otimes
\ul\II(X_n)^{\epsilon_n}$-modules.

\subsubsection{An example}
\label{sec:an-example-1}

Let $\alpha \circ \beta' =
\beta \circ \alpha'$ be a cartesian diagram as in 
the row above \eqref{eq:tab:proper-base-change-ENH} 
in table~\ref{tab:main} on page
\pageref{tab:main}; assume
that $\beta$ is separated and locally proper with $\beta_! \colon
\Mod(X'_A) \ra \Mod(X_A)$ of finite cohomological dimension. Then
there is a
2-isomorphism
\begin{equation}
  \dR\Hom(\dR\alpha_! (-), \dR\sheafHom(\dR\beta_! (-), -)), 
  \cong
  \dR\Hom(\dR\alpha'_!\dL \beta'^*(-) \otimes^\dL (-), \beta^!(-)) 
\end{equation}
between
triangulated 
$\kk$-trilinear functors
from $\D(Y)^\opp \times \D(X')^\opp \times \D(X)$ to $\D(\pt)$,
by
\eqref{eq:tab:!-adjunction-sheafHom-ENH}, 
\eqref{eq:tab:enhanced-pull-push-ulHom},
\eqref{eq:tab:ulalpha*-inv},
\eqref{eq:tab:proper-base-change-ENH},
\eqref{eq:tab:28}.
The above discussion shows that it lifts to a 2-isomorphism
\begin{equation}
  \ul\Hom(\ul\alpha_! (-), \ul\sheafHom(\ul\beta_! (-), -))
  \cong
  \ul\Hom(\ul\alpha'_!\ul\beta'^*(-) \ul\otimes (-), \ul\beta^!(-)) 
\end{equation}
in $\ENH_\kk$ which can be represented by a zig-zag of objectwise
homotopy equivalences; hence it gives rise to an isomorphism
in the derived category of
left dg $\ul\II(Y)^\opp \otimes \ul\II(X')^\opp \otimes
\ul\II(X)$-modules.

\subsection{Model categories and dg
  \texorpdfstring{$\kk$}{k}-categories}
\label{sec:model-categories-dg}

The key ingredients for our results come from the interplay
between model category theory and enriched category
theory. Recall that a dg $\kk$-category is a category
enriched in the monoidal category of complexes of $\kk$-vector
spaces. We usually denote a dg $\kk$-category by an underlined
symbol and omit the underline when referring to the
underlying category.

\subsubsection{Dg \texorpdfstring{$\kk$}{k}-enriched functorial
  factorizations}
\label{sec:dg-texorpdfstr-enric}

Any model category $\ms{M}$ admits functorial factorizations and in
particular functorial cofibrant and fibrant resolutions.
If $\ms{M}$ is the underlying category of a
dg $\kk$-category $\ulms{M}$ it is natural to ask whether it admits dg
$\kk$-enriched functorial factorizations.
We provide some general criteria to ensure a positive answer to this
question in
Theorem~\ref{t:existence-enriched-functorial-fact}. 
This result is the adaptation to dg $\kk$-categories of general
results from enriched model category theory 
\cite{shulman-enriched-homotopy-theory-arxiv,riehl-categorical-homotopy-theory}.
As a
consequence we obtain:

\begin{theorem}
  [{see Theorems~\ref{t:enriched-functorial-fact-k-ringed-site},
    \ref{t:enriched-functorial-fact-modules-K-cat},
    \ref{t:enriched-functorial-fact-flat}}]
  \label{t:intro:examples-admit-functorial-fact}
  Let $\kk$ be a field.
  Consider the following list of 
  dg $\kk$-categories and model structures on
  their underlying categories turning them into model categories.
  \begin{enumerate}
  \item 
    \label{enum:ulCX-inj-extenstion-by-zero}
    $\ul\C(X)$ with $\C(X)$ carrying the
    injective model structure (see
    \ref{sec:inject-model-struct-1}) 
    or the
    extension-by-zero model
    structure 
    (see \ref{sec:extension-zero-model}), for
    a $\kk$-ringed site $(X, \mathcal{O})$;
  \item the dg $\kk$-category $\ul\MMod(\ulms{C})$ of (right) dg
    modules over a 
    dg $\kk$-category $\ulms{C}$,
    with $\MMod(\ulms{C})$ carrying the
    injective or the projective model
    structure 
    (see \ref{sec:model-structures});
  \item $\ul\C(X)$ with $\C(X)$ carrying the 
    flat model structure 
    (see \ref{sec:flat-model-structure})
    or the injective model structure, for a $\kk$-ringed
    space $(X, \mathcal{O})$;
  \end{enumerate}
  Then all the model structures in this list admit 
  dg $\kk$-enriched functorial factorizations.
\end{theorem}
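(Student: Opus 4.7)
The plan is to reduce the theorem to four applications of Theorem~\ref{t:existence-enriched-functorial-fact}, which, being the adaptation of Shulman's and Riehl's criteria to the dg $\kk$-enriched setting, should require (i) that the model structure be cofibrantly generated with sources of generating (trivial) cofibrations small in the enriched sense, (ii) a pushout-product or ``enriched unit'' compatibility saying that tensoring a generating (trivial) cofibration in $\ms{M}$ with any morphism complex $\ulms{M}(X,Y)$ yields again a (trivial) cofibration in $\ms{M}$, and (iii) a standard Quillen-type enrichment axiom. The decisive simplification common to all four cases is that $\kk$ is a field, so every complex of $\kk$-vector spaces is flat and the morphism complexes of $\ulms{M}$ lie in $\ul\C(\kk)$; hence tensoring with them preserves monomorphisms, epimorphisms, flat resolutions, and all colimits of interest, which is what makes the enriched compatibility automatic in each case.

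For case (a) I would recall from section~\ref{sec:applications} that both the injective and the extension-by-zero model structures on $\C(X)$ are cofibrantly generated with explicit, accessible generating sets built from inclusions of subsheaves; one then verifies that tensoring such generating (trivial) cofibrations with an arbitrary complex of $\kk$-vector spaces preserves the class of (trivial) cofibrations, using exactness of tensoring over a field. This establishes the pushout-product hypothesis of Theorem~\ref{t:existence-enriched-functorial-fact}, and smallness follows since the underlying category is locally presentable. Case (b), the projective and injective model structures on $\ul\MMod(\ulms{C})$, is the most standard: the generating cofibrations can be chosen in the form $\ulms{C}(-,C) \otimes i$ where $i$ runs through generating (trivial) cofibrations of the projective model structure on $\C(\kk)$, and for such generators the enriched compatibility is essentially tautological once flatness over $\kk$ is used.

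For cases (c) and (d) I would invoke the flat/injective model structures constructed by Gillespie and others (cited in \ref{sec:flat-model-structure}, \ref{sec:inject-model-qcoh}, \ref{sec:flat-model-qcoh}); these are cofibrantly generated via cotorsion pairs whose left classes consist of (degreewise) flat objects, which interact well with the enriched tensoring by complexes of $\kk$-vector spaces. The verification of enriched smallness uses that the categories of sheaves on a ringed space and of quasi-coherent sheaves on a scheme are locally presentable, together with the quasi-compact semi-separated hypothesis in the flat quasi-coherent case, which controls the size of generators.

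I expect the main obstacle to be the enriched pushout-product compatibility in cases (c) and (d), because the generating (trivial) cofibrations of the flat model structures are not given by explicit inclusions but only through the cotorsion-pair machinery; here one has to argue that tensoring by a $\kk$-complex preserves membership in the left class of the relevant cotorsion pair, a step that relies crucially on the field assumption but also requires care to ensure that one remains within the accepted class of admissible generating morphisms. Once this is established, Theorem~\ref{t:existence-enriched-functorial-fact} applies uniformly and delivers dg $\kk$-enriched functorial factorizations in all four cases.
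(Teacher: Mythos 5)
Your overall strategy---reduce each case to Theorem~\ref{t:existence-enriched-functorial-fact}---is the right one, but you have misread what that theorem asks you to verify, and as a consequence the step you yourself flag as the ``main obstacle'' is left genuinely open. The hypotheses of Theorem~\ref{t:existence-enriched-functorial-fact} are not a pushout-product condition on generating (trivial) cofibrations tensored with morphism complexes; they are that $\ulms{M}$ is strongly pretriangulated and $\KK$-bicomplete and that the classes of cofibrations and of trivial cofibrations are stable under all shifts and under the identity-cone functor $\iCone$. The reason these two stability conditions suffice---and the only place the field hypothesis really enters---is Lemma~\ref{l:objects-C(k)}: every object of $\KK=\C(\kk)$ is a coproduct of shifts of $\kk$ and of $\Cone(\id_\kk)=\iCone(\kk)$. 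Since $(-\odot-)$ commutes with coproducts and shifts in each argument, the condition that $(-\odot K)$ preserve (trivial) cofibrations for every $K\in\KK$ (which is what \cite[Cor.~13.2.4]{riehl-categorical-homotopy-theory} requires) collapses to the two cases $K=\kk$ and $K=\iCone(\kk)$, i.e.\ exactly to shift- and $\iCone$-stability (Lemma~\ref{l:tensor-K-left-Quillen}). Your appeal to flatness and exactness of $\otimes_\kk$ does not capture this: degreewise flatness shows that tensoring preserves monomorphisms, but it does not show that the cokernel of the tensored map is again cofibrant for the extension-by-zero or flat model structures, nor that it is acyclic when the original cofibration is trivial (for the latter one needs h-flatness, which over $\kk$ is again a consequence of the decomposition, not of mere flatness).

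Concretely, the obstacle you leave unresolved---that tensoring by an arbitrary $\kk$-complex preserves membership in the left class of the relevant cotorsion pair---simply disappears once the reduction to $\iCone$ is made. For every model structure on the list whose cofibrations are the (degreewise split) monomorphisms with cofibrant cokernel and whose cofibrant objects are stable under shifts, Lemma~\ref{l:cone-ii-trivial-cofibration} gives $\iCone$-stability uniformly: factor $\iCone(i)$ as $\Cone(\id_A)\ra\Cone(i)\ra\Cone(\id_B)$, observe that the first map is a pushout of $i$ and the second is a (degreewise split) monomorphism with cokernel the shift $[1]\Kokern(i)$, and conclude that $\iCone(i)$ is a cofibration between contractible objects, hence trivial. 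The extension-by-zero case is handled by the analogous Lemma~\ref{l:from-CD} using the Cisinski--D\'eglise description of the generating cofibrations, and the dg-module cases by Lemma~\ref{l:cone-for-II-and-PP-model-for-dg-modules}. Finally, you omit the verification that the dg categories in question are tensored and cotensored (indeed $\KK$-bicomplete), which is an actual hypothesis of Theorem~\ref{t:existence-enriched-functorial-fact} and is supplied by Lemmas~\ref{l:CX-tensored-and-cotensored-R-bicomplete}, \ref{l:ModC-tensored-and-cotensored-K-bicomplete} and \ref{l:scheme-K-bicomplete}.
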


\subsubsection{Dg \texorpdfstring{$\kk$}{k}-enriched injective and
  flat resolutions}
\label{sec:dg-texorpdfstr-enric-1}

Given a $\kk$-ringed site $(X, \mathcal{O})$, the fibrant objects
of the injective model structure on $\C(X)$ are
precisely the objects of $\II(X)$ and the cofibrant
objects of the extension-by-zero model structure on
$\C(X)$ are h-flat and have flat components (see
Lemma~\ref{l:E-cofibrant-(pullback)-h-flat}).
Therefore
Theorem~\ref{t:intro:examples-admit-functorial-fact}.\ref{enum:ulCX-inj-extenstion-by-zero}
yields the following 
key ingredient to lift the six functors to dg
$\kk$-functors.

\begin{corollary}
  \label{c:intro:enriched-inj-flat-resolutions-k-ringed-site}
  Let $\kk$ be a field.
  Let $(X, \mathcal{O})$ be a $\kk$-ringed
  site and let $\ul\C(X)_{\hflat, \op{cwflat}}$ denote
  the full dg $\kk$-subcategory of $\ul\C(X)$ of h-flat
  and componentwise flat objects.
  Then there are dg $\kk$-functors
  \begin{align}
    \ii 
    & \colon \ul\C(X) \ra \ul\II(X),\\
    \ee 
    & \colon \ul\C(X) \ra
          \ul\C(X)_{\hflat, \op{cwflat}}
  \end{align}
  together with dg $\kk$-natural transformations 
  \begin{align}
    \iotaii \colon \id \ra \ii 
    & \colon \ul\C(X) \ra \ul\C(X),\\ 
    \epsilonee \colon \ee \ra \id 
    & \colon \ul\C(X) \ra \ul\C(X)
  \end{align}
  whose evaluations $\iotaii_G \colon G \ra \ii G$ and
  $\epsilonee_G \colon \ee G \ra G$ at each object
  $G \in \ul\C(X)$ are quasi-isomorphisms.
\end{corollary}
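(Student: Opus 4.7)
\medskip
\noindent
\textbf{Proof proposal.} The plan is to deduce both halves of the corollary as direct applications of Theorem~\ref{t:intro:examples-admit-functorial-fact}.\ref{enum:ulCX-inj-extenstion-by-zero}, which provides dg $\kk$-enriched functorial factorizations for both the injective and the extension-by-zero model structures on $\C(X)$. The key point is that such a dg $\kk$-enriched functorial factorization, when restricted to morphisms of the form $G \ra 0$ or $0 \ra G$ (which depend dg $\kk$-functorially on $G$ because $0$ is a zero object), automatically produces a dg $\kk$-functor $G \mapsto \ii G$ or $G \mapsto \ee G$ together with a dg $\kk$-natural transformation.

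For $\ii$ and $\iotaii$ I would work with the injective model structure on $\C(X)$. Applying the dg $\kk$-enriched functorial factorization from Theorem~\ref{t:intro:examples-admit-functorial-fact}.\ref{enum:ulCX-inj-extenstion-by-zero} to the canonical morphism $G \ra 0$ yields a factorization $G \xra{\iotaii_G} \ii G \ra 0$ in which $\iotaii_G$ is a trivial cofibration and $\ii G \ra 0$ is a fibration; in particular $\ii G$ is fibrant. Since fibrant objects of the injective model structure on $\C(X)$ are precisely the objects of $\II(X)$ (recalled in \ref{sec:enhanc-cons-1}), this produces the desired dg $\kk$-functor $\ii \colon \ul\C(X) \ra \ul\II(X)$, and $\iotaii_G$ is a weak equivalence, hence a quasi-isomorphism.

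For $\ee$ and $\epsilonee$ I would instead use the extension-by-zero model structure. Applying the factorization to the canonical morphism $0 \ra G$ produces $0 \ra \ee G \xra{\epsilonee_G} G$ in which $0 \ra \ee G$ is a cofibration, so $\ee G$ is cofibrant, and $\epsilonee_G$ is a trivial fibration, hence a quasi-isomorphism. By Lemma~\ref{l:E-cofibrant-(pullback)-h-flat}, cofibrant objects for this model structure are h-flat and have flat components, so $\ee G$ lands in $\ul\C(X)_{\hflat, \op{cwflat}}$. The dg $\kk$-enrichedness of the factorization again guarantees that $\ee$ is a dg $\kk$-functor and $\epsilonee$ a dg $\kk$-natural transformation.

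The only non-formal input is the characterization of the fibrant, respectively cofibrant, objects for each model structure; both are already recorded in the excerpt. The rest is a direct invocation of the cited theorem, so there is no substantial obstacle at this stage — all the real work sits in proving Theorem~\ref{t:intro:examples-admit-functorial-fact} and Lemma~\ref{l:E-cofibrant-(pullback)-h-flat}.
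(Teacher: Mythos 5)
Your proposal is correct and is essentially the paper's own argument: the corollary is stated as an immediate consequence of Theorem~\ref{t:intro:examples-admit-functorial-fact}.\ref{enum:ulCX-inj-extenstion-by-zero} (i.e.\ Theorem~\ref{t:enriched-functorial-fact-k-ringed-site}), with $(\ii,\iotaii)$ obtained as the $\KK$-enriched $\II$-fibrant resolution functor (factoring $G \ra 0$) and $(\ee,\epsilonee)$ as the $\KK$-enriched $\EE$-cofibrant resolution functor (factoring $0 \ra G$), using exactly the two characterizations you cite — $\II$-fibrant objects are the objects of $\II(X)$, and $\EE$-cofibrant objects are h-flat with flat components by Lemma~\ref{l:E-cofibrant-(pullback)-h-flat}.
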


The assumption that $\kk$ is a field is essential 
for this key 
ingredient. We give a counterexample in
Lemma~\ref{l:no-additive-injective-repl-for-Ab}.  
We fix such ``injective'' and ``flat'' resolution functors for
each $\kk$-ringed site. 

\subsubsection{Definition of the equivalence \eqref{eq:intro:DX-[ulIIX]}}
\label{sec:definitions}

The dg $\kk$-functor $\ii$ induces a functor 
\begin{equation}
  [\ii] \colon [\ul\C(X)] \ra
  [\ul\II(X)]  
\end{equation}
of triangulated $\kk$-categories which
maps acyclic objects to zero. It factors to an equivalence
$\ol{[\ii]} \colon \D(X) \ra [\ul\II(X)]$ of triangulated
$\kk$-categories. This defines
the equivalence \eqref{eq:intro:DX-[ulIIX]}.

\subsubsection{Definition of the lifts of the six functors}
\label{sec:definition-lifts-six}

The first five of the dg $\kk$-functors
in \eqref{eq:intro:def-lifts-six-functors} are defined by
\begin{align}
  (-)\ul\otimes(-) 
  & := \ii(\ee(-) \otimes \ee(-)),
  & \ul\sheafHom(-,-) 
  & := \ii\sheafHom(-,-),
  \\
  \ul\alpha^* 
  & := \ii\alpha^*\ee,
  & \ul\alpha_* 
  & :=\ii\alpha_*,
  \\
  \ul\alpha_! & := \ii\alpha_!.
\end{align}
The definition of the sixth dg $\kk$-functor $\ul\alpha^!$ 
uses the explicit description of a right adjoint functor
$\alpha^!$ to $\dR\alpha_!$. We refer the reader to
\ref{sec:lifts-deriv-funct-2}. The object $\ul{\mathcal{O}}$ appearing
in table~\ref{tab:main} is defined by $\ul{\mathcal{O}}:=\ii
\mathcal{O}$. Further definitions are 
$\ul\alpha\inv:=\ii \alpha\inv$, $\ul\Gamma:=\ul\sigma_*$
(where $\sigma \colon (\Sh(\mathcal{X}),
\mathcal{O}_\mathcal{X}) \ra (\Sh(\pt), \kk)$ is the structure
morphism, see \eqref{eq:56}), and $\ul\Hom:=\ii\Gamma\sheafHom$.





\section{Dg \texorpdfstring{$\kk$}{k}-enriched functorial factorizations}
\label{sec:dg-enrich-funct}


We provide 
criteria to ensure that a model structure on a dg $\kk$-category
admits dg $\kk$-enriched functorial factorizations, see
Theorem~\ref{t:existence-enriched-functorial-fact}.  
We use the language of enriched categories, in particular of dg
categories, and of model categories. All basic facts we need can
be found in \cite{kelly-enriched, keller-on-dg-categories-ICM,
  hirschhorn-model, hovey-model-categories,
  may-ponto-more-concise-alg-topo,
  riehl-categorical-homotopy-theory}.
We fix a Grothendieck universe $\mathscr{U}$. 

\subsection{Categories of complexes}
\label{sec:categories-complexes}

Recall that $\R$ is a commutative ring.
We say $\R$-category instead of $\R$-linear category.
When size issues seem important we will be more precise and
speak for example about $\R$-categories with $\mathscr{U}$-small
$\Hom$-sets.

Let $\mathcal{A}$ be an additive
$\R$-category. We write $\mathcal{A}^\bZ$ for the additive
$\R$-category of $\bZ$-graded objects in $\mathcal{A}$.  The
additive $\R$-category of (cochain) complexes in $\mathcal{A}$
with cochain maps as morphisms is denoted $\C(\mathcal{A})$. We
sometimes view $\mathcal{A}^\bZ$ as the full subcategory of
$\C(\mathcal{A})$ consisting of complexes with vanishing
differential. We also have the $\R$-functor $\C(\mathcal{A}) \ra
\mathcal{A}^\bZ$ that forgets the differentials. The two
categories $\mathcal{A}^\bZ$ and 
$\C(\mathcal{A})$ are abelian if $\mathcal{A}$ is abelian.  They
come endowed with shift functors $[n]$, for $n \in \bZ$. If
$f \colon M \ra N$ is a morphism in $\C(\mathcal{A})$, we denote
its cone by $\Cone(f)$, i.\,e.\ $\Cone(f)=N \oplus [1]M$ 
with differential $d_{\Cone(f)}= \big(
\begin{smallmatrix}
  d_N & [1]f\\
  0 & d_{[1]M}
\end{smallmatrix}
\big)
$. 
As usual, we view $\mathcal{A}$ as a full subcategory of
$\mathcal{A}^\bZ$ and $\C(\mathcal{A})$.
In particular, if $M$ is an object of $\mathcal{A}$, the
shift $[n]M$ for $n \in \bZ$ is the complex in $\mathcal{A}$
concentrated in 
degree $-n$ with $([n]M)^{-n}=M$. 
There is a canonical monomorphism $[n]M \ra \Cone(\id_{[n]M})$ in
$\C(\mathcal{A})$ which
splits in 
$\mathcal{A}^\bZ$.

We write $\ul\C(\mathcal{A})$ for the dg
$\R$-category of complexes in $\mathcal{A}$. 
Given complexes $M, N$ in $\mathcal{A}$ we usually use the
notation
\begin{align}
  \label{eq:78}
  \C_\mathcal{A}(M,N) & =\Hom_{\C(\mathcal{A})}(M,N),\\
  \ul\C_\mathcal{A}(M,N) & =\Hom_{\ul\C(\mathcal{A})}(M,N).
\end{align}
The $\R$-module $\C_\mathcal{A}(M,N)$ consists of the closed
degree zero morphisms in the dg $\R$-module
$\ul\C_\mathcal{A}(M,N)$, i.\,e.\
$\C_\mathcal{A}(M,N)= Z^0(\ul\C_\mathcal{A}(M,N))$.

As usual, we write $[\ul\C(\mathcal{A})]$ for the homotopy
$\R$-category of 
complexes in $\mathcal{A}$. Its objects are complexes in
$\mathcal{A}$, and given two such complexes $M,N$ we have
\begin{equation}
  \label{eq:79}
  [\ul\C_\mathcal{A}](M,N):=
  \Hom_{[\ul\C(\mathcal{A})]}(M,N):=H^0(\ul\C_\mathcal{A}(M,N)).
\end{equation}



 
If $\mathcal{A}$ is an abelian category, we 
write $\C_\ac(\mathcal{A})$ for the full subcategory of
$\C(\mathcal{A})$ of acyclic objects. The Verdier
quotient
$\D(\mathcal{A})=[\C(\mathcal{A})]/[\C_\ac(\mathcal{A})]$ is the
derived category of $\mathcal{A}$ and comes with the Verdier
localization functor $[\C(\mathcal{A})] \ra
\D(\mathcal{A})$. We usually assume that this localization
functor is the identity on objects.
We abbreviate $\D_\mathcal{A}(M,N)=\Hom_{\D(\mathcal{A})}(M,N)$.


\subsubsection{Notation}
\label{sec:notation}

Let $\Mod(\R)=\Mod(\R, \mathscr{U})$ be the abelian $\R$-category of
$\mathscr{U}$-small $\R$-modules.
The following notation will be used in the rest of this
article.

Let 
$\RR:=\C(\R):=\C(\Mod(\R))$ be the category of complexes in
$\Mod(\R)$ 
and $\ul\RR:=\ul\C(\R):=\ul\C(\Mod(\R))$ the dg $\R$-category 
of complexes in
$\Mod(\R)$. 
Similarly, starting from the field $\kk$, we write
$\KK:=\C(\kk):=\C(\Mod(\kk))$ and
$\ul\KK:=\ul\C(\kk):=\ul\C(\Mod(\kk))$. 
If relevant, we add the universe as an index and write for example
$\RR_\mathscr{U}$ instead of $\RR$.

\subsubsection{Complexes of vector spaces}
\label{sec:compl-vect-spac}


\begin{lemma}
  \label{l:objects-C(k)}
  Let $\kk$ be a field.
  Any object of $\KK=\C(\kk)$ is isomorphic to a coproduct of
  shifts of the two 
  objects $\kk$ and $\Cone(\id_\kk)$.
\end{lemma}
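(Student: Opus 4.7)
The plan is as follows. Given a complex $V^\bullet$ of $\kk$-vector spaces with differential $d$, the goal is to produce a direct sum decomposition into subcomplexes, each isomorphic to a shift of $\kk$ (viewed as concentrated in one degree) or a shift of $\Cone(\id_\kk)$ (the contractible two-term complex $\kk\xrightarrow{\id}\kk$). Since $\C(\kk)$ has arbitrary coproducts, formed degreewise, it suffices to exhibit such a decomposition as an internal direct sum of subcomplexes.

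First I would pick, for each $n\in\DZ$, convenient splittings using only that $\kk$ is a field (so every short exact sequence of $\kk$-vector spaces splits). Set $Z^n=\ker d^n$ and $B^n=\operatorname{im} d^{n-1}$. Choose a complement $C^n$ of $Z^n$ in $V^n$, so that $d^n$ restricts to an isomorphism $C^n \xrightarrow{\sim} B^{n+1}$; and choose a complement $H^n$ of $B^n$ in $Z^n$, which is a model for the cohomology in degree $n$. Then
\begin{equation*}
V^n = H^n \oplus B^n \oplus C^n,
\end{equation*}
and under this splitting $d^n$ vanishes on $H^n \oplus B^n$ and maps $C^n$ isomorphically onto the $B^{n+1}$-summand of $V^{n+1}$.

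Next I would assemble the summands into subcomplexes. For each $n$, the graded piece $H^n$ concentrated in degree $n$ is a subcomplex with zero differential; choosing a $\kk$-basis of $H^n$ writes it as a coproduct of copies of the shift of $\kk$ placed in degree $n$ (which, in the convention of the paper, is an appropriate $[m]\kk$). For each $n$, the two-term piece consisting of $C^n$ in degree $n$ and its image $B^{n+1}\subseteq V^{n+1}$ in degree $n+1$, with differential the chosen isomorphism $d^n|_{C^n}$, is a subcomplex; choosing a basis of $C^n$ decomposes it as a coproduct of copies of an appropriate shift of $\Cone(\id_\kk)$.

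The main (indeed only) thing to verify is that these subcomplexes together give an internal direct sum decomposition of $V^\bullet$ as complexes; but this reduces degreewise to the splitting $V^n = H^n \oplus B^n \oplus C^n$, with $B^n$ contributed by the two-term piece involving $C^{n-1}$ and $C^n$ contributed by the two-term piece involving degrees $n, n+1$. Since coproducts in $\C(\kk)$ are computed degreewise, this yields the desired isomorphism. There is no real obstacle: the argument is essentially Hodge-theoretic linear algebra, and the only place the hypothesis is used is that $\kk$ being a field allows the three splittings above.
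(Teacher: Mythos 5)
Your proof is correct and is essentially the paper's argument: the paper (via Lemma~\ref{l:C-semisimple-abelian-category-monomorphisms}.\ref{enum:Csemisimple-objects}) splits the short exact sequences $B(A)\hra Z(A)\sra H(A)$ and $Z(A)\hra A \sra [1]B(A)$ to obtain $A\cong H(A)\oplus \iCone(B(A))$, which is exactly your degreewise decomposition $V^n=H^n\oplus B^n\oplus C^n$ with $C^n\xra{\sim}B^{n+1}$. Choosing bases then exhibits the two summands as coproducts of shifts of $\kk$ and of $\Cone(\id_\kk)$, as you do.
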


\begin{proof}
  Obvious; see Lemma~\ref{l:monos-C(k)} for a stronger result.
\end{proof}

\subsection{Differential graded categories}
\label{sec:diff-grad-categ}

Note that $\RR$ is a closed symmetric monoidal category in the
obvious way: the monoidal structure is given by the tensor
product $\otimes=\otimes_\R$.
In particular, we can speak about $\RR$-categories (= categories
enriched in $\RR$), see \cite{kelly-enriched}.
Note that $\RR$-category is just
another name for dg $\R$-category with $\mathscr{U}$-small
$\Hom$-sets. 
We transfer
the usual terminology and notation from dg $\R$-categories
to $\RR$-categories. 

We usually denote an $\RR$-category by an underlined symbol
(e.\,g.\ $\ulms{M}$) and its underlying ordinary category by a
plain symbol (e.\,g.\ $\ms{M}$). We have already used this
convention above: 
we have the $\RR$-category 
$\ul\C(\mathcal{A})$ for an additive $\R$-category $\mathcal{A}$
with $\mathscr{U}$-small $\Hom$-sets
(whose underlying category $\C(\mathcal{A})$ was even considered
as an $\R$-category),
the $\RR$-category $\ul\RR$, and the $\KK$-category
$\ul\KK$.

If $\ulms{M}$ is an $\RR$-category, we denote its homotopy
category by $[\ulms{M}]$. Note that $[\ulms{M}]$ is an
$\R$-category. 

%

\subsubsection{Dg-enriched arrow categories}
\label{sec:enrich-arrow-categ}

For the convenience of the reader we recall the definition of the
arrow category in the $\RR$-enriched setting.
For $n \in \bN$, let $[n]$ denote the 
free category on the graph $0 \ra 1 \ra \dots \ra n$.

Let $\ulms{M}$ be an $\RR$-category.
The $\RR$-category $\ulms{M}^{[1]}$ of arrows in $\ms{M}$
has
morphisms $A \xra{f} B$ in $\ms{M}$ as objects and morphism
objects
\begin{equation}
  \label{eq:10}
  \Hom_{\ulms{M}^{[1]}}((A \xra{f} B), (A' \xra{f'} B'))
  =
  \ulms{M}(A,A') \subtimes{\ulms{M}(A,B')}
  \ulms{M}(B,B'). 
\end{equation}

For readers more familiar with dg
$\R$-categories let us describe 
this category in the dg language.
The dg $\R$-category $\ulms{M}^{[1]}$ has closed degree zero
morphisms 
in $\ulms{M}$ as objects, and given two such objects
$A \xra{f} B$ and $A' \xra{f'} B'$, the dg $\R$-module of
morphisms 
from $f$ to $f'$ is the dg
submodule of 
$\ulms{M}(A,A') \times \ulms{M}(B,B')$ consisting of pairs $(a,b)$
making the diagram
\begin{equation}
  \label{eq:11}
  \xymatrix{
    {A} \ar[r]^-{f} \ar[d]^-{a} & {B} \ar[d]^-{b}\\
    {A'} \ar[r]^-{f'} & {B'}
  }
\end{equation}
commutative.

The $\RR$-category $\ulms{M}^{[2]}$ of two composable arrows
is defined similarly.
Composition of arrows defines an $\RR$-functor
\begin{equation}
  \label{eq:12}
  \ulms{M}^{[2]} \ra \ulms{M}^{[1]}.
\end{equation}

If $\mathcal{A}$ is an additive $\R$-category, taking
cones as defined in 
\ref{sec:categories-complexes}
extends naturally to an $\RR$-functor
$\Cone \colon \ul\C(\mathcal{A})^{[1]} \ra \ul\C(\mathcal{A})$.
Note also that the shift functors $[n]$ come from $\RR$-functors
$[n] \colon \ul\C(\mathcal{A}) \ra \ul\C(\mathcal{A})$. 

\subsubsection{Categories of dg modules over a dg category}
\label{sec:categ-dg-modul}

Let $\ulms{M}$ be a $\mathscr{U}$-small $\RR$-category. A
(right) $\ulms{M}$-module 
(= a dg $\ulms{M}$-module) is an $\RR$-functor
$\ulms{M}^\opp \ra \ul\RR$. Let $\MMod(\ulms{M})$ be
the
category of $\ulms{M}$-modules and $\ul\MMod(\ulms{M})$ the
$\RR$-category of $\ulms{M}$-modules (note that these categories 
have $\mathscr{U}$-small $\Hom$-sets and objects in
$\mathscr{U}$, by Remark~\ref{rem:functor-categories-size} below).
Given two
$\ulms{M}$-modules $M$ and $N$ we write
$\MMod_{\ulms{M}}(M,N)=\Hom_{\MMod(\ulms{M})}(M,N)$ and
$\ul\MMod_{\ulms{M}}(M,N)=\Hom_{\ul\MMod(\ulms{M})}(M,N)$.  There
are shift $\RR$-functors
$[n] \colon \ul\MMod(\ulms{M}) \ra \ul\MMod(\ulms{M})$ and
a cone $\RR$-functor
$\Cone \colon \ul\MMod(\ulms{M})^{[1]} \ra \ul\MMod(\ulms{M})$
defined in the obvious way.

The (enriched) Yoneda functor is the fully faithful $\RR$-functor
\begin{align}
  \label{eq:RR-Yoneda}
  \Yo \colon \ulms{M} & \ra \ul\MMod(\ulms{M}),\\
  \notag
  M & \mapsto \Yo(M)=\ulms{M}(-,M).
\end{align}
Its underlying functor $\ms{M} \ra
\MMod(\ulms{M})$ is denoted by the same symbol.
The Yoneda functor induces a fully faithful $\RR$-functor
$\Yo^{[1]} \colon \ulms{M}^{[1]} \ra
\ul\MMod(\ulms{M})^{[1]}$
mapping $(A \xra{f} B)$ to $(\Yo(A) \xra{\Yo(f)} \Yo(B))$.


\begin{remark}
  \label{rem:functor-categories-size} 
  Let $\mathcal{C}$ and $\mathcal{D}$ be categories
  and let
  $\Fun(\mathcal{C}, \mathcal{D})$ be the category of functors
  from $\mathcal{C}$ to $\mathcal{D}$.
  Then the following statements are true (see
  \cite[Rem.~1.1.12,
  Cor.~1.4.1.(i)]{gabber-ramero-found-almost-v12}).   
  \begin{enumerate}
  \item 
    \label{enum:U-klein-neu}
    If $\mathcal{C}$ and $\mathcal{D}$ are $\mathscr{U}$-small, so
    is $\Fun(\mathcal{C}, \mathcal{D})$.
  \item 
    \label{enum:U-klein-nach-lokal-U-klein-neu}
    If $\mathcal{C}$ is $\mathscr{U}$-small and $\mathcal{D}$ has
    $\mathscr{U}$-small $\Hom$-sets, 
    then 
    $\Fun(\mathcal{C}, \mathcal{D})$ 
    has $\mathscr{U}$-small $\Hom$-sets.
    If, in addition, $\mathcal{D}$ has objects in $\mathscr{U}$,
    then 
    $\Fun(\mathcal{C}, \mathcal{D})$ has objects in
    $\mathscr{U}$.
  \end{enumerate}
\end{remark}

\subsubsection{Strongly pretriangulated dg categories}
\label{sec:strongly-pretr-categ}

Let $\ulms{M}$ be a $\mathscr{U}$-small $\RR$-category.
 
We say that $\ulms{M}$ is \define{strongly
pretriangulated} (resp.\ \define{pretriangulated}) if
the following objects are in the essential image of
$\Yo \colon \ms{M} \ra \MMod(\ulms{M})$
(resp.\ $[\Yo] \colon [\ulms{M}] \ra [\ul\MMod(\ulms{M})]$):
\begin{itemize}
\item 
  the zero module;
\item 
  the object
  $[n]\Yo(M)$, for all objects $M \in \ms{M}$ and all $n \in
  \bZ$;
\item 
  the object
  $\Cone(\Yo^{[1]}(f))$,
  for all objects $f \in \ms{M}^{[1]}$.
\end{itemize}
We have added the first condition to the definition from
\cite[4.3]{bondal-larsen-lunts-grothendieck-ring}; it implies
that $\ms{M}$ (resp.\ $[\ulms{M}]$) has a zero object.

Let $\ulms{M}$ be a strongly pretriangulated $\mathscr{U}$-small $\RR$-category. Then
there are
$\RR$-functors ``(formal) shift'' 
$[1] \colon \ulms{M} \ra \ulms{M}$ 
and ``(formal) cone'' 
$\Cone \colon \ulms{M}^{[1]} \ra \ulms{M}$ 
such that the diagrams
\begin{equation}
  \label{eq:formal-shift-and-cone}
  \xymatrix{
    {\ulms{M}} \ar[r]^-{\Yo} \ar[d]_-{[1]} &
    {\ul\MMod(\ulms{M})} \ar[d]^-{[1]}\\
    {\ulms{M}} \ar[r]^-{\Yo} &
    {\ul\MMod(\ulms{M})}
  }
  \quad \text{and} \quad
  \xymatrix{
    {\ulms{M}^{[1]}} \ar[r]^-{\Yo^{[1]}} \ar[d]_-{\Cone} &
    {\ul\MMod(\ulms{M})^{[1]}} \ar[d]^-{\Cone}\\
    {\ulms{M}} \ar[r]^-{\Yo} &
    {\ul\MMod(\ulms{M})}
  }
\end{equation}
commute up to isomorphisms of $\RR$-functors. We fix such
isomorphisms so that shift $[1]$ and cone $\Cone$
are uniquely determined in the obvious way.
We define $[-1]$ similarly, put $[n]=[1]^n$ and $[-n]=[-1]^n$,
for $n \in \bN$, and call all these $\RR$-functors shift functors.
Cone and shift functors commute 
up to unique isomorphism of $\RR$-functors
with any $\RR$-functor 
between strongly pretriangulated $\RR$-categories.

\begin{example}
  \label{exam:strongly-pretriang-dgcat}
  The categories
  $\ul\C(\mathcal{A})$, for an additive
  $\R$-category $\mathcal{A}$, or
  $\ul\MMod(\ulms{C})$, for an $\RR$-category
  $\ulms{C}$, 
  are 
  strongly pretriangulated $\RR$-categories, up to size issues:
  the category $\mathcal{A}$ should be $\mathscr{U}$-small, and
  $\ulms{C}$ 
  should be $\mathscr{U}'$-small, where $\mathscr{U}'$ is a
  universe that is an element of $\mathscr{U}$.
  We can and will assume that their formal shift and cone
  functors are the usual functors introduced previously.
  We make the same assumption for strongly pretriangulated
  $\RR$-subcategories of these categories that are stable under
  shifts and cones.
\end{example}

Consider the $\RR$-functor $\ulms{M} \ra \ulms{M}^{[1]}$
mapping an object $M$ to $M \xra{\id_M} M$ and a morphism $f
\colon M \ra N$ to $(f,f)$.
We define the
$\RR$-functor $\iCone$, called \define{identity cone}, to be the composition of this functor with
the cone functor, i.\,e.\
\begin{equation}
  \label{eq:iCone}
  \iCone
  \colon 
  \ulms{M} \ra
  \ulms{M}^{[1]} \xra{\Cone}
  \ulms{M},
\end{equation}
and denote its underlying functor 
$\iCone \colon \ms{M} \ra \ms{M}$
by the same symbol.
If $f \colon M \ra N$ is a morphism in $\ms{M}$ we have
\begin{equation}
  \iCone(f)=\Cone(f,f) \colon \iCone(M)=\Cone(\id_M) \ra
  \iCone(N)=\Cone(\id_N). 
\end{equation}

\begin{remark}
  \label{rem:htpy-cat-of-pretriangulated-is-triang}
  Given a pretriangulated $\RR$-category $\ulms{M}$ we 
  can
  similarly fix a shift $\R$-functor
  $[1] \colon [\ulms{M}] \ra [\ulms{M}]$ together
  with an 
  isomorphism $[1] [\Yo] \sira [\Yo][1]$.
  Then the homotopy
  category $[\ulms{M}]$ becomes a triangulated $\R$-category 
  in the obvious way. The choice when fixing its
  shift functor $[1]$ is inessential.
  Hence the homotopy category of a pretriangulated $\RR$-category
  is canonically triangulated.
  In fact, we obtain a functor from the category of
  pretriangulated $\RR$-categories to the category of
  triangulated $\R$-categories.
\end{remark}

\subsubsection{Dg enhancements}
\label{sec:dg-enhancements}

We recall the notion of an $\RR$-enhancement (= dg
$\R$-en\-hance\-ment) from
\cite{bondal-kapranov-enhancements,
  bondal-larsen-lunts-grothendieck-ring,lunts-orlov-enhancement}.

\begin{definition}
  \label{d:dg-enhancement}
  Let $\mathcal{T}$ be a $\mathscr{U}$-small triangulated
  $\R$-category.  
  An \define{$\RR$-enhancement} of $\mathcal{T}$ is a
  pretriangulated $\mathscr{U}$-small $\RR$-category $\ulms{E}$
  together with an 
  equivalence $\epsilon \colon [\ulms{E}] \sira \mathcal{T}$ of
  triangulated $\R$-categories (where $[\ulms{E}]$ is considered
  as a triangulated $\R$-category as explained in 
  Remark~\ref{rem:htpy-cat-of-pretriangulated-is-triang}).
\end{definition}

\subsubsection{Tensors, cotensors, and completeness}
\label{sec:tens-cotens-compl}

An $\RR$-category $\ulms{M}$ is \define{tensored}
if there is a bifunctor $\odot \colon \ms{M}
\times \RR \ra \ms{M}$ together with  
natural isomorphisms
\begin{equation}
  \label{eq:tensored}
  \ul\RR(R,\ulms{M}(M,N)) \cong
  \ulms{M}(M \odot R, N)
\end{equation}
in $R \in \RR$, $M,N \in \ms{M}$ (see
\cite[3.7]{riehl-categorical-homotopy-theory}); then the functor
$\odot$ is unique up to unique isomorphism. 
Similarly, $\ulms{M}$ is
\define{cotensored} if there is a bifunctor
$\Psi \colon \RR^\opp \times \ms{M} \ra \ms{M}$ together with
natural isomorphisms
\begin{equation}
  \label{eq:cotensored}
  \ul\RR(R,\ulms{M}(M,N)) 
  \cong \ulms{M}(M, \Psi(R,N)).
\end{equation}
in $R \in \RR$, $M,N \in \ms{M}$; then the functor $\Psi$ is
unique up to unique isomorphism. 
Note that $\odot$ and $\Psi$ come from $\RR$-bifunctors
(see 
\cite[3.7.4, 7.3.1]{riehl-categorical-homotopy-theory}).

Recall from \cite[7.6]{riehl-categorical-homotopy-theory}
that 
an $\RR$-category $\ulms{M}$ is $\RR$-bicomplete
if and only if it is
tensored and cotensored and the underlying category $\ms{M}$ is
bicomplete (i.\,e.\ complete and cocomplete); more precisely,
(co)completeness means that all $\mathscr{U}$-small (co)limits
exist.

If an $\RR$-category $\ulms{M}$ is tensored
and strongly
pretriangulated, we
deduce from \eqref{eq:tensored} 
natural isomorphisms $M \odot 0 \cong 0$,
$([m]M) \odot
([r]R) \cong [m+r](M \odot R)$ for $M \in \ms{M}$, $R
\in \RR$, $m,r \in \bZ$. 
Moreover, we have 
$M \odot \R \cong M$ and 
$M \odot \iCone(\R) \cong \iCone(M)$ naturally in $M \in
\ms{M}$.
Also note that $(- \odot -)$ preserves coproducts in each
argument. 

\subsection{Model categories and dg enriched functorial
  factorizations}
\label{sec:model-categ-enrich}


Let $\ulms{M}$ be an $\RR$-category. Recall from 
\cite[12, 13]{riehl-categorical-homotopy-theory}
that an
\define{$\RR$-enriched functorial factorization} on $\ulms{M}$
is a section of the composition functor
\eqref{eq:12}, i.\,e.\ an $\RR$-functor 
$\ulms{M}^{[1]} \ra \ulms{M}^{[2]}$ whose composition with 
\eqref{eq:12} is the identity.
It is convenient to denote an $\RR$-enriched functorial
factorization by a triple $(L,M,R)$ encoding its effect
on objects and morphisms:
\begin{align}
  \label{eq:13}
  (A \xra{f} B) & \mapsto (A \xra{Lf} M(f) \xra{Rf} B),\\
  \notag
  (a,b) & \mapsto (a, M(a,b), b).
\end{align}

If the underlying category $\ms{M}$ of $\ulms{M}$ carries a 
model structure, we say that
this model structure \define{admits  
$\RR$-enriched 
functorial 
factorizations} if
there exist 
$\RR$-enriched functorial factorizations
$(L,M,R)$ and $(L', M', R')$ such that 
$Lf$ is a cofibration, $Rf$ is a trivial
fibration,
$L'f$ is a trivial cofibration, and $R'f$ is fibration,
for all objects $f$
of $\ulms{M}^{[1]}$. Then, in particular,
there exist $\RR$-functors
\begin{equation}
  \label{eq:14}
  \ccof \colon \ulms{M} \ra \ulms{M}
  \quad \text{and} \quad
  \ffib \colon \ulms{M} \ra \ulms{M}
\end{equation}
together with $\RR$-natural transformations
$\ccof \xra{\gamma} \id \xra{\phiff} \ffib$
such that for each $M \in \ulms{M}$ the object $\ccof(M)$ is
cofibrant and the morphism
$\gammacc_M \colon \ccof(M) \ra M$ is a trivial
fibration, and $\ffib(M)$ is fibrant and $\phiff_M \colon M \ra \ffib(M)$ is a
trivial cofibration.
We call any such pair $(\ccof, \gammacc)$ (resp.\
$(\ffib, \phiff)$) an \define{$\RR$-enriched cofibrant}
(resp.\ \define{fibrant}) \define{resolution functor} and call
$\ccof$ (resp.\ $\ffib$) an \define{$\RR$-enriched 
  cofibrant} (resp.\ \define{fibrant})
\define{replacement functor}.

Note that the notion of a model category depends on a universe: a
model category with respect to the universe $\mathscr{U}$ has
$\mathscr{U}$-small Hom-sets and all $\mathscr{U}$-small
(co)limits.


\begin{theorem}
  \label{t:existence-enriched-functorial-fact}
  Let $\kk$ be a field.
  Let $\ulms{M}$ be a strongly pretriangulated
  $\KK$-bicomplete $\KK$-category.
  Assume that the underlying category $\ms{M}$
  is equipped with a model structure
  turning it into a cofibrantly generated model category
  (with respect to the universe $\mathscr{U}$) (so it admits the
  small object argument) 
  such that 
  the sets of cofibrations and of trivial cofibrations
  are stable 
  \begin{enumerate}
  \item under taking shifts,
    i.\,e.\ 
    mapping $i \colon E \ra F$ to $[n]i \colon [n]E \ra
    [n]F$, for all $n \in \bZ$, and
  \item 
    under applying the functor $\iCone \colon \ms{M} \ra \ms{M}$,
    i.\,e.\  
    under mapping $i \colon E \ra F$ to $\iCone(i) \colon
    \iCone(E) \ra 
    \iCone(F)$ (see
    \eqref{eq:iCone}).
  \end{enumerate}
  Then this model structure admits $\KK$-enriched functorial
  factorizations.
\end{theorem}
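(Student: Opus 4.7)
The plan is to apply a $\KK$-enriched version of Quillen's small object argument, following the framework developed by Kelly, Shulman and Riehl. Since $\ms{M}$ is cofibrantly generated, fix generating sets $I$ and $J$ of cofibrations and trivial cofibrations respectively.

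The crux is a tensor stability lemma that I would establish first: for every (trivial) cofibration $i \colon E \to F$ in $\ms{M}$ and every $R \in \KK$, the morphism $i \odot R \colon E \odot R \to F \odot R$ is again a (trivial) cofibration. By Lemma~\ref{l:objects-C(k)}, any object $R$ of $\KK$ is a coproduct of shifts of $\kk$ and $\Cone(\id_\kk)$. Since $(- \odot -)$ preserves coproducts in each variable, and using the natural isomorphisms $X \odot [n]\kk \cong [n]X$ and $X \odot [n]\Cone(\id_\kk) \cong [n]\iCone(X)$ available from \ref{sec:tens-cotens-compl} (because $\ulms{M}$ is $\KK$-bicomplete and strongly pretriangulated), the morphism $i \odot R$ is isomorphic to a coproduct of shifts of $i$ and shifts of $\iCone(i)$. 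The two stability hypotheses, combined with the standard fact that coproducts of (trivial) cofibrations in a model category are again (trivial) cofibrations, give the claim.

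Next I would run the $\KK$-enriched small object argument on $I$ and on $J$. The required inputs are (i) cocompleteness and smallness of domains, coming from cofibrant generation; (ii) the fact that $\ulms{M}$ is tensored over $\KK$, from $\KK$-bi\-com\-plete\-ness; and (iii) the tensor stability property just established, which is exactly the compatibility condition needed to lift the transfinite cell-attachment construction to a $\KK$-functor $\ulms{M}^{[1]} \to \ulms{M}^{[2]}$ sectioning \eqref{eq:12}. The output is a pair of $\KK$-enriched functorial factorizations $(L,M,R)$ and $(L',M',R')$ in which the left factors are built as retracts of transfinite compositions of pushouts of morphisms of the form $\coprod (i \odot R_j)$ with $i \in I$ (resp.\ $i \in J$) and $R_j \in \KK$, and the right factors have the right lifting property against $I$ (resp.\ $J$).

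Finally I would read off the model-categorical conclusions. The left factors are (trivial) cofibrations by the closure properties of these classes together with the tensor stability lemma; the right factors are (trivial) fibrations by cofibrant generation of the model structure. The main obstacle is the middle step, where the $\KK$-enriched small object argument must be set up so that the iterative cell-attachment remains $\KK$-functorial; this is precisely where Shulman's and Riehl's enriched small object argument apply, once tensor stability is known. The tensor stability, in turn, is what makes essential use of the field hypothesis on $\kk$ via Lemma~\ref{l:objects-C(k)}, reducing it to the two explicit closure assumptions of the theorem.
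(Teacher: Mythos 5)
Your proposal is correct and follows essentially the same route as the paper: the key step in both is the tensor stability lemma (that $(-\odot K)$ preserves cofibrations and trivial cofibrations for all $K\in\KK$, proved by decomposing $K$ into a coproduct of shifts of $\kk$ and $\iCone(\kk)$ via Lemma~\ref{l:objects-C(k)} and invoking the two closure hypotheses), after which the enriched small object argument of Riehl--Shulman delivers the $\KK$-enriched functorial factorizations. The paper simply packages the second step as a citation of \cite[Cor.~13.2.4]{riehl-categorical-homotopy-theory}, where you sketch the cell-attachment construction explicitly; the mathematical content is the same.
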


The assumption that $\kk$ is a field is essential as we explain 
in \ref{sec:counterexample}.

\begin{proof}
  This follows from
  \cite[Cor.~13.2.4]{riehl-categorical-homotopy-theory} and the
  following Lemma~\ref{l:tensor-K-left-Quillen}.  Note that
  $\KK$-bicompleteness of $\ulms{M}$ implies that
  $\ulms{M}$ is tensored (see
  \ref{sec:tens-cotens-compl}).
\end{proof}

\begin{lemma}
  \label{l:tensor-K-left-Quillen}
  Let $\kk$ be a field.  Let $\ulms{M}$ be a strongly
  pretriangulated tensored $\KK$-category, with tensored
  structure given by the bifunctor $\odot$.   
  Assume that the underlying category $\ms{M}$ is equipped with a
  model structure turning it into a model category such that the
  set of cofibrations (resp.\ of 
  trivial cofibrations) is stable under all shifts and under the
  functor $\iCone(-)$.  Then, for each $K \in \KK$, the functor
  $(- \odot K) \colon \ms{M} \ra \ms{M}$ preserves cofibrations
  (resp.\ trivial cofibrations).
\end{lemma}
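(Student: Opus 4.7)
The plan is to reduce the lemma to the two explicit cases $K=\kk$ and $K=\iCone(\kk)=\Cone(\id_\kk)$ via the structure theorem for objects of $\KK$.

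First, I would recall from \ref{sec:tens-cotens-compl} that because $\ulms{M}$ is strongly pretriangulated and tensored, the functor $(-\odot -)$ satisfies the following natural isomorphisms: $M\odot \kk\cong M$, $M\odot \iCone(\kk)\cong \iCone(M)$, $([m]M)\odot([r]K)\cong [m+r](M\odot K)$, and $(-\odot -)$ preserves coproducts in each argument. These facts are purely formal consequences of the adjunction defining $\odot$ together with the existence of formal shifts and cones in $\ulms{M}$. In particular, for any morphism $i\colon E\to F$ in $\ms{M}$, the morphism $i\odot \kk$ is naturally isomorphic to $i$, and $i\odot \iCone(\kk)$ is naturally isomorphic to $\iCone(i)$; for each $n\in\DZ$, $i\odot [n]\kk\cong [n]i$ and $i\odot [n]\iCone(\kk)\cong [n]\iCone(i)$.

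Second, I would apply Lemma~\ref{l:objects-C(k)}: since $\kk$ is a field, any $K\in\KK$ decomposes as $K\cong \bigoplus_{j\in J}[n_j]K_j$ where each $K_j$ is either $\kk$ or $\iCone(\kk)$. Combining this decomposition with the compatibility isomorphisms above gives
\begin{equation*}
  i\odot K\cong \bigoplus_{j\in J}[n_j](i\odot K_j),
\end{equation*}
where each $i\odot K_j$ is either $i$ itself or $\iCone(i)$.

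Third, I would invoke the hypotheses: if $i$ is a cofibration (resp.\ trivial cofibration), then $\iCone(i)$ is a cofibration (resp.\ trivial cofibration) by assumption, every shift $[n_j]$ of a cofibration (resp.\ trivial cofibration) is again one by assumption, and coproducts of cofibrations (resp.\ trivial cofibrations) are cofibrations (resp.\ trivial cofibrations)—this last closure property holds in any model category, since both classes are characterized by left lifting properties, which are preserved under coproducts. Thus each summand $[n_j](i\odot K_j)$ is a cofibration (resp.\ trivial cofibration), and hence so is $i\odot K$.

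The only delicate point is the natural isomorphism $i\odot K\cong \bigoplus_j[n_j](i\odot K_j)$ applied to a morphism (not just an object): one has to check that the chosen decomposition of $K$ is natural enough in the argument $i$ for the identification to be compatible with the model-categorical closure arguments. This is however automatic because, for a fixed decomposition of $K$, the functor $(-\odot K)\colon \ms{M}\to\ms{M}$ itself is isomorphic as a functor to $\bigoplus_j [n_j](-\odot K_j)$, so the morphism $i\odot K$ is literally the coproduct of shifts of the morphisms $i\odot K_j$ up to this fixed isomorphism. Once this is spelled out, the proof of Theorem~\ref{t:existence-enriched-functorial-fact} follows by quoting \cite[Cor.~13.2.4]{riehl-categorical-homotopy-theory}, whose hypothesis is exactly that each $(-\odot K)$ is left Quillen—equivalently, preserves cofibrations and trivial cofibrations—which we have just verified.
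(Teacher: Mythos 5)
Your proof is correct and follows essentially the same route as the paper: decompose $K$ via Lemma~\ref{l:objects-C(k)} into a coproduct of shifts of $\kk$ and $\iCone(\kk)$, use that $(-\odot-)$ commutes with coproducts and shifts and that $(-\odot\kk)\cong\id$, $(-\odot\iCone(\kk))\cong\iCone(-)$, and conclude from the closure of (trivial) cofibrations under coproducts, shifts, and $\iCone$. The extra care you take about naturality of the decomposition is a fine point the paper leaves implicit but does not change the argument.
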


\begin{proof}
  Lemma~\ref{l:objects-C(k)}
  tells us that any object of $\KK$ is a coproduct of 
  shifts of $\kk$ and $\Cone(\id_\kk)=\iCone(\kk)$.
  Since $(- \odot -)$ commutes with coproducts 
  and shifts in each argument
  and 
  since the set of (trivial) cofibrations in $\ms{M}$ is closed
  under 
  coproducts and shifts
  it is enough to prove the
  claim in the two cases $K=\kk$ and $K=\iCone(\kk)$.
  The claim for $K=\kk$ is trivial because $(-\odot \kk) \cong
  \id$, and the claim for $K=\iCone(\kk)$ is clear by assumption
  because $(- \odot \iCone(\kk)) \cong
  \iCone(-)$. 
\end{proof}

\begin{remark}
  \label{rem:existence-enriched-functorial-fact}
  In most of our applications of 
  Theorem~\ref{t:existence-enriched-functorial-fact}, 
  $\ulms{M}$ is some category $\ul\C(\mathcal{A})$
  of complexes in an abelian $\kk$-category $\mathcal{A}$ with
  $\mathscr{U}$-small $\Hom$-sets, the 
  weak equivalences
  of the considered model structure on $\ms{M}=\C(\mathcal{A})$
  are the quasi-isomorphisms, and the cofibrations and trivial
  cofibrations are obviously stable under all shifts. 
  In this case the functor $\iCone$ maps any morphism to a
  quasi-isomorphism because it maps any object to a contractible
  and hence acyclic object.
  Therefore it is enough to check that the cofibrations are stable
  under the functor $\iCone(-)$.
  In Lemma~\ref{l:cone-ii-trivial-cofibration} we give a
  criterion ensuring this condition.
\end{remark}


Note that the notion of a Grothendieck abelian category depends
on a universe: a 
Grothendieck abelian category with respect to the universe
$\mathscr{U}$ has 
$\mathscr{U}$-small Hom-sets, has $\mathscr{U}$-small coproducts,
and all $\mathscr{U}$-small filtered colimits are exact.

\begin{lemma}
  \label{l:cone-ii-trivial-cofibration}
  Let $\mathcal{A}$ be a Grothendieck abelian
  category (with respect to the universe $\mathscr{U}$)
  and let a model
  structure on $\C(\mathcal{A})$ be 
  given turning it into a model category whose weak
  equivalences are the quasi-isomorphisms. 
  Assume that the cofibrant objects are stable under all shifts
  and that
  a morphism
  is a cofibration if and only if it is a monomorphism with
  cofibrant cokernel (resp.\ a degreewise split monomorphism with
  cofibrant 
  cokernel). 
  Then $\iCone$ maps cofibrations to trivial cofibrations.
 \end{lemma}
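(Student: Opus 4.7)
The plan is to split the assertion into the two defining properties of a trivial cofibration. First, since $\iCone(X)=\Cone(\id_X)$ is an acyclic complex for every $X\in\C(\mathcal{A})$ (it is the cone of a quasi-isomorphism, namely $\id_X$), any morphism $\iCone(i)\colon\iCone(E)\to\iCone(F)$ is a map between acyclic complexes, hence a quasi-isomorphism. So the whole content of the lemma is that $\iCone(i)$ is a cofibration, and the rest of the argument uses the given characterization of cofibrations.

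Next I would verify the monomorphism/degreewise-split aspect. As a graded object $\iCone(X)=X\oplus[1]X$, and on underlying graded morphisms $\iCone(i)$ is simply $i\oplus[1]i$. Since $i$ is a mono (resp.\ a degreewise split mono) and shifting preserves monos (resp.\ degreewise splittings), so does $\iCone(i)$. Moreover, because taking $\iCone$ is exact on short exact sequences of complexes (again, a statement that is visible on underlying graded objects), the cokernel of $\iCone(i)$ is canonically $\iCone(C)$, where $C:=\Cok(i)$.

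The real work is therefore to show that $\iCone(C)$ is cofibrant whenever $C$ is. Here I would use the natural short exact sequence
\begin{equation*}
0\longrightarrow C\xlongrightarrow{\iota} \iCone(C)\xlongrightarrow{\pi}[1]C\longrightarrow 0,
\end{equation*}
where $\iota$ is the inclusion $c\mapsto(c,0)$ of the left summand of $\iCone(C)=C\oplus[1]C$ (one checks directly from the cone differential $\bigl(\begin{smallmatrix}d_C & \id\\ 0 & d_{[1]C}\end{smallmatrix}\bigr)$ that this is a chain map) and $\pi$ is the projection. The map $\iota$ is a monomorphism and, on graded objects, is split by the projection onto $C$, so it is also a degreewise split mono. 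Its cokernel is $[1]C$, which is cofibrant by the shift-stability hypothesis. Hence in either of the two cases allowed by the statement, $\iota$ itself is a cofibration. Composing the cofibration $0\to C$ (which expresses cofibrancy of $C$) with $\iota$ shows that $0\to\iCone(C)$ is a cofibration, i.e.\ $\iCone(C)$ is cofibrant. This supplies the cofibrant cokernel required by the characterization of cofibrations, and concludes the proof.

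The main obstacle I anticipate is the final step, namely bootstrapping cofibrancy of $C$ to cofibrancy of $\iCone(C)$; the key trick is that the canonical SES $C\hookrightarrow\iCone(C)\twoheadrightarrow[1]C$ is built out of cofibrant pieces and has its first map of the shape allowed by the cofibration characterization, so composition gives exactly what we need. Everything else (the weak equivalence property, the degreewise split/monomorphism property of $\iCone(i)$, and the identification of its cokernel) is formal from the decomposition $\iCone=(-)\oplus[1](-)$ at the graded level.
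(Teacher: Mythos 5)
Your argument is correct, but it takes a different route from the paper's. The paper factors $\iCone(i)$ as
\begin{equation*}
  \iCone(A) \xra{\Cone(\id_A,i)} \Cone(i) \xra{\Cone(i,\id_B)} \iCone(B),
\end{equation*}
observes that $\Cone(i)=B\sqcup_A \iCone(A)$, so the first map is a pushout of $i$ and hence a cofibration, and then checks that the second map is a (degreewise split) monomorphism with cofibrant cokernel $[1]\Kokern(i)$; triviality is, as in your write-up, immediate from contractibility of source and target. You instead apply the cofibration characterization directly to $\iCone(i)$ itself: you verify it is a (degreewise split) monomorphism with cokernel $\iCone(C)$ for $C=\Cok(i)$, and reduce everything to showing that $\iCone$ preserves cofibrant objects, which you get from the canonical extension $C\hra\iCone(C)\sra[1]C$ together with shift-stability and closure of cofibrations under composition. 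Your route is arguably more economical for the lemma as stated; the paper's factorization has the side benefit that it exhibits the comparison map $B\sqcup_A\iCone(A)\ra\iCone(B)$ as a cofibration, which is exactly what Corollary~\ref{c:cone-ii-trivial-cofibration} extracts from the proof and what is later needed for the pushout-product argument in Lemma~\ref{l:pushout-product-cofibrations}. So if you wanted to replace the paper's proof wholesale you would still need a separate argument for that corollary.
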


\begin{proof}
  Let $i \colon A \ra B$ be a
  cofibration. 
  The obvious commutative diagram
  \begin{equation}
    \label{eq:50a}
    \xymatrix{
      {A} \ar[r] \ar[d]^-{i} & 
      {\iCone(A)} \ar[d]^-{\iCone(i)}\\
      {B} \ar[r] & 
      {\iCone(B)}
    }
  \end{equation}
  induces a morphism
  \begin{equation}
    \label{eq:41a}
    B \;\subsqcup{A}\; \iCone(A) \ra \iCone(B).
  \end{equation}
  Since $i$ is a monomorphism we have $B \;\subsqcup{A}\;
  \iCone(A) = \Cone(i)$. We obtain the factorization
  \begin{equation}
    \label{eq:87a}
    \iCone(i) \colon \Cone(\id_A)
    \xrightarrow[{=i \oplus \id}]{\Cone(\id_A,i)} 
    \Cone(i)
    \xrightarrow[{=\id \oplus [1]i}]{\Cone(i, \id_B)}
    \Cone(\id_B).
  \end{equation}
  The morphism $\Cone(\id_A, i)$ is a pushout of $i$ and hence a
  cofibration. The morphism $\Cone(i, \id_B)$ is a cofibration
  because it is a monomorphism (resp.\ degreewise split
  monomorphism) 
  whose cokernel is the cofibrant object $[1]\Kokern(i)$.
  This implies that $\iCone(i)$ is a cofibration. It is trivial
  because source and target are contractible.
\end{proof}

\begin{corollary}
  \label{c:cone-ii-trivial-cofibration}
  Under the assumptions of
  Lemma~\ref{l:cone-ii-trivial-cofibration},
  given a (trivial) cofibration $i \colon A \ra B$, the induced
  morphism  
  $B \;\subsqcup{A}\; \iCone(A) \ra \iCone(B)$ is a (trivial)
  cofibration. 
\end{corollary}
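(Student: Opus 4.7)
The plan is to observe that the map in question literally appears inside the factorization \eqref{eq:87a} used in the proof of Lemma~\ref{l:cone-ii-trivial-cofibration}, so almost all the work has already been done. First I would identify $B \subsqcup{A} \iCone(A)$ with $\Cone(i)$: since $i$ is a monomorphism, the pushout of $A \hookrightarrow B$ along $A \hookrightarrow \Cone(\id_A)=A \oplus [1]A$ is exactly $B \oplus [1]A$ with the differential twisted by $i$, which is the definition of $\Cone(i)$. Under this identification the canonical comparison map $B \subsqcup{A} \iCone(A) \ra \iCone(B)$ becomes the second arrow $\Cone(i,\id_B) \colon \Cone(i) \ra \Cone(\id_B)$ in \eqref{eq:87a}.

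For the cofibration case I would simply appeal to what was already shown in the proof of the lemma: $\Cone(i,\id_B)= \id \oplus [1]i$ is a monomorphism (respectively a degreewise split monomorphism) whose cokernel is $[1]\coker(i)$, and this cokernel is cofibrant because $\coker(i)$ is cofibrant (by the characterization of cofibrations) and cofibrant objects are stable under shifts. Hence $\Cone(i,\id_B)$ is a cofibration by the assumed characterization of cofibrations.

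For the trivial cofibration case the additional point is to verify the weak equivalence. If $i$ is a quasi-isomorphism and a monomorphism, the long exact cohomology sequence of $0 \ra A \ra B \ra \coker(i) \ra 0$ shows that $\coker(i)$, and hence $[1]\coker(i)$, is acyclic. Plugging this into the short exact sequence
\begin{equation*}
0 \ra \Cone(i) \xra{\Cone(i,\id_B)} \Cone(\id_B) \ra [1]\coker(i) \ra 0
\end{equation*}
and using that $\Cone(\id_B)$ is contractible, the resulting long exact sequence forces $\Cone(i,\id_B)$ to be a quasi-isomorphism. Combined with the previous paragraph, this gives a trivial cofibration.

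There is no real obstacle in this argument; the only slightly delicate point is the pushout identification in the first paragraph, which should be written out carefully so that the comparison map is correctly matched with $\Cone(i,\id_B)$ rather than, say, $\Cone(\id_A,i)$.
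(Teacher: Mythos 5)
Your proposal is correct and follows essentially the same route as the paper: identify $B \subsqcup{A} \iCone(A)$ with $\Cone(i)$ and the comparison map with the arrow $\Cone(i,\id_B)$ from the factorization \eqref{eq:87a}, which the proof of Lemma~\ref{l:cone-ii-trivial-cofibration} already showed to be a cofibration. The only (immaterial) difference is in the triviality step, where the paper simply observes that $\Cone(i)$ and $\iCone(B)$ are both acyclic when $i$ is trivial, whereas you deduce the quasi-isomorphism from the acyclicity of the cokernel $[1]\Kokern(i)$ via the long exact sequence.
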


\begin{proof}
  Clear from the proof of 
  Lemma~\ref{l:cone-ii-trivial-cofibration}. Note that
  $B \;\subsqcup{A}\;
  \iCone(A) = \Cone(i)$ is acyclic if $i$ is trivial.
\end{proof}

\section{Applications}
\label{sec:applications}

We recall some model structures and provide several examples
where 
Theorem~\ref{t:existence-enriched-functorial-fact} is
applicable. 
The examples concerning
modules over dg $\kk$-categories in \ref{sec:modules-over-dg}
and
sheaves on $\kk$-ringed spaces 
in \ref{sec:ringed-spaces}
are not relevant for the rest of this article. 
Let $\mathscr{U} \in \mathscr{V}$ be universes.

\subsection{Grothendieck abelian categories}
\label{sec:groth-abel-categ}

\fussnote{
  Could work over $\R$ and $\RR$ here instead of starting with
  $\bZ$ and $\C(\bZ)$...
}

Let $\mathcal{A}$ be a Grothendieck abelian category with respect
to the universe $\mathscr{U}$. 
Then
$\C(\mathcal{A})$ is a Grothendieck abelian category; in
particular, it is bicomplete. 

\subsubsection{Injective model structure}
\label{sec:inject-model-struct-1}


Recall that a complex $I$ in $\mathcal{A}$ is called
h-injective
if $[\ul\C_\mathcal{A}](A,I)=0$ for any acyclic complex $A$ in
$\mathcal{A}$. 
The \define{injective model structure} or 
\define{$\II$-model structure} on $\C(\mathcal{A})$
(considered as an ordinary category)
is
the following triple 
of sets of morphisms in
$C(\mathcal{A})$
(see
\cite[Prop.~3.13]{beke-sheafifiable-homotopy-model-categories}
or
\cite[Cor.~7.1]{gillespie-kaplansky-classes}).
\begin{itemize}
\item the weak equivalences are the quasi-isomorphisms;
\item the cofibrations are the monomorphisms;
\item the fibrations are the epimorphisms with
  h-injective and componentwise injective kernel.
\end{itemize}
This model structure turns $\C(\mathcal{A})$ into a cofibrantly
generated model category.  We call the
(co)fibrations and (co)fibrant objects of this model structure
$\II$-(co)fibrations and $\II$-(co)fibrant objects, respectively.
The $\II$-fibrant objects are precisely the h-injective and
componentwise injective objects of $\C(\mathcal{A})$; they are
sometimes called dg-injective. 
Let 
$\ul\II(\mathcal{A}) \subset \ul\C(\mathcal{A})$ denote the full
subcategory of these objects. It is a strongly pretriangulated
$\C(\bZ)$-category. 
All objects of $\C(\mathcal{A})$
are $\II$-cofibrant.

\begin{remark}
  \label{rem:I-fibrant-for-right-derived}
  Right derived functors can be computed using $\II$-fibrant
  resolutions.
\end{remark}

\begin{lemma}
  \label{l:cone-II-model}
  Let $\mathcal{A}$ be a Grothendieck abelian category.
  Then $\iCone$ maps $\II$-co\-fi\-bra\-tions to trivial
  $\II$-cofibrations.
\end{lemma}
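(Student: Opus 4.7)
The plan is to verify that Lemma~\ref{l:cone-ii-trivial-cofibration} applies directly to the injective model structure on $\C(\mathcal{A})$, yielding the desired statement with no further work. There are four hypotheses to check: that $\mathcal{A}$ is Grothendieck abelian, that the weak equivalences are the quasi-isomorphisms, that cofibrant objects are stable under all shifts, and that a morphism is a cofibration if and only if it is a monomorphism with cofibrant cokernel.

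The first two are built into our setup (we assumed $\mathcal{A}$ Grothendieck abelian, and by \ref{sec:inject-model-struct-1} the $\II$-weak equivalences are the quasi-isomorphisms). For the remaining two, the key observation — already recorded in \ref{sec:inject-model-struct-1} — is that every object of $\C(\mathcal{A})$ is $\II$-cofibrant. This immediately gives stability of cofibrant objects under shifts (and in fact under every functor), and it reduces the cofibration characterization to the statement that the $\II$-cofibrations are precisely the monomorphisms, which is how the injective model structure is defined.

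Having verified the hypotheses, Lemma~\ref{l:cone-ii-trivial-cofibration} yields that $\iCone$ sends $\II$-cofibrations to trivial $\II$-cofibrations, which is exactly the claim. I do not anticipate any obstacle here; the proof is a one-line citation once the four bullet points are checked, and the real content sits in Lemma~\ref{l:cone-ii-trivial-cofibration} (where the factorization $\iCone(i) = \Cone(i,\id_B) \circ \Cone(\id_A,i)$ is exhibited and both factors are shown to be cofibrations, with acyclicity of source and target of $\iCone(i)$ guaranteeing triviality).
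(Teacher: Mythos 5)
Your proof is correct and matches the paper's, which simply says the lemma is ``immediate from Lemma~\ref{l:cone-ii-trivial-cofibration}''; your verification of the hypotheses (in particular, that every object being $\II$-cofibrant makes the shift-stability condition automatic and reduces the cofibration characterization to ``cofibration = monomorphism'') is exactly the intended reduction.
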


\begin{proof}
  Immediate from Lemma~\ref{l:cone-ii-trivial-cofibration}.
\end{proof}

\subsubsection{Injective enhancements}
\label{sec:inject-enhanc}

Let $\R$ be a commutative ring and $\mathcal{A}$ a Grothendieck
abelian $\R$-category. The
strongly pretriangulated
$\RR$-category $\ul\II(\mathcal{A})$
together with the obvious 
equivalence
\begin{equation}
  \label{eq:24}
  [\ul\II(\mathcal{A})] \sira \D(\mathcal{A})
\end{equation}
of triangulated
$\R$-categories
is an $\RR$-enhancement of $\D(\mathcal{A})$. 
We call it the
\define{injective enhancement} or \define{$\II$-en\-hance\-ment}
of $\D(\mathcal{A})$. 

\subsubsection{Dg
\texorpdfstring{$\kk$}{k}-enriched functorial factorizations for
  Grothendieck abelian \texorpdfstring{$\kk$}{k}-categories}
\label{sec:enrich-funct-fact-5}


\begin{theorem}
  \label{t:enriched-functorial-fact-grothendieck-abelian}
  Let $\mathcal{A}$ be a Grothendieck abelian $\kk$-category
  where $\kk$ is a field. 
  Assume that the $\KK$-category $\ul\C(\mathcal{A})$ is
  $\KK$-bicomplete.  
  Then the $\II$-model structure 
  on $\C(\mathcal{A})$
  admits $\KK$-enriched functorial factorizations.
\end{theorem}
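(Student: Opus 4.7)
The plan is to deduce this as a direct application of Theorem~\ref{t:existence-enriched-functorial-fact} to $\ulms{M} = \ul\C(\mathcal{A})$ with the $\II$-model structure on $\ms{M} = \C(\mathcal{A})$. I therefore need to verify each of the hypotheses of that theorem in turn.

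First, $\ul\C(\mathcal{A})$ is a strongly pretriangulated $\KK$-category by Example~\ref{exam:strongly-pretriang-dgcat}, and $\KK$-bicompleteness is assumed as part of the hypothesis. Next, the $\II$-model structure on $\C(\mathcal{A})$ is cofibrantly generated, as recalled in \ref{sec:inject-model-struct-1}. So it remains to check stability of (trivial) $\II$-cofibrations under the shifts $[n]$ and under $\iCone$.

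Stability under shifts is essentially formal: the shift functor $[n] \colon \C(\mathcal{A}) \ra \C(\mathcal{A})$ is an exact autoequivalence, hence preserves monomorphisms (= $\II$-cofibrations) and quasi-isomorphisms, so it preserves both cofibrations and trivial cofibrations. For stability under $\iCone$ of cofibrations, this is precisely the content of Lemma~\ref{l:cone-II-model}, which in fact gives the stronger statement that $\iCone$ sends $\II$-cofibrations to \emph{trivial} $\II$-cofibrations. In particular, if $i$ is a trivial $\II$-cofibration, it is a fortiori an $\II$-cofibration, so $\iCone(i)$ is a trivial $\II$-cofibration. Hence both stability conditions hold.

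With all hypotheses verified, Theorem~\ref{t:existence-enriched-functorial-fact} yields the existence of $\KK$-enriched functorial factorizations for the $\II$-model structure. There is no real obstacle here — the whole point of having isolated Theorem~\ref{t:existence-enriched-functorial-fact} and Lemma~\ref{l:cone-II-model} is to make verifications like this one immediate; the substantive work was done in establishing those two results.
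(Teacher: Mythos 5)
Your proposal is correct and follows exactly the paper's own proof: both reduce to Theorem~\ref{t:existence-enriched-functorial-fact} by noting that $\ul\C(\mathcal{A})$ is strongly pretriangulated and $\KK$-bicomplete, that the $\II$-model structure is cofibrantly generated with (trivial) cofibrations stable under shifts, and that Lemma~\ref{l:cone-II-model} handles stability under $\iCone$. Your extra remark that trivial cofibrations are a fortiori cofibrations, so $\iCone$ sends them to trivial cofibrations as well, is a correct (and slightly more explicit) way of covering the second half of hypothesis (b).
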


\begin{proof}
  Obviously, $\ul\C(\mathcal{A})$ is strongly
  pretriangulated. 
  The $\II$-model structure on $\C(\mathcal{A})$ is
  cofibrantly generated and its (trivial) cofibrations are stable
  under all shifts. Together with Lemma~\ref{l:cone-II-model}
  this shows that all assumptions of 
  Theorem~\ref{t:existence-enriched-functorial-fact} are
  satisfied. 
\end{proof}

\subsubsection{A counterexample}
\label{sec:counterexample}

Let $\Mod(\bZ)$ be the category of $\mathscr{U}$-small abelian
groups. We consider 
the injective model structure on $\C(\bZ)=\C(\Mod(\bZ))$. Let
$\II(\bZ) \subset \C(\bZ)$ be the subcategory of $\II$-fibrant
objects. 
The categories $\C(\Ab)$ and $\II(\Ab)$ are additive.

Note that $\ul\C(\bZ)$ is $\ul\C(\bZ)$-bicomplete (e.\,g.\ by
Lemma~\ref{l:ModC-tensored-and-cotensored-K-bicomplete} below).
Therefore the following result shows that
Theorems~\ref{t:existence-enriched-functorial-fact}
and \ref{t:enriched-functorial-fact-grothendieck-abelian}
with the field $\kk$ replaced 
by the integers $\bZ$ are not true.

\begin{lemma}
  \label{l:no-additive-injective-repl-for-Ab}
  There is no pair $(\ii, \iotaii)$ 
  where $\ii$ is an \textbf{additive} functor $\C(\bZ) \ra
  \II(\bZ)$  
  and $\iotaii$ is a natural transformation $\id \ra \ii$ of
  functors such
  that for each $M \in \C(\bZ)$ the morphism $\iotaii_M \colon M
  \ra \ii(M)$ is a 
  quasi-isomorphism (cf.\ \ref{sec:model-categ-enrich}).
 
  In particular, the $\II$-model structure on $\C(\bZ)$ does not
  admit $\C(\bZ)$-enriched functorial factorizations, and there
  is no $\C(\bZ)$-enriched fibrant replacement functor.
\end{lemma}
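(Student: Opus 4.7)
The plan is to exhibit a concrete object $A \in \C(\DZ)$ where the additivity of $\ii$ forces $\ii(A) = 0$, which contradicts $\iotaii_A$ being a quasi-isomorphism. I will take $A = \DZ/n$ with $n \geq 2$, considered as a complex concentrated in degree zero.

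First, I note that for any abelian functor $F \colon \C(\DZ) \to \C(\DZ)$ and any object $M$, additivity implies $F(m \cdot \id_M) = m \cdot \id_{F(M)}$ for every integer $m$: this is immediate for $m \geq 0$ by writing $m \cdot \id_M$ as a sum of copies of $\id_M$, and the case $m < 0$ follows since $F$ preserves additive inverses of morphisms. In particular, if $n \cdot \id_M = 0$ then $n \cdot \id_{F(M)} = 0$, so each component of $F(M)$ is an abelian group annihilated by $n$.

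Now I apply this to $A = \DZ/n$ and $F = \ii$. Since $n \cdot \id_A = 0$, every component of $\ii(A)$ is annihilated by $n$. But by definition of $\II(\DZ)$, each component of $\ii(A)$ is also an injective abelian group, hence divisible. A divisible abelian group annihilated by $n$ must vanish, because for every element $b$ one writes $b = n b'$ and concludes $b = 0$. Therefore $\ii(A) = 0$ componentwise, i.e.\ $\ii(A)$ is the zero complex. The naturality hypothesis then gives a quasi-isomorphism $\iotaii_A \colon A \to 0$, forcing $H^0(A) = \DZ/n = 0$, a contradiction.

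For the addendum, I observe that a $\C(\DZ)$-enriched functor is in particular additive (it is a morphism of categories enriched in the monoidal category of complexes of abelian groups, and in particular induces group homomorphisms on morphism complexes in degree zero). Hence a $\C(\DZ)$-enriched fibrant replacement functor would yield exactly a pair $(\ii, \iotaii)$ of the type ruled out above, and the existence of $\C(\DZ)$-enriched functorial factorizations for the $\II$-model structure would produce such an enriched fibrant replacement by factoring each morphism $M \to 0$. The main (mild) obstacle is simply to spell out that the obstruction is truly additive in nature, i.e.\ that no extra structure beyond $\DZ$-linearity is being used, so the same argument blocks any $\RR$-enriched fibrant resolution whenever $\R$-enrichment is at least as strong as additivity.
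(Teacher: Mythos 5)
Your proof is correct and is essentially the paper's own argument: the paper takes $T=\DZ/2\DZ$ and derives $2\id_{\ii(T)}=\ii(2\id_T)=\ii(0)=0$ from additivity, then uses divisibility of the injective components to conclude $\ii(T)=0$, contradicting $\iotaii_T$ being a quasi-isomorphism. Your version with $\DZ/n$ and the remark on the addendum are the same idea, just spelled out in slightly more generality.
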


\begin{proof}
  Assume that such a pair $(\ii, \iotaii)$ exists.
  Consider $T:=\bZ/2\bZ$ as an object of $\C(\bZ)$ sitting in
  degree zero. Then
  \begin{equation}
    \label{eq:80}
    2\id_{\ii(T)}=2\ii(\id_T)=\ii(2\id_T)=\ii(0)=0,
  \end{equation}
  so multiplication by $2$ is the zero map on $\ii(T)$.  Since
  all components of $\ii(T)$ are injective and therefore
  divisible, multiplication by $2$ is surjective on $\ii(T)$.
  Hence $\ii(T)=0$ contradicting the assumption that $\iotaii_T$
  is a quasi-isomorphism.
\end{proof}

\begin{remark}
  \label{rem:no-additive-injective-repl-for-Ab}
  The above proof essentially boils down to the fact that there
  is no additive functor $\Mod(\bZ) \ra \Mod(\bZ)^{[1]}$ mapping
  an abelian group $A$ to a monomorphism $A \hra I_A$ into an
  injective abelian group $I_A$ (see \cite[Lemma
  3]{olaf-barcelona}). 
\end{remark}

\subsubsection{Drinfeld quotients}
\label{sec:drinfeld-quotients}

We use the name
\define{Drinfeld quotient} for Drinfeld's 
notion of a quotient in
\cite[1.2]{drinfeld-dg-quotients-arxiv-newer-than-published}. 

\begin{lemma}
  \label{l:strong-drinfeld-quotient}
  Let 
  $\ulms{A}$
  be a pretriangulated $\mathscr{U}$-small
  $\RR$-category 
  and
  $\ulms{B}$ a full pretriangulated $\RR$-subcategory.
  Let 
  $F \colon \ulms{A} \ra \ulms{C}$ 
  be an $\RR$-functor 
  to a pretriangulated $\mathscr{U}$-small $\RR$-category $\ulms{C}$.
  Then the following statements are equivalent:
  \begin{enumerate}
  \item the functor $[F] \colon [\ulms{A}] \ra [\ulms{C}]$
    of triangulated $\R$-categories
    (see Remark~\ref{rem:htpy-cat-of-pretriangulated-is-triang})
    vanishes on $[\ulms{B}]$ and the induced functor
    $[\ulms{A}]/[\ulms{B}] \ra [\ulms{C}]$ is an equivalence of
    triangulated 
    $\R$-categories;
  \item the functor $F \colon \ulms{A} \ra \ulms{C}$ (more
    precisely, the diagram $\ulms{A} \xla{\id} \ulms{A} \xra{F}
    \ulms{C}$) is a Drinfeld quotient of $\ulms{A}$ by $\ulms{B}$.
  \end{enumerate}
\end{lemma}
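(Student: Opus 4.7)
The plan is to exploit Drinfeld's description of the homotopy category of a dg quotient together with the principle that, between pretriangulated $\RR$-categories, a morphism in $\Ho(\dgcat_\R)$ is an isomorphism if and only if the induced functor on triangulated homotopy categories is an equivalence. Let $Q \colon \ulms{A} \to \ulms{A}/\ulms{B}$ denote the canonical Drinfeld quotient functor. By \cite[Thm.~1.6.2]{drinfeld-dg-quotients-arxiv-newer-than-published}, under the present hypotheses $\ulms{A}/\ulms{B}$ is again pretriangulated and $[Q]$ identifies with the Verdier quotient $[\ulms{A}] \to [\ulms{A}]/[\ulms{B}]$. Unraveling the definition from \cite[1.6.1]{drinfeld-dg-quotients-arxiv-newer-than-published}, the diagram $\ulms{A} \xla{\id} \ulms{A} \xra{F} \ulms{C}$ is a Drinfeld quotient of $\ulms{A}$ by $\ulms{B}$ precisely when there exists an isomorphism $\ol{F} \colon \ulms{A}/\ulms{B} \sira \ulms{C}$ in $\Ho(\dgcat_\R)$ satisfying $\ol{F} \circ Q = F$ in $\Ho(\dgcat_\R)$.

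For the implication (b)$\Rightarrow$(a), applying the homotopy category functor to this identification gives $[F] = [\ol{F}] \circ [Q]$ with $[\ol{F}]$ a triangulated equivalence and $[Q]$ the Verdier projection, so $[F]$ vanishes on $[\ulms{B}]$ and factors through a triangulated equivalence $[\ulms{A}]/[\ulms{B}] \sira [\ulms{C}]$. Conversely, for (a)$\Rightarrow$(b), the hypothesis that $[F]$ vanishes on $[\ulms{B}]$ means that $F$ sends every object of $\ulms{B}$ to a contractible object of $\ulms{C}$. The universal property of the Drinfeld quotient in $\Ho(\dgcat_\R)$ then supplies a morphism $\ol{F} \colon \ulms{A}/\ulms{B} \to \ulms{C}$ in $\Ho(\dgcat_\R)$ with $\ol{F} \circ Q = F$. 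On homotopy categories this yields $[\ol{F}] \circ [Q] = [F]$; combining this with the equivalence hypothesis of (a) and surjectivity of $[Q]$ on objects forces $[\ol{F}]$ itself to be a triangulated equivalence.

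It remains to upgrade the triangulated equivalence $[\ol{F}]$ to an isomorphism $\ol{F}$ in $\Ho(\dgcat_\R)$. Here the pretriangulated hypothesis is essential: for any $X, Y \in \ulms{A}/\ulms{B}$ and $n \in \DZ$ one has canonical identifications $H^n(\ulms{A}/\ulms{B}(X,Y)) \cong [\ulms{A}/\ulms{B}](X, Y[n])$ and likewise for $\ulms{C}$, where $Y[n]$ is the formal shift (\ref{sec:strongly-pretr-categ}). Choose a dg functor representative of $\ol{F}$ between fibrant-cofibrant replacements in Tabuada's model structure on $\dgcat_\R$; then fully faithfulness of $[\ol{F}]$ on all shifted pairs of objects translates, via the identifications above, into the assertion that this representative induces a quasi-isomorphism on every Hom complex, while essential surjectivity of $[\ol{F}]$ gives the required condition on objects. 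Together these say the representative is a quasi-equivalence, so $\ol{F}$ is invertible in $\Ho(\dgcat_\R)$, completing the proof. The main technical point to nail down carefully is this last upgrade, where one must ensure that $[\ol{F}]$ is genuinely realized by a single dg functor whose Hom-complex maps can be probed; all other steps are essentially immediate given Drinfeld's computation of $[\ulms{A}/\ulms{B}]$.
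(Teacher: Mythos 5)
Your argument arrives at the right statement, but it starts from the wrong definition and consequently does far more work than the lemma requires. The notion the paper uses is Drinfeld's definition of a quotient from \cite[1.2]{drinfeld-dg-quotients-arxiv-newer-than-published}: a diagram $\ulms{A} \la \tildew{\ulms{A}} \xra{F} \ulms{C}$ with the left leg a quasi-equivalence is \emph{by definition} a quotient of $\ulms{A}$ by $\ulms{B}$ when the induced functor between the associated triangulated categories is an equivalence $\ulms{A}^{tr}/\ulms{B}^{tr} \sira \ulms{C}^{tr}$. After identifying the triangulated category of a pretriangulated $\RR$-category with its homotopy category (Remark~\ref{rem:htpy-cat-of-pretriangulated-is-triang}) and specializing to the diagram $\ulms{A} \xla{\id} \ulms{A} \xra{F} \ulms{C}$, this is verbatim condition (a); the paper's proof is accordingly a one-liner. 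What you treat as the definition --- the existence of an isomorphism $\ol{F} \colon \ulms{A}/\ulms{B} \sira \ulms{C}$ in $\Ho(\dgcat_\R)$ with $\ol{F} \comp Q = F$ --- is not the definition but a consequence of Drinfeld's existence and uniqueness theorems. So your proof in effect establishes the equivalence of (a) with that universal-property characterization, and you are silently using those theorems to bridge back to the actual definition.

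Read as a proof of that (true, stronger) statement, two steps still need care. First, the identification $[\ulms{A}/\ulms{B}] \simeq [\ulms{A}]/[\ulms{B}]$ for the explicit quotient construction requires homotopical flatness of the Hom complexes over $\R$ (or a preliminary resolution); the lemma is stated over an arbitrary commutative ring $\R$, so this cannot be waved away, even though it is harmless in the paper's applications where $\R=\kk$ is a field. Second, in the final upgrade from ``$[\ol{F}]$ is a triangulated equivalence'' to ``$\ol{F}$ is invertible in $\Ho(\dgcat_\R)$,'' passing to fibrant-cofibrant replacements is awkward because a cofibrant replacement of a pretriangulated $\RR$-category need not be pretriangulated, so the shifted objects $[n]Y$ you want to probe do not live in the replacement; the clean route is to check quasi-fully-faithfulness directly on the pretriangulated categories via $H^n\ulms{C}(X,Y) \cong [\ulms{C}](X,[n]Y)$ and transport along the structural quasi-equivalences. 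Both points are repairable, but neither is needed for the lemma as stated.
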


\begin{proof}
  Clear from the definition of a Drinfeld quotient.
\end{proof}

Recall the universe $\mathscr{V}$ that contains $\mathscr{U}$ as
an element.

\begin{lemma}
  \label{l:ii-Drinfeld-quotient}
  Let $\kk$ be a field and
  let $\mathcal{A}$ be a Grothendieck abelian $\kk$-category
  (with respect to the universe $\mathscr{U}$)
  with objects in $\mathscr{U}$, and assume that
  $\ul\C(\mathcal{A})$ is 
  $\KK$-bicomplete. Let $\ii \colon \ul\C(\mathcal{A}) \ra
  \ul\II(\mathcal{A})$ be a $\KK$-enriched $\II$-fibrant replacement
  functor (which exists by 
  Theorem~\ref{t:enriched-functorial-fact-grothendieck-abelian}).
  Then $\ii$ 
  is a
  Drinfeld quotient of the pretriangulated $\mathscr{V}$-small
  $\KK$-category 
  $\ul\C(\mathcal{A})$ by its 
  full $\KK$-subcategory $\ul\C_\ac(\mathcal{A})$ of acyclic
  objects. In particular, $\ii$ 
  induces a unique equivalence
  \begin{equation}
    \label{eq:Drinfeld-equiv}
    \ol{[\ii]} \colon \D(\mathcal{A}) \sira [\ul\II(\mathcal{A})]
  \end{equation}
  of triangulated $\kk$-categories such that the diagram 
  \begin{equation}
    \label{eq:64}
    \xymatrix{
      {[\ul\C(\mathcal{A})]} \ar[r]^-{[\ii]} \ar[d] 
      &
      {[\ul\II(\mathcal{A})]} 
      \\
      {\D(\mathcal{A})} \ar[ru]^-{\ol{[\ii]}}_-{\sim} 
    }
  \end{equation}
  commutes.
  This equivalence has \eqref{eq:24} as a quasi-inverse.
\end{lemma}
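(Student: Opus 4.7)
The plan is to apply Lemma~\ref{l:strong-drinfeld-quotient}, which reduces the Drinfeld quotient claim to two statements about the induced triangulated functor $[\ii] \colon [\ul\C(\mathcal{A})] \ra [\ul\II(\mathcal{A})]$: namely, (a) that $[\ii]$ vanishes on $[\ul\C_\ac(\mathcal{A})]$, and (b) that the resulting functor $\ol{[\ii]} \colon \D(\mathcal{A}) = [\ul\C(\mathcal{A})]/[\ul\C_\ac(\mathcal{A})] \ra [\ul\II(\mathcal{A})]$ is an equivalence.

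For (a), let $A \in \ul\C_\ac(\mathcal{A})$. Since $\iotaii_A \colon A \ra \ii(A)$ is a quasi-isomorphism and $A$ is acyclic, so is $\ii(A)$. But $\ii(A)$ is also h-injective; applying the defining property of h-injectivity with acyclic source $\ii(A)$ and h-injective target $\ii(A)$ yields $[\ul\C_\mathcal{A}](\ii(A),\ii(A))=0$, so $\id_{\ii(A)}=0$ in the homotopy category and hence $[\ii](A)=\ii(A)\cong 0$ in $[\ul\II(\mathcal{A})]$. By the universal property of the Verdier quotient, $[\ii]$ factors uniquely through a triangulated $\kk$-functor $\ol{[\ii]}$ making \eqref{eq:64} commute.

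For (b), I plan to show that $\ol{[\ii]}$ is quasi-inverse to the equivalence $\epsilon \colon [\ul\II(\mathcal{A})] \sira \D(\mathcal{A})$ of \eqref{eq:24}, which is induced by the inclusion $\ul\II(\mathcal{A}) \hra \ul\C(\mathcal{A})$ followed by the Verdier localization. The $\KK$-enriched natural transformation $\iotaii \colon \id \ra \ii$ descends, after passage to homotopy categories, to a natural transformation $[\id] \ra [\ii]$, whose components $\iotaii_M$ are quasi-isomorphisms and therefore become isomorphisms in $\D(\mathcal{A})$. This gives a natural isomorphism $\id_{\D(\mathcal{A})} \xsira{} \epsilon \circ \ol{[\ii]}$. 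For the other composition, restrict $\iotaii$ to $\ul\II(\mathcal{A})$: for $I \in \ul\II(\mathcal{A})$, the morphism $\iotaii_I \colon I \ra \ii(I)$ is a quasi-isomorphism between h-injective complexes, and such a map is automatically a homotopy equivalence (apply h-injectivity of $\Cone(\iotaii_I)$, which is acyclic and h-injective, exactly as in (a)). Hence $[\iotaii_I]$ is an isomorphism in $[\ul\II(\mathcal{A})]$, yielding a natural isomorphism $\id_{[\ul\II(\mathcal{A})]} \xsira{} \ol{[\ii]} \circ \epsilon$.

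The main subtlety is the observation that a quasi-isomorphism between h-injective complexes is a homotopy equivalence; everything else is formal manipulation with the universal property of the Verdier quotient and the naturality of $\iotaii$. Uniqueness of $\ol{[\ii]}$ with the stated factorization property is immediate from the universal property of Verdier localization, and the fact that \eqref{eq:24} is a quasi-inverse then follows from the two natural isomorphisms constructed above.
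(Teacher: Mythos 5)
Your proof is correct and follows essentially the same route as the paper: acyclic h-injective complexes are contractible, so $[\ii]$ kills $[\ul\C_\ac(\mathcal{A})]$ and factors through the Verdier quotient, and $\ol{[\ii]}$ is checked to be quasi-inverse to \eqref{eq:24} before invoking Lemma~\ref{l:strong-drinfeld-quotient}. The only difference is cosmetic: you construct the two natural isomorphisms explicitly from $\iotaii$, where the paper appeals to the usual quasi-inverse construction for a fully faithful essentially surjective functor.
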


\begin{proof}
  The $\KK$-categories $\ul\C(\mathcal{A})$,
  $\ul\C_\ac(\mathcal{A})$,
  $\ul\II(\mathcal{A})$ are clearly $\mathscr{V}$-small (since
  they have objects in $\mathscr{U}$) and
  pretriangulated. 
  Since an acyclic h-injective complex is contractible, $[\ii]
  \colon [\ul\C(\mathcal{A})] \ra 
  [\ul\II(\mathcal{A})]$ maps acyclic objects to zero objects,
  and therefore induces a 
  unique functor 
  $\ol{[\ii]} \colon \D(\mathcal{A}) = 
  [\ul\C(\mathcal{A})]/[\ul\C_\ac(\mathcal{A})]
  \ra [\ul\II(\mathcal{A})]$
  by the universal property of the Verdier quotient.
  The usual construction of a quasi-inverse to a
  fully faithful essentially surjective functor can be used to
  show that 
  $\ol{[\ii]}$ is quasi-inverse 
  to \eqref{eq:24}.
  Now apply Lemma~\ref{l:strong-drinfeld-quotient} using the
  universe $\mathscr{V}$.
\end{proof}

\subsection{Ringed sites}
\label{sec:ringed-sites}

Let $(\mathcal{X},
\mathcal{O}=\mathcal{O}_\mathcal{X})$ be a ringed site 
\citestacks{04KQ}. We assume that it is $\mathscr{U}$-small,
i.\,e.\ its underlying category is $\mathscr{U}$-small, all
coverings of objects are $\mathscr{U}$-sets, and the sheaf
$\mathcal{O}_\mathcal{X}$ takes values in the category of
$\mathscr{U}$-small commutative rings.
We write $\Sh(\mathcal{X})$ for the category of sheaves of
$\mathscr{U}$-small sets
on the site $\mathcal{X}$. 
The category $\Mod(\mathcal{X})$ of sheaves of
$\mathscr{U}$-small $\mathcal{O}$-modules 
has $\mathscr{U}$-small $\Hom$-sets and objects in $\mathscr{U}$ 
(by Remark~\ref{rem:functor-categories-size})
and
is a Grothendieck abelian category 
\citestacks{07A5} with respect to the universe $\mathscr{U}$. We
usually say $\mathcal{O}$-module instead of 
sheaf of $\mathscr{U}$-small $\mathcal{O}$-modules. 
We abbreviate $\Hom_\mathcal{X}=\Hom_{\Mod(\mathcal{X})}$ and
$\otimes=\otimes_{\mathcal{O}}$ and
$\sheafHom=\sheafHom_{\mathcal{O}}$, and
write $\C(\mathcal{X})$ instead of $\C(\Mod(\mathcal{X}))$, and
$\C_\mathcal{X}(M,N)=\C_{\Mod(\mathcal{X})}(M,N)$, and similarly
for $\ul\C(\mathcal{X})$, 
$[\ul\C(\mathcal{X})]$, and $\D(\mathcal{X})$.
Sometimes we write $\Mod(\mathcal{X}, \mathcal{O})$,
\dots{}
instead of
$\Mod(\mathcal{X})$, \dots{} in order to avoid ambiguity.
All sites and ringed sites in the rest of this subsection are
assumed to be $\mathscr{U}$-small. 

\subsubsection{Injective model structure}
\label{sec:inject-model-struct}
   
If $(\mathcal{X},
\mathcal{O})$ is a ringed site,
$\C(\mathcal{X})$ carries
the injective 
model structure from \ref{sec:inject-model-struct-1}.
We abbreviate 
$\ul\II(\mathcal{X})=\ul\II(\Mod(\mathcal{X}))$.

\subsubsection{Extension-by-zero model structure}
\label{sec:extension-zero-model}

Following
\cite{cisinski-deglise-local-and-stable-homological-algebra}
we introduce a model structure on $\C(\mathcal{X})$ which
is useful for computing derived tensor products and
derived pullbacks.

Let $(\mathcal{X}, \mathcal{O})$ be a ringed site.
For $U \in \mathcal{X}$ 
the continuous and cocontinuous functor $j_U \colon
\mathcal{X}/U \ra \mathcal{X}$ \citestacks{00XZ} gives rise to a
morphism of ringed topoi
\begin{equation}
  \label{eq:103}
  (j_U, j_U^\sharp:=\id) \colon (\Sh(\mathcal{X}/U),
  \mathcal{O}_U:=\mathcal{O}_{\mathcal{X}/U}:=j\inv 
  \mathcal{O}) \ra (\Sh(\mathcal{X}), \mathcal{O}). 
\end{equation}
The
extension by zero
$j_{U!}\mathcal{O}_U 
\in \Mod(\mathcal{X})$
is a flat $\mathcal{O}$-module \citestacks{03EV}.

\begin{theorem}
  \label{t:E-model-structure}
  Let $(\mathcal{X}, \mathcal{O})$ be a 
  ringed site. 
  There is a model structure on
  $\C(\mathcal{X})$ 
  turning it into a 
  cofibrantly generated model category 
  whose weak equivalences are the
  quasi-isomorphisms and whose cofibrations are 
  the smallest set of morphisms in $\C(\mathcal{X})$ 
  that contains all morphisms of the form
  \begin{equation}
    \label{eq:433}
    [n]j_{U!}\mathcal{O}_U  
    \ra 
    \iCone([n]j_{U!}\mathcal{O}_U),
    \quad \quad \text{for $U \in \mathcal{X}$ and $n \in \bZ$,}
  \end{equation}
  and is closed under
  pushouts, retracts, and transfinite compositions.
\end{theorem}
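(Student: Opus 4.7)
The plan is to apply J.~Smith's recognition theorem for combinatorial model categories: given a locally presentable category $\ms{M}$, a set $I$, and an accessibly embedded accessible class $W$ of morphisms satisfying two-out-of-three and closure under retracts, a cofibrantly generated model structure exists with generating cofibrations $I$ and weak equivalences $W$ provided (1) $I$-inj $\subset W$ and (2) $W \cap I$-cof is closed under pushouts and transfinite compositions. I would take $\ms{M}=\C(\mathcal{X})$, which is locally presentable since $\Mod(\mathcal{X})$ is Grothendieck (cf.~\ref{sec:ringed-sites}), take $I$ as given in the statement, and take $W$ to be the class of quasi-isomorphisms; two-out-of-three, retract-closure, and accessibility of $W$ are standard.

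For (1), I would unwind the lifting condition using the adjunction identifications
\[
\C_\mathcal{X}([n]j_{U!}\mathcal{O}_U, M) \cong Z^{-n}(M(U)), \qquad
\C_\mathcal{X}(\iCone([n]j_{U!}\mathcal{O}_U), M) \cong \{(a,b) \in M^{-n}(U) \times M^{-n-1}(U) \mid da = 0,\ db = a\}.
\]
A lifting square then unpacks as the following problem: given a cycle $a \in Z^{-n}(A(U))$ and $b' \in B^{-n-1}(U)$ with $db' = f(a)$, find $\tilde b \in A^{-n-1}(U)$ with $d\tilde b = a$ and $f(\tilde b) = b'$. Solvability for all $U,n,a,b'$ forces $f$ to be sectionwise degreewise epic with sectionwise acyclic kernel, hence in particular a quasi-isomorphism.

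For (2), every morphism in $I$ is a degreewise split monomorphism, and monomorphisms in the abelian category $\C(\mathcal{X})$ are closed under pushouts, transfinite compositions, and retracts, so every $I$-cofibration is a monomorphism. A monomorphism is a quasi-isomorphism iff its cokernel is acyclic, and cokernels commute with pushouts and filtered colimits, so $W \cap I$-cof is closed under pushouts and transfinite compositions. The main obstacle is the bookkeeping in step (1) in the general site-theoretic setting; once done, Smith's theorem immediately yields the asserted model structure, which is the $\mathcal{E}$-model structure of \cite{cisinski-deglise-local-and-stable-homological-algebra}.
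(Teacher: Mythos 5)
Your proof is correct, but it takes a genuinely different route from the paper: the paper disposes of this theorem by citing \cite[Ex.~2.3, Thm.~2.5]{cisinski-deglise-local-and-stable-homological-algebra}, i.e.\ it invokes the Cisinski--D\'eglise machinery of descent structures, where the real work is matching the $\mathcal{G}$-weak equivalences with quasi-isomorphisms via hypercover descent (whence the paper's remark about the modern definition of hypercovers). You instead fix $W$ to be the quasi-isomorphisms from the outset and verify Smith's two conditions directly, which bypasses hypercovers entirely. Your key computation checks out: a map $f$ with the right lifting property against \eqref{eq:433} is surjective on $Z^{-n}(-(U))$ (take $a=0$), hence for arbitrary $b'\in B^{-n-1}(U)$ one can first lift the cycle $db'$ to some $a\in Z^{-n}(A(U))$ and then solve the lifting problem for the pair $(a,b')$, giving degreewise surjectivity on sections; injectivity on $H^\ast(-(U))$ follows similarly, so $f$ is a sectionwise quasi-isomorphism and therefore a quasi-isomorphism of complexes of sheaves (sheafification being exact). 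Your condition (2) argument via monomorphisms with acyclic cokernel, AB5, and invariance of cokernels under pushout is also fine. Two points you gloss over and should make explicit: the set-indexing of $I$ uses that $\mathcal{X}$ is (essentially) small, which is the paper's standing convention; and the accessibility of the class of quasi-isomorphisms in $\C(\mathcal{X})$, while standard, deserves a citation (e.g.\ \cite{beke-sheafifiable-homotopy-model-categories}, which the paper already uses for the injective model structure). What each approach buys: the citation to Cisinski--D\'eglise is shorter and additionally yields their Brown-representability-style consequences for the homotopy category, while your argument is self-contained, makes the generating (trivial) cofibrations and the identification of $I$-injectives completely explicit, and avoids any appeal to the theory of hypercovers.
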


\begin{proof}
  This follows from \cite[Ex.~2.3,
  Thm.~2.5]{cisinski-deglise-local-and-stable-homological-algebra}.
  Note that the modern definition of a hypercover used there (see
  e.\,g.\
  \cite{dugger-hollander-isaksen-hypercovers-simpl-presheaves})
  does not require that the site has fiber products.
\end{proof}

We call the model structure of Theorem~\ref{t:E-model-structure}
the \define{extension-by-zero model structure} or
\define{$\EE$-model structure} and call its (co)fibrations and
(co)fibrant objects $\EE$-(co)fibrations and $\EE$-(co)fibrant
objects, respectively. 
Note that $j_{U!}\mathcal{O}_U$ and all
its shifts are $\EE$-cofibrant.
Let 
$\ul\EE(\mathcal{X}) \subset \ul\C(\mathcal{X})$ denote the full
subcategory of $\EE$-cofibrant objects.

\begin{lemma}
  \label{l:from-CD}
  Let $(\mathcal{X}, \mathcal{O})$ be a
  ringed site. 
  Then $\iCone$ maps $\EE$-cofibrations to trivial
  $\EE$-cofibrations.
\end{lemma}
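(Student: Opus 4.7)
The plan is to adapt the strategy of Lemma~\ref{l:cone-ii-trivial-cofibration} to the $\EE$-model structure, even though $\EE$-cofibrations do not a priori admit a description as (degreewise split) monomorphisms with cofibrant cokernel. For any morphism $i\colon A\to B$ in $\C(\mathcal{X})$, both $\iCone(A)$ and $\iCone(B)$ are contractible, so $\iCone(i)$ is automatically a quasi-isomorphism, i.e.\ an $\EE$-weak equivalence. It therefore suffices to show that $\iCone(i)$ is an $\EE$-cofibration whenever $i$ is.

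The main step is to exploit the factorization~\eqref{eq:87a}
\[
  \iCone(i)\colon \Cone(\id_A) \xrightarrow{\Cone(\id_A,i)} \Cone(i) \xrightarrow{\Cone(i,\id_B)} \Cone(\id_B)
\]
and to identify \emph{both} factors as pushouts of $\EE$-cofibrations. The first factor is exactly the canonical pushout map $\iCone(A)\to B\;\subsqcup{A}\;\iCone(A)=\Cone(i)$, so it is a pushout of $i$; the monomorphism hypothesis on $i$ invoked in the proof of Lemma~\ref{l:cone-ii-trivial-cofibration} for the identification $B\;\subsqcup{A}\;\iCone(A)=\Cone(i)$ is in fact superfluous, since the map $A\to \iCone(A)\oplus B$ whose cokernel computes this pushout is always injective in its $A$-component. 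The second factor $\Cone(i,\id_B)\colon \Cone(i)\to \iCone(B)$ should be identified, by a direct computation of the pushout, with the pushout of $[1]i\colon [1]A\to [1]B$ along the canonical inclusion $[1]A\hookrightarrow \Cone(i)$.

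The remaining ingredient is that the class of $\EE$-cofibrations is stable under the shift functors $[n]$: the shifts merely reindex the generators in~\eqref{eq:433} and commute with pushouts, retracts, and transfinite compositions, so they preserve the smallest class generated by~\eqref{eq:433} under these operations. Consequently $[1]i$ is an $\EE$-cofibration whenever $i$ is, and so both factors in~\eqref{eq:87a} are pushouts of $\EE$-cofibrations and hence are themselves $\EE$-cofibrations. Composing them and combining with the automatic triviality observation finishes the proof.

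The main technical obstacle is the pushout identification of the second factor $\Cone(i,\id_B)$. At the level of underlying graded objects the computation is routine, but one has to verify that the differential induced on the pushout coincides with the standard cone differential on $\Cone(\id_B)$, which requires carefully unwinding the explicit cone formulas. Once that is done, everything else is formal closure properties of the generating class.
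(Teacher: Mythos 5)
There is a genuine gap in your treatment of the second factor of the factorization \eqref{eq:87a}. You claim that $\Cone(i,\id_B)\colon \Cone(i)\ra\iCone(B)$ is a pushout of $[1]i\colon [1]A\ra[1]B$ along ``the canonical inclusion $[1]A\hra\Cone(i)$''. No such inclusion exists in $\C(\mathcal{X})$: with the convention $\Cone(i)=B\oplus[1]A$ and $d_{\Cone(i)}=\big(\begin{smallmatrix} d_B & i\\ 0 & d_{[1]A}\end{smallmatrix}\big)$, the graded-summand inclusion $a\mapsto(0,a)$ satisfies $d(0,a)=(i(a),d_{[1]A}a)$, so it commutes with the differentials only when $i=0$; likewise there is no chain map $[1]B\ra\Cone(\id_B)$ hitting the second summand unless $B$ is contractible. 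So the square you want to push out along cannot even be formed, and the issue is not (as you suggest at the end) merely a matter of verifying that an induced differential agrees with the cone differential. Nor can you fall back on Lemma~\ref{l:characterize-E-cofibrations} (``degreewise split mono with $\EE$-cofibrant cokernel $[1]\Kokern(i)$''), since the converse direction of that lemma is proved \emph{using} the present statement — that route is circular.

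The paper's proof agrees with yours on everything else: triviality is automatic because source and target of $\iCone(i)$ are contractible, and the first factor $\iCone(A)\ra B\;\subsqcup{A}\;\iCone(A)$ is a pushout of $i$ and hence an $\EE$-cofibration (your observation that the monomorphism hypothesis is not needed for this pushout identification is correct, since one pushes out along the split monomorphism $A\ra\iCone(A)$). But for the corner map $B\;\subsqcup{A}\;\iCone(A)\ra\iCone(B)$ the paper invokes the proof of \cite[Lemma~2.7]{cisinski-deglise-local-and-stable-homological-algebra}. That argument is genuinely nontrivial: one shows that the class of morphisms $i$ whose corner map is an $\EE$-cofibration contains the generators \eqref{eq:433} (for which the corner map must be computed explicitly) and is closed under pushouts, retracts, and transfinite compositions (which requires tracking how the corner-map construction interacts with these operations). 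None of this is supplied by formal closure properties plus shift-stability, so your proposal is missing the essential step.
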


\begin{proof}
  Let $i \colon A \ra B$ be an $\EE$-cofibration.
  Consider the factorization
  \begin{equation}
    \label{eq:87aa}
    \iCone(i) \colon \iCone(A) \ra B \;\subsqcup{A}\; \iCone(A)
    \ra \iCone(B) 
  \end{equation}
  obtained from the commutative diagram \eqref{eq:50a}.  As a
  pushout of an $\EE$-cofibration, the first morphism is a
  $\EE$-cofibration. The second morphism is an
  $\EE$-cofibration, by the proof of
  \cite[Lemma~2.7]{cisinski-deglise-local-and-stable-homological-algebra}.
  Therefore the composition is an
  $\EE$-cofibration; it has acyclic source and target and is
  therefore trivial.
\end{proof}


\begin{lemma}
  \label{l:characterize-E-cofibrations}
  Let $(\mathcal{X}, \mathcal{O})$ be a ringed site.
  A morphism $A \ra B$ in $\C(\mathcal{X})$ is an
  $\EE$-cofibration if and only if it is a degreewise split
  monomorphism with $\EE$-cofibrant cokernel.
\end{lemma}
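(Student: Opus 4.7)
My plan is to establish the two inclusions separately. Let $\mathcal{C}$ denote the class of degreewise split monomorphisms in $\C(\mathcal{X})$ with $\EE$-cofibrant cokernel. I will first show that every morphism in $\mathcal{C}$ is an $\EE$-cofibration via a pushout construction, and then obtain the converse by a closure argument. The crucial auxiliary fact is that for every $\EE$-cofibrant object $X$, the canonical inclusion $X \to \iCone(X)$ is an $\EE$-cofibration. This is immediate from the statement from \cite[Lem.~2.7]{cisinski-deglise-local-and-stable-homological-algebra} that is already invoked inside the proof of Lemma~\ref{l:from-CD}, namely that for every $\EE$-cofibration $i \colon A \to B$ the induced morphism $B \sqcup_A \iCone(A) \to \iCone(B)$ is an $\EE$-cofibration: applying this to $A = 0$ and $B = X$ degenerates it to the desired morphism $X \to \iCone(X)$.

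To show that every morphism in $\mathcal{C}$ is an $\EE$-cofibration, let $f \colon A \to B$ be a degreewise split monomorphism with $\EE$-cofibrant cokernel $C$. A graded splitting of $B \to C$ identifies $B$ with $A \oplus C$ as graded object, with twisted differential encoded by a closed degree zero morphism $h \colon [-1]C \to A$; in other words, $B \cong \Cone(h)$. A direct check then exhibits $f$ as the pushout of the canonical inclusion $[-1]C \to \iCone([-1]C)$ along $h$. Since the set of generating $\EE$-cofibrations of Theorem~\ref{t:E-model-structure} is stable under shifts, $[-1]C$ is $\EE$-cofibrant; by the auxiliary fact above, the left-hand arrow of this pushout square is an $\EE$-cofibration, and hence so is its pushout $f$.

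For the reverse inclusion I verify that $\mathcal{C}$ contains the generating cofibrations and is closed under pushouts, retracts, and transfinite compositions. The generating morphism $[n]j_{U!}\mathcal{O}_U \to \iCone([n]j_{U!}\mathcal{O}_U) = [n]j_{U!}\mathcal{O}_U \oplus [n+1]j_{U!}\mathcal{O}_U$ is manifestly degreewise split with $\EE$-cofibrant cokernel $[n+1]j_{U!}\mathcal{O}_U$. Pushouts and retracts preserve both the degreewise splitting and the cokernel (the latter up to retract, which keeps it $\EE$-cofibrant). For a transfinite composition $A_0 \to \cdots \to A_\lambda$ with each step in $\mathcal{C}$, compatible graded splittings are constructed by transfinite induction, using that filtered colimits in the Grothendieck abelian category $\Mod(\mathcal{X})$ commute with direct sums; the cokernel $A_\lambda/A_0 = \colim A_\alpha/A_0$ is built as the transfinite composition of morphisms again lying in $\mathcal{C}$, and by the direction just proved these are $\EE$-cofibrations, so starting from $0$ this exhibits $A_\lambda/A_0$ as $\EE$-cofibrant.

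The main obstacle is the pushout identification in the second paragraph: one has to verify that the graded decomposition $B \cong A \oplus C$ induced by the chosen splitting, together with the twisting chain map $h$, really makes $f$ the pushout of $[-1]C \to \iCone([-1]C)$ along $h$, with differentials matching up to the cone sign conventions on both sides. Everything else reduces to routine bookkeeping with pushouts and filtered colimits in $\Mod(\mathcal{X})$.
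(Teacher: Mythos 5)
Your proof is correct and follows essentially the same route as the paper's: the same identification of a degreewise split monomorphism with $\EE$-cofibrant cokernel $C$ as a pushout of $[-1]C \ra \iCone([-1]C)$ along a chain map $[-1]C \ra A$ (with $[-1]C \ra \iCone([-1]C)$ seen to be an $\EE$-cofibration by applying the Cisinski--D\'eglise statement to $0 \ra [-1]C$), and the same closure argument for the converse. The only cosmetic difference is that the paper obtains $\EE$-cofibrancy of the cokernel directly by pushing the $\EE$-cofibration out along $A \ra 0$, instead of carrying the cokernel condition through the closure argument as you do in the transfinite-composition step.
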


\begin{proof}
  The set of all degreewise split monomorphisms in
  $\C(\mathcal{X})$ is closed under push\-outs, retracts, and
  transfinite compositions (by Grothendieck's AB5
  condition). Since it contains all morphisms
  in \eqref{eq:433} any $\EE$-cofibration is a degreewise split
  monomorphism. If $i \colon A \ra B$ is an $\EE$-cofibration,
  its pushout along $A \ra 0$ is the $\EE$-cofibration $0 \ra
  \Kokern(i)$. Hence $\Kokern(i)$ is $\EE$-cofibrant.

  Conversely, let $i \colon A \ra B$ be a degreewise split
  monomorphism with 
  $\EE$-cofibrant cokernel $\Kokern(i)$.
  The shift $C:=[-1] \Kokern(i)$ is also $\EE$-cofibrant.
  Since $i$ is degreewise split, 
  $i$ is isomorphic to  
  the obvious inclusion $A \hra A \oplus [1]C$ in
  $\Mod(\mathcal{X})^\bZ$, and
  there 
  is a morphism $m \colon C \ra A$ in $\C(\mathcal{X})$ 
  such that $i$ is isomorphic to the canonical morphism
  $A \ra \Cone(m)$.
  But 
  \begin{equation}
    \label{eq:22}
    \xymatrix{
      {C} \ar[d] \ar[r]^-{m} 
      & 
      {A} \ar[d]\\
      {\iCone(C)} \ar[r]^-{m \oplus \id} 
      &
      {\Cone(m)}
    }
  \end{equation}
  is a pushout diagram whose
  left vertical arrow is 
  an $\EE$-cofibration: in the proof of 
  Lemma~\ref{l:from-CD} we have seen that the morphism 
  $B' \;\subsqcup{A'}\; \iCone(A') \ra \iCone(B')$
  is an $\EE$-cofibration if $A' \ra B'$ is an $\EE$-cofibration;
  apply this to $0 \ra C$. This implies that $A \ra \Cone(m)$ is
  an $\EE$-cofibration, and so is $i$.
\end{proof}



\begin{lemma}
  \label{l:E-cofibrant-(pullback)-h-flat}
  Let $(\mathcal{X}, \mathcal{O})$ be a ringed site.
  If $E$ 
  is an $\EE$-cofibrant object, then
  $E$ is an h-flat complex of $\mathcal{O}$-modules with flat
  components.   
\end{lemma}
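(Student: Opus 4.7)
The plan is to verify directly that the class $\mathcal{F}$ of complexes in $\C(\mathcal{X})$ which are h-flat and have flat components is closed under all the operations used to generate $\EE$-cofibrant objects from the generating cofibrations of Theorem~\ref{t:E-model-structure}, starting from $0$. First I would record the base cases: by \citestacks{03EV}, $j_{U!}\mathcal{O}_U$ is a flat $\mathcal{O}$-module, so for every $n \in \DZ$ the complex $[n]j_{U!}\mathcal{O}_U$ lies in $\mathcal{F}$ (a single flat module concentrated in one degree is trivially h-flat with flat components), and $\iCone([n]j_{U!}\mathcal{O}_U)$ is contractible (hence h-flat) with components equal to $j_{U!}\mathcal{O}_U$, hence also lies in $\mathcal{F}$.

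Next I would establish the stability properties of $\mathcal{F}$. Arbitrary coproducts: flat modules are closed under coproducts and tensor product commutes with coproducts, so $\mathcal{F}$ is closed under coproducts. Degreewise split extensions: if $0 \to A \to B \to C \to 0$ is short exact in $\C(\mathcal{X})$ and split as a sequence of graded objects, and $A, C \in \mathcal{F}$, then $B \in \mathcal{F}$, because $B^n \cong A^n \oplus C^n$ gives flat components and tensoring with any acyclic complex $M$ yields a degreewise split short exact sequence $0 \to A \otimes M \to B \otimes M \to C \otimes M \to 0$, which is acyclic since its outer terms are. Retracts: clear since retracts preserve both flatness and h-flatness.

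Then by the small object argument applied to the generating cofibrations of the $\EE$-model structure, every $\EE$-cofibrant object $E$ is a retract of an object built as a transfinite composition $0 = X_0 \to X_1 \to \cdots \to X_\alpha \to \cdots$ where each transition $X_\alpha \to X_{\alpha+1}$ is a pushout of a coproduct of morphisms $[n]j_{U!}\mathcal{O}_U \to \iCone([n]j_{U!}\mathcal{O}_U)$. Each such pushout is a degreewise split monomorphism (as in the proof of Lemma~\ref{l:characterize-E-cofibrations}) whose cokernel is a coproduct of shifts $[n+1]j_{U!}\mathcal{O}_U$, hence lies in $\mathcal{F}$. A transfinite induction using the closure properties above then shows every $X_\alpha$ lies in $\mathcal{F}$, and taking retracts concludes the proof.

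The main technical obstacle is the verification that $\mathcal{F}$ is closed under transfinite compositions along degreewise split monomorphisms with successive cokernels in $\mathcal{F}$: flatness of components at the colimit follows from exactness of filtered colimits in the Grothendieck abelian category $\Mod(\mathcal{X})$, so $(\colim X_\alpha)^n = \colim X_\alpha^n$ is a filtered colimit of flat modules; h-flatness of the colimit follows from tensoring the telescope presentation with an arbitrary acyclic complex $M$ and using that filtered colimits commute with tensor products and preserve acyclicity, reducing to the already established acyclicity of each $X_\alpha \otimes M$ (itself obtained by induction from the degreewise split extension property). This part of the argument is where the specific choice of generating cofibrations in Theorem~\ref{t:E-model-structure}—which forces cofibrations to be degreewise split with well-controlled cokernels—is essential.
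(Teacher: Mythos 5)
Your proof is correct and follows essentially the same route as the paper: the paper's proof considers the class of degreewise split monomorphisms whose cokernel is h-flat with flat components, checks that it contains the generating morphisms \eqref{eq:433} and is closed under pushouts, retracts, and transfinite compositions (via AB5), and concludes that it contains $0 \ra E$. Your detour through the small object argument is unnecessary — Theorem~\ref{t:E-model-structure} already defines the $\EE$-cofibrations as the smallest class closed under those operations — but the substance of the argument is the same.
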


\begin{proof}
  All morphisms in \eqref{eq:433} are degreewise split
  monomorphisms with h-flat and componentwise flat cokernel
  $[n+1]j_{U!}\mathcal{O}_U$.
  Therefore the set of all degreewise split monomorphisms
  $A \xra{m} B$ in $\C(\mathcal{X})$ such that
  $\Kokern(m)$ is h-flat and componentwise flat
  contains
  all morphisms in \eqref{eq:433}. Moreover, it is closed under
  pushouts, 
  retracts, and transfinite compositions (by Grothendieck's AB5
  condition). Therefore it contains all $\EE$-cofibrations and in
  particular $0 \ra E$.
\end{proof}

\begin{remark}
  \label{rem:E-cofibrant-for-tensor}
  Lemma~\ref{l:E-cofibrant-(pullback)-h-flat}
  implies that derived tensor
  products and derived pullbacks can be computed using
  $\EE$-cofibrant resolutions, cf.\
  Propositions~\ref{p:spalt-6.1+5-sites}
  and \ref{p:spalt-6.7a-sites}.
\end{remark}


\subsubsection{The final topos}
\label{sec:final-topos}

Let $\pt$ be the punctual
site. 
Sheaves and presheaves of $\mathscr{U}$-small sets on $\pt$ are
just $\mathscr{U}$-small sets. 
Let $\mathcal{X}$ be a site.
Consider the morphism of topoi
\begin{equation}
  \label{eq:53a}
  \sigma = (\sigma\inv, \sigma_*) \colon \Sh(\mathcal{X}) \ra
  \Sh(\pt)=\Sets_\mathscr{U}, 
\end{equation}
where 
$\sigma\inv \colon \Sets_\mathscr{U}=\Sh(\pt) \ra
\Sh(\mathcal{X})$ is the 
exact and colimit preserving functor that maps a set $S$ to the constant sheaf
$\sigma\inv S$ with value $S$,
and where
$\sigma_* = \Gamma= \Gamma(\mathcal{X},-) \colon \Sh(\mathcal{X})
\ra 
\Sh(\pt)=\Sets_\mathscr{U}$ is the global section functor, see
\citestacks{06UN}. Up to unique isomorphism, $\sigma$ is the
unique morphism of topoi $\Sh(\mathcal{X}) \ra
\Sh(\pt)=\Sets_\mathscr{U}$, 
by \cite[Exp.~IV, 4.3]{SGA4-1}.

\subsubsection{Ringed sites and topoi over a ring}
\label{sec:ringed-sites-over}

Let $\R$ be a commutative ring.

\begin{definition}
  \label{d:k-ringed-site}
  An 
  \define{$\R$-ringed site} 
  or
  \define{ringed site over $\R$} 
  is a ringed site $(\mathcal{X}, \mathcal{O})$ such that
  $\mathcal{O}$ is a sheaf of commutative $\R$-algebras.
  The category of
  $\R$-ringed sites is defined in the obvious way, cf.\
  \citestacks{04KQ}. 
\end{definition}

For example, the ringed site $(\pt, \R)$ is an $\R$-ringed site.
A sheaf of $\mathscr{U}$-small $\R$-modules on $(\pt, \R)$ is a
$\mathscr{U}$-small $\R$-module, 
so
$\Mod(\pt, \R)=\Mod(\R)$ and $\C(\pt)=\RR=\C(\R)$.

If $(\mathcal{X}, \mathcal{O})$ is an $\R$-ringed site there is a canonical 
morphism 
$\sigma^\sharp \colon \sigma\inv \R \ra \mathcal{O}$
of sheaves of rings turning 
\eqref{eq:53a}
into a
morphism
\begin{equation}
  \label{eq:56}
  (\sigma, \sigma^\sharp) \colon (\Sh(\mathcal{X}), \mathcal{O})
  \ra 
  (\Sh(\pt), \R)
\end{equation}
of ringed topoi. 
Conversely, if $(\mathcal{X}, \mathcal{O})$ is a ringed
site and 
$\rho \colon \sigma\inv\R \ra \mathcal{O}$ is a 
morphism of
sheaves of rings, then $(\mathcal{X}, \mathcal{O})$ becomes
canonically an $\R$-ringed site. These two constructions are
obviously inverse to each other. 
\fussnote{
  More precisely, the category of $\R$-ringed sites is equivalent
  to the category whose objects are pairs consisting of a
  ringed site $(\mathcal{X},
  \mathcal{O})$ and a morphism $(\Sh(\mathcal{X}),
  \mathcal{O}) \ra (\Sh(\pt), \R)$ of ringed topoi, and whose
  morphisms are morphisms of ringed sites such that the obvious
  diagram of morphisms of ringed topoi commutes.
  
  Analog sollte gehen:
  The category of $\R$-ringed topoi is equivalent ...
}

\begin{definition}
  \label{d:k-ringed-topos}
  An \define{$\R$-ringed topos} or 
  \define{ringed topos over $\R$}
  is a pair $(\Sh(\mathcal{X}), \mathcal{O})$ where
  $(\mathcal{X}, \mathcal{O})$ is a
  $\R$-ringed site. 
  The category of
  $\R$-ringed topoi is defined in the obvious way, cf.\
  \citestacks{01D3}.  
\end{definition}

\subsubsection{Enriched completeness}
\label{sec:enrich-compl}


If $(\mathcal{X}, \mathcal{O})$ is a ringed site over 
$\R$ then 
$\ul\C(\mathcal{X})$ is an $\RR$-category with underlying
category $\C(\mathcal{X})$.  

\begin{lemma}
  \label{l:CX-tensored-and-cotensored-R-bicomplete}
  Let $(\mathcal{X}, \mathcal{O})$ be a ringed
  site over $\R$.
  Then $\ul\C(\mathcal{X})$ is tensored and cotensored, and 
  $\RR$-bicomplete. 
\end{lemma}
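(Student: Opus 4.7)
The plan is to invoke \cite[7.6]{riehl-categorical-homotopy-theory}: since $\Mod(\mathcal{X})$ is Grothendieck abelian (see \ref{sec:ringed-sites}), the category $\C(\mathcal{X})$ is again Grothendieck abelian by \ref{sec:groth-abel-categ} and hence bicomplete; thus $\RR$-bicompleteness reduces to constructing a tensor $\odot$ and a cotensor $\Psi$ satisfying \eqref{eq:tensored} and \eqref{eq:cotensored}. Everything is controlled by the structure morphism $\sigma^\sharp \colon \sigma\inv\R \ra \mathcal{O}$ of \eqref{eq:56}: every $\mathcal{O}$-module becomes a $\sigma\inv\R$-module, and its sections over each $U$ carry canonical $\R$-module structures, which is precisely what lets us pair them with objects of $\RR$.

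For the tensor I would set $M \odot R$ to be the total complex (with direct sums) of the bicomplex whose $(p,q)$-term is the sheaf tensor product $M^p \otimes_\R R^q$, viewed as an $\mathcal{O}$-module via its first factor; equivalently, $M \odot R = M \otimes_{\sigma\inv\R} \sigma\inv R$ with the usual Koszul differential. Bifunctoriality in $M$ and $R$ is immediate. The required isomorphism \eqref{eq:tensored} unpacks in each cohomological degree into the classical adjunction $\Hom_\mathcal{X}(M^p \otimes_\R R^q, N^r) \cong \Hom_\R(R^q, \Hom_\mathcal{X}(M^p, N^r))$, valid because $R^q$ carries only its $\R$-structure. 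Assembling these identifications over all bidegrees and checking that the two differentials correspond on the nose yields the enriched adjunction.

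For the cotensor, define
\begin{equation*}
  \Psi(R, N)^n \;:=\; \prod_{p \in \DZ} \sheafHom_\R(R^p, N^{n+p}),
\end{equation*}
where $\sheafHom_\R(R^p, N^{n+p})$ denotes the presheaf $U \mapsto \Hom_\R(R^p, N^{n+p}(U))$; this is automatically a sheaf, because $\Hom_\R(R^p, -)$ preserves the equalizer limits encoding the sheaf condition on $N^{n+p}$, and it inherits an $\mathcal{O}$-action from $N^{n+p}$. With the standard Koszul differential, the verification of \eqref{eq:cotensored} is entirely dual to the tensor case, using $\Hom_\R(R^q, \Hom_\mathcal{X}(M^p, N^r)) \cong \Hom_\mathcal{X}(M^p, \sheafHom_\R(R^q, N^r))$ on sections. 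The only real labour is bookkeeping Koszul signs and keeping track of sheafification (for $\odot$) and of the product-of-equalizers (for $\Psi$) so that the enriched adjunctions are morphisms of complexes rather than merely of graded modules; there is no conceptual obstacle. Once $\odot$ and $\Psi$ are in hand, \cite[7.6]{riehl-categorical-homotopy-theory} delivers $\RR$-bicompleteness of $\ul\C(\mathcal{X})$.
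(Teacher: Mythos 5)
Your proof is correct and is essentially the paper's argument: your tensor $M \odot R = M \otimes_{\sigma\inv\R} \sigma\inv R$ and cotensor $\Psi(R,N)$ are exactly the paper's $M \otimes \sigma^* R$ and $\sheafHom(\sigma^* R, N)$ for the structure morphism $\sigma$ of \eqref{eq:56}, and the final step is the same appeal to \cite[7.6]{riehl-categorical-homotopy-theory} together with $\C(\mathcal{X})$ being Grothendieck abelian. The only difference is presentational: you verify the adjunctions degreewise by hand, whereas the paper obtains \eqref{eq:tensored} and \eqref{eq:cotensored} in one stroke by composing the enriched adjunctions $(\sigma^*, \sigma_*)$ and $(\otimes, \sheafHom)$ and using $\ul\C_\mathcal{X}(M,N) = \sigma_*\sheafHom(M,N)$.
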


\begin{proof}
  The morphism \eqref{eq:56} of ringed topoi associated to
  $(\mathcal{X}, \mathcal{O})$ gives rise to an adjoint pair
  $(\sigma^*, \sigma_*)$ of $\RR$-functors
  $\sigma^* \colon 
  \ul\RR 
  \rightleftarrows
  \ul\C(\mathcal{X}) \colon \sigma_*$.
  Using this adjunction and the
  $(\otimes, \sheafHom)$-adjunction we obtain natural isomorphisms
  \begin{equation}
    \label{eq:9}
    \ul\RR(R, \sigma_* \sheafHom(M,N)) \cong
    \ul\C_\mathcal{X}(M \otimes \sigma^*R, N)
    \cong \ul\C_\mathcal{X}(M, \sheafHom(\sigma^*R,N))
  \end{equation}
  in  
  $R \in \RR$ and $M,N \in \C(\mathcal{X})$.
  Observe that
  \begin{equation}
    \label{eq:15}
    \ul\C_\mathcal{X}(M,N)=\sigma_* \sheafHom(M,N).
  \end{equation}
  Therefore the isomorphisms \eqref{eq:9} show that
  the $\RR$-category
  $\ul\C(\mathcal{X})$ is tensored and cotensored (cf.\ 
  \eqref{eq:tensored}, \eqref{eq:cotensored}).
  Note that $\C(\mathcal{X})$ is a Grothendieck abelian category
  and therefore bicomplete. This implies that $\ul\C(\mathcal{X})$
  is $\RR$-bicomplete, by 
  \ref{sec:tens-cotens-compl}.
\end{proof}

\begin{remark}
  The fact that $\ul\C(\mathcal{X})$ is tensored
  and cotensored can more formally be deduced as follows.
  The tensor product $\otimes$ turns both categories
  $\C(\mathcal{X})$ and $\RR$ into closed
  symmetric monoidal categories.
  In particular, these two categories are
  enriched, tensored and cotensored over themselves.
  The functor $\sigma^* \colon \RR \ra \C(\mathcal{X})$ is
  strong monoidal by \citestacks{03EL}.
  Therefore 
  $\C(\mathcal{X})$ canonically becomes a tensored and cotensored
  $\RR$-category $\ul\C(\mathcal{X})$, by
  \cite[Thm.~3.7.11]{riehl-categorical-homotopy-theory}.
\end{remark}

\subsubsection{Dg
\texorpdfstring{$\kk$}{k}-enriched functorial factorizations for
\texorpdfstring{$\kk$}{k}-ringed sites}
\label{sec:enrich-funct-resol-for-k-ringed-sites}

\begin{theorem}
  \label{t:enriched-functorial-fact-k-ringed-site}
  Let $(\mathcal{X}, \mathcal{O})$ be a $\mathscr{U}$-small ringed
  site over a field $\kk$.
  Then the $\II$-model structure and the $\EE$-model
  structure on 
  $\C(\mathcal{X})$
  admit $\KK$-enriched functorial factorizations.
\end{theorem}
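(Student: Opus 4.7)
The plan is to verify the hypotheses of Theorem~\ref{t:existence-enriched-functorial-fact} for both model structures on $\C(\mathcal{X})$ and invoke it. In particular, I need to check, for each of the $\II$-model structure and the $\EE$-model structure, that (i) $\ul\C(\mathcal{X})$ is strongly pretriangulated, (ii) $\ul\C(\mathcal{X})$ is $\KK$-bicomplete, (iii) the underlying model structure on $\C(\mathcal{X})$ is cofibrantly generated, (iv) cofibrations and trivial cofibrations are stable under shifts $[n]$, and (v) cofibrations and trivial cofibrations are stable under $\iCone$.

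For both structures, condition (i) is Example~\ref{exam:strongly-pretriang-dgcat}, and condition (ii) is Lemma~\ref{l:CX-tensored-and-cotensored-R-bicomplete} (applied with $\R=\kk$, so $\RR=\KK$). For (iii), the $\II$-model structure is cofibrantly generated by the discussion in \ref{sec:inject-model-struct-1} (via \cite{beke-sheafifiable-homotopy-model-categories} or \cite{gillespie-kaplansky-classes}), and the $\EE$-model structure is cofibrantly generated by the statement of Theorem~\ref{t:E-model-structure}. For (iv), shift stability is immediate: in the $\II$-model structure, (trivial) cofibrations are (quasi-iso) monomorphisms, and both classes are preserved by the exact autoequivalence $[n]$; in the $\EE$-model structure, the shifts of the generators $[n]j_{U!}\mathcal{O}_U \to \iCone([n]j_{U!}\mathcal{O}_U)$ lie in the generating set, so shifting preserves the class of $\EE$-cofibrations, and trivial $\EE$-cofibrations are $\EE$-cofibrations that are quasi-isomorphisms, which is also shift-stable.

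The only nontrivial input is condition (v), but this has already been established in both cases: Lemma~\ref{l:cone-II-model} says $\iCone$ sends $\II$-cofibrations to trivial $\II$-cofibrations, and Lemma~\ref{l:from-CD} says $\iCone$ sends $\EE$-cofibrations to trivial $\EE$-cofibrations. In each case, the image of a trivial cofibration under $\iCone$ is also a trivial cofibration, since it is a cofibration (by the above) and automatically a weak equivalence (both source and target of $\iCone$ are contractible, so $\iCone$ applied to any morphism is a quasi-isomorphism).

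With all five hypotheses verified, Theorem~\ref{t:existence-enriched-functorial-fact} applies and yields $\KK$-enriched functorial factorizations for both the $\II$-model structure and the $\EE$-model structure on $\C(\mathcal{X})$. I expect no serious obstacle: the theorem is essentially a packaging result, and the substantive work has been done in Lemmas~\ref{l:cone-II-model} and \ref{l:from-CD} together with the enriched bicompleteness Lemma~\ref{l:CX-tensored-and-cotensored-R-bicomplete}.
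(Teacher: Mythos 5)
Your proof is correct and follows essentially the same route as the paper: verify the hypotheses of Theorem~\ref{t:existence-enriched-functorial-fact} using Lemma~\ref{l:CX-tensored-and-cotensored-R-bicomplete} for $\KK$-bicompleteness and Lemmas~\ref{l:cone-II-model} and \ref{l:from-CD} for the $\iCone$-stability, with the trivial-cofibration case handled exactly as in the Remark following that theorem. The only cosmetic difference is that the paper routes the $\II$-case through the general Grothendieck-abelian-category statement (Theorem~\ref{t:enriched-functorial-fact-grothendieck-abelian}) rather than verifying the hypotheses directly, but the underlying argument is identical.
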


\begin{proof}
  Obviously, $\ul\C(\mathcal{X})$ is strongly
  pretriangulated. It is $\KK$-bicomplete by 
  Lemma~\ref{l:CX-tensored-and-cotensored-R-bicomplete}.
  The claim for the $\II$-model structure follows from
  Theorem~\ref{t:enriched-functorial-fact-grothendieck-abelian},
  and the same line of argument works for the $\EE$-model
  structure: 
  it is
  cofibrantly generated, its (trivial) cofibrations are stable
  under all shifts, and Lemma \ref{l:from-CD} holds; therefore 
  Theorem~\ref{t:existence-enriched-functorial-fact} applies.
\end{proof}


\subsection{Modules over dg categories}
\label{sec:modules-over-dg}

Recall our notation for module categories over $\RR$-categories
from 
\ref{sec:categ-dg-modul}.

\subsubsection{Model structures}
\label{sec:model-structures}

Let $\ulms{C}$ be an $\RR$-category. We always assume that it is 
$\mathscr{U}$-small in the rest of this subsection.
The category $\MMod(\ulms{C})$ has $\mathscr{U}$-small
$\Hom$-sets, objects in $\mathscr{U}$, and all $\mathscr{U}$-small
colimits and limits. It has two well-known model
structures whose weak equivalences are the quasi-isomorphisms
(\cite[Thm.~3.2]{keller-on-dg-categories-ICM}, \cite[Thm.~2.2]{valery-olaf-smoothness-equivariant}):
\begin{enumerate}
\item The injective model structure whose cofibrations are the
  monomorphisms. We call this model structure the
  \define{$\II$-model structure} and its 
  (co)fibrations and
  (co)fibrant objects $\II$-(co)fibrations and $\II$-(co)fibrant
  objects, respectively.
\item The projective model structure whose fibrations are
  the epimorphisms. We call this model structure the
  \define{$\PP$-model structure} and its 
  (co)fibrations and
  (co)fibrant objects $\PP$-(co)fibrations and $\PP$-(co)fibrant
  objects, respectively. 
\end{enumerate}
Each of these model structures turns $\MMod(\ulms{C})$ into a
cofibrantly generated model category.
Any $\II$-fibrant object is h-injective,
and all objects are $\II$-cofibrant. 
Any $\PP$-cofibrant object is h-projective 
and h-flat
(\cite[Lemmas~2.6, 2.8]{valery-olaf-smoothness-equivariant}),
and all objects are $\PP$-fibrant.
We write $\ul\IIMod(\ulms{C})$ (resp.\
$\ul\PPMod(\ulms{C})$)
for the full $\RR$-subcategory of
$\ul\MMod(\ulms{C})$ consisting of $\II$-fibrant (resp.\
$\PP$-cofibrant) objects.

\begin{lemma}
  \label{l:cone-for-II-and-PP-model-for-dg-modules}
  Let $\ulms{C}$ be an $\RR$-category.  
  Then $\iCone$ maps $\II$-cofibrations (resp.\
  $\PP$-cofibrations) to trivial 
  $\II$-cofibrations (resp.\ trivial $\PP$-cofibrations).
\end{lemma}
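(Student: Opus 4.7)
The plan is to reduce both cases to (the proof of) Lemma~\ref{l:cone-ii-trivial-cofibration}. Although that lemma is stated for $\C(\mathcal{A})$ with $\mathcal{A}$ Grothendieck abelian, its proof uses only that the ambient category is a Grothendieck abelian category equipped with pointwise shift and cone functors, together with the three hypotheses: (i) the weak equivalences are the quasi-isomorphisms, (ii) cofibrant objects are stable under all shifts, and (iii) cofibrations are characterized as monomorphisms (resp.\ degreewise split monomorphisms) with cofibrant cokernel. The underlying category $\MMod(\ulms{C})$ is Grothendieck abelian with the structure of a complex category in the graded $\ulms{C}$-modules, so the same proof will go through once (i)--(iii) are verified.

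For the $\II$-model structure, (i) is given, and because every object of $\MMod(\ulms{C})$ is $\II$-cofibrant, both (ii) and the "monomorphism with cofibrant cokernel" version of (iii) are automatic from the definition that $\II$-cofibrations are precisely the monomorphisms. Hence the proof of Lemma~\ref{l:cone-ii-trivial-cofibration} applies verbatim to the $\II$-model structure.

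For the $\PP$-model structure I would invoke the standard description of the generating $\PP$-cofibrations as the morphisms
\begin{equation*}
[n]\Yo(C) \hra \iCone([n]\Yo(C)), \quad C \in \ulms{C}, \; n \in \DZ,
\end{equation*}
together with $0 \hra [n]\Yo(C)$. Each such generator is a degreewise split monomorphism whose cokernel is representable up to shift, hence $\PP$-cofibrant. The class of degreewise split monomorphisms with $\PP$-cofibrant cokernel is closed under pushouts, retracts, and transfinite compositions (the argument is parallel to Lemma~\ref{l:characterize-E-cofibrations}), so every $\PP$-cofibration lies in this class, which gives the "degreewise split" version of (iii). Stability of $\PP$-cofibrant objects under shifts (ii) follows from the fact that shifts of representables are representable up to relabeling and that the generating cofibrations are closed under shift.

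With (i)--(iii) established in both cases, I apply the proof of Lemma~\ref{l:cone-ii-trivial-cofibration}: for a cofibration $i \colon A \to B$, the factorization $\iCone(i) \colon \iCone(A) \to \Cone(i) \to \iCone(B)$ has first map a pushout of $i$ along $A \to \iCone(A)$, and second map a (degreewise split) monomorphism with cofibrant cokernel $[1]\Kokern(i)$; both factors are cofibrations in the appropriate model structure, so $\iCone(i)$ is a cofibration, and it is trivial because $\iCone(A)$ and $\iCone(B)$ are contractible. The only nontrivial point is the characterization of $\PP$-cofibrations via degreewise split monomorphisms with $\PP$-cofibrant cokernel, and this is a standard consequence of the explicit description of the generating $\PP$-cofibrations above.
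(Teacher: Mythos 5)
Your proof is correct and follows essentially the same route as the paper: the paper's own argument simply records that the $\II$-cofibrations are the monomorphisms and the $\PP$-cofibrations are the monomorphisms with $\PP$-cofibrant cokernel (citing the reference for the latter), and then adapts the proof of Lemma~\ref{l:cone-ii-trivial-cofibration}, exactly as you do. The only difference is that you additionally sketch how to extract the characterization of $\PP$-cofibrations from the generating cofibrations rather than citing it, which is a harmless (and standard) elaboration.
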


\begin{proof}
  The $\II$-cofibrations are the monomorphisms; the
  $\PP$-cofibrations are the monomorphisms with 
  $\PP$-cofibrant 
  cokernel, as explained before
  \cite[Lemma~2.4]{valery-olaf-smoothness-equivariant}.
  Now it is easy to adapt the proof of
  Lemma~\ref{l:cone-ii-trivial-cofibration}.
\end{proof}

\subsubsection{Projective and injective enhancements}
\label{sec:proj-inject-enhanc}

Let $\ulms{C}$ be an $\RR$-category. The derived category of
$\ulms{C}$-modules is denoted $\D(\ulms{C})$. 
Its
\define{projective enhancement} or \define{$\PP$-en\-hance\-ment}
is the strongly pretriangulated (see
\cite[Lemma~2.4]{valery-olaf-smoothness-equivariant}) 
$\RR$-category $\ul\PPMod(\ulms{C})$ 
together with the obvious $\R$-linear triangulated equivalence
\begin{equation}
  \label{eq:24-modules}
  [\ul\PPMod(\ulms{C})] \sira \D(\ulms{C}).
\end{equation}
Similarly, $\ul\IIMod(\ulms{C})$ is the
\define{injective enhancement} or \define{$\II$-en\-hance\-ment}
of $\D(\ulms{C})$.

\subsubsection{Enriched completeness}
\label{sec:enrich-compl-2}

\begin{lemma}
  \label{l:ModC-tensored-and-cotensored-K-bicomplete}
  Let $\ulms{C}$ be an $\RR$-category.
  Then $\ul\MMod(\ulms{C})$ is tensored and cotensored, and 
  $\RR$-bicomplete. 
\end{lemma}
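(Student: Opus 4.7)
The plan is to mimic the proof of Lemma~\ref{l:CX-tensored-and-cotensored-R-bicomplete} and exploit the fact that $\ul\MMod(\ulms{C})$ is an $\RR$-enriched functor category whose target is itself $\RR$-bicomplete, so that all the required structure can be defined pointwise and transported along componentwise evaluation.

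First I would define the tensor and cotensor objectwise. Given a $\ulms{C}$-module $M$ and $R \in \RR$, set
\begin{equation}
  (M \odot R)(C) := M(C) \otimes R, \qquad \Psi(R,M)(C) := \ul\RR(R, M(C))
\end{equation}
for $C \in \ulms{C}$, and promote each assignment to an $\RR$-functor $\ulms{C}^\opp \ra \ul\RR$ by using the canonical action of $\ulms{C}(C',C)$ on $M(C)$ and the bifunctoriality of $\otimes$ and $\ul\RR(-,-)$. The adjunction isomorphisms \eqref{eq:tensored} and \eqref{eq:cotensored} for $\ul\MMod(\ulms{C})$ then reduce to the corresponding isomorphisms for $\ul\RR$ applied componentwise, once one expresses the hom complex $\ul\MMod_{\ulms{C}}(M,N)$ as the usual end $\int_{C \in \ulms{C}} \ul\RR(M(C), N(C))$ of $\RR$-natural transformations. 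Since ends commute with the representable functor $\ul\RR(R,-)$, one obtains natural isomorphisms
\begin{equation}
  \ul\RR(R, \ul\MMod_{\ulms{C}}(M,N)) \cong \ul\MMod_{\ulms{C}}(M \odot R, N) \cong \ul\MMod_{\ulms{C}}(M, \Psi(R,N))
\end{equation}
as desired.

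Next I would verify that $\MMod(\ulms{C})$ is bicomplete as an ordinary category: limits and colimits of $\ulms{C}$-modules are formed pointwise in the bicomplete category $\Mod(\kk)^{\DZ}$ of graded $\kk$-vector spaces (or in $\Mod(\R)$, depending on the reading of the definition of an $\ulms{M}$-module in \ref{sec:categ-dg-modul}), with differentials and $\ulms{C}$-action inherited in the evident way. Together with tensoredness and cotensoredness, the characterization of $\RR$-bicompleteness recalled in \ref{sec:tens-cotens-compl} then yields that $\ul\MMod(\ulms{C})$ is $\RR$-bicomplete.

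The argument is essentially bookkeeping; the only point one must take care of is that the pointwise formulas actually assemble into honest $\RR$-enriched functors on $\ulms{C}^\opp$, which follows from the fact that $\otimes$ and $\ul\RR(-,-)$ themselves underlie $\RR$-bifunctors on $\ul\RR$. Alternatively, one may invoke the general principle from enriched category theory \cite[Chap.~3]{kelly-enriched} that for any small $\RR$-category $\ulms{C}$ and any tensored, cotensored, $\RR$-bicomplete $\RR$-category $\ul{\mathcal{V}}$, the enriched functor category $[\ulms{C}^\opp, \ul{\mathcal{V}}]$ is again tensored, cotensored, and $\RR$-bicomplete, with all these structures computed pointwise; applied to $\ul{\mathcal{V}} = \ul\RR$ (or $\ul\KK$) this gives the lemma at once.
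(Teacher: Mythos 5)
Your proof is correct and follows essentially the same route as the paper: the tensor and cotensor are given by the same objectwise formulas, and $\RR$-bicompleteness is deduced from bicompleteness of the underlying (Grothendieck abelian) category of modules together with the criterion recalled in \ref{sec:tens-cotens-compl}. The only cosmetic difference is that the paper obtains the adjunction isomorphisms \eqref{eq:tensored} and \eqref{eq:cotensored} by specializing an external tensor--Hom adjunction for $\ulms{C}\otimes\ulms{D}$-modules to the case $\ulms{D}=\ul\R$, whereas you verify them directly via the end formula for $\ul\MMod_{\ulms{C}}(M,N)$.
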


\begin{proof}
  Let $\ulms{D}$ be an $\RR$-category. Let $M$ be a
  $\ulms{C}$-module, $L$ a $\ulms{D}$-module and $N$ a $\ulms{C}
  \otimes \ulms{D}$-module. 
  Then there is an isomorphism
  \begin{equation}
    \label{eq:outer-tensor-product-right}
    \ul\MMod_{\ulms{C} \otimes \ulms{D}}(M \otimes L, N) 
    \cong 
    \ul\MMod_{\ulms{C}}(M,\mathsf{rHom}_{\ulms{D}}(L,N))   
  \end{equation}
  where $\mathsf{rHom}_{\ulms{D}}(L,N)$ is the $\ulms{C}$-module
  whose evaluation at $C \in \ulms{C}$ is
  $\ul\MMod_{\ulms{D}}(L, N(C,-))$.
  Similarly, there is an isomorphism
  \begin{equation}
    \label{eq:outer-tensor-product-left}
    \ul\MMod_{\ulms{C} \otimes \ulms{D}}(M \otimes L, N) 
    \cong 
    \ul\MMod_{\ulms{D}}(L,\mathsf{lHom}_{\ulms{C}}(M,N))   
  \end{equation}
  where $\mathsf{lHom}_{\ulms{C}}(M,N)$ is the $\ulms{D}$-module
  whose evaluation at $D \in \ulms{D}$ is
  $\ul\MMod_{\ulms{C}}(M, N(-,D))$.
  Both $\mathsf{rHom}$ and $\mathsf{lHom}$ are $\RR$-functors,
  and the above two isomorphisms
  are natural in $M$, $L$ and $N$.

  Let $\ul\R$ be the $\RR$-category with one object whose
  endomorphisms are $\R$. Then $\MMod(\ul\R)=\C(\R)=\RR$ and
  $\ul\MMod(\ul\R)=\ul\C(\R)=\ul\RR$. If we apply the above to
  $\ulms{D}=\ul\R$ then
  $\ulms{C} \otimes \ul\R = \ulms{C}$ and
  $\mathsf{lHom}_{\ulms{C}}(M,N)=\ul\MMod_{\ulms{C}}(M,N)$ and
  we obtain isomorphisms
  \begin{equation}
    \label{eq:119}
    \ul\MMod_{\ulms{C}}(M, \mathsf{rHom}_{\ul\R}(R,N)) 
    \cong 
    \ul\MMod_{\ulms{C}}(M \otimes R, N) 
    \cong 
    \ul\RR(R, \ul\MMod_{\ulms{C}}(M,N))
  \end{equation}
  natural in $R \in \RR$, $M,N \in \ms{M}$.
  They show that $\ul\MMod(\ulms{C})$ is tensored and
  cotensored (cf.\ \eqref{eq:tensored}, \eqref{eq:cotensored}).
  As a Grothendieck abelian category,
  $\MMod(\ulms{C})$ is bicomplete and hence
  $\ul\MMod(\ulms{C})$ is
  $\RR$-bicomplete, by
  \ref{sec:tens-cotens-compl}.
\end{proof}

\subsubsection{Dg
\texorpdfstring{$\kk$}{k}-enriched functorial factorizations for modules over dg
\texorpdfstring{$\kk$}{k}-categories}
\label{sec:enrich-funct-fact-4}

\begin{theorem}
  \label{t:enriched-functorial-fact-modules-K-cat}
  Let $\kk$ be a field and $\ulms{C}$ a $\mathscr{U}$-small
  $\KK$-category. 
  Then the $\II$-model structure and the $\PP$-model
  structure on 
  $\MMod(\ulms{C})$ admit $\KK$-enriched functorial
  factorizations. 
\end{theorem}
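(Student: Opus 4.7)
The plan is to deduce this theorem as a direct application of Theorem~\ref{t:existence-enriched-functorial-fact} to the $\KK$-category $\ul\MMod(\ulms{C})$, equipped with either the $\II$-model structure or the $\PP$-model structure on its underlying category. All the substantial work has essentially been done in earlier results of the paper; this statement is a matter of verifying the hypotheses.

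First I would note that $\ul\MMod(\ulms{C})$ is a strongly pretriangulated $\KK$-category, as already mentioned in Example~\ref{exam:strongly-pretriang-dgcat}, with its shift and cone functors being the formal ones. Next, $\KK$-bicompleteness of $\ul\MMod(\ulms{C})$ is precisely Lemma~\ref{l:ModC-tensored-and-cotensored-K-bicomplete}. Both the $\II$-model structure and the $\PP$-model structure on $\MMod(\ulms{C})$ are cofibrantly generated by the results recalled in \ref{sec:model-structures}, so the small object argument is available.

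It then remains to check the two stability conditions on (trivial) cofibrations. Stability under shifts $[n]$ is immediate in both cases: the $\II$-cofibrations are exactly the monomorphisms, and the $\PP$-cofibrations are the monomorphisms with $\PP$-cofibrant cokernel (as recalled in the proof of Lemma~\ref{l:cone-for-II-and-PP-model-for-dg-modules}); both classes are preserved by the exact auto-equivalences $[n]$, which also preserve weak equivalences (quasi-isomorphisms), hence preserve trivial cofibrations as well. Stability of (trivial) cofibrations under $\iCone$ is precisely the content of Lemma~\ref{l:cone-for-II-and-PP-model-for-dg-modules}.

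With all four hypotheses of Theorem~\ref{t:existence-enriched-functorial-fact} in place for each of the two model structures, the theorem yields the desired $\KK$-enriched functorial factorizations. There is no genuine obstacle: the only nontrivial input, the stability of cofibrations under $\iCone$, was already isolated and handled in Lemma~\ref{l:cone-for-II-and-PP-model-for-dg-modules} by adapting the argument of Lemma~\ref{l:cone-ii-trivial-cofibration}.
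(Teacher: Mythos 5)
Your proposal is correct and follows essentially the same route as the paper: verify the hypotheses of Theorem~\ref{t:existence-enriched-functorial-fact} using strong pretriangulatedness, $\KK$-bicompleteness from Lemma~\ref{l:ModC-tensored-and-cotensored-K-bicomplete}, cofibrant generation and shift-stability of the (trivial) cofibrations, and Lemma~\ref{l:cone-for-II-and-PP-model-for-dg-modules} for stability under $\iCone$. The only difference is that you spell out the shift-stability argument, which the paper treats as immediate.
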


\begin{proof}
  Obviously, $\ul\MMod(\ulms{C})$ is strongly
  pretriangulated. It is $\KK$-bicomplete by 
  Lemma~\ref{l:ModC-tensored-and-cotensored-K-bicomplete}.
  Both $\II$-model structure and $\PP$-model structure are
  cofibrantly generated and their (trivial) cofibrations are stable
  under all shifts.
  Lemma~\ref{l:cone-for-II-and-PP-model-for-dg-modules}
  then shows that all assumptions of 
  Theorem~\ref{t:existence-enriched-functorial-fact} are
  satisfied. 
\end{proof}

\begin{remark}
  \label{rem:ii-pp-modules-Drinfeld-quotient}
  In the setting of
  Theorem~\ref{t:enriched-functorial-fact-modules-K-cat}, 
  any
  $\KK$-enriched $\II$-fibrant (resp.\ $\PP$-cofibrant) replacement
  functor $\ii \colon \ul\MMod(\ulms{C}) \ra \ul\IIMod(\ulms{C})$
  (resp.\ $\pp \colon \ul\MMod(\ulms{C}) \ra \ul\PPMod(\ulms{C})$)
  is a Drinfeld quotient of $\ul\MMod(\ulms{C})$ by its subcategory
  of acyclic objects. This is proved in the same way as
  Lemma~\ref{l:ii-Drinfeld-quotient}.
\end{remark}

\subsection{Ringed spaces}
\label{sec:ringed-spaces}

A $\mathscr{U}$-small ringed space is a pair $(X, \mathcal{O}_X)$
consisting of a $\mathscr{U}$-small
topological space $X$ and
a sheaf 
$\mathcal{O}_X$
of $\mathscr{U}$-small commutative rings on $X$. All ringed
spaces and sites in the rest of this subsection are assumed to be
$\mathscr{U}$-small. 
Any ringed space gives rise to a ringed site in the obvious way,
and the categories of sheaves (of modules) on the ringed
space and the associated ringed site coincide canonically. 
We adapt the notation from
\ref{sec:ringed-sites} in the obvious way to ringed
spaces; for example we write $\C(X)$ for the category of
complexes of $\mathcal{O}_X$-modules.
Note that all the previous results about ringed sites apply
to ringed spaces. 

\subsubsection{Flat model structure}
\label{sec:flat-model-structure}

Let $(X, \mathcal{O})$ be a ringed space.
An object $F \in \C(X)$ is called \define{dg-flat} if 
it is h-flat and all its components $F^n$ are flat
$\mathcal{O}$-modules. 
This terminology coincides with that of
\cite{gillespie-kaplansky-classes} by
\cite[Prop.~5.6]{gillespie-ChOX}. 
The \define{flat model structure} or 
\define{$\FF$-model structure} on $\C(X)$ is
given as follows
(see
\cite[Cor.~7.8]{gillespie-kaplansky-classes}):
\begin{itemize}
\item the weak equivalences are the quasi-isomorphisms;
\item the cofibrations are the 
  monomorphisms with dg-flat cokernel.
\end{itemize}
This model structure turns $\C(X)$ into a cofibrantly generated
model category.
The $\FF$-cofibrant objects are precisely the dg-flat
objects of $\C(X)$. Let $\ul\FF(X) \subset \ul\C(X)$ denote the
full subcategory of $\FF$-cofibrant objects.

\begin{lemma}
  \label{l:cone-for-flat-model}
  Let $(X, \mathcal{O})$ be a ringed space.
  Then $\iCone$ maps $\FF$-cofibrations to trivial
  $\FF$-cofibrations.
\end{lemma}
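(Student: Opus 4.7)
The plan is to invoke Lemma~\ref{l:cone-ii-trivial-cofibration} directly. That lemma provides a general criterion: given a Grothendieck abelian category $\mathcal{A}$ and a model structure on $\C(\mathcal{A})$ whose weak equivalences are the quasi-iso\-mor\-phisms, whose cofibrant objects are stable under all shifts, and whose cofibrations are exactly the monomorphisms with cofibrant cokernel, the functor $\iCone$ sends cofibrations to trivial cofibrations.

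I need only check that each of these hypotheses holds for the $\FF$-model structure on $\C(X)$. The category $\Mod(X)$ of $\mathcal{O}$-modules on a ringed space is Grothendieck abelian. The weak equivalences of the $\FF$-model structure are by definition the quasi-isomorphisms. The $\FF$-cofibrant objects are the dg-flat complexes, and being dg-flat is preserved by all shifts (both h-flatness and componentwise flatness are manifestly shift-stable). Finally, the definition of the $\FF$-model structure recalled just above the lemma says that the cofibrations are precisely the monomorphisms with dg-flat, i.e.\ $\FF$-cofibrant, cokernel.

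Hence all hypotheses of Lemma~\ref{l:cone-ii-trivial-cofibration} are met, and the conclusion is immediate. There is no real obstacle here; the work was already done in the general lemma, and this statement is essentially a bookkeeping consequence, entirely parallel to Lemmas~\ref{l:cone-II-model} and~\ref{l:cone-for-II-and-PP-model-for-dg-modules}.
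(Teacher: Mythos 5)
Your proposal is correct and is exactly the paper's argument: the paper's proof consists of the single line ``Apply Lemma~\ref{l:cone-ii-trivial-cofibration}.'' Your verification of the hypotheses (Grothendieck abelian category, quasi-isomorphisms as weak equivalences, shift-stability of dg-flat objects, cofibrations being the monomorphisms with dg-flat cokernel) is the implicit content of that one-line proof, spelled out.
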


\begin{proof}
  Apply Lemma~\ref{l:cone-ii-trivial-cofibration}.
\end{proof}

\subsubsection{Ringed spaces over a ring}
\label{sec:ringed-spaces-over}

\begin{definition}
  An \define{$\R$-ringed space} or \define{ringed space over
    $\R$} is a pair $(X, \mathcal{O}_X)$
  consisting of a topological space $X$ and a sheaf
  $\mathcal{O}_X$ of
  commutative $\R$-algebras on $X$. The category of $\R$-ringed
  spaces is 
  defined in the obvious way.
\end{definition}

If we denote the topological space with a unique point by $\pt$,
then an $\R$-ringed space can equivalently be defined as a ringed
space over the ringed space $(\pt, \R)$. 
Any $\R$-ringed space gives rise to an $\R$-ringed site and to
an $\R$-ringed topos in the obvious way.
In particular, all previous results for $\R$-ringed sites can be
applied to $\R$-ringed spaces. 
For example,
Lemma~\ref{l:CX-tensored-and-cotensored-R-bicomplete} shows that 
$\ul\C(\mathcal{X})$ is $\R$-bicomplete if $(X, \mathcal{O})$ is
an $\R$-ringed space.

\subsubsection{Dg
\texorpdfstring{$\kk$}{k}-enriched functorial factorizations for
\texorpdfstring{$\kk$}{k}-ringed spaces} 
\label{sec:enrich-funct-fact-1}

Certainly
Theorem~\ref{t:enriched-functorial-fact-k-ringed-site} 
applies to ringed spaces over a field. 
There is a similar result for the
flat model structure:

\begin{theorem}
  \label{t:enriched-functorial-fact-flat}
  Let $(X, \mathcal{O}_X)$ be a ringed space over a field 
  $\kk$.
  Then the $\FF$-model structure on
  $\C(X)$ admits $\KK$-enriched functorial
  factorizations. 
\end{theorem}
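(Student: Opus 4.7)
The plan is to apply Theorem~\ref{t:existence-enriched-functorial-fact} to the $\KK$-category $\ul\C(X)$ equipped with the $\FF$-model structure, exactly parallel to the proofs of Theorems~\ref{t:enriched-functorial-fact-grothendieck-abelian} and \ref{t:enriched-functorial-fact-k-ringed-site}.

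First I would check the structural hypotheses: $\ul\C(X)$ is strongly pretriangulated as the dg category of complexes in an abelian category (cf.\ Example~\ref{exam:strongly-pretriang-dgcat}), and it is $\KK$-bicomplete by Lemma~\ref{l:CX-tensored-and-cotensored-R-bicomplete} (which applies since every $\kk$-ringed space is in particular a $\kk$-ringed site). The $\FF$-model structure is cofibrantly generated by construction, so the small object argument is available.

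Next I would verify the two stability properties required by Theorem~\ref{t:existence-enriched-functorial-fact}. Stability of (trivial) cofibrations under shifts is immediate from the explicit description of $\FF$-cofibrations as monomorphisms with dg-flat cokernel: shifting preserves monomorphisms, preserves the quasi-isomorphism class, and the cokernel of $[n]i$ is $[n]\Kokern(i)$, which is dg-flat whenever $\Kokern(i)$ is (since h-flatness and componentwise flatness are shift-invariant). Stability of $\FF$-cofibrations under $\iCone$ is exactly the content of Lemma~\ref{l:cone-for-flat-model}, and since $\iCone$ takes values in contractible (hence acyclic) complexes, the image of any $\FF$-cofibration under $\iCone$ is automatically a trivial $\FF$-cofibration.

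All hypotheses of Theorem~\ref{t:existence-enriched-functorial-fact} being verified, the conclusion follows immediately. There is no real obstacle here: the argument is a direct translation of the one for the $\II$- and $\EE$-model structures in Theorem~\ref{t:enriched-functorial-fact-k-ringed-site}, with Lemma~\ref{l:cone-for-flat-model} playing the role of Lemmas~\ref{l:cone-II-model} and \ref{l:from-CD}. The essential input that makes everything work over a field is, as in the other cases, Lemma~\ref{l:objects-C(k)}, which reduces stability under $(-)\odot K$ to stability under shifts and under $\iCone$.
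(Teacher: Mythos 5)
Your proposal is correct and follows exactly the paper's own route: the paper proves this result ``mutatis mutandis'' from Theorem~\ref{t:enriched-functorial-fact-k-ringed-site}, i.e.\ by verifying that $\ul\C(X)$ is strongly pretriangulated and $\KK$-bicomplete (Lemma~\ref{l:CX-tensored-and-cotensored-R-bicomplete}), that the $\FF$-model structure is cofibrantly generated with (trivial) cofibrations stable under shifts, and invoking Lemma~\ref{l:cone-for-flat-model} for the $\iCone$ condition before applying Theorem~\ref{t:existence-enriched-functorial-fact}. Your version merely spells out the shift-stability check that the paper leaves implicit.
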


\begin{proof}
  Mutatis mutandis 
  the proof of
  Theorem~\ref{t:enriched-functorial-fact-k-ringed-site},
  using Lemma~\ref{l:cone-for-flat-model}.
\end{proof}

\subsection{Modules over rings}
\label{sec:modules-over-rings}

All rings and modules in this subsection are assumed to be $\mathscr{U}$-small.
If $A$ is a ring,
$\Mod(A)$ denotes the category of $A$-modules, and
$\ul{A}$ denotes the $\C(\bZ)$-category with one object whose
endomorphisms are $A$.

\subsubsection{Model structures on complexes of modules}
\label{sec:model-struct-compl}

Let $A$ be a commutative ring. 
Note that $\ul\MMod(\ul A)=\ul\C(\pt,
A)
=\ul\C(\Mod(A))=:\ul\C(A)$.
We have introduced several model structures on 
$\MMod(\ul A)=\C(\pt, A)
=\C(A)$ whose weak
equivalences are the quasi-iso\-mor\-phisms.
The 
$\II$-model structure on $\MMod(\ul A)$
coincides with the $\II$-model structure on
$\C(\pt, A)$
and the injective model structure on $\C(A)$
(because the cofibrations are the
monomorphisms). 
The 
$\PP$-model structure on $\MMod(\ul A)$
coincides with the $\EE$-model structure on
$\C(\pt, A)$ (because the cofibrations can be generated by the same
generating cofibrations).


\subsubsection{Dg
\texorpdfstring{$\kk$}{k}-enriched functorial factorizations for modules
  over a \texorpdfstring{$\kk$}{k}-algebra}
\label{sec:modules-over-an-1}


Let $A$ be an algebra over a field $\kk$.
Then Theorem~\ref{t:enriched-functorial-fact-modules-K-cat} 
can be applied to the $\KK$-category 
$\ul A$ with one object whose
endomorphisms are $A$;
if $A$ is commutative, 
Theorems~\ref{t:enriched-functorial-fact-k-ringed-site}
and \ref{t:enriched-functorial-fact-flat}
can be applied to the $\kk$-ringed space
$(\pt, A)$.

\subsubsection{The field case}
\label{sec:field-case}

\begin{lemma}
  \label{l:model-str-for-k}
  If $\kk$ is a field, the following model structures on 
  $\KK=\MMod(\ul\kk)=\C(\pt, \kk)
  =\C(\kk)$ 
  coincide:
  the 
  $\II$-model structure on $\MMod(\ul\kk)$,
  the $\II$-model structure on
  $\C(\pt, \kk)$, 
  the injective model structure on  
  $\C(\kk)$,
  the 
  $\PP$-model structure on $\MMod(\ul\kk)$,
  the $\EE$-model structure on
  $\C(\pt, \kk)$, 
  the 
  $\FF$-model structure on $\C(\pt, \kk)$.
  All objects of $\KK$ are fibrant and cofibrant with respect to
  any of these model structures.
\end{lemma}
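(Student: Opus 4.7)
The plan is to reduce everything to two elementary facts about a field $\kk$: (i) every $\kk$-vector space is injective, projective, and flat as a $\kk$-module; (ii) by Lemma~\ref{l:objects-C(k)}, every object of $\KK$ is a coproduct of shifts of $\kk$ and $\iCone(\kk)=\Cone(\id_\kk)$, and in particular every acyclic complex of $\kk$-vector spaces is contractible, so every object of $\KK$ is h-injective, h-projective, and h-flat. Combined with~(i), every object of $\KK$ is dg-injective, h-projective and dg-flat (cf.\ Remark~\ref{rem:dg-flats-are-hflat}).

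First I would observe that all the listed categories agree as ordinary categories (as already recalled in \ref{sec:model-struct-compl}), and that among the eight model structures listed, \ref{sec:model-struct-compl} identifies the four $\II$-model structures, identifies the $\PP$-model structure on $\MMod(\ul\kk)$ with the $\EE$-model structure on $\C(\pt,\kk)$, and identifies the $\FF$-model structure on $\C(\pt,\kk)$ with the flat model structure on $\C(\Qcoh(\Spec\kk))$. Only three model structures remain to compare, and all have the quasi-isomorphisms as weak equivalences. Since a model structure on a given category is determined by its weak equivalences together with its cofibrations, it suffices to show that in each of the three remaining model structures the cofibrations are exactly the monomorphisms.

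For the $\II$-model structure this is the definition. For the flat model structure on $\C(\Qcoh(\Spec\kk))$, the cofibrations are the monomorphisms with dg-flat cokernel; by observation~(ii), every object of $\KK$ is dg-flat, so this class coincides with the monomorphisms. For the $\PP$-model structure on $\MMod(\ul\kk)$, the cofibrations are the monomorphisms with $\PP$-cofibrant cokernel (as recalled in the proof of Lemma~\ref{l:cone-for-II-and-PP-model-for-dg-modules}), so it remains to show that every object of $\KK$ is $\PP$-cofibrant. Here I would argue that the unique representable $\ul\kk$-module is $\PP$-cofibrant (representables always are), that $\PP$-cofibrancy is preserved under shifts and coproducts, and that $\iCone$ preserves $\PP$-cofibrancy by Lemma~\ref{l:cone-for-II-and-PP-model-for-dg-modules} (applied to $0\to\kk$); then Lemma~\ref{l:objects-C(k)} finishes the job.

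Finally, for the fibrant/cofibrant claim, every object $X$ of $\KK$ is cofibrant because $0\to X$ is a monomorphism, and $X$ is fibrant because by~(i) and~(ii) its unique map $X\to 0$ is an epimorphism with h-injective, componentwise injective kernel, i.e.\ an $\II$-fibration. The main (mild) obstacle is the last identification, showing every object is $\PP$-cofibrant, which is where one genuinely exploits the explicit description of Lemma~\ref{l:objects-C(k)}; everything else is a direct unwinding of definitions.
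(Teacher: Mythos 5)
Your proposal is correct, and its overall architecture matches the paper's: reduce via \ref{sec:model-struct-compl} to comparing a small number of model structures with the same weak equivalences, then show their cofibrations all coincide with the monomorphisms, using the semisimplicity of $\KK$ throughout. The one place you diverge is the identification of the injective structure with the $\EE$/$\PP$ structure. The paper compares $\II$ with $\EE$ directly on $\C(\pt,\kk)$: it quotes Lemma~\ref{l:characterize-E-cofibrations} to see that every $\EE$-cofibration is a monomorphism, and conversely uses the classification of \emph{monomorphisms} in Lemma~\ref{l:monos-C(k)} together with the observation that $0\hookrightarrow[1]\kk$ is a pushout of the generating cofibration $\kk\to\iCone(\kk)$ to see that every monomorphism is an $\EE$-cofibration. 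You instead compare $\II$ with $\PP$ on $\MMod(\ul\kk)$, reducing to the claim that every object is $\PP$-cofibrant, which you get from the classification of \emph{objects} in Lemma~\ref{l:objects-C(k)} plus cofibrancy of representables, closure of cofibrant objects under shifts and coproducts, and Lemma~\ref{l:cone-for-II-and-PP-model-for-dg-modules} applied to $0\to\kk$. Both routes are sound; yours avoids Lemma~\ref{l:characterize-E-cofibrations} and the explicit pushout computation, at the price of invoking the characterization of $\PP$-cofibrations as monomorphisms with cofibrant cokernel (which the paper does record in the proof of Lemma~\ref{l:cone-for-II-and-PP-model-for-dg-modules}). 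The fibrant/cofibrant claim is handled identically in both.
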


\begin{proof}
  We prove first 
  that $\II$-model
  structure, $\EE$-model 
  structure and $\FF$-model structure on $\C(\pt,\kk)$ coincide.
  
  $\II$=$\EE$: 
  By Lemma~\ref{l:characterize-E-cofibrations},
  any $\EE$-cofibration is a monomorphism, i.\,e.\ an
  $\II$-cofibration. 
  Conversely, 
  Lemma~\ref{l:monos-C(k)} below
  shows that any monomorphism is an $\EE$-cofibration:
  note that $0 \hra [1]\kk$ is a pushout of the $\EE$-cofibration
  $\kk 
  \ra \iCone(\kk)$.

  $\II$=$\FF$:
  The $\FF$-cofibrations are the monomorphisms with dg-flat
  kernel. But any object of $\KK$ is dg-flat:
  all vector spaces are flat, and any acyclic object of $\KK$ is
  contractible, by 
  Lemma~\ref{l:objects-C(k)}
  (or
  Lemma~\ref{l:C-semisimple-abelian-category-monomorphisms}.\ref{enum:Csemisimple-objects}
  below). 

  The first claim of the lemma follows now from
  \ref{sec:model-struct-compl}.
 
  Trivially, all objects of $\KK$ are $\II$-cofibrant. 
  Recall that the $\II$-fibrant objects of $\KK$ are the
  h-injective 
  complexes with injective components.
  But all vector spaces are injective, and 
  all objects of $\KK$ are h-injective because 
  any acyclic complex is contractible, by
  Lemma~\ref{l:objects-C(k)}. Hence all objects of $\KK$ are
  $\II$-fibrant. This proves the second claim.
\end{proof}

\begin{lemma}
  \label{l:monos-C(k)}
  Let $\kk$ be a field.
  Any monomorphism in
  $\KK=\C(\kk)$ is isomorphic to a 
  coproduct of 
  shifts of the five monomorphisms $\kk \xra{\id} \kk$,
  $\iCone(\kk) 
  \xra{\id} \iCone(\kk)$, $0 \hra \kk$, $0 \hra \iCone(\kk)$, 
  $\kk \xra{(\id,0)^\tp} \iCone(\kk)$.
\end{lemma}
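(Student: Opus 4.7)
The plan is to reduce the classification to linear algebra over $\kk$ on extension classes. The data of a monomorphism $f\colon M\hookrightarrow N$ in $\KK^{[1]}$, up to isomorphism, is equivalent (after fixing representatives $M$ and $C:=\Kokern(f)$ of the isomorphism classes involved) to the extension class $\eta\in\Ext^1_\KK(C,M)$ modulo the natural action of $\Aut(M)\times\Aut(C)$. By Lemma~\ref{l:objects-C(k)} I decompose $M=\bigoplus_\alpha M_\alpha$ and $C=\bigoplus_\beta C_\beta$ into shifts of $\kk$ and $\iCone(\kk)$.

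The key calculation is to compute the relevant Ext groups between indecomposables. I claim $\iCone(\kk)$ is both projective and injective in $\KK$: a chain map $\iCone(\kk)\to X$ is uniquely determined by its degree $-1$ component $\kk\to X^{-1}$ (the degree $0$ piece being forced to equal its composition with $d_X^{-1}$), so projectivity reduces to that of $\kk$ in $\Mod(\kk)$; injectivity is dual. Consequently $\Ext^1_\KK$ vanishes whenever either argument is a shift of $\iCone(\kk)$. For two shifts of $\kk$, an inspection of possible differentials on the underlying graded vector space of an extension $0\to[b]\kk\to N\to[a]\kk\to 0$ yields $\Ext^1_\KK([a]\kk,[b]\kk)\cong\kk$ if $a=b+1$, with unique non-split representative $0\to[b]\kk\to[b]\iCone(\kk)\to[b+1]\kk\to 0$, and vanishes otherwise. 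Hence the ``matrix'' $(\eta_{\alpha\beta})\in\prod_\beta\bigoplus_\alpha\Ext^1_\KK(C_\beta,M_\alpha)$ (columns finitely supported because each indecomposable $C_\beta$ is compact in $\KK$) is supported only on pairs $(\alpha,\beta)$ with $M_\alpha\cong[a]\kk$ and $C_\beta\cong[a+1]\kk$ for a common $a\in\DZ$.

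For each such $a$, set $I_a:=\{\alpha:M_\alpha\cong[a]\kk\}$ and $J_a:=\{\beta:C_\beta\cong[a+1]\kk\}$; the block $\eta^{(a)}$ of $\eta$ on this pair of isotypic components is naturally a $\kk$-linear map $\eta^{(a)}\colon\kk^{(J_a)}\to\kk^{(I_a)}$. Automorphisms of $M$ and $C$ that recombine the summands within the respective isotypic components act on $\eta^{(a)}$ by $\GL(\kk^{(I_a)})$ from the left and $\GL(\kk^{(J_a)})$ from the right. Since $\kk$ is a field, any $\kk$-linear map between $\kk$-vector spaces admits bases of source and target adapted to its kernel--image decomposition, so I can pick such bases to bring each $\eta^{(a)}$ into rank normal form. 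This amounts to producing new decompositions $M=\bigoplus_\alpha M'_\alpha$, $C=\bigoplus_\beta C'_\beta$ of the same isomorphism types together with, for each $a$, an injective partial pairing $\phi_a\colon J'_a\hookrightarrow I_a$ on some subset $J'_a\subseteq J_a$, such that the transformed $\eta$ has value $1\in\Ext^1_\KK([a+1]\kk,[a]\kk)$ on each paired entry $(\phi_a(\beta),\beta)$ and vanishes on all other entries.

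Such a ``diagonal'' Ext class realizes $0\to M\to N\to C\to 0$ as a direct sum of: (i) non-split extensions $0\to[a]\kk\to[a]\iCone(\kk)\to[a+1]\kk\to 0$ for each paired pair, contributing shifts of the mono $\kk\xra{(\id,0)^\tp}\iCone(\kk)$; (ii) split pieces $0\to M'_\alpha\xra{\id}M'_\alpha\to 0\to 0$ for each $\alpha$ not in the image of any $\phi_a$, contributing shifts of $\kk\xra{\id}\kk$ or $\iCone(\kk)\xra{\id}\iCone(\kk)$; and (iii) split pieces $0\to 0\to C'_\beta\xra{\id}C'_\beta\to 0$ for each $\beta$ not in any $J'_a$, contributing shifts of $0\hra\kk$ or $0\hra\iCone(\kk)$. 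Thus $f$ is isomorphic in $\KK^{[1]}$ to the corresponding coproduct of shifts of the five listed monomorphisms. The main subtlety is the diagonalization step when some $I_a$ or $J_a$ is infinite, but this is just the standard rank-normal-form statement for $\kk$-linear maps between (possibly infinite-dimensional) $\kk$-vector spaces and therefore poses no genuine difficulty.
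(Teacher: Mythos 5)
Your proof is correct, but it takes a genuinely different route from the paper's. The paper deduces Lemma~\ref{l:monos-C(k)} from the more general Lemma~\ref{l:C-semisimple-abelian-category-monomorphisms}.\ref{enum:Csemisimple-monos}, valid for complexes over any semisimple abelian category $\mathcal{A}$: there one first splits off the summands $\iCone(B(E))$ and $\iCone(B(G))$ of the two end terms of the short exact sequence $E \hra F \sra G$ using that objects $\iCone(T)$ are injective and projective, reducing to the case where $E$ and $G$ have zero differential, and then decomposes directly along the image $W$ of $d_F$, writing $E \cong S \oplus W$ and $G \cong [1]W \oplus U$ -- no extension groups appear. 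You instead classify the monomorphism by its Yoneda class in $\Ext^1_\KK(C,M)$, compute $\Ext^1$ between the indecomposables ($\iCone(\kk)$ projective and injective, $\Ext^1([a]\kk,[b]\kk) \cong \kk$ exactly when $a=b+1$), and put the resulting matrix of classes into rank normal form. Both arguments are, at bottom, the same piece of linear algebra (a kernel--image decomposition of one map), but your packaging buys a conceptual explanation of \emph{why} only these five monomorphisms occur, at the cost of some extra bookkeeping that the paper's direct argument avoids: you need $\Ext^1(C_\beta,-)$ to commute with arbitrary coproducts (which holds, since $[a]\kk$ has the projective resolution $\cdots \to [a-2]\iCone(\kk) \to [a-1]\iCone(\kk) \to [a]\kk$ by compact projectives, but deserves the one-line justification), and you need that a ``diagonal'' class in $\prod_\beta \bigoplus_\alpha \Ext^1(C_\beta, M_\alpha)$ really is the class of the corresponding direct sum of elementary extensions, which for infinite index sets uses injectivity of the comparison map $\Ext^1(\bigoplus_\beta C_\beta, M) \to \prod_\beta \Ext^1(C_\beta, M)$. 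Also note that the paper's version is basis-free and hence works over any semisimple $\mathcal{A}$, whereas your diagonalization as written uses bases and so is specific to vector spaces -- which is all the stated lemma requires.
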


\begin{proof}
  This follows from the slightly more general
  Lemma~\ref{l:C-semisimple-abelian-category-monomorphisms}.\ref{enum:Csemisimple-monos}
  below.
\end{proof}

\subsubsection{Complexes in a semisimple abelian category}
\label{sec:compl-semis-abel}



\fussnote{ 
  The following lemma seems to be true in the periodic
  case also (even one-periodic), so can also use methods for
  matrix factorizations.  
}

\begin{lemma}
  \label{l:C-semisimple-abelian-category-monomorphisms}
  Let $\mathcal{A}$ be an abelian category which is semisimple in
  the sense that any short exact sequence splits.
  Then:
  \begin{enumerate}
  \item 
    \label{enum:Csemisimple-objects}
    Any object $A \in \C(\mathcal{A})$ is isomorphic to
    $S \oplus \iCone(T)$ for objects $S, T \in \mathcal{A}^\bZ$;
    more precisely, $S \cong H(A)$ and $T \cong B(A)$ (where
    $B(A)$ denotes the boundaries).
  \item 
    \label{enum:Csemisimple-injectives}
    All
    objects
    $\iCone(T)$ for $T \in \mathcal{A}^\bZ$ are projective and
    injective in $\C(\mathcal{A})$;
  \item
    \label{enum:Csemisimple-monos}
    Any short exact sequence in $\C(\mathcal{A})$ is
    isomorphic 
    to a direct sum
    of short exact sequences of the following five types:
    $S \xhra{\id} S \sra 0$,
    $\iCone(T) \xhra{\id} \iCone(T) \sra 0$,
    $0 \hra U \xsra{\id} U$,
    $0 \hra \iCone(V) \xsra{\id} \iCone(V)$,
    $W \xhra{(\id,0)^\tp} \iCone(W) \xra{(0,\id)} [1]W$,
    where $S, T, U, V, W$ are objects of $\mathcal{A}^\bZ$.
  \end{enumerate}
\end{lemma}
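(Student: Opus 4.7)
For part \ref{enum:Csemisimple-objects}, fix $A \in \C(\mathcal{A})$ and apply semisimplicity degreewise. For each $n$, the short exact sequences $0 \to Z^n(A) \to A^n \to B^{n+1}(A) \to 0$ and $0 \to B^n(A) \to Z^n(A) \to H^n(A) \to 0$ split, giving a direct-sum decomposition $A^n \cong H^n(A) \oplus B^n(A) \oplus B^{n+1}(A)$. Under this decomposition the differential $d^n_A$ vanishes on the $H^n(A)$- and $B^n(A)$-summands and restricts to the identity from the $B^{n+1}(A)$-summand of $A^n$ onto the $B^{n+1}(A)$-summand of $A^{n+1}$. Assembling over $n$ identifies $A$ with $H(A) \oplus \iCone(B(A))$, proving the claim with $S = H(A)$ and $T = B(A)$.

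For part \ref{enum:Csemisimple-injectives}, I compute the two Hom functors into and out of $\iCone(T)$ directly. A chain map $\iCone(T) \to A$ amounts to a family of $\mathcal{A}$-morphisms $(\alpha^n, \beta^n) \colon T^n \oplus T^{n+1} \to A^n$ subject to $\alpha^{n+1} = d_A \beta^n$, so it is freely parametrised by $\beta = (\beta^n)$. This exhibits $\Hom_{\C(\mathcal{A})}(\iCone(T), -)$ as the composition of the exact forgetful functor $\C(\mathcal{A}) \to \mathcal{A}^\DZ$ with an enriched Hom in the semisimple category $\mathcal{A}^\DZ$; both are exact, so $\iCone(T)$ is projective. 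A dual computation using families $(\alpha^n, \beta^n) \colon A^n \to T^n \oplus T^{n+1}$ with $\beta^n = \alpha^{n+1} d_A$ shows injectivity.

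For part \ref{enum:Csemisimple-monos}, start from a short exact sequence $\xi \colon 0 \to M \to N \to P \to 0$ and decompose it in stages using \ref{enum:Csemisimple-objects} and \ref{enum:Csemisimple-injectives}. Write $M \cong H(M) \oplus \iCone(B(M))$ and $P \cong H(P) \oplus \iCone(B(P))$. Since $\iCone(B(M))$ is injective, its embedding into $N$ (via $M$) extends to a retraction $N \to \iCone(B(M))$; splitting off this summand identifies $\xi$ with the direct sum of the type-2 sequence $\iCone(B(M)) \xhra{\id} \iCone(B(M)) \sra 0$ and a residual $\xi' \colon 0 \to H(M) \to N' \to P \to 0$. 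Dually, projectivity of $\iCone(B(P))$ yields a section of $N' \to P \sra \iCone(B(P))$, splitting $\xi'$ as the type-4 sequence $0 \hra \iCone(B(P)) \xsra{\id} \iCone(B(P))$ plus a residual $\xi'' \colon 0 \to H(M) \to N'' \to H(P) \to 0$ in which both end terms have zero differential.

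The technical core of the argument, and the step I expect to require the most care, is the decomposition of $\xi''$. By semisimplicity $\xi''$ is degreewise split, so $N''$ is $H(M) \oplus H(P)$ as a graded object and the differential takes the form $d(m, p) = (f(p), 0)$ for a unique graded morphism $f \colon H(P) \to H(M)$ of degree $+1$. Applying semisimplicity of $\mathcal{A}$ again gives graded decompositions $H(P) = \ker(f) \oplus C$ with $f|_C \colon C \sira \bild(f)$ and $H(M) = \bild(f) \oplus D$. Correspondingly $\xi''$ splits as a type-1 sequence on the $D$-summand, a type-3 sequence on the $\ker(f)$-summand, and a residual $0 \to W \to N''_3 \to C \to 0$ with $W := \bild(f)$. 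Under the identification $C \sira [1]W$ supplied by $f|_C$, the differential on $N''_3$ matches that of $\iCone(W)$, so this residual is precisely the type-5 sequence $W \xhra{(\id, 0)^\tp} \iCone(W) \xra{(0, \id)} [1]W$. The main obstacle is keeping shift conventions straight in this last identification; once done, concatenating the four splittings yields the required decomposition of $\xi$.
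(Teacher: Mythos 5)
Your proof is correct and follows essentially the same route as the paper's: part (a) by choosing splittings of $B(A) \hra Z(A) \sra H(A)$ and $Z(A) \hra A \sra [1]B(A)$, part (b) via the fact that $\iCone$ is (up to shift) right and left adjoint to the exact forgetful functor to $\mathcal{A}^\DZ$ — which you verify by direct computation rather than cite — and part (c) by first splitting off the $\iCone(B(-))$ summands of the end terms using injectivity/projectivity and then decomposing the residual sequence of zero-differential complexes along the kernel and image of the induced degree-one map. The additional care you take with shift conventions and with making the chosen splittings compatible with the sequence is exactly the detail the paper's proof leaves implicit.
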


\begin{proof}
  \ref{enum:Csemisimple-objects}
  Consider the short exact sequences $Z(A) \hra A \xsra{d}
  [1]B(A)$ and $B(A) \hra Z(A) \sra H(A)$ in
  $\C(\mathcal{A})$. All terms in these sequences except for $A$
  have vanishing differential. Since $\mathcal{A}$ is semisimple,
  the 
  second sequence splits and hence the first sequence is
  isomorphic to $H(A) \oplus B(A) \hra A \xsra{d} [1]B(A)$.
  This sequence splits in $\mathcal{A}^\bZ$ (i.\,e.\ if we forget
  the differentials) and we obtain
  $A \cong H(A) \oplus \iCone(B(A))$ in $\C(\mathcal{A})$.

  \ref{enum:Csemisimple-injectives}
  The exact functor ``forget the differential''
  $\C(\mathcal{A}) \ra \mathcal{A}^\bZ$ has $T \mapsto
  \iCone(T)$ as a right adjoint and $T \mapsto [-1]\iCone(T)$ as a
  left adjoint. Hence $\iCone \colon \mathcal{A}^\bZ \ra
  \C(\mathcal{A})$ preserves injective (resp.\ projective)
  objects.
  Since $\mathcal{A}$ is semisimple, all objects of
  $\mathcal{A}^\bZ$ are injective and projective.
  
  \ref{enum:Csemisimple-monos} 
  Let $E \hra F \sra G$ be a short
  exact sequence in $\C(\mathcal{A})$.
  By 
  \ref{enum:Csemisimple-objects}
  and
  \ref{enum:Csemisimple-injectives},
  our sequence is isomorphic to the direct sum of the three short
  exact sequences 
  $\iCone(T) \xhra{\id} \iCone(T) \sra 0$,
  $0  \hra \iCone(V) \xsra{\id} \iCone(V)$, and
  $E' \hra F' \sra G'$ where $T=B(E)$,
  $V=B(G)$, $E'=H(E)$, 
  and $G'=H(G)$ are objects of $\mathcal{A}^\bZ$ and $F'$ is a
  suitable object of $\C(\mathcal{A})$. 
  In $\mathcal{A}^\bZ$ our sequence
  $E' \hra F' \sra G'$ is isomorphic to the obvious short exact
  sequence $E' \hra E' \oplus G' \sra G'$, and the differential
  $d_{F'}$ of 
  $F'$ maps $E'$ to zero and $G'$ to $E'$. Let $W$ denote the
  image of $d_{F'}$. By semisimplicity we can decompose
  $E' \cong S \oplus W$ and $G' \cong [1]W \oplus U$ such that 
  $d_{F'}^n$ is given by the composition
  \begin{equation}
    \label{eq:74}
    F'^n 
    \cong
    S^n \oplus W^n \oplus W^{n+1} \oplus U^n
    \xra{\text{proj.}} W^{n+1}
    \xra{\text{incl.}}
    S^{n+1} \oplus W^{n+1} \oplus W^{n+2} \oplus U^{n+1}
    \cong
    F'^{n+1}. 
  \end{equation}
  The claim follows.
\end{proof}

\subsection{Dg \texorpdfstring{$\kk$}{k}-enriched model categories}
\label{sec:enrich-model-categ-1}

This subsection may be skipped on a first reading.
Our aim is to show Theorem~\ref{t:examples-enriched-model-cats}
for later use in \ref{sec:some-remarks-2-morphisms-ENH}.
All model categories in this subsection are to be considered with
respect to the
universe $\mathscr{U}$. 

Let $\kk$ be a field and $\ul\KK=\ul\C(\kk)$.
We view $\KK$ as a model category equipped with any of the
identical model
structures of Lemma~\ref{l:model-str-for-k}.
The following result is similar to
Lemma~\ref{l:tensor-K-left-Quillen}. 

\begin{lemma}
  \label{l:pushout-product-cofibrations}
  Let $\kk$ be a field.  Let $\ulms{M}$ be a strongly
  pretriangulated tensored $\KK$-category, with tensored
  structure given by the bifunctor $\odot$.   
  Assume that the underlying category $\ms{M}$ is equipped with a
  model structure turning it into a model category such that 
  \begin{enumerate}
  \item 
    the set of cofibrations and the set of
    trivial cofibrations are stable under all shifts;
  \item 
    the functor $\iCone(-)$ maps cofibrations to
    trivial cofibrations;
  \item 
    for each cofibration 
    $E \ra F$ the
    morphism
    \begin{equation}
      \label{eq:84}
      \iCone(E) \;\subsqcup{E}\; F \ra \iCone(F)
    \end{equation}
    (cf.\ \eqref{eq:41a}) is a cofibration.
  \end{enumerate}
  Then the following is true:
  if $i \colon E \ra F$ is a cofibration in $\ms{M}$ and 
  $j \colon K \ra L$ is a cofibration in $\KK$, then the
  pushout-product morphism 
  \begin{equation}
    \label{eq:65}
    i \hat{\odot} j \colon 
    E \odot L \subsqcup{E \odot K} F
    \odot K \ra F \odot L 
  \end{equation}
  induced by the commutative diagram
  \begin{equation}
    \label{eq:85}
    \xymatrix{
      {E \odot K} 
      \ar[r]^-{i \odot \id} 
      \ar[d]^-{\id \odot j} &
      {F \odot K} 
      \ar[d]^-{\id \odot j} \\
      {E \odot L} 
      \ar[r]^-{i \odot \id} &
      {F \odot L} 
    }
  \end{equation}
  is a cofibration which is trivial if $i$ or $j$ is a trivial
  cofibration. 
\end{lemma}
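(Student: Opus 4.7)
The plan is to reduce the claim to a finite list of elementary cases by exploiting the very explicit classification of monomorphisms in $\KK=\C(\kk)$ provided by Lemma~\ref{l:monos-C(k)}. A cofibration $j$ in $\KK$ is precisely a monomorphism, so it is isomorphic (in $\KK^{[1]}$) to a coproduct of shifts of the five basic monomorphisms $\id_\kk$, $\id_{\iCone(\kk)}$, $0\hra\kk$, $0\hra\iCone(\kk)$, $\kk\xra{(\id,0)^\tp}\iCone(\kk)$. Since the functor $(-\odot-)$ preserves coproducts and commutes with shifts in each argument (see~\ref{sec:tens-cotens-compl}) and colimits commute with colimits, the pushout-product $i\hat\odot j$ decomposes as a coproduct of shifts of the morphisms $i\hat\odot j_\alpha$ for the individual basic summands $j_\alpha$. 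The classes of (trivial) cofibrations in $\ms{M}$ are closed under arbitrary coproducts (as is any class defined by a left lifting property) and under shifts by hypothesis~(i); hence it suffices to treat the five basic monomorphisms themselves.

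Next I compute $i\hat\odot j$ in each of the five cases, using the canonical isomorphisms $M\odot\kk\cong M$ and $M\odot\iCone(\kk)\cong\iCone(M)$ from~\ref{sec:tens-cotens-compl}. In the two identity cases, the pushout diagram is degenerate and $i\hat\odot j$ is an isomorphism. For $j=(0\hra\kk)$ the morphism $i\hat\odot j$ identifies with $i$ itself; for $j=(0\hra\iCone(\kk))$ it identifies with $\iCone(i)$, which is a trivial cofibration by hypothesis~(ii). The decisive fifth case, $j=(\kk\hra\iCone(\kk))$, identifies $i\hat\odot j$ precisely with the canonical morphism $\iCone(E)\subsqcup{E}F\ra\iCone(F)$ of~\eqref{eq:84}, which is a cofibration by hypothesis~(iii). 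This already yields the cofibration half of the lemma.

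For triviality I treat the two halves separately. If $j$ is a trivial cofibration, then inspecting the cokernels of the five basic monomorphisms shows that only the three with acyclic cokernel can occur as summands in its decomposition (the cokernels of $0\hra\kk$ and $\kk\hra\iCone(\kk)$ are $\kk$ and $[1]\kk$, which are not acyclic); in each of those three cases $i\hat\odot j_\alpha$ is either an isomorphism or of the form $\iCone(i)$, hence a trivial cofibration. If instead $i$ is a trivial cofibration, the only case requiring real work is the fifth one, where I revisit the factorization of $\iCone(i)$ from the proof of Lemma~\ref{l:cone-ii-trivial-cofibration}:
\begin{equation*}
\iCone(i)\colon\iCone(E)\ra\iCone(E)\subsqcup{E}F\ra\iCone(F).
\end{equation*}
The first map is a pushout of $i$, hence a trivial cofibration; the composite $\iCone(i)$ is a trivial cofibration by hypothesis~(ii); two-out-of-three then forces the second map, which is our $i\hat\odot j$, to be a weak equivalence, and it was already shown to be a cofibration by hypothesis~(iii).

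I expect the main obstacle to be exactly this fifth-case triviality verification when $i$ is a trivial cofibration: the hypotheses give a cofibration statement only in~(iii), and triviality has to be extracted indirectly through the $\iCone$ factorization together with hypothesis~(ii). Once that trick is set up, the remaining cases are routine verifications and the reductions are formal.
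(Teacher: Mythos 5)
Your proof is correct and follows essentially the same route as the paper: reduce via Lemma~\ref{l:monos-C(k)} to the five basic monomorphisms, identify $i\hat\odot j$ in each case with $\id$, $i$, $\iCone(i)$, or the morphism \eqref{eq:84}, and handle triviality in the fifth case via the factorization of $\iCone(i)$ together with 2-out-of-3. Your explicit cokernel analysis showing that a trivial cofibration in $\KK$ can only contain the summands with acyclic cokernel is a detail the paper leaves implicit, but it is correct and does not change the argument.
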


\begin{proof}
  Lemma~\ref{l:monos-C(k)}
  allows us to write $j$ as a coproduct of 
  shifts of monomorphisms $\kk \xra{\id} \kk$, $\iCone(\kk)
  \xra{\id} \iCone(\kk)$, $0 \hra \kk$, $0 \hra \iCone(\kk)$, 
  $\kk \xra{(\id,0)^\tp} \iCone(\kk)$.
  As in the proof of
  Lemma~\ref{l:tensor-K-left-Quillen}
  we see
  that it is enough to consider these basic five monomorphisms.
  Let $i \colon E \ra F$ be a cofibration.
  
  If $j$ is an isomorphism, then 
  $i \hat{\odot} j$ is an isomorphism and hence a trivial
  cofibration. 

  If $j$ is $0 \hra \kk$, then
  $i \hat{\odot} j$ is isomorphic
  to $i$. This uses $M \odot 0 \cong 0$ and
  $(- \odot \kk) \cong \id$.

  If $j$ is the trivial cofibration $0 \hra \iCone(\kk)$, then 
  $i \hat{\odot} j$ is isomorphic
  to the morphism $\iCone(i) \colon \iCone(E)
  \ra \iCone(F)$ which is a 
  trivial
  cofibration by assumption.
  This uses $M \odot \iCone(\kk) \cong \iCone(M)$ naturally in $M$,
  and $M \odot 0 \cong 0$.

  Now assume that $j$ is the cofibration 
  $\kk \xra{(\id,0)^\tp} \iCone(\kk)$.
  Note that the functor $M \mapsto (M \odot \kk \ra M \odot
  \iCone(\kk))$ is 
  isomorphic to $M \mapsto (M \ra \iCone(M))$.
  Therefore 
  $i \hat{\odot} j$ is isomorphic to the morphism
  \eqref{eq:84} and therefore a cofibration.
  If $i \colon E \ra F$ is a trivial cofibration, then the
  composition
  \begin{equation}
    \label{eq:86}
    \iCone(E) \ra
    \iCone(E) \;\subsqcup{E}\; F \ra 
    \iCone(F)
  \end{equation}
  is a trivial cofibration by assumption and so is the arrow on
  the left as a pushout of $i$. Therefore the cofibration on the
  right 
  is trivial by the 2-out-of-3 property.
\end{proof}


\begin{proposition}
  \label{p:KK-excellent-and-MM-KK-model-cat}
  Let $\kk$ be a field.
  \begin{enumerate}
  \item 
    \label{enum:KK-excellent}
    The model category $\KK$ is monoidal and excellent (see
    \cite[Def.~A.3.1.2, A.3.2.16]{lurie-higher-topos}). 
  \item 
    \label{enum:MM-KK-enriched-model}
    Additionally to the assumptions
    of Lemma~\ref{l:pushout-product-cofibrations}, assume that 
    $\ulms{M}$ is 
    cotensored over $\KK$.
    Then 
    $\ulms{M}$ is a $\KK$-enriched model category (see
    \cite[Def.~A.3.1.5]{lurie-higher-topos}). 
  \end{enumerate}
\end{proposition}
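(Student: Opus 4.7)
For part~\ref{enum:KK-excellent}, the plan is to verify each clause of Lurie's definition of an excellent symmetric monoidal model category (Def.~A.3.1.2, A.3.2.16) in turn. First I would observe that $\KK$ is combinatorial: the $\II$-model structure on $\C(\kk)$ is cofibrantly generated, and the underlying abelian category $\Mod(\kk)$ is locally presentable. Second, the symmetric monoidal structure on $\KK$ given by $\otimes_\kk$ together with unit $\kk$ satisfies the pushout-product axiom: this is precisely the special case $\ulms{M}=\ul\KK$ (with $\odot=\otimes$) of Lemma~\ref{l:pushout-product-cofibrations}, whose hypotheses hold by Lemma~\ref{l:model-str-for-k} (cofibrations are monomorphisms, closed under shifts), Lemma~\ref{l:cone-II-model} ($\iCone$ maps cofibrations to trivial cofibrations), and Corollary~\ref{c:cone-ii-trivial-cofibration} (the map \eqref{eq:84} is a cofibration). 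The unit axiom is trivial because $\kk$ is itself cofibrant by Lemma~\ref{l:model-str-for-k}. Third, monomorphisms equal cofibrations in $\KK$ by Lemma~\ref{l:model-str-for-k}. Fourth, quasi-isomorphisms are stable under filtered colimits, as homology commutes with filtered colimits in $\Mod(\kk)$.

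The main obstacle in part~\ref{enum:KK-excellent} is the ``invertibility hypothesis'' of Def.~A.3.2.16. Here the plan is to exploit the very special nature of $\KK$: by Lemma~\ref{l:model-str-for-k} every object of $\KK$ is both fibrant and cofibrant, and by Lemma~\ref{l:objects-C(k)} every object is a coproduct of shifts of $\kk$ and $\iCone(\kk)=\Cone(\id_\kk)$, with the latter contractible. Consequently tensoring with any object of $\KK$ preserves quasi-isomorphisms (one reduces to tensoring with $\kk$, which is the identity, and with a contractible complex, which kills everything up to homotopy equivalence). This preservation property, combined with the pushout-product axiom already established, is enough to verify the invertibility hypothesis directly.

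For part~\ref{enum:MM-KK-enriched-model}, the task is to verify Def.~A.3.1.5, namely that the data of the $\KK$-enrichment, tensoring, and cotensoring on $\ulms{M}$ together with the given model structure on $\ms{M}$ satisfy the pushout-product and unit axioms relative to $\KK$. I would proceed as follows. The underlying category $\ms{M}$ is bicomplete (it is a model category), the enrichment has tensors and cotensors by hypothesis, so the adjunction data $\bigl(\odot, \ulms{M}(-,-), \Psi\bigr)$ is present. The pushout-product axiom for the bifunctor $\odot\colon \ms{M}\times \KK \ra \ms{M}$ is exactly the conclusion of Lemma~\ref{l:pushout-product-cofibrations}, whose hypotheses are precisely the hypotheses imposed on $\ulms{M}$ in part~\ref{enum:MM-KK-enriched-model}. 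The unit axiom requires that for a cofibrant replacement $Q\kk\ra \kk$ and any cofibrant $M \in \ms{M}$, the induced map $M\odot Q\kk \ra M\odot \kk$ is a weak equivalence; but $\kk$ is cofibrant in $\KK$ by Lemma~\ref{l:model-str-for-k}, so we may take $Q\kk=\kk$ and the axiom is trivial. This completes the verification.

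I expect the main technical point to be the invertibility hypothesis in part~\ref{enum:KK-excellent}; everything else is either a direct appeal to Lemma~\ref{l:pushout-product-cofibrations} or a straightforward verification using the degenerate features of $\KK$ (all objects fibrant and cofibrant, unit cofibrant).
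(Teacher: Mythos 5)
Your proposal is correct and follows the paper's route exactly: both parts are reduced to Lemma~\ref{l:pushout-product-cofibrations} (with its hypotheses supplied by Lemma~\ref{l:model-str-for-k}, Lemma~\ref{l:cone-ii-trivial-cofibration} and Corollary~\ref{c:cone-ii-trivial-cofibration}), the unit axiom being vacuous since $\kk$ is cofibrant, which is precisely the paper's one-line proof. The only place you go beyond the paper is the invertibility hypothesis, which the paper silently takes for granted; your sketch there is thin — preservation of quasi-isomorphisms under $\otimes$ is not literally what that hypothesis asserts, since it concerns localizations of $\KK$-enriched categories — but the special features of $\KK$ you isolate (every object bifibrant, every quasi-isomorphism a homotopy equivalence by Lemma~\ref{l:objects-C(k)}) are indeed what makes it hold.
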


\begin{proof}
  \ref{enum:KK-excellent}
  Lemma~\ref{l:pushout-product-cofibrations}
  (using Lemma~\ref{l:cone-ii-trivial-cofibration}
  and Corollary~\ref{c:cone-ii-trivial-cofibration})
  and
  Lemma~\ref{l:model-str-for-k}
  show that $\KK$ is a monoidal model category.
  Certainly, $\KK$ is a combinatorial model category
  (cf.\ the proof of Theorem~\ref{t:examples-enriched-model-cats}
  below), and 
  the set of quasi-isomorphisms is stable under 
  $\mathscr{U}$-small filtered colimits: this is true for the
  category of 
  complexes in any
  abelian category satisfying Grothendieck's condition AB5. 
  Now use
  \cite[Thm.~1]{lawson-localization-of-enriched-cats}
  to see that $\KK$ is excellent.

  Claim~\ref{enum:MM-KK-enriched-model} follows from
  Lemma~\ref{l:pushout-product-cofibrations}.
\end{proof}

\fussnote{
  The condition in
  \cite[Def.~A.3.1.1]{lurie-higher-topos} 
  that $\odot$ preserves small colimits in each variable
  is satisfied since 
  (by \eqref{eq:tensored} 
  and
  \eqref{eq:cotensored})
  both 
  $M \odot -$ and $- \odot R$ are left adjoints.
  Of course we use that our category is tensored 
  (by the assumptions
  in Lemma~\ref{l:pushout-product-cofibrations})
  and
  cotensored.
}

\begin{theorem}
  \label{t:examples-enriched-model-cats}
  Let $\kk$ be a field.
  The following $\KK$-categories are combinatorial 
  (see \cite[Def.~A.2.6.1]{lurie-higher-topos}) $\KK$-enriched
  model 
  categories: 
  \begin{enumerate}
  \item $\ul\C(\mathcal{X})$ with $\C(\mathcal{X})$ carrying the
    $\II$- or $\EE$-model structure, for
    a $\mathscr{U}$-small $\kk$-ringed site $(\mathcal{X},
    \mathcal{O})$; 
  \item $\ul\MMod(\ulms{C})$ with $\MMod(\ulms{C})$ carrying the
    $\II$- or $\PP$-model structure, for a
    $\mathscr{U}$-small $\KK$-category $\ulms{C}$;
  \item $\ul\C(X)$ with $\C(X)$ carrying the $\FF$- or $\II$- or
    $\EE$-model structure, for a $\mathscr{U}$-small $\kk$-ringed
    space $(X, \mathcal{O})$;
  \item $\ul\C(A)$ with $\C(A)$ carrying any of the model
    structures from \ref{sec:model-struct-compl} where $A$ is a
    $\mathscr{U}$-small commutative $\kk$-algebra.
  \end{enumerate}
\end{theorem}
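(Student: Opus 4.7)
The plan is to apply Proposition~\ref{p:KK-excellent-and-MM-KK-model-cat}\ref{enum:MM-KK-enriched-model} in each of the listed cases. For a given $\KK$-category $\ulms{M}$ with a model structure on $\ms{M}$, the proposition requires that (i) $\ulms{M}$ be strongly pretriangulated, (ii) tensored and cotensored over $\KK$, (iii) the cofibrations and trivial cofibrations be stable under shifts, (iv) $\iCone$ send cofibrations to trivial cofibrations, and (v) the pushout-corner map $\iCone(E) \subsqcup{E} F \to \iCone(F)$ be a cofibration for every cofibration $E \to F$. Combinatoriality then amounts to checking that each model structure is cofibrantly generated (already noted) and that the underlying category is presentable (immediate from Grothendieck-abelianness).

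For each of the five items, conditions (i) and (ii) are in hand: strong pretriangulatedness comes from Example~\ref{exam:strongly-pretriang-dgcat}, and tensoredness/cotensoredness (and $\KK$-bicompleteness) from Lemmas~\ref{l:CX-tensored-and-cotensored-R-bicomplete}, \ref{l:ModC-tensored-and-cotensored-K-bicomplete}, \ref{l:scheme-K-bicomplete} (specialized to $\R=\kk$), together with $\ul\C(X)$ being a special case of $\ul\C(\mathcal{X})$. Condition (iii) is immediate for all the model structures in question, since the cofibrations are characterized either by being monomorphisms (possibly with cofibrant or dg-flat cokernel), by a degreewise splitness condition, or by generation from a shift-stable set of generators (as in the $\EE$-model structure). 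Condition (iv) has been verified case by case: by Lemma~\ref{l:cone-II-model} for the $\II$-structure on $\C(\mathcal{X})$ and $\C(\Qcoh(X))$, by Lemma~\ref{l:cone-for-II-and-PP-model-for-dg-modules} for $\MMod(\ulms{C})$, by Lemma~\ref{l:from-CD} for the $\EE$-structure, by Lemma~\ref{l:cone-for-flat-model} for the $\FF$-structure, and by Lemma~\ref{l:lemma-cor-for-flat-model-qcoh} for the flat structure on $\C(\Qcoh(X))$; the remaining model structures on $\C(A)$ are covered by identifying them with ones from the previous items as in \ref{sec:model-struct-compl}.

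The main technical point is condition (v). For those model structures whose cofibrations are monomorphisms (or degreewise-split monomorphisms) with cofibrant cokernel, this is exactly Corollary~\ref{c:cone-ii-trivial-cofibration}; this covers the $\II$-structure on $\C(\mathcal{X})$ and $\C(\Qcoh(X))$, the $\II$- and $\PP$-structures on $\MMod(\ulms{C})$ (via the characterization of $\PP$-cofibrations recalled in the proof of Lemma~\ref{l:cone-for-II-and-PP-model-for-dg-modules}), the $\FF$-structure on $\C(X)$, and the flat structure on $\C(\Qcoh(X))$. For the $\EE$-model structure the argument is contained in the proof of Lemma~\ref{l:from-CD}: the map \eqref{eq:84} is an $\EE$-cofibration by the citation of \cite[Lemma~2.7]{cisinski-deglise-local-and-stable-homological-algebra} made there. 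Assembling these observations, Proposition~\ref{p:KK-excellent-and-MM-KK-model-cat}\ref{enum:MM-KK-enriched-model} yields the $\KK$-enriched model structure in each case; the final reduction from $\C(A)$ to $\C(\pt,A)$, $\MMod(\ul A)$, or $\C(\Qcoh(\Spec A))$ is the content of \ref{sec:model-struct-compl}.

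The only obstacle worth flagging is keeping track, uniformly across the list, that the pushout-corner condition (v) in Lemma~\ref{l:pushout-product-cofibrations} really does reduce to Corollary~\ref{c:cone-ii-trivial-cofibration} in every case except the $\EE$-structure, where one must instead cite the Cisinski--Déglise argument built into Lemma~\ref{l:from-CD}; everything else is a bookkeeping translation of results already proved in sections \ref{sec:groth-abel-categ}--\ref{sec:modules-over-rings}.
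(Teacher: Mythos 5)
Your proposal is correct and follows essentially the same route as the paper: the paper's (much terser) proof likewise gets combinatoriality from cofibrant generation plus local presentability of Grothendieck categories, and then invokes Proposition~\ref{p:KK-excellent-and-MM-KK-model-cat}.\ref{enum:MM-KK-enriched-model} together with the section-\ref{sec:applications} lemmas and Corollary~\ref{c:cone-ii-trivial-cofibration} (mutatis mutandis for $\ul\MMod(\ulms{C})$). Your only divergence is cosmetic: for the $\EE$-structure you route condition (v) through the proof of Lemma~\ref{l:from-CD}, whereas one could equally apply Corollary~\ref{c:cone-ii-trivial-cofibration} directly via the characterization of $\EE$-cofibrations in Lemma~\ref{l:characterize-E-cofibrations}; both are valid.
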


\begin{proof}
  All model categories considered are cofibrantly generated;
  they are combinatorial because any
  Grothendieck abelian category is locally presentable, by
  \cite[Prop.~3.10]{beke-sheafifiable-homotopy-model-categories},
  and hence presentable in Lurie's terminology.
  They are $\KK$-enriched model categories
  because 
  Proposition~\ref{p:KK-excellent-and-MM-KK-model-cat}.\ref{enum:MM-KK-enriched-model}
  is applicable:
  use the proven results of section~\ref{sec:applications},
  and Corollary~\ref{c:cone-ii-trivial-cofibration}
  (mutatis mutandis for $\MMod(\ulms{C})$).
\end{proof}

\begin{remark}
  \label{rem:model-str-on-KK-enriched-functor-cats}
  Theorem~\ref{t:examples-enriched-model-cats} and
  Proposition~\ref{p:KK-excellent-and-MM-KK-model-cat}.\ref{enum:KK-excellent}
  show that \cite[Prop.~A.3.3.2]{lurie-higher-topos} is
  applicable to all the combinatorial $\KK$-enriched model
  categories mentioned in
  Theorem~\ref{t:examples-enriched-model-cats}: the category of
  $\KK$-functors from a $\mathscr{U}$-small
  $\KK$-category to one of
  these $\KK$-categories carries the projective and the injective
  combinatorial model structure, each of them turning it into a
  model category. This will be used in 
  \ref{sec:some-remarks-2-morphisms-ENH}.
\end{remark}

\section{Some 2-multicategories}
\label{sec:some-2-mult}



The main goal of this section is to introduce the
$\R$-linear 2-multicategory $\ENH_\R$ of enhancements. 
This is a key definition which allows a concise formulation of
our lifting results later on.
We also introduce a multicategory of formulas.

For the rest of this article, we fix universes $\mathscr{U} \in
\mathscr{V}$. 


By an $\R$-category we mean an $\R$-category with
$\mathscr{V}$-small $\Hom$-sets.
By a $\mathscr{V}$-small $\RR$-category we mean 
a $\mathscr{V}$-small $\RR_\mathscr{V}$-category (i.\,e.\ the
set of objects is $\mathscr{V}$-small and the $\Hom$-sets are
$\mathscr{V}$-small dg $\R$-modules). 

\subsection{2-multicategories of 
   categories, 
   dg categories, and
  triangulated categories}
\label{sec:2-mult-categ}


Our reference for enriched multicategories is
\cite{leinster-higher-operads-categories}.
Recall that a 2-multicategory (with $\mathscr{V}$-small
$\Hom$-categories) is a multicategory
enriched in the symmetric monoidal category of
($\mathscr{V}$-small) categories.
By an $\R$-linear 2-multicategory we mean a 2-multicategory 
enriched in the symmetric monoidal category of $\mathscr{V}$-small
$\R$-categories, cf.\
\cite[Def.~2.5]{ganter-kapranov-rep-and-char-thy-in-2-cats}.
In such a multicategory the set of all 
2-morphisms with same source and
target is a $\mathscr{V}$-small $\R$-module.

\fussnote{
  maybe \cite{hermida-repres-multicats} is a good reference?
  Free multicategory.
}
\fussnote{
  Wenn ich den Begriff 2-Kategorie verwende, meine ich ``strict
  2-category''. Wohl analog bei 2-Multikategorie.
}
\fussnote{
  I guess all my multicats are in fact symmetric? see leinster.
}

\subsubsection{Categories and dg categories}
\label{sec:categ-dg-categ}

The monoidal category of $\mathscr{V}$-small $\R$-categories
gives rise to the 
multicategory $\cat_\R$, by
\cite[Ex.~2.1.3]{leinster-higher-operads-categories}.  Its
objects are the $\mathscr{V}$-small $\R$-categories,
and its morphism sets
\begin{equation}
  \cat_\R(\mathcal{C}_1, \dots, \mathcal{C}_n; \mathcal{D})
\end{equation}
are $\R$-functors
$\mathcal{C}_1 \otimes \dots \otimes \mathcal{C}_n \ra
\mathcal{D}$, or, equivalently, $\R$-multilinear
functors
$\mathcal{C}_1 \times \dots \times \mathcal{C}_n \ra
\mathcal{D}$.
We adopt this latter viewpoint because 
later on we will focus our attention on triangulated
$\R$-categories. Note that 
$\cat_\R(\emptyset;\mathcal{D})=\Obj \mathcal{D}$ where $\emptyset$ is
the empty sequence because the empty tensor product of
$\R$-categories is the
$\R$-category with one object whose endomorphisms are $\R$.  

The multicategory
$\cat_\R$ is the underlying multicategory of the $\R$-linear
2-mul\-ti\-cat\-e\-go\-ry 
$\CAT_\R$ of $\mathscr{V}$-small $\R$-categories whose objects are the
$\mathscr{V}$-small $\R$-categories and whose morphism $\R$-categories
\begin{equation}
  \CAT_\R(\mathcal{C}_1, \dots, \mathcal{C}_n; \mathcal{D})
\end{equation}
are the categories of $\R$-multilinear functors 
$F, G \colon \mathcal{C}_1 \times \dots \times \mathcal{C}_n \ra
\mathcal{D}$ and $\R$-natural transformations $\sigma, \tau
\colon F \ra G$ (note that these morphism categories (or
$\Hom$-categories) are indeed $\mathscr{V}$-small by 
the obvious variant of Remark~\ref{rem:functor-categories-size}.\ref{enum:U-klein-neu}). Since $\mathcal{D}$ is an $\R$-category, $r
\sigma+ r'\tau$ is again an 
$\R$-natural transformation $F \ra G$, for $r,r' \in \R$; this
explains that $\CAT_\R$ is $\R$-linear.
Note that
$\CAT_\R(\emptyset; \mathcal{D})=\mathcal{D}$.

The $\R$-linear 2-multicategory $\DGCAT_\R$ of
$\mathscr{V}$-small $\RR$-categories (= dg $\R$-categories) is
defined similarly: its 
objects are the $\mathscr{V}$-small
$\RR$-categories, and its morphism $\R$-categories
\begin{equation}
  \DGCAT_\R(\ulms{A}_1, \dots, \ulms{A}_n; \ulms{B})
\end{equation}
are the $\mathscr{V}$-small $\R$-categories of $\RR$-functors 
$\ulms{A}_1 \otimes \dots \otimes \ulms{A}_n \ra
\ulms{B}$ and $\RR$-natural transformations. The underlying
multicategory of $\DGCAT_\R$ is the 
multicategory $\dgcat_\R$ of $\RR$-categories and $\RR$-functors.
Note that $\DGCAT_\R(\emptyset;\ulms{B})=\ms{B}$.

Mapping a $\mathscr{V}$-small $\RR$-category $\ulms{A}$ to its
homotopy 
category $[\ulms{A}]$ induces a functor
\begin{equation}
  \label{eq:htpy-DGCAT-CAT}
  [-] \colon \DGCAT_\R \ra \CAT_\R
\end{equation}
of $\R$-linear 2-multicategories: a 1-morphism
$F \colon \ulms{A}_1 \otimes \dots \otimes \ulms{A}_n \ra
\ulms{B}$
is mapped to
the composition
\begin{equation}
\label{eq:[F]-def}
  [F] \colon [\ulms{A}_1] \times \dots \times [\ulms{A}_n] \ra
  [\ulms{A}_1 \otimes \dots \otimes \ulms{A}_n]
  \xra{[F]}
  [\ulms{B}],
\end{equation}
again denoted $[F]$, and a 2-morphism $\tau \colon F \ra G$ is
mapped to the induced 
2-morphism $[\tau] \colon [F] \ra [G]$.

\subsubsection{Triangulated categories and pretriangulated dg categories}
\label{sec:triang-categ-pretr}

We denote the $\R$-linear 2-multicategory of $\mathscr{V}$-small 
triangulated
$\R$-categories by $\TRCAT_\R$. Its objects are
$\mathscr{V}$-small triangulated
$\R$-categories, and its morphism $\R$-categories
\begin{equation}
  \TRCAT_\R(\mathcal{T}_1, \dots, \mathcal{T}_n; \mathcal{S})
\end{equation}
are the $\R$-categories of triangulated $\R$-multilinear functors 
$\mathcal{T}_1 \times \dots \times \mathcal{T}_n \ra
\mathcal{S}$ (i.\,e.\ $\R$-multilinear functors of $\R$-categories
with translation that send
triangles in each argument to triangles, cf.\
\cite[Def.~10.3.6]{KS-cat-sh}) and their transformations.
Since $\mathcal{S}$ is additive, these $\R$-categories are additive.
We use the convention $\TRCAT_\R(\emptyset; \mathcal{S})=\mathcal{S}$.
The underlying 
multicategory of $\TRCAT_\R$ is the
multicategory $\trcat_\R$ of $\mathscr{V}$-small triangulated
$\R$-categories and 
triangulated $\R$-functors.


If $\ulms{A}_1, \dots,
\ulms{A}_n, \ulms{B}$ are
$\mathscr{V}$-small pretriangulated $\RR$-categories
and
$F \colon \ulms{A}_1 \otimes \dots \otimes \ulms{A}_n \ra
\ulms{B}$ is an $\RR$-functor, then $[F] \colon
[\ulms{A}_1] \times \dots \times [\ulms{A}_n] \ra 
[\ulms{B}]$ is a triangulated $\RR$-multilinear functor
(by the obvious generalization of Remark~\ref{rem:htpy-cat-of-pretriangulated-is-triang});
if 
$G$ is another such $\RR$-functor and
$\tau \colon F \ra G$ is an $\RR$-natural transformation, then 
$[\tau]$ is a transformation of triangulated $\R$-multilinear
functors.

Let $\DGCAT_\R^{\pretr}$ 
be the full $\R$-linear 2-multisubcategory
of $\DGCAT_\R$
whose objects are the
pretriangulated 
$\RR$-categories.
The above
discussion shows that 
the functor \eqref{eq:htpy-DGCAT-CAT} induces 
a functor 
\begin{equation}
  \label{eq:htpy-DGCAT-pretr-shift-TRCAT}
  [-] \colon \DGCAT_\R^{\pretr} \ra \TRCAT_\R
\end{equation}
of $\R$-linear 2-multicategories
so that we obtain
a commutative diagram 
\begin{equation}
  \label{eq:htpy-DGCAT-pretr-shift-TRCAT-square}
  \xymatrix{
    {\DGCAT_\R^{\pretr}} \ar[r]^-{[-]} \ar[d] &
    {\TRCAT_\R} \ar[d]\\
    {\DGCAT_\R} \ar[r]^-{[-]} &
    {\CAT_\R}
  }
\end{equation}
of $\R$-linear 2-multicategories. 

\begin{remark}
  \label{rem:TRCAT-CAT-reflects-1-and-2-isos}
  The vertical functor on the right 
  in
  \eqref{eq:htpy-DGCAT-pretr-shift-TRCAT-square}
  trivially 
  reflects 1-isomorphisms
  and 2-isomorphisms.
\end{remark}

\fussnote{
  I guess a 1-isomorphism necessarily has only one source!
}  

\subsubsection{Objectwise homotopy equivalences}
\label{sec:objectw-homot-equiv-1}

\begin{definition}
  \label{d:objectw-homot-equiv-1}
  Let $\ulms{A}_1, \dots, \ulms{A}_n$, $\ulms{B} \in \DGCAT_\R$.
  We call a morphism 
  $\tau \colon F \ra G$ in
  $\DGCAT_\R(\ulms{A}_1, \dots, \ulms{A}_n; \ulms{B})$
  an \define{objectwise homotopy equivalence} if
  \begin{equation}
    \tau_{(A_1,\dots,A_n)} \colon F(A_1, \dots, A_n) \ra
    G(A_1, \dots, A_n)  
  \end{equation}
  is a homotopy equivalence in
  $\ulms{B}$, for all $(A_1, \dots, A_n) \in \ulms{A}_1 \otimes
  \dots \otimes \ulms{A}_n$.
  Our notation for an objectwise homotopy equivalence is
  $F \xra{[\sim]} G$. 
\end{definition}

Equivalently, an objectwise homotopy equivalence is a 2-morphism
in $\DGCAT_\R$ whose image under the functor 
\eqref{eq:htpy-DGCAT-CAT} is a 2-isomorphism.




\begin{remark}
  \label{rem:composition-ohe}
  The vertical (resp.\ horizontal) composition in $\DGCAT_\kk$ of
  objectwise homotopy equivalences is an objectwise homotopy
  equivalence.
\end{remark}

\subsubsection{Existence of additive linear localizations}
\label{sec:exist-line-local}

\begin{definition}
  [{cf.\ \cite[Def.~8.3.1]{hirschhorn-model}}]
  \label{d:R-localization}
  An \define{additive $\R$-localization} of an additive
  $\R$-category 
  $\mathcal{M}$ 
  with respect to some subset $\mathcal{W}$ of the set
  $\Mor(\mathcal{M})$ of morphisms in 
  $\mathcal{M}$ is an additive $\R$-category $\L \mathcal{M}$
  together with 
  an $\R$-functor $\gamma \colon
  \mathcal{M} \ra \L 
  \mathcal{M}$ (which is automatically additive)
  that maps all 
  morphisms in $\mathcal{W}$ to isomorphisms and is universal
  with this property: given any
  $\R$-functor $\phi \colon \mathcal{M} \ra \mathcal{N}$ to an
  additive $\R$-category $\mathcal{N}$ mapping
  all
  morphisms in $\mathcal{W}$ to isomorphisms there is a unique
  $\R$-functor $\delta \colon \L\mathcal{M} \ra \mathcal{N}$ such
  that $\phi=\delta \gamma$.
\end{definition}

\begin{proposition}
  \label{p:R-localization-additive-small}
  Let $\mathscr{V}$ be a universe,
  $\mathcal{M}$ a $\mathscr{V}$-small additive $\R$-category, and
  $\mathcal{W} \subset \Mor \mathcal{M}$ a subset.
  Assume that $\mathcal{W}$ is closed under direct sums, i.\,e.\
  for all  
  morphisms $w \colon A \ra B$ und $w' \colon A' \ra B'$ in
  $\mathcal{W}$ the morphism $w \oplus w' \colon A \oplus A' \ra
  B \oplus B'$ is again in $\mathcal{W}$.
  Then there is a 
  an additive $\R$-localization
  $\gamma \colon \mathcal{M} \ra \L \mathcal{M}$ of $\mathcal{M}$
  with respect to $\mathcal{W}$ 
  where $\L\mathcal{M}$ is $\mathcal{V}$-small.
  Moreover,
  the underlying functor is the
  ordinary localization of the category $\mathcal{M}$ with
  respect to $\mathcal{W}$. 
\end{proposition}

\begin{proof}
  The existence of the localization 
  $\gamma \colon \mathcal{M} \ra \L \mathcal{M}$
  as an ordinary category and the fact that $\L \mathcal{M}$ is
  $\mathscr{V}$-small are
  well-known, see e.\,g.\ 
  \cite[Thm.~1.6.9 together with its proof]{gabber-ramero-found-almost-v12}.
  Then \cite{cisinski-mathoverflow-localization-additive} 
  shows that $\L \mathcal{M}$ is additive and that $\gamma$ is an
  additive functor.
  Now it is clear how to multiply a morphism $A \ra B$ in
  $\L\mathcal{M}$ by an element $r \in R$: just precompose with
  $\gamma(r \id_A)$ (or postcompose with $\gamma(r \id_B)$).
  All claims of the proposition follow.
\end{proof}

\begin{corollary}
  \label{c:R-localization-additive-small}
  Let $\ulms{A}, \ulms{B}$ be $\mathscr{V}$-small
  $\RR$-categories, and assume that $\ulms{B}$ is additive. 
  Then the additive $\R$-localization 
  of $\DGCAT_\R(\ulms{A}, \ulms{B})$ with
  respect to the set of objectwise homotopy
  equivalences exists and is $\mathscr{V}$-small.
\end{corollary}

\begin{proof}
  Since $\ulms{B}$ is
  additive, the $\mathscr{V}$-small $\R$-category
  $\DGCAT_\R(\ulms{A}, \ulms{B})$ is additive.
  The direct sum of objectwise homotopy equivalences is again an
  objectwise homotopy equivalence. Hence we can use
  Proposition~\ref{p:R-localization-additive-small}.
\end{proof}

We can and will assume in the following that all considered additive
$\R$-localization functors are the identity maps on the sets of
objects.  

\subsection{The 2-multicategory of dg
  \texorpdfstring{$\R$}{R}-enhancements} 
\label{sec:2-mult-enhanc}

Let
$\tildew{\ENH}_\R$
be the full $\R$-linear 2-multisubcategory 
of $\DGCAT_\R$ consisting of all additive pretriangulated
$\mathscr{V}$-small $\RR$-categories. Any strongly
pretriangulated $\mathscr{V}$-small $\RR$-category is additive
and hence an object of $\tildew{\ENH}_\R$.
The $\R$-linear 2-multicategory $\ENH_\R$ of
$\RR$-enhancements is defined as follows. 
Its objects are the objects of $\tildew{\ENH}_\R$.
Given objects $\ulms{A}_1, \dots, \ulms{A}_n$ and $\ulms{B}$ of
$\ENH_\R$, the $\R$-category of morphisms
\begin{equation}
  \label{eq:def-ENH}
  \ENH_\R(\ulms{A}_1, \dots, \ulms{A}_n; \ulms{B})
\end{equation}
is defined to be the target of the additive $\R$-localization 
(see Definition~\ref{d:R-localization})
of the $\mathscr{V}$-small additive $\R$-category 
\begin{equation}
  \tildew{\ENH}_\R(\ulms{A}_1, \dots, \ulms{A}_n; \ulms{B})
  =\DGCAT_\R(\ulms{A}_1 \otimes \dots \otimes
  \ulms{A}_n, \ulms{B})
\end{equation}
with respect to the set of objectwise homotopy equivalences.
It exists and is a $\mathscr{V}$-small additive $\R$-category, by
Corollary~\ref{c:R-localization-additive-small}.
The definition of the identities in $\ENH_\R$ is obvious, and 
compositions are defined in the obvious way using 
Remark \ref{rem:composition-ohe} and the fact that
additive $\R$-localization of additive $\R$-categories commutes
with finite products. 

\begin{remark}
  \label{rem:ENH-objects-are-enhancements}
  Each object $\ulms{B}$ of $\ENH_\R$ (or $\tildew{\ENH}_\R$) is a
  pretriangulated $\RR$-category, so
  $[\ulms{B}]$ is a 
  triangulated $\R$-category and $\ulms{B}$ together with the
  identity functor $[\ulms{B}] \ra [\ulms{B}]$
  is an $\RR$-enhancement of $[\ulms{B}]$, see
  Definition~\ref{d:dg-enhancement}.
  This explains why $\ENH_\R$ is called the 2-category of
  enhancements. 
  For the purpose of this article, the most important objects 
  of $\ENH_\R$ are
  the strongly pretriangulated $\RR$-categories
  $\ul\II(\mathcal{X})$, where
  $(\mathcal{X}, \mathcal{O})$
  is a $\mathscr{U}$-small
  $\R$-ringed site.
  Then 
  $[\ul\II(\mathcal{X})] \xsira{\eqref{eq:24}}
  \D(\mathcal{X})$ is an equivalence and 
  $\ul\II(\mathcal{X})$ is usually viewed as an $\RR$-enhancement of
  $\D(\mathcal{X})$.
  Similarly, 
  $\ul\IIMod(\ulms{C})$ and $\ul\PPMod(\ulms{C})$ are
  important objects of $\ENH_\R$, if
  $\ulms{C}$ is a $\mathscr{U}$-small
  $\RR$-category. They are enhancements of $\D(\ulms{C})$, cf.\
  \eqref{eq:24-modules}.
  Note that $\ul\II(\mathcal{X})$, 
  $\ul\IIMod(\ulms{C})$ and $\ul\PPMod(\ulms{C})$
  have $\mathscr{U}$-small $\Hom$-sets and
  objects in $\mathscr{U}$, by the discussion
  in \ref{sec:ringed-sites} and
  \ref{sec:model-structures}, so they are in particular 
  $\mathscr{V}$-small.
\end{remark}

\fussnote{
  Begr\"undung zur folgenden Bemerkung siehe wohl (ausgelagert!)
  ref{sec:zu-multikategorien}. 
}

\begin{remark}
  \label{rem:ENH-emptysource}
  If $\ulms{B}$ is any strongly pretriangulated
  $\mathscr{V}$-small $\RR$-category,
  it is straightforward to see that 
  $\ms{B} \ra [\ulms{B}]$ is the additive $\R$-localization of 
  $\ms{B}$ 
  with respect to the set of homotopy equivalences. 
  In
  particular, $\ENH_\R(\emptyset, \ulms{B})=[\ulms{B}]$ as an
  $\R$-category because 
  it is defined as the additive $\R$-localization
  of $\tildew{\ENH}_\R(\emptyset, \ulms{B})=
  \DGCAT_\R(\emptyset, \ulms{B})=\ms{B}$ 
  with respect to the set of (objectwise) homotopy equivalences. 
  In particular, objects and
  morphisms in
  $\ms{B}$ give rise to 1-morphisms and 2-morphisms in $\ENH_\R$.
\end{remark}

The construction of $\ENH_\R$ yields a functor
\begin{equation}
  \label{eq:delta-tilde-ENH-to-ENH}
  \delta \colon \tildew{\ENH}_\R \ra \ENH_\R
\end{equation}
of $\R$-linear 2-multicategories.
We can and will assume that $\delta$ is the identity on
1-morphisms. 
By the universal property of
additive $\R$-localizations (and 
Remark~\ref{rem:TRCAT-CAT-reflects-1-and-2-isos})
there is a unique functor
\begin{equation}
  \label{eq:htpy-ENH-TRCAT}
  [-] \colon \ENH_\R \ra \TRCAT_\R
\end{equation}
of $\R$-linear 2-multicategories
such that the diagram
\begin{equation}
  \xymatrix{
    {\tildew{\ENH}_\R} 
    \ar[r]^-{\delta} 
    \ar[rd]_-{[-]}
    & 
    {\ENH_\R}
    \ar[d]^-{[-]}\\
    &
    {\TRCAT_\R}
  }
\end{equation}
of $\R$-linear
2-multicategories commutes where the diagonal arrow
is 
obtained from \eqref{eq:htpy-DGCAT-pretr-shift-TRCAT}.

Let $\enh_\R$ be the underlying multicategory of
$\tildew{\ENH}_\R$. 
It coincides with the underlying multicategory of 
$\ENH_\R$ and comes with the obvious functor 
\begin{equation}
  \label{eq:[]-enh}
  [-] \colon \enh_\R \ra \trcat_\R.
\end{equation}

\fussnote{
  Auszuarbeiten (und hoffentlich richtig):
  
  \cite[4.1, Prop~1]{toen-lectures-dg-cats}
  implies that there is a natural bijection between the set of
  isomorphism classes of objects in $\ENH_\kk(\ulms{A}_1,
  \dots, \ulms{A}_n; \ulms{B})$ and the set of morphisms from
  $\ulms{A}_1 \otimes 
  \dots \otimes \ulms{A}_n \ra \ulms{B}$ in the homotopy category
  of $\KK$-categories (which is the localization of $\dgcat_\kk$
  with respect to the set of quasi-isomorphisms).

  Auch Prop.~3  auf Seite 40 interessant. Aber ich glaube,
  diese Art Lokalisierung ist schw\"acher als das, was ich
  mache.
  
  Kann ich sinnvolles Diagram zeichnen?
}

\begin{remark}
  \label{rem:zig-zags-define-2-morph-in-ENH}
  Let
  \begin{equation}
    \label{eq:zig-zag-left-pointing-ohe}
    F \xla[{[\sim]}]{\alpha_1} E_1 \xra{\beta_1} 
    E_1' 
    \xla[{[\sim]}]{}
    \dots
    \ra
    E_{n-1}' \xla[{[\sim]}]{\alpha_n} E_n \xra{\beta_n}
    G
  \end{equation}
  be a zig-zag of 2-morphisms in $\tildew{\ENH}_\R$ 
  where all morphisms $\alpha_i$ are objectwise homotopy
  equivalences.
  Then all $\delta(\alpha_i)$ are invertible in $\ENH_\R$ and
  \begin{equation}
    \delta(\beta_n) \delta(\alpha_n)\inv \cdots \delta(\beta_1)
    \delta(\alpha_1)\inv \colon \delta(F) \ra \delta(G)   
  \end{equation}
  defines a 2-morphism 
  $F \ra G$ in 
  $\ENH_\R$; this
  2-morphism is a 2-isomorphism if all $\beta_i$ are objectwise
  homotopy equivalences.
\end{remark}

\subsection{Some useful equalities}
\label{sec:some-usef-equal}

For later use we prove two additional results. 
We now work over the field $\kk$. In this subsection let
$\ulms{M}$ be either 
\begin{enumerate}
\item 
  \label{enum:M=CX-inj}
  $\ul\C(\mathcal{X})$ with $\C(\mathcal{X})$ carrying the
  $\II$-model structure, for
  a $\mathscr{U}$-small $\kk$-ringed site $(\mathcal{X},
  \mathcal{O})$, or 
\item
  \label{enum:M=ModC-inj-proj}
  $\ul\MMod(\ulms{C})$ with $\MMod(\ulms{C})$ carrying the
  $\II$-model structure, for a
  $\mathscr{U}$-small $\KK$-category $\ulms{C}$.
\end{enumerate}
We remind the reader that all objects of $\ulms{M}$ are
$\II$-cofibrant and hence the set of $\II$-fibrant objects in
$\ulms{M}$ 
coincides with the set of $\II$-bifibrant objects. Let $\ulms{M}_\bifib$
be the full subcategory of $\ulms{M}$ of $\II$-fibrant objects. 
So either $\ulms{M}_\bifib=\ul\II(\mathcal{X})$ or
$\ulms{M}_\bifib=\ul\IIMod(\ulms{C})$.

\fussnote{
  In den folgenden beiden Resultat k\"onnte vermutlich auch
  $\ulms{A}'$ und 
  $\ulms{B}'$ mitbetrachten, jedoch scheint mir dies nicht so
  wichtig. 
}

\begin{proposition}
  \label{p:compare-K-enri-I-fibr-repl-functors}
  Let $\ulms{M}$ be as above. 
  Let $\ulms{B}$ be a full additive
  $\KK$-subcategory of $\ulms{M}_\bifib$ such that
  $[\ulms{B}] \subset [\ulms{M}_\bifib]$ is a strictly full
  subcategory. 
  Let $\ulms{A}$ be any $\mathscr{V}$-small $\KK$-category
  and let
  \begin{equation}
    \label{eq:compare-iF-i'F}
    \delta \colon \DGCAT_\kk(\ulms{A},
    \ulms{B}) \ra \L_\ohe\DGCAT_\kk(\ulms{A},
    \ulms{B})
  \end{equation}
  be the additive $\kk$-localization
  of $\DGCAT_\kk(\ulms{A}, \ulms{B})$ with
  respect to the set of objectwise homotopy
  equivalences
  (see Corollary~\ref{c:R-localization-additive-small}).
  Let $(\ii, \iotaii)$ and $(\ii', \iotaii')$
  be $\KK$-enriched $\II$-fibrant resolution functors on
  $\ulms{M}$. 
  Let $F \colon \ulms{A} \ra
  \ulms{M}$ be a $\KK$-functor such that $\ii F \colon \ulms{A} \ra
  \ulms{M}_{\II\text{-}\fib}=\ulms{M}_\bifib$ lands in
  $\ulms{B}$. Then $\ii' F$ 
  and $\ii' \ii F$ and $\ii\ii' F$
  also land in $\ulms{B}$ and the zig-zags
  \begin{align}
    \label{eq:via-ii'-ii}
    \ii F 
    & \xra{\iotaii'\ii F} \ii'\ii F \xla{\ii'\iotaii F}
      \ii'F,
    \\
    \label{eq:via-ii-ii'}
    \ii F 
    & \xra{\ii\iotaii' F} \ii \ii' F \xla{\iotaii\ii' F}
      \ii' F
  \end{align}
  of objectwise homotopy equivalences in $\DGCAT_\kk(\ulms{A},
  \ulms{B})$ give rise to the same isomorphism
  \begin{equation}
    \phi_{\ii F \ra \ii'F}
    \colon \ii F \sira \ii'F
  \end{equation}
  in $\L_\ohe\DGCAT_\kk(\ulms{A}, \ulms{B})$ (cf.\
  Remark~\ref{rem:zig-zags-define-2-morph-in-ENH}).  

  If $(\ii'', \iotaii'')$ is another 
  $\KK$-enriched $\II$-fibrant resolution functor, we have
  \begin{align}
    \label{eq:phi-ii'-ii-assoc}
    \phi_{\ii' F \ra \ii''F}
    \circ
    \phi_{\ii F \ra \ii'F}
    & = \phi_{\ii F \ra \ii''F},\\
    \label{eq:phi-ii-ii-id}
    \phi_{\ii F \ra \ii F} 
    & = \id_{\ii F}
  \end{align}
  where the isomorphisms $\phi_{\ii' F \ra \ii''F}$,
  $\phi_{\ii F \ra \ii''F}$
  and $\phi_{\ii F \ra \ii F}$ are
  defined in the analog way.

  In particular, we obtain
  \begin{align}
    \label{eq:phi-ii-ii-id-corollar}
    \delta(\ii \iotaii F) 
    & = \delta(\iotaii \ii F),\\
    \label{eq:phi-ii-ii-id-corollar-2}
    \delta(\ii' \iotaii F)\inv \delta(\iota' \ii F)
    & = \delta(\iotaii \ii' F)\inv \delta(\ii \iotaii' F).
  \end{align}
\end{proposition}

\begin{proof}
  Let $A \in \ulms{A}$ and consider the commutative cube
  \begin{equation}
    \xymatrix{
      & {\ii''\ii' FA} 
      \ar[rr]^-{\ii''\ii'\iotaii_{FA}}
      &&
      {\ii''\ii' \ii FA}
      \\
      {\ii' FA} 
      \ar[rr]^(.7){\ii'\iotaii_{FA}}
      \ar[ur]^-{\iotaii''_{\ii' FA}}
      &&
      {\ii' \ii FA}
      \ar[ur]^-{\iotaii''_{\ii' \ii FA}}
      \\
      & 
      {\ii'' FA} 
      \ar[rr]^(.3){\ii''\iotaii_{FA}}
      \ar[uu]_(.3){\ii''\iotaii'_{FA}}
      &&
      {\ii''\ii FA}
      \ar[uu]_-{\ii''\iotaii'_{\ii FA}}
      \\
      {FA} 
      \ar[rr]_-{\iotaii_{FA}}
      \ar[uu]^-{\iotaii'_{FA}}
      \ar[ur]^-{\iotaii''_{FA}}
      &&
      {\ii FA.}
      \ar[uu]_(.7){\iotaii'_{\ii FA}}
      \ar[ur]_-{\iotaii''_{\ii FA}}
    }
  \end{equation}
  in $\ms{M}$. All morphisms are weak equivalences (by the
  2-out-of-3 property), and if we delete
  the object $FA$ and its outgoing arrows, all remaining
  morphisms are homotopy equivalences
  (cf.\
  Remark~\ref{rem:ii-maps-qisos-to-htpy-equis} below);  
  since $\ii F A$ is in
  $\ms{B}$, all the remaining objects are in $\ms{B}$ by our
  assumption that $[\ulms{B}] \subset
  [\ulms{M}_{\II\text{-}\fib}]$ is strictly full.

  We define $\phi_{\ii F \ra \ii' F}:= \delta(\ii'\iotaii F)\inv
  \delta(\iotaii' \ii F) \colon \ii F \sira \ii'
  F$ using the 
  zig-zag \eqref{eq:via-ii'-ii}, 
  and in a similar way, we define $\phi_{\ii' F \ra \ii''F} 
  := \delta(\ii''\iotaii' F)\inv
  \delta(\iotaii'' \ii' F)\colon \ii' F \sira \ii''F$
  and $\phi_{\ii F \ra \ii''F} := \delta(\ii''\iotaii F)\inv
  \delta(\iotaii'' \ii F)\colon \ii F \sira \ii''F$.
  Then \eqref{eq:phi-ii'-ii-assoc}
  follows immediately from the above cube (without using its
  corner $FA$).
 
  The equality \eqref{eq:phi-ii'-ii-assoc} for 
  $(\ii'', \iotaii'') = (\ii', \iotaii')= (\ii, \iotaii)$ says that
  $\phi_{\ii F \ra \ii F}$ is an idempotent isomorphism of $\ii
  F$. This implies 
  \eqref{eq:phi-ii-ii-id} and \eqref{eq:phi-ii-ii-id-corollar}.

  From \eqref{eq:phi-ii'-ii-assoc} for
  $(\ii'', \iotaii'') = (\ii, \iotaii)$
  and \eqref{eq:phi-ii-ii-id} we
  obtain
  $\phi_{\ii' F \ra \ii F} \circ \phi_{\ii F \ra \ii'F} = \phi_{\ii F
    \ra \ii F}=\id_{\ii F}$.
  Rewritten, this just means that
  \eqref{eq:phi-ii-ii-id-corollar-2} holds and shows that
  $\phi_{\ii F \ra \ii' F}$ can also be defined using the zig-zag
  \eqref{eq:via-ii-ii'}.
\end{proof}

The following result is similar and may be skipped on a first
reading. 

\begin{proposition}
  \label{p:compare-K-enri-E-fibr-repl-functors}
  Let $\ulms{M}$ be as above. 
  Let $\ulms{B}$ be a full additive
  $\KK$-subcategory of $\ulms{M}_\bifib$ such that
  $[\ulms{B}] \subset [\ulms{M}_\bifib]$ is a strictly full
  subcategory. 
  Let $\ulms{A}$ be any $\mathscr{V}$-small $\KK$-category
  and let
  \begin{equation}
    \label{eq:compare-FeG-Fe'G}
    \delta \colon \DGCAT_\kk(\ulms{A},
    \ulms{B}) \ra \L_\ohe\DGCAT_\kk(\ulms{A},
    \ulms{B})
  \end{equation}
  be the additive $\kk$-localization
  of $\DGCAT_\kk(\ulms{A}, \ulms{B})$ with
  respect to the set of objectwise homotopy
  equivalences
  (see Corollary~\ref{c:R-localization-additive-small}).
  Let $(\mathcal{Y}, \mathcal{O}_\mathcal{Y})$ be a
  $\mathscr{U}$-small $\kk$-ringed
  site, let
  $F \colon \ul\C(\mathcal{Y})_\hflat \ra
  \ulms{M}$ be a $\KK$-functor which maps
  quasi-isomorphisms to homotopy equivalences, and let 
  $G \colon \ulms{A} \ra \ul\C(\mathcal{Y})$ be a $\KK$-functor.
  Let $(\ee, \epsilonee)$ and  $(\ee', \epsilonee')$
  be $\KK$-enriched $\EE$-fibrant resolution functors on
  $\ul\C(\mathcal{Y})$.
  Assume that the composition
  \begin{equation}
    \label{eq:7}
    F \ee G \colon
    \ulms{A} \xra{G} \ul\C(\mathcal{Y}) \xra{\ee}
    \ul\EE(\mathcal{Y}) \subset \ul\C(\mathcal{Y})_\hflat \xra{F}
    \ulms{M} 
  \end{equation}
  lands in $\ulms{B}$.
  Then $F \ee' G$
  and $F \ee'\ee G$ and $F \ee\ee' G$
  also land in $\ulms{B}$ and the zig-zags
  \begin{align}
    \label{eq:via-ee'-ee}
    F \ee G 
    & \xla{F\epsilonee'\ee G} 
      F \ee'\ee G
      \xra{F\ee'\epsilonee G}
      F \ee' G
    \\
    \label{eq:via-ee-ee'}
    F \ee G
    & \xla{F \ee \epsilonee' G} 
      F \ee\ee' G
      \xra{F \epsilonee\ee' G}
      F \ee' G
  \end{align}
  of objectwise homotopy equivalences in $\DGCAT_\kk(\ulms{A},
  \ulms{B})$ give rise to the same isomorphism
  \begin{equation}
    \label{eq:psi-FeeG-to-Fee'G}
    \psi_{F\ee G \ra F\ee' G}
    \colon F\ee G \sira F\ee' G
  \end{equation}
  in $\L_\ohe\DGCAT_\kk(\ulms{A}, \ulms{B})$.
  If $(\ee'', \epsilonee'')$ is another 
  $\KK$-enriched $\EE$-fibrant resolution functor, we have
  \begin{align}
    \label{eq:psi-ee-ee'-assoc}
    \psi_{F\ee' G \ra F\ee'' G}
    \circ
    \psi_{F\ee G \ra F\ee' G}
    & = \psi_{F\ee G \ra F\ee'' G},\\
    \label{eq:psi-ee-ee'-id}
    \psi_{F\ee G \ra F \ee G}
    & = \id_{F\ee G}
  \end{align}
  where the isomorphisms
  $\psi_{F\ee' G \ra F\ee'' G}$, $\psi_{F\ee G \ra F\ee'' G}$ and 
  $\psi_{F\ee G \ra F\ee G}$ 
  are defined in the analog way.
  In particular, we obtain
  \begin{align}
    \label{eq:phi-ee-ee-id-corollar}
    \delta(F \epsilonee \ee G)
    & = \delta(F\ee \epsilonee G),\\
    \delta(F\ee'\epsilonee G) \delta(F\epsilonee'\ee G)\inv
    & = \delta(F\epsilonee\ee' G) \delta(F\ee\epsilonee' G)\inv.
  \end{align}
\end{proposition}

\fussnote{
  it seems that I could get rid of $\ul\C(\mathcal{Y})_\hflat$ by
  replacing it with
  $\ul\EE(\mathcal{Y})$. Aber so vielleicht besser, dann kann
  auch flache Modellstruktur nehmen f\"ur Garben auf
  $\kk$-geringten R\"aumen.
}

\begin{proof}
  Let $A \in \ulms{A}$, put $A':= G(A)$ and consider the
  commutative diagram
  \begin{equation}
    \xymatrix{
      & 
      {\ee''\ee'\ee A'} 
      \ar[rr]^-{\ee''\ee'\epsilonee_{A'}}
      \ar[dd]^(.7){\ee''\epsilonee'_{\ee A'}}
      \ar[dl]_-{\epsilonee''_{\ee'\ee A'}}
      &&
      {\ee''\ee'A'}
      \ar[dd]^-{\ee''\epsilonee'_{A'}}
      \ar[dl]_-{\epsilonee''_{\ee' A'}}
      \\
      {\ee'\ee A'} 
      \ar[rr]^(.7){\ee'\epsilonee_{A'}}
      \ar[dd]_-{\epsilonee'_{\ee A'}}
      &&
      {\ee'A'}
      \ar[dd]^(.3){\epsilonee'_{A'}}
      \\
      & {\ee''\ee A'} 
      \ar[rr]^(.3){\ee''\epsilonee_{A'}}
      \ar[dl]_-{\epsilonee''_{\ee A'}}
      &&
      {\ee''A'}
      \ar[dl]^-{\epsilonee''_{A'}}
      \\
      {\ee A'} 
      \ar[rr]_-{\epsilonee_{A'}}
      &&
      {A'}
    }
  \end{equation}
  in $\C(\mathcal{Y})$. All morphisms are quasi-isomorphisms, and
  all objects apart from $A'$ are h-flat. So if we remove the
  object $A'$ and all morphisms with target $A'$ from this
  diagram and apply $F$ we obtain a commutative diagram of
  homotopy equivalences in $\ms{M}$. Since 
  $[\ulms{B}]$ is a strictly full subcategory of
  $[\ulms{M}_{\II\text{-}\fib}]$ all objects of this new diagram lie
  in $\ulms{B}$, and we can define the isomorphism
  $\psi_{F\ee G \ra F\ee' G}:=\delta(F \ee'\epsilonee G)
  \delta(F\epsilonee' \ee G)\inv \colon F\ee G \ra
  F\ee'G$. 
  All claims of the proposition are now proved as the
  corresponding claims in
  Proposition~\ref{p:compare-K-enri-I-fibr-repl-functors}. 
\end{proof}

\subsection{The multicategory of formulas for the four operations}
\label{sec:categ-form-four}


Let $\R$ be a ring and $\RR=\C(\R)$.
Let $\fml'_\R$ be the free multicategory
whose objects are  
symbols $\ud{\mathcal{X}}$ and
$\ud{\mathcal{X}}^\opp$, for each
$\mathscr{U}$-small $\R$-ringed site $(\mathcal{X},
\mathcal{O}_\mathcal{X})$, and 
whose generating 
morphisms are given as follows (where all appearing $\R$-ringed
sites are assumed to be $\mathscr{U}$-small):
\begin{enumerate}
\item for each $\R$-ringed site $(\mathcal{X},
  \mathcal{O}_\mathcal{X})$ there are morphisms 
  $\ud{\otimes} \colon (\ud{\mathcal{X}},\ud{\mathcal{X}}) 
  \ra \ud{\mathcal{X}}$ and
  $\ud{\sheafHom} \colon (\ud{\mathcal{X}}^\opp,\ud{\mathcal{X}}) 
  \ra \ud{\mathcal{X}}$ and
  $\ud{\Hom} \colon (\ud{\mathcal{X}}^\opp,\ud{\mathcal{X}}) 
  \ra \ud{\pt}$ where $\ud{\pt}$ is the symbol for the
  $\R$-ringed site $(\pt, \R)$;
\item for each morphism 
  $\alpha \colon (\Sh(\mathcal{Y}), \mathcal{O}_\mathcal{Y}) \ra
  (\Sh(\mathcal{X}), \mathcal{O}_\mathcal{X})$ of $\R$-ringed
  topoi
  there are
  morphisms
  $\ud{\alpha}^* \colon \ud{\mathcal{X}} \ra \ud{\mathcal{Y}}$ and
  $\ud{\alpha}_* \colon \ud{\mathcal{Y}} \ra \ud{\mathcal{X}}$;
\item 
  for each $\R$-ringed site $(\mathcal{X},
  \mathcal{O}_\mathcal{X})$
  and each object $G \in \C(\mathcal{X})$ there is a morphism
  $\ud{G} \colon \emptyset \ra \ud{\mathcal{X}}$;
\item each of these morphisms has an ``opposite'' morphism:
  there are morphisms 
  $\ud{\otimes}^\opp \colon
  (\ud{\mathcal{X}}^\opp,\ud{\mathcal{X}}^\opp)  
  \ra \ud{\mathcal{X}}^\opp$,
  $\ud{\sheafHom}^\opp \colon
  (\ud{\mathcal{X}},\ud{\mathcal{X}}^\opp)  
  \ra \ud{\mathcal{X}}^\opp$,
  $\ud{\Hom}^\opp \colon
  (\ud{\mathcal{X}},\ud{\mathcal{X}}^\opp)  
  \ra \ud{\pt}^\opp$,
  $(\ud{\alpha}^*)^\opp \colon \ud{\mathcal{X}}^\opp \ra
  \ud{\mathcal{Y}}^\opp$, 
  $\ud{\alpha}_*^\opp \colon \ud{\mathcal{Y}}^\opp \ra
  \ud{\mathcal{X}}^\opp$, and $\ud{G}^\opp \colon \emptyset \ra
  \ud{\mathcal{X}}^\opp$.
\end{enumerate}
There is an obvious involution
\begin{equation}
  \label{eq:opp-fml'}
  (-)^\opp \colon \fml'_\R \ra \fml'_\R  
\end{equation}
swapping $\ud{\mathcal{X}}$ and
$\ud{\mathcal{X}}^\opp$ and exchanging each morphism with its
opposite. 

If $\sigma$ is the canonical morphism \eqref{eq:56}
we abbreviate $\ud\Gamma:=\ud\sigma_*$. 
The morphisms in $\fml'_\R$ may be viewed as planar rooted trees.
We prefer a formulaic notation, for
example $\ud\sheafHom(\ud\alpha^*(-),-)$ denotes the obvious
morphism $(\ud{\mathcal{X}}^\opp, \ud{\mathcal{Y}}) \ra
\ud{\mathcal{Y}}$. More precisely, this morphism would be
denoted $\ud\sheafHom((\ud\alpha^*)^\opp(-),-)$.

\begin{remark}
  \label{rem:enlarge-fml}
  In \ref{sec:categ-form-six} we will enlarge the multicategory
  $\fml'_\R$ by adding generating morphisms $\ud\alpha_!$,
  $\ud\alpha^!$, $\ud\alpha\inv$ for suitable morphisms $\alpha$
  of $\R$-ringed spaces, and call this enlarged multicategory
  $\fml_\R$. In \ref{sec:2-multicat-formulas} we even 
  add some 2-morphisms and define a $\kk$-linear 2-multicategory
  of formulas. 
\end{remark}

\begin{remark}
  \label{rem:name-fml}
  We call $\fml'_\R$ the multicategory of formulas.  
  A priori this seems to be a bad name: neither the objects nor
  the morphisms 
  of $\fml'_\R$ look like formulas. However, in the
  $\kk$-linear 2-multicategory of formulas defined later on
  there are 2-(iso)morphisms as
  $\ud\alpha_*\ud\sheafHom(\ud\alpha^*(-),-) \sira
  \ud\sheafHom(-, \ud\alpha_*(-))$ which really look like
  formulas and hence justify the name.
\end{remark}

\subsubsection{Interpretation of formulas in triangulated
  categories}
\label{sec:interpr-form-triang}

By definition of $\fml'_\R$, there is a unique interpretation
functor 
\begin{equation}
  \label{eq:interprete-fml'-tricat}
  \D \colon \fml'_\R \ra \trcat_\R  
\end{equation}
of multicategories given on objects by
$\ud{\mathcal{X}} \mapsto \D(\mathcal{X})$ and
$\ud{\mathcal{X}}^\opp \mapsto \D(\mathcal{X})^\opp$, and on
generating morphisms by 
$\ud\otimes \mapsto \otimes^\dL$, 
$\ud\sheafHom \mapsto \dR\sheafHom$,
$\ud\Hom \mapsto \dR\Hom$,
$\ud\alpha^* \mapsto \dL \alpha^*$,
$\ud\alpha_* \mapsto \dR \alpha_*$, and $\ud{G} \mapsto G$ (since
$G$ is also an object of $\D(\mathcal{X})$ we can view it as a
morphism $G \colon \emptyset \ra \D(\mathcal{X})$),
and by mapping
the opposites of these morphisms to the opposites of their
images. 
This is well-defined since $\D(\mathcal{X})$ is $\mathscr{V}$-small
(use for example Proposition~\ref{p:R-localization-additive-small}). 
The notation $\D$ for this functor is justified by
$\D(\ud{\mathcal{X}})=\D(\mathcal{X})$; note that
for example $\D(\ud\alpha_*)= \dR \alpha_*$. 

\fussnote{k\"onnte Funktor auch boldface $\mathbf{D}$ nennen.}

\begin{remark}
  \label{rem:functor-on-fml}
  In the following, when defining a functor from $\fml'_\R$ to
  another multicategory with an obvious involution $(-)^\opp$, we
  usually 
  only give the value of this functor on the objects 
  $\ud{\mathcal{X}}$ and the generating morphisms $\ud\otimes$,
  $\ud\sheafHom$, $\ud\Hom$, 
  $\ud\alpha^*$, 
  $\ud\alpha_*$, $\ud{G}$ and extend it tacitly so that it is
  compatible with the involutions. 
\end{remark}

\section{Four operations}
\label{sec:four-operations}

We lift the four functor formalism to the dg level, i.\,e.\ the
part of Grothendieck--Verdier--Spaltenstein's six functor
formalism concerning the four functors $\otimes^\dL$,
$\dR\sheafHom$, 
$\dL \alpha^*$, $\dR \alpha_*$.

We fix a field $\kk$ and write $\KK=\C(\kk)$.
We use the index ``$\hflat$'' (resp.\ ``$\hinj$'',
``$\whinj$'')
to indicate subcategories of h-flat (resp.\ h-injective, weakly
h-injective) objects. 

All ringed sites $(\mathcal{X}, \mathcal{O})$ in this section are
assumed to be $\mathscr{U}$-small. In particular, this applies to
the ringed sites giving rise to ringed topoi.

\subsection{Fixed data}
\label{sec:fixed-data-1}

 
For the rest of this article we fix for each $\kk$-ringed site
$(\mathcal{X}, \mathcal{O})$ a $\KK$-enriched $\EE$-cofibrant
resolution functor
$(\ee, \epsilonee)=(\ee_{(\mathcal{X}, \mathcal{O})},
\epsilonee_{(\mathcal{X}, \mathcal{O})})$
and a $\KK$-enriched $\II$-fibrant resolution functor
$(\ii, \iotaii)=(\ii_{(\mathcal{X}, \mathcal{O})},
\iotaii_{(\mathcal{X}, \mathcal{O})})$,
i.\,e.\ we have $\KK$-functors
\begin{equation}
  \label{eq:ee-and-ii}
  \ee \colon \ul\C(\mathcal{X}) \ra \ul\EE(\mathcal{X}) 
  \subset \ul\C(\mathcal{X})_\hflat 
  \quad \text{and} \quad
  \ii \colon \ul\C(\mathcal{X}) \ra \ul\II(\mathcal{X}) \subset
  \ul\C(\mathcal{X})_\hinj 
\end{equation}
and $\KK$-natural transformations $\ee \xra{\epsilonee} \id
\xra{\iotaii} \ii$ such that for each $M \in \ul\C(\mathcal{X})$
the morphism $\epsilonee_M \colon \ee(M) \ra M$ is a trivial
$\EE$-fibration and the morphism $\iotaii_M \colon M \ra \ii(M)$ is a
trivial $\II$-cofibration. 
This is
possible by
Theorem~\ref{t:enriched-functorial-fact-k-ringed-site};
the above inclusions were observed in 
Lemmas~\ref{l:E-cofibrant-(pullback)-h-flat} and
\ref{sec:inject-model-struct-1}.
For simplicity we assume that 
the $\KK$-enriched
resolution functors for $(\pt, \kk)$ are trivial, i.\,e.\ 
$(\ee_{(\pt, \kk)}, \epsilonee_{(\pt, \kk)})
=(\id,\id)=
(\ii_{(\pt, \kk)}, \iotaii_{(\pt, \kk)})$. This is allowed by 
Lemma~\ref{l:model-str-for-k}.



\begin{remark}
  \label{rem:ii-maps-qisos-to-htpy-equis}
  Here are some trivial facts that are frequently used
  in the following.
  Let $(\mathcal{X}, \mathcal{O})$ be a $\kk$-ringed site. Any
  quasi-isomorphism in $\C(\mathcal{X})$ between h-injective
  objects is invertible in the homotopy category
  $[\ul\C(\mathcal{X})]$.  In particular, the $\KK$-functor $\ii$
  maps quasi-isomorphisms to homotopy equivalences.  The
  $\KK$-functor $\ee$ obviously maps quasi-isomorphisms to
  quasi-isomorphisms.
\end{remark}

\begin{remark}
  \label{rem:choice-of-ii-and-ee}
  Subsequent constructions will depend on the fixed data.
  Nevertheless the
  constructions obtained from different choices are easy to
  compare. We will explain this in two examples, see
  Remarks~\ref{rem:compare-alpha_*-different-ii}
  and \ref{rem:compare-alpha^*-different-ee}.
\end{remark}

\subsection{Dg \texorpdfstring{$\kk$}{k}-enhancements considered}
\label{sec:enhanc-cons}


If $(\mathcal{X}, \mathcal{O})$ is a $\kk$-ringed site then
$\ul\II(\mathcal{X})$ is an object of $\ENH_\kk$. Recall
from \ref{sec:inject-enhanc}
that $\ul\II(\mathcal{X})$ together with the equivalence
\begin{equation}
  \label{eq:II-enhance-equiv}
  [\ul\II(\mathcal{X})] \sira \D(\mathcal{X}) 
\end{equation}
is a $\KK$-enhancement of $\D(\mathcal{X})$. Moreover, this
equivalence has  
\begin{equation}
  \label{eq:ol[ii]}
  \ol{[\ii]} \colon \D(\mathcal{X}) \sira [\ul\II(\mathcal{X})]
\end{equation}
as a quasi-inverse, by Lemma~\ref{l:ii-Drinfeld-quotient}.
If $q_\mathcal{X} \colon [\ul\C(\mathcal{X})] \ra
\D(\mathcal{X})$  
denotes the Verdier localization, then 
$\ol{[\ii]}$ is the unique triangulated $\kk$-functor such that 
$\ol{[\ii]} q_\mathcal{X}= [\ii]$.

\subsection{Lifts of derived functors}
\label{sec:lifts-deriv-funct}

The main players of this section are the following
$\KK$-functors.
If $\alpha \colon (\Sh(\mathcal{Y}), \mathcal{O}_\mathcal{Y}) \ra
(\Sh(\mathcal{X}), \mathcal{O}_\mathcal{X})$ is a morphism of $\kk$-ringed topoi,
define $\KK$-functors
\begin{align}
  \label{eq:25}
  \ul\alpha^* := \ii \alpha^*\ee 
  & \colon \ul\II(\mathcal{X})
    \xra{\ee} 
    \ul\EE(\mathcal{X}) 
    \xra{\alpha^*}
    \ul\C(\mathcal{Y})_\hflat
    \xra{\ii} 
    \ul\II(\mathcal{Y}), \\
  \label{eq:2555}
  \ul\alpha_* := \ii \alpha_* 
  & \colon \ul\II(\mathcal{Y})
    \xra{\alpha_*} 
    \ul\C(\mathcal{X})_\whinj \xra{\ii}
    \ul\II(\mathcal{X}).
\end{align}
Here $\alpha^*$ and $\alpha_*$ are the restrictions of the
obvious $\KK$-functors $\alpha^* \colon \ul\C(\mathcal{X}) \ra \ul\C(\mathcal{Y})$ 
and $\alpha_* \colon \ul\C(\mathcal{Y}) \ra \ul\C(\mathcal{X})$;
they land in the indicated
categories by 
Propositions~\ref{p:pullback-preserves-h-flat}
and \ref{p:spalt-5.15-sites}.
Note that $\ul\alpha^*$ and $\ul\alpha_*$ are 1-morphisms in the
$\kk$-linear
2-multicategories $\tildew{\ENH}_\kk$ and $\ENH_\kk$.

If $(\mathcal{X}, \mathcal{O}_\mathcal{X})$ is a $\kk$-ringed
site, define $\KK$-functors
\begin{align}
  \label{eq:33}
  \ul{\otimes}:= \ii(\ee(-) \otimes \ee(-)) 
  & \colon
  \ul\II(\mathcal{X}) \otimes \ul\II(\mathcal{X}) 
  \xra{\ee \otimes \ee}
  \ul\EE(\mathcal{X}) \otimes \ul\EE(\mathcal{X})
  \xra{\otimes}
  \ul\C(\mathcal{X})_\hflat
  \xra{\ii}
  \ul\II(\mathcal{X}),\\
  \ul\sheafHom := \ii \sheafHom(-,-) 
  & \colon 
  \ul\II(\mathcal{X})^\opp \otimes \ul\II(\mathcal{X}) 
  \xra{\sheafHom(-,-)}
  \ul\C(\mathcal{X})_\whinj 
  \xra{\ii} 
  \ul\II(\mathcal{X}).
\end{align}
Here we use the fact that the tensor product of two h-flat
objects is 
h-flat and Proposition~\ref{p:spalt-5.14-sites-sheafHom}.
Both $\KK$-functors $\ul\otimes$ and $\ul\sheafHom$ can and will
be viewed as 1-morphisms
$\ul\otimes \colon  (\ul\II(\mathcal{X}),\ul\II(\mathcal{X})) \ra
\ul\II(\mathcal{X})$ 
and $\ul\sheafHom \colon
(\ul\II(\mathcal{X})^\opp,\ul\II(\mathcal{X})) \ra 
\ul\II(\mathcal{X})$ 
in 
$\tildew{\ENH}_\kk$ and $\ENH_\kk$.

\begin{remark}
  Note that $\alpha^*$, $\alpha_*$,
  $\otimes$, $\sheafHom$ are
  $\KK$-functors. They are not the 
  underlying 
  functors of the $\KK$-functors $\ul\alpha^*$, $\ul\alpha_*$,
  $\ul\otimes$, $\ul\sheafHom$.
\end{remark}

Let $\sigma = \sigma_\mathcal{X}\colon (\Sh(\mathcal{X}),
\mathcal{O}_\mathcal{X}) \ra (\Sh(\pt), \kk)$ be the canonical 
morphism \eqref{eq:56} of $\kk$-ringed topoi, so
$\Gamma=\sigma_*$, by \ref{sec:final-topos}.
We define $\ul\Gamma:=\ul\sigma_*$.
Our assumption $\ii_{(\pt, \kk)}=\id$ implies that
$\ul\Gamma=\ul\sigma_*=\sigma_* = \Gamma \colon
\ul\II(\mathcal{X}) \ra \ul\KK=\ul\C(\pt,\kk)=\ul\II(\pt,\kk)$. 
Define the $\KK$-functor
\begin{equation}
  \label{eq:40}
  \ul\Hom=\ul\Hom_\mathcal{X}:= \Gamma
  \sheafHom \colon 
  \ul\II(\mathcal{X})^\opp \otimes 
  \ul\II(\mathcal{X}) \ra \ul\KK. 
\end{equation}
We write $\ul\End(I):= \ul\Hom(I,I)$.

\begin{remark}
  \label{rem:[ulC]-vs-[ulHom]}
  For $I$, $J \in \ul\II(\mathcal{X})$ we have
  $\ul\Hom(I,J)=\ul\C_\mathcal{X}(I,J)$.  The image of $\ul\Hom$
  under \eqref{eq:htpy-ENH-TRCAT} is the triangulated
  $\kk$-functor
  $[\ul\Hom] \colon [\ul\II(\mathcal{X})]^\opp \times
  [\ul\II(\mathcal{X})] \ra [\ul\KK]$;
  on objects, it coincides with \eqref{eq:40}, i.\,e.\
  $[\ul\Hom](I,J)=\ul\Hom(I,J)$ is a complex of $\kk$-vector
  spaces.  On the other hand, $[\ul\C_\mathcal{X}](I,J)$ is the
  $\kk$-vector space of closed degree zero morphisms $I \ra J$ up
  to homotopy, i.\,e.\
  $[\ul\C_\mathcal{X}](I,J) = H^0([\ul\Hom](I,J)) =
  H^0(\ul\Hom(I,J))$.
  To avoid possible conflicts of notation we will rarely use the
  symbol
  $\ul\C_\mathcal{X}$ for the $\KK$-functor
  $\ul\C(\mathcal{X})^\opp \otimes \ul\C(\mathcal{X}) \ra \KK$,
  $(E,F) \mapsto \ul\C_\mathcal{X}(E,F)$.
\end{remark}

All the 
derived
functors $\dL\alpha^*$, $\dR \alpha_*$, $\otimes^\dL$,
$\dR\sheafHom$, $\dR\Hom$ exist and can be computed in the
expected way, by 
Propositions~\ref{p:spalt-6.1+5-sites} and
\ref{p:spalt-6.7a-sites}.
It is intuitively clear that the $\KK$-functors $\ul\alpha^*$,
$\ul\alpha_*$, $\ul\otimes$, $\ul\sheafHom$, $\ul\Hom$ lift these
derived functors 
to the level of injective enhancements. 
Let us confirm this intuition.

\begin{proposition}
  \label{p:canonical-isotrafos}
  If $\alpha \colon (\Sh(\mathcal{Y}), \mathcal{O}_\mathcal{Y}) \ra
  (\Sh(\mathcal{X}), \mathcal{O}_\mathcal{X})$ is a morphism of 
  $\kk$-ringed topoi, there are canonical 2-isomorphisms
  \begin{align}
    \label{eq:omega-alpha^*}
    \omega_{\alpha^*} \colon 
    [\ul{\alpha}^*]\ol{[\ii]}
    & \sira \ol{[\ii]}\dL\alpha^*,\\
    \label{eq:omega-alpha_*}
    \omega_{\alpha_*} \colon [\ul{\alpha}_*]\ol{[\ii]}
    & \sira \ol{[\ii]}\dR\alpha_*
  \end{align}
  in $\TRCAT_\kk$ as illustrated in the diagrams
  \begin{equation}
    \label{eq:26}
    \xymatrix{
      {\D(\mathcal{X})} \ar[rr]^-{\dL \alpha^*}
      \ar[d]_-{\ol{[\ii]}}^-{\sim} 
      &&
      {\D(\mathcal{Y})}
      \ar[d]_-{\ol{[\ii]}}^-{\sim} 
      \\
      [\ul\II(\mathcal{X})] 
      \ar[rr]_-{[\ul\alpha^*]} 
      \ar@{=>}[rru]^-{\omega_{\alpha^*}}_-{\sim}
      &&
      {[\ul\II(\mathcal{Y})],} 
    }
    \quad \quad \quad
    \xymatrix{
      {\D(\mathcal{Y})} 
      \ar[rr]^-{\dR \alpha_*} 
      \ar[d]_-{\ol{[\ii]}}^-{\sim} 
      &&
      {\D(\mathcal{X})} 
      \ar[d]_-{\ol{[\ii]}}^-{\sim} 
      \\
      {[\ul\II(\mathcal{Y})]}
      \ar[rr]_-{[\ul\alpha_*]}
      \ar@{=>}[rru]^-{\omega_{\alpha_*}}_-{\sim}
      &&
      {[\ul\II(\mathcal{X})].}
    }
  \end{equation}
  In particular, for $\alpha=\sigma$ there is a canonical
  2-isomorphism 
  $\omega_\Gamma \colon [\ul\Gamma]\ol{[\ii]} \sira
  \dR \Gamma$.

  If $(\mathcal{X}, \mathcal{O})$ is a $\kk$-ringed site, there
  are canonical 
  2-isomorphisms
  $\omega_\otimes$, $\omega_\sheafHom$, and $\omega_\Hom$ in
  $\TRCAT_\kk$ as illustrated in the diagrams
  \begin{equation}
    \label{eq:34}
    \xymatrix{
      {\D(\mathcal{X}) \times \D(\mathcal{X})}
      \ar[rr]^-{\otimes^\dL}
      \ar[d]_-{\ol{[\ii]} \times \ol{[\ii]}}^-{\sim} 
      &
      &
      {\D(\mathcal{X})}
      \ar[d]_-{\ol{[\ii]}}^-{\sim} 
      \\
      {[\ul\II(\mathcal{X})] \times [\ul\II(\mathcal{X})]} 
      \ar[rr]_-{[\ul\otimes]}
      \ar@{=>}[rru]^-{\omega_{\otimes}}_-{\sim}
      &
      &
      {[\ul\II(\mathcal{X})],} 
    }
  \end{equation}
  \begin{equation}
    \label{eq:36}
    \xymatrix{
      {\D(\mathcal{X})^\opp \times \D(\mathcal{X})}
      \ar[rr]^-{\dR\sheafHom}
      \ar[d]_-{\ol{[\ii]} \times \ol{[\ii]}}^-{\sim} 
      &&
      {\D(\mathcal{X})}
      \ar[d]_-{\ol{[\ii]}}^-{\sim} 
      \\
      {[\ul\II(\mathcal{X})]^\opp \times [\ul\II(\mathcal{X})]} 
      \ar[rr]_-{[\ul\sheafHom]}
      \ar@{=>}[rru]^-{\omega_\sheafHom}_-{\sim}
      &
      &
      {[\ul\II(\mathcal{X})],} 
    }
  \end{equation}
  \begin{equation}
    \label{eq:omega-ulHom}
    \xymatrix{
      {\D(\mathcal{X})^\opp \times \D(\mathcal{X})}
      \ar[rr]^-{\dR\Hom}
      \ar[d]_-{\ol{[\ii]} \times \ol{[\ii]}}^-{\sim} 
      &&
      {\D(\pt)=[\KK]}
      \ar@<-3ex>[d]^-{\ol{[\ii]}=\id}
      \\
      {[\ul\II(\mathcal{X})]^\opp \times [\ul\II(\mathcal{X})]} 
      \ar[rr]_-{[\ul\Hom]}
      \ar@{=>}[rru]^-{\omega_\Hom}_-{\sim}
      &
      &
      {[\ul\II(\pt)]=[\KK]} 
    }
  \end{equation}
  (cf.\ \eqref{eq:[F]-def} for the definition of the lower
  horizontal arrows).
\end{proposition}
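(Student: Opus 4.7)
The plan is to exploit the fact that each of the classical derived functors $\bL\alpha^*$, $\bR\alpha_*$, $\otimes^\bL$, $\bR\sheafHom$, $\bR\Hom$ admits a concrete presentation via the fixed resolution functors $\ee$ and $\ii$: for instance $\bR\alpha_*(G) \cong \alpha_*\ii G$ naturally in $G \in \D(\mathcal{Y})$, since $\alpha_*\ii G$ is weakly h-injective (Proposition~\ref{p:spalt-5.15-sites}), and $\bL\alpha^*(G) \cong \alpha^*\ee G$ naturally in $G \in \D(\mathcal{X})$, since $\ee G$ is h-flat with flat components (Lemma~\ref{l:E-cofibrant-(pullback)-h-flat}, Proposition~\ref{p:spalt-6.1+5-sites}). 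Since the dg-lifts $\ul\alpha_*$, $\ul\alpha^*$, $\ul\otimes$, $\ul\sheafHom$, $\ul\Hom$ apply $\alpha_*$, $\alpha^*$, $\otimes$, $\sheafHom$, $\Gamma\sheafHom$ to such resolved inputs and then apply a final $\ii$ where needed, the two sides of each square agree up to the insertion or removal of a resolution, and the natural transformations $\iotaii$ and $\epsilonee$ supply the comparison 2-isomorphism.

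To illustrate, I construct $\omega_{\alpha_*}$. By definition $\ol{[\ii]}$ satisfies $\ol{[\ii]}\,q_\mathcal{X} = [\ii]$, where $q_\mathcal{X} \colon [\ul\C(\mathcal{X})] \to \D(\mathcal{X})$ is the Verdier localization. The canonical natural isomorphism $\bR\alpha_*(G) \cong q_\mathcal{X}[\alpha_*\ii G]$ in $\D(\mathcal{X})$ then yields
\begin{equation*}
\ol{[\ii]}\,\bR\alpha_*(G) \;\cong\; \ol{[\ii]}\,q_\mathcal{X}[\alpha_*\ii G] \;=\; [\ii\alpha_*\ii G] \;=\; [\ul\alpha_*]\,\ol{[\ii]}(G),
\end{equation*}
natural in $G$; the composite is $\omega_{\alpha_*}$, and it is a 2-isomorphism because each step is.

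The construction of $\omega_{\alpha^*}$ is analogous, with a twist. From $\bL\alpha^*(G) \cong q_\mathcal{Y}[\alpha^*\ee G]$ we obtain $\ol{[\ii]}\,\bL\alpha^*(G) \cong [\ii\alpha^*\ee G]$ in $[\ul\II(\mathcal{Y})]$, whereas $[\ul\alpha^*]\,\ol{[\ii]}(G) = [\ii\alpha^*\ee\ii G]$ directly from the definition. The two are identified by the homotopy class of $\ii\alpha^*\ee\iotaii_G$: indeed $\iotaii_G \colon G \to \ii G$ is a quasi-isomorphism, so $\ee\iotaii_G \colon \ee G \to \ee\ii G$ is a quasi-isomorphism between h-flat objects, whence $\alpha^*\ee\iotaii_G$ is a quasi-isomorphism by Proposition~\ref{p:spalt-6.1+5-sites}, and $\ii$ converts quasi-isomorphisms into homotopy equivalences (Remark~\ref{rem:ii-maps-qisos-to-htpy-equis}). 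Naturality in $G$ follows from naturality of $\iotaii$, $\ee$, $\alpha^*$, and $\ii$.

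The remaining three 2-isomorphisms are built in the same spirit. For $\omega_\otimes$ one uses $\ee\iotaii$ on both tensor factors, invoking that the tensor product of h-flat objects preserves quasi-isomorphisms. For $\omega_\sheafHom$ one uses the contravariant $\iotaii$ on the first argument: $\sheafHom(\iotaii_G, \ii H) \colon \sheafHom(\ii G, \ii H) \to \sheafHom(G, \ii H)$ is a quasi-isomorphism because $\sheafHom(-, I)$ preserves quasi-isomorphisms when $I$ is h-injective (Propositions~\ref{p:spalt-5.14-sites-sheafHom}, \ref{p:spalt-6.7a-sites}); for $\omega_\Hom$ one further applies $\Gamma$, which preserves quasi-isomorphisms between weakly h-injective objects. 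The main technical burden throughout is verifying that the various resolution-comparison morphisms are quasi-isomorphisms, which is precisely the content of the Spaltenstein-style results for ringed topoi collected in Appendix~\ref{sec:spalt-results-ring}; once these are in hand, the arguments are formal.
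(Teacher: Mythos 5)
Your proposal is correct and follows essentially the same route as the paper: in each case one chooses a convenient model for the derived functor (computed via $\ee$ and $\ii$) and then identifies it with the dg-lift by inserting $\iotaii$ (resp.\ $\epsilonee$) at the appropriate argument, using the Spaltenstein-type results of the appendix to see that the comparison maps become homotopy equivalences after applying $\ii$. The only (cosmetic) differences are that the paper normalizes $\bR\alpha_*$ so that $\omega_{\alpha_*}$ is literally the identity, and a couple of your citations should point to Propositions~\ref{p:pullback-preserves-h-flat} and \ref{p:spalt-6.7a-sites} rather than \ref{p:spalt-6.1+5-sites}.
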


Let us caution the reader that
the vertical arrows
$\ol{[\ii]}$ in the above diagrams are equivalences and not
1-isomorphisms in general even though they are labeled $\sim$.

The canonical 2-isomorphisms $\omega_{\alpha_*}$ and
$\omega_{\alpha^*}$ can even be uniquely characterized, see
Propositions~\ref{p:ul-alpha_*-as-derived-functor} and
\ref{p:ul-alpha^*-as-derived-functor} below. There are similar
characterizations of $\omega_\otimes$, $\omega_\sheafHom$,
$\omega_\Hom$ 
left to the reader.

\fussnote{
  is the equivalence 
  \eqref{eq:diagram-for-bL-alpha^*}
  worth a separate statement?
}

\begin{proof}
  Construction of $\omega_{\alpha_*}$. 
  We can assume without
  loss of generality that $\dR \alpha_* =
  q_\mathcal{X}[\alpha_*]\ol{[\ii]}$. Then $\ol{[\ii]} \dR\alpha_*=
  \ol{[\ii]} q_\mathcal{X}[\alpha_*]\ol{[\ii]} =
  [\ii][\alpha_*]\ol{[\ii]}=[\ul\alpha_*]\ol{[\ii]}$ and we define
  $\omega_{\alpha_*}$ to 
  be the identity.

  Construction of $\omega_{\alpha^*}$. 
  Consider the obvious commutative diagram
  \begin{equation}
    \label{eq:diagram-for-bL-alpha^*}
    \xymatrix{
      {[\ul\C(\mathcal{X})]} 
      \ar[r]^-{[\ee]}
      \ar[d]^-{q_\mathcal{X} }
      &
      {[\ul\EE(\mathcal{X})]}
      \ar[r]^-{[\alpha^*]}
      \ar[d]
      & 
      {[\ul\C(\mathcal{Y}]}
      \ar[rd]^-{[\ii]}
      \ar[d]^-{q_\mathcal{Y}}
      \\
      {\D(\mathcal{X})}
      \ar[r]^-{\ol{[\ee]}}_-{\sim}
      &
      {[\ul\EE(\mathcal{X})]/[\ul\EE_\ac(\mathcal{X})]}
      \ar[r]^-{\ol{[\alpha^*]}}
      &
      {\D(\mathcal{Y})}
      \ar[r]^-{\ol{[\ii]}}_-{\sim}
      &
      {[\ul\II(\mathcal{Y})]}
    }
  \end{equation}
  whose lower horizontal arrows are the induced triangulated
  $\kk$-functors and $\ol{[\ee]}$ is an equivalence. Without loss
  of generality we can assume 
  that $\dL \alpha^*= \ol{[\alpha^*]}\;\ol{[\ee]}$. Then $\ol{[\ii]}
  \dL \alpha^* q_\mathcal{X}=[\ii] [\alpha^*] [\ee]$ by the above
  diagram. 
  For each object $E \in \C(\mathcal{X})$ the morphism
  \begin{equation}
    \label{eq:ii-alpha^*-ee-iotaii-E}
    \ii \alpha^* \ee E
    \xra{\ii \alpha^* \ee \iotaii_E} 
    \ii \alpha^* \ee \ii E  
    =
    \ul\alpha^* \ii E
  \end{equation}
  in $\II(\mathcal{Y})$ is a homotopy equivalence by
  Remark~\ref{rem:ii-maps-qisos-to-htpy-equis}
  and
  Proposition~\ref{p:pullback-preserves-h-flat}, i.\,e.\ an
  isomorphism in $[\ul\II(\mathcal{Y})]$.
  The family of these isomorphisms defines a natural
  isotransformation $\ol{[\ii]}\dL\alpha^* \sira
  [\ul\alpha^*]\ol{[\ii]}$ of triangulated $\kk$-functors
  $\D(\mathcal{X}) \ra [\ul\II(\mathcal{Y})]$. 
  We let 
  $\omega_{\alpha^*}$ be the inverse of this isotransformation.

  Construction of $\omega_\sheafHom$ and $\omega_\Hom$. We
  can assume that  
  \begin{equation}
    \dR\sheafHom(-,-)= q_\mathcal{X}[\sheafHom](-,\ol{[\ii]}
    (-)).
  \end{equation}
  For $E, F \in
  \C(\mathcal{X})$ the morphism
  \begin{equation}
    \ul\sheafHom(\ii E, \ii F)= \ii\sheafHom(\ii E, \ii F) 
    \xra{\ii\sheafHom(\iotaii_E, \id_{\ii F})} 
    \ii\sheafHom(E, \ii F) 
  \end{equation}
  defines an isomorphism in $[\ul\II(\mathcal{X})]$,
  by
  Proposition~\ref{p:spalt-5.20-sites-sheafHom}.\ref{enum:sheafHom-to-h-inj}. We
  define $\omega_{\sheafHom}$ to be the family of these
  isomorphisms. Similarly, 
  $\omega_\Hom$ is constructed using the homotopy equivalences
  $\ul\Hom(\ii E, \ii F)= \ul\C_{\mathcal{X}}(\ii E, \ii F) 
  \xra{\ul\C_{\mathcal{X}}(\iotaii_E, \id_{\ii F})} 
  \ul\C_{\mathcal{X}}(E, \ii F)$.

  Construction of $\omega_\otimes$. We can assume that $((-)
  \otimes^\dL (-))= (\ol{\ee}(-) \ol\otimes \ol{\ee}(-))$ where
  $\ol{\otimes} \colon 
  [\ul\EE(\mathcal{X})]/[\ul\EE_\ac(\mathcal{X})]
  \times [\ul\EE(\mathcal{X})]/[\ul\EE_\ac(\mathcal{X})] \ra
  \D(\mathcal{X})$ is the obvious functor, cf.\
  Proposition~\ref{p:spalt-5.7-sites}. 
  This proposition also shows that the morphism
  \begin{equation}
    \ii(\ee E \otimes \ee F)
    \xra{\ii(\ee\iotaii_E \otimes \ee\iotaii_F)}
    \ii(\ee\ii E \otimes \ee\ii F)
    = \ii E \;\ul\otimes\; \ii F
  \end{equation}
  defines an isomorphism in $[\ul\II(\mathcal{X})]$, for
  $E, F \in \C(\mathcal{X})$. Let $\omega_\otimes$ be given by
  the inverses of these isomorphisms.
\end{proof}


\begin{remark}
  \label{rem:pullback-flat}
  If
  $\alpha \colon (\Sh(\mathcal{Y}),\mathcal{O}_\mathcal{Y}) \ra
  (\Sh(\mathcal{X}), \mathcal{O}_\mathcal{X})$
  is a flat morphism of $\kk$-ringed topoi, then
  $\ul\alpha^* I =\ii \alpha^* \ee I \xra{\ii \alpha^*
    \epsilonee_I} \ii \alpha^* I$
  is a homotopy equivalence for all $I \in
  \ul\II(\mathcal{X})$.
  Therefore $\ul\alpha^* \ra \ii \alpha^*$ is an objectwise
  homotopy equivalence and defines a 2-isomorphism
  $\ul\alpha^*\sira\ii\alpha^*$ in $\ENH_\kk$.
\end{remark}

Let $(\mathcal{X}, \mathcal{O}_\mathcal{X})$ be a $\kk$-ringed 
site.
If $G \in \C(\mathcal{X})$ is an object we define
\begin{equation}
  \label{eq:define-ul-G}
  \ul{G}:= \ii G. 
\end{equation}
This is an object of $\II(\mathcal{X})$ and of
$[\ul\II(\mathcal{X})]$ and can therefore be viewed as a 1-morphism
in $\tildew{\ENH}_\kk$ and $\ENH_\kk$, by
Remark~\ref{rem:ENH-emptysource}.
Trivially, the diagram
\begin{equation}
  \label{eq:omega-G}
  \xymatrix{
    {\emptyset} 
    \ar[rr]^-{G}
    \ar@{=}[d]
    &&
    {\D(\mathcal{X})}
    \ar[d]_-{\ol{[\ii]}}^-{\sim} 
    \\
    {\emptyset}
    \ar[rr]_-{[\ul{G}]} 
    \ar@{=>}[rru]^-{\omega_{G}=\id}_-{\sim}
    &&
    {[\ul\II(\mathcal{X})]} 
  }
\end{equation}
in $\TRCAT_\kk$ commutes where $\omega_G := \id \colon
[\ul{G}] \xra{=} \ol{[\ii]}G$.

\begin{remark}
  \label{rem:ul-G-special-cases}
  For $G \in \KK=\C(\pt,\kk)$ we have $\ul{G}=G$ by assumption
  $\ii_{(\pt , \kk)}=\id$, in particular $\ul{\mathcal{O}}_\pt=\kk$.
  If $(\mathcal{X}, \mathcal{O}_\mathcal{X})$ is a $\kk$-ringed site and
  $\sigma = \sigma_\mathcal{X}\colon (\Sh(\mathcal{X}), 
  \mathcal{O}_\mathcal{X}) \ra (\Sh(\pt), \kk)$ is the associated 
  morphism of $\kk$-ringed topoi (see \eqref{eq:56}) then
  \begin{equation}
    \label{eq:112}
    \ul \sigma^* \ul{\mathcal{O}}_\pt=
    \ul \sigma^* \kk = 
    \ii \alpha^* \ee \kk=
    \ii \alpha^* \kk=
    \ii \mathcal{O}_\mathcal{X}= 
    \ul{\mathcal{O}}_\mathcal{X}=
    \ul{\mathcal{O}}
  \end{equation}
  using $\ee_{(\pt, \kk)}=\id$. This is a special case of 
  the morphism \eqref{eq:alpha^*-ul-obj}
  in Lemma~\ref{l:ul-G-and-functors} below.
\end{remark}

\subsubsection{Some uniqueness results}
\label{sec:some-uniq-results}

This subsubsection may be skipped on a first reading.
We provide the characterizations of $\omega_{\alpha_*}$ and
$\omega_{\alpha^*}$ mentioned above.
All derived functors are taken with respect to the obvious full
subcategories of acyclic objects.

\begin{proposition}
  \label{p:ul-alpha_*-as-derived-functor}
  Let $\alpha \colon (\Sh(\mathcal{Y}), \mathcal{O}_\mathcal{Y})
  \ra 
  (\Sh(\mathcal{X}), \mathcal{O}_\mathcal{X})$ be a morphism of
  $\kk$-ringed topoi.
  Consider 
  the $\KK$-natural transformation
  \begin{equation}
    \ii \alpha_* \iotaii \colon
    \ii \alpha_* \ra 
    \ii \alpha_* \ii = \ul\alpha_* \ii 
    \colon
    \ul\C(\mathcal{Y}) \ra \ul\II(\mathcal{X})
  \end{equation}
  and the induced transformation
  \begin{equation}
    [\ii \alpha_* \iotaii] 
    \colon
    [\ii \alpha_*] 
    \ra 
    [\ul\alpha_* \ii] = 
    [\ul\alpha_*] \ol{[\ii]} q_\mathcal{Y} 
    \colon 
    [\ul\C(\mathcal{Y})] \ra [\ul\II(\mathcal{X})]
  \end{equation} 
  of triangulated $\kk$-functors where we use $[\ii]=\ol{[\ii]}
  q_\mathcal{Y}$. 
  Then the pair
  $([\ul\alpha_*] \ol{[\ii]}, [\ii \alpha_* \iotaii])$
  is a right derived functor of
  $[\ii \alpha_*] \colon [\ul\C(\mathcal{Y})] \ra
  [\ul\II(\mathcal{X})]$.
  In particular, if 
  $(\dR \alpha_*, \rho)$
  is a right derived functor of
  $q_\mathcal{X}[\alpha_*] \colon
  [\ul\C(\mathcal{Y})] 
  \ra 
  \D(\mathcal{X})$
  where $\rho \colon q_\mathcal{X}
  [\alpha_*] \ra (\dR \alpha_*) q_\mathcal{Y}$
  then there is a unique isotransformation
  $\omega \colon 
  [\ul\alpha_*] \ol{[\ii]} \sira \ol{[\ii]} (\dR \alpha_*)$ 
  of triangulated $\kk$-functors such that
  the diagram
  \begin{equation}
    \xymatrix{
      & {[\ii \alpha_*]} \ar@{}[r]|-{=} 
      \ar[ld]_-{[\ii \alpha_* \iotaii]} 
      & {\ol{[\ii]}q_\mathcal{X}[\alpha_*]} 
      \ar[rd]^-{\ol{[\ii]}\rho}
      \\
      {[\ul\alpha_*] \ol{[\ii]} q_\mathcal{Y}} 
      \ar[rrr]^-{\omega q_\mathcal{Y}}_-{\sim} 
      &&& 
      {\ol{[\ii]} (\dR \alpha_*) q_\mathcal{Y}} 
    }
  \end{equation}
  commutes. This isotransformation $\omega$ coincides with the
  canonical isotransformation 
  \eqref{eq:omega-alpha_*}
  constructed in the proof of
  Proposition~\ref{p:canonical-isotrafos}. 
\end{proposition}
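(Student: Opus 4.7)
The plan splits into three parts: (i) verify that $([\ul\alpha_*]\ol{[\ii]}, [\ii\alpha_*\iotaii])$ is a right derived functor of $[\ii\alpha_*]$, (ii) deduce existence and uniqueness of $\omega$, and (iii) identify $\omega$ with the canonical $\omega_{\alpha_*}$ from the proof of Proposition~\ref{p:canonical-isotrafos}.

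For (i), I would first use Lemma~\ref{l:ii-Drinfeld-quotient} to rewrite $\ol{[\ii]} q_\mathcal{Y} = [\ii]$, so that
\[
[\ul\alpha_*]\ol{[\ii]} q_\mathcal{Y} = [\ul\alpha_*][\ii] = [\ul\alpha_*\ii] = [\ii\alpha_*\ii],
\]
which makes $[\ii\alpha_*\iotaii]$ a well-defined transformation from $[\ii\alpha_*]$ to $[\ul\alpha_*]\ol{[\ii]} q_\mathcal{Y}$. Next I would invoke the standard Deligne-style criterion for the existence of right derived functors via h-injective resolutions: it is enough to check that $[\ii\alpha_*]$ sends quasi-isomorphisms between h-injective complexes to isomorphisms in $[\ul\II(\mathcal{X})]$, and that every object of $\C(\mathcal{Y})$ admits a quasi-isomorphism to an h-injective object. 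The second point holds because $\iotaii_M \colon M \to \ii M$ is a quasi-isomorphism with h-injective target. For the first point, any quasi-isomorphism between h-injectives is a homotopy equivalence, hence preserved by the additive functor $\alpha_*$, and $\ii$ sends homotopy equivalences to homotopy equivalences. The universal property of the right derived functor is then verified pointwise: for $M \in \C(\mathcal{Y})$ the component $[\ii\alpha_*\iotaii_M] \colon [\ii\alpha_*](M) \to [\ii\alpha_*](\ii M)$ becomes an isomorphism as soon as $M$ is h-injective, and this is the pointwise cofinality needed to identify $([\ul\alpha_*]\ol{[\ii]}, [\ii\alpha_*\iotaii])$ with the right derived functor.

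For (ii), I would apply the equivalence $\ol{[\ii]} \colon \D(\mathcal{X}) \sira [\ul\II(\mathcal{X})]$ to the pair $(\bR\alpha_*, \rho)$ to obtain $(\ol{[\ii]}\bR\alpha_*, \ol{[\ii]}\rho)$, which is a right derived functor of $\ol{[\ii]} q_\mathcal{X}[\alpha_*] = [\ii\alpha_*]$. Combining with (i) and the standard fact that two right derived functors of the same functor are connected by a unique isotransformation compatible with the derivation transformations, I obtain a unique isotransformation $\omega \colon [\ul\alpha_*]\ol{[\ii]} \sira \ol{[\ii]}\bR\alpha_*$ making the displayed triangle commute.

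For (iii), the task is to show that this $\omega$ agrees with the canonical $\omega_{\alpha_*}$ constructed in the proof of Proposition~\ref{p:canonical-isotrafos}. There the specific choice $\bR\alpha_* := q_\mathcal{X}\alpha_*\ol{[\ii]}$ is made with $\omega_{\alpha_*} := \id$. I would unwind this choice: in this model the derivation transformation $\rho \colon q_\mathcal{X}[\alpha_*] \to (\bR\alpha_*) q_\mathcal{Y}$ is forced to be $q_\mathcal{X}[\alpha_*\iotaii]$, because $\iotaii$ is the canonical natural quasi-isomorphism to the h-injective replacement and this is the only choice making $(\bR\alpha_*,\rho)$ a right derived functor. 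Applying $\ol{[\ii]}$ gives $\ol{[\ii]}\rho = [\ii\alpha_*\iotaii]$, so the commutative triangle characterizing $\omega$ forces $\omega q_\mathcal{Y} = \id$; since $q_\mathcal{Y}$ is essentially surjective and the functors on both sides of $\omega$ are triangulated, this yields $\omega = \id = \omega_{\alpha_*}$. The main obstacle is the bookkeeping in (iii): one has to argue that, in the specific model $\bR\alpha_* = q_\mathcal{X}\alpha_*\ol{[\ii]}$, the natural transformation $\rho$ is uniquely determined as $q_\mathcal{X}[\alpha_*\iotaii]$ by the universal property, so that the identification with $\omega_{\alpha_*}$ is forced rather than merely consistent.
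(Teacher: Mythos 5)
Your proposal is correct and follows essentially the same route as the paper: part (i) is the standard h-injective-resolution argument (which the paper dismisses as obvious), part (ii) is exactly the paper's appeal to the uniqueness of right derived functors after transporting $(\bR\alpha_*,\rho)$ along the equivalence $\ol{[\ii]}$, and part (iii) is verified, as in the paper, in the model $\bR\alpha_*=q_\mathcal{X}[\alpha_*]\ol{[\ii]}$ with $\rho=q_\mathcal{X}[\alpha_*][\iotaii]$, where both $\omega$ and $\omega_{\alpha_*}$ are the identity. One small overstatement: $\rho$ is not literally \emph{forced} by the universal property once the underlying functor is fixed (it is only determined up to precomposition with an automorphism); rather, $\rho=q_\mathcal{X}[\alpha_*][\iotaii]$ is the choice the construction of $\omega_{\alpha_*}$ actually makes, which is all your computation needs.
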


\begin{proof}
  The first claim is obvious. Since
  $(\ol{[\ii]}(\dR \alpha_*), \ol{[\ii]} \rho)$ is a right
  derived functor of
  $[\ii \alpha_*] \colon [\ul\C(\mathcal{Y})] \ra
  [\ul\II(\mathcal{X})]$
  the second claim follows from the universal property of a right
  derived functor.
  To see the last claim, note that
  $q_\mathcal{X}[\alpha_*] \ol{[\ii]}$
  together with 
  $q_\mathcal{X} [\alpha_*] [\iotaii] \colon q_\mathcal{X}
  [\alpha_*] \ra q_\mathcal{X}[\alpha_*] \ol{[\ii]}
  q_\mathcal{Y}= q_\mathcal{X}[\alpha_*] [\ii]$
  is a right derived functor of $q_\mathcal{X}[\alpha_*]$. In
  this case both isotransformations $\omega$ and
  $\omega_{\alpha_*}$ are obviously the identity.
\end{proof}


\begin{proposition}
  \label{p:ul-alpha^*-as-derived-functor}
  Let $\alpha \colon (\Sh(\mathcal{Y}), \mathcal{O}_\mathcal{Y}) \ra
  (\Sh(\mathcal{X}), \mathcal{O}_\mathcal{X})$ be a morphism of
  $\kk$-ringed topoi.
  Consider the 
  zig-zag of $\KK$-natural transformations
  \begin{equation}
     \ul\alpha^* \ii =
     \ii \alpha^* \ee \ii
     \xla{\ii \alpha^* \ee \iotaii}
     \ii \alpha^* \ee
     \xra{\ii \alpha^* \epsilonee}
     \ii \alpha^* 
     \colon
     \ul\C(\mathcal{X}) \ra \ul\II(\mathcal{Y}).
  \end{equation}
  Then the transformation on the left becomes invertible on
  homotopy categories, and we obtain the transformation
  \begin{equation}
    [\ii \alpha^* \epsilonee]
    \circ ([\ii \alpha^*\ee\iotaii])\inv
    \colon 
    [\ul\alpha^* \ii] =
     [\ul\alpha^*] \ol{[\ii]} q_\mathcal{X} 
     \ra
     [\ii \alpha^*]
     \colon
     [\ul\C(\mathcal{X})] \ra [\ul\II(\mathcal{Y})]
  \end{equation} 
  of triangulated $\kk$-functors.
  Then the pair
  $([\ul\alpha^*] \ol{[\ii]}, 
  [\ii \alpha^* \epsilonee]
  \circ ([\ii \alpha^*\ee\iotaii])\inv)$
  is a left derived functor of
  $[\ii \alpha^*] \colon [\ul\C(\mathcal{X})] \ra
  [\ul\II(\mathcal{Y})]$.
  In particular, if 
  $(\dL \alpha^*, \lambda)$
  is a left derived functor of
  $q_\mathcal{Y}[\alpha^*] \colon
  [\ul\C(\mathcal{X})] 
  \ra 
  \D(\mathcal{Y})$
  where $\lambda \colon
  (\dL \alpha^*) q_\mathcal{X}
  \ra q_\mathcal{Y}[\alpha^*]$
  then there is a unique isotransformation $\omega \colon
  [\ul\alpha^*] \ol{[\ii]} \sira \ol{[\ii]} (\dL \alpha^*)$ 
  of triangulated $\kk$-functors such that
  the diagram
  \begin{equation}
    \xymatrix{
      {[\ul\alpha^*] \ol{[\ii]} q_\mathcal{X}}
      \ar[rd]_-{[\ii \alpha^* \epsilonee]
        \circ ([\ii \alpha^*\ee\iotaii])\inv}
      \ar[rrr]^-{\omega q_\mathcal{X}}_-{\sim}
      &&&
      {\ol{[\ii]} (\dL \alpha^*) q_\mathcal{X}}
      \ar[ld]^-{\ol{[\ii]}\lambda}
      \\
      & {[\ii \alpha^*]} 
      \ar@{}[r]|-{=}  
      & {\ol{[\ii]}q_\mathcal{Y}[\alpha^*]} 
    }
  \end{equation}
  commutes.
  This isotransformation $\omega$ coincides with the
  canonical isotransformation 
  \eqref{eq:omega-alpha^*}
  constructed in the proof of
  Proposition~\ref{p:canonical-isotrafos}.
\end{proposition}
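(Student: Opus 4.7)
The plan is to mirror the proof of Proposition~\ref{p:ul-alpha_*-as-derived-functor} but on the ``left'' side, exploiting h-flat resolutions in place of h-injective ones. First I would verify the invertibility claim: for every $E \in \ul\C(\mathcal{X})$, the morphism $\iotaii_E \colon E \ra \ii E$ is a quasi-isomorphism, $\ee$ preserves quasi-isomorphisms (Remark~\ref{rem:ii-maps-qisos-to-htpy-equis}), both $\ee E$ and $\ee \ii E$ are $\EE$-cofibrant and hence h-flat and componentwise flat (Lemma~\ref{l:E-cofibrant-(pullback)-h-flat}), so $\alpha^*$ sends the quasi-isomorphism $\ee \iotaii_E$ to a quasi-isomorphism between h-flat objects (Proposition~\ref{p:spalt-6.1+5-sites}); applying $\ii$ then yields a homotopy equivalence, i.\,e.\ an isomorphism in $[\ul\II(\mathcal{Y})]$. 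Hence $[\ii\alpha^*\ee\iotaii]$ is invertible and the composite $\eta := [\ii\alpha^*\epsilonee]\comp [\ii\alpha^*\ee\iotaii]\inv$ makes sense as a transformation $[\ul\alpha^*]\ol{[\ii]} q_\mathcal{X} \ra [\ii\alpha^*]$ of triangulated $\kk$-functors.

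For the main assertion that $([\ul\alpha^*]\ol{[\ii]},\eta)$ is a left derived functor of $[\ii\alpha^*]$, my plan is to reduce to the ordinary derived functor $\bL\alpha^*$. By Proposition~\ref{p:spalt-6.1+5-sites}, the pair $(q_\mathcal{Y}[\alpha^*\ee], q_\mathcal{Y}[\alpha^*\epsilonee])$ is a left derived functor of $q_\mathcal{Y}[\alpha^*] \colon [\ul\C(\mathcal{X})] \ra \D(\mathcal{Y})$, since $\ee$ is a functorial h-flat resolution (cf.\ Remark~\ref{rem:E-cofibrant-for-tensor}). Post-composing with the equivalence $\ol{[\ii]} \colon \D(\mathcal{Y}) \sira [\ul\II(\mathcal{Y})]$ from Lemma~\ref{l:ii-Drinfeld-quotient}, we obtain that $([\ii\alpha^*\ee],[\ii\alpha^*\epsilonee])$ is a left derived functor of $\ol{[\ii]}q_\mathcal{Y}[\alpha^*]=[\ii\alpha^*]$. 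Finally, I would identify this with $([\ul\alpha^*]\ol{[\ii]} q_\mathcal{X}, \eta)$: the functor $[\ii\alpha^*\ee]$ sends acyclic objects to zero (an acyclic object has h-flat acyclic image under $\ee$, hence acyclic image under $\alpha^*$, which becomes contractible after applying $\ii$), so it factors uniquely through $q_\mathcal{X}$; the resulting factor is canonically $[\ul\alpha^*]\ol{[\ii]}$ via the isotransformation $[\ii\alpha^*\ee\iotaii]$, and one checks directly from the definition of $\eta$ that transporting the structure transformation $[\ii\alpha^*\epsilonee]$ along this identification yields precisely $\eta$.

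The ``in particular'' clause is then a formal consequence of the universal property of left derived functors: given any other left derived functor $(\bL\alpha^*,\lambda)$ of $q_\mathcal{Y}[\alpha^*]$, the pair $(\ol{[\ii]}(\bL\alpha^*),\ol{[\ii]}\lambda)$ is again a left derived functor of $[\ii\alpha^*]$, so by uniqueness there is exactly one isotransformation $\omega q_\mathcal{X}$ of triangulated $\kk$-functors rendering the prescribed triangle commutative, and it descends uniquely to an isotransformation $\omega$ since $q_\mathcal{X}$ is a localization.

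For the last claim—identification of $\omega$ with the canonical $\omega_{\alpha^*}$ of Proposition~\ref{p:canonical-isotrafos}—I would make the same specific choice used in that proof, namely $\bL\alpha^* := \ol{[\alpha^*]}\ol{[\ee]}$ (with $\lambda$ read off from diagram~\eqref{eq:diagram-for-bL-alpha^*}). Unwinding definitions, both $\omega_{\alpha^*}$ (defined there as the inverse of the isotransformation induced by $\ii\alpha^*\ee\iotaii_E$) and the $\omega$ produced by the universal property above become, after evaluation at $q_\mathcal{X}E$, the single morphism $[\ii\alpha^*\ee\iotaii_E]\inv$; since two isotransformations of triangulated $\kk$-functors out of $\D(\mathcal{X})$ that agree after precomposition with $q_\mathcal{X}$ coincide, we conclude $\omega=\omega_{\alpha^*}$. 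I expect the only subtle bookkeeping step to be this final identification, where one must carefully track which instance of $\iotaii$, $\epsilonee$, and the Verdier-quotient factorization is being used in each of the two parallel constructions.
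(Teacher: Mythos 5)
Your proposal follows the same route as the paper's own proof: establish invertibility of $[\ii\alpha^*\ee\iotaii]$ via h-flatness of $\EE$-cofibrant objects, observe that $(q_\mathcal{Y}[\alpha^*\ee], q_\mathcal{Y}[\alpha^*\epsilonee])$ is the evident left derived functor of $q_\mathcal{Y}[\alpha^*]$, postcompose with the equivalence $\ol{[\ii]}$, transfer along the isotransformation to the stated pair, and invoke the universal property for the uniqueness and comparison claims. The argument is correct; the only blemish is the citation of Proposition~\ref{p:spalt-6.1+5-sites} where the relevant fact (pullback preserves h-flatness and acyclicity of h-flat complexes) is Proposition~\ref{p:pullback-preserves-h-flat}.
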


\begin{proof}
  We have seen 
  in 
  \eqref{eq:ii-alpha^*-ee-iotaii-E}
  in the proof of Proposition~\ref{p:canonical-isotrafos}
  that the morphism 
  $[\ii \alpha^*\ee\iotaii_E]$ in $[\ul\II(\mathcal{Y})]$ is
  invertible for any $E \in \C(\mathcal{X})$.
  With notation from the commutative
  diagram~\eqref{eq:diagram-for-bL-alpha^*}, 
  \begin{equation}
    \label{eq:obvious-L-alpha^*}
    (\ol{[\alpha^*]} \; \ol{[\ee]},\;
    \ol{[\alpha^*]} \; \ol{[\ee]}q_\mathcal{X} =
    q_\mathcal{Y} [\alpha^* \ee] 
    \xra{q_\mathcal{Y} [\alpha^* \epsilonee]} 
    q_\mathcal{Y} [\alpha^*])
  \end{equation}
  is a left derived functor of $q_\mathcal{Y} [\alpha^*]$, and,
  by postcomposing with $\ol{[\ii]}$,
  \begin{equation}
    (\ol{[\ii]} \; \ol{[\alpha^*]} \; \ol{[\ee]}, \;
    \ol{[\ii]} \; \ol{[\alpha^*]} \; \ol{[\ee]}q_\mathcal{X} =
    [\ii \alpha^* \ee] 
    \xra{[\ii \alpha^* \epsilonee]} 
    [\ii \alpha^*])
  \end{equation}
  is a left derived functor of $[\ii \alpha^*]$.  This implies
  that the pair in the proposition is a left derived functor of
  $[\ii \alpha^*]$. The same statement is true for
  $(\ol{[\ii]}(\dL \alpha^*), \ol{[\ii]} \lambda)$ and implies 
  the second claim. The last claim is now immediate from the
  construction 
  of $\omega_{\alpha^*}$.
\end{proof}

\begin{remark}
  \label{rem:omega-alpha^*-wlog}
  If we take \eqref{eq:obvious-L-alpha^*}
  as the left derived functor of $q_\mathcal{Y} [\alpha^*]$ the
  isotransformation $\omega_{\alpha^*}$ is given by
  \begin{equation}
    \label{eq:omega-alpha^*-wlog}
    \omega_{\alpha^*} 
    = \ol{[\ii]} \; \ol{[\alpha^*]} \; \ol{[\ee]}
    \; \ol{[\iotaii]}\inv
    \colon
    [\ul\alpha^*] \ol{[\ii]}
    = 
    \ol{[\ii]} \; \ol{[\alpha^*]} \; \ol{[\ee]} q_\mathcal{X}
    \ol{[\ii]}  
    \sira \ol{[\ii]} \; \ol{[\alpha^*]}
    \; 
    \ol{[\ee]}  
  \end{equation}
  where $\ol{[\iotaii]} \colon \id \ra q_\mathcal{X} \ol{[\ii]}$ is
  the obvious isotransformation induced by $\iotaii$.
  This is of course the same isotransformation we constructed in
  the proof 
  of Proposition~\ref{p:canonical-isotrafos}.
\end{remark}

\fussnote{
  I guess we could also assume that 
  \begin{equation}
    (q_\mathcal{Y}[\alpha^*] \; [\ee] \; \ol{[\ii]},\;
    ...)
  \end{equation}
  is a left derived functor of $q_\mathcal{Y} [\alpha^*]$.
  Then $\omega_{\alpha^*}$ is identity.
}

\subsubsection{Dependence on choices}
\label{sec:dependence-choices}

This subsubsection may be skipped on a first reading.

\begin{remark}
  \label{rem:compare-alpha_*-different-ii}
  We explain Remark~\ref{rem:choice-of-ii-and-ee}
  for $\ul\alpha_*$.
  Let 
  $(\ii', \iotaii')=(\ii'_{(\mathcal{X}, \mathcal{O})},
  \iotaii'_{(\mathcal{X}, \mathcal{O})})$ be another choice
  of a
  $\KK$-enriched $\II$-fibrant resolution functor.
  Then
  Proposition~\ref{p:compare-K-enri-I-fibr-repl-functors}
  provides a 2-isomorphism
  $\ul\alpha_* = \ii
  \alpha_* \sira \ii' 
  \alpha_*$
  between 1-morphisms $\ul\II(\mathcal{Y}) \ra \ul\II(\mathcal{X})$
  in $\ENH_\kk$, and the 2-isomorphisms obtained in this way are
  compatible in the expected manner, cf.\
  \eqref{eq:phi-ii'-ii-assoc},
  \eqref{eq:phi-ii-ii-id}.
\end{remark}

\begin{remark}
  \label{rem:compare-alpha^*-different-ee}
  We explain Remark~\ref{rem:choice-of-ii-and-ee} for
  $\ul\alpha^*$. Let
  $(\ee', \epsilonee')=(\ee'_{(\mathcal{X}, \mathcal{O})},
  \epsilonee'_{(\mathcal{X}, \mathcal{O})})$
  be another $\KK$-enriched $\EE$-cofibrant resolution functor.
  Then Proposition~\ref{p:compare-K-enri-E-fibr-repl-functors}
  provides a 2-isomorphism 
  $\psi \colon$
  $\ul\alpha^*=\ii\alpha^*\ee 
  \sira 
  \ii\alpha^*\ee'$ between 1-morphisms $\ul\II(\mathcal{X}) \ra
  \ul\II(\mathcal{Y})$ in $\ENH_\kk$, and the 2-isomorphisms obtained
  in this way are compatible as expected, cf.\
  \eqref{eq:psi-ee-ee'-assoc}, \eqref{eq:psi-ee-ee'-id}. 
  Similarly, using
  Proposition~\ref{p:compare-K-enri-I-fibr-repl-functors}, 
  one may also include different choices of 
  $\KK$-enriched $\II$-fibrant resolution functors
  into this discussion.
\end{remark}

\subsection{Interpretation of formulas in enhancements}
\label{sec:interpr-form-enhanc}

Recall that $\enh_\kk$ denotes the underlying multicategory of the
$\kk$-linear 2-multicategory
$\ENH_\kk$. 
Recall the category $\fml'_\kk$ of formulas from
\ref{sec:categ-form-four} and the interpretation functor $\D
\colon \fml'_\kk \ra \trcat_\kk$.  
Similarly, there is a unique interpretation functor
\begin{equation}
  \label{eq:II-interpret}
  \ul\II \colon
  \fml'_\kk \ra \enh_\kk
\end{equation}
of multicategories given on objects by 
$\ud{\mathcal{X}} \mapsto \ul\II(\mathcal{X})$ 
and on 
generating morphisms by $\ud\otimes \mapsto \ul\otimes$,
$\ud\sheafHom \mapsto \ul\sheafHom$,
$\ud\alpha^* \mapsto \ul\alpha^*$,
$\ud\alpha_* \mapsto \ul\alpha_*$,
$\ud\Hom \mapsto \ul\Hom$, $\ud{G} \mapsto \ul{G}$
(cf.\ Remark~\ref{rem:functor-on-fml}).
The notation $\ul\II$ for this
functor is justified by
$\ul\II(\ud{\mathcal{X}})=\ul\II(\mathcal{X})$; note that for
example $\ul\II(\ud\alpha_*)= \ul\alpha_*$. Generalizing this we
abbreviate $\ul{t}:=\ul\II(\ud{t})$ if $\ud{t}$ is any morphism in
$\fml'_\kk$.

\subsubsection{Relation to interpretation of formulas in triangulated
  categories}
\label{sec:relat-interpr-triang}

Let $[\ul\II] \colon \fml'_\kk \ra \trcat_\kk$ denote the
composition of 
\eqref{eq:II-interpret} with 
\eqref{eq:[]-enh}, mapping $\ud{\mathcal{X}}$ to
$[\ul\II(\mathcal{X})]$ and, for example, $\ud\alpha^*$ to
$[\ul\alpha^*]$. Then there is a unique 
pseudo-natural 
transformation (see \cite[7.5]{borceux-cat},
generalized to 2-multicategories; here we view the functors $\D$ and
$[\ul\II]$ of multicategories in the trivial way as functors of
2-multicategories)
\begin{equation}
  \label{eq:pseudo-nat-trafo-D-to-[I]}
  (\ol{[\ii]}, \omega) \colon \D \ra [\ul\II] 
\end{equation}
that maps an object $\mathcal{X}$ (resp.\ $\mathcal{X}^\opp$)
to the 1-morphism \eqref{eq:ol[ii]} (resp.\ its opposite)
(which is an equivalence of triangulated $\kk$-categories) 
and that maps the generating morphisms $\ud\alpha^*$,
$\ud\alpha_*$, $\ud\otimes$, $\ud\sheafHom$, $\ud\Hom$, $\ud{G}$
to the 
2-isomorphisms 
in the diagrams \eqref{eq:26}, \eqref{eq:34}, \eqref{eq:36},
\eqref{eq:omega-ulHom},
\eqref{eq:omega-G},
respectively, and accordingly for the opposite generating
morphisms.
If $\ud t$ is an arbitrary morphism in $\fml'_\kk$ 
the corresponding 2-isomorphism is denoted
\begin{equation}
  \label{eq:omega-ud-t}
  \omega_{\ud t} \colon [\ul{t}] \ol{[\ii]} \sira \ol{[\ii]}
  \D(\ud{t}).
\end{equation}
For example, 
$\omega_{\ud{\alpha}^*}=\omega_{\alpha^*}$ and
\begin{equation}
  \omega_{\ud\sheafHom(\ud\alpha^*(-),-)}
  \colon 
  \ul\sheafHom(\ul\alpha^* \ol{[\ii]} (-), \ol{[\ii]}(-))
  \sira
  \ol{[\ii]} \dR\sheafHom(\dL\alpha^*(-),-).  
\end{equation}

\fussnote{
  Second condition in  
  in \cite[Def.~7.5.2]{borceux-cat}
  i.\,e.\ 
  Diagram 7.15,
  says how to extend
  (compatibly) from
  generating morphisms to trees. Vertical arrows in this diagram
  are equalities in our case.
}

\begin{example}
  \label{exam:omega-alpha-alpha}
  The 2-isomorphism $\omega_{\ud\alpha_*\ud\alpha^*}$ is
  obtained by juxtaposing the two diagrams in 
  \eqref{eq:26}, i.\,e.\ it is 
  the
  composition
  \begin{equation}
    \label{eq:omega-alpha_*-alpha^*}
    \omega_{\ud\alpha_*\ud\alpha^*} \colon
    [\ul\alpha_*][\ul\alpha^*]\ol{[\ii]}
    \xsira{[\ul\alpha_*]\omega_{\alpha^*}}
    [\ul\alpha_*]\ol{[\ii]}\dL\alpha^*
    \xsira{\omega_{\alpha_*}\dL\alpha^*}
    \ol{[\ii]}\dR\alpha_*\dL\alpha^*.
  \end{equation}
  Similarly, the 2-isomorphism $\omega_{\ud\alpha^*\ud\alpha_*}$ is
  the
  composition
  \begin{equation}
    \label{eq:omega-alpha^*-alpha_*}
    \omega_{\ud\alpha^*\ud\alpha_*} \colon
    [\ul\alpha^*][\ul\alpha_*]\ol{[\ii]}
    \xsira{[\ul\alpha^*]\omega_{\alpha_*}}
    [\ul\alpha^*]\ol{[\ii]}\dR\alpha_*
    \xsira{\omega_{\alpha^*}\dR\alpha_*}
    \ol{[\ii]}\dL\alpha^*\dR\alpha_*.
  \end{equation}
  If we choose $\dR\alpha_*$ as in the proof of 
  Proposition~\ref{p:canonical-isotrafos},
  then $\omega_{\alpha_*}$ is the identity
  and we have
  $\omega_{\ud\alpha_*\ud\alpha^*} =
  [\ul\alpha_*]\omega_{\alpha^*}$
  and
  $\omega_{\ud\alpha^*\ud\alpha_*} =
  \omega_{\alpha^*}\dR\alpha_*$.
\end{example}

\begin{remark}
  \label{rem:outlook:t:compare-interpretations-FML}
  Later on, in \ref{sec:2-multicat-formulas},
  we will extend $\fml'_\kk$ to a $\kk$-linear 2-multicategory
  $\FML_\kk$, the 
  functors $\D$ and $\ul\II$ (and $[\ul\II]$) to
  functors of $\kk$-linear 2-multicategories, and $(\ol{[\ii]},
  \omega)$ to a 
  pseudo-natural transformation $\D \ra [\ul\II]$ between these
  extensions,
  see Theorem~\ref{t:compare-interpretations-FML}.
  This theorem summarizes many results of this article.
\end{remark}

\subsection{Lifts of relations}
\label{sec:lifts-relations-2}

\subsubsection{Lifts of 2-(iso)morphisms}
\label{sec:lifts-2-morphisms}

\begin{definition}
  \label{d:enhance-2-morphism}
  Let $\ud{v}$, $\ud{w} \colon
  ((\ud{\mathcal{X}}_1)^{\epsilon_1},
  \dots,
  (\ud{\mathcal{X}}_n)^{\epsilon_n})  
  \ra \ud{\mathcal{Y}}^{\delta}$ be morphisms in $\fml'_\kk$. We say
  that a 2-morphism $\tau \colon \ul{v}=\ul\II(\ud{v}) \ra \ul{w}
  = \ul\II(\ud{w})$ in $\ENH_\kk$
  \define{$(\ud{v},\ud{w})$-enhances} a 2-morphism
  $\sigma \colon \D(\ud{v}) \ra \D(\ud{w})$ in $\TRCAT_\kk$ if
  the equality of 2-morphisms
  \begin{equation}
    \label{eq:sigma-omega=omega-tau}
    \omega_{\ud{w}} 
    ([\tau]\ol{[\ii]})
    =
    (\ol{[\ii]}\sigma) 
    \omega_{\ud{v}}
  \end{equation}
  in $\TRCAT_\kk$ holds, i.\,e.\ if the diagram
  \begin{equation}
    \label{eq:sigma-omega=omega-tau-diagram}
    \xymatrix{
      {[\ul{v}]\ol{[\ii]}} 
      \ar[d]_-{\omega_{\ud{v}}}^-{\sim}
      \ar[r]^-{[\tau]\ol{[\ii]}} &
      {[\ul{w}]\ol{[\ii]}} 
      \ar[d]_-{\omega_{\ud{w}}}^-{\sim}\\
      {\ol{[\ii]}\D(\ud{v})} \ar[r]^-{\ol{[\ii]}\sigma} &
      {\ol{[\ii]}\D(\ud{w})}
    }
  \end{equation}
  in $\TRCAT(\D(\mathcal{X}_1)^{\epsilon_1},
  \dots,
  \D(\mathcal{X}_n)^{\epsilon_n};  
  [\ul\II(\mathcal{Y})]^{\delta})$ is commutative.
  The following diagram
  illustrates the situation.
  \begin{equation}
    \label{eq:2-cat-diagram-enhances}
    \begin{tikzpicture}[baseline=(current  bounding  box.center)]
      \matrix (m) [matrix of math nodes, row sep=6em, column
      sep=10em, minimum width=2em, text height=1.5ex, text
      depth=0.25ex] 
      { 
        \D(\mathcal{X}_1)^{\epsilon_1} \times \dots \times
        \D(\mathcal{X})^{\epsilon_n}    
        & \D(\mathcal{Y})^\delta \\
        {[\ul\II(\mathcal{X})]^{\epsilon_1} \times \dots \times
          [\ul\II(\mathcal{X})]^{\epsilon_n}}
        & {[\ul\II(\mathcal{Y})]^\delta} \\
      };
      \path[->, font=\small]
      (m-1-1) edge node [left] {$\ol{[\ii]} \times \dots \times \ol{[\ii]}$} (m-2-1)
      (m-1-1.north east) edge [bend left=20] node [above] {$\D(\ud{v})$} 
                                             node (Dv) {} 
                                             node [pos=0.2] (Dvleft) {} (m-1-2)
      (m-1-1.south east) edge [bend right=20] node [below] {$\D(\ud{w})$}
                                              node (Dw) {} 
                                              node [pos=0.8]
                                              (Dwright) {} (m-1-2)
      (m-2-1.south east) edge [bend right=20] node [below] {$[\ul{w}]=[\ul\II(\ud{w})]$}
                                              node (Iw) {}
                                              node [pos=0.8] (Iwright) {} (m-2-2)
      (m-1-2) edge node [right] {$\ol{[\ii]}$} (m-2-2);
  
      \draw[double, double equal sign distance, -implies] (Iwright) --
      node [pos=0.55, right]{\scriptsize{$\omega_\ud{w}$}} (Dwright);    
      \path[->, font=\small]
      (m-2-1.north east) edge [bend left=20, -, line width=6pt, draw=white] (m-2-2)
                         edge [bend left=20] node [above] {[$\ul{v}]=[\ul\II(\ud{v})]$}
                                             node (Iv) {} 
                                             node [pos=0.2] (Ivleft) {} (m-2-2);

      \draw[double, double equal sign distance, -implies] (Dv) --
      node [right] {\scriptsize{$\sigma$}} (Dw);    
      \draw[-, line width=6pt, draw=white] (Dvleft) -- (Ivleft);    
      \draw[double, double equal sign distance, -implies] (Ivleft) --
      node [pos=0.45, left]{\scriptsize{$\omega_\ud{v}$}} (Dvleft);    
      \draw[double, double equal sign distance, -implies] (Iv) --
      node [left] {\scriptsize{$[\tau]$}} (Iw);    
    \end{tikzpicture}
  \end{equation}
  Usually, $\ud{v}$ and $\ud{w}$ are clear from the context and
  we just say that $\tau$ \define{enhances} $\sigma$.
\end{definition}

\begin{remark}
  \label{rem:enhance-2-morphism-compose-and-invert}
  Keep the setting of Definition~\ref{d:enhance-2-morphism}. Let
  $\ud u$ be another morphism in $\fml'_\kk$, and assume that
  $\tau' \colon \ul{w}=\ul\II(\ud{w}) \ra \ul{u} =\ul\II(\ud{u})$
  $(\ud{w},\ud{u})$-enhances
  $\sigma' \colon \D(\ud{w}) \ra \D(\ud{u})$. Then
  $\tau' \tau$ 
  $(\ud{v},\ud{u})$-enhances $\sigma' \sigma$.
  If $\tau$ 
  $(\ud{v},\ud{w})$-enhances $\sigma$ and both $\tau$ and
  $\sigma$ are
  2-isomorphisms,
  then 
  $\tau\inv$ 
  $(\ud{w},\ud{v})$-enhances $\sigma\inv$.
\end{remark}

\begin{proposition}
  \label{p:adjunction-morphisms-push-pull}
  Let
  $\alpha \colon (\Sh(\mathcal{Y}), \mathcal{O}_\mathcal{Y}) \ra
  (\Sh(\mathcal{X}), \mathcal{O}_\mathcal{X})$
  be a morphism of $\kk$-ringed topoi. 
  Then the zig-zag
  \begin{equation}
    \label{eq:zigzag-aa-id}
    \ul\alpha^* \ul\alpha_* =\ii \alpha^* \ee \ii\alpha_* 
    \xla{[\sim]}
    \ii \alpha^* \ee \alpha_*
    \ra
    \ii \alpha^* \alpha_*
    \ra
    \ii 
    \xla{[\sim]}
    \id
  \end{equation}
  of 2-morphisms in $\tildew{\ENH}_\kk$ defines a 2-morphism
  \begin{equation}
    \label{eq:aa-id}
    \ul\alpha^* \ul\alpha_* \ra \id
  \end{equation}
  in $\ENH_\kk$ that enhances the counit $\dL\alpha^* \dR\alpha_*
  \ra \id$ (see Remarks~\ref{rem:fixed-data-alpha*-adjunction}
  and
  \ref{rem:aa-id-precisely} for more details),
  and the zig-zag
  \begin{equation}
    \label{eq:zigzag-id-aa}
    \id \xra{[\sim]}
    \ii 
    \xla{[\sim]}
    \ii \ee 
    \ra
    \ii \alpha_* \alpha^*\ee
    \ra
    \ii \alpha_* \ii \alpha^*\ee
    = \ul\alpha_* \ul\alpha^*
  \end{equation}
  of 2-morphisms in $\tildew{\ENH}_\kk$ defines a 2-morphism
  \begin{equation}
    \label{eq:id-aa}
    \id 
    \ra \ul\alpha_* \ul\alpha^* 
  \end{equation}
  in $\ENH_\kk$ 
  that enhances the unit morphism $\id \ra \dR\alpha_*\dL\alpha^*$.
\end{proposition}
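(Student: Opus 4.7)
The plan has two parts: first check that the two displayed diagrams are zig-zags of the type considered in Remark~\ref{rem:zig-zags-define-2-morph-in-ENH}, so that they indeed define 2-morphisms in $\ENH_\kk$; second, verify that the resulting 2-morphisms enhance the counit, respectively unit, of the derived adjunction $(\bL\alpha^*, \bR\alpha_*)$ in the sense of Definition~\ref{d:enhance-2-morphism}.

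For the first part, I would go through each backwards-pointing arrow and verify it is an objectwise homotopy equivalence. Consider the zig-zag \eqref{eq:zigzag-aa-id}. The arrow $\ii\alpha^*\ee\alpha_*\to\ii\alpha^*\ee\ii\alpha_*$ is $\ii\alpha^*\ee\iotaii_{\alpha_*(-)}$; on $I\in\ul\II(\mathcal{Y})$, the component $\iotaii_{\alpha_*I}$ is a quasi-isomorphism, its image under $\ee$ is a quasi-isomorphism between h-flat objects by Lemma~\ref{l:E-cofibrant-(pullback)-h-flat}, its image under $\alpha^*$ is again a quasi-isomorphism by Proposition~\ref{p:pullback-preserves-h-flat}, and $\ii$ then turns it into a homotopy equivalence by Remark~\ref{rem:ii-maps-qisos-to-htpy-equis}. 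The other leftward arrow $\id\to\ii$ is $\iotaii$ evaluated on $\ul\II(\mathcal{Y})$, which is a quasi-isomorphism between h-injectives and hence a homotopy equivalence by the same remark. The analogous checks for the zig-zag \eqref{eq:zigzag-id-aa} are easier: $\iotaii$ on $\ul\II(\mathcal{X})$ is an objectwise homotopy equivalence as above, and $\ii\epsilonee$ is an objectwise homotopy equivalence because $\epsilonee$ is a pointwise quasi-isomorphism and $\ii$ sends quasi-isomorphisms to homotopy equivalences.

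For the enhancement property, I would use the uniqueness characterizations provided in \ref{sec:some-uniq-results}. By Proposition~\ref{p:ul-alpha_*-as-derived-functor}, $[\ul\alpha_*]\ol{[\ii]}$ is a right derived functor of $[\ii\alpha_*]$, with structure morphism $[\ii\alpha_*\iotaii]$; dually Proposition~\ref{p:ul-alpha^*-as-derived-functor} identifies $[\ul\alpha^*]\ol{[\ii]}$ as a left derived functor. This allows normalization of the choices of $\bR\alpha_*$ and $\bL\alpha^*$, as in the proof of Proposition~\ref{p:canonical-isotrafos}, so that the comparison 2-isomorphisms $\omega_{\alpha_*}$ and $\omega_{\alpha^*}$ take a convenient explicit form (in fact $\omega_{\alpha_*}$ may be taken to be the identity). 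In that normalization, applying $[-]\comp\ol{[\ii]}$ to the zig-zag \eqref{eq:zigzag-aa-id} produces an explicit composite in $\TRCAT_\kk$ which, upon tracing through the definitions of the derived counit via its universal property, reduces the required identity \eqref{eq:sigma-omega=omega-tau} to the manifestly commutative composite of naturality squares for $\iotaii$ and $\epsilonee$ together with the sheaf-level counit $\alpha^*\alpha_*\to\id$. The argument for the unit \eqref{eq:id-aa} is entirely parallel, using the sheaf-level unit $\id\to\alpha_*\alpha^*$ and the adjoint universal property of $\bL\alpha^*$.

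The main obstacle is the bookkeeping in the second part: one must keep careful track of how the pseudo-natural transformation $\omega$ decomposes along the composition $\ul\alpha^*\ul\alpha_*$ (respectively $\ul\alpha_*\ul\alpha^*$) and match each resulting factor against the corresponding step in the zig-zag. The cleanest way to carry this out is to first observe that \eqref{eq:sigma-omega=omega-tau-diagram} is a diagram of natural transformations between functors of derived categories, and that both sides are characterized by universal properties of left or right derived extensions; then it suffices to verify the commutativity after precomposition with the Verdier quotient $q_\mathcal{Y}\colon[\ul\C(\mathcal{Y})]\to\D(\mathcal{Y})$, where everything reduces to an equality of explicit natural transformations of honest $\KK$-functors built from $\alpha^*$, $\alpha_*$, $\ee$, $\ii$, $\epsilonee$, $\iotaii$, and the sheaf-level adjunction (co)unit. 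That equality is a straightforward diagram chase using naturality of $\iotaii$, $\epsilonee$ and the triangle identities for $(\alpha^*,\alpha_*)$.
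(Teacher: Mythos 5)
Your overall strategy is the paper's: part one (checking that the wrong-way arrows are objectwise homotopy equivalences via Proposition~\ref{p:pullback-preserves-h-flat}, Lemma~\ref{l:E-cofibrant-(pullback)-h-flat} and Remark~\ref{rem:ii-maps-qisos-to-htpy-equis}) is exactly what the paper does and is complete, and for part two you likewise normalize $\bR\alpha_*$ and $\bL\alpha^*$ through Propositions~\ref{p:ul-alpha_*-as-derived-functor} and \ref{p:ul-alpha^*-as-derived-functor} so that $\omega_{\alpha_*}=\id$ and $\omega_{\alpha^*}$ is the explicit isotransformation \eqref{eq:omega-alpha^*-wlog}, and then compare two explicit composites evaluated at objects.

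The gap is in the last step. Your claim that the remaining equality "is a straightforward diagram chase using naturality of $\iotaii$, $\epsilonee$ and the triangle identities" conceals the one non-formal input, and as stated the chase stalls. When you unwind the two sides at an object $F$, one composite involves $[\iotaii_{\ii F}]^{-1}$ (coming from the arrow $\id\to\ii$ of the zig-zag evaluated at $\ii F$), while the other involves $[\ii\iotaii_F]^{-1}$ (coming from $\ol{[\ii]}$ applied to the structure morphism of the derived functor, equivalently from $\omega_{\alpha^*}$ resp.\ $\omega_{\alpha_*}$). Naturality of $\iotaii$ only yields $\iotaii_{\ii F}\comp\iotaii_F=\ii\iotaii_F\comp\iotaii_F$; to cancel $\iotaii_F$ one must use that $\ii\ii F$ is h-injective and $\iotaii_F$ is a quasi-isomorphism, so that $[\iotaii_F]^*\colon[\ul\C_{\mathcal{X}}](\ii F,\ii\ii F)\to[\ul\C_{\mathcal{X}}](F,\ii\ii F)$ is bijective and hence $[\iotaii_{\ii F}]=[\ii\iotaii_F]$. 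This is precisely Lemma~\ref{eq:l-iotaii-ii-vs-ii-iotaii} of the paper, and it is the step "naturality" does not supply. (The triangle identities for $(\alpha^*,\alpha_*)$ play no role here; they enter only later, in Proposition~\ref{p:adjunction-*-pull-push-in-ENH}, when showing that \eqref{eq:aa-id} and \eqref{eq:id-aa} form an adjunction in $\ENH_\kk$.) With that lemma supplied, your argument closes and coincides with the paper's.
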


\begin{remark}
  \label{rem:fixed-data-alpha*-adjunction}
  In the formulation of
  Proposition~\ref{p:adjunction-morphisms-push-pull} we have
  implicitly fixed an adjunction 
  $(\alpha^*, \alpha_*, \id \xra{\eta} \alpha_* \alpha^*,
  \alpha^* \alpha_* \xra{\theta} \id)$ giving
  rise to an adjunction 
  $(\dL\alpha^*, \dR\alpha_*, \eta^\D, \theta^\D)$. We will
  tacitly fix and use similar data in the following.
\end{remark}

\begin{remark}
  \label{rem:aa-id-precisely}
  The precise meaning of the first claim in
  Proposition~\ref{p:adjunction-morphisms-push-pull} is:
  in the zig-zag
  \begin{equation}
    \ul\alpha^* \ul\alpha_* =\ii \alpha^* \ee \ii\alpha_* 
    \xla{\ii\alpha^*\ee\iotaii\alpha_*}
    \ii \alpha^* \ee \alpha_*
    \xra{\ii \alpha^* \epsilonee \alpha_*}
    \ii \alpha^* \alpha_*
    \xra{\ii \theta}
    \ii 
    \xla{\iotaii}
    \id
  \end{equation}
  of 2-morphisms (between 1-morphisms $\ul\II(\mathcal{Y}) \ra
  \ul\II(\mathcal{Y})$)  
  in $\tildew{\ENH}_\kk$
  all left-pointing arrows are objectwise homotopy equivalences
  (here $\theta$ is as in
  Remark~\ref{rem:fixed-data-alpha*-adjunction}); 
  therefore, 
  as explained in Remark~\ref{rem:zig-zags-define-2-morph-in-ENH}, 
  \begin{equation}
    \label{eq:aa-id-in-detail}
    \delta(\iotaii)\inv
    \delta(\ii \theta)
    \delta(\ii \alpha^* \epsilonee \alpha_*)
    \delta(\ii\alpha^*\ee\iotaii\alpha_*)\inv
    \colon
    \ul\alpha^* \ul\alpha_* \ra \id
  \end{equation}
  defines a 2-morphism in $\ENH_\kk$ 
  (where $\delta \colon \tildew{\ENH}_\kk \ra \ENH_\kk$ is the
  functor
  \eqref{eq:delta-tilde-ENH-to-ENH})
  and this 2-morphism
  $(\ud\alpha^* \ud\alpha_*, \id)$-enhances the counit 2-morphism
  $\theta^\D \colon \dL\alpha^* \dR\alpha_* \ra \id$ in $\TRCAT_\kk$.
  We leave it to the reader to interpret similar claims
  accordingly. 
\end{remark}

\fussnote{
  could prove similar adjunction morphisms for $(\otimes,
  \sheafHom)$.
}

\fussnote{
  Are these adjunction morphisms compatible with 
  say 
  Lemma~\ref{l:enhanced-pull-push-ulHom}.
}

\begin{proof}
  Let $\eta$ and $\theta$ be as in
  Remark~\ref{rem:fixed-data-alpha*-adjunction}. The evaluation
  of \eqref{eq:zigzag-aa-id} at $J \in \ul\II(\mathcal{Y})$ is
  the zig-zag
  \begin{equation}
    \label{eq:aa-id-construct}
    \ii \alpha^* \ee \ii\alpha_* J
    \xla{\ii \alpha^* \ee \iotaii_{\alpha_* J}}
    \ii \alpha^* \ee \alpha_* J
    \xra{\ii \alpha^* \epsilonee_{\alpha_*J}}
    \ii \alpha^* \alpha_* J
    \xra{\ii \theta_J}
    \ii J
    \xla{\iotaii_J}
    J
  \end{equation}
  in $\II(\mathcal{Y})$.
  The two arrows pointing to the left are homotopy equivalences,
  by Proposition~\ref{p:pullback-preserves-h-flat} (and
  Remark~\ref{rem:ii-maps-qisos-to-htpy-equis}).
  This shows that the left-pointing arrows in
  \eqref{eq:zigzag-aa-id} are objectwise homotopy equivalences;
  hence this zig-zag defines the 2-morphism \eqref{eq:aa-id}.
  Let us call it $\tau$.

  By choosing $\dR \alpha_*$ and $\dL\alpha^*$ as in
  the proof of Proposition~\ref{p:canonical-isotrafos}
  (more precisely, as in the proof of
  Proposition~\ref{p:ul-alpha_*-as-derived-functor} and
  as in Remark~\ref{rem:omega-alpha^*-wlog})
  we can assume that
  $\omega_{\alpha_*}$ is the identity and that
  $\omega_{\alpha^*}$ is \eqref{eq:omega-alpha^*-wlog}. Then
  the counit $\theta^\D \colon \dL \alpha^* \dR \alpha_* \ra \id$
  is the composition
  \begin{multline*}
    \theta^\D \colon \dL \alpha^* \dR \alpha_* 
    =
    \ol{[\alpha^*]} \; \ol{[\ee]}
    q_\mathcal{X}[\alpha_*] \ol{[\ii]}
    =
    q_\mathcal{Y}[\alpha^*] [\ee]
    [\alpha_*] \ol{[\ii]}\\
    \xra{q_\mathcal{Y}[\alpha^*][\epsilonee][\alpha_*]
      \ol{[\ii]}}
    q_\mathcal{Y}[\alpha^*\alpha_*] \ol{[\ii]}
    \xra{q_\mathcal{Y}[\theta] \ol{[\ii]}}
    q_\mathcal{Y} \ol{[\ii]}
    \xra{\ol{[\iotaii]}\inv}
    \id,
  \end{multline*}
  cf.\ \eqref{eq:diagram-for-bL-alpha^*}.
  We need to show (cf.\ \eqref{eq:sigma-omega=omega-tau} and
  Example~\ref{exam:omega-alpha-alpha})
  that
  \begin{equation}
    [\tau] \ol{[\ii]} = (\ol{[\ii]} \theta^\D) (\omega_{\alpha^*}
    \dR \alpha_*)
  \end{equation}
  as natural transformations
  $[\ul\alpha^*][\ul\alpha_*] \ol{[\ii]} = 
  [\ul\alpha^*] \ol{[\ii]} \dR\alpha_* 
  \ra \ol{[\ii]}$ between
  triangulated $\kk$-functors
  $\D(\mathcal{Y}) \ra [\ul\II(\mathcal{Y})]$.  It is enough to
  show that the evaluations of both transformations at an
  arbitrary object 
  $F \in \D(\mathcal{Y})$ coincide.
  Consider the composition
  \begin{equation}
    \ii \alpha^* \ee \ii \alpha_* \ii F 
    \xra{[\ii\alpha^*\ee\iotaii_{\alpha_*\ii F}]\inv}
    \ii \alpha^* \ee \alpha_* \ii F
    \xra{[\ii \alpha^* \epsilonee_{\alpha_* \ii F}]}
    \ii \alpha^* \alpha_* \ii F
    \xra{[\ii \theta_{\ii F}]}
    \ii \ii F.
  \end{equation}
  If we compose it with
  $[\iotaii_{\ii F}]\inv \colon\ii \ii F \ra \ii F$ we obtain the
  evaluation of $[\tau] \ol{[\ii]}$ at $F$; if we compose it with
  $[\ii \iotaii_F]\inv \colon \ii \ii F \ra \ii F$
  we obtain the evaluation of $(\ol{[\ii]} \theta^\D)
  (\omega_{\alpha^*}\dR \alpha_*)$ at $F$ 
  (cf.\ \eqref{eq:ii-alpha^*-ee-iotaii-E}). 
  These two evaluations coincide because 
  $[\iotaii_{\ii F}] = [\ii
  \iota_F]$, by
  Lemma~\ref{l:l-iotaii-ii-vs-ii-iotaii} below. This shows that
  $\ul\alpha^* \ul\alpha_* \ra \id$
  enhances $\dL \alpha^* \dR \alpha_* \ra \id$. 
  
  The evaluation of the zig-zag \eqref{eq:zigzag-id-aa}
  at $I \in \ul\II(\mathcal{X})$ 
  is the zig-zag 
  \begin{equation}
    \label{eq:id-aa-construct}
    I 
    \xra{\iotaii_I}
    \ii I 
    \xla{\ii \epsilonee_I}
    \ii \ee I
    \xra{\ii \eta_{\ee I}} 
    \ii \alpha_* \alpha^*\ee I
    \xra{\ii\alpha_* \iotaii_{\alpha^*\ee I}}
    \ii \alpha_* \ii \alpha^*\ee I
    = \ul\alpha_* \ul\alpha^* I
  \end{equation}
  in $\II(\mathcal{X})$.
  Its first two arrows are obviously homotopy equivalences.
  This shows that the indicated two arrows in
  \eqref{eq:zigzag-id-aa} are objectwise homotopy equivalences;
  hence this zig-zag defines the 2-morphism \eqref{eq:id-aa}.
  Let us call it $\rho$.

  \fussnote{
    Ob der Term
    $\ii \alpha_* \alpha^*\ee I$
    in \eqref{eq:id-aa-construct}
    f\"ur $I \in \ul\QQcoh(X)$ (und $\alpha$ konzentriert) in
    $\ul\QQcoh(X)$ liegt, ist unklar?
  }

  We choose $\dR \alpha^*$ and $\dL \alpha^*$ as before.
  Consider the composition 
  \begin{equation}
    \id \xla{[\epsilonee]}
    [\ee] \xra{[\eta][\ee]}
    [\alpha_* \alpha^* \ee]
    \xra{[\alpha_*][\iotaii][\alpha^*][\ee]} 
    [\alpha_* \ii \alpha^* \ee]
  \end{equation}
  of transformations of
  endofunctors of
  $[\ul\C(\mathcal{X})]$.
  If we apply $q_\mathcal{X}$, the left-pointing arrow becomes
  invertible and we obtain a transformation
  \begin{equation}
    \label{eq:theta-tilde}
    \tilde{\eta} \colon 
    q_\mathcal{X} \ra
    q_X[\alpha_*] [\ii] [\alpha^*] [\ee]
    =
    q_X[\alpha_*] \ol{[\ii]} q_\mathcal{Y} [\alpha^*] [\ee]
    = \dR \alpha_* \ol{[\alpha^*]} \; \ol{[\ee]} q_\mathcal{X}
    = \dR \alpha_* \dL \alpha^* q_\mathcal{X} 
  \end{equation}
  using \eqref{eq:diagram-for-bL-alpha^*}.
  It is easy to see (using that $[\alpha_* \ii \alpha^* \ee]$
  preserves quasi-isomorphisms) that there is a unique
  transformation
  $\eta^\D \colon \id \ra \dR \alpha_* \dL \alpha^*$
  such that $\eta^\D q_\mathcal{X}=     \tilde{\eta}$;
  moreover, this transformation is the
  unit of 
  the adjunction $(\dL \alpha^*, \dR \alpha_*)$.

  We need to show (cf.\ \eqref{eq:sigma-omega=omega-tau} and
  Example~\ref{exam:omega-alpha-alpha})
  that 
  $([\ul\alpha_*] \omega_{\alpha^*})([\rho] \ol{[\ii]}) =
  \ol{[\ii]} \eta^\D$. By precomposing with $q_\mathcal{X}$
  it is enough to show that
  \begin{equation}
    ([\ul\alpha_*] \omega_{\alpha^*} q_\mathcal{X})([\rho] [\ii]) =
    \ol{[\ii]} \tilde{\eta}
  \end{equation}
  as natural transformations $[\ii] \ra
  \ol{[\ii]}\dR \alpha_* \dL \alpha^* q_\mathcal{X} =
  [\ii][\alpha_*][\ii][\alpha^*] [\ee]$ between triangulated
  functors 
  $[\ul\C(\mathcal{X})] \ra [\ul\II(\mathcal{X})]$. 
  The evaluation of 
  $\ol{[\ii]} \tilde{\eta}$
  at $E \in [\ul\C(\mathcal{X})]$ is
  \begin{equation}
    \label{eq:eval-ii-eta}
    \ii E
    \xra{[\ii\epsilonee_E]\inv}
    \ii\ee E 
    \xra{[\ii\eta_{\ee E}]}
    \ii\alpha_* \alpha^* \ee E
    \xra{[\ii\alpha_*\iotaii_{\alpha^*\ee E}]} 
    \ii\alpha_* \ii \alpha^* \ee E.
  \end{equation}
  On the other hand, the evaluation of
  $([\ul\alpha_*] \omega_{\alpha^*} q_\mathcal{X})([\rho] [\ii])$ at
  $E$ is (the first four arrows are 
  obtained from 
  \eqref{eq:id-aa-construct} and compose to $([\rho] [\ii])_E$,
  the last arrow is
  $([\ul\alpha_*] \omega_{\alpha^*})_E$, cf.\ 
  \eqref{eq:ii-alpha^*-ee-iotaii-E}
  or
  \eqref{eq:omega-alpha^*-wlog})
  \begin{equation}
    \label{eq:eval-omeage-rho-ii}
    \ii E 
    \xra{[\iotaii_{\ii E}]}
    \ii \ii E 
    \xra{[\ii \epsilonee_{\ii E}]\inv}
    \ii \ee \ii E
    \xra{[\ii \eta_{\ee \ii E}]} 
    \ii \alpha_* \alpha^*\ee \ii E
    \xra{[\ii\alpha_* \iotaii_{\alpha^*\ee \ii E}]}
    \ii \alpha_* \ii \alpha^*\ee \ii E
    \xra{[\ii \alpha_*\ii \alpha^* \ee \iotaii_E]\inv}
    \ii \alpha_* \ii \alpha^* \ee E.  
  \end{equation}
  To see that the compositions in \eqref{eq:eval-ii-eta}
  and \eqref{eq:eval-omeage-rho-ii} coincide consider the
  commutative diagram
  \begin{equation}
    \xymatrix{
      {\ii E} 
      \ar[d]_-{\ii \iotaii_E}^-{[\sim]}
      & 
      {\ii \ee E} 
      \ar[d]_-{\ii \ee \iotaii_E}^-{[\sim]} 
      \ar[l]_-{\ii \epsilonee_E}^-{[\sim]}
      \ar[r]^-{\ii \eta_{\ee E}}
      & 
      {\ii \alpha_*\alpha^*\ee E} 
      \ar[d]_-{\ii \alpha_*\alpha^* \ee \iotaii_E} 
      \ar[rr]^-{\ii \alpha_*\iotaii_{\alpha^* \ee E}} 
      & &
      {\ii \alpha_* \ii \alpha^*\ee E} 
      \ar[d]_-{\ii \alpha_*\ii\alpha^* \ee \iotaii_E}^-{[\sim]}
      \\
      {\ii\ii E}
      &
      {\ii \ee \ii E}
      \ar[l]_-{\ii \epsilonee_{\ii E}}^-{[\sim]}
      \ar[r]^-{\ii \eta_{\ee \ii E}}
      &
      {\ii \alpha_*\alpha^*\ee \ii E}
      \ar[rr]^-{\ii \alpha_*\iotaii_{\alpha^* \ee \ii E}} 
      & &
      {\ii \alpha_* \ii \alpha^*\ee \ii E}
    }
  \end{equation}
  in $\II(\mathcal{X})$, note that the arrows labeled $[\sim]$
  become 
  invertible in $[\ul\II(\mathcal{X})]$, and that moreover 
  $[\ii\iotaii_E]=[\iotaii_{\ii E}]$ 
  by Lemma~\ref{l:l-iotaii-ii-vs-ii-iotaii} below.
  This shows that $\id \ra \ul\alpha_* \ul\alpha^*$ enhances $\id
  \ra \dR \alpha_* \dL \alpha^*$.
\end{proof}

\fussnote{
  folgendes Lemma:
  koennte und sollte? vorziehen. koennte in Annahme
  schreiben:  
  let 
  $(\ii, \iotaii)=(\ii_{(\mathcal{X}, \mathcal{O})},
  \iotaii_{(\mathcal{X}, \mathcal{O})})$
  be a $\KK$-enriched $\II$-fibrant resolution functor 
  
  Funktor muss nicht enriched sein.
}

\fussnote{``same'' result true for 
  $\II$-fibrant and $\PP$-cofibrant resolution functor
  on $\MMod(\ulms{C})$.
}

\fussnote{
  Folgt wohl auch aus
  Proposition~\ref{p:compare-K-enri-I-fibr-repl-functors}. 
  sieht aber nach overkill aus. Andererseits ist unser Beweis
  hier so kurz und zeigt im wesentlichen auch einen Teil der
  dortigen 
  Proposition. 
}

\begin{lemma}
  \label{l:l-iotaii-ii-vs-ii-iotaii}
  Let $\mathcal{X}$ be a $\kk$-ringed site and $F \in
  \ul\C(\mathcal{X})$. Then the two morphisms
  $[\iotaii_{\ii F}]$, $[\ii \iota_F] \colon \ii F \ra \ii\ii F$
  in $[\ul\C(\mathcal{X})]$ coincide, 
  i.\,e.\ $[\iotaii_{\ii F}]=[\ii \iota_F]$.
\end{lemma}

\begin{proof}
  Consider the commutative square
  \begin{equation}
    \label{eq:ii-iota-vs-iota-ii}
    \xymatrix{
      {\ii F} \ar[r]^-{\ii \iotaii_F} & {\ii\ii F}\\
      {F} \ar[u]^-{\iotaii_F} \ar[r]^-{\iotaii_F}
      & {\ii F} \ar[u]^-{\iotaii_{\ii F}}
    }
  \end{equation}
  in $\C(\mathcal{X})$.
  Since $\iotaii_F$
  is a quasi-isomorphism and $\ii\ii F$ is h-injective, the map
  \begin{equation}
    [\iotaii_F]^* \colon [\ul\C_\mathcal{Y}](\ii F, \ii\ii F) \ra
    [\ul\C_\mathcal{Y}](F, \ii\ii F)  
  \end{equation}
  is an isomorphism; it maps 
  $[\iotaii_{\ii F}]$ and $[\ii \iota_F]$ to the same element.
\end{proof}

\begin{lemma}
  \label{l:composition-inverse-direct-image}
  Let $(\Sh(\mathcal{Z}), \mathcal{O}_\mathcal{Z}) \xra{\beta}
  (\Sh(\mathcal{Y}), \mathcal{O}_\mathcal{Y}) \xra{\alpha}
  (\Sh(\mathcal{X}), \mathcal{O}_\mathcal{X})$ be morphisms of
  $\kk$-ringed 
  topoi.
  Then the zig-zags
  \begin{align}
    \label{eq:id_*}
    \ul\id_* 
    & = \ii \xla{[\sim]} \id,\\
    \label{eq:alphabeta_*}
    \ul{(\alpha\beta)}_*
    & =
      \ii (\alpha\beta)_*
      \sira \ii \alpha_* \beta_* 
      \xra{[\sim]}
      \ii\alpha_*\ii\beta_*
      =      
      \ul\alpha_*\ul\beta_*,\\
    \ul\id^* 
    & = \ii\ee \xra{[\sim]} \ii
      \xla{[\sim]}
      \id,\\
    \label{eq:alphabeta^*-tilde-ENH}
    \ul{(\alpha\beta)}^* 
    & =  
      \ii(\alpha\beta)^* \ee 
      \sira 
      \ii\beta^* \alpha^*\ee 
      \xla{[\sim]}
      \ii\beta^*\ee \alpha^*\ee 
      \xra{[\sim]}
      \ii\beta^*\ee\ii \alpha^*\ee = 
      \ul\beta^*\ul\alpha^*
  \end{align}
  of objectwise homotopy 
  equivalences in $\tildew{\ENH}_\kk$ define 2-isomorphisms
  \begin{align}
    \label{eq:id_*-ENH}
    \ul\id_* 
    & \sira \id,
    \\
    \label{eq:alphabeta_*-ENH}
    \ul{(\alpha \beta)}_* 
    & \sira \ul\alpha_* \ul\beta_*,
    \\
    \label{eq:id^*-ENH}
    \ul\id^*
    & \sila \id,
    \\ 
    \label{eq:alphabeta^*-ENH}
    \ul{(\alpha\beta)}^*
    & \sila \ul\beta^*\ul\alpha^*
  \end{align}
  in $\ENH_\kk$
  that enhance the isomorphisms
  $\dR \id_* \cong \id$, 
  $\dR(\alpha \beta)_* \cong \dR \alpha_* \dR \beta_*$, 
  $\dL \id^* \cong \id$,
  $\dL(\alpha \beta)^* \cong \dL \beta^* \dL \alpha^*$.
\end{lemma}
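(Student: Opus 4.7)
The plan is to split the argument into two steps: first, verify that each arrow labelled $[\sim]$ in the four zig-zags is indeed an objectwise homotopy equivalence (so that each zig-zag defines a 2-isomorphism in $\ENH_\kk$ via Remark~\ref{rem:zig-zags-define-2-morph-in-ENH}); second, verify the enhancement property of Definition~\ref{d:enhance-2-morphism}.

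For the first step, everything reduces to known facts about $\ii$ and $\ee$. For \eqref{eq:id_*}, evaluated at $I \in \ul\II(\mathcal{X})$ the arrow $\iotaii_I \colon I \to \ii I$ is a quasi-isomorphism between h-injective complexes, hence a homotopy equivalence. For \eqref{eq:alphabeta_*}, the unmarked first arrow is the canonical equality of $\alpha_*\beta_*$ and $(\alpha\beta)_*$; the marked arrow, evaluated at $J \in \ul\II(\mathcal{Z})$, is $\ii\alpha_*\iotaii_{\beta_* J}$. Since $\beta_* J$ is weakly h-injective (Proposition~\ref{p:spalt-5.15-sites}) and $\alpha_*$ sends quasi-isomorphisms between such objects to quasi-isomorphisms, $\alpha_*\iotaii_{\beta_* J}$ is a quasi-isomorphism and $\ii$ of it is a homotopy equivalence (Remark~\ref{rem:ii-maps-qisos-to-htpy-equis}). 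Both marked arrows in \eqref{eq:alphabeta^*-tilde-ENH} are handled similarly: $\ee I$ is h-flat and $\alpha^*$ preserves h-flatness (Proposition~\ref{p:pullback-preserves-h-flat}), so $\ee\iotaii_{\alpha^*\ee I}$ and $\epsilonee_{\alpha^*\ee I}$ are quasi-isomorphisms between h-flat objects, $\beta^*$ preserves these quasi-isomorphisms, and $\ii$ turns the results into homotopy equivalences. The case \eqref{eq:id^*-ENH} is elementary. Remark~\ref{rem:zig-zags-define-2-morph-in-ENH} then produces the 2-isomorphisms \eqref{eq:id_*-ENH}--\eqref{eq:alphabeta^*-ENH}.

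For the enhancement property, I would adopt the strategy of the proof of Proposition~\ref{p:adjunction-morphisms-push-pull}: choose convenient models for $\bR\id_*$, $\bR(\alpha\beta)_*$, $\bR\alpha_*\bR\beta_*$ and for $\bL\id^*$, $\bL(\alpha\beta)^*$, $\bL\beta^*\bL\alpha^*$ such that the canonical 2-isomorphisms $\omega$ from Proposition~\ref{p:canonical-isotrafos} take the explicit forms described in Proposition~\ref{p:ul-alpha_*-as-derived-functor} resp.\ Remark~\ref{rem:omega-alpha^*-wlog}. For the two $_*$-statements the argument is then essentially tautological: with the choice $\bR\alpha_* = q_\mathcal{X}\alpha_*\ol{[\ii]}$ etc., $\omega_{\alpha_*}$ is the identity, so the enhancement square reduces to the observation that $\ii(\alpha\beta)_*\ol{[\ii]} \sira \ii\alpha_*\beta_*\ol{[\ii]} \to \ii\alpha_*\ii\beta_*\ol{[\ii]}$ is precisely the composite obtained from applying $\ol{[\ii]}$ to the standard isomorphism $\bR(\alpha\beta)_* \cong \bR\alpha_*\bR\beta_*$. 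One uses Lemma~\ref{eq:l-iotaii-ii-vs-ii-iotaii} to identify $[\iotaii_{\ii\beta_*J}]$ with $[\ii\iotaii_{\beta_*J}]$, exactly as in the second half of the proof of Proposition~\ref{p:adjunction-morphisms-push-pull}.

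The hard part is the pullback case \eqref{eq:alphabeta^*-ENH}, because the enhancement equation must track both $\ee$- and $\ii$-resolutions through a longer zig-zag, and $\omega_{\alpha^*}$ is itself nontrivial (it is given by \eqref{eq:omega-alpha^*-wlog} and involves inverses of $\iotaii$-morphisms). The verification will reduce to a diagram chase in $[\ul\II(\mathcal{Z})]$ combining: the identification $[\alpha^*\beta^*]=[(\alpha\beta)^*]$; two invocations of Lemma~\ref{eq:l-iotaii-ii-vs-ii-iotaii} to interchange $[\iotaii \ii]$ and $[\ii \iotaii]$; and the compatibility of $\ee$ and $\epsilonee$ with pullbacks on h-flat complexes, i.\,e.\ that $\beta^*\epsilonee_{\alpha^*\ee I}$ and $\beta^*\ee\iotaii_{\alpha^*\ee I}$ become isomorphisms in $[\ul\II(\mathcal{Z})]$ after applying $\ii$. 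I expect this step to be mostly bookkeeping and to parallel closely the computation \eqref{eq:eval-omeage-rho-ii} in the proof of Proposition~\ref{p:adjunction-morphisms-push-pull}; no new ideas should be required beyond those already used there.
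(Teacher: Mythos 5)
Your proposal is correct and follows essentially the same route as the paper: the paper's own proof consists exactly of your first step (checking the marked arrows are objectwise homotopy equivalences via Proposition~\ref{p:spalt-5.15-sites} for the pushforward zig-zag and Proposition~\ref{p:pullback-preserves-h-flat} plus h-flatness of $\ee$-resolutions for the pullback zig-zag) and then explicitly leaves the enhancement verification to the reader. Your sketch of that omitted verification, modelled on the proof of Proposition~\ref{p:adjunction-morphisms-push-pull} and the explicit forms of $\omega_{\alpha_*}$ and $\omega_{\alpha^*}$, is exactly the intended completion.
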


In the formulation of the lemma, we use the usual isomorphisms
$(\alpha\beta)_* \sira \alpha_* \beta_*$ and
$(\alpha\beta)^* \sira \beta^*\alpha^*$, cf.
Remark~\ref{rem:fixed-data-alpha*-adjunction}.

\begin{proof}
  Let $J \in \ul\II(\mathcal{Z})$. Obviously, 
  $J \xra{\iotaii_J} \ii J = \ul\id_* J$ is a homotopy equivalence.
  Proposition~\ref{p:spalt-5.15-sites}
  shows that $\beta_*
  J \xra{\iotaii_{\beta_* J}} \ii\beta_* J$ is a
  quasi-isomorphism between 
  weakly h-injective complexes and that
  \begin{equation}
    \label{eq:32}
    \ul{(\alpha\beta)}_*J
    =
    \ii (\alpha\beta)_*J
    \sira
    \ii \alpha_* \beta_* J
    \xra{\ii\alpha_*\iotaii_{\beta_*J}}
    \ii\alpha_*\ii\beta_* J
    =      
    \ul\alpha_*\ul\beta_*J    
  \end{equation}
  is a homotopy equivalence. 

  Let $I \in \ul\II(\mathcal{X})$.
  Obviously, 
  $I \xra{\iotaii_I} \ii I
  \xla{\ii \epsilonee_I} \ii\ee I = \ul\id^* I$ consists of
  homotopy equivalences.
  Proposition~\ref{p:pullback-preserves-h-flat}
  shows that $F:=\alpha^*\ee I$ is h-flat, so both morphisms
  \begin{equation}
    \label{eq:29}
    F \xla{\epsilonee_F} \ee F \xra{\ee \iotaii_F} \ee\ii F
  \end{equation}
  are quasi-isomorphisms between h-flat complexes, and that 
  $\ii \beta^*$ maps them to homotopy equivalences.
  We obtain homotopy equivalences
  \begin{equation}
    \ul{(\alpha\beta)}^* I 
    =
    \ii(\alpha\beta)^* \ee I 
    \sira \ii\beta^* \alpha^* \ee I
    =
    \ii\beta^* F 
    \xla{\ii\beta^*\epsilonee_F}
    \ii\beta^*\ee F 
    \xra{\ii\beta^*\ee \iotaii_F}
    \ii\beta^*\ee\ii F = 
    \ul\beta^*\ul\alpha^*I. 
  \end{equation}
  We leave the rest of the proof to the reader.
\end{proof}

The following result uses fixed data turning
$(\C(\mathcal{X}), \otimes,
\mathcal{O}_\mathcal{X})$ into a symmetric monoidal category.

\begin{lemma}
  \label{l:ulO-otimes-neutral}
  Let $(\mathcal{X},\mathcal{O})$ be a $\kk$-ringed site.
  Then the zig-zags
  \begin{align*}
    (\ul{\mathcal{O}} 
    &
      \; \ul\otimes -) 
      = \ii(\ee \ii \mathcal{O} \otimes \ee(-))
      \xla{[\sim]} \ii(\ee \mathcal{O} \otimes \ee(-))
      \xra{[\sim]} \ii(\mathcal{O} \otimes \ee (-))
      \sira \ii \ee
      \xra{[\sim]}
      \ii 
      \xla{[\sim]}
      \id,\\
    (- 
    &
      \ul\otimes \; \ul{\mathcal{O}})
      = \ii(\ee(-) \otimes \ee \ii \mathcal{O})
      \xla{[\sim]} \ii(\ee(-) \otimes \ee \mathcal{O})
      \xra{[\sim]} \ii(\ee(-) \otimes \mathcal{O})
      \sira \ii \ee
      \xra{[\sim]}
      \ii 
      \xla{[\sim]}
      \id,\\
    ((- 
    & \ul\otimes -) 
      \ul\otimes -)
      = \ii(\ee\ii(\ee(-) \otimes \ee(-)) \otimes \ee(-))
      \xla{[\sim]} \ii(\ee(\ee(-) \otimes \ee(-)) \otimes \ee(-))\\
    \notag
    & \xra{[\sim]} \ii((\ee(-) \otimes
      \ee(-)) \otimes \ee(-))
      \sira \ii(\ee(-) \otimes (\ee(-) \otimes \ee(-)))
      \xla{[\sim]} \dots
      \xra{[\sim]}  
      (- \ul\otimes (- \ul\otimes -)),\\
    (- 
    & \ul\otimes ?) 
      = \ii(\ee(-) \otimes \ee(?))
      \sira  \ii(\ee(?) \otimes \ee(-)) = (? \ul\otimes -)
  \end{align*}
  of objectwise homotopy equivalences in $\tildew{\ENH}_\kk$ define
  2-isomorphisms
  \begin{align}
    \label{eq:left}
    (\ul{\mathcal{O}} \;\ul\otimes -)
    & \sira \id,
    \\
    \label{eq:right}
    (- \ul\otimes \; \ul{\mathcal{O}})
    & \sira \id,\\
    \label{eq:ass-and-symm}
    ((- \ul\otimes -) \ul\otimes -)
    & \sira
    (- \ul\otimes (- \ul\otimes -)),
    \\
    \label{eq:ulotimes-swap}
    (- \ul\otimes ?) 
    & \sira
    (? \ul\otimes -),
  \end{align}
  that enhance the corresponding isomorphisms
  $(\mathcal{O} \otimes^\dL -) \sira \id$, 
  $(- \otimes^\dL \mathcal{O}) \sira \id$, 
  $((- \otimes^\dL -) \otimes^\dL -) \sira 
  (- \otimes^\dL (- \otimes^\dL -))$,
  $(- \otimes^\dL ?) \sira (? \otimes^\dL -)$.
\end{lemma}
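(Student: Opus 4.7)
The strategy is to follow closely the pattern established in the proofs of Proposition~\ref{p:adjunction-morphisms-push-pull} and Lemma~\ref{l:composition-inverse-direct-image}. For each of the four zig-zags, the plan is first to verify that every arrow marked $[\sim]$ is an objectwise homotopy equivalence, so that by Remark~\ref{rem:zig-zags-define-2-morph-in-ENH} the zig-zag defines a 2-isomorphism in $\ENH_\kk$, and second to check compatibility with $\omega_\otimes$ (as constructed in the proof of Proposition~\ref{p:canonical-isotrafos}) so that the 2-isomorphism enhances its triangulated counterpart in the sense of Definition~\ref{d:enhance-2-morphism}.

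The verification of the objectwise homotopy equivalences will repeatedly use three facts. First, $\iotaii$ and $\epsilonee$ are pointwise quasi-isomorphisms. Second, $\ee$ takes values in h-flat complexes (Lemma~\ref{l:E-cofibrant-(pullback)-h-flat}), so tensor products of the form $\ee(-) \otimes \ee(?)$ and $\ee\ii(\ee(-) \otimes \ee(?)) \otimes \ee(\cdot)$ are themselves h-flat. Third, by Proposition~\ref{p:spalt-5.7-sites} the tensor product of a quasi-isomorphism with an h-flat complex is a quasi-isomorphism, and $\ii$ sends any quasi-isomorphism to a homotopy equivalence (Remark~\ref{rem:ii-maps-qisos-to-htpy-equis}). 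Combining these, each arrow of the form $\ii(\ldots)$ obtained by inserting $\epsilonee$, $\iotaii$, or $\ee\iotaii$ into a tensor product of h-flats is a homotopy equivalence. The remaining arrows $\sira$ are either the strict unitality isomorphism $\mathcal{O} \otimes X \cong X$, or the strict associator, or the strict symmetry of the monoidal category $(\C(\mathcal{X}),\otimes)$; these are $\KK$-natural, pointwise invertible, and hence a fortiori objectwise homotopy equivalences.

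For the enhancement property, recall from the proof of Proposition~\ref{p:canonical-isotrafos} that, once $(- \otimes^\bL ?)$ is chosen as $(\ol{\ee}(-) \ol\otimes \ol{\ee}(?))$, the 2-isomorphism $\omega_\otimes$ is given objectwise at $(E,F)$ by the inverse of $[\ii(\ee\iotaii_E \otimes \ee\iotaii_F)] \colon \ii(\ee E \otimes \ee F) \sira \ii(\ee\ii E \otimes \ee\ii F) = \ii E \;\ul\otimes\; \ii F$. With this explicit form in hand, the compatibility condition \eqref{eq:sigma-omega=omega-tau-diagram} becomes, in each of the four cases, the commutativity in $[\ul\C(\mathcal{X})]$ of a diagram whose every face expresses either $\KK$-naturality of $\iotaii$ or $\epsilonee$, the strict unitality/associativity/symmetry of $\otimes$ at the level of $\C(\mathcal{X})$, or the identity $[\iotaii_{\ii F}] = [\ii \iotaii_F]$ from Lemma~\ref{eq:l-iotaii-ii-vs-ii-iotaii} when one needs to trade a resolution applied outside for one applied inside.

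I expect the technically most delicate case to be the associativity isomorphism \eqref{eq:ass-and-symm}, where the displayed zig-zag ``$\xla{[\sim]} \dots \xra{[\sim]}$'' is the mirror image, applied on the right-hand side of the associator, of the explicitly written chain on the left. Making this precise amounts to writing both $((E \ul\otimes F) \ul\otimes G) = \ii(\ee\ii(\ee E \otimes \ee F) \otimes \ee G)$ and $(E \ul\otimes (F \ul\otimes G)) = \ii(\ee E \otimes \ee\ii(\ee F \otimes \ee G))$ symmetrically, then connecting each to the common ``uncorrected'' triple tensor product $\ii(\ee E \otimes \ee F \otimes \ee G)$ via $\ii$ applied to chains of $\epsilonee$'s and $\iotaii$'s tensored with identities; each such morphism is a quasi-isomorphism between h-flat complexes by the principles above, and the strict associator in $\C(\mathcal{X})$ matches the two sides. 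Once this zig-zag is unfolded, both its status as a composable chain of objectwise homotopy equivalences and its enhancement of the triangulated associator follow from the same diagram-chase scheme used in the other three cases.
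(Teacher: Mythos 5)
Your proposal is correct and fills in exactly the details the paper omits: its own proof of this lemma is literally ``Obvious and left to the reader,'' and the argument you outline (h-flatness of $\ee$-resolutions plus Proposition~\ref{p:spalt-5.7-sites} and Remark~\ref{rem:ii-maps-qisos-to-htpy-equis} to verify the objectwise homotopy equivalences, then the explicit form of $\omega_\otimes$ from Proposition~\ref{p:canonical-isotrafos} together with Lemma~\ref{eq:l-iotaii-ii-vs-ii-iotaii} to check the enhancement square) is precisely the scheme the paper uses for the analogous statements such as Lemma~\ref{l:pull-tensor} and Proposition~\ref{p:adjunction-morphisms-push-pull}. No gap.
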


\begin{proof}
  Obvious and left to the reader.
\end{proof}

\begin{lemma}
  \label{l:ulGamma-ulsheafHom}
  Let $(\mathcal{X},\mathcal{O})$ be a $\kk$-ringed site.
  Then the 2-morphism
  \begin{equation}
    \ul\Hom(-,-) = \Gamma\sheafHom \ra
    \Gamma \ii \sheafHom(-,-) 
    = \ul\Gamma \; \ul\sheafHom(-,-)  
  \end{equation}
  in $\tildew{\ENH}_\kk$ is an
  objectwise homotopy equivalence and the induced
  isomorphism
  \begin{equation}
    \label{eq:46}
    \ul\Hom(-,-) \sira
    \ul\Gamma \; \ul\sheafHom(-,-) 
  \end{equation}
  in $\ENH_\kk$ enhances 
  $\dR \Hom(-,-) \sira \dR \Gamma \dR\sheafHom(-,-)$. 
\end{lemma}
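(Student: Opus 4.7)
The plan has two parts: first to show that the 2-morphism
$\tau := \Gamma\,\iotaii_{\sheafHom(-,-)} \colon \Gamma\sheafHom(-,-) \ra \Gamma\ii\sheafHom(-,-)$
in $\tildew{\ENH}_\kk$ is an objectwise homotopy equivalence, which by Proposition~\ref{p:useful-props-delta-ENH-ol[]}.\ref{enum:useful-ohe-iff-delta-2-isom} yields the 2-isomorphism \eqref{eq:46} in $\ENH_\kk$; second, to verify the enhancement condition of Definition~\ref{d:enhance-2-morphism}.

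For the first part, fix $I, J \in \ul\II(\mathcal{X})$, so $I$ and $J$ are h-injective complexes of injective sheaves. Proposition~\ref{p:spalt-5.14-sites-sheafHom} implies that $\sheafHom(I,J)$ is weakly h-injective, and $\ii\sheafHom(I,J)$ is h-injective by construction. Hence $\iotaii_{\sheafHom(I,J)}$ is a quasi-isomorphism between weakly h-injective complexes, and Proposition~\ref{p:spalt-5.15-sites} applied to $\sigma$ shows that $\tau_{(I,J)} = \Gamma\iotaii_{\sheafHom(I,J)}$ is a quasi-isomorphism in $\KK$. Because $\kk$ is a field, every object of $\KK$ is a coproduct of shifts of $\kk$ and $\Cone(\id_\kk)$ by Lemma~\ref{l:objects-C(k)} and is in particular h-injective (Lemma~\ref{l:model-str-for-k}); hence every quasi-isomorphism in $\KK$ is a homotopy equivalence, so $\tau_{(I,J)}$ is a homotopy equivalence.

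For the enhancement claim I would unwind the pseudo-naturality description of $\omega$ from \ref{sec:relat-interpr-triang} using the explicit constructions of the proof of Proposition~\ref{p:canonical-isotrafos}. Choosing $\bR\Gamma = [\Gamma]\ol{[\ii]}$ there gives $\omega_{\ud\Gamma} = \id$, so $\omega_{\ud\Gamma\ud\sheafHom} = [\ul\Gamma]\,\omega_{\ud\sheafHom}$; evaluated at representing objects $E, F \in \C(\mathcal{X})$ this is $[\Gamma\ii\sheafHom(\iotaii_E, \id_{\ii F})]$. Similarly $\omega_{\ud\Hom}$ at $(E,F)$ is $[\Gamma\sheafHom(\iotaii_E, \id_{\ii F})]$, using the identification $\ul\C_\mathcal{X} = \Gamma\sheafHom$ from \eqref{eq:15}. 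After matching a compatible model for the standard 2-isomorphism $\bR\Hom \sira \bR\Gamma\bR\sheafHom$, the enhancement square of Definition~\ref{d:enhance-2-morphism} reduces to commutativity of
\begin{equation*}
\xymatrix{
\Gamma\sheafHom(\ii E, \ii F) \ar[r]^-{\Gamma\iotaii} \ar[d]_-{\Gamma\sheafHom(\iotaii_E, \id)} & \Gamma\ii\sheafHom(\ii E, \ii F) \ar[d]^-{\Gamma\ii\sheafHom(\iotaii_E, \id)}\\
\Gamma\sheafHom(E, \ii F) \ar[r]^-{\Gamma\iotaii} & \Gamma\ii\sheafHom(E, \ii F),
}
\end{equation*}
which holds by $\KK$-naturality of $\iotaii$ applied to the morphism $\sheafHom(\iotaii_E, \id_{\ii F})$ in $\C(\mathcal{X})$, followed by $\Gamma$. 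The main obstacle I expect is purely bookkeeping in this second step; the first step is essentially immediate from the Spaltenstein-type results of Appendix~\ref{sec:spalt-results-ring} together with the collapse of quasi-isomorphism to homotopy equivalence in $\KK$ when $\kk$ is a field.
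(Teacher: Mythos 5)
Your proof is correct and follows the paper's own argument: the quasi-isomorphism $\sheafHom(I,J)\ra\ii\sheafHom(I,J)$ between weakly h-injective complexes (Proposition~\ref{p:spalt-5.14-sites-sheafHom}) is sent to a quasi-isomorphism, hence homotopy equivalence, in $\KK$ by $\Gamma=\sigma_*$ via Proposition~\ref{p:spalt-5.15-sites}. You additionally spell out the enhancement check that the paper leaves to the reader, and your reduction to the $\KK$-naturality square for $\iotaii$ applied to $\sheafHom(\iotaii_E,\id_{\ii F})$ is the right way to do it.
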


\begin{proof}
  For $I, J \in \II(\mathcal{X})$ the morphism
  $\sheafHom(I,J) \ra \ii\sheafHom(I,J)$ is a quasi-isomorphism
  between weakly h-injective complexes, by
  Proposition~\ref{p:spalt-5.14-sites-sheafHom}, so that we can
  apply Proposition~\ref{p:spalt-5.15-sites}.
  The rest of the proof is left to the
  reader.
\end{proof}

\begin{lemma}
  \label{l:pull-tensor}
  Let
  $\alpha \colon (\Sh(\mathcal{Y}), \mathcal{O}_\mathcal{Y}) \ra
  (\Sh(\mathcal{X}), \mathcal{O}_\mathcal{X})$
  be a morphism of $\kk$-ringed topoi. 
  Then the zig-zag
  \begin{multline*}
    \ul\alpha^* (- \ul\otimes -)
    =
    \ii\alpha^*\ee\ii(\ee(-) \otimes \ee(-))
    \xla{[\sim]}
    \ii\alpha^*\ee(\ee(-) \otimes \ee(-))
    \xra{[\sim]}
    \ii\alpha^*(\ee(-) \otimes \ee(-))\\
    \xra{\sim}
    \ii(\alpha^*\ee(-) \otimes \alpha^*\ee(-))
    \xla{[\sim]}
    \ii(\ee \alpha^*\ee(-) \otimes \ee \alpha^*\ee(-))
    \\
    \xra{[\sim]}
    \ii(\ee\ii\alpha^*\ee(-) \otimes \ee\ii\alpha^*\ee(-))
    = 
    (\ul\alpha^* -) \ul\otimes 
    (\ul\alpha^* -))
  \end{multline*}
  of objectwise homotopy equivalences in $\tildew{\ENH}_\kk$ defines
  a 2-isomorphism
  \begin{equation}
    \label{eq:ul-alpha^*-otimes}
    \ul\alpha^* (- \ul\otimes -)
    \sira
    (\ul\alpha^* -) \ul\otimes 
    (\ul\alpha^* -) 
  \end{equation}
  in $\ENH_\kk$ that enhances
  $\dL \alpha^*(- \otimes^\dL -) \sira \dL \alpha^*(-)
  \otimes^\dL \dL \alpha^*(-)$ (cf.\
  \cite[Prop.~3.2.4.(i)]{lipman-notes-on-derived-functors-and-GD}
  for 
  ringed spaces).
\end{lemma}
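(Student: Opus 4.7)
The plan is to verify the lemma in two stages: first show that each arrow in the displayed zig-zag is an objectwise homotopy equivalence (so the zig-zag genuinely represents a $2$-isomorphism in $\ENH_\kk$ via Remark~\ref{rem:zig-zags-define-2-morph-in-ENH}); then check that this $2$-isomorphism enhances the usual derived pullback-of-tensor isomorphism in the sense of Definition~\ref{d:enhance-2-morphism}.

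For the first stage, fix $I,J \in \ul\II(\mathcal{X})$. The object $\ee I \otimes \ee J$ is h-flat, being the tensor of h-flat objects. The arrow $\ii\alpha^*\ee(\ee I\otimes\ee J) \la \ii\alpha^*\ee\ii(\ee I\otimes\ee J)$ is built from $\ee\iotaii_{\ee I\otimes\ee J}$; since $\iotaii$ is a quasi-isomorphism, $\ee$ turns it into a quasi-isomorphism between h-flat objects, $\alpha^*$ preserves this by Proposition~\ref{p:pullback-preserves-h-flat}, and $\ii$ turns it into a homotopy equivalence by Remark~\ref{rem:ii-maps-qisos-to-htpy-equis}. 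The same reasoning, applied to $\epsilonee_{\ee I\otimes\ee J}$, handles the next arrow. The middle arrow $\ii\alpha^*(\ee I\otimes\ee J) \xra{\sim} \ii(\alpha^*\ee I\otimes\alpha^*\ee J)$ is induced by the standard strong-monoidal isomorphism for $\alpha^*$ and is an honest isomorphism of functors. The remaining two arrows are treated analogously: $\alpha^*\ee(-)$ and $\ee\alpha^*\ee(-)$ are both h-flat (again by Proposition~\ref{p:pullback-preserves-h-flat}), the maps $\epsilonee$ and $\ee\iotaii$ are quasi-isomorphisms between h-flat complexes, tensoring two such quasi-isomorphisms preserves the h-flatness and quasi-isomorphism property, and $\ii$ promotes the result to a homotopy equivalence.

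For the second stage, I would invoke Remark~\ref{rem:enhance-2-morphism-compose-and-invert} to break the enhancement check into pieces corresponding to the individual arrows of the zig-zag. Each left-pointing arrow $\xla{[\sim]}$ is built from $\iotaii$ or $\epsilonee$ of objects whose passage through $[-]$ already appears in the constructions of $\omega_\otimes$ and $\omega_{\alpha^*}$ in the proof of Proposition~\ref{p:canonical-isotrafos}; the right-pointing arrows are either the canonical $\alpha^*(- \otimes -) \sira \alpha^*(-)\otimes\alpha^*(-)$ or further instances of $\iotaii$, $\epsilonee$. Concretely, taking the convenient choices of $\bL\alpha^*$ and $\otimes^{\bL}$ from Proposition~\ref{p:ul-alpha^*-as-derived-functor} and the proof of Proposition~\ref{p:canonical}, the composed zig-zag, evaluated at $(I,J)$ and passed through $[-]\ol{[\ii]}$, reduces via Lemma~\ref{eq:l-iotaii-ii-vs-ii-iotaii} and the commutativity of the obvious diagrams relating $\iotaii$, $\epsilonee$, $\alpha^*$, and $\otimes$ to the canonical isomorphism $\bL\alpha^*(I \otimes^{\bL} J) \sira \bL\alpha^*(I) \otimes^{\bL} \bL\alpha^*(J)$. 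The verification is essentially a large commutative diagram in $[\ul\C(\mathcal{Y})]$ whose cells commute either by naturality of $\iotaii$ and $\epsilonee$, by functoriality of the strong monoidal isomorphism of $\alpha^*$, or by Lemma~\ref{eq:l-iotaii-ii-vs-ii-iotaii}.

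I expect the main obstacle to be purely bookkeeping: the zig-zag has six arrows, two $\omega$'s enter the enhancement check (each itself obtained by inverting one leg of a zig-zag), and the target isomorphism $\bL\alpha^*(- \otimes^{\bL} -) \sira \bL\alpha^*(-) \otimes^{\bL} \bL\alpha^*(-)$ is traditionally defined through a choice of h-flat resolutions that does not literally match the one employed here. To streamline the verification I would, as in the proof of Proposition~\ref{p:adjunction-morphisms-push-pull}, exploit the freedom in choosing the derived functors to arrange that $\omega_\otimes$ and $\omega_{\alpha^*}$ become the identity on the relevant inputs; then the enhancement equation reduces to the commutativity of a diagram in $[\ul\II(\mathcal{Y})]$ whose cells are either naturality squares for $\iotaii$ and $\epsilonee$ or the defining compatibility of $\alpha^*$ with $\otimes$, closed up by an application of Lemma~\ref{eq:l-iotaii-ii-vs-ii-iotaii} to identify $[\iotaii_{\ii F}]$ with $[\ii\iotaii_F]$ wherever both appear.
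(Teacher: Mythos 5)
Your proposal is correct and follows exactly the route the paper intends: the paper's own proof consists of the single remark that it ``uses Proposition~\ref{p:pullback-preserves-h-flat} and is left to the reader,'' and your two-stage verification (objectwise homotopy equivalences via h-flatness preservation and Remark~\ref{rem:ii-maps-qisos-to-htpy-equis}, then the enhancement check via the explicit descriptions of $\omega_{\alpha^*}$, $\omega_\otimes$ and Lemma~\ref{eq:l-iotaii-ii-vs-ii-iotaii}) is precisely the pattern the paper carries out in detail for the analogous statements such as Propositions~\ref{p:adjunction-morphisms-push-pull} and \ref{p:enhanced-pull-push-sheafHom}. No gaps; you have simply written out what the author left implicit.
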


\begin{proof}
  The proof uses Proposition~\ref{p:pullback-preserves-h-flat}
  and is left to the reader.
\end{proof}

\fussnote{
  can maybe also generalize notion of pseudofunctor of
  monoidal cats (oder wie das bei Lipman heisst) zu meiner
  $\kk$-linearen 2-Multikategorie. Er verlangt aber glaube ich
  strikte Identit\"at.

  Kompatibiltiaet mit assoziator, recht- und links einheiten,
  symmetrie-swap. 
}

\begin{proposition}
  \label{p:enhanced-pull-push-sheafHom}
  Let
  $\alpha \colon (\Sh(\mathcal{Y}), \mathcal{O}_\mathcal{Y}) \ra
  (\Sh(\mathcal{X}), \mathcal{O}_\mathcal{X})$
  be a morphism of $\kk$-ringed topoi. Then the zig-zag
  \begin{multline}
    \label{eq:enhanced-pull-push-sheafHom-zig-zag}
    \ul\alpha_* \ul\sheafHom(\ul\alpha^* -, -)
    = 
    \ii \alpha_* \ii\sheafHom(\ii \alpha^*\ee(-), -)
    \xla{[\sim]}
    \ii \alpha_* \sheafHom(\ii \alpha^*\ee(-), -)
    \\
    \xra{[\sim]}
    \ii \alpha_* \sheafHom(\alpha^*\ee(-), -)
    \xra{\sim}
    \ii \sheafHom(\ee(-), \alpha_*(-))
    \xra{[\sim]}
    \ii \sheafHom(\ee(-), \ii\alpha_*(-))
    \\
    \xla{[\sim]}
    \ii \sheafHom(-, \ii\alpha_*(-))
    =
    \ul\sheafHom(-, \ul\alpha_*-)
  \end{multline}
  of objectwise homotopy equivalences
  in $\tildew{\ENH}_\kk$ defines a 2-isomorphism
  \begin{equation}
    \label{eq:pull-push-sheafHom-ENH}
    \ul\alpha_* \ul\sheafHom(\ul\alpha^* -, -)
    \sira
    \ul\sheafHom(-, \ul\alpha_*-)
  \end{equation}
  in $\ENH_\kk$ that enhances 
  $\dR \alpha_* \dR \sheafHom(\dL
  \alpha^* -,-) \sira \dR \sheafHom(-, \dR \alpha_* -)$.
\end{proposition}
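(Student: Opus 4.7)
The plan is to prove the claim in three stages: (i) verify that each arrow marked $[\sim]$ in \eqref{eq:enhanced-pull-push-sheafHom-zig-zag} is an objectwise homotopy equivalence; (ii) identify the unlabeled middle arrow $\xra{\sim}$ as coming from a strict natural isomorphism of $\KK$-functors; (iii) check the enhancement condition of Definition~\ref{d:enhance-2-morphism}. Once (i) and (ii) are done, Remark~\ref{rem:zig-zags-define-2-morph-in-ENH} immediately yields a well-defined 2-isomorphism \eqref{eq:pull-push-sheafHom-ENH} in $\ENH_\kk$.

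For (i), fix $I \in \ul\II(\mathcal{X})$ and $J \in \ul\II(\mathcal{Y})$. The first left-pointing arrow is $\alpha_*$ applied to $\iotaii_{\sheafHom(\ii\alpha^*\ee I, J)}$, then $\ii$. Since $J$ is h-injective, $\sheafHom(\ii\alpha^*\ee I, J)$ is weakly h-injective by Proposition~\ref{p:spalt-5.14-sites-sheafHom}, so $\iotaii$ is a quasi-isomorphism between weakly h-injective complexes; Proposition~\ref{p:spalt-5.15-sites} preserves this under $\alpha_*$, and Remark~\ref{rem:ii-maps-qisos-to-htpy-equis} turns the result into a homotopy equivalence under $\ii$. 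The second arrow uses the quasi-isomorphism $\alpha^*\ee I \xra{\iotaii} \ii\alpha^*\ee I$ between h-flat complexes (Proposition~\ref{p:pullback-preserves-h-flat}); since $J$ is h-injective, Proposition~\ref{p:spalt-5.20-sites-sheafHom} ensures $\sheafHom(\iotaii_{\alpha^*\ee I}, J)$ is a quasi-isomorphism, and the remaining weakly h-injective target/source combined with Propositions~\ref{p:spalt-5.15-sites} and Remark~\ref{rem:ii-maps-qisos-to-htpy-equis} complete the argument. The same pattern (with $\ee I \xra{\iotaii} \ii\ee I$ between h-flat objects, plus $\iotaii_{\alpha_* J}$ between weakly h-injective objects) handles the remaining three arrows.

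For (ii), the middle arrow is the $\KK$-natural isomorphism
\begin{equation}
\alpha_*\sheafHom(\alpha^* F, G) \sira \sheafHom(F, \alpha_* G),
\end{equation}
the sheafified adjunction induced by the $(\alpha^*, \alpha_*)$-adjunction and the tensor-hom adjunction on $\mathcal{X}$, instantiated at $F=\ee(-)$, $G=(-)$ and composed with $\ii$. It is a strict isomorphism of $\KK$-functors, hence a 2-isomorphism in $\tildew\ENH_\kk$.

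For (iii), the triangulated isomorphism $\bR\alpha_*\bR\sheafHom(\bL\alpha^*-,-) \sira \bR\sheafHom(-,\bR\alpha_*-)$ is the derived avatar of the same strict adjunction isomorphism above; by the universal property of derived functors, it is characterized by being the unique isomorphism of triangulated $\kk$-bifunctors whose evaluation on acyclic-model-computing input (objects of the form $\alpha^*\ee I$ resp.\ $\alpha_* J$) agrees with the strict adjunction. Using the explicit descriptions of $\omega_\sheafHom$, $\omega_{\alpha^*}$, $\omega_{\alpha_*}$ from the proof of Proposition~\ref{p:canonical-isotrafos}, the diagram \eqref{eq:sigma-omega=omega-tau-diagram} evaluated at $(I, J)$ reduces to a diagram in $[\ul\II(\mathcal{X})]$ whose horizontal composite along the top follows the zig-zag \eqref{eq:enhanced-pull-push-sheafHom-zig-zag} and whose vertical sides insert the $\iotaii$- and $\epsilonee$-resolutions defining the $\omega$'s; commutativity then follows from naturality of $\iotaii$, $\epsilonee$, and the adjunction isomorphism, together with Lemma~\ref{eq:l-iotaii-ii-vs-ii-iotaii} to identify $[\iotaii_{\ii F}]$ with $[\ii\iotaii_F]$ where needed.

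The hard part will be step (iii): the enhancement check is a diagram chase of the same flavor as (but more intricate than) the one comparing \eqref{eq:eval-ii-eta} and \eqref{eq:eval-omeage-rho-ii} in the proof of Proposition~\ref{p:adjunction-morphisms-push-pull}, since here three distinct $\omega$'s interact with a composite zig-zag of six elementary arrows. The key organizing principle is that the zig-zag has been constructed precisely so that cancelling the canonical quasi-isomorphisms that define the $\omega$'s leaves the bare strict adjunction $\alpha_*\sheafHom(\alpha^* -,-) \cong \sheafHom(-,\alpha_* -)$, which is the very isomorphism whose derived functor is the target triangulated 2-isomorphism.
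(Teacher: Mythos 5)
Your proposal matches the paper's proof in both structure and substance: the paper likewise evaluates the zig-zag at $(I,J)$ with $I \in \ul\II(\mathcal{X})$, $J \in \ul\II(\mathcal{Y})$, verifies each labeled arrow is a homotopy equivalence using Propositions~\ref{p:spalt-5.14-sites-sheafHom}, \ref{p:spalt-5.15-sites}, \ref{p:spalt-5.20-sites-sheafHom} and Corollary~\ref{c:spalt-5.20-sites-sheafHom}, identifies the middle arrow as the usual adjunction isomorphism, and leaves the enhancement check to the reader (so your step (iii) is actually more explicit than the paper). One cosmetic slip: the final left-pointing arrow is induced by $\epsilonee_I \colon \ee I \ra I$ in the first argument of $\sheafHom$, not by $\iotaii_{\ee I}$, but the tools you cite handle it identically.
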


\fussnote{
  relation to 
  Proposition~\eqref {p:adjunction-*-pull-push-in-ENH}?
  same with next lemma, Hom statt sheafHom.
}

\begin{proof}
  Let $I \in \ul\II(\mathcal{X})$ and $J \in \ul\II(\mathcal{Y})$
  and evaluate the zig-zag
  \eqref{eq:enhanced-pull-push-sheafHom-zig-zag} at $(I,J)$. 
  Then the first arrow is a homotopy equivalence because
  $L:=\sheafHom(\ii\alpha^*\ee I, J)$ is weakly h-injective by
  Proposition~\ref{p:spalt-5.14-sites-sheafHom}, so that we can
  apply
  Proposition~\ref{p:spalt-5.15-sites}
  to $L \xra{\iotaii} \ii L$. The second arrow is a
  homotopy equivalence because
  $\sheafHom(\ii\alpha^*\ee I, J) \ra \sheafHom(\alpha^*\ee I,
  J)$
  is a quasi-isomorphism between weakly h-injective complexes, by
  Propositions~\ref{p:spalt-5.20-sites-sheafHom}.\ref{enum:sheafHom-to-h-inj} 
  and
  Proposition~\ref{p:spalt-5.14-sites-sheafHom},
  so that we can
  apply
  Proposition~\ref{p:spalt-5.15-sites}.
  The third arrow 
  comes from the usual
  isomorphism.
  The fourth arrow
  is a homotopy equivalence by
  Proposition~\ref{p:spalt-5.15-sites}
  and
  Corollary~\ref{c:spalt-5.20-sites-sheafHom}.
  Proposition~\ref{p:spalt-5.20-sites-sheafHom}.\ref{enum:sheafHom-to-h-inj} 
  shows that the last arrow is a homotopy equivalence.
  The rest of the proof is left to the reader.
\end{proof}

\fussnote{
  Fussnote bei ``The third arrow 
  comes from the usual''
  war:
  
  reference? Yoneda und vorher bewiesene
  pull-push Adjunktion
  \citestacks{03D7},
  tensor-sheafHom Adjunktion \citestacks{03EO}
  und \citestacks{03EL}. 
  
  existiert das als individueller Eintrag im Stack Project?
}

\begin{proposition}
  \label{p:enhanced-tensor-sheafhom}
  Let $(\mathcal{X}, \mathcal{O}_\mathcal{X})$ be a $\kk$-ringed
  site. Then the zig-zag
  \begin{multline}
    \label{eq:enhanced-tensor-sheafhom-zigzag}
    \ul\sheafHom(- \ul\otimes -, -)
    =
    \ii\sheafHom(\ii(\ee(-) \otimes \ee(-)), -)
    \\
    \xra{[\sim]}
    \ii\sheafHom(\ee(-) \otimes \ee(-), -)
    \xla{[\sim]}
    \ii\sheafHom(\ee(-) \otimes -, -)
    \\
    \sira
    \ii\sheafHom(\ee(-), \sheafHom(-, -))
    \xra{[\sim]}
    \ii\sheafHom(\ee(-), \ii\sheafHom(-, -))
    \\
    \xla{[\sim]}
    \ii\sheafHom(-, \ii\sheafHom(-,-))
    =
    \ul\sheafHom(-, \ul\sheafHom(-,-))
  \end{multline}
  of objectwise homotopy equivalences in $\tildew{\ENH}_\kk$ defines a
  2-isomorphism
  \begin{equation}
    \label{eq:48}
    \ul\sheafHom(- \ul\otimes -, -)
    \sira
    \ul\sheafHom(-, \ul\sheafHom(-,-))
  \end{equation}
  in $\ENH_\kk$ that enhances
  $\dR\sheafHom(- \otimes^\dL
  -,-) 
  \sira \dR\sheafHom(-, \dR\sheafHom(-,-))$.
\end{proposition}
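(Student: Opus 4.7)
The plan is to verify, one by one, that each of the five arrows labelled $[\sim]$ in \eqref{eq:enhanced-tensor-sheafhom-zigzag} is an objectwise homotopy equivalence in $\tildew{\ENH}_\kk$. Once this is established, Remark~\ref{rem:zig-zags-define-2-morph-in-ENH} produces the 2-isomorphism~\eqref{eq:48} in $\ENH_\kk$. The remaining task is to check that~\eqref{eq:48} enhances the classical tensor-sheafHom adjunction.

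For the five arrows, evaluate at a triple $(I_1, I_2, I_3) \in \ul\II(\mathcal{X})^\opp \otimes \ul\II(\mathcal{X})^\opp \otimes \ul\II(\mathcal{X})$. The first arrow is $\ii$ applied to $\sheafHom(\iotaii_{\ee I_1 \otimes \ee I_2}, \id_{I_3})$; since $\iotaii$ is a quasi-isomorphism and $I_3$ is h-injective, this is a quasi-isomorphism between weakly h-injective complexes by Propositions~\ref{p:spalt-5.14-sites-sheafHom} and \ref{p:spalt-5.20-sites-sheafHom}, hence becomes a homotopy equivalence after $\ii$ (Remark~\ref{rem:ii-maps-qisos-to-htpy-equis}). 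The second arrow is $\ii$ applied to $\sheafHom(\ee I_1 \otimes \epsilonee_{I_2}, \id_{I_3})$, where $\epsilonee_{I_2} \colon \ee I_2 \ra I_2$ is a quasi-isomorphism between h-flat complexes (both $I_2$, viewed as h-injective, and $\ee I_2$ need not be h-flat themselves in general --- but $\ee I_1 \otimes \epsilonee_{I_2}$ is a quasi-isomorphism between $\ee I_1 \otimes \ee I_2$ and $\ee I_1 \otimes I_2$, with $\ee I_1$ h-flat, so the source is h-flat); applying $\sheafHom(-, I_3)$ with $I_3$ h-injective gives a quasi-isomorphism between weakly h-injective complexes (again via Propositions~\ref{p:spalt-5.14-sites-sheafHom}, \ref{p:spalt-5.20-sites-sheafHom}), so $\ii$ converts it to a homotopy equivalence. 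The middle unlabelled isomorphism is simply $\ii$ applied to the usual (underived) tensor-sheafHom adjunction isomorphism. The fourth arrow uses $\iotaii$ in the second argument, and the fifth uses $\epsilonee$ in the first; both are treated exactly as above.

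For the enhancement property, we need to check that the composite 2-isomorphism \eqref{eq:48} and the standard isomorphism $\bR\sheafHom(- \otimes^\bL -, -) \sira \bR\sheafHom(-, \bR\sheafHom(-,-))$ agree after applying~$\ol{[\ii]}$ modulo the canonical 2-isomorphisms~$\omega_\otimes$ and~$\omega_\sheafHom$ from Proposition~\ref{p:canonical-isotrafos}. Inserting the explicit form of $\omega_\otimes$ (built from $\iotaii$ and $\epsilonee$) and $\omega_\sheafHom$ (built from $\iotaii$) into the diagram \eqref{eq:sigma-omega=omega-tau-diagram}, one reduces the check to commutativity of a diagram in $[\ul\C(\mathcal{X})]$ whose arrows are all induced by $\iotaii$, $\epsilonee$, and the underived tensor-sheafHom adjunction; this commutativity is a routine diagram chase (in the same spirit as in the proofs of Propositions~\ref{p:adjunction-morphisms-push-pull} and~\ref{p:enhanced-pull-push-sheafHom}) using Lemma~\ref{eq:l-iotaii-ii-vs-ii-iotaii} to replace $\iotaii_{\ii F}$ by $\ii\iotaii_F$ where needed.

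The main obstacle is bookkeeping: the zig-zag has five $[\sim]$-arrows and one non-trivial associativity/adjunction isomorphism, so the diagram required to verify enhancement has many cells. I expect no conceptual difficulty beyond the fact that one must keep track of where $\iotaii$ and $\epsilonee$ are inserted; in particular, one needs to know that, for $I_3$ h-injective, $\ii \sheafHom(F, I_3)$ behaves functorially as a right derived $\bR\sheafHom(F,-)$ on $F \in \ul\C(\mathcal{X})_\hflat$, which is exactly what Propositions~\ref{p:spalt-5.14-sites-sheafHom}, \ref{p:spalt-5.20-sites-sheafHom} (and Corollary~\ref{c:spalt-5.20-sites-sheafHom}) deliver.
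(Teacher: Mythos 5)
Your proposal is correct and follows essentially the same route as the paper's proof: one verifies that each of the five $[\sim]$-arrows is an objectwise homotopy equivalence using Propositions~\ref{p:spalt-5.14-sites-sheafHom}, \ref{p:spalt-5.20-sites-sheafHom} and Corollary~\ref{c:spalt-5.20-sites-sheafHom} (the paper cites exactly these and then leaves the enhancement check, which you outline via $\omega_\otimes$, $\omega_\sheafHom$ and a diagram chase, to the reader). One small factual slip in your treatment of the second arrow: $\ee I_2$ \emph{is} h-flat, being $\EE$-cofibrant (Lemma~\ref{l:E-cofibrant-(pullback)-h-flat}), though this does not matter since, as you correctly note, only the h-flatness of $\ee I_1$ is needed to see that $\ee I_1 \otimes \epsilonee_{I_2}$ is a quasi-isomorphism.
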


\begin{proof}
  Let $I,J,L \in \ul\II(\mathcal{X})$ and evaluate the zig-zag
  \eqref{eq:enhanced-tensor-sheafhom-zigzag} at $(I,J,L)$.  The
  first two arrows are homotopy equivalences by
  Proposition~\ref{p:spalt-5.20-sites-sheafHom}.\ref{enum:sheafHom-to-h-inj}
  and h-flatness of $\ee I$, the third arrow is the usual
  isomorphism, the fourth arrow is a homotopy equivalence by
  Proposition~\ref{p:spalt-5.14-sites-sheafHom} and
  Corollary~\ref{c:spalt-5.20-sites-sheafHom}, and the fifth
  arrow is a homotopy equivalence by
  Proposition~\ref{p:spalt-5.20-sites-sheafHom}.\ref{enum:sheafHom-to-h-inj}.
  We leave the rest of the proof to the reader.
\end{proof}

\fussnote{
  could provide morphisms $\ul\sheafHom(E, -) \ul\otimes E \ra
  \id$ and $\id \ra \ul\sheafHom(E, (-) \ul\otimes E)$ and show
  that they form an adjunction. Also could clarify relation to
  above 2-isomorphisms
  \eqref{eq:28}
  or \eqref{eq:28}.
}

\subsubsection{Lifts of commutative diagrams}
\label{sec:lifts-comm-diagr}

We indicate some relations between the 2-morphisms we have
constructed so far. 

\begin{proposition}
  \label{p:adjunction-*-pull-push-in-ENH}
  Let
  $\alpha \colon (\Sh(\mathcal{Y}), \mathcal{O}_\mathcal{Y}) \ra
  (\Sh(\mathcal{X}), \mathcal{O}_\mathcal{X})$
  be a morphism of $\kk$-ringed topoi.
  Then the 
  two 1-morphisms $\ul\alpha^*$ and $\ul\alpha_*$ and the
  two 2-morphisms 
  \eqref{eq:id-aa}
  and 
  \eqref{eq:aa-id}
  form an adjunction in $\ENH_\kk$,
  i.\,e.\ the two diagrams
  in $\ENH_\kk$
  in \eqref{eq:intro:alpha*-triangles}
  commute.
\end{proposition}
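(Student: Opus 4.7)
The statement to prove is an equality of 2-morphisms in $\ENH_\kk$: the two triangle composites
$\ul\alpha_* \xra{\eqref{eq:id-aa}\ul\alpha_*} \ul\alpha_*\ul\alpha^*\ul\alpha_* \xra{\ul\alpha_*\eqref{eq:aa-id}} \ul\alpha_*$
and
$\ul\alpha^* \xra{\ul\alpha^*\eqref{eq:id-aa}} \ul\alpha^*\ul\alpha_*\ul\alpha^* \xra{\eqref{eq:aa-id}\ul\alpha^*} \ul\alpha^*$
must coincide with the identity 2-morphisms. My plan is to concatenate the objectwise zig-zag representatives of the two 2-morphisms produced in Proposition~\ref{p:adjunction-morphisms-push-pull} and to reduce the resulting long zig-zag, using the triangle identity $\id_{\alpha_*}=(\alpha_*\theta)\comp(\eta\alpha_*)$ for the underlying adjunction $(\alpha^*,\alpha_*)$ and the naturality of $\iotaii$ and $\epsilonee$, to a zig-zag representing the identity in $\ENH_\kk$.

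Concretely, for the first triangle I would evaluate at $J \in \ul\II(\mathcal{Y})$. Whiskering \eqref{eq:id-aa-construct} by $\ul\alpha_*$ on the right and whiskering \eqref{eq:aa-id-construct} by $\ul\alpha_*=\ii\alpha_*$ on the left, and then concatenating at the common point $\ii\alpha_*\ii\alpha^*\ee\ii\alpha_* J$, yields a zig-zag from $\ii\alpha_* J$ to $\ii\alpha_* J$ whose leftward arrows (inserted applications of $\iotaii$ and $\epsilonee$) are each quasi-isomorphisms between h-injective complexes, hence objectwise homotopy equivalences, so that the zig-zag defines a 2-morphism $\ul\alpha_*\ra\ul\alpha_*$ in $\ENH_\kk$ by Remark~\ref{rem:zig-zags-define-2-morph-in-ENH}. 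The strategy is then: use naturality of $\iotaii$ to commute $\ii\alpha_*\iotaii_{\alpha^*\ee\ii\alpha_* J}$ past $\ii\alpha_*\ii\alpha^*\ee\iotaii_{\alpha_* J}$ in reverse direction so that two interior arrows cancel; use Lemma~\ref{eq:l-iotaii-ii-vs-ii-iotaii} (the equality $[\iotaii_{\ii F}]=[\ii\iotaii_F]$) to identify pairs of inserted $\iotaii$'s that then cancel against the adjacent $\ii\epsilonee$'s; and finally use the strict triangle identity $(\alpha_*\theta_J)\comp(\eta_{\alpha_* J})=\id_{\alpha_* J}$ of the underlying adjunction $(\alpha^*,\alpha_*)$ to collapse the middle of the zig-zag. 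What remains is a short zig-zag of the form $\ii\alpha_* J \xra{\iotaii}\ii\ii\alpha_* J \xla{[\sim]} \ii\alpha_* J$ which, again by Lemma~\ref{eq:l-iotaii-ii-vs-ii-iotaii} applied in the same manner as in the proof of Proposition~\ref{p:adjunction-morphisms-push-pull}, defines the identity 2-morphism in $\ENH_\kk$. The second triangle identity is handled by a completely parallel computation after evaluation at $I \in \ul\II(\mathcal{X})$, with the roles of $\alpha^*$ and $\alpha_*$ swapped.

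To promote the objectwise verification to an equality of 2-morphisms in $\ENH_\kk$, I would invoke Corollary~\ref{c:localization-wrt-obj-htpy-equ} together with Remark~\ref{rem:defining-morphisms}: two zig-zags of dg $\kk$-natural transformations with leftward arrows objectwise h.e.\ determine the same 2-morphism in $\ENH_\kk$ provided they become equal in the model-categorical localization $\L_{\mathcal{W}}\DGCAT_\kk(\ul\II(\mathcal{Y}),\ul\C(\mathcal{X}))$ (resp.\ with $\mathcal{Y},\mathcal{X}$ interchanged). All the rewriting steps above are instances of strictly commuting squares in $\DGCAT_\kk$ (naturality) or strict equalities of natural transformations (the triangle identity for $(\alpha^*,\alpha_*)$), so they already hold before localization; only Lemma~\ref{eq:l-iotaii-ii-vs-ii-iotaii}, which requires passing to the homotopy category of a single mapping complex, needs the inversion of objectwise homotopy equivalences, and this is exactly what the localization provides.

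The main obstacle is the bookkeeping: the concatenated zig-zag has nine terms and mixes applications of $\ii$, $\ee$, $\alpha^*$, $\alpha_*$ at several nested depths, so identifying the correct pairs of $\iotaii$'s and $\epsilonee$'s to cancel (and choosing the right order in which to commute them past one another) is delicate. The trickiest step will be the reduction around the application of the strict triangle identity for $(\alpha^*,\alpha_*)$, because before this identity can be applied one must first move a factor of $\iotaii_{\alpha_* J}$ across $\alpha^*\ee$ using a chain of naturality squares that are only commutative after passing to $\ii$; the preceding objectwise-h.e.\ argument, together with Lemma~\ref{eq:l-iotaii-ii-vs-ii-iotaii} used exactly in the spirit of the proof of Proposition~\ref{p:adjunction-morphisms-push-pull}, is what makes these moves legal at the level of $\ENH_\kk$.
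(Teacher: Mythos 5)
Your overall strategy is the same as the paper's: concatenate the objectwise zig-zags \eqref{eq:id-aa-construct} and \eqref{eq:aa-id-construct} after whiskering, reduce the long zig-zag using naturality of $\iotaii$ and $\epsilonee$ together with the strict triangle identity for the underlying adjunction $(\alpha^*,\alpha_*)$, and end up with a residual short zig-zag of the form $\ii\alpha_* J \ra \ii\ii\alpha_* J \la \ii\alpha_* J$ whose two legs are $\iotaii_{\ii\alpha_* J}$ and $\ii\iotaii_{\alpha_* J}$. The paper organizes the bookkeeping as one large commutative diagram in $\II(\mathcal{X})$ rather than as a step-by-step rewriting, but the content is identical up to that final step.

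There is, however, a genuine gap at that final step. You propose to conclude with Lemma~\ref{eq:l-iotaii-ii-vs-ii-iotaii}, which only asserts $[\iotaii_{\ii F}]=[\ii\iotaii_F]$ in $[\ul\C(\mathcal{X})]$, i.e.\ that the two dg natural transformations $\iotaii\ii$ and $\ii\iotaii$ agree \emph{objectwise up to homotopy}. That is exactly what is needed in the proof of Proposition~\ref{p:adjunction-morphisms-push-pull}, where the goal is an identity of 2-morphisms in $\TRCAT_\kk$ (i.e.\ after applying $[-]$), and a 2-morphism there is determined by its values on objects. But the present statement is an equality of 2-morphisms in $\ENH_\kk$, and the functor $[-]\colon \ENH_\kk \ra \TRCAT_\kk$ is not faithful on 2-morphisms: two natural transformations that are objectwise homotopic need not become equal in the $\kk$-localization $\L_{\op{ohe}}\DGCAT_\kk(\ul\II(\mathcal{Y}),\ul\II(\mathcal{X}))$ — that would require a left/right homotopy in the projective model structure on the functor category, not a family of unrelated homotopies. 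What you actually need is the stronger statement $\delta(\ii\iotaii\alpha_*)=\delta(\iotaii\ii\alpha_*)$ as 2-morphisms in $\ENH_\kk$; this is precisely \eqref{eq:phi-ii-ii-id-corollar} of Proposition~\ref{p:compare-K-enri-I-fibr-repl-functors}, whose proof passes through the zig-zag via $F$ itself in the larger localization $\L_\mathcal{W}\DGCAT_\kk(\ulms{A},\ulms{M})$ and uses the full faithfulness of Corollary~\ref{c:localization-wrt-obj-htpy-equ}. Replacing your appeal to Lemma~\ref{eq:l-iotaii-ii-vs-ii-iotaii} by an appeal to \eqref{eq:phi-ii-ii-id-corollar} closes the gap and recovers the paper's argument.
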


\fussnote{
  Sicherlich kann ich analoge Resultate f\"ur andere Adjunktionen
  zeigen...!? 
}

\fussnote{
  Hat diese Adjunktion andere wichtige Konsequenzen? Lohnt es,
  andere Aussagen dieser Art zu beweisen?
}

\begin{proof}
  Let 
  $\ul\eta \colon \id \ra \ul\alpha_* \ul\alpha^*$
  and
  $\ul\theta \colon \ul\alpha^* \ul\alpha_* \ra \id$ 
  denote 
  \eqref{eq:id-aa}
  and
  \eqref{eq:aa-id}, respectively.
  We claim that the composition
  \begin{equation}
    \label{eq:eta-alpha_*-alpha_*-theta}
    \ul\alpha_*
    \xra{\ul\eta\,\ul\alpha_*} 
    \ul\alpha_* \ul\alpha^* 
    \ul\alpha_* 
    \xra{\ul\alpha_* \ul\theta}
    \ul\alpha_*
  \end{equation}
  of 2-morphisms in $\ENH_\kk$ is the identity 2-morphism.

  Let $J \in \ul\II(\mathcal{Y})$ and consider the commutative
  diagram 
  \begin{equation}
    \label{eq:eta-alpha_*-alpha_*-theta-big-diagram}
    \xymatrix@C+3em@R+1ex{
      {\ii \ee \ii\alpha_* J} 
      \ar@[red][dd]|(.3){\text{red}}^-{\ii \epsilonee_{\ii \alpha_* J}}_-{[\sim]}
      \ar@[red][r]|(.3){\text{red}}^-{\ii \eta_{\ee\ii\alpha_* J}}
      &
      {\ii \alpha_* \alpha^*\ee \ii\alpha_* J} 
      \ar@[red][r]|(.35){\text{red}}^-{\ii \alpha_* \iotaii_{\alpha^*\ee\ii\alpha_* J}}
      &
      {\ii \alpha_* \ii \alpha^*\ee \ii\alpha_* J} 
      \\
      &
      {\ii \ee \alpha_* J} 
      \ar[ul]_-{\ii \ee \iotaii_{\alpha_* J}}^-{[\sim]}
      \ar[r]^-{\ii \eta_{\ee\alpha_* J}}
      \ar[dd]^-{\ii \epsilonee_{\alpha_* J}}_-{[\sim]}
      &
      {\ii \alpha_* \alpha^* \ee \alpha_*J} 
      \ar[ul]_-{\ii \alpha_* \alpha^* \ee \iotaii_{\alpha_* J}}
      \ar[r]^-{\ii \alpha_* \iotaii_{\alpha^*\ee\alpha_* J}}
      \ar[dd]^-{\ii \alpha_* \alpha^* \epsilonee_{\alpha_* J}}
      &
      {\ii \alpha_* \ii \alpha^* \ee \alpha_*J} 
      \ar@[red][ul]|(.4){\text{red}}_-{\ii \alpha_* \ii \alpha^* \ee
        \iotaii_{\alpha_* J}}^(0.7){[\sim]}
      \ar@[red][dd]|(.3){\text{red}}^-{\ii \alpha_* \ii \alpha^*
        \epsilonee_{\alpha_* J}}
      \\
      {\ii\ii \alpha_* J}
      \\
      &
      {\ii \alpha_* J} 
      \ar@[green][lu]|(.3){\text{green}}_-{\ii \iotaii_{\alpha_* J}}^-{[\sim]}
      \ar[r]^-{\ii \eta_{\alpha_* J}}
      \ar[rdd]^-{\id}
      &
      {\ii \alpha_* \alpha^* \alpha_*J} 
      \ar[r]^-{\ii \alpha_* \iotaii_{\alpha^*\alpha_* J}}
      \ar[dd]^-{\ii \alpha_* \theta_J}
      &
      {\ii \alpha_* \ii \alpha^* \alpha_*J} 
      \ar@[red][dd]|(.3){\text{red}}^-{\ii \alpha_* \ii \theta_J}
      \\
      {\ii \alpha_* J}
      \ar@[blue][uu]|(.3){\text{blue}}_-{\iotaii_{\ii \alpha_* J}}^-{[\sim]}
      \\
      &
      &
      {\ii \alpha_* J} 
      \ar@[red][r]|(.3){\text{red}}^-{\ii \alpha_* \iotaii_J}_-{[\sim]}
      &
      {\ii \alpha_* \ii J} 
    }
  \end{equation}
  in $\II(\mathcal{X})$; the arrows labeled $[\sim]$ become
  invertible in $[\ul\II(\mathcal{X})]$. We view $J$ as a
  variable in the rest of this proof. Then our diagram is a
  commutative diagram in
  $\tildew{\ENH}_\kk(\ul\II(\mathcal{Y}); \ul\II(\mathcal{X}))$
  and all arrows labeled $[\sim]$ are objectwise homotopy
  equivalences.  If we pass to $\ENH_\kk$, these labeled arrows
  become invertible, and we define $\beta$ (resp.\ $\gamma$) to
  be the composition of the path starting with the blue (resp.\
  green) arrow and then following the red arrows and the inverses
  of the red arrows. Then $\beta$ is the 2-morphism
  \eqref{eq:eta-alpha_*-alpha_*-theta} in $\ENH_\kk$, cf.\
  \eqref{eq:aa-id-construct}, \eqref{eq:id-aa-construct}.
  Commutativity of the diagram obviously shows that $\gamma$ is
  the identity 2-morphism in $\ENH_\kk$.
  Proposition~\ref{p:compare-K-enri-I-fibr-repl-functors}, more
  precisely \eqref{eq:phi-ii-ii-id-corollar}, shows that
  $\delta(\ii\iotaii\alpha_*)=\delta(\iotaii \ii \alpha_*)$ in
  $\ENH_\kk$, i.\,e.\ blue and green arrow are equal in
  $\ENH_\kk$, and our claim follows.
  
  Similarly, one shows that the composition $\ul\alpha^*
  \xra{\ul\alpha^* \ul\eta} \ul\alpha^* \ul\alpha_* \ul\alpha^*
  \xra{\ul\theta\, \ul\alpha^*} \ul\alpha^*$ is the identity.
  This shows that the two diagrams
  in \eqref{eq:intro:alpha*-triangles}
  commute.
\end{proof}

In the following result we use results and terminology from
\cite[IV.7]{maclane-working-mathematician},
\cite[3.3]{lipman-notes-on-derived-functors-and-GD}, generalized
to 2-categories in the obvious way (see also
Remark~\ref{rem:morphism-between-left-adjoints}). 

\begin{proposition}
  \label{p:pullback-comp-adjoint-pushforward-comp}
  Let $(\Sh(\mathcal{Z}), \mathcal{O}_\mathcal{Z}) \xra{\beta}
  (\Sh(\mathcal{Y}), \mathcal{O}_\mathcal{Y}) \xra{\alpha}
  (\Sh(\mathcal{X}), \mathcal{O}_\mathcal{X})$ be morphisms of
  $\kk$-ringed 
  topoi.
  Then the 2-isomorphisms
  $\ul\id_* \xsira{\eqref{eq:id_*-ENH}} \id$ and $\id
  \xsira{\eqref{eq:id^*-ENH}} \ul\id^*$ 
  are conjugate
  and so are the 
  2-isomorphisms
  $\ul{(\alpha \beta)}_* 
  \xsira{\eqref{eq:alphabeta_*-ENH}} \ul\alpha_* \ul\beta_*$
  and
  $\ul\beta^*\ul\alpha^*
  \xsira{\eqref{eq:alphabeta^*-ENH}}
  \ul{(\alpha\beta)}^*$, i.\,e.\ the diagrams
  \eqref{eq:intro:pullback-id-adjoint-pushforward-id} and
  \eqref{eq:intro:pullback-comp-adjoint-pushforward-comp} commute.
  Here we use the two adjunctions
  $(\ul\id^*,\ul\id_*)$ and $(\id,\id)$, and the two adjunctions
  $(\ul{(\alpha\beta)}^*, \ul{(\alpha\beta)}_*)$ and
  $(\ul\beta^*\ul\alpha^*, \ul\alpha_*\ul\beta_*)$
  obtained from
  Proposition~\ref{p:adjunction-*-pull-push-in-ENH}.
\end{proposition}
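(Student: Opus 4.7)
The plan is to verify both commutativity claims by explicit diagram chases in $\tildew{\ENH}_\kk$ and then pass to $\ENH_\kk$ via $\delta$, following the pattern of the proof of Proposition~\ref{p:adjunction-*-pull-push-in-ENH}. The abstract content is just the mate/conjugate correspondence for adjunctions (the mate of the counit of a composite adjunction is the unit of the composite, built from the component units); our task is to show that this holds for the specific lifts defined in \ref{sec:lifts-2-morphisms}.

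For the identity case, I would unwind the definitions: $\eqref{eq:id_*-ENH}$ is induced by $\iotaii \colon \id \xra{[\sim]} \ii = \ul\id_*$, $\eqref{eq:id^*-ENH}$ comes from the zigzag $\id \xra{\iotaii} \ii \xla{\ii\epsilonee}[\sim] \ii\ee = \ul\id^*$, and $\eqref{eq:id-aa}$ for $\alpha = \id$ is the zigzag \eqref{eq:zigzag-id-aa} whose unit $\eta_{\id}$ collapses to the identity, yielding $\id \xra{\iotaii} \ii \xla{\ii\epsilonee}[\sim] \ii\ee \xra{\ii\iotaii_{\ee}} \ii\ii\ee = \ul\id_*\ul\id^*$. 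Postcomposing with $\eqref{eq:id_*-ENH}\ul\id^* = \delta(\iotaii_{\ii\ee})\inv$ and invoking the equality $\delta(\ii\iotaii_{\ee}) = \delta(\iotaii_{\ii\ee})$ from \eqref{eq:phi-ii-ii-id-corollar} of Proposition~\ref{p:compare-K-enri-I-fibr-repl-functors}, both compositions reduce to $\id \xra{\iotaii} \ii \xla{\ii\epsilonee}[\sim] \ii\ee$, which is $\eqref{eq:id^*-ENH}$.

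For the composition case, I would evaluate both paths of diagram~\eqref{eq:intro:pullback-comp-adjoint-pushforward-comp} at an arbitrary object $I \in \ul\II(\mathcal{X})$, using the explicit zigzag representations \eqref{eq:zigzag-id-aa} for $\alpha$ and $\alpha\beta$, \eqref{eq:alphabeta_*} for $\eqref{eq:alphabeta_*-ENH}$, and \eqref{eq:alphabeta^*-tilde-ENH} for $\eqref{eq:alphabeta^*-ENH}$. Assembling these into a single diagram in $\II(\mathcal{X})$ whose various subpaths compute the two sides, commutativity at the level of $\tildew{\ENH}_\kk$ follows from (i) naturality of $\iotaii$ and $\epsilonee$, (ii) the standard compatibility $\eta_{\alpha\beta} = (\alpha_* \eta_\beta \alpha^*) \comp \eta_\alpha$ of units of composed adjunctions together with the compatibility of the canonical isomorphisms $(\alpha\beta)_* \sira \alpha_*\beta_*$ and $(\alpha\beta)^* \sira \beta^*\alpha^*$ with these units, and (iii) the fact that $\alpha^*$ preserves h-flatness (Proposition~\ref{p:pullback-preserves-h-flat}), ensuring that various arrows in the diagram are objectwise homotopy equivalences. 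After passing to $\ENH_\kk$ via $\delta$, the remaining discrepancies—compositions of the form $\iotaii_{\ii X}$ versus $\ii\iotaii_X$, or $\ii\alpha_*\iotaii_{\beta_* I'}$ versus $\iotaii_{\ii\alpha_*\beta_* I'}$—are identified using \eqref{eq:phi-ii-ii-id-corollar} and \eqref{eq:phi-ii-ii-id-corollar-2} of Proposition~\ref{p:compare-K-enri-I-fibr-repl-functors}.

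The main obstacle is bookkeeping rather than conceptual difficulty: the diagram is large because every transition involves an insertion of $\ii$ or $\ee$, and one must keep careful track of the position at which $\iotaii$ is applied at each stage. The essential subtlety is that identifying compositions like $\delta(\ii \alpha_* \iotaii_{\beta_*})$ with the corresponding compositions arising from the other path requires the machinery of \ref{sec:local-at-objectw} (cf.~the end of the proof of Proposition~\ref{p:adjunction-*-pull-push-in-ENH}); once these identifications are made, both compositions in $\ENH_\kk$ coincide, proving the commutativity of \eqref{eq:intro:pullback-comp-adjoint-pushforward-comp}. The same strategy, applied to the (much simpler) identity adjunction, proves commutativity of \eqref{eq:intro:pullback-id-adjoint-pushforward-id}.
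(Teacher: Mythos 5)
Your proposal is correct and follows essentially the same route as the paper's proof: unwind the defining zig-zags into one large diagram in $\II(\mathcal{X})$ with the object as a variable, use the (tacitly assumed) conjugacy of $(\alpha\beta)_*\sira\alpha_*\beta_*$ and $\beta^*\alpha^*\sira(\alpha\beta)^*$ to make the relevant subdiagram commute, and resolve the remaining $\iotaii_{\ii(-)}$ versus $\ii\iotaii_{(-)}$ discrepancy via \eqref{eq:phi-ii-ii-id-corollar} of Proposition~\ref{p:compare-K-enri-I-fibr-repl-functors}. The paper leaves the identity case to the reader, which your first paragraph fills in correctly.
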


\begin{proof}
  We only show the second claim and leave the first to the reader.
  We need to show that the diagram 
  \begin{equation}
    \label{eq:pullback-comp-adjoint-pushforward-comp}
    \xymatrix{
      {\id} 
      \ar@[green][r]|(.3){\text{green}}^-{\eqref{eq:id-aa}}
      \ar@[blue][d]|(.3){\text{blue}}_-{\eqref{eq:id-aa}}
      &
      {\ul\alpha_*\ul\alpha^*}
      \ar@[green][rr]|(.3){\text{green}}^-{\ul\alpha_*\eqref{eq:id-aa}\ul\alpha^*}
      &&
      {\ul\alpha_*\ul\beta_*\ul\beta^*\ul\alpha^*}
      \ar@[green][d]|(.3){\text{green}}^-{\ul\alpha_*\ul\beta_*\eqref{eq:alphabeta^*-ENH}}_-{\sim}  
      \\
      {\ul{(\alpha\beta)}_*\ul{(\alpha\beta)}^*}
      \ar@[blue][rrr]|(.3){\text{blue}}^-{\eqref{eq:alphabeta_*-ENH}\ul{(\alpha\beta)}^*}_-{\sim}  
      &&&
      {\ul\alpha_*\ul\beta_*\ul{(\alpha\beta)}^*}
    }
  \end{equation}
  of 2-morphisms in $\ENH_\kk$ commutes, cf.\
  \cite[3.6.2)]{lipman-notes-on-derived-functors-and-GD}. 
  Replacing the arrows in this diagram by their defining zig-zags
  of 2-morphisms in $\tildew{\ENH}_\kk$ yields the outer arrows in
  diagram 
  \eqref{eq:pullback-comp-adjoint-pushforward-comp-big}
  on page 
  \pageref{eq:pullback-comp-adjoint-pushforward-comp-big} where
  $I \in \ul\II(\mathcal{X})$; this diagram lives in 
  $\II(\mathcal{X})$ and is commutative if we remove the bent green
  arrow; here $\eta$, $\eta'$ and
  $\eta''$ denote units of the tacitly fixed adjunctions
  $(\alpha^*, \alpha_*)$, $(\beta^*, \beta_*)$ and
  $((\alpha\beta)^*, (\alpha\beta)_*)$, 
  and of course we have tacitly assumed that the
  isomorphisms  
  $\sigma \colon (\alpha\beta)_* \sira \alpha_* \beta_*$ and
  $\tau \colon \beta^*\alpha^* \ra (\alpha\beta)^*$ 
  used to construct
  \eqref{eq:alphabeta_*-ENH}
  and \eqref{eq:alphabeta^*-ENH}
  are
  conjugate 
  (this implies that the ``pentagon'' in diagram
  \eqref{eq:pullback-comp-adjoint-pushforward-comp-big}
  commutes); 
  the arrows labeled $[\sim]$ become invertible in
  $[\ul\II(\mathcal{X})]$.  In the rest of this proof we view $I$
  as a variable. Then diagram
  \eqref{eq:pullback-comp-adjoint-pushforward-comp-big} is a
  commutative diagram in
  $\tildew{\ENH}_\kk(\ul\II(\mathcal{X}), \ul\II(\mathcal{X}))$
  if we forget the bent green arrow, and
  the arrows labeled $[\sim]$ are objectwise homotopy
  equivalences. Its red and blue (resp.\ red and green) arrows
  give rise to the blue (resp.\ green)
  arrows in 
  \eqref{eq:pullback-comp-adjoint-pushforward-comp}.
  Proposition~\ref{p:compare-K-enri-I-fibr-repl-functors},
  more precisely
  \eqref{eq:phi-ii-ii-id-corollar}, shows that
  $\delta(\ii \iotaii\alpha^*\ee)=\delta(\iotaii\ii\alpha^*\ee)$
  in 
  $\ENH_\kk$.
  Hence 
  $\delta(\ii\alpha_* \ii
  \iotaii\alpha^*\ee)=\delta(\ii\alpha_*\iotaii\ii\alpha^*\ee)$,
  i.\,e.\ 
  the bent green arrow and the cyan arrow become equal in
  $\ENH_\kk$. 
  This implies our claim.
\end{proof}

\begin{landscape}
  \begin{equation}
    \label{eq:pullback-comp-adjoint-pushforward-comp-big}
    \hspace{-3.1cm}
    \xymatrix@R=3pc{
      & {I} 
      \ar@[red][d]|(.3){\text{red}}_-{\iotaii_I}^-{[\sim]}
      \\
      &
      {\ii I}
      &&&
      {\ii\alpha_*\ii \ii\alpha^*\ee I}
      \\
      &
      {\ii\ee I} 
      \ar@[red][u]|(.3){\text{red}}^-{\ii \epsilonee_I}_-{[\sim]}
      \ar@[green][r]|(.3){\text{green}}^-{\ii\eta_{\ee I}}
      \ar@[blue][d]|(.3){\text{blue}}_-{\ii\eta''_{\ee I}}
      &
      {\ii\alpha_*\alpha^*\ee I} 
      \ar@[green][r]|(.3){\text{green}}^-{\ii \alpha_* \iotaii_{\alpha^*\ee I}}
      \ar[d]_-{\ii \alpha_* \eta'_{\alpha^* \ee I}}
      &
      {\ii\alpha_*\ii\alpha^*\ee I}
      \ar@[green]@/^4pc/[ru]|(.15){\text{green}}^-{\ii\alpha_*\iotaii_{\ii\alpha^*\ee
          I}}_-{[\sim]} 
      \ar@[cyan][ru]|(.3){\text{cyan}}^-{\ii\alpha_*\ii\iotaii_{\alpha^*\ee
          I}}_-{[\sim]} 
      \ar[d]_-{\ii\alpha_*\ii \eta'_{\alpha^*\ee I}}
      &
      &
      {\ii\alpha_*\ii\ee \ii\alpha^*\ee I} 
      \ar@[green][lu]|(.3){\text{green}}_-{\ii\alpha_*\ii \epsilonee_{\ii\alpha^*\ee I}}
      ^-{[\sim]}
      \ar@[green][d]|(.3){\text{green}}|-{\ii\alpha_*\ii \eta'_{\ee\ii\alpha^*\ee I}}
      \\
      &
      {\ii(\alpha\beta)_*(\alpha\beta)^*\ee I} 
      \ar@[blue][dl]|(.7){\text{blue}}_-{\ii(\alpha\beta)_*\iotaii_{(\alpha\beta)^*\ee I}}
      \ar[d]_-{\ii \sigma_{(\alpha\beta)^*\ee I}}^-{\sim}
      &
      {\ii\alpha_*\beta_*\beta^*\alpha^*\ee I} 
      \ar[r]^-{\ii \alpha_* \iotaii_{\beta_*\beta^*\alpha^*\ee
          I}}
      \ar[dl]_-{\ii\alpha_*\beta_*\tau_{\ee I}}^-{\sim}
      &
      {\ii\alpha_*\ii\beta_*\beta^*\alpha^*\ee I}
      \ar[dl]_-{\ii\alpha_*\ii\beta_*\tau_{\ee I}}^-{\sim}
      \ar[d]|-{\ii\alpha_*\ii \beta_* \iotaii_{\beta^*\alpha^*\ee I}}
      &
      {\ii\alpha_*\ii\ee\alpha^*\ee I}
      \ar[lu]_-{\ii\alpha_*\ii \epsilonee_{\alpha^*\ee
          I}}^-{[\sim]}
      \ar[ru]^-{\ii\alpha_*\ii\ee\iotaii_{\alpha^*\ee I}}_-{[\sim]}
      \ar[d]|(.6){\ii\alpha_*\ii \eta'_{\ee\alpha^*\ee I}}
      &
      {\ii\alpha_*\ii\beta_*\beta^*\ee \ii\alpha^*\ee I} 
      \ar@[green][d]|(.3){\text{green}}|-{\ii\alpha_*\ii \beta_* \iotaii_{\beta^*\ee
          \ii\alpha^*\ee I}} 
      \\
      {\ii(\alpha\beta)_*\ii(\alpha\beta)^*\ee I} 
      \ar@[blue][d]|(.3){\text{blue}}_-{\ii \sigma_{\ii(\alpha\beta)^*\ee I}}^-{\sim}
      &
      {\ii\alpha_*\beta_*(\alpha\beta)^*\ee I} 
      \ar[dl]^-{\ii\alpha_*\beta_*\iotaii_{(\alpha\beta)^*\ee I}}
      \ar[r]^-{\ii \alpha_* \iotaii_{\beta_*(\alpha\beta)^*\ee I}}
      &
      {\ii\alpha_*\ii\beta_*(\alpha\beta)^*\ee I} 
      \ar[d]_-{\ii\alpha_*\ii \beta_* \iotaii_{(\alpha\beta)^*\ee I}}
      &
      {\ii\alpha_*\ii\beta_*\ii\beta^*\alpha^*\ee I}
      \ar@[green][dl]|(.3){\text{green}}_-{\ii\alpha_*\ii\beta_*\ii\tau_{\ee I}}^-{\sim}
      &
      {\ii\alpha_*\ii\beta_*\beta^*\ee\alpha^*\ee I}
      \ar[ru]^-{\ii\alpha_*\ii \beta_*\beta^*\ee\iotaii_{\alpha^*\ee
          I}}
      \ar[lu]_-{\ii\alpha_*\ii \beta_*\beta^*
        \epsilonee_{\alpha^*\ee I}} 
      \ar[d]|-{\ii\alpha_*\ii \beta_* \iotaii_{\beta^*\ee\alpha^*\ee I}}
      &
      {\ii\alpha_*\ii\beta_*\ii\beta^*\ee \ii\alpha^*\ee I} 
      \\
      {\ii\alpha_*\beta_*\ii(\alpha\beta)^*\ee I} 
      \ar@[blue][rr]|(.3){\text{blue}}^-{\ii \alpha_*
        \iotaii_{\beta_*\ii(\alpha\beta)^*\ee I}}_-{[\sim]}
      &&
      {\ii\alpha_*\ii\beta_*\ii(\alpha\beta)^*\ee I} 
      &&
      {\ii\alpha_*\ii\beta_*\ii\beta^*\ee\alpha^*\ee I}
      \ar@[green][ru]|(.7){\text{green}}_-{\ii\alpha_*\ii
        \beta_*\ii\beta^*\ee\iotaii_{\alpha^*\ee I}}^-{[\sim]}
      \ar@[green][lu]|(.7){\text{green}}^-{\ii\alpha_*\ii \beta_*\ii\beta^*
        \epsilonee_{\alpha^*\ee I}}_-{[\sim]} 
    }
  \end{equation}
\end{landscape}

Recall that a (symmetric) monoidal category is a
(symmetric) monoidal object in the 2-category of categories.
We generalize this definitions in the obvious way to
$\kk$-linear 2-multicategories.

\begin{lemma}
  \label{l:monoidal-enhancement}
  Let $(\mathcal{X}, \mathcal{O})$ be a $\kk$-ringed site,
  define 
  $l:=\eqref{eq:left}$,
  $r:=\eqref{eq:right}$,
  $a:=\eqref{eq:ass-and-symm}$,
  $s:=\eqref{eq:ulotimes-swap}$,
  and 
  consider $\ul{\mathcal{O}}$ as a  
  1-morphism
  $\ul{\mathcal{O}} \colon \emptyset  \ra \ul\II(\mathcal{X})$,
  cf.\ Remark~\ref{rem:ENH-emptysource}.
  Then 
  $(\ul\II(\mathcal{X}), \ul\otimes, \ul{\mathcal{O}}, a, l, r,
  s)$
  is a symmetric monoidal object in 
  $\ENH_\kk$,
  i.\,e.\ the diagrams
  \eqref{eq:intro:ulotimes-ass},
  \eqref{eq:intro:ulotimes-unital},
  \eqref{eq:intro:ulotimes-symm}
  in $\ENH_\kk$ commute.
\end{lemma}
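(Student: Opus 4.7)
The plan is to verify each of the three coherence diagrams \eqref{eq:intro:ulotimes-ass}, \eqref{eq:intro:ulotimes-unital}, \eqref{eq:intro:ulotimes-symm} separately by unwinding the defining zig-zags and reducing to two facts: (i) that $(\C(\mathcal{X}), \otimes, \mathcal{O})$ is already a symmetric monoidal category, so the corresponding pentagon, triangle and symmetry diagrams commute on the nose in $\C(\mathcal{X})$; and (ii) that different ways of inserting $\KK$-enriched resolution functors $\ii$ and $\ee$ yield the same 2-morphism in $\ENH_\kk$, by Propositions \ref{p:compare-K-enri-I-fibr-repl-functors} and \ref{p:compare-K-enri-E-fibr-repl-functors}.

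First I would expand the defining zig-zags. All five generating 2-isomorphisms $a, l, r, s$ from Lemma~\ref{l:ulO-otimes-neutral} are built by inserting/removing $\ii$ and $\ee$ around expressions of the form $\otimes$, $\mathcal{O}$, and by applying the underlying associator/unitors/symmetry of $(\C(\mathcal{X}), \otimes, \mathcal{O})$. Following the strategy used in the proof of Proposition~\ref{p:pullback-comp-adjoint-pushforward-comp}, I would, for each axiom, draw the large commutative diagram in $\II(\mathcal{X})$ (viewing the objects $I, J, K, L \in \ul\II(\mathcal{X})$ as variables) whose outer path realises both sides of the coherence diagram as compositions of zig-zags in $\tildew{\ENH}_\kk$. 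The inner cells of this diagram split into two kinds: squares expressing naturality of $\epsilonee \colon \ee \ra \id$ and $\iotaii \colon \id \ra \ii$ (which commute strictly in $\C(\mathcal{X})$), and a ``core'' cell that is precisely the relevant coherence diagram for the honest symmetric monoidal category $(\C(\mathcal{X}), \otimes, \mathcal{O})$.

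The only cells that do not commute strictly in $\C(\mathcal{X})$ are those comparing two different patterns of resolutions, such as $\ii\ee \ii\ee \ra \ii\ee$ or $\ee\ee \ra \ee$; these are resolved in $\ENH_\kk$ by Propositions \ref{p:compare-K-enri-I-fibr-repl-functors} and \ref{p:compare-K-enri-E-fibr-repl-functors}, which say that any two zig-zags built from $\iotaii, \epsilonee$ between the same pair of $\II$-fibrant resolutions (resp.\ $\EE$-cofibrant resolutions) of the same functor induce the same isomorphism in $\ENH_\kk$. Thus applying equations \eqref{eq:phi-ii'-ii-assoc}, \eqref{eq:phi-ii-ii-id}, \eqref{eq:phi-ii-ii-id-corollar}, \eqref{eq:phi-ii-ii-id-corollar-2} (and their $\ee$-analogues) to each such comparison cell shows that, after passing to $\ENH_\kk$, all inner cells commute and hence the outer paths agree, as required.

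The main obstacle is purely bookkeeping: the zig-zags involved in the pentagon axiom \eqref{eq:intro:ulotimes-ass} are considerably longer than those in the proof of Proposition~\ref{p:pullback-comp-adjoint-pushforward-comp}, because each occurrence of $\ul\otimes$ introduces both an $\ee \otimes \ee$ and an outer $\ii$, so the four-fold tensor expression $((A \ul\otimes B) \ul\otimes C) \ul\otimes D$ contains many nested $\ii$'s and $\ee$'s. Organising the comparison cleanly amounts to observing that every comparison between paths uses only the naturality of $\iotaii$ and $\epsilonee$ plus the underlying coherence in $\C(\mathcal{X})$, so no genuinely new input is needed once the zig-zags have been written out. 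The triangle axiom \eqref{eq:intro:ulotimes-unital} and the symmetry axiom \eqref{eq:intro:ulotimes-symm} are strictly easier: for the latter, the underlying symmetry of $(\C(\mathcal{X}), \otimes)$ already squares to the identity, and the zig-zag realising $s$ has the same shape on both arguments, so the composition of zig-zags representing $s \comp s$ collapses to the identity zig-zag modulo a comparison of resolutions handled by Proposition~\ref{p:compare-K-enri-I-fibr-repl-functors}.
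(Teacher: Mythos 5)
The paper's own proof of this lemma is literally ``Obvious and left to the reader,'' so there is no argument in the text to compare against; your sketch supplies exactly the verification the author omits, and it does so with the same machinery the paper uses for the neighbouring coherence statements (the large commutative diagrams in $\II(\mathcal{X})$ in the proofs of Propositions~\ref{p:adjunction-*-pull-push-in-ENH} and \ref{p:pullback-comp-adjoint-pushforward-comp}, naturality of $\iotaii$ and $\epsilonee$, coherence of the underlying monoidal category, and the identities \eqref{eq:phi-ii-ii-id-corollar} and \eqref{eq:phi-ee-ee-id-corollar} from Propositions~\ref{p:compare-K-enri-I-fibr-repl-functors} and \ref{p:compare-K-enri-E-fibr-repl-functors} to reconcile the different resolution-insertion patterns). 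One small simplification you could note: the symmetry axiom \eqref{eq:intro:ulotimes-symm} needs no comparison of resolutions at all, since $s$ is a single right-pointing 2-morphism in $\tildew{\ENH}_\kk$ (no wrong-way arrows), so $s \comp s = \id$ follows directly from the fact that the underlying symmetry of $(\C(\mathcal{X}), \otimes)$ squares to the identity.
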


\begin{proof}
  Obvious and left to the reader.
\end{proof}

\begin{remark}
  \label{rem:monoidal-enhancement}
  One may call
  $(\ul\II(\mathcal{X}), \ul\otimes, \ul{\mathcal{O}}, a, l, r,
  s)$
  a ``symmetric monoidal enhancement''; similar terminology in
  the context of derivators appears in
  \cite[2.1]{groth-monoidal-derivators}). If we apply the
  functor \eqref{eq:htpy-ENH-TRCAT}, we
  obtain the symmetric monoidal triangulated $\kk$-category
  $[\ul\II(\mathcal{X})]$.  Lemma~\ref{l:ulO-otimes-neutral}
  shows that \eqref{eq:ol[ii]} is an equivalence
  of
  symmetric monoidal triangulated $\kk$-categories.
\end{remark}

\fussnote{
  Fussnote bei ''shows that \eqref{eq:ol[ii]} is an equivalence
  of
  symmetric monoidal triangulated $\kk$-categories.'':
  
  nicht
  nachgeschaut, was terminologie hier... (co)lax ...  
}

\fussnote{
  Maybe could define/say that 
  $(\ul\II(\mathcal{X}), \ul\otimes, \ul{\mathcal{O}}, 
  \eqref{eq:left},
  \eqref{eq:right},
  \eqref{eq:ass-and-symm})$ 
  together with 
  \eqref{eq:ol[ii]}
  (Hier will richtige Richtung, cf.\ fuassnoate (if still
  existent) in
  Remark~\ref{rem:ENH-objects-are-enhancements})
  is a monoidal $\KK$-enhancement of 
  the 
  monoidal triangulated $\kk$-category
  $(\D(\mathcal{X}), \otimes^\dL, \mathcal{O})$.

  Definition: Let ... be a monoidal triangulated
  $\kk$-category. A  monoidal enhancement...
}

\fussnote{
  what about closed? Gibt das nun Zugriff auf extranatural
  trafos?

  Schaue \cite{groth-monoidal-derivators} 1.3 an? adjunction of
  two variables. Vielleicht
  sollte wieder Begriff in 2-multicategory definieren?

  (Diese kommen uebrigens implizit in \cite{KS} vor, siehe
  Gleichung (2.6.10).)
  Hilft es, dass Modellstruktur auf $\KK$-Funktoren (wie ich
  sie zur Definition von $\ENH_\kk$ verwende) ebenfalls
  ``$\KK$-model structure'' ist?
}

\subsubsection{Some other lifts}
\label{sec:some-other-lifts}

\fussnote{
  allgemeine Bemerkung: strong Yoneda in enriched category
  setting in particular applies to 2-categories! Ja, aber wohl
  nicht so zu $\ENH_\kk$ wie ich das gern haette. oder?
}

\fussnote{
  Beachte, dass im folgenden Lemma 
  $\oldul{\ul\C(E,F)}$ schreiben muss. Denn zur Zeit ist
  $\ul\Hom$ nur 
  definiert, falls beide Argumente in $\ul\II(\mathcal{X})$. 

  Using Remark~\ref{rem:[ulC]-vs-[ulHom]}
}

Recall that each object $G \in \C(\mathcal{X})$ gives rise to an
object $\ul{G}:=\ii G$ of $\II(\mathcal{X})$ and to a
1-morphism $\ul{G}$ in $\ENH_\kk$, cf.\ \eqref{eq:define-ul-G}.
Similarly, any morphism $g \colon G \ra G'$ in $\C(\mathcal{X})$
gives rise to a morphism
\begin{equation}
  \label{eq:define-ul-g}
  \ul{g}:=\ii g \colon \ul{G} \ra \ul{G}' 
\end{equation}
in $\II(\mathcal{X})$ 
and to a 2-morphism $\ul{g} \colon \ul{G} \ra \ul{G}'$ in
$\ENH_\kk$.  

\begin{lemma}
  \label{l:ul-G-and-functors}
  Let
  $\alpha \colon (\Sh(\mathcal{Y}), \mathcal{O}_\mathcal{Y}) \ra
  (\Sh(\mathcal{X}), \mathcal{O}_\mathcal{X})$
  be a morphism of $\kk$-ringed topoi, let 
  $E$, $F \in \C(\mathcal{X})$, and let $g \colon G \ra G'$ be a
  morphism in $\C(\mathcal{Y})$.
  Then the zig-zags
  \begin{align}
    \ul\alpha^* \ul E 
    & = \ii \alpha^* \ee \ii E \xla{[\sim]} \ii
      \alpha^* \ee E \ra \ii \alpha^* E = \ul{\alpha^* E},\\
    \ul E \;\ul\otimes\; \ul F 
    & = \ii(\ee \ii E \otimes \ee \ii F)
      \xla{[\sim]} \ii(\ee E \otimes \ee F)
      \ra \ii (E \otimes F) = \ul{E \otimes F},\\
    \ul{\alpha_* G} 
    & = \ii \alpha_* G \ra \ii\alpha_* \ii G = \ul\alpha_* \ul G,\\
    \ul{\sheafHom(E,F)} 
    & = \ii\sheafHom(E,F) 
      \ra \ii\sheafHom(E,\ii F) 
      \xla{[\sim]} \ii\sheafHom(\ii E,\ii F)
      = \ul\sheafHom(\ul E, \ul F),\\
    \oldul{\ul\C(E,F)} 
    & 
      = \ul\C(E,F) 
      \ra \ul\C(E,\ii F) 
      \xla{[\sim]} \ul\C(\ii E,\ii F)
      = \ul\Hom(\ii E,\ii F)
      = \ul\Hom(\ul E, \ul F),\\
    \ul G 
    &
      \xra{\ul{g}} \ul G'
  \end{align}
  of obvious 2-morphisms in
  $\tildew{\ENH}_\kk$
  define 2-morphisms
  \begin{align}
    \label{eq:alpha^*-ul-obj}
    \ul\alpha^* \ul E & \ra \ul{\alpha^* E} 
    & \text{(2-isomorphism if $E$ is h-flat)},\\
    \label{eq:otimes-ul-obj}
    \ul E \;\ul\otimes\; \ul F 
    & \ra \ul{E \otimes F}
    & \text{(2-isomorphism if $E$ or $F$ is h-flat)},\\
    \label{eq:alpha_*-ul-obj}
    \ul{\alpha_* G} 
    & \ra \ul\alpha_* \ul G
    & \text{(2-isomorphism if $G$ is weakly h-injective)},\\
    \label{eq:sheafHom-ul-obj}
    \ul{\sheafHom(E,F)} 
    & \ra \ul\sheafHom(\ul E, \ul F)
    & \text{(2-isom.\ if $F$ h-inj., or $E$ h-flat, $F$
      weakly h-inj.)},\\ 
    \label{eq:Hom-ul-obj}
    \oldul{\ul\C(E,F)} 
    & \ra \ul\Hom(\ul E, \ul F)
    & \text{(2-isom.\ if $F$ h-inj., or $E$ h-flat, $F$
      weakly h-inj.)},\\
    \label{eq:gul-obj-D}
    \ul G 
    & \xra{\ul g} \ul G'
    & \text{(2-isom.\ if $g$ is a quasi-isomorphism)}
  \end{align}
  in $\ENH_\kk$ that are 2-isomorphisms in the indicated cases
  and that enhance the 2-morphisms
  $\dL\alpha^*(E) \ra \alpha^* E$,
  $E \otimes^\dL F \ra E \otimes F$,
  $\alpha_* G \ra \dR\alpha_*(G)$,
  $\sheafHom(E,F) \ra \dR\sheafHom(E, F)$,
  $\ul\C(E,F) \ra \dR\Hom(E, F)$,
  $G \xra{g} G'$.
  The first five of these 2-morphisms come from the $\kk$-natural
  transformation which is part of the datum of the corresponding
  derived functor. 
  In particular, \eqref{eq:alpha^*-ul-obj}
  provides a 2-isomorphism
  $\ul\alpha^* \ul{\mathcal{O}}_\mathcal{X}
  \sira
  \oldul{\alpha^* \mathcal{O}_\mathcal{X}}
  =\ul{\mathcal{O}}_\mathcal{Y}$
  in $\ENH_\kk$ that enhances $\dL \alpha^*
  \mathcal{O}_\mathcal{X} \sira \mathcal{O}_\mathcal{Y}$.
\end{lemma}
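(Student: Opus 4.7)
The plan is to treat the six items uniformly: for each zig-zag in $\tildew{\ENH}_\kk$ verify (a) that every leftward-pointing arrow is an objectwise homotopy equivalence, so that by Remark~\ref{rem:zig-zags-define-2-morph-in-ENH} the zig-zag defines a 2-morphism in $\ENH_\kk$; (b) that in the cases indicated the rightward-pointing arrows are also objectwise homotopy equivalences, so that the 2-morphism is a 2-isomorphism; and (c) that the resulting 2-morphism $(\ud{v},\ud{w})$-enhances the claimed derived transformation in the sense of Definition~\ref{d:enhance-2-morphism}.

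For (a) and (b), the whole input is Remark~\ref{rem:ii-maps-qisos-to-htpy-equis} (the dg-functor $\ii$ sends quasi-isomorphisms to homotopy equivalences, and $\ee$ preserves quasi-isomorphisms) together with the Spaltenstein-type preservation results already proved. Concretely: for \eqref{eq:alpha^*-ul-obj} and \eqref{eq:otimes-ul-obj} I will invoke Proposition~\ref{p:pullback-preserves-h-flat} and the fact that $\otimes$ preserves quasi-isomorphisms between h-flat complexes to see that $\ii\alpha^*\ee\iotaii$ and $\ii(\ee\iotaii\otimes\ee\iotaii)$ are homotopy equivalences; if $E$ (resp.\ $E$ or $F$) is h-flat, then $\epsilonee$ is a quasi-isomorphism between h-flat objects, so $\ii\alpha^*\epsilonee$ (resp.\ $\ii(\ee(-)\otimes\ee\epsilonee)$ composed with the swap arrow) is again a homotopy equivalence. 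For \eqref{eq:alpha_*-ul-obj} the zig-zag consists of a single arrow $\ii\alpha_*\iotaii_G$; it is an objectwise homotopy equivalence precisely when $G$ is weakly h-injective, by Proposition~\ref{p:spalt-5.15-sites}. For \eqref{eq:sheafHom-ul-obj} and \eqref{eq:Hom-ul-obj} the leftward arrows become homotopy equivalences by Proposition~\ref{p:spalt-5.20-sites-sheafHom}.\ref{enum:sheafHom-to-h-inj} and Proposition~\ref{p:spalt-5.14-sites-sheafHom}; the rightward arrows become homotopy equivalences under the hypotheses indicated by the same two propositions and Corollary~\ref{c:spalt-5.20-sites-sheafHom}. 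Finally \eqref{eq:gul-obj-D} is tautological since $\ii$ sends quasi-isomorphisms to homotopy equivalences.

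For (c), the strategy is to recall that the canonical 2-isomorphisms $\omega_{\alpha^*}$, $\omega_{\alpha_*}$, $\omega_\otimes$, $\omega_\sheafHom$, $\omega_\Hom$ defined in Proposition~\ref{p:canonical-isotrafos} were built from exactly the zig-zags of $\iotaii$'s and $\epsilonee$'s that appear in the present statement. Thus the equality \eqref{eq:sigma-omega=omega-tau} (equivalently, commutativity of the square \eqref{eq:sigma-omega=omega-tau-diagram}) becomes, after unwinding the definitions, the statement that a canonical natural transformation of derived functors (e.g.\ $\bL\alpha^* E \ra \alpha^* E$ or $E\otimes^\bL F \ra E\otimes F$) is induced by the $\KK$-natural transformation one writes down at the complex level; this is a standard fact about derived functors. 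In the $\alpha_*$-case I will use the description of $\omega_{\alpha_*}$ from Proposition~\ref{p:ul-alpha_*-as-derived-functor}, which makes the verification trivial: both sides are literally equal after choosing $\bR\alpha_*=q_\mathcal{X}\alpha_*\ol{[\ii]}$. The remaining cases are analogous and involve no new content.

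The only even mildly non-routine step will be \eqref{eq:sheafHom-ul-obj} and \eqref{eq:Hom-ul-obj}, because the enhancement claim there requires combining $\omega_\sheafHom$ (resp.\ $\omega_\Hom$) with the contravariance in the first argument; the point is that the arrow $\ii\sheafHom(\iotaii_E,\id_{\ii F})$ appears both in the definition of $\omega_\sheafHom$ (in the proof of Proposition~\ref{p:canonical-isotrafos}) and in the present zig-zag, so the diagram \eqref{eq:sigma-omega=omega-tau-diagram} commutes on the nose once the canonical transformation $\sheafHom(E,F)\to\bR\sheafHom(E,F)$ is identified with $[\ii\sheafHom(\id_E,\iotaii_F)]$ modulo Lemma~\ref{eq:l-iotaii-ii-vs-ii-iotaii}. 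The remaining case \eqref{eq:alpha^*-ul-obj} applied to $E=\mathcal{O}_\mathcal{X}$, combined with $\ul{\alpha^*\mathcal{O}_\mathcal{X}}=\ul{\mathcal{O}}_\mathcal{Y}$ since $\alpha^*\mathcal{O}_\mathcal{X}=\mathcal{O}_\mathcal{Y}$, gives the last stated 2-isomorphism because $\mathcal{O}_\mathcal{X}$ is h-flat.
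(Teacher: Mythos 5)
The paper's own proof of this lemma is literally ``Left to the reader,'' and your proposal supplies exactly the verification the author intends: checking objectwise homotopy equivalences via Propositions~\ref{p:pullback-preserves-h-flat}, \ref{p:spalt-5.7-sites}, \ref{p:spalt-5.15-sites}, \ref{p:spalt-5.14-sites-sheafHom}, \ref{p:spalt-5.20-sites-sheafHom} and Corollary~\ref{c:spalt-5.20-sites-sheafHom}, and checking the enhancement condition against the canonical $\omega$'s of Proposition~\ref{p:canonical-isotrafos}, which is the same pattern used in the neighbouring proved results (e.g.\ Lemma~\ref{l:composition-inverse-direct-image}, Proposition~\ref{p:enhanced-pull-push-sheafHom}). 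The argument is correct; the only cosmetic slip is the phrase ``precisely when $G$ is weakly h-injective'' for \eqref{eq:alpha_*-ul-obj}, where only the ``if'' direction is needed or justified.
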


\begin{proof}
  Left to the reader. 
\end{proof}

\begin{lemma}
  \label{l:ul-object-4-functors}
  Let
  $\alpha \colon (\Sh(\mathcal{Y}), \mathcal{O}_\mathcal{Y}) \ra
  (\Sh(\mathcal{X}), \mathcal{O}_\mathcal{X})$
  be a morphism of $\kk$-ringed topoi, let 
  $e \colon E \ra E'$, $f \colon F \ra F'$ be morphisms in
  $\C(\mathcal{X})$, let $g, g' \colon G \ra G'$ and $h \colon G'
  \ra G''$ be 
  morphisms in $\C(\mathcal{Y})$, and let $r, r' \in \kk$.
  Then the following diagrams in $\ENH_\kk$ are commutative.
  \begin{equation}
    \label{eq:ul-alpha-comm}
    \xymatrix{
      {\ul\alpha^* \ul{E}} 
      \ar[r]^-{\eqref{eq:alpha^*-ul-obj}}
      \ar[d]_-{\ul\alpha^*\ul{e}}
      & 
      {\ul{\alpha^*E}} 
      \ar[d]_-{\ul{\alpha^*e}}
      \\
      {\ul\alpha^*\ul{E}'} 
      \ar[r]^-{\eqref{eq:alpha^*-ul-obj}}
      & 
      {\ul{\alpha^*E'}} 
    }
    \quad\quad\quad
    \xymatrix{
      {\ul{\alpha_*G}} 
      \ar[r]^-{\eqref{eq:alpha_*-ul-obj}}
      \ar[d]_-{\ul{\alpha_*g}}
      & 
      {\ul\alpha_*\ul{G}}
      \ar[d]_-{\ul\alpha_*\ul{g}}
      \\
      {\ul{\alpha_*G'}} 
      \ar[r]^-{\eqref{eq:alpha_*-ul-obj}}
      & 
      {\ul\alpha_*\ul{G}}
    }
  \end{equation}
  \begin{equation}
    \label{eq:otimes-sheafHom-ul-obj-comm}
    \xymatrix{
      {\ul E \;\ul\otimes\; \ul F}
      \ar[r]^-{\eqref{eq:otimes-ul-obj}}
      \ar[d]_-{\ul{e}\;\ul\otimes\;\ul{f}}
      & 
      {\ul{E \otimes F}}
      \ar[d]_-{\ul{e \otimes f}}
      \\
      {\ul E' \;\ul\otimes\; \ul F'}
      \ar[r]^-{\eqref{eq:otimes-ul-obj}}
      & 
      {\ul{E' \otimes F'}}
    }
    \quad\quad\quad
    \xymatrix{
      {\ul{\sheafHom(E',F)}}
      \ar[r]^-{\eqref{eq:sheafHom-ul-obj}}
      \ar[d]_-{\ul{\sheafHom(e,f)}}
      & 
      {\ul\sheafHom(\ul E', \ul F)}
      \ar[d]_-{\ul{\sheafHom}(\ul{e},\ul{f})}
      \\
      {\ul{\sheafHom(E,F')}}
      \ar[r]^-{\eqref{eq:sheafHom-ul-obj}}
      & 
      {\ul\sheafHom(\ul E, \ul F')}
    }
  \end{equation}
  \begin{equation}
    \label{eq:Hom-ul-obj-comm}
    \xymatrix{
      {\oldul{\ul\C(E',F)}}
      \ar[r]^-{\eqref{eq:Hom-ul-obj}}
      \ar[d]_-{\oldul{\ul\C(e,f)}}
      & 
      {\ul\Hom(\ul E', \ul F)}
      \ar[d]_-{\ul{\Hom}(\ul{e},\ul{f})}
      \\
      {\oldul{\ul\C(E,F')}}
      \ar[r]^-{\eqref{eq:Hom-ul-obj}}
      & 
      {\ul\Hom(\ul E, \ul F')}
    }
  \end{equation}
  \begin{equation}
    \label{eq:ulg-ulg'-and-rg+r'g'}
    \xymatrix{
      {\ul{G}} 
      \ar[r]^-{\ul{g}}
      \ar[rd]_-{\ul{hg}}
      &
      {\ul{G}'}
      \ar[d]^-{\ul{h}}
      \\
      &
      {\ul{G}''}
    }
    \quad\quad\quad
    \xymatrix{
      {\ul{G}} 
      \ar@/^/[d]^-{\ul{rg+r'g'}}
      \ar@/_/[d]_-{r\ul{g}+r'\ul{g}'}
      \\
      {\ul{G}'}
    }
  \end{equation}
\end{lemma}

\begin{proof}
  Left to the reader. The last two diagrams are commutative
  because the $\KK$-functor $\ii \colon \ul\C(\mathcal{Y}) \ra
  \ul\II(\mathcal{Y})$ 
  gives rise to a
  $\kk$-functor $\C(\mathcal{Y}) \ra \II(\mathcal{Y}) \ra
  [\ul\II(\mathcal{Y})]$.  
\end{proof}

\subsubsection{Subsequently constructed lifts of
  2-(iso)morphisms}
\label{sec:subs-constr-lifts}

We give some examples as to how to produce more lifts from the
lifts we have constructed so far.

\begin{lemma}
  \label{l:enhanced-pull-push-ulHom}
  Let
  $\alpha \colon (\Sh(\mathcal{Y}), \mathcal{O}_\mathcal{Y}) \ra
  (\Sh(\mathcal{X}), \mathcal{O}_\mathcal{X})$
  be a morphism of $\kk$-ringed topoi.
  Then the composition
  \begin{multline}
    \label{eq:enhanced-pull-push-ulHom}
    \ul\Hom(\ul\alpha^* -, -)
    \xsira{\eqref{eq:46}(\ul\alpha^*,\id)}
    \ul\Gamma\;\ul\sheafHom(\ul\alpha^* -, -)
    \xsira{\eqref{eq:alphabeta_*-ENH}\ul\sheafHom(\ul\alpha^* -, -)}
    \ul\Gamma\;\ul\alpha_*\ul\sheafHom(\ul\alpha^* -, -)\\
    \xsira{\ul\Gamma\eqref{eq:pull-push-sheafHom-ENH}}
    \ul\Gamma\;\ul\sheafHom(-, \ul\alpha_*-)
    \xsira{\eqref{eq:46}\inv(\id, \ul\alpha_*)}
    \ul\Hom(-, \ul\alpha_*-)
  \end{multline}
  of 2-isomorphisms
  in $\ENH_\kk$ enhances 
  $\dR \Hom(\dL
  \alpha^* -,-) \sira \dR \Hom(-, \dR \alpha_* -)$.
\end{lemma}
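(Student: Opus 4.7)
The plan is to reduce the claim to two easy verifications: first, that each of the four 2-isomorphisms making up the composition in \eqref{eq:enhanced-pull-push-ulHom} is known to enhance its obvious derived counterpart; second, that on the triangulated level the corresponding composition agrees with the standard adjunction 2-isomorphism $\bR\Hom(\bL\alpha^*(-),-) \sira \bR\Hom(-,\bR\alpha_*(-))$.

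For the first point I would invoke Remark~\ref{rem:enhance-2-morphism-compose-and-invert}: since enhancement is preserved by composition and inversion, it suffices that each factor enhances its derived counterpart. By Lemma~\ref{l:ulGamma-ulsheafHom} the 2-isomorphism \eqref{eq:46} (and its inverse) enhances the canonical isomorphism $\bR\Hom(-,-) \sira \bR\Gamma\,\bR\sheafHom(-,-)$; by Lemma~\ref{l:composition-inverse-direct-image}, \eqref{eq:alphabeta_*-ENH} applied to $\sigma = \sigma_\mathcal{X} \comp \alpha$ (using $\sigma_\mathcal{Y} = \sigma_\mathcal{X} \comp \alpha$, so that $\ul\Gamma_\mathcal{Y} = \ul\sigma_{\mathcal{Y},*}$) enhances the corresponding isomorphism $\bR\Gamma_\mathcal{Y} \sira \bR\Gamma_\mathcal{X}\,\bR\alpha_*$; finally, by Proposition~\ref{p:enhanced-pull-push-sheafHom}, the 2-isomorphism \eqref{eq:pull-push-sheafHom-ENH} enhances $\bR\alpha_*\bR\sheafHom(\bL\alpha^*-,-) \sira \bR\sheafHom(-,\bR\alpha_*-)$. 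Whiskering by $\ul\Gamma$ (resp.\ by $\ul\alpha^*$, $\ul\alpha_*$) on the enhancement level corresponds to whiskering by $\bR\Gamma$ (resp.\ $\bL\alpha^*$, $\bR\alpha_*$) on the derived level, as all these 1-morphisms are in the image of $\ul\II$ and $\omega$ is pseudo-natural; compare \ref{sec:relat-interpr-triang}.

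For the second point I need to check that the composite derived 2-isomorphism
\begin{equation*}
\bR\Hom(\bL\alpha^*-,-) \sira \bR\Gamma\,\bR\sheafHom(\bL\alpha^*-,-) \sira \bR\Gamma\,\bR\alpha_*\bR\sheafHom(\bL\alpha^*-,-) \sira \bR\Gamma\,\bR\sheafHom(-,\bR\alpha_*-) \sira \bR\Hom(-,\bR\alpha_*-)
\end{equation*}
is the usual adjunction isomorphism. This is a standard fact: applying $\bR\Gamma_\mathcal{X}$ to the $(\bL\alpha^*,\bR\alpha_*)$-sheafified adjunction and using $\bR\Hom=\bR\Gamma\,\bR\sheafHom$ on both sides yields the ordinary $(\bL\alpha^*,\bR\alpha_*)$-adjunction; cf.\ the corresponding formula at the underived level.

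The main obstacle is essentially bookkeeping: verifying that the whiskered enhancements compose correctly. The principal subtlety is that several of the whiskering operations involve 1-morphisms of the form $\ul\alpha_*(-\text{on something which is not already $\ul\II$})$, and one should make sure, via Remark~\ref{rem:enhance-2-morphism-compose-and-invert} together with the pseudo-naturality of $\omega$ (see \ref{sec:relat-interpr-triang}), that the identifications $\omega_{\ud{w}}$ used in \eqref{eq:sigma-omega=omega-tau-diagram} match up across the four composable 2-isomorphisms. Once this coherence is observed, concluding the lemma is immediate.
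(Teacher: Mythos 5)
Your proposal is correct and follows essentially the same route as the paper, whose entire proof is to cite Remark~\ref{rem:enhance-2-morphism-compose-and-invert} together with Lemmas~\ref{l:composition-inverse-direct-image}, \ref{l:ulGamma-ulsheafHom} and Proposition~\ref{p:enhanced-pull-push-sheafHom}. Your ``second point'' is not actually needed for the lemma as stated, since the target 2-isomorphism $\bR\Hom(\bL\alpha^*-,-)\sira\bR\Hom(-,\bR\alpha_*-)$ is by definition the corresponding composition of derived 2-isomorphisms (the relation to the honest adjunction isomorphism is treated separately in Remark~\ref{rem:enhanced-pull-push-Hom}).
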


\begin{proof}
  Use 
  Remark~\ref{rem:enhance-2-morphism-compose-and-invert},
  Lemmas~\ref{l:composition-inverse-direct-image}, 
  \ref{l:ulGamma-ulsheafHom}, and
  Proposition~\ref{p:enhanced-pull-push-sheafHom}.
\end{proof}

\begin{remark}
  \label{rem:enhanced-pull-push-Hom}
  Let us explain how the adjunction isomorphisms 
  $\D_\mathcal{Y}(\dL\alpha^* E, F) \sira
  \D_\mathcal{X}(E, \dR\alpha_* F)$ are encoded in this formalism.  
  Lemma~\ref{l:enhanced-pull-push-ulHom}
  implies that the diagram
  \begin{equation}
    \xymatrix{
      {[\ul\Hom]([\ul\alpha^*]\ol{[\ii]}(-), \ol{[\ii]}(-))}
      \ar[rr]_-{\sim}^-{[\eqref{eq:enhanced-pull-push-ulHom}]\ol{[\ii]}}
      \ar[d]_{\omega_{\ud{\Hom}(\ud{\alpha}^*(-),-)}}^-{\sim}
      &&
      {[\ul\Hom](\ol{[\ii]}(-), [\ul\alpha_*]\ol{[\ii]}(-))}
      \ar[d]_{\omega_{\ud{\Hom}(-,\ud{\alpha}_*(-))}}^-{\sim}
      \\
      {\dR\Hom(\dL\alpha^*(-), -)}
      \ar[rr]^-{\sim}
      &&
      {\dR\Hom(-, \dR\alpha_*(-))}
    }
  \end{equation}
  of 1- and 2-morphisms in $\TRCAT_\kk$ 
  commutes, cf.\ \eqref{eq:sigma-omega=omega-tau-diagram} (we use
  that $\ol{[\ii_{(\pt,\kk)}]}=\id$).
  If we evaluate this diagram at $E \in \D(\mathcal{X})$
  and 
  $F \in \D(\mathcal{Y})$, take $H^0 \colon [\KK] \ra
  \Mod(\kk)$ and
  use Remark~\ref{rem:[ulC]-vs-[ulHom]} we obtain the upper
  square in the
  commutative diagram 
  \begin{equation}
    \xymatrix{
      {[\ul\C_\mathcal{Y}](\ul\alpha^*\ii E, \ii F)}
      \ar[r]^-{\sim}
      \ar[d]^-{\sim}
      &
      {[\ul\C_\mathcal{X}](\ii E, \ul\alpha_* \ii F)}
      \ar[d]^-{\sim}
      \\
      {H^0\dR\Hom(\dL\alpha^*E, F)}
      \ar[r]^-{\sim}
      \ar[d]^-{\sim}
      &
      {H^0\dR\Hom(E, \dR\alpha_*F)}
      \ar[d]^-{\sim}
      \\
      {\D_\mathcal{Y}(\dL\alpha^* E, F)}
      \ar[r]^-\sim
      &
      {\D_\mathcal{X}(E, \dR\alpha_* F)}  
    }
  \end{equation}
  in $\Mod(\kk)$. The vertical arrows in the lower square are the
  usual isomorphisms and the lower horizontal isomorphism is the
  adjunction isomorphism.
\end{remark}

\begin{lemma}
  \label{l:enhanced-tensor-sheafhom-Hom}
  Let $(\mathcal{X}, \mathcal{O}_\mathcal{X})$ be a $\kk$-ringed
  site.
  Then the composition
  \begin{multline}
    \label{eq:28}
    \ul\Hom(- \ul\otimes -, -)
    \xsira{\eqref{eq:46}(- \ul\otimes -, -)}
    \ul\Gamma\;\ul\sheafHom(- \ul\otimes -, -)\\
    \xsira{\ul\Gamma\eqref{eq:48}}
    \ul\Gamma\;\ul\sheafHom(-, \ul\sheafHom(-,-))
    \xsira{\eqref{eq:46}\inv(-, \sheafHom(-, -))}
    \ul\Hom(-, \ul\sheafHom(-,-)) 
  \end{multline}
  of 2-isomorphisms in $\ENH_\kk$ enhances
  $\dR\!\Hom(- \otimes^\dL -,-)
  \sira \dR\!\Hom(-, \dR\sheafHom(-,-))$.
\end{lemma}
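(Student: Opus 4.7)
The plan is to copy the argument used for Lemma~\ref{l:enhanced-pull-push-ulHom} essentially verbatim, since the present statement is of exactly the same shape: a three-fold composition of 2-isomorphisms in $\ENH_\kk$ each of which has already been shown to enhance a classical 2-isomorphism in $\TRCAT_\kk$.

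First I would observe that the three factors in the composition \eqref{eq:28} are, by our previous lifting results, already known to enhance the corresponding classical 2-isomorphisms. Specifically, by Lemma~\ref{l:ulGamma-ulsheafHom} applied to $(- \ul\otimes -, -)$, the first 2-isomorphism
$\ul\Hom(- \ul\otimes -, -) \sira \ul\Gamma\;\ul\sheafHom(- \ul\otimes -, -)$
enhances $\bR\Hom(- \otimes^\bL -, -) \sira \bR\Gamma\bR\sheafHom(- \otimes^\bL -, -)$. By Proposition~\ref{p:enhanced-tensor-sheafhom} together with whiskering by $\ul\Gamma$, the middle 2-isomorphism
$\ul\Gamma\;\ul\sheafHom(- \ul\otimes -, -) \sira \ul\Gamma\;\ul\sheafHom(-, \ul\sheafHom(-,-))$
enhances $\bR\Gamma\bR\sheafHom(- \otimes^\bL -, -) \sira \bR\Gamma\bR\sheafHom(-, \bR\sheafHom(-, -))$. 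Finally, by Lemma~\ref{l:ulGamma-ulsheafHom} applied to $(-, \ul\sheafHom(-,-))$ and Remark~\ref{rem:enhance-2-morphism-compose-and-invert} (inversion part), the third 2-isomorphism
$\ul\Gamma\;\ul\sheafHom(-, \ul\sheafHom(-,-)) \sira \ul\Hom(-, \ul\sheafHom(-,-))$
enhances $\bR\Gamma\bR\sheafHom(-, \bR\sheafHom(-,-)) \sira \bR\Hom(-, \bR\sheafHom(-,-))$.

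Then I would invoke the composition part of Remark~\ref{rem:enhance-2-morphism-compose-and-invert} to conclude that the entire composition \eqref{eq:28} enhances the composition of the three classical 2-isomorphisms. It remains to identify this composition with the standard 2-isomorphism $\bR\Hom(- \otimes^\bL -, -) \sira \bR\Hom(-, \bR\sheafHom(-,-))$, which is a purely classical statement in $\TRCAT_\kk$ following from the definitions of $\bR\Hom$ and $\bR\Gamma\bR\sheafHom$ (note that $\bR\Hom = \bR\Gamma \comp \bR\sheafHom$ canonically). There is no real obstacle here; the only point that requires any care is the bookkeeping to check that applying $\ul\Gamma$ to the 2-isomorphism \eqref{eq:48} yields a 2-morphism that enhances applying $\bR\Gamma$ to the corresponding classical 2-isomorphism, which follows formally from the fact that $[\ul\Gamma]\ol{[\ii]} \sira \ol{[\ii]}\bR\Gamma$ is a 2-isomorphism (the special case $\omega_\Gamma$ of \eqref{eq:omega-alpha_*}). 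Hence the verification reduces to naturality and a routine diagram chase, entirely analogous to the proof of Lemma~\ref{l:enhanced-pull-push-ulHom}.
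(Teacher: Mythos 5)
Your proposal is correct and follows the same route as the paper: the paper's proof simply cites Remark~\ref{rem:enhance-2-morphism-compose-and-invert}, Lemma~\ref{l:ulGamma-ulsheafHom}, and Proposition~\ref{p:enhanced-tensor-sheafhom}, which is exactly the decomposition you spell out. Your additional bookkeeping about whiskering by $\ul\Gamma$ and the identification of the composite classical isomorphism is just the detail the paper leaves implicit.
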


The adjunction
isomorphisms $\D_\mathcal{X}(E \otimes^\dL F, G) \cong \D_\mathcal{Y}(E,
\dR\sheafHom(F,G))$
can be obtained as in Remark~\ref{rem:enhanced-pull-push-Hom}.

\begin{proof}
  Use 
  Remark~\ref{rem:enhance-2-morphism-compose-and-invert},
  Lemma~\ref{l:ulGamma-ulsheafHom} and
  Proposition~\ref{p:enhanced-tensor-sheafhom}.
\end{proof}

\begin{lemma}
  \label{l:push-sheafHom}
  Let
  $\alpha \colon (\Sh(\mathcal{Y}), \mathcal{O}_\mathcal{Y}) \ra
  (\Sh(\mathcal{X}), \mathcal{O}_\mathcal{X})$
  be a morphism of $\kk$-ringed topoi. Then the composition
  \begin{equation}
    \label{eq:push-sheafHom-ENH}
    \ul\alpha_* \ul\sheafHom(-, -)
    \xra{\ul\alpha_* \ul\sheafHom(\eqref{eq:aa-id}, -)}
    \ul\alpha_*\ul\sheafHom(\ul\alpha^*\ul\alpha_*(-), -)
    \xsira{\eqref{eq:pull-push-sheafHom-ENH}(\ul\alpha_*(-),-)}
    \ul\sheafHom(\ul\alpha_*(-), \ul\alpha_*(-))
  \end{equation}
  in $\ENH_\kk$ enhances
  $\dR \alpha_* \dR \sheafHom(-,-) \ra \dR \sheafHom(\dR
  \alpha_*(-), \dR \alpha_* (-))$, cf.\ \cite[(2.6.24)]{KS}. 
\end{lemma}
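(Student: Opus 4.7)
The plan is to verify the lemma by decomposing both sides into already-established enhancement statements and invoking the composition principle of Remark~\ref{rem:enhance-2-morphism-compose-and-invert}. First I would recall that on the triangulated level the usual 2-morphism $\bR\alpha_*\bR\sheafHom(-,-) \ra \bR\sheafHom(\bR\alpha_*(-),\bR\alpha_*(-))$ is constructed as the composition
\begin{equation*}
  \bR\alpha_*\bR\sheafHom(-,-)
  \xra{\bR\alpha_*\bR\sheafHom(\theta^\D\bR\alpha_*(-),-)}
  \bR\alpha_*\bR\sheafHom(\bL\alpha^*\bR\alpha_*(-),-)
  \xsira{\eqref{eq:pull-push-sheafHom-ENH}^\D}
  \bR\sheafHom(\bR\alpha_*(-),\bR\alpha_*(-)),
\end{equation*}
where $\theta^\D \colon \bL\alpha^*\bR\alpha_* \ra \id$ is the counit and the second isomorphism is the classical one from Proposition~\ref{p:enhanced-pull-push-sheafHom}. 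Thus the lemma's composition in $\ENH_\kk$ is structurally the literal enhancement of this decomposition, with \eqref{eq:aa-id} replacing $\theta^\D$ and \eqref{eq:pull-push-sheafHom-ENH} replacing its derived counterpart.

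Next I would handle the two factors separately. The second factor is essentially the statement of Proposition~\ref{p:enhanced-pull-push-sheafHom} with the 1-morphism $\ul\alpha_*(-)$ substituted into the first argument; substitution of 1-morphisms into an enhancing 2-morphism preserves the enhancement property, since the defining square \eqref{eq:sigma-omega=omega-tau-diagram} is natural in its inputs and the pseudo-natural transformation $\omega$ of \eqref{eq:pseudo-nat-trafo-D-to-[I]} is compatible with composition of generating 1-morphisms. For the first factor, \eqref{eq:aa-id} enhances the counit $\bL\alpha^*\bR\alpha_* \ra \id$ by Proposition~\ref{p:adjunction-morphisms-push-pull}; whiskering this 2-morphism on the outside by $\ul\alpha_*\ul\sheafHom(-,-)$ (in the second argument slot) then enhances the corresponding whiskering on the derived side, again by the coherence of the $\omega$-isomorphisms with respect to composition of operations encoded in the pseudo-natural transformation of \eqref{eq:pseudo-nat-trafo-D-to-[I]} (cf.\ the discussion in \ref{sec:relat-interpr-triang}).

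Finally, by Remark~\ref{rem:enhance-2-morphism-compose-and-invert} the composition of two 2-morphisms that enhance their respective derived counterparts enhances the composition of those counterparts, so stacking the two factors yields the desired enhancement of $\bR\alpha_*\bR\sheafHom(-,-) \ra \bR\sheafHom(\bR\alpha_*(-),\bR\alpha_*(-))$.

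The main obstacle I anticipate is the bookkeeping around whiskering: one must be sure that applying $\ul\alpha_*\ul\sheafHom(-,-)$ to the slot where \eqref{eq:aa-id} sits, and substituting $\ul\alpha_*(-)$ into the first argument of \eqref{eq:pull-push-sheafHom-ENH}, both correspond under the functor $[-] \colon \ENH_\kk \ra \TRCAT_\kk$ to the expected whiskerings/substitutions on the derived side, with the $\omega$-isomorphisms for the compound 1-morphisms being the ones dictated by the pseudo-natural transformation $(\ol{[\ii]},\omega)$. This coherence is routine but requires expanding the defining diagram \eqref{eq:sigma-omega=omega-tau-diagram} for each factor and pasting; it could alternatively be deferred to (or absorbed into) the general interpretation result Theorem~\ref{t:intro:interprete-compare-FML-ENH}, which is precisely the formal statement that all such whiskerings and compositions are compatible with the $\omega$-data.
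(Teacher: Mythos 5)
Your proof is correct and supplies exactly the argument the paper leaves as ``obvious'': decompose \eqref{eq:push-sheafHom-ENH} into the whiskered counit (Proposition~\ref{p:adjunction-morphisms-push-pull}) and the substituted pull-push isomorphism (Proposition~\ref{p:enhanced-pull-push-sheafHom}), then compose via Remark~\ref{rem:enhance-2-morphism-compose-and-invert}. The whiskering/substitution compatibility you flag as the only delicate point is indeed absorbed into the pseudo-naturality of $(\ol{[\ii]},\omega)$, i.e.\ into Theorem~\ref{t:compare-interpretations-FML}, so nothing further is needed.
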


\begin{proof}
  Obvious and left to the reader.
\end{proof}

\begin{remark}
  \label{rem:alternative-constructions}
  Some of the 2-(iso)morphisms in $\ENH_\kk$ admit
  alternative constructions. 
  For example, 
  the compositions 
  in \eqref{eq:enhanced-pull-push-ulHom}, \eqref{eq:28},
  \eqref{eq:push-sheafHom-ENH}
  can be constructed
  directly from zig-zags, 
  cf.\
  Propositions~\ref{p:enhanced-pull-push-sheafHom},
  \ref{p:enhanced-tensor-sheafhom}.  
\end{remark}

\section{Two operations}
\label{sec:two-operations}

We lift the remaining part of
Grothendieck--Verdier--Spaltenstein's six functor 
formalism involving the two functors $\dR \alpha_!$ and $\alpha^!$
to the dg level.

All ringed spaces and algebras in this section are assumed to be
$\mathscr{U}$-small; all algebras are assumed to be commutative. 

\subsection{Proper direct image}
\label{sec:proper-direct-image}
 
We use results on the proper direct image functor and
on separated and locally proper morphisms of topological spaces from
\cite{wolfgang-olaf-locallyproper};
all results there extend in a
straightforward manner 
from sheaves of abelian groups to sheaves of modules over some
fixed ground ring (for the modification of
\cite[Thm.~6.3]{wolfgang-olaf-locallyproper} cf.\ 
\cite[Prop.~2.6.6]{KS}), and we use these extended results
tacitly; instead of
the notation $\alpha_{(!)}$, $\alpha_!$, etc.\ there we use the
traditional notation 
$\alpha_!$, $\dR \alpha_!$ etc.\ here.

Let $\R$ be a ring and $A$ a (commutative) $\R$-algebra.
If $X$ is a topological space,
the constant sheaf $A_X$ of $\R$-algebras 
with stalk $A$ turns $X$ into an $\R$-ringed space $X_A :=(X, A_X)$.
Any morphism $\alpha \colon Y \ra X$ of topological spaces
gives rise to a morphism $\alpha \colon Y_A \ra X_A$ of
$\R$-ringed spaces whose comorphism is adjoint to the identity
morphism $\alpha\inv A_X \xra{\id} A_Y$. 
Then $\alpha\inv=\alpha^* \colon \Mod(X_A) \ra \Mod(Y_A)$
is an exact functor and 
$\alpha\inv=\alpha^* \colon \ul\C(X_A) \ra \ul\C(Y_A)$ and 
$\dL\alpha\inv=\dL\alpha^* \colon \D(X_A) \ra \D(Y_A)$.
To emphasize that we work with sheaves of $A$-modules we
usually prefer the notation $\alpha\inv$ and $\dL\alpha\inv$. 

Let $\alpha_! \colon \Mod(Y_A) \ra \Mod(X_A)$
denote the proper direct image functor. It induces the $\RR$-functor
$\alpha_! \colon \ul\C(Y_A) \ra \ul\C(X_A)$
and the derived functor $\dR\alpha_! \colon \D(Y_A) \ra \D(X_A)$. 

If $\alpha$ is a separated and locally proper morphism of
topological spaces such that 
$\alpha_! \colon \Mod(Y_A) \ra \Mod(X_A)$ has finite cohomological
dimension, then $\dR\alpha_!$ admits a right adjoint functor
$\alpha^! \colon \D(X_A) \ra \D(Y_A)$, by
\cite[Thm.~8.3.(a)]{wolfgang-olaf-locallyproper}. We fix an
adjunction $(\dR\alpha_!, \alpha^!)$.

\subsection{The multicategory of formulas for the six operations}
\label{sec:categ-form-six}

Recall the category $\fml'_\R$ of formulas from
\ref{sec:categ-form-four}.  For each $\R$-algebra $A$ and each
topological space $X$ we have the object $\ud{X}_A:=\ud{X_A}$ of
this category.

Let $\fml_\R$ be the free multicategory 
having the same objects as
$\fml'_\R$ and whose generating morphisms consist of the
generating morphisms of $\fml'_\R$ and the following morphisms:
\begin{enumerate}
\item for each $\R$-algebra $A$ and each morphism
  $\alpha \colon Y \ra X$ of topological spaces 
  there are morphisms
  $\ud\alpha\inv \colon \ud{X}_A \ra \ud{Y}_A$ and 
  $\ud\alpha_! \colon \ud{Y}_A \ra \ud{X}_A$ 
  and their opposites
  $(\ud\alpha\inv)^\opp \colon \ud{X}_A^\opp \ra \ud{Y}_A^\opp$ and
  $\ud\alpha_!^\opp \colon \ud{Y}_A^\opp \ra \ud{X}_A^\opp$; 
\item for each $\R$-algebra $A$ and each separated, locally
  proper morphism $\alpha \colon Y \ra X$ with
  $\alpha_! \colon \Mod(Y_A) \ra \Mod(X_A)$ of finite
  cohomological dimension there are a morphism
  $\ud\alpha^! \colon \ud{X}_A \ra \ud{Y}_A$ and its opposite
  $(\ud\alpha^!)^\opp \colon \ud{X}_A^\opp \ra \ud{Y}_A^\opp$.
\end{enumerate}
Obviously, $\fml_\R$ contains
$\fml'_\R$ as a full subcategory, and the involution $(-)^\opp$
on $\fml'_\R$ extends to $\fml_\R$.
%
%
The interpretation functor \eqref{eq:interprete-fml'-tricat}
of formulas in triangulated categories extends uniquely to a
functor 
\begin{equation}
  \label{eq:interprete-fml-tricat}
  \D \colon \fml_\R \ra \trcat_\R  
\end{equation}
of multicategories by $\ud\alpha\inv \mapsto \dL\alpha\inv$,
$\ud\alpha_! \mapsto \dR \alpha_!$, $\ud\alpha^! \mapsto
\alpha^!$.


\subsection{More fixed data}
\label{sec:more-fixed-data}

We keep the conventions from \ref{sec:fixed-data-1}.
Additionally, for each $\kk$-algebra $A$ and each separated,
locally proper morphism $\alpha \colon Y \ra X$ of topological
spaces with $\alpha_! \colon \Mod(Y_A) \ra \Mod(X_A)$ of finite
cohomological dimension, we fix a bounded complex
$\mathcal{L}=\mathcal{L}_\alpha \in \C(Y_A)$ with flat and
$\alpha$-c-soft components and a quasi-isomorphism
\begin{equation}
  \label{eq:flat-alpha-c-soft-reso}
  A_Y \ra \mathcal{L}=\mathcal{L}_\alpha  
\end{equation}
in $\C(Y_A)$; this is possible by
the proof of 
\cite[Thm.~7.7]{wolfgang-olaf-locallyproper}.

\begin{proposition}
  \label{p:alpha_!^L-adjoints}
  Let $A$ be a
  $\kk$-algebra and
  $\alpha \colon Y \ra X$ a separated, locally proper
  morphism of 
  topological spaces with
  $\alpha_! \colon 
  \Mod(Y_A) \ra \Mod(X_A)$ 
  of finite cohomological dimension.
  Let $\mathcal{M} \in \C(Y_A)$ be a bounded complex
  of flat $A_Y$-modules
  such that one of the following two conditions is satisfied.
  \begin{enumerate}
  \item 
    \label{enum:components-alpha-c-soft}
    All components of $\mathcal{M}$ are 
    $\alpha$-c-soft.
  \item 
     \label{enum:otimes-components-alpha_!-acyclic}
    For all components $\mathcal{M}^p$ of $\mathcal{M}$ the
    functor 
    $(- \otimes \mathcal{M}^p) \colon \Mod(Y_A) \ra \Mod(Y_A)$
    lands in the subcategory of $\alpha_!$-acyclic objects.
  \end{enumerate}
  Then the 
  $\KK$-functor 
  \begin{equation}
    \label{eq:alpha_!^L}
    \alpha_!^{\mathcal{M}} := \alpha_!(\mathcal{M} \otimes -)
    \colon
    \ul\C(Y_A) \ra \ul\C(X_A)
  \end{equation}
  admits a right adjoint $\KK$-functor
  \begin{equation}
    \label{eq:alpha^!_L}
    \alpha^!_{\mathcal{M}} 
    \colon
    \ul\C(X_A) \ra \ul\C(Y_A).
  \end{equation}
  Moreover, $\alpha_!^{\mathcal{M}}$ preserves acyclic objects and 
  $\alpha^!_{\mathcal{M}}$ 
  preserves h-injective objects, componentwise injective
  objects and, in particular, 
  $\II$-fibrant objects.
\end{proposition}

\begin{proof}
  If condition~\ref{enum:components-alpha-c-soft}
  holds, this follows from
  \cite[Prop.~7.5 and proofs of Thms~7.7 and
  8.3.(a)]{wolfgang-olaf-locallyproper}. 
  By inspection of the proof of
  \cite[Prop.~7.5]{wolfgang-olaf-locallyproper}
  we see that 
  condition~\ref{enum:otimes-components-alpha_!-acyclic}
  is also sufficient for our conclusion.
\end{proof}

Given any datum as in Proposition~\ref{p:alpha_!^L-adjoints} 
we fix 
an adjunction $(\alpha_!^{\mathcal{M}},
\alpha^!_{\mathcal{M}})$ in $\DGCAT_\kk$.

\begin{remark}
  \label{rem:choice-of-L}
  Subsequent constructions will depend on the fixed 
  componentwise flat and $\alpha$-c-soft resolution
  \eqref{eq:flat-alpha-c-soft-reso} of $A_Y$. Nevertheless the
  constructions obtained from different choices are easy to
  compare, starting from the following observation: if $A_Y \ra
  \mathcal{L}'$ is another componentwise 
  flat and $\alpha$-c-soft resolution, its tensor product with
  $A_Y \ra 
  \mathcal{L}=\mathcal{L}_\alpha$ provides a resolution
  $A_Y \ra \mathcal{L} \otimes \mathcal{L}'$ of the same type
  which allows the comparison of $\alpha_!^\mathcal{L}$ and
  $\alpha_!^{\mathcal{L}'}$ via
  $\alpha_!^{\mathcal{L} \otimes \mathcal{L}'}$, and then also of
  $\alpha^!_\mathcal{L}$ and $\alpha^!_{\mathcal{L}'}$, by
  Remark~\ref{rem:morphism-between-left-adjoints} below. 
\end{remark}

\subsection{Lifts of derived functors}
\label{sec:lifts-deriv-funct-2}

Let $A$ be a $\kk$-algebra and $\alpha \colon Y \ra X$ a morphism
of topological spaces.
Define $\KK$-functors
\begin{align}
  \ul\alpha_! := \ii \alpha_!
  & \colon \ul\II(Y_A)
    \xra{\alpha_!} \ul\C(X_A) \xra{\ii}
    \ul\II(X_A),\\
  \ul\alpha\inv := \ii \alpha\inv 
  & \colon \ul\II(X_A)
    \xra{\alpha\inv} \ul\C(Y_A) \xra{\ii}
    \ul\II(Y_A).
\end{align}
Similarly to \eqref{eq:omega-alpha_*}, \eqref{eq:omega-alpha^*},
there are canonical 
2-isomorphisms 
\begin{align}
  \label{eq:omega-alpha_!}
  \omega_{\alpha_!} 
  & \colon
    [\ul{\alpha}_!]\ol{[\ii]}
    \sira \ol{[\ii]}\dR\alpha_!,\\
  \label{eq:omega-alpha-inv}
  \omega_{\alpha\inv} 
  & \colon
    [\ul{\alpha}\inv]\ol{[\ii]}
    \sira \ol{[\ii]}\dL\alpha\inv.
\end{align}

If $\alpha$ is separated and locally proper with $\alpha_! \colon
\Mod(Y_A) \ra \Mod(X_A)$ of finite cohomological dimension, 
Proposition~\ref{p:alpha_!^L-adjoints}
allows to 
define the $\KK$-functor  
\begin{equation}
  \ul\alpha^! := \alpha^!_\mathcal{L} 
  = \alpha^!_{\mathcal{L}_\alpha} 
  \colon \ul\II(X_A) \ra
  \ul\II(Y_A).   
\end{equation}
It follows from the proof of
\cite[Thm.~8.3.(a)]{wolfgang-olaf-locallyproper} that 
the composition 
\begin{equation}
  \label{eq:2}
  \D(X_A) 
  \xra{\ol{[\ii]}} 
  [\ul\II(X_A)]
  \xra{[\ul\alpha^!]}
  [\ul\II(Y_A)]
  \ra
  [\ul\C(Y_A)]
  \xra{q_{Y_A}}
  \D(Y_A)
\end{equation}
is right adjoint to $\dR \alpha_!$ with explicitly given unit and
counit morphisms obtained from $(\alpha_!^\mathcal{L},
\alpha^!_\mathcal{L})$, and therefore canonically 
isomorphic to $\alpha^!$. We obtain a canonical 2-isomorphism
\begin{equation}
  \label{eq:omega-alpha^!}
  \omega_{\alpha^!} \colon [\ul\alpha^!]\ol{[\ii]} \sira
  \ol{[\ii]} \alpha^!.
\end{equation}

The results of \ref{sec:interpr-form-enhanc}
generalize in the obvious way. The interpretation functor
\eqref{eq:II-interpret} of formulas in enhancements extends
uniquely to a functor
\begin{equation}
  \label{eq:II-interpret-fml}
  \ul\II \colon
  \fml_\kk \ra \enh_\kk
\end{equation}
of multicategories by $\ud\alpha\inv \mapsto \ul\alpha\inv$,
$\ud\alpha_! \mapsto \ul\alpha_!$,
$\ud\alpha^! \mapsto \ul\alpha^!$. The pseudo-natural
transformation \eqref{eq:pseudo-nat-trafo-D-to-[I]} accordingly
extends by mapping $\ud\alpha_!$, $\ud\alpha\inv$, $\ud\alpha^!$
to
$\omega_{\alpha_!}$, $\omega_{\alpha\inv}$, $\omega_{\alpha^!}$, 
respectively.
Definition~\ref{d:enhance-2-morphism} and all results using this
definition extend to and will be used
for arbitrary
morphisms $\ud{v}$ and $\ud{w}$ in $\fml_\kk$.


\subsection{Lifts of relations}
\label{sec:lifts-relations-1}

\subsubsection{Lifts of 2-(iso)morphisms}
\label{sec:lifts-2-isomorphisms}

\begin{lemma}
  \label{l:ulalpha*-inv}
  Let $A$ be a $\kk$-algebra and $\alpha \colon Y \ra X$ a morphism
  of topological spaces.
  The obvious objectwise homotopy equivalence
  $\ul\alpha^* =\ii\alpha^*\ee=\ii \alpha\inv \ee \xra{[\sim]} 
  \ii\alpha\inv = \ul\alpha\inv$
  in $\tildew{\ENH}_\kk$ defines an isomorphism 
  \begin{equation}
    \label{eq:ulalpha*-inv}
    \ul\alpha^* \sira \ul\alpha\inv
  \end{equation}
  in $\ENH_\kk$ which enhances the identity $\dL \alpha^* = \dL
  \alpha\inv$. 
\end{lemma}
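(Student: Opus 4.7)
The plan is to observe that $\alpha \colon Y_A \to X_A$ is a flat morphism of $\kk$-ringed spaces (its comorphism $\alpha\inv A_X \to A_Y$ is the identity, so $\alpha^* = \alpha\inv$ is exact) and then combine Remark~\ref{rem:pullback-flat} with a compatibility check of the canonical 2-isomorphisms $\omega_{\alpha^*}$ and $\omega_{\alpha\inv}$.

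First I would apply Remark~\ref{rem:pullback-flat}: flatness guarantees that for each $I \in \ul\II(X_A)$ the morphism $\ii \alpha\inv \epsilonee_I \colon \ul\alpha^* I = \ii\alpha\inv\ee I \to \ii\alpha\inv I = \ul\alpha\inv I$ is a homotopy equivalence (indeed $\epsilonee_I$ is a quasi-isomorphism, $\alpha\inv$ preserves quasi-isomorphisms by exactness, and $\ii$ sends quasi-isomorphisms to homotopy equivalences by Remark~\ref{rem:ii-maps-qisos-to-htpy-equis}). Hence the natural transformation $\ii\alpha\inv\epsilonee \colon \ul\alpha^* \to \ul\alpha\inv$ is an objectwise homotopy equivalence in $\tildew{\ENH}_\kk$, and by Proposition~\ref{p:useful-props-delta-ENH-ol[]}.\ref{enum:useful-ohe-iff-delta-2-isom} it defines the 2-isomorphism $\tau \colon \ul\alpha^* \sira \ul\alpha\inv$ of the statement.

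To verify that $\tau$ enhances the identity $\bL\alpha^* = \bL\alpha\inv$, I would make explicit choices of the derived functors: since $\alpha\inv$ is exact, one may take $\bL\alpha\inv := q_{Y_A}[\alpha\inv]$ on the one hand and $\bL\alpha^* := \ol{[\alpha\inv]}\,\ol{[\ee]}$ on the other, as in the proof of Proposition~\ref{p:canonical-isotrafos}. The identity $\bL\alpha^* = \bL\alpha\inv$ then corresponds, via the universal property of left derived functors (cf.\ Proposition~\ref{p:ul-alpha^*-as-derived-functor}), to the isotransformation induced by $\epsilonee$. With these choices, $\omega_{\alpha^*}$ is represented at $E \in \C(X_A)$ by the inverse of $[\ii\alpha\inv\ee\iotaii_E]$ and $\omega_{\alpha\inv}$ by the inverse of $[\ii\alpha\inv\iotaii_E]$. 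The diagram \eqref{eq:sigma-omega=omega-tau-diagram} unfolds, after inserting these representatives, to the image under $\ii\alpha\inv$ of the naturality square of $\epsilonee \colon \ee \to \id$ applied to $\iotaii_E \colon E \to \ii E$; this square commutes on the nose in $\C(Y_A)$ and hence a fortiori in $[\ul\II(Y_A)]$.

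The only delicate point is the bookkeeping in the last step: matching the chosen representatives for $\bL\alpha^*$ and $\bL\alpha\inv$ so that the identity between them is represented precisely by the data coming from $\epsilonee$. Once this is made explicit, the verification reduces to a single naturality square, and no further difficulty is expected.
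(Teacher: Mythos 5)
Your argument is correct and follows exactly the route the paper has in mind (its own proof is just ``Obvious''): the objectwise homotopy equivalence is Remark~\ref{rem:pullback-flat} specialized to the flat morphism $\alpha \colon Y_A \ra X_A$, and the enhancement statement reduces, after choosing $\bL\alpha^*=\ol{[\alpha\inv]}\,\ol{[\ee]}$ and $\bL\alpha\inv$ computed naively, to applying $\ii\alpha\inv$ to the naturality square of $\epsilonee$ against $\iotaii_E$. Nothing is missing; you have simply written out the verification the paper suppresses.
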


\begin{proof}
  Obvious.
\end{proof}

We deduce the following consequence.

\begin{lemma}
  \label{l:adjunction-morphisms-push-pullinv}
  Let $A$ be a $\kk$-algebra and $\alpha \colon Y \ra X$ a morphism
  of topological spaces.
  The compositions
  \begin{align}
    \label{eq:ainva-id}
    \ul\alpha\inv \ul\alpha_* 
    & \xsira{\eqref{eq:ulalpha*-inv}\inv \ul\alpha_*}
      \ul\alpha^*\ul\alpha_* 
      \xra{\eqref{eq:aa-id}}
      \id,\\
    \label{eq:id-aainv}
    \id 
    & \xra{\eqref{eq:id-aa}}
      \ul\alpha_* \ul\alpha^*
      \xsira{\ul\alpha_*\eqref{eq:ulalpha*-inv}}
      \ul\alpha_* \ul\alpha\inv
  \end{align}
  of 2-morphisms 
  in $\ENH_\kk$ enhance counit $\dL\alpha\inv \dR \alpha_* \ra \id$
  and unit $\id \ra \dR\alpha_* \dL\alpha\inv$ of the adjunction
  $(\dL\alpha\inv, \dR\alpha_*)$.
  Moreover, the 
  two 1-morphisms $\ul\alpha\inv$ and $\ul\alpha_*$ and the
  two 2-morphisms 
  \eqref{eq:id-aainv}
  and
  \eqref{eq:ainva-id}
  form an adjunction in $\ENH_\kk$.
\end{lemma}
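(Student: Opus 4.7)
The proof should be essentially a one-step application of the composition principle for enhancements. The plan is to invoke Remark~\ref{rem:enhance-2-morphism-compose-and-invert}, which says that horizontal/vertical composition and inversion of enhancing 2-morphisms enhance the corresponding composition/inversion of the enhanced 2-morphisms. Three ingredients are already available: by Lemma~\ref{l:ulalpha*-inv}, the 2-isomorphism $\eqref{eq:ulalpha*-inv}\colon \ul\alpha^*\sira\ul\alpha\inv$ enhances the identity $\bL\alpha^*=\bL\alpha\inv$; by Proposition~\ref{p:adjunction-morphisms-push-pull}, the 2-morphisms $\eqref{eq:aa-id}$ and $\eqref{eq:id-aa}$ enhance the counit $\bL\alpha^*\bR\alpha_*\ra\id$ and the unit $\id\ra\bR\alpha_*\bL\alpha^*$ of the adjunction $(\bL\alpha^*,\bR\alpha_*)$.

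First I would whisker: since the whiskerings $\eqref{eq:ulalpha*-inv}\inv\ul\alpha_*$ and $\ul\alpha_*\eqref{eq:ulalpha*-inv}$ are obtained by horizontally composing with the identity 2-morphisms on $\ul\alpha_*$, and since (under our chosen derived data) the identity 2-morphism $\id_{\ul\alpha_*}$ trivially enhances $\id_{\bR\alpha_*}$, these whiskerings enhance the corresponding whiskerings $(\bL\alpha^*=\bL\alpha\inv)\bR\alpha_*$ and $\bR\alpha_*(\bL\alpha^*=\bL\alpha\inv)$ of the identity $\bL\alpha^*=\bL\alpha\inv$. Then vertical composition in Remark~\ref{rem:enhance-2-morphism-compose-and-invert} shows that $\eqref{eq:ainva-id}$ enhances the composition
\begin{equation*}
  \bL\alpha\inv\bR\alpha_*\xsira{(\bL\alpha^*=\bL\alpha\inv)\inv\bR\alpha_*}\bL\alpha^*\bR\alpha_*\ra\id
\end{equation*}
and that $\eqref{eq:id-aainv}$ enhances
\begin{equation*}
  \id\ra\bR\alpha_*\bL\alpha^*\xsira{\bR\alpha_*(\bL\alpha^*=\bL\alpha\inv)}\bR\alpha_*\bL\alpha\inv.
\end{equation*}

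It remains to identify these compositions with the counit and unit of the adjunction $(\bL\alpha\inv,\bR\alpha_*)$. But since $\alpha^*=\alpha\inv$ is already exact, its left derived functor equals itself (up to the canonical $\KK$-enriched resolutions) and the adjunction $(\bL\alpha\inv,\bR\alpha_*)$ is simply the transport of $(\bL\alpha^*,\bR\alpha_*)$ along the identification $\bL\alpha^*=\bL\alpha\inv$; transporting the counit and unit along this identification yields precisely the above two compositions. This finishes the proof; I do not anticipate any genuine obstacle, as the verification is entirely formal once one bookkeeps the compatibility of the identification $\bL\alpha^*=\bL\alpha\inv$ with the tacitly fixed adjunction data from Remark~\ref{rem:fixed-data-alpha*-adjunction}.
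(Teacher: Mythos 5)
Your proposal is correct and follows the same route as the paper, whose entire proof is the one-line citation of Proposition~\ref{p:adjunction-morphisms-push-pull} and Lemma~\ref{l:ulalpha*-inv}; you have merely spelled out the compositionality and whiskering bookkeeping that the paper leaves implicit. The only detail worth noting is that Remark~\ref{rem:enhance-2-morphism-compose-and-invert} as stated covers vertical composition and inversion but not whiskering; that step instead rests on the pseudo-naturality of $\omega$ (and on $\alpha^*=\alpha\inv$ holding on the nose, so the two adjunctions literally coincide), which is harmless but is the one point your write-up silently assumes.
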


\begin{proof}
  The first statement follows from
  Proposition~\ref{p:adjunction-morphisms-push-pull}
  and Lemma~\ref{l:ulalpha*-inv}.
  The second statement follows from
  Proposition~\ref{p:adjunction-*-pull-push-in-ENH}
\end{proof}

\begin{lemma}
  Let $A$ be a
  $\kk$-algebra and
  $\alpha \colon Y \ra X$ a 
  morphism of 
  topological spaces.
  Then the obvious 2-morphism $\alpha_! \ra \alpha_*$ between
  1-morphisms $\ul\C(Y_A) \ra \ul\C(X_A)$ in $\DGCAT_\kk$ induces
  a 2-morphism $\ul\alpha_!=\ii\alpha_! \ra \ii \alpha_*
  =\ul\alpha_*$ in $\tildew{\ENH}_\kk$ and then a 2-morphism
  \begin{equation}
    \label{eq:alpha_!-to-alpha_*}
    \ul\alpha_! \ra \ul\alpha_*
  \end{equation}
  in $\ENH_\kk$ that enhances $\dR\alpha_! \ra \dR \alpha_*$.
  If $\alpha$ is proper, then 
  \eqref{eq:alpha_!-to-alpha_*} is a 2-isomorphism.
\end{lemma}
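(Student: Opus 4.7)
The statement decomposes into three tasks: defining the 2-morphism \eqref{eq:alpha_!-to-alpha_*}, verifying it enhances $\bR\alpha_! \to \bR\alpha_*$, and showing it is a 2-isomorphism when $\alpha$ is proper. The construction is immediate from the definitions: for $F \in \ul\C(Y_A)$ the sheaf $\alpha_! F$ sits inside $\alpha_* F$ as the subsheaf of sections with support proper over $\alpha$, and this assembles into a $\KK$-natural transformation $\alpha_! \to \alpha_*$ between $\KK$-functors $\ul\C(Y_A) \to \ul\C(X_A)$. Post-composing with $\ii$ and restricting the source to $\ul\II(Y_A)$ yields a 2-morphism $\ul\alpha_! = \ii\alpha_! \to \ii\alpha_* = \ul\alpha_*$ in $\tildew{\ENH}_\kk$, whose image under $\delta$ defines \eqref{eq:alpha_!-to-alpha_*}.

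To check the enhancement property I would use the flexibility in choosing derived functors, as in the proof of Proposition~\ref{p:canonical-isotrafos}: taking $\bR\alpha_! := q_X [\alpha_!] \ol{[\ii]}$ and $\bR\alpha_* := q_X [\alpha_*] \ol{[\ii]}$ makes both $\omega_{\alpha_!}$ and $\omega_{\alpha_*}$ equal to the identity 2-isomorphism. With these choices the derived natural transformation $\bR\alpha_! \to \bR\alpha_*$ is literally $q_X[\alpha_! \to \alpha_*] \ol{[\ii]}$, while the 2-morphism in the middle column of diagram \eqref{eq:sigma-omega=omega-tau-diagram} is $[\ii(\alpha_! \to \alpha_*)] \ol{[\ii]} = \ol{[\ii]} q_X [\alpha_! \to \alpha_*] \ol{[\ii]}$, so commutativity becomes tautological. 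The main (mild) obstacle is the bookkeeping of compatible strict models of the derived functors; once one notices that $\alpha_!$ and $\alpha_*$ are already defined pointwise in the same style and differ only by a subsheaf inclusion, this step reduces to unwinding the identifications.

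For the final claim, when $\alpha$ is proper every section of $\alpha_* F$ has support proper over $\alpha$: the support is closed in $Y$, and $\alpha$ restricted to any closed subspace of $Y$ is again proper. Hence $\alpha_! F = \alpha_* F$ as subsheaves of $\alpha_* F$ for every $F$, so the transformation $\alpha_! \to \alpha_*$ is already an isomorphism of $\KK$-functors; applying $\ii$ preserves this, producing an objectwise homotopy equivalence in $\tildew{\ENH}_\kk$, which in turn becomes a 2-isomorphism in $\ENH_\kk$ by Proposition~\ref{p:useful-props-delta-ENH-ol[]}\ref{enum:useful-ohe-iff-delta-2-isom}.
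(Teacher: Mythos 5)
Your construction and your verification of the enhancement property follow the paper's (very terse) proof in spirit: the paper simply asserts that $\ul\alpha_! \ra \ul\alpha_*$ ``clearly'' enhances $\bR\alpha_! \ra \bR\alpha_*$, and your explicit choice of strict models $\bR\alpha_! = q_{X_A}[\alpha_!]\ol{[\ii]}$ and $\bR\alpha_* = q_{X_A}[\alpha_*]\ol{[\ii]}$ making $\omega_{\alpha_!}$ and $\omega_{\alpha_*}$ identities is exactly the bookkeeping that makes this tautological. Where you genuinely diverge is the properness claim. The paper argues on the derived level: $\bR\alpha_! \ra \bR\alpha_*$ is known to be an isomorphism for proper $\alpha$, and then Proposition~\ref{p:useful-props-delta-ENH-ol[]}.\ref{enum:useful-reflects-2-isos} (the functor $[-]\colon \ENH_\kk \ra \TRCAT_\kk$ reflects 2-isomorphisms) upgrades this to a 2-isomorphism in $\ENH_\kk$. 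You instead argue underived: for proper $\alpha$ the support of any section of $\alpha_*F$ is closed in $Y$, hence proper over the base, so $\alpha_!F = \alpha_*F$ already as subsheaves; thus the transformation is an isomorphism of $\KK$-functors, $\ii$ of it is an objectwise homotopy equivalence, and Proposition~\ref{p:useful-props-delta-ENH-ol[]}.\ref{enum:useful-ohe-iff-delta-2-isom} applies. Both are valid. Your route is more elementary and self-contained (it does not need the derived-level isomorphism as an input), but it leans on the precise definition of $\alpha_!$ and of properness in \cite{wolfgang-olaf-locallyproper}; the paper's route is more robust, since it reuses the reflection principle that is the standard mechanism throughout the article for promoting known triangulated-level isomorphisms to 2-isomorphisms in $\ENH_\kk$ without reexamining the underlying sheaf theory.
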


\begin{proof}
  Clearly, $\ul\alpha_! \ra \ul\alpha_*$ enhances $\dR \alpha_!
  \ra \dR\alpha_*$. If
  $\alpha$ is proper then 
  $\alpha_!=\alpha_*$ and hence 
  $\ul\alpha_! \ra \ul\alpha_*$ is a 2-isomorphism.
\end{proof}

We need some preparations for Proposition~\ref{p:compos-alpha-!}.

\begin{remark}
  \label{rem:L-id}
  For 
  $\alpha=\id_X$ the morphism
  $\id \sira A_X \otimes (-)
  \xra{\eqref{eq:flat-alpha-c-soft-reso} \otimes(-)}
  \mathcal{L} \otimes (-) =
  \id_!^\mathcal{L}$ 
  between left adjoints induces, by 
  Remark~\ref{rem:morphism-between-left-adjoints} below, a
  morphism 
  \begin{equation}
    \label{eq:id^!_L-to-id}
    \id^!_\mathcal{L} \ra \sheafHom(A_X,-) \sira \id
  \end{equation}
  between the corresponding right adjoints. 
  We have
  $\id_\mathcal{L}^! \cong \sheafHom(\mathcal{L},-)$. In
  particular,
  the 
  evaluation of
  \eqref{eq:id^!_L-to-id}
  at an h-injective object is
  a quasi-isomorphism, and even a homotopy equivalence because
  $\id_\mathcal{L}^!$ preserves h-injectives. 
  Of course, we could also assume that 
  $\mathcal{L}_{\id_X}=A_X$.
\end{remark}

\begin{lemma}
  \label{l:pull-tensor-is-alphabeta-acyclic}
  Let $A$ be a
  $\kk$-algebra and
  let $Z \xra{\beta} Y \xra{\alpha} X$ be separated, locally
  proper 
  morphisms of 
  topological spaces with
  $\alpha_! \colon 
  \Mod(Y_A) \ra \Mod(X_A)$ 
  and $\beta_! \colon 
  \Mod(Z_A) \ra \Mod(Y_A)$ 
  of finite cohomological dimension.
  Assume that $\mathcal{A} \in \Mod(Y_A)$ 
  is flat and $\alpha$-c-soft 
  and that $\mathcal{B} \in \Mod(Z_A)$
  is flat and $\beta$-c-soft. 
  Then 
  $\beta\inv(\mathcal{A}) \otimes \mathcal{B} \otimes
  \mathcal{T}$ is $(\alpha\beta)_!$-acyclic, for any $\mathcal{T}
  \in \Mod(Z_A)$. 
\end{lemma}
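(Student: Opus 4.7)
The plan is to exploit the factorization $(\alpha\beta)_! \cong \alpha_! \beta_!$ and reduce the claim to two separate acyclicity statements, one for $\beta_!$ and one for $\alpha_!$.

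First, I would set up notation and observe that $\beta^{-1}$ preserves flatness (since $\beta^{-1}$ is exact on sheaves of modules and takes stalks to stalks), so $\beta^{-1}(\mathcal{A})$ is flat. Put
\[
\mathcal{F} := \beta^{-1}(\mathcal{A}) \otimes \mathcal{B} \otimes \mathcal{T}.
\]
Since the question is about acyclicity for $(\alpha\beta)_!$, and since we can assume $(\alpha\beta)_! = \alpha_! \beta_!$ with a Grothendieck-style composition-of-derived-functors spectral sequence (available because $\alpha, \beta$ are separated and locally proper of finite cohomological dimension, by the results of \cite{wolfgang-olaf-locallyproper}), it suffices to show two things:
(i) $\mathcal{F}$ is $\beta_!$-acyclic, and
(ii) $\beta_!(\mathcal{F})$ is $\alpha_!$-acyclic.

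For (i), the key point is the projection formula together with the standard fact (cf.\ \cite[Prop.~2.5.13, Prop.~2.6.6]{KS} in the classical setting, suitably adapted in \cite{wolfgang-olaf-locallyproper}) that a tensor product of a flat $\beta$-c-soft sheaf with an arbitrary sheaf is again $\beta$-c-soft. Applying this to $\mathcal{B} \otimes \mathcal{T}$ shows that $\mathcal{B} \otimes \mathcal{T}$ is $\beta$-c-soft (and flat components are irrelevant here), hence $\beta_!$-acyclic. I would then invoke the (underived) projection formula
\[
\beta_!\bigl(\beta^{-1}(\mathcal{A}) \otimes \mathcal{B} \otimes \mathcal{T}\bigr)
\;\cong\; \mathcal{A} \otimes \beta_!(\mathcal{B} \otimes \mathcal{T}),
\]
which is valid because $\beta^{-1}(\mathcal{A})$ is flat. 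The analogous identity at the level of higher direct images, combined with $\beta_!$-acyclicity of $\mathcal{B} \otimes \mathcal{T}$, gives $R^i\beta_!(\mathcal{F}) = 0$ for $i>0$, so $\mathcal{F}$ is $\beta_!$-acyclic and $\beta_!(\mathcal{F}) \cong \mathcal{A} \otimes \beta_!(\mathcal{B} \otimes \mathcal{T})$.

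For (ii), apply the same principle at the level of $\alpha$: since $\mathcal{A}$ is flat and $\alpha$-c-soft, the tensor product $\mathcal{A} \otimes \beta_!(\mathcal{B} \otimes \mathcal{T})$ is $\alpha$-c-soft and hence $\alpha_!$-acyclic. Combining (i) and (ii) with the Leray-type composition of the derived functors $\alpha_!$ and $\beta_!$ shows that $R^i(\alpha\beta)_!(\mathcal{F}) = 0$ for $i>0$, which is the desired conclusion. The main obstacle to this plan is a careful justification of the projection formula and of the permanence of flat c-softness under tensor products in the present generality of sheaves of $A$-modules; both statements are classical in the abelian-group setting and extend in a straightforward way (as done systematically in \cite{wolfgang-olaf-locallyproper}), but one has to check that the constant sheaf $A_Y$ behaves well enough for the stalkwise arguments on flatness and c-softness to go through unchanged.
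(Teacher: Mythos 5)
Your proposal is correct and follows essentially the same route as the paper: both reduce via $(\alpha\beta)_! = \alpha_!\beta_!$ to showing that $\mathcal{F}=\beta\inv(\mathcal{A})\otimes\mathcal{B}\otimes\mathcal{T}$ is $\beta_!$-acyclic and that $\beta_!(\mathcal{F})\cong\mathcal{A}\otimes\beta_!(\mathcal{B}\otimes\mathcal{T})$ is $\alpha_!$-acyclic, using the facts that a flat c-soft sheaf tensored with an arbitrary sheaf is again c-soft and the (underived) projection formula. The only differences are cosmetic: the paper tracks exactness through an explicit injective resolution rather than invoking a Leray-type spectral sequence, and in your step (i) you could avoid the detour through a derived projection formula by applying the flat-c-soft-tensor lemma directly to $\mathcal{B}$, which shows that $\mathcal{F}$ itself is $\beta$-c-soft.
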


\begin{proof}
  Let 
  \begin{equation}
    \label{eq:betainvA}
    0 \ra
    \beta\inv(\mathcal{A}) \otimes \mathcal{B} \otimes
    \mathcal{T} \ra I^0 \ra I^1 \ra \dots
  \end{equation}
  be an injective resolution.
  By
  \cite[Lemmas~5.5, 7.4]{wolfgang-olaf-locallyproper},
  $\beta\inv(\mathcal{A}) \otimes \mathcal{B} \otimes
  \mathcal{T}$
  is $\beta$-c-soft and hence $\beta_!$-acyclic.
  Hence 
  \begin{equation}
    \label{eq:beta_!betainvA}
    0 \ra \beta_!(\beta\inv(\mathcal{A}) \otimes \mathcal{B} \otimes
    \mathcal{T}) \ra \beta_!(I^0) \ra \beta_!(I^1) \ra
  \end{equation}
  is an exact sequence. All objects $\beta_!(I^n)$ are
  $\alpha$-c-soft, by
  \cite[5.3, Lemma~5.7]{wolfgang-olaf-locallyproper}, and hence
  $\alpha_!$-acyclic. We also have an isomorphism
  \begin{equation}
    \label{eq:5}
    \beta_!(\beta\inv(\mathcal{A}) \otimes \mathcal{B} \otimes
    \mathcal{T}) \sila
    \mathcal{A} \otimes \beta_!(\mathcal{B} \otimes
    \mathcal{T})
  \end{equation}
  by \cite[Thm.~6.2]{wolfgang-olaf-locallyproper}, and the object
  on the right is $\alpha$-c-soft and hence $\alpha_!$-acyclic by 
  \cite[Lemmas~5.5, 7.4]{wolfgang-olaf-locallyproper}. 
  Hence \eqref{eq:beta_!betainvA} stays exact when we apply
  $\alpha_!$. Hence, using $\alpha_! \beta_!=(\alpha\beta)_!$,
  see \cite[Thm.~3.6]{wolfgang-olaf-locallyproper}, 
  \eqref{eq:betainvA} stays exact when applying
  $(\alpha\beta)_!$.
  This implies the lemma.
\end{proof}

\begin{remark}
  \label{rem:L-alpha-beta}
  Let $A$ be a
  $\kk$-algebra and let
  $Z \xra{\beta} Y \xra{\alpha} X$ be separated, locally proper
  morphisms of 
  topological spaces with
  $\alpha_! \colon 
  \Mod(Y_A) \ra \Mod(X_A)$ 
  and $\beta_! \colon 
  \Mod(Z_A) \ra \Mod(Y_A)$ 
  of finite cohomological dimension.
  Then
  $\alpha\beta$ is also separated and locally proper with
  $(\alpha\beta)_! \colon \Mod(Z_A) \ra \Mod(X_A)$ of finite
  cohomological dimension, by
  \cite[Thm.~8.3.(b)]{wolfgang-olaf-locallyproper}.

  Consider the quasi-isomorphisms
  $A_Z=\beta\inv A_Y \ra
  \beta\inv(\mathcal{L}_\alpha)$, $A_Z \ra \mathcal{L}_\beta$ and
  $A_Z \ra \mathcal{L}_{\alpha\beta}$ between bounded complexes
  with flat 
  components.
  Suitable tensor products of these quasi-isomorphisms yield
  quasi-isomorphisms 
  \begin{equation}
    \label{eq:quisos-flat-alphabeta-c-soft}
    \mathcal{L}_{\alpha\beta}
    \ra
    \mathcal{M}:=\mathcal{L}_{\alpha\beta} \otimes
    \beta\inv(\mathcal{L}_\alpha) 
    \otimes \mathcal{L}_\beta 
    \la
    \mathcal{N}:=
    \beta\inv(\mathcal{L}_\alpha)
    \otimes \mathcal{L}_\beta 
  \end{equation}
  between bounded complexes with flat 
  components.
  We can apply 
  Proposition~\ref{p:alpha_!^L-adjoints}
  to the map
  $\alpha \beta$
  and the three complexes
  $\mathcal{L}_{\alpha\beta}$,
  $\mathcal{M}$ and $\mathcal{N}$:
  this is obvious for
  $\mathcal{L}_{\alpha\beta}$
  and follows from 
  Lemma~\ref{l:pull-tensor-is-alphabeta-acyclic}
  for 
  $\mathcal{M}$ and $\mathcal{N}$.
  Hence the adjunctions
  $((\alpha\beta)_!^{\mathcal{L}_{\alpha\beta}},
  (\alpha\beta)^!_{\mathcal{L}_{\alpha\beta}})$, 
  $((\alpha\beta)_!^\mathcal{M},
  (\alpha\beta)^!_\mathcal{M})$ 
  and
  $((\alpha\beta)_!^\mathcal{N}, (\alpha\beta)^!_\mathcal{N})$
  are available. 

  If $T \in \C(Z_A)$ is arbitrary, applying $(-) \otimes T$
  to 
  \eqref{eq:quisos-flat-alphabeta-c-soft}
  yields quasi-isomorphisms between componentwise
  $(\alpha\beta)_!$-acyclic complexes, by 
  \cite[Lemmas~5.5, 7.4]{wolfgang-olaf-locallyproper}
  and Lemma~\ref{l:pull-tensor-is-alphabeta-acyclic}. 
  Hence
  $(\alpha\beta)_!$ maps these quasi-isomorphisms to
  quasi-isomorphisms, by 
  \cite[Lemma~12.4.(b)]{wolfgang-olaf-locallyproper}
  using the fact that 
  $(\alpha\beta)_! \colon \Mod(Z_A) \ra \Mod(X_A)$ has finite
  cohomological dimension. This shows that the
  evaluation of the zig-zag 
  \begin{equation}
    (\alpha\beta)_!^{\mathcal{L}_{\alpha\beta}} 
    \ra
    (\alpha\beta)_!^{\mathcal{M}}
    \la
    (\alpha\beta)_!^{\mathcal{N}}
  \end{equation}
  of 2-morphisms 
  in $\DGCAT_\kk$ at any $T \in \ul\C(Z_A)$ is a zig-zag of 
  quasi-isomorphisms.
  Remark~\ref{rem:morphism-between-left-adjoints} below therefore
  yields a zig-zag 
  \begin{equation}
    \label{eq:alphabeta^!-LMN}
    (\alpha\beta)^!_{\mathcal{L}_{\alpha\beta}} 
    \la
    (\alpha\beta)^!_{\mathcal{M}}
    \ra
    (\alpha\beta)^!_{\mathcal{N}}
  \end{equation}
  of 2-morphisms in $\DGCAT_\kk$. We claim that the evaluation of
  this zig-zag at 
  an h-injective object $I \in \ul\C(X_A)$ consists of 
  homotopy equivalences (between h-injectives). 
  
  Let $T \in \ul\C(Z_A)$ and consider the commutative diagram 
  \begin{equation}
    \xymatrix{
      {[\ul\C_{X_A}]((\alpha\beta)_!^{\mathcal{M}}T, I)}
      \ar[r]^-{\sim}
      \ar[d]_-{\sim}
      & 
      {[\ul\C_{X_A}]((\alpha\beta)_!^{\mathcal{N}}T, I),}
      \ar[d]_-{\sim}
      \\
      {[\ul\C_{Z_A}](T, (\alpha\beta)^!_{\mathcal{M}}I)}
      \ar[r]
      &
      {[\ul\C_{Z_A}](T, (\alpha\beta)^!_{\mathcal{N}}I)}
    }
  \end{equation}
  cf.\ \eqref{eq:conjugate-maps}.
  Its upper horizontal arrow is an isomorphism because $I$ is
  h-injective and $(\alpha\beta)_!^\mathcal{N} T \ra
  (\alpha\beta)_!^\mathcal{M} T$ is a quasi-isomorphism as
  observed above. The vertical isomorphisms 
  come
  from 
  the adjunctions.
  Since $T$ was arbitrary, the Yoneda lemma shows that 
  $(\alpha\beta)^!_{\mathcal{M}}I \ra
  (\alpha\beta)^!_{\mathcal{N}}I$ is an isomorphism in
  $[\ul\C(Z_A)]$ and hence a homotopy equivalence in $\C(Z_A)$.
  The same argument with $\mathcal{N}$
  replaced by $\mathcal{L}_{\alpha\beta}$ shows the second half
  of the claim. 

  Since $\mathcal{L}_\alpha$ has flat components,
  \cite[Thms.~3.6, 6.2]{wolfgang-olaf-locallyproper} 
  provide a 2-isomorphism
  \begin{equation}
    \alpha_!^{\mathcal{L}_\alpha}\beta_!^{\mathcal{L}_\beta}
    =
    \alpha_!(\mathcal{L}_\alpha \otimes
    \beta_!(\mathcal{L}_\beta \otimes (-)))
    \sira
    \alpha_!\beta_!(\beta\inv(\mathcal{L}_\alpha) \otimes
    \mathcal{L}_\beta \otimes (-))
    =   
    (\alpha\beta)_!^\mathcal{N} 
  \end{equation}
  in $\DGCAT_\kk$ which corresponds by  
  Remark~\ref{rem:morphism-between-left-adjoints} below to a 
  2-isomorphism
  \begin{equation}
    \label{eq:alphabeta^!-N-alpha_!-beta_!}
    (\alpha\beta)^!_{\beta\inv(\mathcal{L}_\alpha) \otimes
    \mathcal{L}_\beta}
    \sira
    \beta^!_{\mathcal{L}_\beta}
    \alpha^!_{\mathcal{L}_\alpha}.
  \end{equation}
\end{remark}

\begin{proposition}
  \label{p:compos-alpha-!}
  Let $A$ be a $\kk$-algebra and let
  $Z \xra{\beta} Y \xra{\alpha} X$ be separated, locally proper
  morphisms of topological spaces with
  $\alpha_! \colon \Mod(Y_A) \ra \Mod(X_A)$ and
  $\beta_! \colon \Mod(Z_A) \ra \Mod(Y_A)$ of finite
  cohomological dimension.  Then the zig-zags
  \begin{align}
    \label{eq:id_!}
    \ul\id_! 
    & = \ii \xla{[\sim]} \id,\\
    \label{eq:alphabeta_!}
    \ul{(\alpha\beta)}_!
    & =
      \ii (\alpha\beta)_!
      =\ii \alpha_! \beta_! 
      \xra{[\sim]}
      \ii\alpha_!\ii\beta_!
      =      
      \ul\alpha_!\ul\beta_!,
    \\
    \ul\id^! 
    & = \id^!_{\mathcal{L}_{\id}}
      \xra[{\eqref{eq:id^!_L-to-id}}]{[\sim]}
      \id,\\
    \label{eq:alphabeta^!-tilde-ENH}
    \ul{(\alpha\beta)}^! 
    & = (\alpha\beta)^!_{\mathcal{L}_{\alpha\beta}} 
      \xla[{\eqref{eq:alphabeta^!-LMN}_{\text{left}}}]{[\sim]}
      (\alpha\beta)^!_{\mathcal{L}_{\alpha\beta} \otimes
      \beta\inv(\mathcal{L}_\alpha) 
      \otimes \mathcal{L}_\beta}
      \xra[{\eqref{eq:alphabeta^!-LMN}_{\text{right}}}]{[\sim]}
      (\alpha\beta)^!_{\beta\inv(\mathcal{L}_\alpha)
      \otimes \mathcal{L}_\beta}
    \\
    \notag
    & \qquad \qquad \qquad \qquad \qquad \qquad \qquad \qquad
      \qquad \qquad
      \xra[{\eqref{eq:alphabeta^!-N-alpha_!-beta_!}}]{\sim}
      \beta^!_{\mathcal{L}_\beta}
      \alpha^!_{\mathcal{L}_\alpha} 
      =
      \ul\beta^!\ul\alpha^!
  \end{align}
  of objectwise homotopy 
  equivalences in $\tildew{\ENH}_\kk$ define 2-isomorphisms
  \begin{align}
    \label{eq:id_!-ENH}
    \ul\id_! 
    & \sila \id,
    \\
    \label{eq:alphabeta_!-ENH}
    \ul{(\alpha \beta)}_! 
    & \sila \ul\alpha_! \ul\beta_!,
    \\
    \label{eq:id^!-ENH}
    \ul\id^!
    & \sira \id,
    \\ 
    \label{eq:alphabeta^!-ENH}
    \ul{(\alpha\beta)}^!
    & \sira \ul\beta^!\ul\alpha^!
  \end{align}
  in $\ENH_\kk$
  that enhance the isomorphisms
  $\dR \id_! \cong \id$, 
  $\dR(\alpha \beta)_! \cong \dR \alpha_! \dR \beta_!$,
  $\id^! \cong \id$,
  $(\alpha \beta)^! \cong \beta^! \alpha^!$ from
  \cite[Thm.~8.3.(b)]{wolfgang-olaf-locallyproper}
\end{proposition}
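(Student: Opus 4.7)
The plan is to handle each of the four zig-zags in turn, following exactly the template set up in Lemma~\ref{l:composition-inverse-direct-image}: first verify that every arrow labeled $[\sim]$ in the zig-zag is an objectwise homotopy equivalence so that Remark~\ref{rem:zig-zags-define-2-morph-in-ENH} produces a 2-morphism in $\ENH_\kk$; second verify that the remaining arrows are also objectwise homotopy equivalences, forcing 2-isomorphism status; and third verify the enhancement property of Definition~\ref{d:enhance-2-morphism} by chasing the canonical 2-isomorphisms $\omega_{\ud\alpha_!}=\eqref{eq:omega-alpha_!}$ and $\omega_{\ud\alpha^!}=\eqref{eq:omega-alpha^!}$ through diagram~\eqref{eq:sigma-omega=omega-tau-diagram}.

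The identity statements \eqref{eq:id_!-ENH} and \eqref{eq:id^!-ENH} are immediate. For \eqref{eq:id_!-ENH}, evaluation of $\iotaii$ at $J\in\ul\II(Y_A)$ is a quasi-isomorphism between h-injective complexes and hence a homotopy equivalence. For \eqref{eq:id^!-ENH}, Remark~\ref{rem:L-id} supplies exactly the required fact: \eqref{eq:id^!_L-to-id} evaluated at an h-injective object is a quasi-isomorphism between h-injective (hence weakly h-injective) objects by preservation properties of $\alpha^!_\mathcal{L}$, so again a homotopy equivalence. For the composition $\ul{(\alpha\beta)}_!\sila \ul\alpha_!\ul\beta_!$ in \eqref{eq:alphabeta_!-ENH}, the first arrow is the strict equality $\alpha_!\beta_!=(\alpha\beta)_!$ of \cite[Thm.~3.6]{wolfgang-olaf-locallyproper}, and the second arrow $\ii\alpha_!\beta_!J \to \ii\alpha_!\ii\beta_!J$ for $J\in\ul\II(Z_A)$ must be shown to be a homotopy equivalence. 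This reduces to showing that $\alpha_!$ applied to the quasi-isomorphism $\iotaii_{\beta_!J}\colon \beta_!J\to\ii\beta_!J$ remains a quasi-isomorphism; since $\alpha_!$ has finite cohomological dimension one can resolve by $\alpha_!$-acyclic objects on both sides and use that $\beta_!J$ and $\ii\beta_!J$ have all componentwise $\alpha$-c-soft resolutions (use that $\ii$ preserves h-injectivity), followed by applying $\ii$ to convert the resulting quasi-isomorphism into a homotopy equivalence.

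For \eqref{eq:alphabeta^!-ENH}, the analysis of Remark~\ref{rem:L-alpha-beta} already does almost all the work: the two middle arrows in \eqref{eq:alphabeta^!-tilde-ENH} evaluated on h-injective objects are quasi-isomorphisms between h-injective objects, hence homotopy equivalences, and the final arrow \eqref{eq:alphabeta^!-N-alpha_!-beta_!} is a genuine 2-isomorphism in $\DGCAT_\kk$ coming via Remark~\ref{rem:morphism-between-left-adjoints} from the strict isomorphism of left adjoints given by \cite[Thms.~3.6, 6.2]{wolfgang-olaf-locallyproper}. The enhancement claims follow diagram-theoretically: for the $!$-pushforward zig-zags one argues as in the proof of Lemma~\ref{l:composition-inverse-direct-image} (replacing $\alpha_*$ by $\alpha_!$ and using $\omega_{\alpha_!}$ in place of $\omega_{\alpha_*}$), invoking the universal property of right derived functors since $\bR\alpha_!$ can be computed via $\ii\alpha_!$ on h-injective complexes.

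The main obstacle will be the enhancement claim for \eqref{eq:alphabeta^!-ENH}. Here one cannot directly appeal to a universal property on the level of derived categories, because $\ul\alpha^!$ is defined not as $\ii$ applied to $\alpha^!_\mathcal{L}$ but as $\alpha^!_\mathcal{L}$ itself. The strategy is to exploit uniqueness of right adjoints: the classical $(\alpha\beta)^! \sira \beta^!\alpha^!$ of \cite[Thm.~8.3.(b)]{wolfgang-olaf-locallyproper} is the unique 2-isomorphism whose conjugate (with respect to the adjunctions $(\bR(\alpha\beta)_!,(\alpha\beta)^!)$ and $(\bR\alpha_!\bR\beta_!,\beta^!\alpha^!)$) is the inverse of the standard $\bR(\alpha\beta)_!\sira \bR\alpha_!\bR\beta_!$. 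One then checks that each arrow in \eqref{eq:alphabeta^!-tilde-ENH} is, under the relevant adjunctions $(\alpha_!^{\mathcal{L}_{\alpha\beta}},\alpha^!_{\mathcal{L}_{\alpha\beta}})$, $((\alpha\beta)_!^\mathcal{M},(\alpha\beta)^!_\mathcal{M})$, $((\alpha\beta)_!^\mathcal{N},(\alpha\beta)^!_\mathcal{N})$, $(\alpha_!^{\mathcal{L}_\alpha}\beta_!^{\mathcal{L}_\beta},\beta^!_{\mathcal{L}_\beta}\alpha^!_{\mathcal{L}_\alpha})$, conjugate to the corresponding arrow in the analogous zig-zag for $_!$'s used to construct \eqref{eq:alphabeta_!-ENH}. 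Once this compatibility is pinned down arrow-by-arrow (using Remark~\ref{rem:morphism-between-left-adjoints} at each step, and Lemma~\ref{eq:l-iotaii-ii-vs-ii-iotaii} to handle the $\ii\iotaii$ vs.\ $\iotaii\ii$ ambiguity as in Proposition~\ref{p:pullback-comp-adjoint-pushforward-comp}), the enhancement of $(\alpha\beta)^!\sira\beta^!\alpha^!$ by \eqref{eq:alphabeta^!-ENH} drops out by the aforementioned uniqueness.
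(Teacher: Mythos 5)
Your proposal is correct and follows essentially the same route as the paper: the paper's proof is a three-line delegation to Remarks~\ref{rem:L-id} and \ref{rem:L-alpha-beta} (which contain exactly the homotopy-equivalence and conjugation arguments you spell out) and to the proof of \cite[Thm.~8.3.(b)]{wolfgang-olaf-locallyproper} for the $\ul{(\alpha\beta)}_!$ case, where the componentwise $\alpha$-c-softness of $\beta_!J$ and $\ii\beta_!J$ makes your acyclicity argument work without taking further resolutions. Your conjugation strategy for the enhancement of \eqref{eq:alphabeta^!-ENH} is precisely what is implicit in the construction of the maps \eqref{eq:alphabeta^!-LMN} and \eqref{eq:alphabeta^!-N-alpha_!-beta_!} via Remark~\ref{rem:morphism-between-left-adjoints}.
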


\fussnote{
  Aus der Konstruktion sollte recht schnell folgen, dass die
  beiden Morphismen 
  \eqref{eq:alphabeta^!-ENH}
  und 
  \eqref{eq:alphabeta_!-ENH}
  konjugiert zueinander sind (und ebenso f\"ur die beiden
  anderen).
}

\begin{proof}
  The claim for $\ul\id_!$ is obvious, and that for 
  $\ul{(\alpha \beta)}_!$ follows from the proof of
  \cite[Thm.~8.3.(b)]{wolfgang-olaf-locallyproper}.  
  The claims for $\ul\id^!$ and $\ul{(\alpha\beta)}^!$ follow
  from Remarks~\ref{rem:L-id} and \ref{rem:L-alpha-beta}. 
\end{proof}

We need some preparations for 
Proposition~\ref{p:adjunction-morphisms-proper-push-pull}. 

\begin{remark}
  \label{rem:alpha_!-vs-alpha_!^L}
  Let $A$ be a
  $\kk$-algebra and
  $\alpha \colon Y \ra X$ a separated, locally proper
  morphism of 
  topological spaces with
  $\alpha_! \colon 
  \Mod(Y_A) \ra \Mod(X_A)$ 
  of finite cohomological dimension.
  We claim that the evaluation of the 2-morphism
  \begin{equation}
    \label{eq:alpha_!-vs-alpha_!^L}
    \alpha_! \sira \alpha_!^{A_Y}= \alpha_!(A_Y \otimes -) 
    \xra{\alpha_!(\eqref{eq:flat-alpha-c-soft-reso} \otimes(-))}
    \alpha_!(\mathcal{L} \otimes -) 
    = \alpha_!^\mathcal{L}.
  \end{equation}
  in $\DGCAT_\kk$
  at any componentwise injective object $J \in
  \ul\C(Y_A)$, and in particular at 
  any object of $\ul\II(Y_A)$, is a
  quasi-isomorphism. 

  Let $J \in \ul\C(Y_A)$ be componentwise injective. Then
  the quasi-isomorphism  
  \eqref{eq:flat-alpha-c-soft-reso} between h-flat objects gives
  rise to the quasi-isomorphism
  \begin{equation}
    J \sira A_Y \otimes J \xra{\eqref{eq:flat-alpha-c-soft-reso}
      \otimes \id} \mathcal{L} \otimes J
  \end{equation}
  between complexes of
  $\alpha_!$-acyclic objects, 
  by 
  Proposition~\ref{p:spalt-5.7-sites}
  and \cite[5.3, Lemmas 5.5,
  7.4]{wolfgang-olaf-locallyproper}.
  Therefore applying $\alpha_!$ preserves this quasi-isomorphism,
  by \cite[Lemma~12.4.(b)]{wolfgang-olaf-locallyproper}. This
  shows the claim.
\end{remark}

\begin{proposition}
  \label{p:adjunction-morphisms-proper-push-pull}
  Let $A$ be a
  $\kk$-algebra and
  $\alpha \colon Y \ra X$ a separated, locally proper
  morphism of 
  topological spaces with
  $\alpha_! \colon 
  \Mod(Y_A) \ra \Mod(X_A)$ 
  of finite cohomological dimension.
  Then the zig-zags
  \begin{align}
    \label{eq:zigzag-a_!a^!-id}
    \ul\alpha_! \ul\alpha^! 
    & = \ii\alpha_! \alpha^!_\mathcal{L}
      \xra[{\ii\eqref{eq:alpha_!-vs-alpha_!^L}
      \alpha^!_\mathcal{L}}]{[\sim]}  
      \ii\alpha_!^\mathcal{L} \alpha^!_\mathcal{L}
      \ra
      \ii 
      \xla{[\sim]}
      \id,
    \\
    \label{eq:zigzag-id-a^!a_!}
    \id 
    & \ra
      \alpha^!_\mathcal{L} \ii \alpha_!^\mathcal{L}
      \xla[{\alpha^!_\mathcal{L}\ii\eqref{eq:alpha_!-vs-alpha_!^L}}]
      {[\sim]} 
      \alpha^!_\mathcal{L} \ii \alpha_!
      = \ul\alpha^! \ul\alpha_!
  \end{align}
  of 2-morphisms in $\tildew{\ENH}_\kk$ 
  where the first arrow in \eqref{eq:zigzag-id-a^!a_!}
  is the composition
  $\id \ra
  \alpha^!_\mathcal{L} \alpha_!^\mathcal{L}
  \ra
  \alpha^!_\mathcal{L} \ii \alpha_!^\mathcal{L}$
  define 2-morphisms
  \begin{align}
    \label{eq:a_!a^!-id}
    \ul\alpha_! \ul\alpha^! & \ra \id,\\
    \label{eq:id-a^!a_!}
    \id & \ra \ul\alpha^! \ul\alpha_! 
  \end{align}
  in $\ENH_\kk$ that enhance the counit $(\dR\alpha_!) \alpha^! \ra
  \id$ and the unit $\id \ra \alpha^!(\dR\alpha_!)$
  of the adjunction $(\dR \alpha_!, \alpha^!)$.
\end{proposition}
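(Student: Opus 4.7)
The first task is to verify that both zig-zags actually define 2-morphisms in $\ENH_\kk$, i.\,e.\ that the arrows marked $[\sim]$ are objectwise homotopy equivalences. For \eqref{eq:zigzag-a_!a^!-id}, I evaluate at $I \in \ul\II(X_A)$: by the discussion in \ref{sec:more-fixed-data} the functor $\alpha^!_\mathcal{L}$ preserves $\II$-fibrant objects, so $\alpha^!_\mathcal{L}(I)$ is componentwise injective; Remark~\ref{rem:alpha_!-vs-alpha_!^L} then shows that $\eqref{eq:alpha_!-vs-alpha_!^L}$ evaluated at $\alpha^!_\mathcal{L}(I)$ is a quasi-isomorphism, which $\ii$ turns into a homotopy equivalence by Remark~\ref{rem:ii-maps-qisos-to-htpy-equis}. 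For \eqref{eq:zigzag-id-a^!a_!} and $J \in \ul\II(Y_A)$, the morphism $\ii\eqref{eq:alpha_!-vs-alpha_!^L}_J$ is a homotopy equivalence between $\II$-fibrant objects by the same argument, and $\alpha^!_\mathcal{L}$ preserves it because it is a $\KK$-functor. Thus Remark~\ref{rem:zig-zags-define-2-morph-in-ENH} applies and yields the 2-morphisms \eqref{eq:a_!a^!-id} and \eqref{eq:id-a^!a_!}.

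To verify the enhancement property I use the following normalization. By choosing $\bR\alpha_! = q_{X_A}[\alpha_!]\ol{[\ii]}$ I arrange that $\omega_{\alpha_!}$ is the identity. For $\alpha^!$ I use the composition~\eqref{eq:2}: the discussion in \ref{sec:lifts-deriv-funct-2} shows that \eqref{eq:2} is right adjoint to $\bR\alpha_!$ with unit and counit inherited from the fixed adjunction $(\alpha_!^\mathcal{L}, \alpha^!_\mathcal{L})$, and these are precisely what defines $\omega_{\alpha^!}$. Concretely, the counit $\theta^\D \colon \bR\alpha_! \alpha^! \ra \id$ and the unit $\eta^\D \colon \id \ra \alpha^! \bR\alpha_!$ can be written as explicit zig-zags involving the unit $\eta^\mathcal{L}$ and counit $\theta^\mathcal{L}$ of $(\alpha_!^\mathcal{L}, \alpha^!_\mathcal{L})$ together with the natural transformation $\iotaii$ and the 2-morphism \eqref{eq:alpha_!-vs-alpha_!^L}. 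This is parallel to the way $\bL\alpha^* \bR\alpha_* \ra \id$ was analyzed in the proof of Proposition~\ref{p:adjunction-morphisms-push-pull}.

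For the counit, I evaluate $[\eqref{eq:a_!a^!-id}]\ol{[\ii]}$ at $F \in \D(Y_A)$ and compare with $\ol{[\ii]}\theta^\D$ evaluated at $F$; both are represented by morphisms in $[\ul\II(X_A)]$ built out of $\iotaii$, $\ii\eqref{eq:alpha_!-vs-alpha_!^L}\alpha^!_\mathcal{L}$ and $\ii\theta^\mathcal{L}$. I organize this into a commutative diagram in $\II(X_A)$ analogous to the one appearing in the proof of Proposition~\ref{p:adjunction-morphisms-push-pull}, where the only non-tautological identification is between the two obvious maps $\ii\alpha_!\alpha^!_\mathcal{L} \ii F \ra \ii\ii\alpha_!\alpha^!_\mathcal{L} F$ (one via $\iotaii$ applied to $\ii$, one via $\ii$ applied to $\iotaii$). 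This identification holds in the homotopy category by Lemma~\ref{eq:l-iotaii-ii-vs-ii-iotaii}, or equivalently by \eqref{eq:phi-ii-ii-id-corollar} from Proposition~\ref{p:compare-K-enri-I-fibr-repl-functors}, which is exactly the ingredient used in the analogous argument in Proposition~\ref{p:adjunction-morphisms-push-pull}.

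For the unit, I run the same pattern in the other direction: I set up $\tilde\eta \colon q_{Y_A} \ra \alpha^!_\mathcal{L}\ii\alpha_!$ from the zig-zag \eqref{eq:zigzag-id-a^!a_!}, argue that it factors uniquely through the Verdier localization to give a transformation $\id \ra \alpha^! \bR\alpha_!$ which agrees with the unit of the derived adjunction, and check the required commutation by evaluating both sides at $E \in [\ul\C(Y_A)]$. The comparison is again reduced to identifying two a priori different maps produced by $\iotaii$ applied at successive stages, and \eqref{eq:phi-ii-ii-id-corollar} disposes of the discrepancy. The main obstacle in the whole argument is therefore purely bookkeeping: tracking carefully how the derived unit/counit of $(\bR\alpha_!, \alpha^!)$ is transported through the identification of $\alpha^!$ with the composition \eqref{eq:2} and verifying that the resulting explicit formula coincides, after applying $[-]$ and the canonical 2-isomorphisms $\omega_{\alpha_!}$, $\omega_{\alpha^!}$, with the 2-morphisms defined by the stated zig-zags.
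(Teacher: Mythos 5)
Your proposal is correct and follows the paper's intended route: the paper's own proof consists only of the observation that $\ii\eqref{eq:alpha_!-vs-alpha_!^L}$ is an objectwise homotopy equivalence (via Remark~\ref{rem:alpha_!-vs-alpha_!^L} and the fact that $\alpha^!_\mathcal{L}$ preserves $\II$-fibrant objects), with the enhancement verification explicitly left to the reader. Your filling-in of that omitted part --- normalizing $\bR\alpha_!$ and taking $\alpha^!$ to be the composition \eqref{eq:2} with its explicit unit and counit, then reducing the comparison to $[\iotaii_{\ii F}]=[\ii\iotaii_F]$ exactly as in the proof of Proposition~\ref{p:adjunction-morphisms-push-pull} --- is precisely the argument the author is gesturing at.
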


The first arrow in \eqref{eq:zigzag-id-a^!a_!}
is not expanded as the given composition since
the $\KK$-functor $\alpha^!_\mathcal{L} \alpha_!^\mathcal{L}$ 
does not land in $\ul\II(Y_A)$ in general.

\begin{proof}
  Note that 
  $\ii\eqref{eq:alpha_!-vs-alpha_!^L} \colon \ii \alpha_! \ra \ii
  \alpha_!^\mathcal{L}$ considered as a 2-morphism between
  1-morphisms $\ul\II(Y_A) \ra \ul\II(X_A)$ in $\tildew{\ENH}_\kk$
  is an objectwise homotopy equivalence, by
  Remark~\ref{rem:alpha_!-vs-alpha_!^L}.
  We leave the rest of the proof to the reader.
\end{proof}

\begin{proposition}
  \label{p:proper-base-change}
  Let  $A$ be a
  $\kk$-algebra and
  \begin{equation}
    \xymatrix{
      {Y'} \ar[r]^-{\beta'} \ar[d]^-{\alpha'} &
      {Y} \ar[d]^-{\alpha} \\
      {X'} \ar[r]^-{\beta} &
      {X}
    }
  \end{equation}
  a cartesian diagram of topological spaces with $\alpha$
  separated, locally proper, and with $\alpha_! \colon
  \Mod(Y_A) \ra \Mod(X_A)$ of finite cohomological dimension.
  Let $t \colon
  \beta\inv \alpha_! \sira \alpha'_! \beta'^{-1}$ be the 
  isomorphism obtained from
  \cite[Thm.~4.4]{wolfgang-olaf-locallyproper}. Then the zig-zag 
  \begin{equation}
    \label{eq:proper-base-change-zigzag}
    \ul{\beta}\inv \ul\alpha_! 
    = 
    \ii \beta\inv \ii \alpha_! 
    \xlongla{[\sim]} 
    \ii \beta\inv \alpha_!
    \xlongsira{\ii t} 
    \ii \alpha'_! \beta'^{-1}
    \xlongra{[\sim]} 
    \ii \alpha'_! \ii \beta'^{-1}
    =
    \ul\alpha'_! \ul\beta'^{-1}
  \end{equation}
  of objectwise homotopy equivalences
  in $\tildew{\ENH}_\kk$ defines a 2-isomorphism
  \begin{equation}
    \label{eq:proper-base-change-ENH}
    \ul{\beta}\inv \ul\alpha_! 
    \sira 
    \ul\alpha'_! \ul\beta'^{-1}
  \end{equation}
  in $\ENH_\kk$ that enhances the derived proper
  base change isomorphism
  \begin{equation}
    \label{eq:proper-base-change-D}
    \dL\beta\inv \dR\alpha_! 
    \sira 
    \dR\alpha'_! \dL\beta'^{-1}  
  \end{equation}
  from
  \cite[Thm.~8.3.(c)]{wolfgang-olaf-locallyproper}. 
\end{proposition}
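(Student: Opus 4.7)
The plan is to verify first that every arrow in the zig-zag \eqref{eq:proper-base-change-zigzag} is an objectwise homotopy equivalence, so that by Remark~\ref{rem:zig-zags-define-2-morph-in-ENH} it defines a 2-isomorphism \eqref{eq:proper-base-change-ENH} in $\ENH_\kk$, and then to check the enhancement property by applying $[-] \colon \ENH_\kk \ra \TRCAT_\kk$ and unwinding the canonical 2-isomorphisms $\omega_{\ud\alpha_!}$, $\omega_{\ud\alpha'_!}$ from \eqref{eq:omega-alpha_!} and $\omega_{\ud\beta\inv}$, $\omega_{\ud{\beta'}^{-1}}$ from \eqref{eq:omega-alpha-inv}.

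First I would evaluate the outer arrows. At $J \in \ul\II(Y_A)$, the left arrow is $\ii\beta\inv \iotaii_{\alpha_!J} \colon \ii\beta\inv\alpha_!J \ra \ii\beta\inv\ii\alpha_!J$. Since $\iotaii_{\alpha_!J}$ is a quasi-isomorphism, $\beta\inv$ is exact, and $\ii$ turns quasi-isomorphisms into homotopy equivalences (Remark~\ref{rem:ii-maps-qisos-to-htpy-equis}), this arrow is a homotopy equivalence. The middle arrow $\ii t$ is induced by an honest natural isomorphism and hence a 2-isomorphism in $\tildew{\ENH}_\kk$. For the right arrow $\ii\alpha'_!\iotaii_{\beta'^{-1}I} \colon \ii\alpha'_!\beta'^{-1}I \ra \ii\alpha'_!\ii\beta'^{-1}I$ at $I \in \ul\II(X_A)$, I would first observe that $\alpha'$ is separated and locally proper with $\alpha'_! \colon \Mod(Y'_A) \ra \Mod(X'_A)$ of finite cohomological dimension (the separated and locally proper properties are stable under base change and the bound on cohomological dimension is inherited via $t$ applied to a stalk, cf.\ the proof of \cite[Thm.~8.3.(c)]{wolfgang-olaf-locallyproper}). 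By the analogue of \cite[Lemma~12.4.(b)]{wolfgang-olaf-locallyproper}, $\alpha'_!$ therefore sends the quasi-isomorphism $\iotaii_{\beta'^{-1}I}$ to a quasi-isomorphism, and then $\ii$ turns it into a homotopy equivalence.

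Next I would address the enhancement statement. By Definition~\ref{d:enhance-2-morphism}, this amounts to checking that the diagram \eqref{eq:sigma-omega=omega-tau-diagram} with $\ud v = \ud\beta\inv \ud\alpha_!$, $\ud w = \ud\alpha'_! \ud{\beta'}^{-1}$, and $\sigma = \eqref{eq:proper-base-change-D}$ commutes. The 2-isomorphisms $\omega_{\ud\alpha_!}$, $\omega_{\ud{\beta'}^{-1}}$ etc.\ are defined pointwise via the insertions of $\iotaii$ and $\epsilonee$, and \eqref{eq:proper-base-change-D} is constructed in \cite[Thm.~8.3.(c)]{wolfgang-olaf-locallyproper} by deriving the non-derived base change isomorphism $t$. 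Tracing evaluations at objects of the form $\ii G$ for $G \in \ul\C(Y_A)$, the zig-zag \eqref{eq:proper-base-change-zigzag} unwinds to $t$ composed with $\iotaii$'s in precisely the same order as in the description of \eqref{eq:proper-base-change-D} after inserting the relevant $\omega$'s. The required identity then reduces to a diagram chase analogous to (but simpler than) the one in the proof of Proposition~\ref{p:adjunction-morphisms-push-pull}, together with the naturality of $\iotaii$.

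The main obstacle is the right arrow $\ii\alpha'_!\iotaii_{\beta'^{-1}(-)}$: because $\beta'^{-1}I$ need not be $\alpha'_!$-acyclic even when $I$ is h-injective, one really must use that $\alpha'_!$ has finite cohomological dimension to preserve the quasi-isomorphism $\iotaii_{\beta'^{-1}I}$ between unbounded complexes. Establishing that this cohomological dimension bound passes from $\alpha$ to $\alpha'$, and then invoking the unbounded version of \cite[Lemma~12.4.(b)]{wolfgang-olaf-locallyproper}, is the only nontrivial ingredient; the remaining arrows and the enhancement diagram are routine.
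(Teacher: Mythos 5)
Your overall strategy coincides with the paper's: check that the two outer arrows of \eqref{eq:proper-base-change-zigzag} are objectwise homotopy equivalences, note that $\ii t$ is an honest 2-isomorphism, and then compare with \eqref{eq:proper-base-change-D} after choosing concrete models $\bL\beta\inv E=\beta\inv E$, $\bR\alpha_! F=\alpha_!\ii F$ so that the derived base change map is itself given by a zig-zag of quasi-isomorphisms. Your treatment of the left arrow and of the enhancement diagram is in line with what the paper does (the paper likewise reduces the comparison to naturality of $\iotaii$ and leaves the chase to the reader).

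The genuine problem is your justification of the right arrow $\ii\alpha'_!\iotaii_{\beta'^{-1}I}$. Finite cohomological dimension of $\alpha'_!$ is \emph{not} by itself enough to conclude that $\alpha'_!$ preserves the quasi-isomorphism $\iotaii_{\beta'^{-1}I}\colon \beta'^{-1}I \ra \ii\beta'^{-1}I$: the lemma you invoke (the analogue of \cite[Lemma~12.4.(b)]{wolfgang-olaf-locallyproper}, exactly as it is used elsewhere in this paper, e.g.\ in Remarks~\ref{rem:L-alpha-beta} and \ref{rem:alpha_!-vs-alpha_!^L}) requires in addition that source and target be \emph{componentwise} $\alpha'_!$-acyclic. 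A left exact functor of finite cohomological dimension does not send arbitrary quasi-isomorphisms of unbounded complexes to quasi-isomorphisms (apply it to the acyclic complex $M \ra I^0 \ra I^1 \ra \cdots$ built from an injective resolution). Your closing paragraph even asserts that ``$\beta'^{-1}I$ need not be $\alpha'_!$-acyclic,'' which, read componentwise, would make the cited lemma inapplicable and the step fail. The missing (and true) input is that the components of $I$ are injective, hence $\alpha$-c-soft, and that $\beta'^{-1}$ carries $\alpha$-c-soft sheaves to $\alpha'$-c-soft sheaves; thus $\beta'^{-1}I$ and $\ii\beta'^{-1}I$ are componentwise $\alpha'_!$-acyclic, and only then does the finite-cohomological-dimension lemma apply. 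The paper sidesteps writing this out by citing \cite[8.3]{wolfgang-olaf-locallyproper} directly for the statement that the third arrow is an objectwise homotopy equivalence; if you want a self-contained argument you must supply the c-softness step.
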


\begin{proof}
  We can assume that $\dL \beta\inv E= \beta\inv E$ and $\dR
  \alpha_! F= \alpha_! \ii F$, for $E \in \D(X_A)$ and $F \in
  \D(Y_A)$, and similarly for $\dL \beta'^{-1}$ and $\dR
  \alpha'_!$. Then 
  $\dL\beta\inv \dR\alpha_! F
  \sira 
  \dR\alpha'_! \dL\beta'^{-1} F$ is obtained from the zig-zag
  \begin{equation}
    \beta\inv \alpha_! \ii F 
    \xsira{t\ii}
    \alpha'_! \beta'^{-1} \ii F
    \xra{\alpha'_! \iotaii_{\beta'^{-1} \ii F}}
    \alpha'_! \ii \beta'^{-1} \ii F
    \xla{\alpha'_! \ii \beta'^{-1}\iotaii_F}
    \alpha'_! \ii \beta'^{-1} F
  \end{equation}
  of quasi-isomorphisms in $\C(X'_A)$; this follows from 
  \cite[8.3]{wolfgang-olaf-locallyproper} which also shows that
  the  
  third arrow in \eqref{eq:proper-base-change-zigzag}
  is an objectwise homotopy equivalence.
  The rest of the proof
  is obvious and left to the reader.
\end{proof}

\begin{proposition}
  \label{p:projection-fml}
  Let $A$ be a
  $\kk$-algebra and
  $\alpha \colon Y \ra X$ a separated, locally proper morphism of
  topological spaces with
  $\alpha_! \colon 
  \Mod(Y_A) \ra \Mod(X_A)$ of finite cohomological dimension.
  Consider the composition
  \begin{equation}
    \label{eq:compos-for-proj-fml}
    \alpha_!(-) \otimes (-)
    \ra 
    \alpha_!((-) \otimes \alpha\inv(-))
    \xlongra{\alpha_! \iotaii ((-) \otimes \alpha\inv(-))}
    \alpha_!\ii((-) \otimes \alpha\inv(-))
  \end{equation}
  where the first arrow is obtained from
  \cite[6.1]{wolfgang-olaf-locallyproper}.
  This composition gives rise to the third arrow in the zig-zag 
  \begin{multline}
    \label{eq:projection-fml-zigzag}
    \ul{\alpha}_!(-) \ul\otimes (-)
    = 
    \ii(\ee\ii\alpha_!(-) \otimes \ee(-))
    \xlongra{[\sim]} 
    \ii(\ii\alpha_!(-) \otimes \ee(-))
    \xlongla{[\sim]} 
    \ii(\alpha_!(-) \otimes \ee(-))
    \\
    \xlongra{[\sim]}
    \ii\alpha_!\ii((-) \otimes \alpha\inv\ee(-))
    \xlongla{[\sim]}
    \ii\alpha_!\ii(\ee(-) \otimes \alpha\inv\ee(-))
    \xlongra{[\sim]}
    \ii\alpha_!\ii(\ee(-) \otimes \alpha\inv(-))
    \\
    \xlongra{[\sim]}
    \ii\alpha_!\ii(\ee(-) \otimes \ii\alpha\inv(-))
    \xlongla{[\sim]}
    \ii\alpha_!\ii(\ee(-) \otimes \ee\ii\alpha\inv(-))
    =
    \ul\alpha_!((-) \ul\otimes \;\ul\alpha\inv(-))
  \end{multline}
  of objectwise homotopy equivalences in $\tildew{\ENH}_\kk$ and defines
  a 2-isomorphism
  \begin{equation}
    \label{eq:projection-fml-ENH}
    \ul{\alpha}_!(-) \ul\otimes (-)
    \sira
    \ul\alpha_!((-) \ul\otimes \;\ul\alpha\inv(-))
  \end{equation}
  in $\ENH_\kk$ that enhances the isomorphism (the derived projection
  formula) 
  \begin{equation}
    \label{eq:projection-fml-D}
    \dR\alpha_!(-) \otimes^\dL (-)
    \sira
    \dR\alpha_!((-) \otimes^\dL \;\dL\alpha\inv(-))
  \end{equation}
  from
  \cite[Thm.~8.3.(d)]{wolfgang-olaf-locallyproper}. 
\end{proposition}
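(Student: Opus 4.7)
The plan is to verify the proposition in two stages. First, I check that each arrow of the zig-zag \eqref{eq:projection-fml-zigzag} is an objectwise homotopy equivalence in $\tildew{\ENH}_\kk$, so that Remark~\ref{rem:zig-zags-define-2-morph-in-ENH} produces a 2-isomorphism \eqref{eq:projection-fml-ENH} in $\ENH_\kk$. Second, I verify that this 2-isomorphism enhances the derived projection formula \eqref{eq:projection-fml-D} in the sense of Definition~\ref{d:enhance-2-morphism}.

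All arrows in \eqref{eq:projection-fml-zigzag} other than the third are of a routine type: they involve only $\iotaii$, $\epsilonee$, or the exactness of $\alpha\inv$, possibly tensored against an h-flat complex and then fed into the outer $\ii$. Since $\ee$ produces $\EE$-cofibrant (in particular, h-flat with flat components) objects by Lemma~\ref{l:E-cofibrant-(pullback)-h-flat}, since tensoring a quasi-isomorphism with a componentwise flat complex remains a quasi-isomorphism, and since $\ii$ converts quasi-isomorphisms into homotopy equivalences by Remark~\ref{rem:ii-maps-qisos-to-htpy-equis}, each of these arrows is an objectwise homotopy equivalence.

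The key arrow is the third one, coming from \eqref{eq:compos-for-proj-fml}. Evaluated at $(J,I) \in \ul\II(Y_A) \otimes \ul\II(X_A)$ it is the outer $\ii$ applied to
\begin{equation*}
\alpha_!(J) \otimes \ee I \longrightarrow \alpha_!(J \otimes \alpha\inv \ee I) \xra{\alpha_!\iotaii} \alpha_!\ii(J \otimes \alpha\inv \ee I).
\end{equation*}
The first map is even an isomorphism of complexes, because $\ee I$ and hence $\alpha\inv \ee I$ is componentwise flat, so the underived projection formula of \cite[Thm.~6.2]{wolfgang-olaf-locallyproper} applies degreewise. For the second map, $J$ is componentwise injective, hence componentwise soft, and $\alpha\inv \ee I$ is componentwise flat, so by \cite[Lemmas~5.5, 5.7, 7.4]{wolfgang-olaf-locallyproper} the components of $J \otimes \alpha\inv \ee I$ are $\alpha$-c-soft and thus $\alpha_!$-acyclic; the components of $\ii(J \otimes \alpha\inv \ee I)$ are injective, hence $\alpha_!$-acyclic as well. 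Because $\alpha_!$ has finite cohomological dimension, \cite[Lemma~12.4.(b)]{wolfgang-olaf-locallyproper} then forces $\alpha_!\iotaii$ to be a quasi-isomorphism, and the outer $\ii$ promotes it to a homotopy equivalence.

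For the enhancement claim, the six arrows of \eqref{eq:projection-fml-zigzag} are built from $\iotaii$, $\epsilonee$, and the underived projection morphism. Under passage to $[-]$ and the Verdier localization to $\D$, these are precisely the ingredients that define the canonical comparison 2-isomorphisms $\omega_\otimes$, $\omega_{\alpha_!}$ and $\omega_{\alpha\inv}$ of Proposition~\ref{p:canonical-isotrafos}, \eqref{eq:omega-alpha_!} and \eqref{eq:omega-alpha-inv}, together with the derived projection formula \eqref{eq:projection-fml-D} which is by construction the left-derived functor of the underived projection morphism evaluated on $\EE$-cofibrant resolutions. Checking \eqref{eq:sigma-omega=omega-tau-diagram} therefore reduces to comparing two composites of the same structural maps, in the same pattern as in Lemma~\ref{l:composition-inverse-direct-image} and Proposition~\ref{p:proper-base-change}. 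The hard part will be the acyclicity argument for the third arrow: neither source nor target of the relevant $\iotaii$ is manifestly componentwise $\alpha_!$-acyclic, so one must carefully combine softness of injective sheaves with the flat-tensor-c-soft stability of \cite[Lemmas~5.5, 5.7]{wolfgang-olaf-locallyproper} before invoking the finite cohomological dimension hypothesis via \cite[Lemma~12.4.(b)]{wolfgang-olaf-locallyproper}.
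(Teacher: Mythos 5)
Your overall strategy for the first half (check each arrow of \eqref{eq:projection-fml-zigzag} is an objectwise homotopy equivalence, with the third arrow as the only nontrivial one) matches the paper, and your observation that the first map $\alpha_!(J)\otimes \ee I \ra \alpha_!(J\otimes\alpha\inv\ee I)$ is already an isomorphism of complexes because $\ee I$ is componentwise flat is correct (it is exactly Lemma~\ref{l:proj-fml-cxs-compwise-flat}, proved later in the same section). But your argument for the second map $\alpha_!\iotaii$ has a genuine gap. You claim that the components of $J\otimes\alpha\inv\ee I$ are $\alpha$-c-soft "by \cite[Lemmas~5.5, 5.7, 7.4]{wolfgang-olaf-locallyproper}". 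The tensor-stability statement there (Lemma~7.4, as used throughout this paper, e.g.\ in Lemma~\ref{l:pull-tensor-is-alphabeta-acyclic} and Remark~\ref{rem:alpha_!-vs-alpha_!^L}) requires one tensor factor to be simultaneously \emph{flat and} $\alpha$-c-soft. Here neither factor qualifies: the components of $J$ are injective, hence $\alpha$-c-soft, but not flat; the components of $\alpha\inv\ee I$ are flat but not $\alpha$-c-soft (extensions by zero from opens are flat but typically not c-soft). So the componentwise $\alpha_!$-acyclicity of the source of $\iotaii$ does not follow, and \cite[Lemma~12.4.(b)]{wolfgang-olaf-locallyproper} cannot be invoked. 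Your closing paragraph in fact flags this as "the hard part" but does not resolve it.

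The paper avoids this issue by proving only that the \emph{composite} \eqref{eq:compos-for-proj-fml} is a quasi-isomorphism when evaluated at $(S,F)$ with $S$ componentwise $\alpha$-c-soft and $F$ h-flat: one chooses a quasi-isomorphism $P\ra F$ with $P\in\underrightarrow{\mathfrak{P}}(X_A)$ (so the components of $P$ are direct sums of $j_{U!}A_U$, for which the proof of \cite[Thm.~8.3.(d)]{wolfgang-olaf-locallyproper} gives that the top row is an isomorphism followed by a quasi-isomorphism), and then a commutative diagram comparing the rows at $P$ and at $F$, whose outer vertical arrows are quasi-isomorphisms by h-flatness of $P$, $F$, $\alpha\inv P$, $\alpha\inv F$, yields that the bottom composite is a quasi-isomorphism without ever controlling the middle term $\alpha_!(S\otimes\alpha\inv F)$ directly. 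Since only $\ii$ of the composite enters the zig-zag, this suffices. (Combined with your correct observation that the first arrow is an isomorphism at $(J,\ee I)$, the paper's argument does a posteriori show that $\alpha_!\iotaii$ is a quasi-isomorphism there, but one needs the detour through $P$ to get it.) Your sketch of the enhancement claim is at roughly the level of detail of the paper's, which also writes out the zig-zag computing \eqref{eq:projection-fml-D} and leaves the remaining verification to the reader; the substantive defect of the proposal is the acyclicity step above.
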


\begin{proof}
  Let $S \in \C(Y_A)$ have $\alpha$-c-soft components and let $F
  \in \C(X_A)$ be h-flat.
  We claim that the evaluation of the composition of
  \eqref{eq:compos-for-proj-fml} 
  at $(S,F)$ is a quasi-isomorphism.
  
  By \cite[Prop.~5.6]{spaltenstein} and
  with the notation from there, there is a
  quasi-isomorphism $P \ra F$ with $P \in
  \underrightarrow{\mathfrak{P}}(X_A)$. 
  Consider the commutative
  diagram
  \begin{equation}
    \xymatrix{
      {\alpha_! S \otimes P} \ar[r] \ar[d]
      &
      {\alpha_!(S \otimes \alpha\inv P)} \ar[r] \ar[d]
      &
      {\alpha_!\ii(S \otimes \alpha\inv P)} \ar[d]
      \\
      {\alpha_! S \otimes F} \ar[r]
      &
      {\alpha_!(S \otimes \alpha\inv F)} \ar[r]
      &
      {\alpha_!\ii(S \otimes \alpha\inv F)}
    }
  \end{equation}
  whose rows come from \eqref{eq:compos-for-proj-fml}.
  The proof of \cite[Thm.~8.3.(d)]{wolfgang-olaf-locallyproper}
  shows that the upper left horizontal arrow is an isomorphism
  and that the 
  upper right horizontal arrow is a quasi-isomorphism. The left
  and the right vertical arrows are quasi-isomorphisms because
  $F$, $P$, $\alpha\inv F$, $\alpha\inv P$ are h-flat. Hence the
  composition of the two lower horizontal arrows is a
  quasi-isomorphism. This proves our claim. 

  This claim
  and the fact that  
  $\II$-fibrant objects have injective and hence
  $\alpha$-c-soft components
  imply that 
  the third arrow in \eqref{eq:projection-fml-zigzag}
  is an objectwise homotopy equivalence. 

  We can assume that $\dL\alpha\inv E= \alpha\inv E$ and $\dR
  \alpha_! G= \alpha_! \ii G$, for $E \in \D(X_A)$ and $G \in
  \D(Y_A)$, and that $(-\otimes^\dL -) = (-\otimes \ee(-))$.
  Then \eqref{eq:projection-fml-D} evaluated at $(G,E)$
  is obtained from the zig-zag
  \begin{multline}
    \alpha_!\ii G \otimes \ee E 
    \xra{\eqref{eq:compos-for-proj-fml}_{(\ii G,\ee E)}}
    \alpha_! \ii(\ii G \otimes \alpha\inv \ee E)
    \la 
    \alpha_! \ii(\ii G \otimes \ee\alpha\inv \ee E)
    \\
    \la 
    \alpha_! \ii(G \otimes \ee\alpha\inv \ee E)
    \ra 
    \alpha_! \ii(G \otimes \ee\alpha\inv E)
  \end{multline}
  of quasi-isomorphisms in $\C(X_A)$ (the first arrow is a
  quasi-isomorphism by the above claim, and the second one
  because $\alpha\inv\ee E$ is h-flat), as follows from 
  \cite[8.3]{wolfgang-olaf-locallyproper}.
  The rest of the proof
  is obvious and left to the reader.
\end{proof}

\fussnote{
  K\"unneth formel in diesem Setting? e.\,g.\ \cite[p.~135]{KS}.
  Ich glaube aber, es ist \"aquivalent zum Proper base change...
  vermutlich habe ich das privat oder per
  \cite{iversen-coho-sheaves} gelernt.
}

Our next aim is Proposition~\ref{p:!-adjunction-sheafHom-ENH}.

Let $A$ be a $\kk$-algebra and $\alpha \colon Y \ra X$ a
separated, locally proper morphism of topological spaces.
The map constructed in \cite[6.1]{wolfgang-olaf-locallyproper}
gives rise to a 2-morphism
\begin{equation}
  \label{eq:proj-fml-cxs}
  \alpha_!(-) \otimes (-) 
  \ra \alpha_!((-) \otimes \alpha\inv(-))
\end{equation}
between 1-morphisms $(\ul\C(Y_A), \ul\C(X_A)) \ra \ul\C(X_A)$ in
$\DGCAT_\kk$. 

\begin{lemma}
  \label{l:proj-fml-cxs-compwise-flat}
  Let $A$ be a $\kk$-algebra and $\alpha \colon Y \ra X$ a
  separated, locally proper morphism of topological spaces.
  Let $F \in \C(X_A)$ 
  be componentwise
  flat and 
  $T \in \C(Y_A)$.
  Then 
  the evaluation 
  of \eqref{eq:proj-fml-cxs} at $(T,F)$ 
  is an isomorphism
 \begin{equation}
   \label{eq:proj-fml-cxs-compwise-flat}
   (\alpha_! T) \otimes F \sira \alpha_!(T \otimes \alpha\inv F)
 \end{equation}
\end{lemma}

\begin{proof}
  From 
  \cite[Thm.~6.2, Lemma 4.6]{wolfgang-olaf-locallyproper}
  we obtain the following two statements. 
  For $p$, $q \in \bZ$ the morphism
  $(\alpha_! T^p) \otimes F^q \sira \alpha_!(T^p \otimes
  \alpha\inv F^q)$ is an isomorphism.
  For
  fixed $n \in 
  \bN$, the morphism 
  $\bigoplus_{p+q =n}\alpha_!(T^p \otimes 
  \alpha\inv F^q) 
  \sira 
  \alpha_!(\bigoplus_{p+q =n}T^p \otimes 
  \alpha\inv F^q)$ is an isomorphism. The lemma follows.
\end{proof}

Assume in addition that   
$\alpha_! \colon \Mod(Y_A) \ra \Mod(X_A)$ 
has finite cohomological dimension.
From \eqref{eq:proj-fml-cxs} and symmetry we obtain 2-morphisms
\begin{align}
  \label{eq:proj-fml-cxs-with-L}
  \alpha^\mathcal{L}_!(-) \otimes (-) 
  & \ra
  \alpha^\mathcal{L}_!((-) \otimes \alpha\inv (-)),\\
  \label{eq:proj-fml-cxs-right-with-L}
  (-) \otimes \alpha^\mathcal{L}_!(-) 
  & \ra
  \alpha^\mathcal{L}_!(\alpha\inv(-) \otimes (-))
\end{align}
between 1-morphisms $(\ul\C(Y_A), \ul\C(X_A)) \ra \ul\C(X_A)$
and
$(\ul\C(X_A), \ul\C(Y_A)) \ra \ul\C(X_A)$ in
$\DGCAT_\kk$, respectively. 

Let $F$, $I \in \C(X_A)$ and $J \in \C(Y_A)$.  
The evaluation of the 2-morphism 
\eqref{eq:proj-fml-cxs-right-with-L}
at $(F, J)$, the 
adjunctions $(\alpha\inv, \alpha_*)$ and   
$(\alpha_!^\mathcal{L}, \alpha^!_\mathcal{L})$ and the
$\otimes$-$\sheafHom$-adjunction yield morphisms 
\begin{multline}
  \label{eq:derivation-!-adjunction-sheafHom}
  \ul\C_{X_A}(F, \alpha_*\sheafHom(J, \alpha^!_\mathcal{L} I))
  \sila
  \ul\C_{Y_A}(\alpha\inv F, \sheafHom(J, \alpha^!_\mathcal{L} I))
  \\
  \sila
  \ul\C_{Y_A}(\alpha\inv(F) \otimes J, \alpha^!_\mathcal{L} I)
  \sila
  \ul\C_{X_A}(\alpha_!^\mathcal{L}(\alpha\inv(F) \otimes J), I)
  \\
  \xra{\eqref{eq:proj-fml-cxs-right-with-L}_{(F,J)}^*}
  \ul\C_{X_A}(F \otimes \alpha_!^\mathcal{L}(J), I)
  \sira
  \ul\C_{X_A}(F, \sheafHom(\alpha_!^\mathcal{L}(J), I))
\end{multline}
in $\KK$. Hence the Yoneda lemma (or, more
concretely, evaluating at $F=v_!(A_V)$, for $v
\colon V \ra Y$ 
the embedding of an open subset $V$ of $Y$, for varying $V$)
and naturality in $J$ and $I$ yield a 
2-morphism 
\begin{equation}
  \label{eq:!-adjunction-sheafHom}
  \alpha_*\sheafHom(-, \alpha^!_\mathcal{L}(-))
  \ra 
  \sheafHom(\alpha_!^\mathcal{L}(-), -)
\end{equation}
between 1-morphisms $(\ul\C(Y_A)^\opp, \ul\C(X_A)) \ra \ul\C(X_A)$ in
$\DGCAT_\kk$. 

\begin{lemma}
  \label{l:!-adjunction-sheafHom-compwise-flat}
  Let $A$ be a $\kk$-algebra and $\alpha \colon Y \ra X$ a
  separated, locally proper morphism of topological spaces
  with $\alpha_! \colon 
  \Mod(Y_A) \ra \Mod(X_A)$ 
  of finite cohomological dimension.
  Then the 2-morphisms 
  \eqref{eq:proj-fml-cxs-with-L},
  \eqref{eq:proj-fml-cxs-right-with-L}
  and \eqref{eq:!-adjunction-sheafHom}
  are 2-isomorphisms.
\end{lemma}

\begin{proof}
  In order to show that \eqref{eq:!-adjunction-sheafHom} is a
  2-isomorphism we need to show that the evaluation 
  \begin{equation}
    \label{eq:!-adjunction-sheafHom-at-IJ}
    \alpha_*\sheafHom(J, \alpha^!_\mathcal{L} I)
    \ra 
    \sheafHom(\alpha_!^\mathcal{L} J, I).
  \end{equation}
  of \eqref{eq:!-adjunction-sheafHom} at 
  $J \in \ul\C(Y_A)$
  and $I \in \ul\C(X_A)$ is an isomorphism. 
  Let $F \in \ul\C(X_A)$ be componentwise flat.
  Then $\ul\C(F,-)$ applied to 
  \eqref{eq:!-adjunction-sheafHom-at-IJ} is an
  isomorphism
  because all arrows in
  \eqref{eq:derivation-!-adjunction-sheafHom}
  are isomorphisms, by
  Lemma~\ref{l:proj-fml-cxs-compwise-flat}.
  Let $v \colon V \subset Y$ be the embedding of an arbitrary
  open subset $V$ 
  of $Y$ and take $F=v_!(A_V)$. 
  This shows that  
  \eqref{eq:!-adjunction-sheafHom-at-IJ}
  evaluated at $V$ is an isomorphism.
  Hence \eqref{eq:!-adjunction-sheafHom} is a
  2-isomorphism.

  If $F \in \ul\C(X_A)$ is now an arbitrary object, we deduce that 
  all morphisms in
  \eqref{eq:derivation-!-adjunction-sheafHom}
  are in fact isomorphisms.
  Hence the Yoneda lemma implies that 
  \eqref{eq:proj-fml-cxs-right-with-L}
  is a 2-isomorphism.
  By symmetry, \eqref{eq:proj-fml-cxs-with-L} is also a
  2-isomorphism.  
  (The statement that 
  \eqref{eq:proj-fml-cxs-right-with-L} is a 2-isomorphism can
  also be proved directly: its evaluation 
  at a componentwise flat
  object $F$ and an arbitrary object $J$ is an
  isomorphism, and an arbitrary object $G \in \C(X_A)$ admits a
  short exact sequence $F' \ra F \ra G \ra 0$ in $\C(X_A)$ where
  $F$ and $F'$ have flat components.)
\end{proof}
  
\begin{proposition}
  \label{p:!-adjunction-sheafHom-ENH}
  Let $A$ be a
  $\kk$-algebra and
  $\alpha \colon Y \ra X$ a separated, locally proper
  morphism of 
  topological spaces with
  $\alpha_! \colon 
  \Mod(Y_A) \ra \Mod(X_A)$ 
  of finite cohomological dimension.
  Then the zig-zag
  \begin{multline}
    \label{eq:!-adjunction-sheafHom-tilde-ENH}
    \ul\alpha_*\ul\sheafHom(-, \ul\alpha^!(-)) 
    =
    \ii\alpha_*\ii\sheafHom(-, \alpha^!_\mathcal{L}(-))
    \xla{[\sim]}
    \ii\alpha_*\sheafHom(-, \alpha^!_\mathcal{L}(-))
    \\
    \xsira{\ii\eqref{eq:!-adjunction-sheafHom}}
    \ii\sheafHom(\alpha_!^\mathcal{L}(-), -)
    \xla{[\sim]}
    \ii\sheafHom(\ii\alpha_!^\mathcal{L}(-), -)
    =
    \ii\sheafHom(\ii\alpha_!(\mathcal{L} \otimes -), -)
    \\
    \xra{[\sim]}
    \ii\sheafHom(\ii\alpha_!(A_Y \otimes -), -)
    \xla{\sim}
    \ii\sheafHom(\ii\alpha_!(-), -)
    =
    \ul\sheafHom(\ul\alpha_!(-),-) 
  \end{multline}
  of objectwise homotopy equivalences in $\tildew{\ENH}_\kk$ defines
  a 2-isomorphism
  \begin{equation}
    \label{eq:!-adjunction-sheafHom-ENH}
    \ul\alpha_*\ul\sheafHom(-, \ul\alpha^!(-)) 
    \sira
    \ul\sheafHom(\ul\alpha_!(-),-) 
  \end{equation}
  in $\ENH_\kk$ that enhances the 2-isomorphism
  \begin{equation}
    \label{eq:!-adjunction-sheafHom-D}
    \dR\alpha_*\dR\sheafHom(-, \alpha^!(-)) 
    \sira
    \dR\sheafHom(\dR\alpha_!(-),-) 
  \end{equation}
  from \cite[Thm.~8.3.(d)]{wolfgang-olaf-locallyproper}.
\end{proposition}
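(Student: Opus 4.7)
The plan is to prove the proposition in three stages: first verify that every arrow in the zig-zag \eqref{eq:!-adjunction-sheafHom-tilde-ENH} is an objectwise homotopy equivalence (or an honest $\RR$-natural isomorphism); second, invoke Remark~\ref{rem:zig-zags-define-2-morph-in-ENH} to obtain the asserted 2-isomorphism \eqref{eq:!-adjunction-sheafHom-ENH} in $\ENH_\kk$; third, verify the enhancement condition of Definition~\ref{d:enhance-2-morphism}, i.\,e.\ that applying $[-]$ and conjugating by the canonical 2-isomorphisms $\omega_{\alpha_*}$, $\omega_{\sheafHom}$, $\omega_{\alpha^!}$, $\omega_{\alpha_!}$ of Proposition~\ref{p:canonical-isotrafos} and \ref{sec:lifts-deriv-funct-2} recovers \eqref{eq:!-adjunction-sheafHom-D}. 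The essential ingredient at the level of $\tildew{\ENH}_\kk$ is Lemma~\ref{l:!-adjunction-sheafHom-compwise-flat}, which supplies the only genuinely non-trivial arrow in the zig-zag.

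For the first stage, I would argue pointwise. Fix $J \in \ul\II(Y_A)$ and $I \in \ul\II(X_A)$. Since $\alpha^!_\mathcal{L}$ preserves h-injective and componentwise injective objects, $\alpha^!_\mathcal{L} I$ is $\II$-fibrant; hence by Proposition~\ref{p:spalt-5.14-sites-sheafHom} the complex $\sheafHom(J,\alpha^!_\mathcal{L} I)$ is weakly h-injective, and by Proposition~\ref{p:spalt-5.15-sites} the morphism to $\ii$ of it, and then $\alpha_*$ of this morphism, are weak h-injective resolutions between weakly h-injective complexes---so the first left-pointing arrow is a homotopy equivalence. The second right-pointing arrow is the $\ii$ of the genuine $\DGCAT_\kk$-isomorphism of Lemma~\ref{l:!-adjunction-sheafHom-compwise-flat}. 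The arrow $\ii\sheafHom(\ii\alpha_!^\mathcal{L}(J),I) \ra \ii\sheafHom(\alpha_!^\mathcal{L}(J),I)$ is the image under $\ii\sheafHom(-,I)$ of a quasi-isomorphism between h-injective targets in the $\sheafHom$-argument; it is a homotopy equivalence by Proposition~\ref{p:spalt-5.20-sites-sheafHom}.\ref{enum:sheafHom-to-h-inj} and Corollary~\ref{c:spalt-5.20-sites-sheafHom}. The penultimate right-pointing arrow is induced by \eqref{eq:alpha_!-vs-alpha_!^L}, which by Remark~\ref{rem:alpha_!-vs-alpha_!^L} is a quasi-isomorphism on the componentwise injective object $J$; applying $\ii\alpha_!$ and then $\ii\sheafHom(-,I)$ preserves this as a homotopy equivalence for the same reason. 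The last arrow comes from the canonical isomorphism $A_Y \otimes J \xsira{} J$ and is a $\DGCAT_\kk$-isomorphism.

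Once every arrow is seen to be an objectwise homotopy equivalence, Remark~\ref{rem:zig-zags-define-2-morph-in-ENH} produces a well-defined 2-morphism in $\ENH_\kk$, and the ``if moreover all $\beta_i$ are objectwise homotopy equivalences'' part of that remark, applied to the right-pointing arrows (each of which we have just argued is an objectwise homotopy equivalence or a genuine isomorphism), yields that this 2-morphism is a 2-isomorphism. Alternatively, once we have the 2-morphism, Proposition~\ref{p:useful-props-delta-ENH-ol[]}.\ref{enum:useful-reflects-2-isos} reduces the 2-isomorphism claim to step three.

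The main obstacle is the enhancement verification. Here the issue is that the 2-isomorphism \eqref{eq:!-adjunction-sheafHom-D} is traditionally constructed by a Yoneda-style chain of adjunction isomorphisms, whereas our 2-morphism \eqref{eq:!-adjunction-sheafHom-ENH} is constructed from the single $\DGCAT_\kk$-isomorphism \eqref{eq:!-adjunction-sheafHom} (assembled by the very same Yoneda-style recipe at the cochain level inside \eqref{eq:derivation-!-adjunction-sheafHom}) with various resolutions applied. The cleanest way I would handle this is to chase both 2-isomorphisms through the adjunctions: on the $\ENH_\kk$ side, by the adjunctions $(\ul\alpha^*,\ul\alpha_*)$ of Proposition~\ref{p:adjunction-*-pull-push-in-ENH} together with $(\ul\alpha_!,\ul\alpha^!)$ (the analogous Proposition~\ref{p:adjunction-!-push-pull-in-ENH}) and the lifts of the tensor-$\sheafHom$ adjunction; on the $\TRCAT_\kk$ side, by the derived versions of these adjunctions. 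Since the $\omega$'s of Propositions~\ref{p:ul-alpha_*-as-derived-functor}, \ref{p:ul-alpha^*-as-derived-functor}, \eqref{eq:omega-alpha_!}--\eqref{eq:omega-alpha^!} are precisely the canonical comparisons with the derived functor, the commutativity \eqref{eq:sigma-omega=omega-tau} becomes a pointwise question of comparing two elements of a $\Hom$-group that both come from the cochain-level adjunctions, and hence are equal. Concretely, it suffices---by Yoneda---to check at $F = v_!(A_V)$ for $v\colon V \hra X$ open, pulling the verification down to a manipulation with the cochain-level maps \eqref{eq:derivation-!-adjunction-sheafHom} which is forced by naturality of the $\otimes$-$\sheafHom$-adjunction and the $(\alpha_!^\mathcal{L},\alpha^!_\mathcal{L})$-adjunction.
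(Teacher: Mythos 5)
Your proposal is correct and follows essentially the same route as the paper: the arrow-by-arrow verification of the zig-zag (with Lemma~\ref{l:!-adjunction-sheafHom-compwise-flat} as the one non-trivial step and Remark~\ref{rem:alpha_!-vs-alpha_!^L} for the $\mathcal{L}$-versus-$A_Y$ comparison) matches the paper's first half, and the enhancement check via matching the cochain-level chain \eqref{eq:derivation-!-adjunction-sheafHom} against the derived-level adjunction chain defining \eqref{eq:!-adjunction-sheafHom-D} is exactly the paper's second half. The only point where the paper is more careful is that it tests against an \emph{arbitrary} $\EE$-cofibrant $F$ (so that equality of the two composites follows from naturality in $F$ by evaluating at the source object itself), rather than only at the generators $v_!(A_V)$, which by themselves would not suffice to separate two parallel morphisms in the derived category.
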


\begin{proof}
  If we evaluate 
  \eqref{eq:!-adjunction-sheafHom-tilde-ENH} 
  at $J \in \ul\II(Y_A)$ and $I \in \ul\II(X_A)$   
  the first arrow is a
  homotopy equivalence by
  Propositions~\ref{p:spalt-5.14-sites-sheafHom},
  \ref{p:spalt-5.15-sites}
  and h-injectivity of 
  $\alpha^!_\mathcal{L} I$.
  The second arrow is 
  an isomorphism by 
  Lemma~\ref{l:!-adjunction-sheafHom-compwise-flat},
  the third arrow is obviously a homotopy equivalence. 
  The fourth arrow is a homotopy equivalence because $J
  \cong A_Y
  \otimes J \ra \mathcal{L} \otimes J$ is a quasi-isomorphism
  between componentwise $\alpha$-c-soft complexes, by
  Proposition~\ref{p:spalt-5.7-sites} and
  \cite[5.3, Lemma~7.4]{wolfgang-olaf-locallyproper}, and hence
  $\alpha_!(A_Y \otimes J) \ra \alpha_!(\mathcal{L} \otimes J)$
  is a quasi-isomorphism by
  \cite[Lemma~12.4.(b)]{wolfgang-olaf-locallyproper}. 
  The fifth arrow clearly is an isomorphism.

  For $F$, $I \in \C(X_A)$ and $J \in \C(Y_A)$, 
  all arrows in   
  \eqref{eq:derivation-!-adjunction-sheafHom} are isomorphisms, by
  Lemma~\ref{l:!-adjunction-sheafHom-compwise-flat}.
  Taking the 0-th cohomology of this sequence of isomorphisms
  gives isomorphisms 
  \begin{multline}
    \label{eq:derivation-!-adjunction-sheafHom-[]}
    [\ul\C_{X_A}](F, \alpha_*\sheafHom(J, \alpha^!_\mathcal{L} I))
    \sila
    [\ul\C_{Y_A}](\alpha\inv F, \sheafHom(J, \alpha^!_\mathcal{L} I))
    \\
    \sila
    [\ul\C_{Y_A}](\alpha\inv(F) \otimes J, \alpha^!_\mathcal{L} I)
    \sila
    [\ul\C_{X_A}](\alpha_!^\mathcal{L}(\alpha\inv(F) \otimes J), I)
    \\
    \xsira{\eqref{eq:proj-fml-cxs-right-with-L}_{(F,J)}^*}
    [\ul\C_{X_A}](F \otimes \alpha_!^\mathcal{L}(J), I)
    \sira
    [\ul\C_{X_A}](F, \sheafHom(\alpha_!^\mathcal{L}(J), I)).
  \end{multline}
  Now assume that $F$ is h-flat and that $I$ is h-injective.
  Then all objects in this sequence are spaces of morphisms in
  either 
  $[\ul\C(Y_A)]$ or
  $[\ul\C(X_A)]$ which have either h-injective target or
  h-flat source and weakly h-injective target:
  this follows from
  Propositions~\ref{p:pullback-preserves-h-flat},
  \ref{p:spalt-5.14-sites-sheafHom},
  \ref{p:spalt-5.15-sites}
  and the fact that 
  $\alpha^!_\mathcal{L}$ preserves h-injectives.
  Therefore, by
  Corollary~\ref{c:spalt-5.20-sites-new},
  all these morphism spaces map isomorphically to the corresponding
  morphism spaces in the derived categories
  $\D(Y_A)$ and $\D(X_A)$, respectively.
  We can moreover assume that $\dR\sheafHom$ (resp.\ $\otimes^\dL$)
  is computed naively 
  if its second argument is h-injective (resp.\ if one of its
  arguments is h-flat), that $\dL \alpha\inv$ is computed
  naively, that $\dR \alpha_*$ is computed naively if its
  argument is weakly h-injective, that $\alpha^!=\dR
  \alpha^!_\mathcal{L}$ is computed 
  naively if its argument is h-injective, and that
  $\dR
  \alpha_!=\alpha_!^\mathcal{L}$ 
  (by the proof of
  \cite[Thm.~8.3.(a)]{wolfgang-olaf-locallyproper}). 
  Hence we get a sequence
  of isomorphisms
  \begin{multline}
    \label{eq:derivation-!-adjunction-sheafHom-D}
    \D_{X_A}(F, \dR\alpha_*\dR\sheafHom(J, \alpha^! I))
    \sila
    \D_{Y_A}(\dL\alpha\inv F, \dR\sheafHom(J, \alpha^! I))
    \\
    \sila
    \D_{Y_A}(\dL \alpha\inv(F) \otimes^\dL J, \alpha^! I)
    \sila
    \D_{X_A}(\dR\alpha_!(\dL\alpha\inv(F) \otimes^\dL J), I)
    \\
    \sira
    \D_{X_A}(F \otimes^\dL \dR \alpha_!(J), I)
    \sira
    \D_{X_A}(F, \dR\sheafHom(\dR\alpha_!(J), I))
  \end{multline}
  which combines with
  \eqref{eq:derivation-!-adjunction-sheafHom-[]} to a commutative
  diagram in the obvious way.  Moreover, the isomorphisms in
  \eqref{eq:derivation-!-adjunction-sheafHom-D} are the
  isomorphisms obtained from the adjunctions
  $(\dL\alpha\inv, \dR\alpha_*)$ and $(\dR\alpha_!, \alpha^!)$,
  the $\otimes$-$\sheafHom$-adjunction, and the derived
  projection formula 
  \eqref{eq:projection-fml-D}.  Now recall that
  \eqref{eq:!-adjunction-sheafHom-D} is by definition the
  isomorphism giving rise to the composition in
  \eqref{eq:derivation-!-adjunction-sheafHom-D}.  
  The rest of the
  proof is left to the reader.
\end{proof}

Let $A$ be a $\kk$-algebra and $\alpha \colon Y \ra X$ a
separated, locally proper morphism of topological spaces
with $\alpha_! \colon \Mod(Y_A) \ra \Mod(X_A)$ 
of finite cohomological dimension.
Let $T \in \C(Y_A)$ and $F$, $J \in \C(X_A)$.  
The evaluation of the 2-isomorphism
\eqref{eq:proj-fml-cxs-with-L} (see Lemma~\ref{l:!-adjunction-sheafHom-compwise-flat}) at $(T,F)$,
the adjunctions  
$(\alpha_!^\mathcal{L}, \alpha^!_\mathcal{L})$ and the
$\otimes$-$\sheafHom$-adjunction yield isomorphisms 
\begin{multline}
  \label{eq:derivation-upper-!-sheafHom}
  \ul\C_{Y_A}(T, \sheafHom(\alpha\inv F, \alpha^!_\mathcal{L}J))
  \sila
  \ul\C_{Y_A}(T \otimes \alpha\inv F, \alpha^!_\mathcal{L}J)
  \\
  \sila
  \ul\C_{X_A}(\alpha_!^\mathcal{L} (T \otimes \alpha\inv F), J)
  \xsira{\eqref{eq:proj-fml-cxs-with-L}_{(T,F)}^*}
  \ul\C_{X_A}(\alpha_!^\mathcal{L} (T) \otimes F, J)
  \\
  \sira
  \ul\C_{X_A}(\alpha_!^\mathcal{L} (T), \sheafHom(F, J))
  \sira
  \ul\C_{Y_A}(T, \alpha^!_\mathcal{L}\sheafHom(F, J))
\end{multline}
in $\KK$. Hence the Yoneda lemma (or, more
concretely, evaluating at $T=v_!(A_V)$ for $v \colon V \ra Y$
the embedding of an open subset $V$ of $Y$, for varying $V$)
and naturality in $F$ and $J$ yield a 
2-isomorphism 
\begin{equation}
  \label{eq:upper-!-sheafHom}
  \sheafHom(\alpha\inv(-), \alpha^!_\mathcal{L}(-))
  \sira \alpha^!_\mathcal{L}\sheafHom(-, -)
\end{equation}
between 1-morphisms $(\ul\C(X_A),\ul\C(X_A)) \ra \ul\C(Y_A)$ in
$\DGCAT_\kk$. 

\begin{proposition}
  \label{p:upper-!-sheafHom-ENH}
  Let $A$ be a
  $\kk$-algebra and
  $\alpha \colon Y \ra X$ a separated, locally proper
  morphism of 
  topological spaces with
  $\alpha_! \colon 
  \Mod(Y_A) \ra \Mod(X_A)$ 
  of finite cohomological dimension.
  Then the zig-zag
  \begin{multline}
    \label{eq:upper-!-sheafHom-tilde-ENH}
    \ul\sheafHom(\ul\alpha\inv(-), \ul\alpha^!(-)) 
    =
    \ii \sheafHom(\ii\alpha\inv(-), \alpha^!_\mathcal{L}(-)) 
    \xra{[\sim]}
    \ii \sheafHom(\alpha\inv(-), \alpha^!_\mathcal{L}(-)) 
    \\
    \xra{[\sim]}
    \ii \sheafHom(\alpha\inv\ee(-), \alpha^!_\mathcal{L}(-)) 
    \xsira{\ii\eqref{eq:upper-!-sheafHom}(\ee(-), -)}
    \ii \alpha^!_\mathcal{L} \sheafHom(\ee(-),-) 
    \\
    \xra{[\sim]}
    \ii \alpha^!_\mathcal{L}\ii\sheafHom(\ee(-),-) 
    \xla{[\sim]}
    \alpha^!_\mathcal{L}\ii\sheafHom(\ee(-),-) 
    \\
    \xla{[\sim]}
    \alpha^!_\mathcal{L}\ii\sheafHom(-,-) 
    =
    \ul\alpha^!\ul\sheafHom(-,-) 
  \end{multline}
  of objectwise homotopy equivalences in $\tildew{\ENH}_\kk$ defines
  a 2-isomorphism
  \begin{equation}
    \label{eq:upper-!-sheafHom-ENH}
    \ul\sheafHom(\ul\alpha\inv(-), \ul\alpha^!(-)) 
    \sira
    \ul\alpha^!\ul\sheafHom(-,-) 
  \end{equation}
  in $\ENH_\kk$ that enhances the 2-isomorphism
  \begin{equation}
    \label{eq:upper-!-sheafHom-D}
     \dR\sheafHom(\dL\alpha\inv(-), \alpha^!(-))
    \sira
    \alpha^!\dR\sheafHom(-,-) 
  \end{equation}
  from \cite[Thm.~8.3.(d)]{wolfgang-olaf-locallyproper}.
\end{proposition}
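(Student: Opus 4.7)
The plan is to follow the template of Proposition~\ref{p:!-adjunction-sheafHom-ENH} closely, since \eqref{eq:upper-!-sheafHom-D} is the companion of \eqref{eq:!-adjunction-sheafHom-D} in \cite[Thm.~8.3.(d)]{wolfgang-olaf-locallyproper} and the construction of \eqref{eq:upper-!-sheafHom} via \eqref{eq:derivation-upper-!-sheafHom} mirrors that of \eqref{eq:!-adjunction-sheafHom} via \eqref{eq:derivation-!-adjunction-sheafHom}. First I would fix $I, J \in \ul\II(X_A)$ and verify that the six arrows of \eqref{eq:upper-!-sheafHom-tilde-ENH} evaluated at $(I,J)$ are indeed homotopy equivalences. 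The key ingredients are: $\ee I$ is h-flat and componentwise flat (Lemma~\ref{l:E-cofibrant-(pullback)-h-flat}); $\alpha\inv \ee I$ is then h-flat (Proposition~\ref{p:pullback-preserves-h-flat}); and $\alpha^!_\mathcal{L}$ preserves h-injective and componentwise injective objects (cf.\ \ref{sec:more-fixed-data}), so $\alpha^!_\mathcal{L} J$ is h-injective.

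The first two arrows are induced by the quasi-isomorphisms $\alpha\inv\ee I \xra{\iotaii} \ii\alpha\inv\ee I$ and $\ii\alpha\inv\ee I \xla{\ii\alpha\inv \epsilonee_I \circ \iotaii} \ii\alpha\inv I$ (between h-flat complexes, using Proposition~\ref{p:pullback-preserves-h-flat} and the fact that $\ii$ preserves h-flatness of h-flat quasi-isomorphic targets up to passage through h-flats); applying $\sheafHom(-,\alpha^!_\mathcal{L}J)$ to a quasi-isomorphism between h-flat objects with h-injective target yields a quasi-isomorphism between weakly h-injective objects by Propositions~\ref{p:spalt-5.14-sites-sheafHom} and \ref{p:spalt-5.20-sites-sheafHom}, so $\ii$ of it is a homotopy equivalence. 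The third arrow is an honest isomorphism by Lemma~\ref{l:upper-!-sheafHom-compwise-flat}, since $\ee I$ is componentwise flat. The fourth and fifth arrows use the fact that $\sheafHom(\ee I, J)$ is weakly h-injective (Proposition~\ref{p:spalt-5.14-sites-sheafHom}), that $\alpha^!_\mathcal{L}$ preserves h-injectives, and that $\iotaii$ from a weakly h-injective to its h-injective resolution is a homotopy equivalence (Propositions~\ref{p:spalt-5.15-sites} and \ref{p:spalt-5.20-sites-sheafHom}). The sixth arrow is handled analogously via $\epsilonee_I$ and contravariance of $\sheafHom$, together with h-injectivity of $\ii\sheafHom(-,J)$.

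To prove the enhancement claim, I would
imitate the derivation \eqref{eq:derivation-!-adjunction-sheafHom}--\eqref{eq:derivation-!-adjunction-sheafHom-D} inside the proof of Proposition~\ref{p:!-adjunction-sheafHom-ENH}. Concretely, for $F \in \C(X_A)$ $\EE$-cofibrant and $I, J \in \C(X_A)$ with $J$ h-injective (hence $\alpha^!_\mathcal{L} J$ h-injective), all arrows in \eqref{eq:derivation-upper-!-sheafHom} with $(T,F,J)$ replaced by $(F, I, J)$ become isomorphisms (using Lemma~\ref{l:proj-fml-cxs-compwise-flat} for the arrow marked by \eqref{eq:proj-fml-cxs-with-L}). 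All entries are spaces of morphisms in $[\ul\C]$ whose source is h-flat and whose target is weakly h-injective (for the $\sheafHom$-entries this uses Proposition~\ref{p:spalt-5.14-sites-sheafHom}), so Corollary~\ref{c:spalt-5.20-sites-new} identifies them with the corresponding $\D$-morphism spaces. Choosing the derived functors to be computed naively whenever one of the above hypotheses holds, the resulting chain of isomorphisms is precisely the one obtained from the adjunctions $(\bL\alpha\inv,\bR\alpha_*)$, $(\bR\alpha_!, \alpha^!)$, $((-)\otimes^\bL F, \bR\sheafHom(F,-))$ and the derived projection formula \eqref{eq:projection-fml-D}; by definition this is the 2-isomorphism \eqref{eq:upper-!-sheafHom-D}. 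Taking a Yoneda-type argument with $F=v_!(A_V)$ ranging over extensions by zero of open subsets of $X$, as in the construction of \eqref{eq:upper-!-sheafHom}, allows the reconstruction of the 2-morphism \eqref{eq:upper-!-sheafHom-ENH} from these isomorphisms, completing the enhancement claim.

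The principal obstacle is bookkeeping: checking that every $\omega$ in \eqref{eq:sigma-omega=omega-tau-diagram} cancels correctly when one tracks the zig-zag \eqref{eq:upper-!-sheafHom-tilde-ENH} through $[\ul\II]$ and compares with \eqref{eq:upper-!-sheafHom-D}. This involves the same sort of diagram chase as the closing paragraph of the proof of Proposition~\ref{p:!-adjunction-sheafHom-ENH}, and should go through in exactly parallel fashion by combining Propositions~\ref{p:canonical-isotrafos}, \ref{p:adjunction-morphisms-push-pull}, \ref{p:adjunction-morphisms-proper-push-pull} and \ref{p:projection-fml} with the naturality of $\iotaii$ and $\epsilonee$; no new ideas beyond those already developed for \eqref{eq:!-adjunction-sheafHom-ENH} are needed.
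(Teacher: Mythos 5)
Your proposal follows the paper's own proof essentially verbatim: first check that each arrow of \eqref{eq:upper-!-sheafHom-tilde-ENH} is an objectwise homotopy equivalence using Lemmas~\ref{l:E-cofibrant-(pullback)-h-flat} and \ref{l:upper-!-sheafHom-compwise-flat}, Propositions~\ref{p:spalt-5.14-sites-sheafHom} and \ref{p:spalt-5.20-sites-sheafHom}, and preservation of h-injectives by $\alpha^!_\mathcal{L}$; then run \eqref{eq:derivation-upper-!-sheafHom} for $F$ $\EE$-cofibrant and $J$ h-injective, pass to $H^0$, identify the entries with derived-category Hom spaces, and recognize the composite as the defining composition of \eqref{eq:upper-!-sheafHom-D}. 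One small correction: for the fourth arrow you should invoke that $\sheafHom(\ee I, J)$ is actually \emph{h-injective} (the h-flat case of Proposition~\ref{p:spalt-5.14-sites-sheafHom}, since $\ee I$ is h-flat), not merely weakly h-injective --- a quasi-isomorphism out of a weakly h-injective complex need not be a homotopy equivalence, and $\alpha^!_\mathcal{L}$ only preserves homotopy equivalences.
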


\begin{proof}
  If we evaluate \eqref{eq:upper-!-sheafHom-tilde-ENH} at $I$, $J
  \in \ul\II(X_A)$  
  the first two arrows are
  homotopy equivalences because $\alpha^!_\mathcal{L}$ preserves
  h-injectives (and $\II$-fibrant objects), the third arrow is an
  isomorphism because we have seen above that 
  \eqref{eq:upper-!-sheafHom} is an isomorphism,
  the fourth arrow
  is a
  homotopy 
  equivalence 
  because $\sheafHom(\ee I, J)$ is h-injective by
  Proposition~\ref{p:spalt-5.14-sites-sheafHom},
  the fifth arrow is a homotopy equivalence because
  $\alpha^!_\mathcal{L}$ preserves h-injectives
  and the sixth arrow is obviously a homotopy
  equivalence. Hence we get the 2-isomorphism
  \eqref{eq:upper-!-sheafHom-ENH}.

  For $T \in \C(Y_A)$ and $F$, $J \in \C(X_A)$, 
  consider the sequence 
  \eqref{eq:derivation-upper-!-sheafHom} of isomorphisms.
  Taking the 0-th cohomology of this sequence gives isomorphisms
  \begin{multline}
    \label{eq:derivation-upper-!-sheafHom-[]}
    [\ul\C_{Y_A}](T, \sheafHom(\alpha\inv F, \alpha^!_\mathcal{L}J))
    \sila
    [\ul\C_{Y_A}](T \otimes \alpha\inv F, \alpha^!_\mathcal{L}J)
    \\
    \sila
    [\ul\C_{X_A}](\alpha_!^\mathcal{L} (T \otimes \alpha\inv F), J)
    \xsira{\eqref{eq:proj-fml-cxs-with-L}_{(T,F)}^*}
    [\ul\C_{X_A}](\alpha_!^\mathcal{L} (T) \otimes F, J)
    \\
    \sira
    [\ul\C_{X_A}](\alpha_!^\mathcal{L} (T), \sheafHom(F, J))
    \sira
    [\ul\C_{Y_A}](T, \alpha^!_\mathcal{L}\sheafHom(F, J)).
  \end{multline}
  Now assume that
  $F$ is h-flat and that $J$ is 
  h-injective. 
  Then all objects in this sequence are spaces of morphisms in
  either 
  $[\ul\C(Y_A)]$ or
  $[\ul\C(X_A)]$ whose target objects are h-injective:
  this follows from
  Propositions~\ref{p:pullback-preserves-h-flat},
  \ref{p:spalt-5.14-sites-sheafHom} and the fact that
  $\alpha^!_\mathcal{L}$ preserves h-injectives.  Therefore, all
  these morphism spaces map isomorphically to the corresponding
  morphism spaces 
  in the derived categories $\D(Y_A)$ and $\D(X_A)$,
  respectively. We can moreover assume that $\dR\sheafHom$
  (resp.\ $\otimes^\dL$) is computed naively if its second
  argument is h-injective (resp.\ if one of its arguments is
  h-flat), that $\dL \alpha\inv$ is computed naively, that
  $\alpha^!=\dR \alpha^!_\mathcal{L}$ is computed naively if its
  argument is h-injective, and that
  $\dR \alpha_!=\alpha_!^\mathcal{L}$.  Hence we get a sequence
  of isomorphisms
  \begin{multline}
    \label{eq:derivation-upper-!-sheafHom-D}
    \D_{Y_A}(T, \dR\sheafHom(\dL\alpha\inv F, \alpha^! J))
    \sila
    \D_{Y_A}(T \otimes^\dL \dL\alpha\inv F, \alpha^! J)
    \\
    \sila
    \D_{X_A}(\dR\alpha_! (T \otimes^\dL \dL\alpha\inv F), J)
    \sira
    \D_{X_A}((\dR \alpha_! T) \otimes^\dL F, J)
    \\
    \sira
    \D_{X_A}(\dR \alpha_! T, \dR\sheafHom(F, J))
    \sira
    \D_{Y_A}(T, \alpha^!\dR\sheafHom(F, J))
  \end{multline}
  which combines with 
  \eqref{eq:derivation-upper-!-sheafHom-[]} 
  to a commutative diagram in the obvious way.
  Moreover, the isomorphisms in
  \eqref{eq:derivation-upper-!-sheafHom-D} are the
  isomorphisms obtained from
  the $\otimes$-$\sheafHom$-adjunction,
  the adjunction $(\dR\alpha_!,
  \alpha^!)$, 
  and the 
  derived projection formula 
  \eqref{eq:projection-fml-D}.
  Now recall that
  \eqref{eq:upper-!-sheafHom-D} is by definition the isomorphism
  giving rise to the composition in 
  \eqref{eq:derivation-upper-!-sheafHom-D}.
  The rest of the proof is left to the reader.
\end{proof}

\subsubsection{Lifts of commutative diagrams}
\label{sec:lifts-comm-diagr-1}

\begin{proposition}
  \label{p:adjunction-!-push-pull-in-ENH}
  Let $A$ be a
  $\kk$-algebra and
  $\alpha \colon Y \ra X$ a separated, locally proper
  morphism of 
  topological spaces with
  $\alpha_! \colon 
  \Mod(Y_A) \ra \Mod(X_A)$ 
  of finite cohomological dimension.
  Then the 
  two 1-morphisms $\ul\alpha_!$ and $\ul\alpha^!$ and the
  two 2-morphisms 
  \eqref{eq:id-a^!a_!}
  and
  \eqref{eq:a_!a^!-id}
  form an adjunction in $\ENH_\kk$, i.\,e.\ the two diagrams in
  $\ENH_\kk$ in
  \eqref{eq:intro:alpha!-triangles}
  commute.
\end{proposition}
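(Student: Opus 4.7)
The plan is to follow very closely the strategy of Proposition~\ref{p:adjunction-*-pull-push-in-ENH}. The key starting point is that at the level of $\DGCAT_\kk$ we have a \emph{strict} adjunction $(\alpha_!^{\mathcal{L}}, \alpha^!_{\mathcal{L}})$ with unit $\eta \colon \id \to \alpha^!_{\mathcal{L}} \alpha_!^{\mathcal{L}}$ and counit $\theta \colon \alpha_!^{\mathcal{L}} \alpha^!_{\mathcal{L}} \to \id$ satisfying the usual triangle identities. The counit $\ul\alpha_! \ul\alpha^! \to \id$ and unit $\id \to \ul\alpha^! \ul\alpha_!$ in $\ENH_\kk$ are, by the very definition in \eqref{eq:zigzag-a_!a^!-id} and \eqref{eq:zigzag-id-a^!a_!}, obtained by composing $\eta$ and $\theta$ with the 2-morphism $\ii\eqref{eq:alpha_!-vs-alpha_!^L}\colon \ii\alpha_! \to \ii\alpha_!^{\mathcal{L}}$ (which is an objectwise homotopy equivalence between 1-morphisms $\ul\II(Y_A) \to \ul\II(X_A)$ by Remark~\ref{rem:alpha_!-vs-alpha_!^L}) and with the transformation $\iotaii$.

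First I would verify the triangle identity on $\ul\alpha^!$, i.\,e.\ that the composition
\[
\ul\alpha^! \xra{\eqref{eq:id-a^!a_!}\ul\alpha^!} \ul\alpha^!\ul\alpha_!\ul\alpha^! \xra{\ul\alpha^!\eqref{eq:a_!a^!-id}} \ul\alpha^!
\]
equals $\id$ in $\ENH_\kk$. Fix $I \in \ul\II(X_A)$ and expand each of the two arrows using \eqref{eq:zigzag-id-a^!a_!} and \eqref{eq:zigzag-a_!a^!-id} to obtain a zig-zag of explicit morphisms in $\ul\II(Y_A)$ built out of $\eta$, $\theta$, $\iotaii$, $\ii\eqref{eq:alpha_!-vs-alpha_!^L}$ and their ``$\alpha^!_{\mathcal{L}}$-prolongations''. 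I would then draw the analogue of the large cube appearing in the proof of Proposition~\ref{p:adjunction-*-pull-push-in-ENH}: the vertical faces realize the expansion of the two composites as zig-zags, while one face of the cube is precisely the image under $\ii$ (or $\alpha^!_\mathcal{L}$) of the strict triangle identity $\alpha^!_\mathcal{L} \xra{\eta \alpha^!_\mathcal{L}} \alpha^!_\mathcal{L}\alpha_!^\mathcal{L}\alpha^!_\mathcal{L} \xra{\alpha^!_\mathcal{L}\theta} \alpha^!_\mathcal{L}$, which equals the identity. Every left-pointing edge of this cube is either $\iotaii$, $\ii\iotaii$, or the image under $\alpha^!_\mathcal{L}$ of such; all of these are objectwise homotopy equivalences (using that $\alpha^!_\mathcal{L}$ preserves $\II$-fibrants) so become invertible in $\ENH_\kk$.

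The second triangle identity $\ul\alpha_! \xra{\ul\alpha_!\eqref{eq:id-a^!a_!}} \ul\alpha_!\ul\alpha^!\ul\alpha_! \xra{\eqref{eq:a_!a^!-id}\ul\alpha_!} \ul\alpha_!$ is then handled by a symmetric diagram, with the roles of $\ul\alpha_!$ and $\ul\alpha^!$ interchanged; the fact that $\alpha_!$ applied after a weak equivalence of $\II$-fibrants may not automatically produce an $\II$-fibrant is fixed by the intervening $\ii$'s, and the diagram again collapses to the identity after one applies the strict triangle identity $\alpha_!^\mathcal{L} \xra{\alpha_!^\mathcal{L}\eta} \alpha_!^\mathcal{L}\alpha^!_\mathcal{L}\alpha_!^\mathcal{L} \xra{\theta\alpha_!^\mathcal{L}} \alpha_!^\mathcal{L}$ and uses the equivalence between $\ii\alpha_!$ and $\ii\alpha_!^\mathcal{L}$ supplied by $\ii\eqref{eq:alpha_!-vs-alpha_!^L}$.

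The main obstacle, exactly as in Proposition~\ref{p:adjunction-*-pull-push-in-ENH}, is that after tracing around the cube the two arrows emanating from the ``base object'' are of the form $\iotaii_{\ii M}$ and $\ii\iotaii_M$ for some intermediate $M \in \ul\C(Y_A)$ resp.\ $\ul\C(X_A)$, and these are a priori distinct morphisms in $\ul\II$. This is resolved by invoking Proposition~\ref{p:compare-K-enri-I-fibr-repl-functors}, concretely the identity \eqref{eq:phi-ii-ii-id-corollar}, which gives $\delta(\ii\iotaii_M) = \delta(\iotaii_{\ii M})$ in $\ENH_\kk$. Once this identification is used, the two ``blue'' and ``green'' paths around the cube coincide in $\ENH_\kk$, and the commutativity of the rest of the diagram -- which is a commutative diagram already in $\DGCAT_\kk$ obtained by naturality of $\eta$, $\theta$, $\iotaii$ and $\eqref{eq:alpha_!-vs-alpha_!^L}$ -- shows that the composite in question is the identity 2-morphism in $\ENH_\kk$, as required.
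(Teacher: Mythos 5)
Your proposal is correct and follows exactly the route the paper intends: the paper's own proof of this proposition simply states that it is "similar to that of Proposition~\ref{p:adjunction-*-pull-push-in-ENH} and left to the reader," and you have carried out precisely that analogous argument — expanding the zig-zags \eqref{eq:zigzag-a_!a^!-id} and \eqref{eq:zigzag-id-a^!a_!}, reducing to the strict triangle identities of the fixed adjunction $(\alpha_!^{\mathcal{L}}, \alpha^!_{\mathcal{L}})$ in $\DGCAT_\kk$, and resolving the only genuine discrepancy via $\delta(\ii\iotaii_M)=\delta(\iotaii_{\ii M})$ from \eqref{eq:phi-ii-ii-id-corollar}. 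No gaps.
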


\begin{proof}
  The proof is similar to that of 
  Proposition~\ref{p:adjunction-*-pull-push-in-ENH} and left to
  the reader. 
\end{proof}

\fussnote{
  relation to 
  Proposition~\eqref{p:!-adjunction-sheafHom-ENH}?
}

\subsubsection{Some other lifts}
\label{sec:some-other-lifts-1}

\begin{lemma}
  \label{l:ul-G-and-functors-!}
  Let $A$ be a
  $\kk$-algebra and
  $\alpha \colon Y \ra X$ a 
  morphism of 
  topological spaces.
  Let $E \in \C(X_A)$ and $F \in \C(Y_A)$.
  The 2-morphisms
  \begin{align}
    \ul\alpha\inv \ul E
    & =
      \ii \alpha\inv \ii E 
      \xla{[\sim]}
      \ii \alpha\inv E
      =
      \ul{\alpha\inv E},\\
    \ul{\alpha_! F}
    & = \ii \alpha_! F
      \ra \ii\alpha_! \ii F = \ul\alpha_! \ul F
  \end{align}
  in $\tildew{\ENH}_\kk$
  define 2-morphisms
  \begin{align}
    \label{eq:alpha-inv-ul-obj}
    \ul\alpha\inv \ul E
    & \sira
      \ul{\alpha\inv E} 
    & \text{(always a 2-isomorphism)},\\
    \label{eq:alpha_!-ul-obj}
    \ul{\alpha_! F}
    & \ra \ul\alpha_! \ul F
    & \text{(2-isomorphism if $F$ is h-inj.)}
  \end{align}
  in $\ENH_\kk$ that enhance the obvious 2-morphisms
  $\dL\alpha\inv(E) \sira \alpha\inv E$ and
  $\alpha_! F \ra \dR\alpha_!(F)$.
\end{lemma}

Note that $\alpha^!$ does not appear here because it is usually
not viewed as a derived functor.

\begin{proof}
  Left to the reader. 
\end{proof}

\begin{lemma}
  \label{l:ul-object-2-functors}
  Let $A$ be a
  $\kk$-algebra and
  $\alpha \colon Y \ra X$ a 
  morphism of 
  topological spaces.
  Let $e \colon E \ra E'$ in $\C(X_A)$ and $f \colon F \ra F'$ in
  $\C(Y_A)$ be morphisms.
  Then the following diagrams in $\ENH_\kk$ are commutative.
  \begin{equation}
    \label{eq:ul-alpha-2-comm}
    \xymatrix{
      {\ul{\alpha}\inv\ul{E}} 
      \ar[d]_-{\ul{\alpha}\inv\ul{e}}
      \ar[r]^-{\eqref{eq:alpha-inv-ul-obj}}_-{\sim}
      & 
      {\ul{\alpha\inv E}} 
      \ar[d]_-{\ul{\alpha\inv e}}
      \\
      {\ul{\alpha}\inv\ul{E'}} 
      \ar[r]^-{\eqref{eq:alpha-inv-ul-obj}}_-{\sim}
      & 
      {\ul{\alpha\inv E'}} 
    }
    \quad\quad\quad
    \xymatrix{
      {\ul{\alpha_!G}} 
      \ar[r]^-{\eqref{eq:alpha_!-ul-obj}}
      \ar[d]_-{\ul{\alpha_!g}}
      & 
      {\ul{\alpha}_!\ul{G}}
      \ar[d]_-{\ul\alpha_!\ul{g}}
      \\
      {\ul{\alpha_!G'}} 
      \ar[r]^-{\eqref{eq:alpha_!-ul-obj}}
      & 
      {\ul{\alpha}_!\ul{G}'}
    }
  \end{equation}
\end{lemma}

\begin{proof}
  Left to the reader. 
\end{proof}

\subsubsection{Subsequently constructed lifts of
  2-(iso)morphisms}
\label{sec:subs-constr-lifts-1}

As in \ref{sec:subs-constr-lifts} we deduce some consequences.

\begin{lemma}
  \label{l:c:proper-base-change}
  Under the assumptions of Proposition~\ref{p:proper-base-change}
  the composition
  \begin{equation}
    \label{eq:alpha_!beta'_*-beta_*alpha'_!-ENH}
    \ul\alpha_!\ul\beta'_* 
    \xlongra{\eqref{eq:id-aainv}\ul\alpha_!\ul\beta'_*}
    \ul\beta_*\ul\beta\inv\ul\alpha_!\ul\beta'_* 
    \xlongsira{\ul\beta_*\eqref{eq:proper-base-change-ENH}\ul\beta'_*}
    \ul\beta_*\ul\alpha'_!\ul{\beta}'^{-1}\ul\beta'_* 
    \xlongra{\ul\beta_*\ul\alpha'_!\eqref{eq:ainva-id}}
    \ul\beta_*\ul\alpha'_!
  \end{equation}
  in $\ENH_\kk$ enhances the 2-morphism
  \begin{equation}
    \label{eq:alpha_!beta'_*-beta_*alpha'_!-D}
    \dR\alpha_!\dR\beta'_* \ra \dR\beta_*\dR\alpha'_!  
  \end{equation}
  which is adjoint to the composition
  $\dL\beta\inv \dR\alpha_! \dR \beta'_* 
  \xsira{\eqref{eq:proper-base-change-D}\dR \beta'_*}
  \dR\alpha'_! \dL\beta'^{-1} \dR \beta'_* \ra \dR\alpha'_!$. 
\end{lemma}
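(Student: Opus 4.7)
The plan is to show this by combining three enhancement results already in hand and then recognizing the resulting $\TRCAT_\kk$-composition as the claimed adjoint morphism. First, I would observe that the three constituent 2-morphisms in the composition \eqref{eq:alpha_!beta'_*-beta_*alpha'_!-ENH} each enhance a known 2-morphism in $\TRCAT_\kk$: by Lemma~\ref{l:adjunction-morphisms-push-pullinv}, \eqref{eq:id-aainv} enhances the unit $\eta^\D \colon \id \to \bR\beta_*\bL\beta\inv$ of the adjunction $(\bL\beta\inv, \bR\beta_*)$, and \eqref{eq:ainva-id} enhances the counit $\theta'^\D \colon \bL\beta'^{-1}\bR\beta'_* \to \id$ of the adjunction $(\bL\beta'^{-1}, \bR\beta'_*)$; and by Proposition~\ref{p:proper-base-change}, the middle 2-morphism $\eqref{eq:proper-base-change-ENH}$ enhances the proper base change isomorphism $\eqref{eq:proper-base-change-D}$.

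Next, invoking Remark~\ref{rem:enhance-2-morphism-compose-and-invert} (the obvious closure of the relation ``$\tau$ enhances $\sigma$'' under whiskering and vertical composition, applied here with appropriate horizontal whiskering by the identity 2-morphisms of $\ul\alpha_!$, $\ul\beta'_*$, $\ul\beta_*$, $\ul\alpha'_!$), the whole composition \eqref{eq:alpha_!beta'_*-beta_*alpha'_!-ENH} enhances the composite 2-morphism in $\TRCAT_\kk$
\begin{equation*}
\bR\alpha_!\bR\beta'_*
\xra{\eta^\D \bR\alpha_!\bR\beta'_*}
\bR\beta_*\bL\beta\inv\bR\alpha_!\bR\beta'_*
\xra{\bR\beta_*\eqref{eq:proper-base-change-D}\bR\beta'_*}
\bR\beta_*\bR\alpha'_!\bL\beta'^{-1}\bR\beta'_*
\xra{\bR\beta_*\bR\alpha'_!\theta'^\D}
\bR\beta_*\bR\alpha'_!.
\end{equation*}

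Finally, it remains to identify this composite with \eqref{eq:alpha_!beta'_*-beta_*alpha'_!-D}. This is a purely formal matter about adjunctions: for any adjunction $(L,R,\eta,\theta)$ (here $(\bL\beta\inv, \bR\beta_*, \eta^\D, \theta^\D)$) and any morphism $g \colon LX \to Y$ (here $g$ is the composition $\bL\beta\inv\bR\alpha_!\bR\beta'_* \xsira{\eqref{eq:proper-base-change-D}\bR\beta'_*} \bR\alpha'_!\bL\beta'^{-1}\bR\beta'_* \xra{\bR\alpha'_!\theta'^\D} \bR\alpha'_!$, with $X=\bR\alpha_!\bR\beta'_*$ and $Y=\bR\alpha'_!$), the adjoint morphism $X \to RY$ is by definition $Rg \comp \eta_X$. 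Expanding $Rg = \bR\beta_*g$ using the naturality of whiskering gives precisely the three-arrow composite displayed above.

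The main (entirely routine) obstacle is just bookkeeping: making sure that the whiskerings in the $\ENH_\kk$-composition are consistent with the whiskerings in the $\TRCAT_\kk$-composition under the functor $[-]\colon \ENH_\kk \to \TRCAT_\kk$ and the pseudo-natural transformation $\omega$ (i.\,e.\ that the diagram \eqref{eq:sigma-omega=omega-tau-diagram} commutes for the composite); this is immediate from the fact that $[-]$ is a functor of $\kk$-linear 2-multicategories and from the component-wise compatibility already built into Definition~\ref{d:enhance-2-morphism}.
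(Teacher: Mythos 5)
Your proof is correct and follows essentially the same route as the paper: the paper's own (very terse) argument likewise observes that the whiskered proper base change composed with the counit enhancement enhances the composition $\bL\beta\inv \bR\alpha_! \bR \beta'_* \sira \bR\alpha'_! \bL\beta'^{-1} \bR \beta'_* \ra \bR\alpha'_!$ (citing Proposition~\ref{p:proper-base-change} and Remark~\ref{rem:enhance-2-morphism-compose-and-invert}) and then concludes by the standard adjoint-morphism formula $Rg \comp \eta_X$. You have merely spelled out the unit/counit enhancements (Lemma~\ref{l:adjunction-morphisms-push-pullinv}) and the final identification more explicitly than the paper does.
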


\begin{proof}
  The composition
  $\ul\beta\inv\ul\alpha_!\ul\beta'_* 
  \xlongra{\eqref{eq:proper-base-change-ENH}\ul\beta'_*}
  \ul\alpha'_!\ul\beta'^{-1}\ul\beta'_* 
  \xlongra{\ul\alpha'_!\eqref{eq:ainva-id}}
  \ul\alpha'_!$ enhances the last composition in the formulation
  of the lemma, by 
  Lemma~\ref{l:adjunction-morphisms-push-pullinv},
  Proposition~\ref{p:proper-base-change} and
  Remark~\ref{rem:enhance-2-morphism-compose-and-invert}.
  This implies the lemma.
\end{proof}

\begin{proposition}
  \label{p:proper-base-change-upper-shriek}
  Under the assumptions of Proposition~\ref{p:proper-base-change}
  the composition
  \begin{multline}
    \label{eq:proper-base-change-upper-shriek-ENH-def}
    \ul{\beta}'_*\ul\alpha'^! 
    \xra{\eqref{eq:id-a^!a_!} \ul{\beta}'_*\ul\alpha'^!} 
    \ul\alpha^!\ul\alpha_! \ul{\beta}'_*\ul\alpha'^! 
    \xra{\ul\alpha^! \eqref{eq:id-aainv} \ul\alpha_!
      \ul{\beta}'_*\ul\alpha'^!} 
    \ul\alpha^!\ul\beta_*\ul\beta\inv
    \ul\alpha_!\ul{\beta}'_*\ul\alpha'^!  
    \\
    \xra[\sim]{\ul\alpha^!\ul\beta_*
      \eqref{eq:proper-base-change-ENH} 
      \ul{\beta}'_*\ul\alpha'^!}  
    \ul\alpha^!\ul\beta_*\ul\alpha'_!\ul\beta'^{-1}
    \ul{\beta}'_*\ul\alpha'^! 
    \xra{\ul\alpha^!\ul\beta_* \ul\alpha'_! \eqref{eq:ainva-id}
      \ul\alpha'^!}  
    \ul\alpha^!\ul\beta_* \ul\alpha'_! \ul\alpha'^!
    \xra{\ul\alpha^!\ul\beta_* \eqref{eq:a_!a^!-id}} 
    \ul\alpha^!\ul\beta_*
  \end{multline}
  in $\ENH_\kk$ is a 2-isomorphism 
  \begin{equation}
    \label{eq:proper-base-change-upper-shriek-ENH}
    \ul{\beta}'_*\ul\alpha'^! 
    \sira
    \ul\alpha^!\ul\beta_* 
  \end{equation}
  that enhances the 2-isomorphism 
  \begin{equation}
    \label{eq:proper-base-change-upper-shriek-D}
    (\dR{\beta}'_*) \alpha'^! 
    \sira
    \alpha^! \dR\beta_*
  \end{equation}
  from
  \cite[Thm.~8.3.(c)]{wolfgang-olaf-locallyproper}. 
\end{proposition}

\begin{remark}
  Note that
  \eqref{eq:proper-base-change-upper-shriek-ENH}
  is a 2-isomorphism even though, in general, not all 2-morphisms
  in \eqref{eq:proper-base-change-upper-shriek-ENH-def}
  are objectwise homotopy equivalences.
\end{remark}

\begin{proof}
  We claim that the 2-isomorphism $\ul{\beta}\inv \ul\alpha_! 
  \sira
  \ul\alpha'_! \ul\beta'^{-1}$
  in \eqref{eq:proper-base-change-ENH} has 
  \eqref{eq:proper-base-change-upper-shriek-ENH}
  as its conjugate 2-morphism, and hence
  \eqref{eq:proper-base-change-upper-shriek-ENH}
  is a 2-isomorphism.
  To see the claim, specialize
  Remarks~\ref{rem:morphism-between-left-adjoints-2-categorical}  
  and \ref{rem:compose-adjunctions} below
  to the case $L= L_1L_2=\ul\beta\inv
  \ul\alpha_!$ and
  $L'= L'_1L'_2=\ul\alpha'_! \ul\beta'^{-1}$ and use 
  the adjunctions
  $(\ul\alpha_!, \ul\alpha^!)$,
  $(\ul\alpha'_!, \ul\alpha'^!)$,
  $(\ul{\beta}\inv, \ul\beta_*)$ and
  $(\ul{\beta}'^{-1}, \ul\beta'_*)$ in $\ENH_\kk$
  (see Proposition~\ref{p:adjunction-!-push-pull-in-ENH}
  and Lemma~\ref{l:adjunction-morphisms-push-pullinv}).

  The 2-isomorphism \eqref{eq:proper-base-change-upper-shriek-D}
  is by definition the 2-morphism that is conjugate to the
  2-isomorphism
  $\dL\beta\inv \dR\alpha_! 
  \sira 
  \dR\alpha'_! \dL\beta'^{-1}$
  from
  \eqref{eq:proper-base-change-D}. By the above argument, now
  using $L= L_1L_2=\dL\beta\inv
  \dR\alpha_!$ and
  $L'= L'_1L'_2=\dR\alpha'_! \dL\beta'^{-1}$, 
  it is equal to the composition
  \begin{multline}
    (\dR\beta'_*)\alpha'^! 
    \xra{\eta_2 (\dR\beta'_*)\alpha'^!} 
    \alpha^!\dR \alpha_! (\dR\beta'_*)\alpha'^! 
    \xra{\alpha^! \eta_1 \dR \alpha_! (\dR\beta'_*)\alpha'^!}
    \alpha^!\dR \beta_*\dL \beta\inv \dR
    \alpha_!(\dR\beta'_*)\alpha'^! 
    \\
    \xra{\alpha^!\dR \beta_* \eqref{eq:proper-base-change-D}
      (\dR\beta'_*)\alpha'^!}  
    \alpha^!\dR
    \beta_*\dR\alpha'_!\dL\beta'^{-1}(\dR\beta'_*)\alpha'^! 
    \\
    \xra{\alpha^!\dR \beta_* \dR\alpha'_!\theta'_2 \alpha'^!} 
    \alpha^!\dR \beta_* (\dR\alpha'_!) \alpha'^!
    \xra{\alpha^!\dR \beta_*\theta'_1} 
    \alpha^!\dR \beta_* 
  \end{multline}
  where $\eta_1$, $\eta_2$ and
  $\theta'_1$, $\theta'_2$ denote
  units and counits of the obvious adjunctions.

  Therefore, Lemma~\ref{l:adjunction-morphisms-push-pullinv},
  Propositions~\ref{p:adjunction-morphisms-proper-push-pull},
  \ref{p:proper-base-change},
  and
  Remark~\ref{rem:enhance-2-morphism-compose-and-invert}
  imply that the composition in
  \eqref{eq:proper-base-change-upper-shriek-ENH-def}
  enhances the 2-isomorphism
  \eqref{eq:proper-base-change-upper-shriek-D}. 
\end{proof}

\fussnote{
  Let us combine these two observations. Assume that
  $(L, R, \eta, \theta)$ is
  as in 
  observation 2 and that $(L'=L'_1L'_2, R'=R'_2 R'_1, \eta',
  \theta')$ is given similarly. Then 
  \eqref{eq:rho-adjunction} is given by 
  the composition
  \begin{multline}
    R'_2R'_1 
    \xra{\eta_2 R'_2R'_1} 
    R_2L_2 R'_2R'_1 
    \xra{R_2 \eta_1 L_2 R'_2R'_1}
    R_2R_1L_1L_2R'_2R'_1
    \\
    \xra{R_2R_1\lambda R'_2R'_1} 
    R_2R_1L'_1L'_2R'_2R'_1
    \xra{R_2R_1 L'_1\theta'_2 R'_1} 
    R_2R_1 L'_1 R'_1
    \xra{R_2R_1\theta'_1} 
    R_2R_1. 
  \end{multline}
  
  Now specialize to the case $L= L_1L_2=\dL\beta\inv
  \dR\alpha_!$ and 
  $L'= \dR\alpha'_! \dL\beta'^{-1}$ and let $\lambda \colon L \ra
  L'$ be given by 
  \eqref{eq:proper-base-change-D}.
}

\begin{remark}
  \label{rem:morphism-between-left-adjoints}
  Assume that $(L, R, \eta, \theta)$ and $(L', R', \eta', \theta')$
  are 
  adjunctions between functors $L,L' \colon \mathcal{A} \ra
  \mathcal{C}$ and $R, R' \colon \mathcal{C} \ra \mathcal{A}$,
  respectively. Then any transformation $\lambda \colon L \ra L'$
  gives rise to a unique transformation $\rho \colon R' \ra R$
  such that 
  \begin{equation}
    \label{eq:conjugate-maps}
    \xymatrix{
      {\mathcal{C}(L'A, C)} 
      \ar[r]^-{(\lambda_A)^*} 
      \ar[d]^-{\sim}
      &
      {\mathcal{C}(LA, C)} 
      \ar[d]^-{\sim}
      \\
      {\mathcal{A}(A, R'C)} 
      \ar[r]^-{(\rho_C)_*} 
      &
      {\mathcal{A}(A, RC)} 
    }
  \end{equation}
  commutes for all $A \in \mathcal{A}$ and $C \in \mathcal{C}$,
  and conversely.
  Then $\rho$ and $\lambda$ are conjugate in the terminology 
  of 
  \cite[IV.7]{maclane-working-mathematician},
  \cite[3.3]{lipman-notes-on-derived-functors-and-GD}.
  Moreover, $\lambda$ is an isotransformation if and only if $\rho$
  is an isotransformation
  (cf.\
  Remark~\ref{rem:morphism-between-left-adjoints-2-categorical}
  below). 
  Similar statements hold for adjunctions between
  $\KK$-functors. In fact, there is an abstract 2-categorical
  statement explained in the following
  Remark~\ref{rem:morphism-between-left-adjoints-2-categorical}. 
\end{remark}

\begin{remark}
  \label{rem:morphism-between-left-adjoints-2-categorical} 
  This remark is a variant of 
  Remark~\ref{rem:morphism-between-left-adjoints}. 
  Assume that $(L, R, \eta,
  \theta)$ and $(L', R', \eta', \theta')$ 
  are 
  adjunctions in a 2-category where $L$ and $L'$ are 1-morphisms
  $\mathcal{A} \ra \mathcal{C}$. 
  Then any 2-morphism $\lambda \colon L \ra L'$ gives rise to a
  conjugate 2-morphism $\rho \colon R' \ra R$ defined as the
  composition
  \begin{equation}
    \label{eq:rho-adjunction}
    R' \xra{\eta R'} RLR' \xra{R\lambda R'} RL'R' \xra{R\theta'} R
  \end{equation}
  which is illustrated by the following diagram.
  \begin{equation}
    \label{eq:rho-adjunction-diagram}
    \begin{tikzpicture}[baseline=(current bounding box.center),
      description/.style={fill=white,inner sep=2pt},
      natural/.style={double,double equal sign distance,-implies} ]
      
      \matrix (m) [matrix of math nodes, row sep=3em,
      column sep=2em, text height=1.5ex, text depth=0.25ex]
      {          & \mathcal{C} && \mathcal{C} && \mathcal{C}\\
        \mathcal{A} && \mathcal{A} && \mathcal{A}\\};
      \path[->,font=\scriptsize]
      (m-1-4) edge node[above]{$\id$} node[pos=0.18, below](end){} (m-1-2)
      (m-2-5) edge node[below]{$\id$} node[pos=0.82, above](start){} (m-2-3)
      (m-2-3) edge node[right] {$L$} (m-1-2)
      (m-2-5) edge node[left] {$L'$} (m-1-4)
      (m-1-2) edge node[left] {$R$} (m-2-1)
      (m-1-6) edge node[right] {$R'$} (m-2-5)

      (m-2-3) edge node[below]{$\id$} node[above](gleichD'){} (m-2-1)
      (m-1-6) edge node[above]{$\id$} node[below](gleichC){} (m-1-4);
            
      \path[->,font=\scriptsize]
      (start) edge[natural]
      node[right] {$\lambda$} (end)
      (gleichD') edge[natural,shorten >=0.2em]
      node[pos=0.4, right] {$\eta$} (m-1-2) 
      (m-2-5) edge[natural,shorten <=0.2em]
      node[pos=0.6, right] {$\theta'$} (gleichC); 
    \end{tikzpicture}
  \end{equation}
  This construction defines a bijection between the set of
  2-morphisms $L \ra L'$ and the set of 2-morphisms $R' \ra R$.
  
  Let $(L'', R'', \eta'', \theta'')$ 
  be another 
  adjunction where $L'' \colon \mathcal{A} \ra \mathcal{C}$ is a
  1-morphism. Then any 2-morphism $\lambda' \colon L' \ra L''$
  similarly has a conjugate 2-morphism $\rho' \colon R''
  \ra R'$. 
  Pasting together the diagram~\eqref{eq:rho-adjunction-diagram}
  and the corresponding
  diagram defining $\rho'$, the triangle identities
  easily imply that the
  2-morphism conjugate to $\lambda' \circ \lambda \colon L \ra
  L''$ is $\rho \circ \rho'$.
  In particular, $\lambda$ is a 2-isomorphism if and only if
  $\rho$ is a 2-isomorphism.
\end{remark}

\begin{remark}
  \label{rem:compose-adjunctions}
  Assume that $(L_1, R_1, \eta_1, \theta_1)$ and $(L_2, R_2,
  \eta_2, \theta_2)$ 
  are  
  adjunctions between 1-morphisms
  $\mathcal{A} \xrightleftarrows[\tiny{$L_2$}][\tiny{$R_2$}]
  \mathcal{B} 
  \xrightleftarrows[\tiny{$L_1$}][\tiny{$R_1$}] \mathcal{C}$ in a
  2-category.
  Then $(L=L_1L_2, R=R_2R_1)$ together with 
  \begin{align}
    \eta \colon 
    & \id 
    \xra{\eta_2} R_2L_2 \xra{R_2 \eta_1 L_2} R_2 R_1 L_1L_2,\\
    \theta \colon
    & L_1L_2R_2R_1 
    \xra{L_1\theta_2 R_1} L_1 R_1 \xra{\theta_1} \id        
  \end{align}
  as unit and counit form an adjunction.
\end{remark}

\begin{lemma}
  \label{l:!-adjunction-Hom-ENH}
  Let $A$ be a
  $\kk$-algebra and
  $\alpha \colon Y \ra X$ a separated, locally proper
  morphism of 
  topological spaces with
  $\alpha_! \colon 
  \Mod(Y_A) \ra \Mod(X_A)$ 
  of finite cohomological dimension.
  Then the composition
  \begin{multline}
    \label{eq:!-adjunction-Hom-ENH}
    \ul\Hom(-, \ul\alpha^!(-))
    \xsira{\eqref{eq:46}(\id, \ul\alpha^!)}
    \ul\Gamma\;\ul\sheafHom(-, \ul\alpha^!(-))
    \\
    \xsira{\eqref{eq:alphabeta_*-ENH}\ul\sheafHom(-, \ul\alpha^!(-))}
    \ul\Gamma\;\ul\alpha_*\ul\sheafHom(-, \ul\alpha^!(-))
    \xsira{\ul\Gamma \eqref{eq:!-adjunction-sheafHom-ENH}}
    \ul\Gamma\;\ul\sheafHom(\ul\alpha_!(-), -)
    \\
    \xsira{\eqref{eq:46}\inv(\ul\alpha_!(-), \id)}
    \ul\Hom(\ul\alpha_!-,-) 
  \end{multline}
  of 2-isomorphisms in $\ENH_\kk$ enhances
  $\dR\!\Hom(-, \alpha^!(-))
  \sira \dR\!\Hom(\dR\alpha_!(-),-)$.
\end{lemma}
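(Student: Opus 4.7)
The plan is to proceed exactly in parallel with the proof of Lemma~\ref{l:enhanced-pull-push-ulHom}, reducing this result to the four ingredients \eqref{eq:46}, \eqref{eq:alphabeta_*-ENH}, and \eqref{eq:!-adjunction-sheafHom-ENH}, whose enhancement properties have already been established.

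First, I would invoke Remark~\ref{rem:enhance-2-morphism-compose-and-invert}, which says that composing enhancing 2-morphisms yields an enhancing 2-morphism, and that inverting a 2-isomorphism which enhances some 2-isomorphism produces an enhancement of the inverse. Lemma~\ref{l:ulGamma-ulsheafHom} shows that \eqref{eq:46} $(\ud{=}, \ud{\alpha}^!)$-enhances the 2-isomorphism $\bR\!\Hom(-, \alpha^!(-)) \sira \bR\Gamma\,\bR\sheafHom(-, \alpha^!(-))$, and likewise for the $\eqref{eq:46}\inv$ term at the end. Lemma~\ref{l:composition-inverse-direct-image}, applied to the factorization $\sigma_Y = \sigma_X \circ \alpha$ of the structure morphism of $Y$, shows that \eqref{eq:alphabeta_*-ENH} enhances the canonical 2-isomorphism $\bR\Gamma \sira \bR\Gamma \,\bR\alpha_*$. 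Finally, Proposition~\ref{p:!-adjunction-sheafHom-ENH} shows that \eqref{eq:!-adjunction-sheafHom-ENH} enhances the 2-isomorphism \eqref{eq:!-adjunction-sheafHom-D}. Hence the composition \eqref{eq:!-adjunction-Hom-ENH} is a 2-isomorphism in $\ENH_\kk$ which enhances the composition
\begin{equation*}
\bR\!\Hom(-, \alpha^!(-))
\xsira{}
\bR\Gamma\,\bR\sheafHom(-, \alpha^!(-))
\xsira{}
\bR\Gamma\,\bR\alpha_*\bR\sheafHom(-, \alpha^!(-))
\xsira{}
\bR\Gamma\,\bR\sheafHom(\bR\alpha_!(-), -)
\xsira{}
\bR\!\Hom(\bR\alpha_!(-), -)
\end{equation*}
of 2-isomorphisms in $\TRCAT_\kk$.

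The remaining task, which is the only non-formal step, is to identify this composition with the usual adjunction 2-isomorphism $\bR\!\Hom(-, \alpha^!(-)) \sira \bR\!\Hom(\bR\alpha_!(-), -)$ arising from $(\bR\alpha_!, \alpha^!)$. I would handle this exactly as in Remark~\ref{rem:enhanced-pull-push-Hom}: unfolding $\bR\!\Hom = \bR\Gamma\,\bR\sheafHom$ and using that $\bR\Gamma = \bR\Gamma\,\bR\alpha_*$ (via $\sigma_Y = \sigma_X \circ \alpha$), the three middle arrows are precisely the ``global sections'' of the derivation at the sheaf level. Since Proposition~\ref{p:!-adjunction-sheafHom-ENH} already establishes that the sheaf-level 2-isomorphism \eqref{eq:!-adjunction-sheafHom-D} is the one produced from the $(\bR\alpha_!, \alpha^!)$ adjunction via the derived projection formula (this is the content of the commutative square combining \eqref{eq:derivation-!-adjunction-sheafHom-[]} and \eqref{eq:derivation-!-adjunction-sheafHom-D} in that proof), applying $\bR\Gamma$ and then taking $H^0$ converts it to the desired $\kk$-module-level adjunction isomorphism, cf.\ the diagram in Remark~\ref{rem:enhanced-pull-push-Hom}.

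The main (and essentially only) obstacle is bookkeeping: one must verify that the identification of \eqref{eq:!-adjunction-sheafHom-D} with the adjunction isomorphism passes through $\bR\Gamma$ to yield the $\Hom$-level adjunction isomorphism, rather than some twisted variant. This is routine given the naturality of all the adjunction units and counits involved, and I expect it to follow automatically from Proposition~\ref{p:!-adjunction-sheafHom-ENH} together with the universal property of adjunctions, without any additional model-categorical input. The rest of the proof is then purely formal, and I would leave the detailed diagram chase to the reader, as is done for the analogous Lemma~\ref{l:enhanced-pull-push-ulHom} and Lemma~\ref{l:enhanced-tensor-sheafhom-Hom}.
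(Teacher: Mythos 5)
Your proposal is correct and takes essentially the same route as the paper, whose proof is a one-line citation of exactly the four ingredients you name: Remark~\ref{rem:enhance-2-morphism-compose-and-invert}, Lemmas~\ref{l:composition-inverse-direct-image} and \ref{l:ulGamma-ulsheafHom}, and Proposition~\ref{p:!-adjunction-sheafHom-ENH}. Your final concern about matching the composed derived-level isomorphism with the adjunction isomorphism is not needed for the lemma as stated (the target 2-isomorphism in $\TRCAT_\kk$ is itself the corresponding composition); the paper relegates that identification to the sentence after the lemma, referring to Remark~\ref{rem:enhanced-pull-push-Hom}, exactly as you suggest.
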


The adjunction
isomorphisms $\D_{Y_A}(G,\alpha^!E) \cong \D_{X_A}(\dR
\alpha_!(G), E)$ 
can be obtained as in Remark~\ref{rem:enhanced-pull-push-Hom}.

\begin{proof}
  Use 
  Remark~\ref{rem:enhance-2-morphism-compose-and-invert},
  Lemmas~\ref{l:composition-inverse-direct-image}, 
  \ref{l:ulGamma-ulsheafHom}, and
  Proposition~\ref{p:!-adjunction-sheafHom-ENH}.
\end{proof}

\section{The 2-multicategory of formulas}
\label{sec:2-multicat-formulas}

We provide a language which allows a compact formulation of
almost all main results of 
this article,
see Theorems~\ref{t:interprete-FML-ENH} and
\ref{t:compare-interpretations-FML}
and Remark~\ref{rem:comm-diagrams}.
All ringed sites, ringed spaces and algebras in this section are
assumed to be $\mathscr{U}$-small; all algebras are commutative. 

Let $\kk$ be a field.
Recall the category $\fml_\kk$ from \ref{sec:categ-form-six}.
Let $\tildew{\FML}_\kk$ be the free $\kk$-linear 2-multicategory
which has the same objects as $\fml_\kk$, whose generating
1-morphisms are the generating morphisms of 
$\fml_\kk$, and whose generating
2-morphisms are 
the 27 morphisms in the left column of 
table~\ref{tab:main} on page \pageref{tab:main}
and the following generating 2-morphisms, and their opposites
(cf.\ Remark~\ref{rem:involution-FML} below).
\begin{enumerate}
\item 
  for each 
  $\kk$-ringed site $(\mathcal{X},
  \mathcal{O})$ and each morphism $g \colon G \ra G'$ in
  $\C(\mathcal{X})$ there is a 2-morphism
  $\ud{g} \colon \ud{G} \ra \ud{G'}$;
\item 
  for each $\kk$-ringed site $(\mathcal{X}, \mathcal{O})$ and
  each pair $E, F \in \C(\mathcal{X})$ of objects there are
  2-morphisms $\ud E \;\ud\otimes\; \ud F \ra \ud{E \otimes F}$,
  $\ud{\sheafHom(E,F)} \ra \ud\sheafHom(\ud E, \ud F)$ and
  $\oldud{\ul\C(E,F)} \ra \ud\Hom(\ud E, \ud F)$;
\item 
  for each 
  morphism
  $\alpha \colon (\Sh(\mathcal{Y}), \mathcal{O}_\mathcal{Y}) \ra
  (\Sh(\mathcal{X}), \mathcal{O}_\mathcal{X})$
  of $\kk$-ringed topoi and each object
  $E \in \C(\mathcal{X})$ (resp.\ $F \in \C(\mathcal{Y})$) there
  is a 2-morphism
  $\ud\alpha^* \ud E  \ra \ud{(\alpha^* E)}$ 
  (resp.\ $\ud{(\alpha_* F)} \ra \ud\alpha_* \ud F$);
\item 
  for each 
  $\kk$-algebra $A$ and each 
  morphism 
  $\alpha \colon Y \ra X$ of 
  topological spaces and each object
  $E \in \C(X_A)$ (resp.\ $F \in \C(Y_A)$) there is a
  2-morphism
  $\ud\alpha\inv \ud E \ra \ud{(\alpha\inv E)}$
  (resp.\ $\ud{(\alpha_! F)} \ra \ud\alpha_! \ud F$).
\end{enumerate}

\fussnote{
 (cf.\ \cite[Sect.~1]{mimram-rewriting}).
}

The $\kk$-linear 2-multicategory $\FML_\kk$ is obtained from
$\tildew{\FML}_\kk$ as follows. We require the following 
generating 2-morphisms and their opposites to be invertible:
all 2-morphisms labeled $\sim$ in the left column of
table~\ref{tab:main};
$\ud{g} \colon \ud G \ra \ud G'$ if $g$ is a quasi-isomorphism;
$\ud E \;\ud\otimes\; \ud F \ra \ud{E \otimes F}$ if $E$ or $F$
is h-flat;
$\ud{\sheafHom(E,F)} \ra \ud\sheafHom(\ud E, \ud F)$ and
$\oldud{\ul\C(E,F)} \ra \ud\Hom(\ud E, \ud F)$ if $F$ is
h-injective or if $E$ is h-flat and $F$ is weakly h-injective;
$\ud\alpha^* \ud E \ra \ud{(\alpha^* E)}$ if $E$ is h-flat;
$\ud{(\alpha_* F)} \ra \ud\alpha_* \ud F$ if $F$ is weakly
h-injective; 
$\ud\alpha\inv \ud E \ra \ud{(\alpha\inv E)}$;
$\ud{(\alpha_! F)} \ra \ud\alpha_! \ud F$ if $F$ is h-injective.
Moreover, we impose the relations $\ud{f} \;\ud{g}=\ud{(fg)}$ for
all composable morphisms $f$ and $g$ in $\C(\mathcal{X})$, and
$\ud{rg+r'g'}=r\ud{g}+r'\ud{g'}$ for all morphisms
$g, g' \colon G \ra G'$ in $\C(\mathcal{X})$ and all scalars
$r, r' \in \kk$. 

These conditions imply that
$\ud{\id}_G=\id_{\ud{G}}$ in $\FML_\kk$, for $G \in
\C(\mathcal{X})$, and that 
two morphisms $g$ and $g'$ in $\C(\mathcal{X})$ which
become equal in $\D(\mathcal{X})$ give rise to the same
2-morphism $\ud{g}=\ud{g}'$ in $\FML_\kk$.

\begin{remark}
  \label{rem:involution-FML}
  The involution $(-)^\opp$ on $\fml_\kk$ from
  \ref{sec:categ-form-six} 
  extends to
  $\tildew{\FML}_\kk$ and 
  $\FML_\kk$ in the obvious way by changing the direction of the
  2-morphisms. For example,
  the 2-morphism $\tau \colon \ud{\alpha}^*\ud{\alpha}_* \ra \id
  \colon 
  \ud{\mathcal{Y}} \ra \ud{\mathcal{Y}}$ is mapped to the
  2-morphism
  $\tau^\opp \colon \id \ra
  (\ud{\alpha}^*)^\opp(\ud{\alpha}_*)^\opp \colon
  \ud{\mathcal{Y}}^\opp \ra \ud{\mathcal{Y}}^\opp$.
  More formally, the involution is a functor $(-)^\opp \colon
  \FML_\kk 
  \ra \FML_\kk^\co$ where $\FML_\kk^\co$ is obtained from
  $\FML_\kk$ by reversing the directions of the 2-morphisms.
\end{remark}

There is a canonical functor $\tildew{\FML}_\kk \ra \FML_\kk$
between $\kk$-linear 2-multicategories. The underlying
multicategories of 
$\tildew{\FML}_\kk$ and 
$\FML_\kk$ are both $\fml_\kk$.
We extend our convention from Remark~\ref{rem:functor-on-fml}
to functors with source $\tildew{\FML}_\kk$ or $\FML_\kk$ in the
obvious way. 

\begin{lemma}
  \label{l:interprete-FML-TRCAT}
  There is a unique functor
  \begin{equation}
    \label{eq:interprete-FML-TRICAT}
    \D \colon \FML_\kk \ra \TRCAT_\kk
  \end{equation}
  of $\kk$-linear 2-multicategories whose composition with
  $\tildew{\FML}_\kk \ra \FML_\kk$ extends the interpretation
  functor $\D \colon \fml_\kk \ra \trcat_\kk$ of multicategories
  from \eqref{eq:interprete-fml-tricat}, sends the 27 generating
  2-morphisms in the left column of table~\ref{tab:main} to the
  corresponding entries in the right column, and sends
  $\ud{g} \colon \ud{G} \ra \ud{G}'$ 
  to $g \colon G \ra G'$,
  $\ud E \;\ud\otimes\; \ud F \ra \ud{E \otimes F}$ 
  to 
  $E \otimes^\dL F \ra E \otimes F$,
  $\ud{\sheafHom(E,F)} \ra \ud\sheafHom(\ud E, \ud F)$ 
  to $\sheafHom(E,F) \ra \dR\sheafHom(E, F)$,
  $\oldud{\ul\C(E,F)} \ra \ud\Hom(\ud E, \ud F)$ to
  $\ul\C(E,F) \ra \dR\Hom(E, F)$,
  $\ud\alpha^* \ud E \ra \ud{(\alpha^* E)}$ to
  $\dL\alpha^*(E) \ra \alpha^* E$,
  $\ud{(\alpha_* F)} \ra \ud\alpha_* \ud F$ to
  $\alpha_* F \ra \dR\alpha_* F$,
  $\ud\alpha\inv \ud E \ra \ud{(\alpha\inv E)}$ to
  $\dL\alpha\inv(E) \ra \alpha\inv E$, and
  $\ud{(\alpha_! F)} \ra \ud\alpha_! \ud F$ to
  $\alpha_! F \ra \dR\alpha_! F$.
\end{lemma}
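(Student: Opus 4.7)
The plan is to first construct a functor $\tildew{\D} \colon \tildew{\FML}_\kk \ra \TRCAT_\kk$ from the free $\kk$-linear 2-multicategory and then show that it factors through the quotient $\FML_\kk$. By the universal property of $\tildew{\FML}_\kk$, it suffices to specify images on objects, on generating 1-morphisms, and on generating 2-morphisms (together with their opposites). On objects and generating 1-morphisms we extend the functor $\D \colon \fml_\kk \ra \trcat_\kk$ of \eqref{eq:interprete-fml-tricat}. On the 27 generating 2-morphisms in the left column of Table~\ref{tab:main} and on the eight further families of comparison 2-morphisms, we use the assignments dictated in the statement; the opposites are then forced. This produces $\tildew{\D}$ uniquely.

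To check that $\tildew{\D}$ factors through $\FML_\kk$, we must verify two things: (i) each generating 2-morphism that is required to be invertible in $\FML_\kk$ is sent to a 2-isomorphism in $\TRCAT_\kk$, and (ii) the relations $\ud{f}\,\ud{g}=\ud{(fg)}$ and $\ud{rg+r'g'}=r\ud{g}+r'\ud{g'}$ hold after applying $\tildew{\D}$. For (ii) this is trivial: the relations become $fg = fg$ and $rg+r'g' = rg+r'g'$ in $\Hom_{\D(\mathcal{X})}(G,G')$. For (i), the 2-morphisms labeled $\sim$ in Table~\ref{tab:main} are sent to standard 2-isomorphisms of Grothendieck's six functor formalism (unit/associativity/symmetry of $\otimes^\bL$, tensor-Hom adjunction, projection formula, proper base change, pseudofunctoriality of $*$, $!$, and $\alpha\mapsto\alpha_*$, etc.); the isomorphism $\ud{g}$ for a quasi-isomorphism $g$ maps to the identity class $g$ in $\D(\mathcal{X})$, which is invertible by definition of the derived category. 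The remaining comparison 2-morphisms are sent to the natural transformations from derived to underived functors, which are isomorphisms precisely under the standard acyclicity conditions stated in the lemma (h-flatness for $\otimes^\bL$ and $\bL\alpha^*$; h-injectivity or combined h-flat/weakly h-injective hypotheses for $\bR\sheafHom$ and $\bR\Hom$; weak h-injectivity for $\bR\alpha_*$; h-injectivity for $\bR\alpha_!$; and unconditionally for $\bL\alpha\inv$ since $\alpha\inv$ is exact). All of these are standard facts used throughout the paper and recalled in Propositions~\ref{p:spalt-6.1+5-sites}, \ref{p:spalt-6.7a-sites}, and \ref{p:spalt-5.15-sites}.

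Uniqueness is immediate: $\FML_\kk$ is generated as a $\kk$-linear 2-multicategory by its objects, its generating 1-morphisms, and its generating 2-morphisms (together with scalar multiplication), and the values of $\D$ on all of these are prescribed by the statement. The main potential obstacle is purely bookkeeping, namely ensuring that every generating 2-morphism and its opposite is listed and assigned consistently with its source and target 1-morphisms; once this is done systematically, the freeness of $\tildew{\FML}_\kk$ and the standard properties of derived functors make both the construction and the factorization automatic.
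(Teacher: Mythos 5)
Your proposal is correct and fills in exactly the argument the paper compresses into ``This is obvious'': construct the functor on the free 2-multicategory $\tildew{\FML}_\kk$ by the universal property, then verify that it factors through $\FML_\kk$ because the generators required to be invertible land on standard 2-isomorphisms of the six functor formalism (or on derived-to-underived comparison maps that are isomorphisms under the stated acyclicity hypotheses) and the imposed relations hold trivially in $\D(\mathcal{X})$. There is no divergence from the paper's (unstated) route.
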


\begin{proof}
  This is obvious.
\end{proof}

The main results of sections~\ref{sec:four-operations} and
\ref{sec:two-operations}
can now be summarized in the following two theorems.

\begin{theorem}
  \label{t:interprete-FML-ENH}
  Let $\kk$ be a field.
  There is a unique functor
  \begin{equation}
    \label{eq:interprete-FML-ENH}
    \ul\II \colon \FML_\kk \ra \ENH_\kk
  \end{equation}
  of $\kk$-linear 2-multicategories whose composition with
  $\tildew{\FML}_\kk \ra \FML_\kk$ extends the interpretation
  functor $\ul\II \colon \fml_\kk \ra \enh_\kk$ of
  multicategories from \eqref{eq:II-interpret-fml}, sends the 27
  generating 2-morphisms in the left column of
  table~\ref{tab:main} to the corresponding entries in the middle
  column, and 
  sends 
  $\ud{g} \colon \ud{G} \ra \ud{G}'$ 
  to $\ul{g} \colon \ul{G} \ra \ul{G}'$, 
  $\ud E \;\ud\otimes\; \ud F \ra \ud{E \otimes F}$ to 
  $\ul E \;\ul\otimes\; \ul F \ra \ul{E \otimes F}$,
  $\ud{\sheafHom(E,F)} \ra \ud\sheafHom(\ud E, \ud F)$ to
  $\ul{\sheafHom(E,F)} \ra \ul\sheafHom(\ul E, \ul F)$, 
  $\oldud{\ul\C(E,F)} \ra \ud\Hom(\ud E, \ud F)$ to
  $\oldul{\ul\C(E,F)} \ra \ul\Hom(\ul E, \ul F)$,
  $\ud\alpha^* \ud E \ra \ud{(\alpha^* E)}$ to
  $\ul\alpha^* \ul E \ra \ul{(\alpha^* E)}$,
  $\ud{(\alpha_* F)} \ra \ud\alpha_* \ud F$ to
  $\ul{(\alpha_* F)} \ra \ul\alpha_* \ul F$,
  $\ud\alpha\inv \ud E \ra \ud{(\alpha\inv E)}$ to
  $\ul\alpha\inv \ul E \ra \ul{(\alpha\inv E)}$,
  and
  $\ud{(\alpha_! F)} \ra \ud\alpha_! \ud F$ to
  $\ul{(\alpha_! F)} \ra \ul\alpha_! \ul F$.
\end{theorem}

\begin{proof}
  The results in sections~\ref{sec:four-operations} and
  \ref{sec:two-operations}
  provide the relevant images in $\ENH_\kk$ for the generating
  2-morphisms of $\tildew{\FML}_\kk$ and show that they satisfy the
  defining relations of $\FML_\kk$.
\end{proof}

\begin{remark}
  \label{rem:comm-diagrams}
  If we view all
  diagrams in \ref{sec:some-comm-diagr} 
  (except diagram \eqref{eq:intro:alpha*-triangles-derived})
  as diagrams in $\FML_\kk$
  by replacing underlines by underdots, the interpretation
  functor
  \eqref{eq:interprete-FML-ENH}
  maps these diagrams to commutative diagrams (namely to the
  commutative diagrams  
  in \ref{sec:some-comm-diagr}), and similarly for the
  commutative diagrams in 
  Lemmas~\ref{l:ul-object-4-functors},
  \ref{l:ul-object-2-functors}.
  This follows from the results in 
  \ref{sec:lifts-comm-diagr},
  \ref{sec:lifts-comm-diagr-1}
  and Lemmas~\ref{l:ul-object-4-functors},
  \ref{l:ul-object-2-functors}.
  Hence, when defining $\FML_\kk$,
  we could additionally impose the
  condition that all these 
  diagrams 
  are commutative.
  If doing so,
  we would in particular have adjunctions 
  $(\ud\alpha^*, \ud\alpha_*)$
  and $(\ud\alpha_!, \ud\alpha^!)$ in $\FML_\kk$.
  This would imply that 
  the 2-morphism
  $\ud{\beta}'_*\ud\alpha'^! \ra \ud\alpha^!\ud\beta_*$
  in the left column of row
  \eqref{eq:tab:proper-base-change-upper-shriek-ENH} 
  of table~\ref{tab:subsequent}
  would be invertible (as the other 2-morphisms in this row): 
  in analogy to
  \eqref{eq:proper-base-change-upper-shriek-ENH-def}, 
  it is defined as the composition
  \begin{multline}
    \label{eq:proper-base-change-upper-shriek-FML-def}
    \ud{\beta}'_*\ud\alpha'^! 
    \ra
    \ud\alpha^!\ud\alpha_! \ud{\beta}'_*\ud\alpha'^! 
    \ra
    \ud\alpha^!\ud\beta_*\ud\beta\inv
    \ud\alpha_!\ud{\beta}'_*\ud\alpha'^!  
    \\
    \xsira{\ud\alpha^!\ud\beta_*
      \eqref{eq:tab:proper-base-change-ENH}
      \ud{\beta}'_*\ud\alpha'^!}  
    \ud\alpha^!\ud\beta_*\ud\alpha'_!\ud\beta'^{-1}
    \ud{\beta}'_*\ud\alpha'^! 
    \ra
    \ud\alpha^!\ud\beta_* \ud\alpha'_! \ud\alpha'^!
    \ra
    \ud\alpha^!\ud\beta_*,
  \end{multline}
  hence it is conjugate to
  the 2-isomorphism 
  $\ud\beta\inv\ud\alpha_!
  \xsira{\eqref{eq:tab:proper-base-change-ENH}}
  \ud\alpha'_!\ud\beta'^{-1}$ 
  in $\FML_\kk$ and invertible, by 
  Remarks~\ref{rem:morphism-between-left-adjoints-2-categorical}  
  and \ref{rem:compose-adjunctions}.
\end{remark}

\begin{theorem}
  \label{t:compare-interpretations-FML}
  Let $\kk$ be a field. The pseudo-natural transformation
  \eqref{eq:pseudo-nat-trafo-D-to-[I]} defines in fact a
  pseudo-natural transformation 
  \begin{equation}
    \label{eq:pseudo-nat-trafo-D-to-[I]-2-multicat}
    (\ol{[\ii]}, \omega) \colon \D \ra [\ul\II] = [-] \circ \ul\II
  \end{equation}
  between functors $\FML_\kk \ra \TRCAT_\kk$ of $\kk$-linear
  2-multicategories where $\D$ is
  \eqref{eq:interprete-FML-TRICAT} and $[\ul\II]$ is the 
  composition
  $\FML_\kk \xra[\II]{\eqref{eq:interprete-FML-ENH}} \ENH_\kk
  \xra[{[-]}]{\eqref{eq:htpy-ENH-TRCAT}} \TRCAT_\kk$
  (see \eqref{eq:intro:pseudonatural-trafo-omega} for a diagram).
\end{theorem}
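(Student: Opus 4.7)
The plan is to observe that the theorem is really a bookkeeping result: essentially everything has already been verified piecewise in sections~\ref{sec:four-operations} and~\ref{sec:two-operations}, and the only new content is to assemble these verifications into a single statement of pseudo-naturality for a transformation of $\kk$-linear 2-multicategories.

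First I would recall that in \ref{sec:relat-interpr-triang} (extended by \ref{sec:lifts-deriv-funct-2}) we already defined $(\ol{[\ii]},\omega)$ as a pseudo-natural transformation between the functors $\D,[\ul\II]\colon \fml_\kk \ra \trcat_\kk$ of multicategories, and verified its coherence with composition of 1-morphisms and identities by construction (one 2-isomorphism $\omega_{\ud t}$ per generating 1-morphism, extended to all morphisms by the standard Borceux--type coherence). The assignments on objects and 1-morphisms are unchanged when we pass to the 2-multicategorical setting, so the only new axiom to verify is the 2-morphism axiom: for every 2-morphism $\rho \colon \ud v \ra \ud w$ in $\FML_\kk$, the diagram
\begin{equation*}
  \xymatrix{
    {[\ul{v}]\ol{[\ii]}}
    \ar[d]_-{\omega_{\ud{v}}}^-{\sim}
    \ar[r]^-{[\ul\II(\rho)]\ol{[\ii]}} &
    {[\ul{w}]\ol{[\ii]}}
    \ar[d]_-{\omega_{\ud{w}}}^-{\sim}\\
    {\ol{[\ii]}\D(\ud{v})} \ar[r]^-{\ol{[\ii]}\D(\rho)} &
    {\ol{[\ii]}\D(\ud{w})}
  }
\end{equation*}
must commute; in the language of Definition~\ref{d:enhance-2-morphism} this is exactly the requirement that $\ul\II(\rho)$ enhance $\D(\rho)$.

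Next I would reduce to the generating 2-morphisms of $\FML_\kk$. By Remark~\ref{rem:enhance-2-morphism-compose-and-invert} the enhancement property is closed under vertical and horizontal composition, and it is trivially closed under identities and under $\kk$-linear combinations (both sides of \eqref{eq:sigma-omega=omega-tau-diagram} are $\kk$-bilinear in the 2-morphism). Since every 2-morphism in $\FML_\kk$ is obtained from the generating ones via these operations, it suffices to verify the enhancement condition for each generating 2-morphism. For the $27$ generators listed in table~\ref{tab:main} this has been done one by one: the four-functor cases in Propositions~\ref{p:adjunction-morphisms-push-pull}, \ref{p:enhanced-pull-push-sheafHom}, \ref{p:enhanced-tensor-sheafhom} and Lemmas~\ref{l:composition-inverse-direct-image}, \ref{l:ulO-otimes-neutral}, \ref{l:ulGamma-ulsheafHom}, \ref{l:pull-tensor}, and the six-functor cases in Lemma~\ref{l:ulalpha*-inv} together with Propositions~\ref{p:compos-alpha-!}, \ref{p:adjunction-morphisms-proper-push-pull}, \ref{p:proper-base-change}, \ref{p:projection-fml}, \ref{p:!-adjunction-sheafHom-ENH}, \ref{p:upper-!-sheafHom-ENH} and the construction of $\ul\alpha_! \ra \ul\alpha_*$ following \eqref{eq:alpha_!-to-alpha_*}. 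For the remaining generating 2-morphisms---those of the form $\ud{g}$, $\ud E \;\ud\otimes\; \ud F \ra \ud{E\otimes F}$, $\ud{\sheafHom(E,F)} \ra \ud\sheafHom(\ud E,\ud F)$, $\oldud{\ul\C(E,F)} \ra \ud\Hom(\ud E,\ud F)$, $\ud\alpha^*\ud E \ra \ud{(\alpha^* E)}$, $\ud{(\alpha_* F)} \ra \ud\alpha_* \ud F$, $\ud{(\alpha\inv E)} \ra \ud\alpha\inv \ud E$ and $\ud{(\alpha_! F)} \ra \ud\alpha_! \ud F$---the enhancement property is furnished by the two lemmas both labelled \ref{l:ul-G-and-functors} (one in \ref{sec:some-other-lifts}, one in \ref{sec:some-other-lifts-1}), together with the definitional identities $\ul\II(\ud g)=\ul g$ and the fact that $\ul g$ descends from $g$ via $\ii$.

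The only genuinely substantive work is thus to check that the list of generating 2-morphisms I just enumerated is complete and that each has been treated exactly once; the mathematical content of each verification is already in place. The main potential obstacle is bookkeeping: one must confirm that the choice of representing zig-zags used to define $\ul\II(\rho)$ for generators with an $\sim$-label is compatible with the various equalities between compositions of $\delta(\iotaii)$'s and $\delta(\epsilonee)$'s--but this is handled uniformly by Propositions~\ref{p:compare-K-enri-I-fibr-repl-functors} and \ref{p:compare-K-enri-E-fibr-repl-functors}, which show that any two reasonable constructions using our fixed resolution data coincide in $\ENH_\kk$. With these compatibilities in hand, the generating checks assemble automatically into the required pseudo-naturality, and the uniqueness follows from the freeness of $\FML_\kk$ on its generating data.
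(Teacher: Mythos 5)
Your proposal is correct and follows essentially the same route as the paper's proof: both reduce the statement to the observation that the data of the pseudo-natural transformation are unchanged from the multicategory level, so that the only new condition is compatibility with 2-morphisms, which for the generating 2-morphisms of $\FML_\kk$ is precisely the ``enhancement'' property of Definition~\ref{d:enhance-2-morphism} established in sections~\ref{sec:four-operations} and~\ref{sec:two-operations}, and which propagates to all 2-morphisms by Remark~\ref{rem:enhance-2-morphism-compose-and-invert}. Your write-up is somewhat more explicit than the paper's (which cites the generating checks only by example), but the argument is the same.
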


\begin{proof}
  The data required to define a pseudo-natural transformation
  $\D \ra [\ul\II]$ where $\D$ and $[\ul\II]$ are
  considered as functors $\fml_\kk \ra \trcat_\kk$ of
  multicategories or as functors
  $\FML_\kk \ra \TRCAT_\kk$ of
  $\kk$-linear 2-multicategories coincide, by
  \cite[Def.~7.5.2]{borceux-cat}: the required natural
  transformation consists of 
  a collection of
  morphisms 
  indexed by the 
  set $\fml_\kk(S_1, \dots, S_n;T)=\Obj(\FML_\kk(S_1, \dots,
  S_n;T))$, for each given collection $S_1, \dots, S_n,
  T \in 
  \Obj(\fml_\kk)=\Obj(\FML_\kk)$ of objects.
  However, 
  in the case of functors of $\kk$-linear 2-multicategories more
  conditions 
  are imposed: the collection of morphisms is required to be
  compatible with morphisms in $\FML_\kk(S_1, \dots, S_n;T)$.
  For the generating 2-morphisms of $\tildew{\FML}_\kk$ this
  compatibility boils down to our main results of 
  sections~\ref{sec:four-operations} and
  \ref{sec:two-operations},
  using Definition~\ref{d:enhance-2-morphism}, for example to the
  fact that our 2-morphism $\ul\alpha^* \ul\alpha_* \ra \id$ 
  $(\ud\alpha^*\ud\alpha_*, \id)$-enhances the 2-morphism
  $\dL\alpha^*\dR\alpha_* \ra \id$.
  The theorem follows.
\end{proof}

\section{Some remarks on 2-morphisms in \texorpdfstring{$\ENH_\kk$}{ENH}}
\label{sec:some-remarks-2-morphisms-ENH}

Recall the functor 
$\delta \colon \tildew{\ENH}_\kk \ra \ENH_\kk$ from 
\eqref{eq:delta-tilde-ENH-to-ENH}.
There are three statements about 2-morphisms in $\ENH_\kk$ we would
like to be true 
but do not know how to prove in general:
\begin{enumerate}[label=(\Roman*)]
\item 
  \label{enum:useful-ohe-iff-delta-2-isom-wish}
  A 2-morphism $\tau$ in 
  $\tildew{\ENH}_\kk$ 
  is an objectwise homotopy equivalence if and
  only if $\delta(\tau)$ is a 2-isomorphism in $\ENH_\kk$.
\item   
  \label{enum:useful-represent-2-morphisms-wish}
  Any 2-morphism $\tau'$ in 
  $\ENH_\kk$ can be represented by a roof, i.\,e.\ it 
  has the form
  $\tau' = \delta(v) \circ \delta(u)\inv$ where $u$ and $v$ are
  2-morphisms in $\tildew{\ENH}_\kk$ and $u$ is an
  objectwise homotopy equivalence; 
  moreover, for any such representation, $\tau'$ is a
  2-isomorphism if and only if $v$ is an objectwise homotopy
  equivalence.
\item 
  \label{enum:useful-reflects-2-isos-wish}
  The functor
  $[-] \colon \ENH_\kk \ra \TRCAT_\kk$
  reflects 2-isomorphisms (i.\,e.\ it is conservative).
\end{enumerate}
The aim of this section is to prove these
statements under suitable assumptions on the sources and target
of the involved
1-morphisms. The target must be a suitable category of sheaves or
modules that we can control with
model categorical techniques. Then either the sources must be 
sufficiently small (see
Proposition~\ref{p:sources-small})  
or the target must be made sufficiently large (see
Proposition~\ref{p:target-large}). 
We expect these results to be useful in applications.
Our approach is based on the existence of certain model
structures on functor categories which only seem to be available
if the source category is small compared to the target category.

\subsection{Localizations of functor categories}
\label{sec:localizations-of-funct-categ}

\begin{proposition}
  \label{p:localization-wrt-obj-htpy-equ}
  Let $\ulms{M}$ be a combinatorial $\KK$-enriched model category
  (with respect to the universe $\mathscr{U}$).
  Assume that the 
  localization functor $\lambda \colon \ms{M} \ra
  \Ho(\ms{M})$ 
  factors through the canonical functor
  $\ms{M} \ra [\ulms{M}]$
  as
  \begin{equation}
    \ms{M} \ra [\ulms{M}] \xra{\ol{\lambda}} \Ho(\ms{M})
  \end{equation}
  and that the restriction
  \begin{equation}
    \label{eq:[M-bifib]-HoM}
    [\ulms{M}_\bifib] \ra \Ho(\ms{M})
  \end{equation}
  of $\ol{\lambda}$ is an equivalence, where $\ulms{M}_\bifib$ is
  the full $\KK$-category of $\ulms{M}$ of bifibrant (= fibrant
  and cofibrant) objects. Let $\ulms{B}$ be a full
  $\KK$-subcategory of $\ulms{M}_\bifib$ such that
  $[\ulms{B}] \subset [\ulms{M}_\bifib]$ is a strictly full
  subcategory. 
  Let $\ulms{A}$ be any $\mathscr{U}$-small
  $\KK$-category.
  Let 
  \begin{equation}
    \label{eq:loc-wrt-obj-htpy-equiv}
    \delta \colon \DGCAT_\kk(\ulms{A},
    \ulms{B}) \ra \L_\ohe\DGCAT_\kk(\ulms{A},
    \ulms{B})    
  \end{equation}
  be the localization
  of $\DGCAT_\kk(\ulms{A}, \ulms{B})$ with
  respect to the set
  of objectwise homotopy
  equivalences. Then
  \begin{enumerate}
  \item 
    \label{enum:K:localization-of-U-exists}
    we may and will assume that
    the category $\L_\ohe\DGCAT_\kk(\ulms{A},\ulms{B})$ has
    $\mathscr{U}$-small Hom-sets, that it has
    the same set of 
    objects as $\DGCAT_\kk(\ulms{A},\ulms{B})$ and that $\delta$
    is the identity on objects;
  \item 
    \label{enum:K:weak-equiv-vs-iso}
    a morphism $\tau$ in 
    $\DGCAT_\kk(\ulms{A},\ulms{B})$
    is an objectwise homotopy equivalence if and
    only if $\delta(\tau)$ is an isomorphism;
  \item   
    \label{enum:K:simplify-rep-if-U-fibrant}
    any morphism $\tau'$ in 
    $\L_\ohe\DGCAT_\kk(\ulms{A},\ulms{B})$    
    has the form
    $\tau' = \delta(v) \circ \delta(u)\inv$ where $u$ is an
    objectwise homotopy equivalence in 
    $\DGCAT_\kk(\ulms{A},\ulms{B})$ and $v$ is a morphism
    in 
    $\DGCAT_\kk(\ulms{A},\ulms{B})$; moreover, in any such
    representation, $\tau'$ is an
    isomorphism if and only if $v$ is an objectwise homotopy
    equivalence;
  \item 
    \label{enum:K:olhU-reflects-isos}
    the unique functor
    $\ol{[-]} \colon
    \L_\ohe\DGCAT_\kk(\ulms{A}, \ulms{B}) \ra
    \CAT_\kk([\ulms{A}], [\ulms{B}])$ such that $\ol{[-]} \circ
    \delta = [-]$ 
    reflects isomorphisms.
  \end{enumerate}
\end{proposition}

\fussnote{
  zu loeschen:
  Is the following true (and is it then useful)?
  Given $\tau, \tau' \colon F \ra G$ with
  $\delta(\tau)=\delta(\tau')$ implies $\tau=\tau'$? (Also
  $\delta$ faithful.)
  
  NEIN (vermutlich), denn wenn $F$ und $G$ bifasernd, dann ist
  $\delta(\tau)=\delta(\tau')$ genau dann, wenn $\tau$ und
  $\tau'$ (links/rechts-)homotop sind. Kann also vermutlich
  explizit gegenbeispiel geben, etwa $\ulms{A}=\ul\kk$, so dass
  die 
  Lokalisierung
  $\ms{B} \ra [\ulms{B}]$ ist (wie ich glaube ich oben
  berechne). 
}

\fussnote{
  $\ulms{A}$ should be small
}

\begin{proof}
  Recall that 
  $\KK$ is an excellent model category, by
  Proposition~\ref{p:KK-excellent-and-MM-KK-model-cat}.\ref{enum:KK-excellent}. 
  Since $\ulms{A}$ is $\mathscr{U}$-small, we may consider
  $\DGCAT_\kk(\ulms{A}, \ulms{M})$ as a model category 
  equipped with the projective model
  structure given by
  \cite[Prop.~A.3.3.2]{lurie-higher-topos}. 
  By definition, a morphism 
  $\tau \colon F \ra G$ 
  in 
  $\DGCAT_\kk(\ulms{A}, \ulms{M})$
  is a weak equivalence (resp.\ a fibration) if and only if
  $\tau_A \colon F(A) \ra G(A)$ is a weak equivalence (resp.\ a
  fibration) for all
  $A \in \ulms{A}$.

  We view 
  $\DGCAT_\kk(\ulms{A}, \ulms{B})$ as the full
  subcategory of 
  $\DGCAT_\kk(\ulms{A}, \ulms{M})$ consisting of $\KK$-functors
  $\ulms{A} \ra \ulms{M}$ that factor as 
  $\ulms{A} \ra \ulms{B} \ra \ulms{M}$.
  We want to apply Theorem~\ref{t:localization-exists}
  to 
  \begin{equation}
    \mathcal{S}:=
    \DGCAT_\kk(\ulms{A}, \ulms{B}) 
    \subset  
    \mathcal{N}:=\DGCAT_\kk(\ulms{A}, \ulms{M})  
  \end{equation}
  and 
  $h$ 
  the composition 
  \begin{equation}
    h \colon
    \mathcal{N}=\DGCAT_\kk(\ulms{A}, \ulms{M}) 
    \xra{[-]} 
    \CAT_\kk([\ulms{A}], [\ulms{M}])
    \ra
    \CAT([\ulms{A}], [\ulms{M}])
    \xra{\ol{\lambda}_*} \CAT([\ulms{A}], \Ho(\ms{M}))
  \end{equation}
  where $\CAT$ is the 2-category of categories.
  Note that $\tau$ as above is a weak equivalence if and only if
  $h(\tau)$ 
  is an isomorphism, by \cite[Thm.~8.3.10]{hirschhorn-model}.

  We need to show that all objects of $\mathcal{S}$ admit fibrant
  cofibrant 
  approximations and cofibrant fibrant approximations in
  $\mathcal{S}$. Note that the initial (resp.\ terminal) object of
  $\mathcal{N}$ is the $\mathcal{K}$-functor $\ulms{A} \ra
  \ulms{M}$ mapping each object of $\ulms{A}$ to the initial (resp.\
  terminal) object of $\ms{M}$. Therefore, all objects of
  $\mathcal{S}$ are fibrant by the definition of the projective
  model structure and our assumption $\ulms{B} \subset
  \ulms{M}_\bifib \subset \ulms{M}_\fib$; hence cofibrant fibrant
  approximations in $\mathcal{S}$ exist trivially for all objects
  of $\mathcal{S}$. 

  Let $E$ be an object of $\mathcal{S}$
  and $\rho \colon G \ra E$ a fibrant cofibrant approximation
  in $\mathcal{N}$. 
  For $A \in \ulms{A}$ the morphism $\rho_A
  \colon G(A) \ra E(A)$ is a trivial fibration 
  by the definition of the projective model structure.
  Since $E(A)$ is fibrant so is $G(A)$.
  By
  \cite[Prop.~3.3]{toen-homotopy-of-dg-cats-morita}, 
  $G(A)$ is cofibrant because $G$ is cofibrant.
  The object $E(A)$ is bifibrant by the assumption $\ulms{B}
  \subset \ulms{M}_\bifib$. 
  So $\rho_A$ is a weak equivalence between
  bifibrant objects, and 
  the equivalence \eqref{eq:[M-bifib]-HoM}
  shows that $\rho_A$ is an isomorphism in
  $[\ulms{M}_\bifib]$. 
  Since $E(A) \in \ulms{B}$ and $[\ulms{B}] \subset
  [\ulms{M}_\bifib]$ is strictly full we 
  obtain $G(A) \in \ulms{B}$. 
  Hence 
  $G \in \mathcal{S}$.
  This shows that fibrant cofibrant
  approximations in $\mathcal{S}$ exist for all objects
  of $\mathcal{S}$. 

  Now
  Theorem~\ref{t:localization-exists}.\ref{enum:localization-of-S-exists} 
  applies to the localization of 
  $\mathcal{S}=\DGCAT_\kk(\ulms{A}, \ulms{B})$ 
  with respect to the set of weak equivalences.

  We claim that a morphism in $\mathcal{S}$ is a weak equivalence
  if and only if it is an objectwise homotopy equivalence.
  The restriction of $h$ to
  $\mathcal{S}$ is equal to the 
  composition 
  \begin{equation}
    \mathcal{S}=\DGCAT_\kk(\ulms{A}, \ulms{B})
    \xra{[-]} 
    \CAT_\kk([\ulms{A}], [\ulms{B}])
    \ra
    \CAT([\ulms{A}], [\ulms{B}])
    \ra \CAT([\ulms{A}], \Ho(\ms{M}))
  \end{equation}
  whose third arrow is induced by
  the
  full and faithful composition
  $[\ulms{B}] \subset [\ulms{M}_\bifib] \sira \Ho(\ms{M})$.
  Therefore this third arrow is full and faithful 
  and in particular reflects isomorphisms.
  The second arrow obviously reflects isomorphisms.
  These statements imply the claim, proving
  \ref{enum:K:localization-of-U-exists}.

  Moreover, 
  \ref{enum:K:weak-equiv-vs-iso},
  \ref{enum:K:simplify-rep-if-U-fibrant}, and
  \ref{enum:K:olhU-reflects-isos} 
  follow from the above facts and
  parts 
  \ref{enum:weak-equiv-vs-iso},
  \ref{enum:f'-we-iff-f-iso},
  \ref{enum:simplify-rep-if-S-fibrant}, and
  \ref{enum:olhS-reflects-isos}
  of Theorem~\ref{t:localization-exists}.
\end{proof}

\begin{proposition}
  \label{p:examples-localization-wrt-obj-htpy-equ}
  Let $\ulms{M}$ be either
  \begin{enumerate}
  \item 
    \label{enum:CX-inj}
    $\ul\C(\mathcal{X})$ with $\C(\mathcal{X})$ carrying the
    $\II$-model structure, for
    a $\mathscr{U}$-small $\kk$-ringed site $(\mathcal{X},
    \mathcal{O})$, or 
  \item
    \label{enum:ModC-inj-proj}
    $\ul\MMod(\ulms{C})$ with $\MMod(\ulms{C})$ carrying the
    $\II$- or $\PP$-model structure, for a $\mathscr{U}$-small
    $\KK$-category $\ulms{C}$.
  \end{enumerate}
  Then $\ulms{M}$ 
  satisfies the assumptions imposed on 
  $\ulms{M}$ in
  Proposition~\ref{p:localization-wrt-obj-htpy-equ}.
\end{proposition}

\begin{proof}
  By Theorem~\ref{t:examples-enriched-model-cats}, $\ulms{M}$ is
  a combinatorial $\KK$-enriched model category.  Two morphisms
  in $\ms{M}$ that are homotopic in the sense that they are equal
  in $[\ulms{M}]$ are left (resp.\ right) homotopic in the model
  categorical sense (use the obvious cylinder (resp.\ path)
  object if $\ms{M}$ carries the $\II$- (resp.\ $\PP$-)model
  structure).  Therefore, using
  \cite[Lemma~8.3.4]{hirschhorn-model}, the localization functor
  $\lambda \colon \ms{M} \ra \Ho(\ms{M})$ factors through the
  canonical functor $\ms{M} \ra [\ulms{M}]$ as
  $\ms{M} \ra [\ulms{M}] \xra{\ol{\lambda}} \Ho(\ms{M})$.
  Moreover, the induced functor
  $[\ulms{M}_\bifib] \ra \Ho(\ms{M})$ is an equivalence, by
  \cite[Thm.~8.3.6]{hirschhorn-model}.
  Here we use the fact that maps in $\ms{M}$ between bifibrant
  objects are 
  homotopic in the model categorical sense if and only if they
  are homotopic in the sense that they are equal in $[\ulms{M}]$:
  use 
  \cite[Cor.~1.2.6]{hovey-model-categories} and
  cylinder (resp.\ path) objects as above.
\end{proof}

\begin{remark}
  \label{rem:reminder-fibrant-cofibrant}
  We remind the reader that all objects of $\ul\C(\mathcal{X})$
  are $\II$-cofibrant and that
  $\ul\II(\mathcal{X})=\ul\C(\mathcal{X})_{\II\text{-}\fib}=\ul\C(\mathcal{X})_{\II\text{-}\bifib}$,
  see 
  \ref{sec:inject-model-struct-1} and
  \ref{sec:inject-model-struct}.
  Similarly,
  $\ul\MMod(\ulms{C})=\ul\MMod(\ulms{C})_{\II\text{-}\cof}$ and
  $\ul\IIMod(\ulms{C})=\ul\MMod(\ulms{C})_{\II\text{-}\fib}=\ul\MMod(\ulms{C})_{\II\text{-}\bifib}$,
  and 
  $\ul\MMod(\ulms{C})=\ul\MMod(\ulms{C})_{\PP\text{-}\fib}$ and
  $\ul\PPMod(\ulms{C})=\ul\MMod(\ulms{C})_{\PP\text{-}\cof}=\ul\MMod(\ulms{C})_{\PP\text{-}\bifib}$,
  see \ref{sec:model-structures}.
\end{remark}

\begin{remark}
  \label{rem:examples-localization-wrt-obj-htpy-equ}
  Consider $\ul\C(\mathcal{X})$ with $\C(\mathcal{X})$ carrying
  the $\II$-model structure, for a $\kk$-ringed site
  $(\mathcal{X}, \mathcal{O})$.
  The composition $\C(\mathcal{X}) \ra [\ul\C(\mathcal{X})] \ra
  \D(\mathcal{X})$ maps weak equivalences to isomorphisms and
  hence factors through the homotopy category
  $\C(\mathcal{X}) \ra \Ho(\C(\mathcal{X}))$ to a functor
  $\Ho(\C(\mathcal{X})) \ra \D(\mathcal{X})$. This functor is an
  isomorphism of categories.
  The subcategory of bifibrant objects of $\C(\mathcal{X})$ 
  is $\II(\mathcal{X})$, and it is well-known that
  $[\ul\II(\mathcal{X})] \ra \D(\mathcal{X})$ is an equivalence,
  cf.\ \ref{sec:inject-enhanc}.
  This gives another point of view on the proof of part
  \ref{enum:CX-inj} of   
  Proposition~\ref{p:examples-localization-wrt-obj-htpy-equ}, and
  similarly for part \ref{enum:ModC-inj-proj} where
  $\Ho(\MMod(\ulms{C})) \sira \D(\ulms{C})$ is an isomorphism.
\end{remark}

\subsection{Small sources}
\label{sec:small-sources}

Let $\mathscr{U} \in \mathscr{V}$ be universes as in
\ref{sec:some-2-mult}.

Let $\ulms{A}_1, \dots, \ulms{A}_n$, $\ulms{B}$ be objects of
$\ENH_\kk$.
Recall that 
\begin{equation}
  \label{eq:delta-ENH-A-B}
  \delta \colon \tildew{\ENH}_\kk(\ulms{A}_1, \dots,
  \ulms{A}_n; \ulms{B}) \ra \ENH_\kk(\ulms{A}_1, \dots,
  \ulms{A}_n; \ulms{B}) 
\end{equation}
denotes the additive $\kk$-localization with respect to the set of
objectwise homotopy equivalences.
We can now prove the statements
\ref{enum:useful-ohe-iff-delta-2-isom-wish},
\ref{enum:useful-represent-2-morphisms-wish},
\ref{enum:useful-reflects-2-isos-wish}
if the sources $\ulms{A}_1, \dots, \ulms{A}_n$ are
$\mathscr{U}$-small and the target 
$\ulms{B}$ is a suitable category of sheaves or modules.

\begin{proposition}
  \label{p:sources-small}
  Let $\ulms{A}_1, \dots, \ulms{A}_n$ be 
  additive pretriangulated 
  $\mathscr{U}$-small $\KK$-categories.
  Let $\ulms{N}$ be either
  \begin{itemize}
  \item 
    $\ul\II(\mathcal{X})$, for a
    $\mathscr{U}$-small 
    $\kk$-ringed site 
    $(\mathcal{X}, \mathcal{O})$, or
  \item 
    $\ul\IIMod(\ulms{C})$   
    or $\ul\PPMod(\ulms{C})$, for a $\mathscr{U}$-small
    $\KK$-category $\ulms{C}$.
  \end{itemize}
  Assume that $\ulms{B}$ is an additive pretriangulated full
  $\KK$-subcategory of $\ulms{N}$ such that
  $[\ulms{B}]$ is a strictly full subcategory of
  $[\ulms{N}]$. 
  Then the following statements hold true.
  \begin{enumerate}
  \item 
    \label{enum:useful-ohe-iff-delta-2-isom}
    A morphism $\tau$ in
    $\tildew{\ENH}_\kk(\ulms{A}_1, \dots, \ulms{A}_n; \ulms{B})$
    is an objectwise homotopy equivalence if and
    only if $\delta(\tau)$ 
    is an isomorphism.
  \item   
    \label{enum:useful-represent-2-morphisms}
    Any morphism $\tau'$ in 
    $\ENH_\kk(\ulms{A}_1, \dots,
    \ulms{A}_n; \ulms{B})$
    has the form
    $\tau' = \delta(v) \circ \delta(u)\inv$ where $u$ and $v$ are
    morphisms in 
    $\tildew{\ENH}_\kk(\ulms{A}_1, \dots, \ulms{A}_n; \ulms{B})$
    and $u$ is an
    objectwise homotopy equivalence; 
    moreover, for any such representation, $\tau'$ is a
    2-isomorphism if and only if $v$ is an objectwise homotopy
    equivalence.
  \item 
    \label{enum:useful-reflects-2-isos}
    The functor
    $[-] \colon 
    \ENH_\kk(\ulms{A}_1, \dots,
    \ulms{A}_n; \ulms{B}) \ra
    \TRCAT_\kk([\ulms{A}_1], \dots,
    [\ulms{A}_n]; [\ulms{B}])$
    reflects isomorphisms.
  \end{enumerate}
\end{proposition}

\begin{remark}
  \label{rem:U-small-objects-ENH}
  This remark provides examples of possible source categories
  $\ulms{A}_1, \dots, \ulms{A}_n$ in
  Proposition~\ref{p:sources-small}. 
  Let $(\mathcal{X}, \mathcal{O})$ be a $\mathscr{U}$-small
  $\kk$-ringed site. 
  Then $\ul\II(\mathcal{X})$ is an object of $\ENH_\kk$ which 
  has $\mathscr{U}$-small $\Hom$-sets and
  objects in $\mathscr{U}$, as explained in 
  Remark~\ref{rem:ENH-objects-are-enhancements}.
  In general, however, $\ul\II(\mathcal{X})$ is not
  $\mathscr{U}$-small. 
  But there are many additive pretriangulated
  $\mathscr{U}$-small full $\KK$-subcategories $\ulms{A}$ of 
  $\ul\II(\mathcal{X})$ (that may enhance some interesting
  subcategories of $\D(\mathcal{X})$ defined by finiteness
  conditions). 
  For example, take any strongly pretriangulated full 
  $\KK$-subcategory of $\ul\II(\mathcal{X})$ whose set of objects
  is $\mathscr{U}$-small. 
  One may produce such a subcategory iteratively by starting with
  any $\mathscr{U}$-small set of objects of 
  $\ul\II(\mathcal{X})$.
\end{remark}

\begin{proof}
  If $\ulms{N}$ is $\ul\II(\mathcal{X})$, let
  $\ulms{M}=\ul\C(\mathcal{X})$ with $\ms{M}$ carrying the
  $\II$-model structure. 
  If $\ulms{N}$ is $\ul\IIMod(\ulms{C})$ (resp.\
  $\ul\PPMod(\ulms{C})$), let $\ulms{M}=\ul\MMod(\ulms{C})$ with
  $\ms{M}$ carrying the $\II$-model structure (resp.\ the
  $\PP$-model structure).
  By Proposition~\ref{p:examples-localization-wrt-obj-htpy-equ}
  and the fact that $\ulms{B} \subset \ulms{N}=\ulms{M}_\bifib$ (see 
  Remark~\ref{rem:reminder-fibrant-cofibrant})
  we can apply 
  Proposition~\ref{p:localization-wrt-obj-htpy-equ}
  (using the universe $\mathscr{U}$ and the $\mathscr{U}$-small
  $\KK$-category 
  $\ulms{A}_1 \otimes \dots \otimes \ulms{A}_n$). 
  All claims follow if we remember that the underlying functor
  of the additive $\kk$-localization is the ordinary
  localization, see 
  Proposition~\ref{p:R-localization-additive-small}.
\end{proof}

\begin{remark}
  \label{rem:sources-small}
  The assumption in Proposition~\ref{p:sources-small}
  that the $\KK$-categories $\ulms{A}_1, \dots, \ulms{A}_n$ are
  $\mathscr{U}$-small is important for the proof. The assumption
  that they are 
  additive pretriangulated was just imposed to ensure that they
  are objects of $\ENH_\kk$. If we omit it, 
  Proposition~\ref{p:sources-small} remains true if
  we replace
  $\tildew{\ENH}_\kk(\ulms{A}_1, \dots, \ulms{A}_n; \ulms{B})
  \ra \ENH_\kk(\ulms{A}_1, \dots, \ulms{A}_n; \ulms{B})$
  by 
  $\DGCAT_\kk(\ulms{A}_1, \dots, \ulms{A}_n; \ulms{B}) \ra
  \L_\ohe\DGCAT_\kk(\ulms{A}_1, \dots, \ulms{A}_n; \ulms{B})$ and
  $\TRCAT_\kk$ by $\CAT_\kk$.
\end{remark}

\begin{example}
  \label{exam:base-change}
  Let
  \begin{equation}
    \label{eq:cartesian-schemes}
    \xymatrix{
      {Y'} \ar[r]^-{\beta'} \ar[d]^-{\alpha'} &
      {Y} \ar[d]^-{\alpha} \\
      {X'} \ar[r]^-{\beta} &
      {X}
    }
  \end{equation}
  be a cartesian diagram of $\mathscr{U}$-small
  $\kk$-schemes. Then there is a base 
  change 2-morphism
  \begin{equation}
    \label{eq:BC-schemes-TRCAT}
    \dL\beta^* \dR\alpha_* \ra \dR\alpha'_*\dL\beta'^* \colon
    \D(Y) \ra \D(X')
  \end{equation}
  in $\TRCAT_\kk$ constructed in the usual way using
  adjunction units and counits, see e.\,g.\
  \cite[Prop.~3.7.2]{lipman-notes-on-derived-functors-and-GD}. 
  This construction has an obvious analog in $\ENH_\kk$
  providing a 2-morphism
  \begin{equation}
    \label{eq:BC-schemes-ENH}
    \ul\beta^* \ul\alpha_* \ra \ul\alpha'_*\ul\beta'^* \colon
    \ul\II(Y) \ra \ul\II(X')
  \end{equation}
  in $\ENH_\kk$ that enhances \eqref{eq:BC-schemes-TRCAT}.
  Let $\D_\qc(Y)$ (resp.\ $\ul\II_\qc(Y)$) be the subcategory of
  $\D(Y)$ (resp. $\ul\II(Y)$) of objects with 
  quasi-coherent cohomology. 
  We 
  assume that the
  2-morphism \eqref{eq:BC-schemes-TRCAT} restricts to a 
  2-isomorphism 
  \begin{equation}
    \label{eq:BC-schemes-TRCAT-Dqc}
    \dL\beta^* \dR\alpha_* \sira \dR\alpha'_*\dL\beta'^* \colon
    \D_\qc(Y) \ra \D(X')
  \end{equation}
  (see \cite[Prop.~3.9.5 or
  Thm.~3.10.3]{lipman-notes-on-derived-functors-and-GD}
  for sufficient conditions).
  Let 
  \begin{equation}
    \label{eq:BC-schemes-ENH-IIqc}
    \tau' \colon \ul\beta^* \ul\alpha_* \ra
    \ul\alpha'_*\ul\beta'^* \colon 
    \ul\II_\qc(Y) \ra \ul\II(X')
  \end{equation}
  be the corresponding restriction of \eqref{eq:BC-schemes-ENH}.
  It is a 2-morphism in $\ENH_\kk$ but we do not know whether
  it is invertible.
  The fact that \eqref{eq:BC-schemes-TRCAT-Dqc} is a 2-isomorphism 
  implies (use
  Definition~\eqref{d:enhance-2-morphism}) that  
  \begin{equation}
    \label{eq:BC-schemes-ENH-Iqc}
    [\tau'] \colon [\ul\beta^*][\ul\alpha_*] \ra
    [\ul\alpha'_*][\ul\beta'^*] \colon 
    [\ul\II_\qc(Y)] \ra [\ul\II(X')]
  \end{equation}
  is a 2-isomorphism.
  Let $\ulms{A}$ be a strongly pretriangulated
  $\mathscr{U}$-small full $\KK$-subcategory of 
  $\ul\II_\qc(Y)$, cf.\ Remark~\ref{rem:U-small-objects-ENH}.
  Then the restriction 
  \begin{equation}
    \label{eq:BC-schemes-ENH-IIqc-restricted}
    \tau'|_\ulms{A} \colon \ul\beta^* \ul\alpha_* \ra
    \ul\alpha'_*\ul\beta'^* \colon 
    \ulms{A} \ra \ul\II(X')
  \end{equation}
  of \eqref{eq:BC-schemes-ENH-IIqc}
  to $\ulms{A}$ is an isomorphism
  in $\ENH_\kk(\ulms{A}, \ul\II(X'))$,
  by
  Proposition~\ref{p:sources-small}.\ref{enum:useful-reflects-2-isos}.
  Moreover, by part~\ref{enum:useful-represent-2-morphisms}, this
  restriction is represented by a roof
  \begin{equation}
    \ul\beta^*\ul\alpha_* \xla{u} F \xra{v}
    \ul\alpha'_*\ul\beta'^*
  \end{equation}
  of objectwise homotopy equivalences in
  $\DGCAT_\kk(\ulms{A}, \ul\II(X'))$
  where
  $F \colon \ulms{A} \ra \ul\II(X')$
  is some $\KK$-functor.
\end{example}

\subsection{Large targets}
\label{sec:large-targets}

Let $\mathscr{U} \in \mathscr{V}$ be universes as in
\ref{sec:some-2-mult}. In this subsection we need two
additional universes $\mathscr{U}' \in \mathscr{V}'$ with
$\mathscr{V} \subset \mathscr{U}'$.


Assume that $\ulms{B}$ is an additive pretriangulated
full $\KK$-subcategory of 
$\ul\II(\mathcal{X})=\ul\II(\mathcal{X}; \mathscr{U})$ 
where $(\mathcal{X}, \mathcal{O})$
is a $\mathscr{U}$-small $\kk$-ringed site;
we have included $\mathscr{U}$ in the notation to emphasize that
we consider sheaves of $\mathscr{U}$-small modules.

By Proposition~\ref{p:h-inj-change-universe-sheaves}
we know that we can view 
$\ul\II(\mathcal{X}; \mathscr{U})$ as a full
$\KK_{\mathscr{U}'}$-subcategory of $\ul\II(\mathcal{X};
\mathscr{U}')$.
Let $\ulms{B}_{\mathscr{U}'}$ be 
any pretriangulated full $\KK_{\mathscr{U}'}$-subcategory of 
$\ul\II(\mathcal{X}; \mathscr{U}')$
containing $\ulms{B}$ whose homotopy category
is closed under isomorphisms in 
$[\ul\II(\mathcal{X}; \mathscr{U}')]$ (possible choices are 
$\ul\II(\mathcal{X}; \mathscr{U}')$ itself, or
the
smallest such subcategory: its set of objects is the
set of objects of the essential image of
$[\ulms{B}]$ in $[\ul\II(\mathcal{X}; \mathscr{U}')]$). 
It is automatically
strongly pretriangulated and in particular additive.

Remember that all objects of $\tildew{\ENH}_\kk$ and   
$\ENH_\kk$
are 
$\mathscr{V}$-small $\KK$-categories. 
Define $\tildew{\ENH}_\kk^{\mathscr{V}'}$ and
$\ENH_\kk^{\mathscr{V}'}$ in the obvious way by taking as objects all
additive pretriangulated $\mathscr{V}'$-small $\KK$-categories.
There are obvious change of universe functors $\tildew{\ENH}_\kk \ra
\tildew{\ENH}_\kk^{\mathscr{V}'}$ and
$\ENH_\kk \ra
\ENH_\kk^{\mathscr{V}'}$. Define $\DGCAT_\kk^{\mathscr{V}'}$ and
$\TRCAT_\kk^{\mathscr{V}'}$ similarly by allowing
$\mathscr{V}'$-small categories as objects.

Let $\ulms{A}_1, \dots, \ulms{A}_n$ be objects of $\ENH_\kk$.
Composition with $\ulms{B} \ra \ulms{B}_{\mathscr{U}'}$
yields the horizontal functors in the following
commutative diagram of $\kk$-linear categories.
\begin{equation}
  \xymatrix{
    {\tildew{\ENH}_\kk(\ulms{A}_1, \dots, \ulms{A}_n; \ulms{B})}
    \ar[d]^-{\delta}
    \ar[r] &
    {\tildew{\ENH}_\kk^{\mathscr{V}'}(\ulms{A}_1, \dots, \ulms{A}_n;
      \ulms{B}_{\mathscr{U}'})} 
    \ar[d]^-{\delta}
    \\
    {\ENH_\kk(\ulms{A}_1, \dots, \ulms{A}_n; \ulms{B})}
    \ar[r] &
    {\ENH_\kk^{\mathscr{V}'}(\ulms{A}_1, \dots,\ulms{A}_n;
      \ulms{B}_{\mathscr{U}'})} 
  }
\end{equation}
The advantage of the right column in this diagram is that the
$\mathscr{V}$-small categories $\ulms{A}_i$ are in particular
$\mathscr{U}'$-small and that $\ms{B}_{\mathscr{U}'}$ lives in
the model category $\C(\mathcal{X}; \mathscr{U}')$ which is a
model category with respect to the universe $\mathscr{U}'$.
In this situation we can prove the following version of statements
\ref{enum:useful-ohe-iff-delta-2-isom-wish},
\ref{enum:useful-represent-2-morphisms-wish},
\ref{enum:useful-reflects-2-isos-wish}.

\begin{proposition}
  \label{p:target-large}
  Under the above assumptions, the following statements hold true.
  \begin{enumerate}
  \item 
    \label{enum:useful-ohe-iff-delta-2-isom-large}
    A morphism $\tau$ in
    $\tildew{\ENH}_\kk^{\mathscr{V}'}(\ulms{A}_1, \dots,
    \ulms{A}_n; \ulms{B}_{\mathscr{U}'})$ 
    is an objectwise homotopy equivalence if and
    only if $\delta(\tau)$ 
    is an isomorphism.
  \item   
    \label{enum:useful-represent-2-morphisms-large}
    Any morphism $\tau'$ in 
    $\ENH_\kk^{\mathscr{V}'}(\ulms{A}_1, \dots,
    \ulms{A}_n; \ulms{B}_{\mathscr{U}'})$
    has the form
    $\tau' = \delta(v) \circ \delta(u)\inv$ where $u$ and $v$ are
    morphisms in 
    $\tildew{\ENH}_\kk^{\mathscr{V}'}(\ulms{A}_1, \dots,
    \ulms{A}_n; \ulms{B}_{\mathscr{U}'})$ 
    and $u$ is an
    objectwise homotopy equivalence; 
    moreover, for any such representation, $\tau'$ is a
    2-isomorphism if and only if $v$ is an objectwise homotopy
    equivalence.
  \item 
    \label{enum:useful-reflects-2-isos-large}
    The functor
    $[-] \colon 
    \ENH_\kk^{\mathscr{V}'}(\ulms{A}_1, \dots,
    \ulms{A}_n; \ulms{B}_{\mathscr{U}'}) \ra
    \TRCAT_\kk^{\mathscr{V}'}([\ulms{A}_1], \dots,
    [\ulms{A}_n]; [\ulms{B}_{\mathscr{U}'}])$
    reflects isomorphisms.
  \end{enumerate}
\end{proposition}

\begin{proof}
  This is a direct application of
  Proposition~\ref{p:sources-small}
  if we work with the universes $\mathscr{U}' \in \mathscr{V}'$
  instead of the universes $\mathscr{U} \in \mathscr{V}$.
\end{proof}

\begin{remark} 
  If we assume that $\ulms{B}$ is an additive pretriangulated
  full $\KK$-subcategory of 
  $\ul\IIMod(\ulms{C})$
  or $\ul\PPMod(\ulms{C})$, for a
  $\mathscr{U}$-small 
  $\KK$-category $\ulms{C}$,
  and define $\ulms{B}_{\mathscr{U}'}$ accordingly using 
  Proposition~\ref{p:h-inj-change-universe-dg-modules},
  Proposition~\ref{p:target-large} literally remains true.
\end{remark}

\begin{example}
  \label{exam:base-change-large}
  Consider the situation of Example~\ref{exam:base-change}
  and assume in particular that \eqref{eq:BC-schemes-TRCAT-Dqc}
  is a 2-isomorphism.
  We do not know whether the 2-morphism 
  \eqref{eq:BC-schemes-ENH-IIqc}
  in $\ENH_\kk$ is invertible.
  But if we postcompose it with
  $\ul\II(X')=\ul\II(X'; \mathscr{U}) 
  \subset \ul\II(X'; \mathscr{U}')$
  (which is well-defined by
  Proposition~\ref{p:h-inj-change-universe-sheaves}), 
  the 
  2-isomorphism
  \eqref{eq:BC-schemes-ENH-Iqc}
  and
  Proposition~\ref{p:target-large}.\ref{enum:useful-reflects-2-isos-large}
  show that this postcomposition
  is a 2-isomorphism
  \begin{equation}
    \label{eq:BC-schemes-ENH-IIqc-large}
    \ul\beta^* \ul\alpha_* \sira
    \ul\alpha'_*\ul\beta'^* \colon 
    \ul\II_\qc(Y)=\ul\II_\qc(Y;\mathscr{U}) \ra \ul\II(X';
    \mathscr{U}') 
  \end{equation}
  in $\ENH_\kk^{\mathscr{V}'}$.
  It is clear that this 2-isomorphism
  lifts the 2-isomorphism
  \begin{equation}
    \label{eq:BC-schemes-TRCAT-Dqc-large}
    \dL\beta^* \dR\alpha_* \sira \dR\alpha'_*\dL\beta'^* \colon
    \D_\qc(Y)=\D_\qc(Y,\mathscr{U}) \ra \D(X';\mathscr{U}')
  \end{equation}
  in $\TRCAT_\kk^{\mathscr{V}'}$.
  Moreover, by
  part~\ref{enum:useful-represent-2-morphisms-large}
  of Proposition~\ref{p:target-large}, the 
  2-isomorphism \eqref{eq:BC-schemes-ENH-IIqc-large}
  is represented by a roof
  \begin{equation}
    \ul\beta^* \ul\alpha_* 
    \xla{u} F \xra{v}
    \ul\alpha'_*\ul\beta'^*
  \end{equation}
  of objectwise homotopy equivalences in
  $\DGCAT_\kk^{\mathscr{V}'}(\ul\II_\qc(Y;\mathscr{U}), \ul\II(X';
  \mathscr{U}'))$ 
  where
  $F$ is some $\KK_{\mathscr{U}'}$-functor 
  $\ul\II_\qc(Y;\mathscr{U}) \ra \ul\II(X'; \mathscr{U}')$.
  The induced functor 
  $[F] \colon [\ul\II_\qc(Y;\mathscr{U})] \ra
  [\ul\II(X'; \mathscr{U}')]$
  lands in the essential image of 
  $[\ul\II(X';\mathscr{U})] \subset [\ul\II(X'; \mathscr{U}')]$.
\end{example} 

\appendix

\section{Spaltenstein's results generalized to ringed topoi}
\label{sec:spalt-results-ring}

\fussnote{
  Hashimoto in Lipman-Hashimoto hat Kapitel: Derived categories
  and derived functors of sheaves on ringed sites. But I guess I
  don't like it...
}

We generalize some results from \cite{spaltenstein}
to ringed sites and ringed topoi.
We use the prefix ``h'' (for ``homotopically'')
instead of Spaltenstein's prefix ``K'', for 
example we say h-limp instead of K-limp.
We use the notation and a few
results 
from \ref{sec:ringed-sites}. 

Let $(\mathcal{X}, \mathcal{O}=\mathcal{O}_\mathcal{X})$ be a
ringed site. The definition of an h-injective (resp.\ h-flat)
complex of $\mathcal{O}$-modules is well-known.

\begin{proposition}
  [{cf.\ \cite[Prop.~5.7]{spaltenstein}}]
  \label{p:spalt-5.7-sites}
  If $F \in \C(\mathcal{X})$ is h-flat acyclic, then $F
  \otimes A$ is acyclic for 
  every $A \in \C(\mathcal{X})$.
\end{proposition}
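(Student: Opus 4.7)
The plan is to exploit the symmetry of the tensor product together with the existence of h-flat resolutions on any ringed site, reducing the problem to a direct application of the two hypotheses on $F$ (h-flat) and (acyclic).

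First, I would choose an h-flat resolution of $A$. Specifically, by Theorem~\ref{t:E-model-structure}, the category $\C(\mathcal{X})$ carries the $\EE$-model structure, which is cofibrantly generated; the small object argument therefore yields functorial cofibrant replacements. Applied to $A$, this produces a trivial fibration $A' \to A$ whose source is $\EE$-cofibrant, and by Lemma~\ref{l:E-cofibrant-(pullback)-h-flat} the complex $A'$ is then h-flat (with flat components). Notice that existence of this model structure, and of $\EE$-cofibrant replacements, is proved purely via the small object argument on a ringed site, requiring no field hypothesis whatsoever.

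Next, I would compare $F \otimes A$ with $F \otimes A'$. Since $A' \to A$ is a quasi-isomorphism, its cone $\Cone(A' \to A)$ is acyclic. Because $F$ is h-flat, $F \otimes \Cone(A' \to A)$ is acyclic, and since the tensor product is an additive bifunctor commuting with cones one has
\[
  \Cone(F \otimes A' \to F \otimes A) \;\cong\; F \otimes \Cone(A' \to A),
\]
so the morphism $F \otimes A' \to F \otimes A$ is itself a quasi-isomorphism. It therefore suffices to show that $F \otimes A'$ is acyclic.

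Finally, for this I would invert the roles of the two arguments: $F \otimes A' = A' \otimes F$, and since $A'$ is h-flat while $F$ is acyclic by hypothesis, the definition of h-flatness (applied to $A'$) gives that $A' \otimes F$ is acyclic. Combining the previous steps, $F \otimes A$ is acyclic. The only conceptual step is recognising the symmetry trick; the only potential obstacle is the availability of h-flat resolutions on a general ringed site, which is supplied directly by the $\EE$-model structure, so no genuine difficulty arises.
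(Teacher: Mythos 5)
Your proof is correct and is essentially the paper's own argument: resolve $A$ by an h-flat complex, use h-flatness of $F$ (via the cone) to see that the induced map is a quasi-isomorphism, and then apply h-flatness of the resolution to the acyclic complex $F$. The only difference is cosmetic — the paper obtains the h-flat resolution by citing the Stacks Project rather than via the $\EE$-model structure, but both routes are available and neither introduces circularity.
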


\begin{proof}
  Let $G \ra A$ be an h-flat resolution \citestacks{06YS}.
  Then $F \otimes G \ra F \otimes A$ is a quasi-isomorphism, and
  $F \otimes G$ is acyclic.
\end{proof}

\begin{proposition}
  [{cf.\ \cite[Prop.~5.4.(b)]{spaltenstein}}]
  \label{p:pullback-preserves-h-flat}
  Let $\alpha \colon (\Sh(\mathcal{Y}), \mathcal{O}_\mathcal{Y})
  \ra (\Sh(\mathcal{X}), \mathcal{O}_\mathcal{X})$ be a morphism
  of ringed topoi. 
  Let $F \in \C(\mathcal{X})$ be h-flat. Then
  $\alpha^*F \in \C(\mathcal{Y})$ is 
  h-flat. It is acyclic if $F$ is in addition acyclic.
\end{proposition}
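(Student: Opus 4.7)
My plan is to reduce the statement to a single key claim about the sheaf-theoretic inverse image $\alpha^{-1}$ and then exploit the already-established Proposition~\ref{p:spalt-5.7-sites}. The starting point is the natural isomorphism
\begin{equation*}
  \alpha^* F \otimes_{\mathcal{O}_\mathcal{Y}} B
  \;\cong\;
  \alpha^{-1} F \otimes_{\alpha^{-1}\mathcal{O}_\mathcal{X}} B,
\end{equation*}
valid for any complex $B$ of $\mathcal{O}_\mathcal{Y}$-modules, that comes from the definition $\alpha^*F = \alpha^{-1}F \otimes_{\alpha^{-1}\mathcal{O}_\mathcal{X}} \mathcal{O}_\mathcal{Y}$ together with associativity of tensor product. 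With this identification in hand, both assertions of the proposition follow from the single

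\medskip\noindent\textbf{Key claim:} \emph{if $F$ is h-flat over $\mathcal{O}_\mathcal{X}$, then $\alpha^{-1} F$ is h-flat over $\alpha^{-1}\mathcal{O}_\mathcal{X}$.}

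The acyclicity assertion is then immediate: if $F$ is h-flat and acyclic, then $\alpha^{-1} F$ is acyclic (because $\alpha^{-1}$ is exact on sheaves of modules) and h-flat over $\alpha^{-1}\mathcal{O}_\mathcal{X}$ by the key claim, so Proposition~\ref{p:spalt-5.7-sites} applied to the ringed topos $(\Sh(\mathcal{Y}), \alpha^{-1}\mathcal{O}_\mathcal{X})$ with $A = \mathcal{O}_\mathcal{Y}$ forces $\alpha^*F = \alpha^{-1}F \otimes_{\alpha^{-1}\mathcal{O}_\mathcal{X}} \mathcal{O}_\mathcal{Y}$ to be acyclic. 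Similarly, the h-flat assertion of the proposition follows from the key claim combined with the displayed isomorphism: any acyclic complex $B$ of $\mathcal{O}_\mathcal{Y}$-modules is also acyclic as a complex of $\alpha^{-1}\mathcal{O}_\mathcal{X}$-modules (exactness only depends on the underlying sheaves of abelian groups), so h-flatness of $\alpha^{-1}F$ yields acyclicity of $\alpha^{-1}F \otimes_{\alpha^{-1}\mathcal{O}_\mathcal{X}} B$, i.e.\ of $\alpha^*F \otimes_{\mathcal{O}_\mathcal{Y}} B$.

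The main obstacle is the key claim itself, where the customary proof for ringed spaces proceeds stalkwise and must be replaced in the topos setting. My plan for it is to first reduce $F$ to a quasi-isomorphic h-flat complex whose components are flat (the topos-theoretic analogue of Spaltenstein's Proposition~5.6, available in the Stacks project); since $\alpha^{-1}$ is exact and preserves tensor products, it sends flat $\mathcal{O}_\mathcal{X}$-modules to flat $\alpha^{-1}\mathcal{O}_\mathcal{X}$-modules, so we may assume $\alpha^{-1} F$ is componentwise flat. For such $F$, h-flatness of $\alpha^{-1}F$ over $\alpha^{-1}\mathcal{O}_\mathcal{X}$ is tested against acyclic complexes $C$ of $\alpha^{-1}\mathcal{O}_\mathcal{X}$-modules; choosing a quasi-isomorphism $Q \to C$ with $Q$ h-flat and componentwise flat over $\alpha^{-1}\mathcal{O}_\mathcal{X}$, the cone is h-flat, acyclic, and componentwise flat, and one checks directly (using that both $\alpha^{-1}F$ and $Q$ are componentwise flat) that $\alpha^{-1}F \otimes_{\alpha^{-1}\mathcal{O}_\mathcal{X}} (-)$ preserves quasi-isomorphisms between such cones, reducing the problem to the case of $Q$, which is handled by Proposition~\ref{p:spalt-5.7-sites} applied to $(\Sh(\mathcal{Y}), \alpha^{-1}\mathcal{O}_\mathcal{X})$. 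The delicate point is arranging that the two reductions (componentwise flat replacement of $F$ and of $C$) interact correctly; this is where the topos-theoretic version of Spaltenstein's componentwise-flat h-flat resolutions does the real work.
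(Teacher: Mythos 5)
Your reduction of both assertions to the key claim is sound and in fact mirrors the paper's own argument: the paper deduces the acyclicity statement by applying the h-flatness statement to the auxiliary morphism of ringed topoi $(\Sh(\mathcal{Y}), \alpha\inv\mathcal{O}_\mathcal{X}) \ra (\Sh(\mathcal{X}), \mathcal{O}_\mathcal{X})$ — whose pullback is exactly $\alpha\inv$ — and then invokes Proposition~\ref{p:spalt-5.7-sites} for $\alpha^*F = \mathcal{O}_\mathcal{Y} \otimes_{\alpha\inv\mathcal{O}_\mathcal{X}} \alpha\inv F$; your change-of-rings derivation of the h-flatness of $\alpha^*F$ from that of $\alpha\inv F$ is also correct. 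The essential difference is that the paper does not prove the key claim at all: it cites \citestacks{06YX} (with a remark that the "enough points" hypothesis can be removed), whereas you attempt a proof from scratch, and that attempt is where the gap lies.

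The gap is a circularity appearing at both reduction steps. First, to replace $F$ by a quasi-isomorphic h-flat complex $P$ with flat components you must transfer h-flatness of $\alpha\inv P$ to $\alpha\inv F$ across the cone $N$ of $P \ra F$; this $N$ is h-flat and acyclic but has no componentwise flatness, so you need $\alpha\inv N \otimes C$ acyclic for acyclic $C$ — an instance of the very claim being proved, and iterating the resolution trick only reproduces the same problem. Second, and more seriously, your assertion that the cone of $Q \ra C$ is ``h-flat, acyclic, and componentwise flat'' is false: it is acyclic, but it contains the arbitrary acyclic complex $C$ as a graded summand, so it is neither h-flat nor componentwise flat in general. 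The step you actually need — that $\alpha\inv F \otimes Q \ra \alpha\inv F \otimes C$ is a quasi-isomorphism — amounts to the acyclicity of $\alpha\inv F \otimes \Cone(Q \ra C)$ with $\Cone(Q \ra C)$ acyclic, i.e.\ precisely to the h-flatness of $\alpha\inv F$ that is the conclusion. Note that componentwise flatness of $\alpha\inv F$ cannot suffice on its own (an unbounded acyclic complex of flat modules need not be h-flat), so the h-flatness of $F$ over $\mathcal{O}_\mathcal{X}$ must enter the argument in an essential way, and in your sketch it never does non-circularly. This is exactly the point where, for ringed spaces, one would pass to stalks; in the topos setting a genuinely different argument is required, which is why the paper outsources this step to the Stacks project rather than reproving it.
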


\begin{proof}
  See \citestacks{06YX}
  for the first claim
  (the assumption there that $\mathcal{Y}$ has enough
  points is not necessary by the 
  comments at the beginning of
  \citestacks{06YV}). 
  Let $F$ be h-flat acyclic, then $\alpha\inv F \in
  \C(\alpha\inv\mathcal{O}_\mathcal{X})$ is acyclic and h-flat,
  by the first claim applied to the morphism $(\Sh(\mathcal{Y}),
  \alpha\inv 
  \mathcal{O}_\mathcal{X}) \ra (\Sh(\mathcal{X}),
  \mathcal{O}_\mathcal{X})$ of ringed topoi. Then $\alpha^*F =
  \mathcal{O}_\mathcal{Y} 
  \otimes_{\alpha\inv \mathcal{O}_\mathcal{X}} \alpha\inv F$ is
  acyclic by Proposition~\ref{p:spalt-5.7-sites}
  (and h-flat). 
\end{proof}

Write
$\mathscr{P}(\mathcal{O}) = \mathscr{P}(\mathcal{O}_\mathcal{X})$
for the set of all bounded above 
complexes $E \in \C(\mathcal{X})$ all of whose components $E^n$
are direct sums of $\mathcal{O}$-modules of the form
$j_{U!}\mathcal{O}_U$, for $U \in \mathcal{X}$. Here we use
the notation from \ref{sec:extension-zero-model}. All
objects of $\mathscr{P}(\mathcal{O})$ are h-flat
\citestacks{06YQ}.
 
\begin{definition}
  [{cf.\ \cite[5.11]{spaltenstein}}]
  A complex $W$ of $\mathcal{O}$-modules is 
  \define{weakly h-injective} if
  $\ul\C_{\mathcal{X}}(F, W)$ is acyclic for all h-flat acyclic 
  objects $F$.
  A complex $L$ of $\mathcal{O}$-modules is 
  \define{h-limp} if $\ul\C_{\mathcal{X}}(P, L)$ is
  acyclic for all acyclic $P \in \mathscr{P}(\mathcal{O})$.
\end{definition}

We have implications h-injective $\Rightarrow$ weakly h-injective
$\Rightarrow$ h-limp.

\begin{lemma}
  \label{l:restriction-to-U-preserves-?-limp}
  Let $U \in \mathcal{X}$. If $A \in \C(\mathcal{X})$
  is h-limp (resp.\ weakly h-injective, h-injective),
  so is $j_U^*A$.
\end{lemma}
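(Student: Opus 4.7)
The plan is to reduce everything to properties of the left adjoint $j_{U!}$ via the enriched adjunction
\begin{equation*}
  \ul\C_{\mathcal{X}/U}(F, j_U^*A) \cong \ul\C_{\mathcal{X}}(j_{U!}F, A),
\end{equation*}
which is an isomorphism of complexes of abelian groups, hence preserves acyclicity. Thus to show $j_U^*A$ has one of the three properties, it suffices to show that $j_{U!}$ sends the corresponding class of ``test objects'' in $\C(\mathcal{X}/U)$ into the class of test objects in $\C(\mathcal{X})$: acyclic complexes for h-injectivity, h-flat acyclic complexes for weak h-injectivity, and acyclic complexes in $\mathscr{P}(\mathcal{O}_U)$ for h-limpness.

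First I would record that $j_{U!}\colon \Mod(\mathcal{X}/U) \to \Mod(\mathcal{X})$ is exact (a standard fact for extension by zero along a localization morphism of topoi, cf.\ \citestacks{03EV}) and that, being a left adjoint, it commutes with arbitrary direct sums; hence it preserves acyclic complexes. This immediately takes care of the h-injective case.

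Next I would verify that $j_{U!}$ preserves h-flatness: given any $B \in \C(\mathcal{X})$, the projection formula $j_{U!}(F \otimes j_U^*B) \cong (j_{U!}F) \otimes B$ together with exactness of $j_U^*$ shows that if $F$ is h-flat then so is $j_{U!}F$ (apply the formula to an acyclic $B$ and use that $F \otimes j_U^*B$ is acyclic and that $j_{U!}$ is exact). Combined with the previous step, $j_{U!}$ sends h-flat acyclic complexes to h-flat acyclic complexes, which settles the weakly h-injective case via the adjunction displayed above.

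Finally, for h-limpness I would use the canonical identification $\mathcal{X}/U/(V\to U) \cong \mathcal{X}/V$ for any $V \in \mathcal{X}/U$, under which $j_U \circ j_V \cong j_V$ (with $j_V\colon \mathcal{X}/V \to \mathcal{X}$ on the right); by uniqueness of left adjoints this yields $j_{U!}(j_{V!}\mathcal{O}_V) \cong j_{V!}\mathcal{O}_V$ in $\Mod(\mathcal{X})$. Since $j_{U!}$ preserves direct sums and is exact (hence preserves termwise the construction of objects in $\mathscr{P}$ and the boundedness above), it maps $\mathscr{P}(\mathcal{O}_U)$ into $\mathscr{P}(\mathcal{O})$ and sends acyclic objects to acyclic objects. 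The adjunction isomorphism then gives the h-limp case. The main obstacle is bookkeeping: verifying the compatibility $j_{U!} \circ j_{V!} \cong j_{V!}$ and the projection formula in the enriched (complex-level) setting, but both follow from the adjoint functor formalism already invoked in the paper.
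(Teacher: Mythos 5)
Your proposal is correct and follows essentially the same route as the paper: the enriched adjunction $\ul\C_{\mathcal{X}/U}(T, j_U^*A) \cong \ul\C_{\mathcal{X}}(j_{U!}T, A)$ combined with exactness of $j_{U!}$ for the h-injective case, the projection formula to get preservation of h-flatness for the weakly h-injective case, and the identification $j_{U!}(j_{V/U!}\mathcal{O}_{V/U}) \cong j_{V!}\mathcal{O}_V$ to see $j_{U!}(\mathscr{P}(\mathcal{O}_U)) \subset \mathscr{P}(\mathcal{O})$ for the h-limp case. No gaps.
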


\begin{proof}
  We use results from \citestacks{03DH}. 
  Recall that 
  there is an adjunction
  $(j_{U!}, j_U^*)$ of functors between $\Mod(\mathcal{X})$ and
  $\Mod(\mathcal{X}/U)$, and $j_{U!}$ is exact. 
  Therefore we have
  \begin{equation}
    \label{eq:988}
    \ul\C_{\mathcal{X}/U}(T, j_U^*A) \cong
    \ul\C_{\mathcal{X}}(j_{U!}T, A) 
  \end{equation}
  for $T \in \C(\mathcal{X}/U)$, and
  $j_{U!}T$ is acyclic if $T$ is acyclic.
  Hence $j_U^*$ preserves h-injectivity.

  If $(V/U)$ is an object of $\mathcal{X}/U$, then $j_{U!} (j_{V/U
    !}\mathcal{O}_{V/U}) \cong j_{V!}\mathcal{O}_V$. Therefore
  $j_{U!}(\mathscr{P}(\mathcal{O}_U)) \subset
  \mathscr{P}(\mathcal{O})$.
  Hence $j_U^*$ preserves h-limpness.

  In order to see that $j_U^*$ preserves weak h-injectivity we
  need to show that $j_{U!}$ preserves h-flatness.
  Recall the projection formula 
  \begin{equation}
    \label{eq:911}
    j_{U!}(F \otimes j_U^*G) \cong
    (j_{U!}F) \otimes G
  \end{equation}
  for all $F \in \C(\mathcal{X}/U)$ and $G \in \C(\mathcal{X})$,
  see 
  \cite[Exp.~IV, 
  Prop.~12.11.(b)]{SGA4-1}. Since $j_U^*$ and $j_{U!}$ are exact
  this proves that $j_{U!}$ preserves h-flatness.
\end{proof}

\begin{proposition}
  \label{p:spalt-5.20-sites-sheafHom}
  \begin{enumerate}
  \item
    \label{enum:sheafHom-to-h-inj}
    If 
    $T \in
    \C(\mathcal{X})$  is
    acyclic 
    and
    $I \in \C(\mathcal{X})$ 
    is h-injective, then 
    $\sheafHom(T, I)$ is acyclic.
  \item 
    \label{enum:sheafHom-h-flat-weakly-h-inj}
    If 
    $F \in \C(\mathcal{X})$ is h-flat acyclic
    and
    $W \in \C(\mathcal{X})$ is weakly h-injective,
    then
    $\sheafHom(F, W)$ is acyclic.
  \end{enumerate}
\end{proposition}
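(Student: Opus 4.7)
The plan is to reduce both claims to the definitions of h-injectivity and weak h-injectivity by passing to local sections and using the compatibility of $j_U^*$ with these properties.

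First I would observe that a complex $\mathcal{C}^\bullet$ of $\mathcal{O}$-modules on $\mathcal{X}$ is acyclic as soon as its cohomology presheaf $U \mapsto H^n(\mathcal{C}^\bullet(U))$ vanishes, since the cohomology sheaf is the sheafification of this presheaf. For $\mathcal{C}^\bullet = \sheafHom(A,B)$ the local sections on $U \in \mathcal{X}$ are, by the very definition of internal Hom of sheaves of modules on a site,
\begin{equation*}
\sheafHom(A,B)(U) = \ul\C_{\mathcal{X}/U}(j_U^* A,\, j_U^* B).
\end{equation*}
So it suffices, for every $U \in \mathcal{X}$, to show that this complex of abelian groups is acyclic.

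For part \ref{enum:sheafHom-to-h-inj}, take $A = T$ acyclic and $B = I$ h-injective. The functor $j_U^*$ is exact (it is the restriction along the continuous and cocontinuous functor $j_U$), so $j_U^* T$ remains acyclic; and by Lemma~\ref{l:restriction-to-U-preserves-?-limp}, $j_U^* I$ is h-injective on the ringed site $(\mathcal{X}/U, \mathcal{O}_U)$. Therefore $\ul\C_{\mathcal{X}/U}(j_U^* T, j_U^* I)$ is acyclic by the very definition of h-injectivity of $j_U^* I$, proving \ref{enum:sheafHom-to-h-inj}.

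For part \ref{enum:sheafHom-h-flat-weakly-h-inj}, take $A = F$ h-flat and acyclic, $B = W$ weakly h-injective. Proposition~\ref{p:pullback-preserves-h-flat} applied to the morphism of ringed topoi induced by $j_U$ shows that $j_U^* F$ is still h-flat (and still acyclic by exactness of $j_U^*$), while Lemma~\ref{l:restriction-to-U-preserves-?-limp} shows that $j_U^* W$ is weakly h-injective. Hence $\ul\C_{\mathcal{X}/U}(j_U^* F, j_U^* W)$ is acyclic by definition of weak h-injectivity, finishing \ref{enum:sheafHom-h-flat-weakly-h-inj}. No step is really the main obstacle here: the whole proof hinges on Lemma~\ref{l:restriction-to-U-preserves-?-limp} being already in place, and once one has it, both statements are immediate consequences of the definitions combined with the description of $\sheafHom$ on local sections.
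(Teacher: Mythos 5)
Your proof is correct and is essentially identical to the paper's own argument: both reduce to the local sections $\sheafHom(A,B)(U)=\ul\C_{\mathcal{X}/U}(j_U^*A,j_U^*B)$, invoke Lemma~\ref{l:restriction-to-U-preserves-?-limp} to preserve (weak) h-injectivity under $j_U^*$ and Proposition~\ref{p:pullback-preserves-h-flat} for h-flatness, and conclude via acyclicity of the presheaf of sections.
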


\begin{proof}
  Let $U \in \mathcal{X}$.

  \ref{enum:sheafHom-to-h-inj}
  Let $I$ be h-injective and $T$ acyclic. Then 
  $j_U^*I$ is h-injective by
  Lemma~\ref{l:restriction-to-U-preserves-?-limp},
  and $j_U^*T$ is acyclic. Therefore
  $\sheafHom(T,I)(U) = \ul\C_{\mathcal{X}/U}(j_U^*T, j_U^*I)$ is
  acyclic. Since $U$ is arbitrary, $\sheafHom(T,I)$ is acyclic as
  a complex of presheaves, and a fortiori as a complex of
  sheaves \citestacks{03EI}.

  \ref{enum:sheafHom-h-flat-weakly-h-inj}
  Let $W$ be weakly h-injective and $F$ h-flat acyclic.
  Then 
  $j_U^*W$ is weakly h-injective by
  Lemma~\ref{l:restriction-to-U-preserves-?-limp},
  and $j_U^*F$ is acyclic, and h-flat, by
  Proposition~\ref{p:pullback-preserves-h-flat}.
  This implies that 
  $\sheafHom(F,W)(U) = \ul\C_{\mathcal{X}/U}(j_U^*F, j_U^*W)$ is
  acyclic, and hence $\sheafHom(F,W)$ is acyclic. 
\end{proof}

\begin{proposition}
  [{cf.\ \cite[Prop.~5.14]{spaltenstein}}]
  \label{p:spalt-5.14-sites-sheafHom}
  If $T \in \C(\mathcal{X})$ is arbitrary (resp.\ h-flat)
  and $I \in \C(\mathcal{X})$ is h-injective, 
  then 
  $\sheafHom(T,I)$ is 
  weakly h-injective (resp.\ is h-injective). 
\end{proposition}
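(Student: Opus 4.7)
The plan is to reduce both assertions to the tensor--Hom adjunction at the level of internal Hom complexes, combined with the definitions of weakly h-injective and h-injective and with Proposition~\ref{p:spalt-5.7-sites}.

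First I would recall that for all $A,T,I \in \C(\mathcal{X})$ there is a natural isomorphism of complexes of $\R$-modules
\begin{equation*}
  \ul\C_\mathcal{X}(A, \sheafHom(T,I)) \cong \ul\C_\mathcal{X}(A \otimes T, I),
\end{equation*}
obtained from the closed symmetric monoidal structure on $\C(\mathcal{X})$ (apply $\sigma_*$ to the sheaf-level tensor-Hom adjunction, cf.\ \eqref{eq:15} and \citestacks{03EO}).

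For the weak h-injectivity statement, let $I$ be h-injective and $T$ arbitrary, and let $F \in \C(\mathcal{X})$ be h-flat and acyclic. Applying the above adjunction with $A=F$ yields
\begin{equation*}
  \ul\C_\mathcal{X}(F, \sheafHom(T,I)) \cong \ul\C_\mathcal{X}(F \otimes T, I).
\end{equation*}
By Proposition~\ref{p:spalt-5.7-sites}, $F \otimes T$ is acyclic (here is where the hypothesis that $F$ is h-flat acyclic is used); since $I$ is h-injective, the right-hand side is acyclic, so $\sheafHom(T,I)$ is weakly h-injective by definition.

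For the h-injectivity statement, assume in addition that $T$ is h-flat, and let $A \in \C(\mathcal{X})$ be an arbitrary acyclic complex. The same adjunction gives
\begin{equation*}
  \ul\C_\mathcal{X}(A, \sheafHom(T,I)) \cong \ul\C_\mathcal{X}(A \otimes T, I),
\end{equation*}
and now $A \otimes T$ is acyclic by the very definition of h-flatness of $T$; hence the right-hand side is acyclic because $I$ is h-injective, and $\sheafHom(T,I)$ is h-injective. The only mildly delicate point is invoking the tensor--Hom adjunction at the level of complexes rather than merely on cohomology, but this is a standard consequence of the closed symmetric monoidal structure on $\Mod(\mathcal{X})$ and requires no new argument.
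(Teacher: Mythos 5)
Your proof is correct and follows exactly the paper's own argument: both cases reduce via the tensor--Hom adjunction $\ul\C_\mathcal{X}(F,\sheafHom(T,I)) \cong \ul\C_\mathcal{X}(F \otimes T, I)$ to acyclicity of $F \otimes T$, using Proposition~\ref{p:spalt-5.7-sites} in the first case and the h-flatness of $T$ in the second, and then the h-injectivity of $I$. No discrepancies to report.
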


\begin{proof}
  Let $F \in \C(\mathcal{X})$ be h-flat acyclic (resp.\ be acyclic).
  Then $F \otimes T$ is acyclic by 
  Proposition~\ref{p:spalt-5.7-sites} (resp.\ because $T$ is
  h-flat), 
  and hence
  $\ul\C_{\mathcal{X}}(F, \sheafHom(T,I)) \cong
  \ul\C_{\mathcal{X}}(F \otimes T, I)$
  is acyclic because $I$ is h-injective. 
\end{proof}

\begin{proposition}
  [{cf.\ \cite[Prop.~5.16]{spaltenstein}}]
  \label{p:spalt-5.16-sites}
  Let $A \in \C(\mathcal{X})$ be an h-limp acyclic complex and
  $U \in \mathcal{X}$. Then $\Gamma(U, A)=\Gamma(U, j_U^*A)$ is
  acyclic. 
\end{proposition}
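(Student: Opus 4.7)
\emph{Plan.} First I would identify $\Gamma(U, A)$ as a Hom-complex via the sheaf-theoretic adjunction $(j_{U!}, j_U^*)$. The natural isomorphism $\Hom_{\mathcal{X}}(j_{U!}\mathcal{O}_U, F) \cong \Gamma(U, F)$ for every $\mathcal{O}$-module $F$ extends termwise to $\ul\C_{\mathcal{X}}(j_{U!}\mathcal{O}_U, A) \cong \Gamma(U, A)$, while the equality $\Gamma(U, A) = \Gamma(U, j_U^*A)$ is immediate from the definition of the restriction $j_U^*$ on the localized site $\mathcal{X}/U$. Thus the task becomes showing that $\ul\C_{\mathcal{X}}(j_{U!}\mathcal{O}_U, A)$ is acyclic.

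The tempting direct approach is to invoke h-limpness of $A$, but although $j_{U!}\mathcal{O}_U$ (concentrated in degree zero) lies in $\mathscr{P}(\mathcal{O})$, it is not acyclic, so h-limpness does not apply directly. The plan is therefore to resolve $j_{U!}\mathcal{O}_U$ by a Čech construction within $\mathscr{P}(\mathcal{O})$. Given a sufficiently fine hypercover $V_\bullet \to U$ in $\mathcal{X}$, the associated Čech complex $K_\bullet$ with components $\bigoplus_\alpha j_{V_\alpha!}\mathcal{O}_{V_\alpha}$ sits in $\mathscr{P}(\mathcal{O})$ and comes with an augmentation $K_\bullet \to j_{U!}\mathcal{O}_U$ that is a quasi-isomorphism; the cone is the full augmented complex, which is acyclic and lies in $\mathscr{P}(\mathcal{O})$. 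Applying h-limpness of $A$ to this acyclic cone yields a quasi-isomorphism $\ul\C_{\mathcal{X}}(j_{U!}\mathcal{O}_U, A) \simeq \ul\C_{\mathcal{X}}(K_\bullet, A)$, so it suffices to show the latter is acyclic.

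It remains to verify acyclicity of $\ul\C_{\mathcal{X}}(K_\bullet, A)$, which is the totalization of a double complex whose entries are sections $\Gamma(V_\alpha, A^q)$ of components of $A$ on objects of the hypercover. A naive degreewise induction would be circular, as these entries are precisely the kind of object whose acyclicity we are trying to establish. The plan to bypass this circularity is to combine the Čech reduction with the good truncations $\tau^{\leq n}A$ of $A$: left-exactness of $\Gamma(U, -)$ yields $\Gamma(U, \tau^{\leq n} A) = \tau^{\leq n} \Gamma(U, A)$, so $H^n(\Gamma(U, A)) = H^n(\Gamma(U, \tau^{\leq n}A))$, reducing the problem to bounded-above acyclic $A$. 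In the bounded-above case, one verifies that h-limpness passes to the relevant truncations (using acyclicity of $A$, which forces the kernel/quotient complexes in the truncation short exact sequences to be themselves h-limp) and then inducts on the length of $\tau^{\leq n}A$, with the contractible length-two case as the base.

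\emph{Main obstacle.} The hard part is the coordination in the last step: showing that the good truncations $\tau^{\leq n}A$ inherit enough of the h-limp property from $A$ to make the length-induction work, and threading this together with the Čech resolution of $j_{U!}\mathcal{O}_U$ so that at each inductive step one can invoke h-limpness for genuinely acyclic objects of $\mathscr{P}(\mathcal{O})$ constructed out of the hypercover. The verification of h-limpness through the truncation triangles, which rests crucially on the acyclicity of $A$ (and hence of each $\tau^{\leq n}A$), is what ultimately breaks the circularity.
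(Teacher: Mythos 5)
The paper does not actually prove this proposition; it only remarks that the proof of \cite[Prop.~5.16]{spaltenstein} generalizes, so your proposal must be judged on its own. Your opening moves are sound: the identification $\Gamma(U,A)\cong\ul\C_{\mathcal{X}}(j_{U!}\mathcal{O}_U,A)$ via the $(j_{U!},j_U^*)$-adjunction, the observation that the augmented \v{C}ech complex of extensions by zero is an acyclic object of $\mathscr{P}(\mathcal{O})$ to which h-limpness applies, and the reduction $H^n(\Gamma(U,A))=H^n(\Gamma(U,\tau^{\leq n}A))$ via left-exactness of $\Gamma(U,-)$ are all correct. But the argument you build on top of them has two genuine gaps, and the second is fatal.

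First, the claim that $\tau^{\leq n}A$ inherits h-limpness is asserted, not proved, and it is unclear how to prove it: the exact sequence $0\to\tau^{\leq n}A\to A\to A/\tau^{\leq n}A\to 0$ is not degreewise split (the inclusion $Z^n(A)\hookrightarrow A^n$ need not split), so $\ul\C_{\mathcal{X}}(P,-)$ does not turn it into a short exact sequence of complexes, and there is no visible reason for the bounded-below acyclic quotient $A/\tau^{\leq n}A$ to be h-limp, since neither $\Hom(P^i,-)$ nor $\Hom(-,(A/\tau^{\leq n}A)^j)$ is exact. Second, and decisively, $\tau^{\leq n}A$ is unbounded below, so ``inducting on the length of $\tau^{\leq n}A$'' is not available: a length induction only reaches bounded complexes, and the passage from bounded to bounded-above is precisely where the content of the proposition lies. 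The obstruction to writing a section of $Z^n(A)=B^n(A)$ over $U$ as a coboundary involves the entire infinite tail $A^{n-1},A^{n-2},\dots$, and no finite truncation of that tail controls $H^n(\Gamma(U,A))$ because $\Gamma(U,-)$ does not commute with the truncations $\tau^{\geq m}$. Your proposal contains no limit or completeness argument to bridge this, and the one place where you do pit h-limpness against a nontrivial acyclic object of $\mathscr{P}(\mathcal{O})$ --- the \v{C}ech comparison --- is abandoned immediately afterwards because, as you note yourself, it only reproduces the original problem on the members of the cover. A working proof has to test $A$ against acyclic bounded-above objects of $\mathscr{P}(\mathcal{O})$ that see all degrees of $A$ simultaneously (this is what Spaltenstein's argument does, assembling iterated coverings into a single acyclic complex in $\mathscr{P}(\mathcal{O})$ and controlling the resulting limit); that mechanism is missing here.
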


\begin{proof}
  The proof of \cite[Prop.~5.16]{spaltenstein} generalizes.
\end{proof}

\begin{proposition}
  [{cf.\ \cite[Prop.~5.15]{spaltenstein}}]
  \label{p:spalt-5.15-sites-h-limp}
  Let $\alpha \colon (\mathcal{Y}, \mathcal{O}_\mathcal{Y}) \ra
  (\mathcal{X}, \mathcal{O}_\mathcal{X})$ be a morphism of
  ringed 
  sites. 
  Let $W \in \C(\mathcal{Y})$ be h-limp.
  Then $\alpha_*W$ is 
  h-limp. It is acyclic if $W$ is in addition acyclic.
\end{proposition}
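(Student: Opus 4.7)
The plan is to treat the two claims separately, starting with the acyclicity assertion as a warm-up. Write $u \colon \mathcal{X} \to \mathcal{Y}$ for the continuous functor of underlying categories through which the morphism of ringed sites $\alpha$ is given, so that $(\alpha_* F)(U) = F(u(U))$ for $U \in \mathcal{X}$ and any sheaf $F$ on $\mathcal{Y}$.

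For the acyclicity claim, assume $W$ is h-limp and acyclic. The already-established Proposition~\ref{p:spalt-5.16-sites} yields that $\Gamma(V, W)$ is acyclic for every $V \in \mathcal{Y}$. Specializing to $V = u(U)$ gives that $\Gamma(U, \alpha_* W) = \Gamma(u(U), W)$ is acyclic for every $U \in \mathcal{X}$, so $\alpha_* W$ has acyclic presheaf sections on each object of $\mathcal{X}$, hence is acyclic as a complex of presheaves, and therefore acyclic as a complex of sheaves by \citestacks{03E1}.

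For the h-limpness claim, given an acyclic $P \in \mathscr{P}(\mathcal{O}_\mathcal{X})$, the $(\alpha^*, \alpha_*)$-adjunction supplies a natural isomorphism
\begin{equation*}
\ul\C_\mathcal{X}(P, \alpha_* W) \;\cong\; \ul\C_\mathcal{Y}(\alpha^* P, W),
\end{equation*}
and the task reduces to showing that the right-hand side is acyclic. The core of the argument is to establish that $\alpha^*$ sends $\mathscr{P}(\mathcal{O}_\mathcal{X})$ into $\mathscr{P}(\mathcal{O}_\mathcal{Y})$. Since $\alpha^*$ is a left adjoint, it commutes with direct sums, so this reduces termwise to verifying
\begin{equation*}
\alpha^* j_{U!}\mathcal{O}_{\mathcal{X}/U} \;\cong\; j_{u(U)!}\mathcal{O}_{\mathcal{Y}/u(U)} \qquad (U \in \mathcal{X}),
\end{equation*}
which follows from the compatibility of $\alpha^{-1}$ with extensions by zero of abelian presheaves (via $\alpha^{-1} h_U^\sharp \cong h_{u(U)}^\sharp$) together with the definition $\alpha^* = \alpha^{-1}(-) \otimes_{\alpha^{-1}\mathcal{O}_\mathcal{X}} \mathcal{O}_\mathcal{Y}$. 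Granting this identification, $\alpha^* P \in \mathscr{P}(\mathcal{O}_\mathcal{Y})$; moreover $\alpha^* P$ is acyclic by Proposition~\ref{p:pullback-preserves-h-flat}, since $P$ is h-flat (every object of $\mathscr{P}$ is h-flat) and acyclic. H-limpness of $W$ then directly yields the desired acyclicity.

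The main obstacle will be verifying the pullback formula for $\alpha^* j_{U!}\mathcal{O}_{\mathcal{X}/U}$ in the full generality of morphisms of ringed sites, since the continuous functor $u$ need not preserve localizations strictly and the identification involves sheafification at two stages. The plan is to give an intrinsic proof by realizing $j_{U!}\mathcal{O}_{\mathcal{X}/U}$ as the free $\mathcal{O}_\mathcal{X}$-module on the sheafified representable $h_U^\sharp$ and exploiting the fact that $\alpha^{-1}$ preserves colimits and sends $h_U^\sharp$ to $h_{u(U)}^\sharp$; the extension of scalars $(-) \otimes_{\alpha^{-1}\mathcal{O}_\mathcal{X}} \mathcal{O}_\mathcal{Y}$ then produces $j_{u(U)!}\mathcal{O}_{\mathcal{Y}/u(U)}$ on the nose.
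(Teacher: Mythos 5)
Your proof is correct and follows essentially the same route as the paper's: the acyclicity claim via Proposition~\ref{p:spalt-5.16-sites} applied to the sections $\Gamma(u(U),W)$, and h-limpness via the adjunction $\ul\C_\mathcal{X}(P,\alpha_*W)\cong\ul\C_\mathcal{Y}(\alpha^*P,W)$ combined with $\alpha^*(\mathscr{P}(\mathcal{O}_\mathcal{X}))\subset\mathscr{P}(\mathcal{O}_\mathcal{Y})$ and Proposition~\ref{p:pullback-preserves-h-flat}. The only (cosmetic) difference is how you identify $\alpha^*j_{U!}\mathcal{O}_U\cong j_{u(U)!}\mathcal{O}_{u(U)}$: you realize $j_{U!}\mathcal{O}_U$ as the free module on $h_U^\sharp$ and use $\alpha^{-1}h_U^\sharp\cong h_{u(U)}^\sharp$, whereas the paper deduces $\alpha^*j_{U!}\cong j_{u(U)!}\alpha^*$ from $j_U^*\alpha_*=\alpha_*j_{u(U)}^*$ by Yoneda; both are valid one-line adjunction arguments.
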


\begin{proof}
  We claim that
  $\alpha^*(\mathscr{P}(\mathcal{O}_\mathcal{X}) \subset
  \mathscr{P}(\mathcal{O}_\mathcal{Y})$.
  Let $\alpha$ be given by the continuous functor
  $u \colon \mathcal{X} \ra \mathcal{Y}$.  Let
  $U \in \mathcal{X}$ and denote the induced morphism
  $(\mathcal{Y}/u(U), \mathcal{O}_{u(U)}) \ra (\mathcal{X}/U,
  \mathcal{O}_U)$
  of ringed sites also by $\alpha$.  Then
  $j_U^*\alpha_* = \alpha_*j_{u(U)}^*$ (see \citestacks{04IZ})
  implies $\alpha^*j_{U!} \cong j_{u(U)!}\alpha^*$ using the
  Yoneda lemma and the obvious adjunctions. Therefore the claim
  follows from $\alpha^* \mathcal{O}_U=\mathcal{O}_{u(U)}$.

  Let $W \in \C(\mathcal{Y})$ be h-limp.
  For $F \in \mathscr{P}(\mathcal{O}_\mathcal{X})$
  we have just seen that
  $\alpha^*F \in \mathscr{P}(\mathcal{O}_\mathcal{Y})$.
  If $F$ is in addition acyclic, so is $\alpha^*F$ by 
  Proposition~\ref{p:pullback-preserves-h-flat}.
  Then
  $\ul\C_{\mathcal{X}}(F, \alpha_*W) \cong
  \ul\C_{\mathcal{Y}}(\alpha^*F, W)$
  is acyclic, so $\alpha_*W$ is
  h-limp.

  For the second claim note that
  $\alpha_* \colon \C(\mathcal{Y}) \ra
  \C(\mathcal{X})$ is given by 
  $(\alpha_* A) (U) = A(u(U))= \Gamma(u(U), A)$ for $A \in
  \C(\mathcal{Y})$ 
  and $U \in \mathcal{X}$. 
  Therefore Proposition~\ref{p:spalt-5.16-sites} shows that
  $\alpha_*$ maps 
  h-limp acyclic complexes to acyclic 
  complexes.
\end{proof}

\begin{proposition}
  [{cf.\ \cite[Prop.~5.15]{spaltenstein}}]
  \label{p:spalt-5.15-sites}
  Let $\alpha \colon (\Sh(\mathcal{Y}), \mathcal{O}_\mathcal{Y}) \ra
  (\Sh(\mathcal{X}), \mathcal{O}_\mathcal{X})$ be a morphism of
  ringed 
  topoi. 
  Let $W \in \C(\mathcal{Y})$ be weakly h-injective.
  Then $\alpha_*W$ is 
  weakly h-injective. It is acyclic if $W$ is in addition acyclic.
\end{proposition}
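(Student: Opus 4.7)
My plan is to imitate the argument of Proposition~\ref{p:spalt-5.15-sites-h-limp} almost verbatim, replacing the test class $\mathscr{P}(\mathcal{O}_\mathcal{X})$ by the larger class of h-flat complexes, and to handle the acyclicity statement by reducing to the h-limp case that has just been treated.

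First, to establish weak h-injectivity of $\alpha_*W$, I would let $F \in \C(\mathcal{X})$ be an arbitrary h-flat acyclic complex. The $(\alpha^*, \alpha_*)$-adjunction for the morphism of ringed topoi extends in the usual way to an isomorphism of complexes of $\R$-modules
\[
  \ul\C_\mathcal{X}(F, \alpha_*W) \cong \ul\C_\mathcal{Y}(\alpha^*F, W),
\]
natural in $F$. By Proposition~\ref{p:pullback-preserves-h-flat}, $\alpha^*F$ is h-flat, and it is acyclic because $F$ is. Hence the right-hand side is acyclic by the assumed weak h-injectivity of $W$, so the left-hand side is too. Letting $F$ vary gives weak h-injectivity of $\alpha_*W$.

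For the acyclicity part, I would use that every weakly h-injective complex is h-limp (this implication is recorded in the excerpt right after the two definitions, because $\mathscr{P}(\mathcal{O}_\mathcal{Y})$ consists of h-flat complexes, so the h-limp test condition is a special case of the weakly h-injective test condition). Once $W$ is h-limp and acyclic, the conclusion that $\alpha_*W$ is acyclic is exactly what Proposition~\ref{p:spalt-5.15-sites-h-limp} delivers, after representing the given morphism of ringed topoi by a morphism of ringed sites and observing that the direct image functor is intrinsic to the morphism of topoi up to canonical isomorphism.

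The only slightly delicate step is the transition from ringed topoi to ringed sites in the acyclicity argument; this is purely a matter of presenting $\alpha$ by some morphism of ringed sites (for instance, by using the canonical sites of the topoi) and checking that the resulting $\alpha_*$ agrees with the given one. Everything else is a direct reuse of Propositions~\ref{p:pullback-preserves-h-flat} and \ref{p:spalt-5.15-sites-h-limp}, so I do not anticipate any substantive obstacle.
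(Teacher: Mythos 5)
Your proof is correct and follows essentially the same route as the paper: the weak h-injectivity part is the identical adjunction argument via Proposition~\ref{p:pullback-preserves-h-flat}, and for acyclicity the paper likewise passes through the h-limp case of Proposition~\ref{p:spalt-5.15-sites-h-limp}. The site-theoretic reduction you flag as the delicate step is handled in the paper by factoring $\alpha$ as an equivalence $h$ of ringed topoi followed by a morphism $g$ induced by a morphism of ringed sites, noting that $h_*$ preserves weak h-injectivity and acyclicity — which is exactly the check you anticipate.
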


\begin{proof}
  Let $W \in \C(\mathcal{Y})$ be weakly h-injective.
  Let $F \in \C(\mathcal{X})$ be h-flat acyclic. Then
  $\alpha^*F$ is 
  h-flat acyclic by
  Proposition~\ref{p:pullback-preserves-h-flat}. Therefore
  $\ul\C_{\mathcal{X}}(F, \alpha_*W) \cong
  \ul\C_{\mathcal{Y}}(\alpha^*F, W)$
  is acyclic, so $\alpha_*W$ is
  weakly h-injective. 


  We can factor $\alpha$ as a composition
  \begin{equation}
    \label{eq:50}
    (\Sh(\mathcal{Y}), \mathcal{O}_\mathcal{Y}) \xra{f}
    (\Sh(\mathcal{Y}'), \mathcal{O}_{\mathcal{Y}'}) \xra{g}
    (\Sh(\mathcal{X}), \mathcal{O}_\mathcal{X})
  \end{equation}
  of an equivalence $f$ of ringed topoi followed by a
  morphism of ringed topoi $g$ induced by a morphism of ringed
  sites, 
  cf.\ \cite[\sptag{03A2}, \sptag{03CR}]{stacks-project}.  
  If $A \in \C(\mathcal{Y})$ is weakly h-injective acyclic,
  then $f_*A$ is weakly h-injective acyclic, in particular h-limp
  acyclic, and hence
  $g_*f_*A=\alpha_*A$ is acyclic by 
  Proposition~\ref{p:spalt-5.15-sites-h-limp}.
\end{proof}


\begin{proposition}
  [{cf.\ \cite[Prop.~5.20.(a)]{spaltenstein}}]
  \label{p:spalt-5.20-sites}
  If $F \in \C(\mathcal{X})$ is h-flat
  and
  $W \in \C(\mathcal{X})$ is weakly h-injective acyclic, then
  $\ul\C_{\mathcal{X}}(F, W)$ is acyclic.
\end{proposition}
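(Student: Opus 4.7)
The plan is to reduce, via a carefully chosen h-flat resolution $\pi \colon P \to F$, to the computation of $\Gamma(U, W)$ for objects $U \in \mathcal{X}$, where the h-limp consequence of weak h-injectivity combined with Proposition~\ref{p:spalt-5.16-sites} closes out the argument. I choose $\pi$ so that $P = \varinjlim_n P_n$ is a filtered colimit with each $P_n \in \mathscr{P}(\mathcal{O})$ (hence bounded above with components direct sums of sheaves $j_{U!}\mathcal{O}_U$, and in particular h-flat) and the transitions $P_n \hookrightarrow P_{n+1}$ are degreewise split monomorphisms of complexes; this is obtained by resolving the good truncations $\tau_{\leq n}F$ by objects of $\mathscr{P}(\mathcal{O})$ compatibly, in the spirit of \citestacks{06YS}. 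Since $F$ and $P$ are both h-flat, the cone $K := \Cone(\pi)$ is h-flat and acyclic, so weak h-injectivity of $W$ yields acyclicity of $\ul\C_{\mathcal{X}}(K, W)$ and $\ul\C_{\mathcal{X}}([-1]K, W)$. Applying the contravariant triangulated functor $[\ul\C_{\mathcal{X}}](-, W)$ to the triangle $P \to F \to K \to [1]P$ then produces a quasi-isomorphism $\ul\C_{\mathcal{X}}(F, W) \to \ul\C_{\mathcal{X}}(P, W)$, reducing the claim to the acyclicity of $\ul\C_{\mathcal{X}}(P, W)$.

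Because the transitions $P_n \hookrightarrow P_{n+1}$ are degreewise split, the induced maps $\ul\C_{\mathcal{X}}(P_{n+1}, W) \to \ul\C_{\mathcal{X}}(P_n, W)$ are degreewise surjective, so $\varprojlim^1$ vanishes on the cochain groups of the tower and the Milnor short exact sequence gives $H^k(\varprojlim_n \ul\C_{\mathcal{X}}(P_n, W)) = \varprojlim_n H^k(\ul\C_{\mathcal{X}}(P_n, W))$. Hence it suffices to show each $\ul\C_{\mathcal{X}}(P_n, W)$ is acyclic. Applying the same Milnor argument to the brutal truncations $\sigma_{\geq -m} P_n \subset P_n$ of each bounded above $P_n$, and then inducting on $m$ via the degreewise split short exact sequences $0 \to \sigma_{\geq -m+1} P_n \to \sigma_{\geq -m} P_n \to [m]P_n^{-m} \to 0$, the problem reduces to the single-degree case: the acyclicity of $\ul\C_{\mathcal{X}}(\bigoplus_\alpha j_{U_\alpha!}\mathcal{O}_{U_\alpha}, W)$. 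Since $\Hom$ of a direct sum is a product and arbitrary products of acyclic complexes of $\R$-modules are acyclic, it further suffices to show $\ul\C_{\mathcal{X}}(j_{U!}\mathcal{O}_U, W)$ is acyclic for each $U \in \mathcal{X}$.

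The $(j_{U!}, j_U^*)$-adjunction identifies $\ul\C_{\mathcal{X}}(j_{U!}\mathcal{O}_U, W)$ with $\Gamma(U, j_U^*W) = \Gamma(U, W)$. Since $W$ is weakly h-injective it is a fortiori h-limp (as recorded just after the definition of these classes), and being also acyclic, $\Gamma(U, W)$ is acyclic by Proposition~\ref{p:spalt-5.16-sites}. The main obstacle in the plan is the construction of the filtered h-flat resolution $P = \varinjlim_n P_n$ with degreewise split transitions for arbitrary (possibly unbounded) $F$: concretely, one must lift the canonical inclusions $\tau_{\leq n} F \subset \tau_{\leq n+1}F$ through compatible $\mathscr{P}(\mathcal{O})$-resolutions in a Cartan--Eilenberg-type manner, arranging the lifts to be genuine subcomplex inclusions split in each degree. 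Once this setup is in place, the two nested Milnor arguments together with the identification $\ul\C_{\mathcal{X}}(j_{U!}\mathcal{O}_U, W) = \Gamma(U, W)$ deliver the conclusion.
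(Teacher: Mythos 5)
Your proof is correct and follows essentially the same route as the paper: reduce via the cone of a quasi-isomorphism $P\ra F$ with $P\in\underrightarrow{\mathscr{P}}(\mathcal{O})$ to the acyclicity of $\ul\C_{\mathcal{X}}(P,W)$, then bottom out at $\Gamma(U,W)$ being acyclic by Proposition~\ref{p:spalt-5.16-sites}. The two points you single out as work to be done are exactly what the paper cites away: the filtered resolution with degreewise split transitions is \citestacks{077J} (or the proof of \cite[Prop.~5.6]{spaltenstein}), and your Milnor/truncation reduction is the content of the proof of \cite[Prop.~5.20.(c)]{spaltenstein} that the paper invokes verbatim.
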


\begin{proof}
  Let $L \in \C(\mathcal{X})$ be h-limp acyclic and $U \in
  \mathcal{X}$.  
  Proposition~\ref{p:spalt-5.16-sites} shows that 
  $\ul\C_{\mathcal{X}}(j_{U!}\mathcal{O}_U, L) =
  \ul\C_{\mathcal{X}/U}(\mathcal{O}_U, 
  j_U^* L)=\Gamma(U,L)$ is acyclic.
  The proof of 
  \cite[Prop.~5.20.(c)]{spaltenstein}
  then shows that 
  $\ul\C_{\mathcal{X}}(T,L)$ is acyclic for all objects $T \in
  \underrightarrow{\mathscr{P}}(\mathcal{O})$ (see
  \cite[2.9]{spaltenstein} for the definition of
  $\underrightarrow{\mathscr{P}}(\mathcal{O})$). 
  
  Let $p \colon P \ra F$ be a quasi-isomorphism with  
  $P \in \underrightarrow{\mathscr{P}}(\mathcal{O})$
  (which exists by the proof of \cite[Prop.~5.6]{spaltenstein},
  or by \citestacks{077J}). Then $P$ and $\Cone(p)$ are
  h-flat, and $\Cone(p)$ is of course acyclic. Since $W$ is 
  weakly h-injective,  
  $\ul\C_{\mathcal{X}}(\Cone(p),W)$ is acyclic by definition,
  so 
  $\ul\C_{\mathcal{X}}(F,W) \ra \ul\C_{\mathcal{X}}(P,W)$
  is a quasi-isomorphism whose target is acyclic by the above
  applied to $L=W$.
\end{proof}

\begin{corollary}
  \label{c:spalt-5.20-sites-new}
  If 
  $F \in \C(\mathcal{X})$ is h-flat
  and
  $W \in \C(\mathcal{X})$ is weakly h-injective, then
  $[\ul\C_{\mathcal{X}}](F, W) \ra \D_\mathcal{X}(F, W)$ is an
  isomorphism.
\end{corollary}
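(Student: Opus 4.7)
First I would reduce the corollary to the following intermediate claim: for any quasi-isomorphism $q \colon P \to F$ with $P$ h-flat, the induced map $q^* \colon [\ul\C_\mathcal{X}](F,W) \to [\ul\C_\mathcal{X}](P,W)$ is bijective. Granting this, bijectivity of $[\ul\C_\mathcal{X}](F,W) \to \D_\mathcal{X}(F,W)$ follows by using h-flat resolutions (which exist by \citestacks{06YS}) to replace arbitrary quasi-isomorphisms. For injectivity, if $[f]$ vanishes in $\D_\mathcal{X}(F,W)$ then some $f \circ s$ is null-homotopic for a quasi-isomorphism $s \colon G \to F$; composing with an h-flat resolution $p \colon P \to G$ and setting $q := s \circ p$ yields $q^*[f] = 0$, whence $[f] = 0$ by the claim. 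For surjectivity, represent $\phi \in \D_\mathcal{X}(F,W)$ by a roof $F \xla{s} G \xra{g} W$ with $s$ a quasi-isomorphism, choose an h-flat resolution $p \colon P \to G$, and set $q := s \circ p$; the intermediate claim produces $[f] \in [\ul\C_\mathcal{X}](F,W)$ with $f \circ q \sim g \circ p$, and then the refined roof $F \xla{q} P \xra{gp} W$ admits roof morphisms via $q \colon P \to F$ and via $p \colon P \to G$ to the two roofs $F \xla{\id} F \xra{f} W$ and $F \xla{s} G \xra{g} W$ respectively, showing that $\phi$ is the image of $[f]$.

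To establish the intermediate claim, I would complete $q$ to a distinguished triangle $P \xra{q} F \to C \to [1]P$ in the triangulated category $[\ul\C(\mathcal{X})]$ with $C := \Cone(q)$. Then $C$ is acyclic since $q$ is a quasi-isomorphism, and $C$ is h-flat because the class of h-flat complexes is closed under cones: for any acyclic $N \in \C(\mathcal{X})$, the tensored triangle $P \otimes N \to F \otimes N \to C \otimes N \to [1](P \otimes N)$ has its first two vertices acyclic by h-flatness of $P$ and $F$, forcing $C \otimes N$ to be acyclic as well. By the defining property of weakly h-injective complexes, $\ul\C_\mathcal{X}(C,W)$ is therefore acyclic, so $[\ul\C_\mathcal{X}]([n]C,W) = H^{-n}(\ul\C_\mathcal{X}(C,W)) = 0$ for every $n \in \DZ$. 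Applying the cohomological functor $[\ul\C_\mathcal{X}](-,W)$ to the triangle above produces a long exact sequence which then forces $q^*$ to be bijective.

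The main point that needs care is the closure of h-flat complexes under cones; once this is in hand, the rest of the argument is a formal manipulation of the calculus of roofs and the long exact sequence attached to a mapping cone. One could equivalently phrase the proof as computing the filtered colimit $\varinjlim_q [\ul\C_\mathcal{X}](P,W) \cong \D_\mathcal{X}(F,W)$ over the cofinal system of h-flat quasi-isomorphisms $q \colon P \to F$, all transition maps of which are isomorphisms by the intermediate claim.
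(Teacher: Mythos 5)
Your proof is correct, but it takes a genuinely different route from the paper's. The paper resolves the \emph{second} variable: it picks a quasi-isomorphism $W \ra I$ with $I$ h-injective and compares $[\ul\C_{\mathcal{X}}](F,-)$ with $\D_\mathcal{X}(F,-)$ along it in a commutative square; the only nontrivial input is bijectivity of the top horizontal map, which comes from Proposition~\ref{p:spalt-5.20-sites} applied to the weakly h-injective acyclic cone of $W \ra I$. You instead work on the \emph{first} variable: your whole argument rests on the observation that a quasi-isomorphism $q$ between h-flat complexes has h-flat acyclic cone, so that the mere \emph{definition} of weak h-injectivity (rather than Proposition~\ref{p:spalt-5.20-sites}) makes $\ul\C_{\mathcal{X}}(\Cone(q),W)$ acyclic, after which the cofinality/roof bookkeeping over h-flat resolutions (which exist by \citestacks{06YS}) finishes the job. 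Your route buys independence from Proposition~\ref{p:spalt-5.20-sites} and hence from the $\underrightarrow{\mathscr{P}}(\mathcal{O})$-resolutions used in its proof, at the cost of spelling out the calculus of fractions; the paper's route is a two-line diagram chase given that Proposition~\ref{p:spalt-5.20-sites} is needed elsewhere anyway (e.g.\ in Proposition~\ref{p:spalt-6.1+5-sites}). The points you flag as delicate all go through: h-flat complexes are closed under cones since $-\otimes A$ commutes with cones, the h-flat quasi-isomorphisms $P \ra F$ are cofinal among all quasi-isomorphisms over $F$, and $\id_F$ itself lies in that cofinal system because $F$ is h-flat, which is what ultimately identifies the colimit with $[\ul\C_{\mathcal{X}}](F,W)$.
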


\begin{proof}
  Let $W \ra I$ be a quasi-isomorphism with $I$ h-injective.
  Consider the commutative diagram
  \begin{equation}
    \xymatrix{
      {[\ul\C_\mathcal{X}](F, W)}
      \ar[r]^-{\sim}
      \ar[d]
      &
      {[\ul\C_\mathcal{X}](F, I)}
      \ar[d]^-{\sim}
      \\
      {\D_\mathcal{X}(F, W)}
      \ar[r]^-{\sim}
      &
      {\D_\mathcal{X}(F, I).}
    }
  \end{equation}
  Its upper horizontal arrow is an isomorphism by 
  Proposition~\ref{p:spalt-5.20-sites}. The lower
  horizontal and the right vertical arrow are certainly
  isomorphisms. The claim follows.
\end{proof}

\begin{corollary}
  \label{c:spalt-5.20-sites-sheafHom}
  If 
  $F \in \C(\mathcal{X})$ is h-flat and
  $W \in \C(\mathcal{X})$ is weakly h-injective acyclic,
  then
  $\sheafHom(F, W)$ is acyclic.
\end{corollary}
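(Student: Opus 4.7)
The plan is to reduce the claim to a sectionwise statement and then invoke Proposition~\ref{p:spalt-5.20-sites}, following exactly the strategy used in the proof of Proposition~\ref{p:spalt-5.20-sites-sheafHom}. First I would observe that for any $U \in \mathcal{X}$ one has the standard identification
\begin{equation*}
  \sheafHom(F,W)(U) \;=\; \ul\C_{\mathcal{X}/U}(j_U^*F,\, j_U^*W),
\end{equation*}
so it suffices to show that this complex of $\R$-modules is acyclic for every $U$; then $\sheafHom(F,W)$ is acyclic as a complex of presheaves, hence as a complex of sheaves by \citestacks{03E1}.

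Next I would check that the two inputs $j_U^*F$ and $j_U^*W$ of this mapping complex satisfy the hypotheses of Proposition~\ref{p:spalt-5.20-sites} on the localized site $\mathcal{X}/U$. Since $j_U^*$ is exact and $W$ is acyclic, $j_U^*W$ is acyclic; by Lemma~\ref{l:restriction-to-U-preserves-?-limp} the restriction $j_U^*W$ is again weakly h-injective. On the other hand, h-flatness of $F$ is preserved under $j_U^*$ by Proposition~\ref{p:pullback-preserves-h-flat}.

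Therefore Proposition~\ref{p:spalt-5.20-sites}, applied on the ringed site $(\mathcal{X}/U, \mathcal{O}_U)$ to the h-flat object $j_U^*F$ and the weakly h-injective acyclic object $j_U^*W$, yields that $\ul\C_{\mathcal{X}/U}(j_U^*F,\, j_U^*W)$ is acyclic. Running this for every $U \in \mathcal{X}$ gives the acyclicity of $\sheafHom(F,W)$. There is no real obstacle here; the statement is essentially a carbon copy of Proposition~\ref{p:spalt-5.20-sites-sheafHom}\ref{enum:sheafHom-h-flat-weakly-h-inj} with the acyclicity of $W$ promoted through Proposition~\ref{p:spalt-5.20-sites} instead of a direct section computation, and all pieces are already in place.
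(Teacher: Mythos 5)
Your proof is correct and follows the paper's own argument essentially verbatim: restrict to $\mathcal{X}/U$, note that $j_U^*W$ is weakly h-injective acyclic by Lemma~\ref{l:restriction-to-U-preserves-?-limp} and $j_U^*F$ is h-flat by Proposition~\ref{p:pullback-preserves-h-flat}, then apply Proposition~\ref{p:spalt-5.20-sites} sectionwise. Nothing to add.
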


\begin{proof}
  Let $U \in \mathcal{X}$.
  Then 
  $j_U^*W$ is weakly h-injective acyclic by
  Lemma~\ref{l:restriction-to-U-preserves-?-limp},
  and $j_U^*F$ is h-flat, by
  Proposition~\ref{p:pullback-preserves-h-flat}.
  Therefore
  $\sheafHom(F,W)(U) = \ul\C_{\mathcal{X}/U}(j_U^*F, j_U^*W)$ is
  acyclic
  by Proposition~\ref{p:spalt-5.20-sites}. Since $U \in
  \mathcal{X}$ was arbitrary, $\sheafHom(F,W)$
  is acyclic.  
\end{proof}

\begin{proposition}
  [{cf.\ \cite[Prop.~6.1, 6.5]{spaltenstein}}]
  \label{p:spalt-6.1+5-sites}
  Let $(\mathcal{X}, \mathcal{O})$ be a ringed site. Let $A, B
  \in \D(\mathcal{X})$. 
  \begin{enumerate}
  \item 
    \label{enum:RHom}
    Then $\dR\Hom(A,B)$ and
    $\dR\sheafHom(A,B)$ are defined and can be computed by any of
    the following methods:
    \begin{enumerate}
    \item 
      \label{enum:h-injective-derived-Homs}
      using an h-injective resolution of $B$;
    \item 
      \label{enum:h-flat-to-acyclic-w-hinj-derived-Homs}
      using an h-flat resolution of $A$ and a
      weakly h-injective resolution of $B$.
    \end{enumerate}
    \item 
      \label{enum:Lotimes}
      Then $A \otimes^\dL B$ is defined and can be computed
      using 
      an h-flat resolution of $A$ or $B$. 
  \end{enumerate}
\end{proposition}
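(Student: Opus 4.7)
The strategy is to reduce every assertion to the existence of appropriate resolutions together with the acyclicity results already established earlier in this appendix. Recall that on a ringed site every complex admits an h-injective resolution (e.g.\ \citestacks{079P}) and an h-flat resolution (cf.\ \citestacks{06YF}); h-injective complexes are in particular weakly h-injective, so weakly h-injective resolutions exist as well.

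For part~\ref{enum:RHom}\ref{enum:h-injective-derived-Homs}, choose an h-injective resolution $B \xra{\iota} I$ and set $\bR\Hom(A,B) := \ul\C_\mathcal{X}(A,I)$ and $\bR\sheafHom(A,B) := \sheafHom(A,I)$. That these formulas define functors of $A \in \D(\mathcal{X})$ follows from Proposition~\ref{p:spalt-5.20-sites-sheafHom}.\ref{enum:sheafHom-to-h-inj} (applied to the cone of any quasi-isomorphism $A \to A'$): $\ul\C_\mathcal{X}(-,I)$ and $\sheafHom(-,I)$ send quasi-isomorphisms to quasi-isomorphisms. Independence of the chosen h-injective resolution of $B$ is standard, using that any two h-injective resolutions are homotopy equivalent.

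For part~\ref{enum:RHom}\ref{enum:h-flat-to-acyclic-w-hinj-derived-Homs}, let $F \xra{p} A$ be an h-flat resolution and $B \xra{q} W$ a weakly h-injective resolution, and extend $q$ to an h-injective resolution $B \xra{q} W \xra{r} I$. The zig-zag
\begin{equation}
\ul\C_\mathcal{X}(A,I) \xra{p^*} \ul\C_\mathcal{X}(F,I) \xla{r_*} \ul\C_\mathcal{X}(F,W)
\end{equation}
consists of quasi-isomorphisms: the first arrow because $I$ is h-injective and $\Cone(p)$ is acyclic (Proposition~\ref{p:spalt-5.20-sites-sheafHom}.\ref{enum:sheafHom-to-h-inj}); the second arrow because $\Cone(r)$ is acyclic and weakly h-injective, so $\ul\C_\mathcal{X}(F,\Cone(r))$ is acyclic by Proposition~\ref{p:spalt-5.20-sites}. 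To see that $\Cone(r)$ is weakly h-injective, note that for any h-flat acyclic $G$ the functor $\ul\C_\mathcal{X}(G,-)$ is triangulated and annihilates both $W$ and $I$, hence annihilates $\Cone(r)$. The identical argument with $\sheafHom$ in place of $\ul\C_\mathcal{X}$, invoking Proposition~\ref{p:spalt-5.14-sites-sheafHom} and Corollary~\ref{c:spalt-5.20-sites-sheafHom} in place of Proposition~\ref{p:spalt-5.20-sites}, handles the $\sheafHom$-case and matches the computation in~\ref{enum:h-injective-derived-Homs}.

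For part~\ref{enum:Lotimes}, let $F \xra{p} A$ be an h-flat resolution and set $A \otimes^\bL B := F \otimes B$. This is a well-defined functor of $B$: a quasi-isomorphism $B \to B'$ has acyclic cone $C$, and $F \otimes C$ is acyclic by Proposition~\ref{p:spalt-5.7-sites}. Independence of the h-flat resolution is shown by connecting any two such resolutions $F_1, F_2 \to A$ through a common h-flat resolution $F_3 \to A$ dominating both (e.g.\ an h-flat resolution of the mapping cylinder); the induced map $F_i \otimes B \to F_3 \otimes B$ is then a quasi-isomorphism since its cone is the tensor product of $B$ with the h-flat acyclic complex $\Cone(F_i \to F_3)$, again acyclic by Proposition~\ref{p:spalt-5.7-sites}. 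Symmetry in $A, B$ yields the statement for h-flat resolutions of $B$.

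The only non-routine step is the closure of the class of weakly h-injective complexes under cones (hence shifts), used to verify that $\Cone(W \to I)$ is weakly h-injective; this is the triangulated-category observation indicated above and is the main obstacle, but it is immediate once formulated correctly.
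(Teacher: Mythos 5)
Your proposal is correct and follows essentially the same route as the paper: part~\ref{enum:Lotimes} from Proposition~\ref{p:spalt-5.7-sites}, the $\bR\Hom$ statements from the definition of h-injectivity together with Proposition~\ref{p:spalt-5.20-sites}, and the $\bR\sheafHom$ statements from Proposition~\ref{p:spalt-5.20-sites-sheafHom} and Corollary~\ref{c:spalt-5.20-sites-sheafHom}; your explicit zig-zag through a further h-injective resolution of $W$ and the observation that weakly h-injective complexes are closed under cones just spell out what the paper leaves implicit. (Only minor quibble: for the first arrow of the $\ul\C_\mathcal{X}$-zig-zag the relevant fact is the definition of h-injectivity rather than Proposition~\ref{p:spalt-5.20-sites-sheafHom}.\ref{enum:sheafHom-to-h-inj}, which is the $\sheafHom$-analogue.)
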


\begin{proof}
  \ref{enum:RHom}.\ref{enum:h-injective-derived-Homs} Use the
  definition for 
  $\dR\Hom$, and 
  Proposition~\ref{p:spalt-5.20-sites-sheafHom}.\ref{enum:sheafHom-to-h-inj}
  for $\dR\sheafHom$.

  \ref{enum:RHom}.\ref{enum:h-flat-to-acyclic-w-hinj-derived-Homs}
  Use the definition and
  Proposition~\ref{p:spalt-5.20-sites} 
  for $\dR\Hom$. Use
  Proposition~\ref{p:spalt-5.20-sites-sheafHom}.\ref{enum:sheafHom-h-flat-weakly-h-inj} 
  and
  Corollary~\ref{c:spalt-5.20-sites-sheafHom}
  for $\dR\sheafHom$.

  \ref{enum:Lotimes}
  Clear from Proposition~\ref{p:spalt-5.7-sites}.
\end{proof}

\begin{proposition}
  \label{p:spalt-6.7a-sites}
  If
  $\alpha \colon (\Sh(\mathcal{Y}), \mathcal{O}_\mathcal{Y}) \ra
  (\Sh(\mathcal{X}), \mathcal{O}_\mathcal{X})$
  is a morphism of ringed topoi, $\dR \alpha_*$ is defined and
  may be 
  computed
  using weakly h-injective resolutions, and
  $\dL \alpha^*$ exists and may be computed using
  h-flat resolutions.

  If $\alpha$ comes from a morphism $(\mathcal{Y},
  \mathcal{O}_\mathcal{Y}) \ra (\mathcal{X},
  \mathcal{O}_\mathcal{X})$ of ringed sites, $\dR \alpha_*$ may
  even be computed using h-limp resolutions.
\end{proposition}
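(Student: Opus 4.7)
The plan is to apply the standard machinery for constructing derived functors via classes of resolutions adapted to the functor, relying almost entirely on results already established in the appendix.

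First I would treat $\bR\alpha_*$ using weakly h\nobreakdash-injective resolutions. Every object $B \in \C(\mathcal{Y})$ admits an h\nobreakdash-injective resolution $B \ra I$ (since $\Mod(\mathcal{Y})$ is a Grothendieck abelian category), and h\nobreakdash-injective objects are weakly h\nobreakdash-injective, so weakly h\nobreakdash-injective resolutions exist. By Proposition~\ref{p:spalt-5.15-sites}, if $W \ra W'$ is a quasi-isomorphism between weakly h\nobreakdash-injective objects, its cone is weakly h\nobreakdash-injective and acyclic, hence $\alpha_*(\mathrm{Cone}) = \mathrm{Cone}(\alpha_*W \ra \alpha_*W')$ is acyclic, so $\alpha_*W \ra \alpha_*W'$ is a quasi-isomorphism. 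This shows that the class of weakly h\nobreakdash-injective complexes is adapted to $\alpha_*$ in the sense of Deligne, so $\bR\alpha_*$ exists and is computed by $\bR\alpha_*(B) = \alpha_*(W)$ for any weakly h\nobreakdash-injective resolution $B \ra W$.

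Next, for $\bL\alpha^*$ I would run the dual argument with h\nobreakdash-flat resolutions. Existence of h\nobreakdash-flat resolutions is \citestacks{06YS}. Proposition~\ref{p:pullback-preserves-h-flat} tells us that $\alpha^*$ preserves h\nobreakdash-flatness and that $\alpha^*$ sends h\nobreakdash-flat acyclic complexes to acyclic complexes; again the cone argument shows $\alpha^*$ preserves quasi-isomorphisms between h\nobreakdash-flat complexes, so h\nobreakdash-flat complexes are adapted to $\alpha^*$ and $\bL\alpha^*$ is defined and computed by h\nobreakdash-flat resolutions.

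For the last claim, assume $\alpha$ comes from a morphism of ringed sites. I would use Proposition~\ref{p:spalt-5.15-sites-h-limp} in place of Proposition~\ref{p:spalt-5.15-sites}: h\nobreakdash-limp resolutions exist because h\nobreakdash-injective resolutions do and h\nobreakdash-injective implies h\nobreakdash-limp, $\alpha_*$ preserves h\nobreakdash-limpness, and $\alpha_*$ sends h\nobreakdash-limp acyclic complexes to acyclic complexes; the cone argument again yields preservation of quasi-isomorphisms between h\nobreakdash-limp complexes, so h\nobreakdash-limp complexes are adapted to $\alpha_*$. In particular the $\bR\alpha_*$ constructed via weakly h\nobreakdash-injective resolutions agrees with the one computed via h\nobreakdash-limp resolutions.

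There is no real obstacle: everything has been prepared by Propositions~\ref{p:pullback-preserves-h-flat}, \ref{p:spalt-5.15-sites-h-limp}, and \ref{p:spalt-5.15-sites}. The only point where one must be slightly careful is that for general ringed topoi the analogue of Proposition~\ref{p:spalt-5.15-sites-h-limp} is not stated, which is why the h\nobreakdash-limp variant of the computation of $\bR\alpha_*$ is only claimed when $\alpha$ is induced by a morphism of ringed sites; this is handled precisely by passing through the factorisation \eqref{eq:50} used in the proof of Proposition~\ref{p:spalt-5.15-sites}.
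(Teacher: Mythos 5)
Your proposal is correct and follows essentially the same route as the paper: the paper likewise obtains $\bR\alpha_*$ from h-injective resolutions and then upgrades via Proposition~\ref{p:spalt-5.15-sites}, gets $\bL\alpha^*$ from Proposition~\ref{p:pullback-preserves-h-flat}, and derives the h-limp statement from Proposition~\ref{p:spalt-5.15-sites-h-limp}; you merely spell out the adapted-class/cone argument that the paper leaves implicit. The only cosmetic slip is your final sentence: the factorisation \eqref{eq:50} is what extends the weakly h-injective statement from ringed sites to general ringed topoi, not what handles the h-limp case (which is simply restricted to morphisms of ringed sites from the outset).
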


\begin{proof}
  Certainly, $\dR \alpha_*$ exists and can be computed
  using 
  h-injective resolutions. 
  Proposition~\ref{p:spalt-5.15-sites} shows that it can even be
  computed using weakly h-injective resolutions.
  
  Certainly $\dL \alpha^*$ exists \citestacks{06YY}, and
  Proposition~\ref{p:pullback-preserves-h-flat} shows that it can
  be computed using h-flat resolutions.

  The last claim follows from
  Proposition~\ref{p:spalt-5.15-sites-h-limp}. 
\end{proof}

The usual formulas
$\dR(\alpha \beta)_* \cong \dR \alpha_*
\dR \beta_*$, 
$\dL(\alpha \beta)^* \cong \dL \beta^* \dL
\alpha^*$,
\begin{align}
  \dR \alpha_* \dR \sheafHom(\dL \alpha^* -,-) 
  & \cong \dR \sheafHom(-, \dR \alpha_* -),\\
  \dR\sheafHom(- \otimes^\dL -,-) 
  & \cong \dR\sheafHom(-, \dR\sheafHom(-,-)),
\end{align}
etc.\ 
follow readily.

\section{Change of universe}
\label{sec:change-universe}

Let $\mathscr{U} \subset \mathscr{V}$ be universes.
If $(\mathcal{X}, \mathcal{O})$ is a $\mathscr{U}$-small
$\R$-ringed site, 
it is certainly expected that the derived category
$\D(\mathcal{X}; \mathscr{U})$ of sheaves of
$\mathscr{U}$-small modules embeds fully faithfully into the
corresponding derived category $\D(\mathcal{X}; \mathscr{V})$ of
$\mathscr{V}$-small modules. We could not find this statement in
the literature (in the setting of unbounded derived categories).
In the first part of this appendix we prove this and similar
statements 
concerning h-injective and $\II$-fibrant complexes.
In the second part we provide analog statements for
dg modules.

\subsection{Sheaves of modules on ringed sites}
\label{sec:sheaves-modules}

Let $(\mathcal{X}, \mathcal{O}_\mathcal{X})$ be a
$\mathscr{U}$-small $\R$-ringed site.
Let $\Mod(\mathcal{X}; \mathscr{U})$
be the $\R$-category of $\mathscr{U}$-small
$\mathcal{O}_\mathcal{X}$-modules and
$\C(\mathcal{X}; \mathscr{U})$ the $\R$-category 
of complexes of such sheaves. Previously these categories were
denoted $\Mod(\mathcal{X})$ and $\C(\mathcal{X})$.

Now, using the universe $\mathscr{V}$, there is an obvious 
``change of universe'' $\R$-functor
\begin{equation}
  \label{eq:change-universe}
  \Mod(\mathcal{X}; \mathscr{U})
  \ra
  \Mod(\mathcal{X}; \mathscr{V}).
\end{equation}

\begin{remark}
  \label{rem:change-universe}
  The 
  change of universe functor \eqref{eq:change-universe}
  is fully faithful,
  reflects isomorphisms, preserves all $\mathscr{U}$-small limits
  and colimits (hence is exact), preserves injectives,
  and is compatible with tensor products and sheaf homomorphisms
  (see \cite[Prop.~V.1.9]{SGA4-2}).
  More trivially, the essentially $\mathscr{U}$-small set
  (i.\,e.\ it is isomorphic to a $\mathscr{U}$-small set) of
  subobjects (resp.\ quotients) of an 
  object of $\Mod(\mathcal{X}; \mathscr{U})$
  is in the obvious way identified with the 
  set of subobjects (resp.\ quotients) of the corresponding object of
  $\Mod(\mathcal{X}; \mathscr{V})$.
\end{remark}


\begin{proposition}
  \label{p:h-inj-change-universe-sheaves}
  Let $\mathscr{U} \subset \mathscr{V}$ be
  universes.
  Let $(\mathscr{X}, \mathcal{O}_\mathcal{X})$ be a
  $\mathscr{U}$-small $\R$-ringed site.
  Then the change of universe functor 
  $\C(\mathcal{X}; \mathscr{U})
  \ra
  \C(\mathcal{X}; \mathscr{V})$ preserves h-injective
  complexes and 
  $\II$-fibrant objects.
  In particular, the induced change of universe functor
  \begin{equation}
    \D(\mathcal{X}; \mathscr{U}) \ra \D(\mathcal{X}; \mathscr{V})
  \end{equation}
  on the level of derived categories is full and faithful.
\end{proposition}

\begin{proof}
  We first prove the statement concerning $\II$-fibrant objects.
  Our proof is heavily based on results in \cite[9,
  14.1]{KS-cat-sh}.
  Note that an object of $\C(\mathcal{X}; \mathscr{U})$ is
  $\II$-fibrant if 
  and only if it is QM-injective in the terminology of
  \cite[Def.~9.5.1, (14.1.2)]{KS-cat-sh} where QM is the
  set of morphisms that are both quasi-isomorphisms and
  monomorphisms. 

  Let us look at the proof of
  \cite[Thm.~14.1.7]{KS-cat-sh}
  in the case of the Grothendieck abelian category
  $\Mod(\mathcal{X}; \mathscr{U})^\bZ$ of graded
  $\mathscr{U}$-small $\mathcal{O}_\mathcal{X}$-modules with
  obvious translation. Kashiwara and Schapira show that there is
  an essentially $\mathscr{U}$-small full subcategory
  $\mathcal{S}$ of $\C(\mathcal{X}; \mathscr{U})$ having the
  properties (i)-(iv) in \cite[(14.1.4)]{KS-cat-sh} (and which
  can and should also be assumed to be closed under translation;
  a category is essentially $\mathscr{U}$-small if it is
  equivalent to a $\mathscr{U}$-small category).
  Then they show that there is a $\mathscr{U}$-small set
  $\mathcal{F}$ of monomorphic quasi-isomorphisms $X \ra
  Y$ between objects $X, Y \in \mathcal{S}$ such that an object
  of $\C(\mathcal{X}; \mathscr{U})$ is QM-injective (i.\,e.\
  $\II$-fibrant) if and only if it is $\mathcal{F}$-injective
  (this statement relies on \cite[Thm.~9.5.5]{KS-cat-sh}).
  
  We need to look more closely at the construction of
  $\mathcal{S}$ using \cite[9.3, in particular
  Corollary~9.3.8]{KS-cat-sh}. Consider the generator 
  \begin{equation}
    G:= \bigoplus_{n \in \bZ} \bigoplus_{U \in \mathcal{X}}
    [n]\iCone(j_{U!}\mathcal{O}_U)  
  \end{equation}
  of $\C(\mathcal{X}; \mathscr{U})$. Let $\pi_0$ be an infinite
  regular cardinal (in the universe $\mathscr{U}$) such that 
  \begin{equation}
    \label{eq:card-G}
    \card(G(G)) < \pi_0 
    \quad \text{and} \quad
    \card(G^{\oplus G(G)}(G)) < \pi_0
  \end{equation}
  where we abbreviate $A(B):=\C_{\mathcal{X}}(A,B)$; choose
  another infinite regular cardinal $\pi \in \mathscr{U}$ such
  that the 
  conditions in \cite[(9.3.4)]{KS-cat-sh} are satisfied for the
  generator $G$ and the category $\C(\mathcal{X}; \mathscr{U})$.
  Then, by the proof of \cite[Cor.\ 9.3.8]{KS-cat-sh} we can
  and will assume that $\mathcal{S}$ is the full subcategory
  $\C(\mathcal{X}; \mathscr{U})_\pi$ of $\pi$-accessible objects
  in $\C(\mathcal{X}; \mathscr{U})$.

  Now consider the change of universe functor 
  \begin{equation}
    \epsilon \colon \C(\mathcal{X}; \mathscr{U}) 
    \ra \C(\mathcal{X}; \mathscr{V}).
  \end{equation}
  Let $\mathcal{S}':= \epsilon(\mathcal{S})$ be the essential
  image of $\mathcal{S}$ under this functor.
  We claim that every object of $\mathcal{S}'$ is
  $\pi$-accessible, i.\,e.\ $\mathcal{S}' \subset \C(\mathcal{X};
  \mathscr{V})_\pi$. 

  To see this let $S \in \mathcal{S}=\C(\mathcal{X};
  \mathscr{U})_\pi$. By \cite[Thm.~9.3.4]{KS-cat-sh} this means
  that $\card(S(G))< \pi$. Note that $\epsilon(G)$, $\pi$ and
  $\pi_0$ satisfy the conditions in 
  \cite[(9.3.4)]{KS-cat-sh} for the category $\C(\mathcal{X};
  \mathscr{V})$: (a) and (c) are obvious, (b) is satisfied by
  \cite[Prop.~9.3.2]{KS-cat-sh} and the fact that
  \eqref{eq:card-G}
  also holds for $G$ replaced by $\epsilon(G)$ (by
  Remark~\ref{rem:change-universe}),  
  and (d) is satisfied since any
  $\mathscr{V}$-small set $A$ with $\card(A)< \pi_0$ is
  isomorphic to a $\mathscr{U}$-small set $B$ since $\pi_0 \in
  \mathscr{U}$. 
  Since $\card(\epsilon(S)(\epsilon(G)))=\card(S(G))< \pi$ 
  we obtain from \cite[Thm.~9.3.4]{KS-cat-sh} that $\epsilon(S)$
  is $\pi$-accessible. This proves our claim that any object of
  $\mathcal{S}'$ is $\pi$-accessible.

  We now claim that $\mathcal{S}'$ (which is obviously closed
  under translation) also has the 
  properties (i)-(iv) in \cite[(14.1.4)]{KS-cat-sh}, now with
  respect to $\C(\mathcal{X}; \mathscr{V})$: property (i) is
  clear since $\epsilon(G)$ is a generator, and properties (ii)
  and (iv) are obvious (cf.\ Remark~\ref{rem:change-universe}).
  To establish (iii) it is sufficient to check the following
  condition: Given any epimorphism $f \colon Z \sra Y$ with $Y
  \in \mathcal{S}'$, there is an object $S \in \mathcal{S}'$ and
  a morphism $g \colon S \ra Z$ such that the composition $f
  \circ g$ is an epimorphism. 
  
  To see this, we copy the argument from the proof of
  \cite[Cor.~9.3.8]{KS-cat-sh}. Consider the epimorphisms
  \begin{equation}
    \epsilon(G)^{\oplus Z(\epsilon(G))} \sra Z \sra Y
  \end{equation}
  and the $\mathscr{V}$-small $\pi$-filtered ordered set
  \begin{equation}
    I:= \{A \subset Z(\epsilon(G)) \mid \card(A) < \pi\}.
  \end{equation}
  Note that $\colim_{A \in I} \epsilon(G)^{\oplus A} \sira
  \epsilon(G)^{\oplus Z(\epsilon(G))}$. Since $Y \in \mathcal{S}'
  \subset \C(\mathcal{X}; \mathscr{V})_\pi$ there is, by
  \cite[Lemma~9.3.1]{KS-cat-sh}, an element $A \in I$ such that
  the composition $\epsilon(G)^{\oplus A} \ra Z \sra Y$ is
  already an epimorphism. It remains to show that
  $\epsilon(G)^{\oplus A} \in \mathcal{S}'$.
  Since $\card(A)< \pi \in \mathscr{U}$ there is a
  $\mathscr{U}$-small set $B$ such that $B \cong A$, and it is
  enough to show that $G^{\oplus B} \in \mathcal{S} =
  \C(\mathcal{X}; \mathscr{U})_\pi$.
  By \cite[Thm.~9.3.4]{KS-cat-sh} it is enough to show that 
  $\card(G^{\oplus B}(G))< \pi$; but this is true
  by 
  \cite[Lemma~9.3.3]{KS-cat-sh}.
  Now we have proven the claim that $\mathcal{S}'$ has the properties
  (i)-(iv) in \cite[(14.1.4)]{KS-cat-sh}.

  Looking at the proof of \cite[Thm.~14.1.7]{KS-cat-sh} again, we
  see that an object of $\C(\mathcal{X}; \mathscr{V})$ is
  QM-injective (i.\,e.\ $\II$-fibrant) if and only if it is
  $\epsilon(\mathcal{F})$-injective. 
  Since $\epsilon$ obviously sends $\mathcal{F}$-injective
  objects to $\epsilon(\mathcal{F})$-injective objects, we deduce
  that $\epsilon$ preserves $\II$-fibrant objects.

  Now it is easy to see all other claims.
  Let $E \in \C(\mathcal{X}; \mathscr{U})$ be h-injective.
  Let $r \colon E \ra I_E$ be a quasi-isomorphism with
  $\II$-fibrant $I_E$. 
  Recall that the $\II$-fibrant objects are precisely the
  h-injective objects with injective components.
  Since $r$ is a quasi-isomorphism between h-injective objects, 
  it is invertible in the homotopy category
  $[\C(\mathcal{X}; \mathscr{U})]$, and 
  $\epsilon(r)$ is invertible in 
  $[\C(\mathcal{X}; \mathscr{V})]$.
  By the above we know that $\epsilon(I_E)$ is $\II$-fibrant and hence
  h-injective. Since h-injectivity is invariant under
  isomorphisms in the homotopy category, $\epsilon(E)$ is
  h-injective as well.

  The claim concerning derived categories follows since 
  \eqref{eq:change-universe} is exact and fully faithful and the
  derived categories are equivalent to the corresponding homotopy
  categories of h-injectives.
\end{proof}

\subsection{Dg Modules over dg categories}
\label{sec:dg-modules-over}

Recall that $\RR=\C(\Mod(\R))$. More precisely, we have
$\RR=\RR_\mathscr{U}=\C(\Mod(\R;\mathscr{U}))$ where
$\Mod(\R;\mathscr{U})$ denotes the category of
$\mathscr{U}$-small $\R$-modules.
Similarly, $\RR_\mathscr{V}$ is defined using the universe
$\mathscr{V}$. 

Let $\ulms{C}$ be a $\mathscr{U}$-small $\RR$-category. Let
$\MMod(\ulms{C};\mathscr{U})$ be the $\R$-category of
$\ulms{C}$-modules (with values in $\ul\RR$), i.\,e.\ of
$\RR$-functors $\ulms{C}^\opp \ra \ul\RR$. 
Similarly, let 
$\MMod(\ulms{C}; \mathscr{V})$ be the $\R$-category of
$\ulms{C}$-modules (with values in $\ul\RR_\mathscr{V}$), i.\,e.\ of
$\RR_\mathscr{V}$-functors $\ulms{C}^\opp \ra \ul\RR_\mathscr{V}$. 

There is an obvious change of universe $\R$ functor $\MMod(\ulms{C};
\mathscr{U}) \ra \MMod(\ulms{C}; \mathscr{V})$ which is fully
faithful, reflects isomorphisms and preserves $\mathscr{U}$-small
limits and colimits (hence is exact). 

\begin{proposition}
  \label{p:h-inj-change-universe-dg-modules}
  Let $\mathscr{U} \subset \mathscr{V}$ be
  universes.
  Let $\ulms{C}$ be a
  $\mathscr{U}$-small $\RR$-category.
  Then the change of universe functor 
  $\MMod(\ulms{C}; \mathscr{U})
  \ra
  \MMod(\ulms{C}; \mathscr{V})$ preserves h-injective,
  $\II$-fibrant,
  h-projective, and $\PP$-cofibrant
  objects.
  In particular, the induced change of universe functor
  \begin{equation}
    \D(\ulms{C}; \mathscr{U}) \ra \D(\ulms{C}; \mathscr{V})
  \end{equation}
  on the level of derived categories is full and faithful.
\end{proposition}

\begin{proof}
  The proof that h-injective and $\II$-fibrant objects are
  preserved follows the proof of
  Proposition~\ref{p:h-inj-change-universe-sheaves}, taking the
  generator 
  $G:= \bigoplus_{n \in \bZ} \bigoplus_{C \in \ulms{C}}
  [n]\iCone(\Yo(C))$.

  The proof that $\PP$-cofibrant objects are preserved uses the
  fact that every $\PP$-cofibrant object is a retract of a
  semi-free $\ulms{C}$-module, see e.\,g.\
  \cite[Lemma~2.7]{valery-olaf-smoothness-equivariant}, and
  semi-free $\ulms{C}$-modules are clearly preserved.
  Since all $\PP$-cofibrant objects are h-projective, 
  see e.\,g.\
  \cite[Lemma~2.6]{valery-olaf-smoothness-equivariant},
  the claim concerning h-projective objects follows as the
  corresponding claim for h-injectives in the proof of 
  Proposition~\ref{p:h-inj-change-universe-sheaves}.
  
  The statement about derived categories is then obvious.
\end{proof}

\section{Some results on model categories}
\label{sec:some-results-model}

Our aim is to prove Theorem~\ref{t:localization-exists}.
This result should be well-known to the experts and is not
difficult to prove but we could not
find a reference in the literature.

As a preparation we need
Proposition~\ref{p:morphisms-in-htpy-cat}.
Its proof provides some details to the
last claim in 
the proof of
\cite[Thm.~14.4.6]{may-ponto-more-concise-alg-topo}, and also
to the last sentence in the proof of
\cite[Thm.~8.3.5]{hirschhorn-model}.
We use the terminology of fibrant cofibrant and cofibrant
fibrant approximations from
\cite[Def.~8.1.2]{hirschhorn-model} and in particular do not
assume that these approximations are functorial.

\begin{proposition}
  [{cf.\ \cite[Thm.~14.4.6]{may-ponto-more-concise-alg-topo}}]
  \label{p:morphisms-in-htpy-cat}
  Let $\mathcal{M}$ be a model category (with respect to some
  universe). 
  For each object $M \in \mathcal{M}$ choose a fibrant cofibrant
  approximation $q_M \colon QM \ra M$ and a cofibrant fibrant
  approximation $r_M \colon M \ra RM$.
  Let $\Ho(\mathcal{M})$ and $\gamma \colon \mathcal{M} \ra
  \Ho(\mathcal{M})$ be defined as in 
  \cite[Def.~14.4.5]{may-ponto-more-concise-alg-topo}.
  Then:
  \begin{enumerate}
  \item
    \label{enum:we-iff-gamma-isom}
    A morphism $m$ in $\mathcal{M}$ is a weak equivalence if
    and only if $\gamma(m)$ is an isomorphism.
  \item 
    \label{enum:morphisms-in-HoM}
    Let $f \colon X \ra Y$ be a morphism in $\Ho(\mathcal{M})$.
    Then there is a morphism
    $g \colon RQX \ra RQY$ such that
    \begin{equation}
      f = \gamma(q_Y) \circ \gamma(r_{QY})\inv \circ \gamma(g)
      \circ \gamma(r_{QX}) \circ \gamma(q_X)\inv
    \end{equation}
    in $\Ho(\mathcal{M})$.
  \end{enumerate}
\end{proposition}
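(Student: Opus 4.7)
The plan is to reduce both claims to the case of bifibrant objects, where weak equivalences coincide with homotopy equivalences and morphisms in the homotopy category are represented by honest morphisms modulo homotopy. The bridge for each $M \in \mathcal{M}$ is the chain of weak equivalences $M \xla{q_M} QM \xra{r_{QM}} RQM$, whose endpoint $RQM$ is bifibrant, combined with the fact that the underlying definition of $\Ho(\mathcal{M})$ in \cite[Def.~14.4.5]{may-ponto-more-concise-alg-topo} identifies hom-sets between bifibrant objects with homotopy classes.

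For part \ref{enum:we-iff-gamma-isom}, the forward implication is immediate: $\gamma$ is by construction a localization at the weak equivalences, so it sends weak equivalences to isomorphisms. For the converse, given $m \colon M \ra N$, first use the lifting axioms to build $Qm \colon QM \ra QN$ with $q_N \comp Qm = m \comp q_M$ (lift $m \comp q_M$ against the trivial fibration $q_N$), and then $RQm \colon RQM \ra RQN$ with $RQm \comp r_{QM} = r_{QN} \comp Qm$ (lift against the trivial cofibration $r_{QM}$). By the 2-out-of-3 property and the fact that $q_M, q_N, r_{QM}, r_{QN}$ are weak equivalences, $m$ is a weak equivalence if and only if $RQm$ is. Since source and target of $RQm$ are bifibrant, the Whitehead-type theorem for model categories \cite[Thm.~7.5.10]{hirschhorn-model} equates weak equivalences between bifibrant objects with homotopy equivalences, and these are precisely the morphisms whose classes in the full subcategory of $\Ho(\mathcal{M})$ on bifibrant objects are invertible. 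The equality $\gamma(RQm) = \gamma(r_{QN})\gamma(q_N)\inv \gamma(m) \gamma(q_M)\gamma(r_{QM})\inv$, together with the forward implication applied to $q_M, q_N, r_{QM}, r_{QN}$, shows that $\gamma(RQm)$ is an isomorphism if and only if $\gamma(m)$ is.

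For part \ref{enum:morphisms-in-HoM}, given $f \colon X \ra Y$ in $\Ho(\mathcal{M})$, transport it to bifibrant source and target by setting
\begin{equation*}
  \tilde{f} := \gamma(r_{QY}) \comp \gamma(q_Y)\inv \comp f \comp \gamma(q_X) \comp \gamma(r_{QX})\inv \colon \gamma(RQX) \ra \gamma(RQY);
\end{equation*}
this makes sense because $\gamma(q_X), \gamma(q_Y), \gamma(r_{QX}), \gamma(r_{QY})$ are isomorphisms by part \ref{enum:we-iff-gamma-isom}. Since $RQX$ and $RQY$ are bifibrant, the defining construction of $\Ho(\mathcal{M})$ exhibits every morphism between them in $\Ho(\mathcal{M})$ as $\gamma(g)$ for some $g \colon RQX \ra RQY$ in $\mathcal{M}$ (unique up to homotopy). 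Taking such a $g$ with $\gamma(g) = \tilde{f}$ and solving for $f$ yields the required formula.

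The main obstacle is purely bookkeeping: verifying the identification of $\Ho(\mathcal{M})(A,B)$ with $\pi\mathcal{M}(A,B)$ for bifibrant $A,B$, which underlies both the Whitehead-type argument in part \ref{enum:we-iff-gamma-isom} and the representability argument in part \ref{enum:morphisms-in-HoM}. In the May--Ponto framework this identification is built into the definition of $\Ho(\mathcal{M})$, so once one unwinds the construction no genuinely new argument is required beyond the standard lifting and 2-out-of-3 manipulations sketched above.
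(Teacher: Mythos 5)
Your proof is correct in substance but reaches part~(\ref{enum:morphisms-in-HoM}) by a genuinely different route. The paper starts from the observation that, in the May--Ponto construction, $f$ literally \emph{is} a homotopy class $[\hat f]$ of a morphism $\hat f \colon RQX \ra RQY$, and then verifies via an explicit coherence diagram (the $3\times 3$ diagram built from $RQM \la QM \ra M$ and its images under $Q$ and $RQ$, whose top row equals its right column) that $g=\hat f$ itself satisfies the required formula. You instead conjugate $f$ to a morphism $\tilde f$ between the bifibrant objects $RQX$ and $RQY$ and invoke fullness of $\gamma$ on bifibrant objects to write $\tilde f=\gamma(g)$. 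Since the proposition only asserts existence of some $g$, this suffices and is the more economical argument; what it does not give is the extra information that $g$ may be taken to be the representing morphism $\hat f$ itself, which is the detail of \cite[Thm.~14.4.6]{may-ponto-more-concise-alg-topo} the paper says it is supplying.

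The one step you assert rather than prove is that the fullness you need is ``built into the definition'' of $\Ho(\mathcal{M})$. It is not: in \cite[Def.~14.4.5]{may-ponto-more-concise-alg-topo} one has $\Ho(\mathcal{M})(A,B)=\op{h}(\mathcal{M}_\bifib)(RQA,RQB)$ and $\gamma(g)=[RQg]$ \emph{even when $A$ and $B$ are already bifibrant}, so one must still compare $A$ with $RQA$ compatibly with $\gamma$ --- and that comparison is exactly the delicate point this proposition exists to nail down. The gap closes quickly: for bifibrant $A$ the object $QA$ is also bifibrant ($q_A$ is a trivial fibration onto a fibrant object), so $\psi_A:=[r_{QA}]\comp[q_A]\inv$ is an isomorphism $A \sira RQA$ in $\op{h}(\mathcal{M}_\bifib)$, and the on-the-nose commutativity of the squares defining $Qg$ and $RQg$ gives $[RQg]\comp\psi_A=\psi_B\comp[g]$; hence $[g]\mapsto[RQg]$ is a bijection $\op{h}(\mathcal{M}_\bifib)(A,B)\ra\Ho(\mathcal{M})(A,B)$, which is the fullness (and faithfulness) you use. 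This is the same ``easily seen'' full-and-faithfulness of $\op{h}(\mathcal{M}_\bifib)\ra\Ho(\mathcal{M})$ that the paper itself invokes, so with this supplement your argument stands. In part~(\ref{enum:we-iff-gamma-isom}) I would justify the forward implication directly ($RQm$ is a weak equivalence between bifibrant objects, hence a homotopy equivalence, hence $\gamma(m)=[RQm]$ is invertible) rather than by appeal to $\gamma$ being a localization, since the localization property is itself a downstream theorem; the rest of your argument for that part is fine.
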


\begin{proof}
  \ref{enum:we-iff-gamma-isom} 
  This is the first statement of 
  \cite[Thm.~14.4.6]{may-ponto-more-concise-alg-topo}.

  Before proving \ref{enum:morphisms-in-HoM}
  we start with an observation. 
  Recall first that the definition of $\gamma$ uses the fact that
  any 
  morphism $m 
  \colon M \ra N$ admits morphisms 
  $Qm \colon QM \ra QN$ and $Rm \colon RM \ra RN$ such that $q_N
  \circ Qm = m \circ q_M$ and $r_N \circ m = Rm \circ r_M$ (cf.\
  \cite[14.4]{may-ponto-more-concise-alg-topo}). Note 
  however that we do not assume that $m \mapsto Qm$ or $m
  \mapsto Rm$ respect identities or composition.
  
  For any object
  $M \in \mathcal{M}$ consider the following
  commutative diagram 
  in
  $\mathcal{M}$ which is constructed starting from its top row.
  \begin{equation}
    \label{eq:RQRQM}
    \xymatrix{
      {RQM} &
      {QM} \ar[l]_-{r_{QM}} \ar[r]^-{q_M} &
      {M} \\
      {QRQM} 
      \ar[u]^-{q_{RQM}}
      \ar[d]_-{r_{QRQM}}
      &
      {QQM} \ar[l]_-{Qr_{QM}} \ar[r]^-{Qq_M} 
      \ar[u]^-{q_{QM}}
      \ar[d]_-{r_{QQM}}
      &
      {QM}
      \ar[u]^-{q_{M}}
      \ar[d]_-{r_{QM}}
      \\
      {RQRQM} &
      {RQQM} \ar[l]_-{RQr_{QM}} \ar[r]^-{RQq_M} &
      {RQM} \\
    }
  \end{equation}
  All arrows are weak equivalences by the 2-out-of-3 property.
  Hence $\gamma$ maps all arrows to isomorphisms.
  Since top row and right column 
  of our diagram coincide, this implies that
  \begin{equation}
    \label{eq:bottom=left}
    \gamma(RQq_M) \circ \gamma(RQr_{QM})\inv = \gamma(q_{RQM})
    \circ \gamma(r_{QRQM})\inv.
  \end{equation}
  
  Let $\mathcal{M}_\bifib$ be the full subcategory of
  $\mathcal{M}$ of bifibrant objects. 
  Let $\op{h}(\mathcal{M}_\bifib)$ denote the category
  whose objects are the objects of $\mathcal{M}_\bifib$ and
  whose morphisms are equivalence classes of morphisms up to
  homotopy. There is a canonical functor $\mathcal{M}_\bifib \ra
  \op{h}(\mathcal{M}_\bifib)$ which is the identity on objects
  and maps a morphism $m$ to its equivalence class $[m]$.

  All objects in the bottom row and left column of diagram
  \eqref{eq:RQRQM} are in $\mathcal{M}_\bifib$, and all arrows
  between these objects are isomorphisms in
  $\op{h}(\mathcal{M}_\bifib)$, by
  \cite[14.3.15]{may-ponto-more-concise-alg-topo}.  The functor
  $\gamma$ induces a functor
  $\op{h}(\mathcal{M}_\bifib) \ra \Ho(\mathcal{M})$ (by
  \cite[Lemma~8.3.4]{hirschhorn-model}) which is easily seen to
  be full and faithful (and even an equivalence), cf.\
  \cite[Prop.~1.2.5]{hovey-model-categories}.  (We do not cite
  \cite[Thm.~8.3.6]{hirschhorn-model} or
  \cite[Thm.~1.2.10]{hovey-model-categories} since the result we
  prove is a strengthening of the last sentence in the proof of
  \cite[Thm.~8.3.5]{hirschhorn-model}.)  Therefore
  \eqref{eq:bottom=left} implies
  \begin{equation}
    \label{eq:bottom=left-2}
    [RQq_M] \circ [RQr_{QM}]\inv = [q_{RQM}]
    \circ [r_{QRQM}]\inv
  \end{equation}
  in
  $\op{h}(\mathcal{M}_\bifib)$.

  \ref{enum:morphisms-in-HoM}
  Let $f \colon X \ra Y$ be an
  arbitrary morphism in $\Ho(\mathcal{M})$. Then $f$ is the
  homotopy class $[\hat{f}]$ of a morphism
  $\hat{f} \colon RQX \ra RQY$ in $\mathcal{M}_\bifib$. We have a
  commutative diagram 
  \begin{equation}
    \xymatrix@d{
      {RQX} \ar[r]_-{\hat{f}} &
      {RQY}
      \\
      {QRQX} \ar[r]_-{Q\hat{f}} 
      \ar[u]^-{q_{RQX}} 
      \ar[d]_-{r_{QRQX}}
      &
      {QRQY}
      \ar[u]^-{q_{RQY}} 
      \ar[d]_-{r_{QRQY}}
      \\
      {RQRQX} \ar[r]_-{RQ\hat{f}} &
      {RQRQY}
    }
  \end{equation}
  in $\mathcal{M}_\bifib$.
  If we pass to $\op{h}(\mathcal{M}_\bifib)$, replace the 
  left horizontal arrows by their inverses, and 
  use \eqref{eq:bottom=left-2}, we see that the diagram
  \begin{equation}
    \xymatrix@d{
      {RQX} \ar[r]_-{[\hat{f}]} &
      {RQY}
      \\
      {RQQX}
      \ar[u]^-{[RQq_{X}]} 
      &
      {RQQY}
      \ar[u]^-{[RQq_{Y}]} 
      \\
      {RQRQX} \ar[r]_-{[RQ\hat{f}]} 
      \ar[u]^-{[RQr_{QX}]\inv}
      &
      {RQRQY}
      \ar[u]^-{[RQr_{QY}]\inv}
    }
  \end{equation}
  in $\op{h}(\mathcal{M}_\bifib)$ commutes.
  But this diagram just says that the diagram
  \begin{equation}
    \xymatrix@d{
      {X} \ar[r]_-{f} &
      {Y}
      \\
      {QX}
      \ar[u]^-{\gamma(q_{X})} 
      &
      {QY}
      \ar[u]^-{\gamma(q_{Y})} 
      \\
      {RQX} \ar[r]_-{\gamma(\hat{f})} 
      \ar[u]^-{\gamma(r_{QX})\inv}
      &
      {RQY}
      \ar[u]^-{\gamma(r_{QY})\inv}
    }
  \end{equation}
  in $\Ho(\mathcal{M})$ commutes. This implies the proposition by
  setting $g=\hat{f}$.
\end{proof}

\begin{remark}
  \label{rem:is-homotopy-cat}
  The functor
   $\gamma \colon \mathcal{M} \ra
  \Ho(\mathcal{M})$
  from Proposition~\ref{p:morphisms-in-htpy-cat}
  is of course the localization of $\mathcal{M}$ with
  respect to the set of weak equivalences,
  see \cite[Thm.~8.3.6]{hirschhorn-model}. 
\end{remark}

\begin{theorem}
  \label{t:localization-exists}
  Let $\mathcal{S}$ be a full
  subcategory of a model category $\mathcal{M}$ with respect to
  some universe $\mathscr{U}$ such that each
  object $S$ of 
  $\mathcal{S}$ admits a fibrant cofibrant approximation
  $\tildew{S} \ra S$ with $\tildew{S} \in \mathcal{S}$ and a
  cofibrant fibrant approximation 
  $S \ra \hat{S}$ with $\hat{S} \in \mathcal{S}$. 
  Let $\gamma \colon \mathcal{M} \ra \Ho(\mathcal{M})$ be the
  canonical functor to the
  homotopy category of $\mathcal{M}$. 
  Let 
  $\delta \colon \mathcal{S} \ra \L \mathcal{S}$
  be the localization
  of $\mathcal{S}$ with respect to the set of 
  weak equivalences in $\mathcal{S}$.
  Then 
  \begin{enumerate}
  \item 
    \label{enum:localization-of-S-exists}
    we may and will assume that the category $\L \mathcal{S}$ has
    $\mathscr{U}$-small $\Hom$-sets, that it has the same set of
    objects as $\mathcal{S}$ and that $\delta$ is the
    identity on objects;
  \item 
    \label{enum:localization-subcategory}
    the unique functor $\L \mathcal{S} \ra \Ho(\mathcal{M})$
    whose composition with 
    $\delta$ is the composition $\mathcal{S} \ra \mathcal{M}
    \xra{\gamma} \Ho(\mathcal{M})$ is full and faithful;
  \item 
    \label{enum:weak-equiv-vs-iso}
    a morphism $f$ in $\mathcal{S}$ is a weak equivalence if and
    only if $\delta(f)$ is an isomorphism;
  \item 
    \label{enum:represent-morphisms}
    any morphism $f$ in $\L \mathcal{S}$ has the form
    \begin{equation}
      \label{eq:representation-f}
      f = \delta(q') \circ \delta(r')\inv \circ \delta(f')
      \circ \delta(r) \circ \delta(q)\inv
    \end{equation}
    where $q$, $q'$, $r$, $r'$ are weak equivalences in
    $\mathcal{S}$ and $f'$ is a morphism in $\mathcal{S}$;
    more precisely, 
    $q$ and $q'$ are trivial fibrations from a cofibrant
    object, and $r$ and $r'$ are trivial cofibrations to a
    bifibrant object;
  \item
    \label{enum:f'-we-iff-f-iso}
    if a morphism $f$ in $\L \mathcal{S}$ is represented as in
    \ref{enum:represent-morphisms} then $f$ is an isomorphism if
    and only if $f'$ is a weak equivalence;
  \item   
    \label{enum:simplify-rep-if-S-fibrant}
    if all objects of $\mathcal{S}$ are fibrant, 
    we can assume
    that $r=\id$ and $r'=\id$ in 
    \ref{enum:represent-morphisms},
    i.\,e.\  
    $f = \delta(q' f') \circ \delta(q)\inv$.
  \end{enumerate}
  Assume that $h \colon \mathcal{M} \ra \mathcal{C}$ is a
  functor to some category $\mathcal{C}$ such that a morphism $f$
  in $\mathcal{M}$ is a weak equivalence if and only if
  $h(f)$ is an isomorphism. Let $h_\mathcal{S} \colon
  \mathcal{S} \ra \mathcal{C}$ be the restriction of $h$ to
  $\mathcal{S}$.  
  Then
  \begin{enumerate}[resume]
  \item 
    \label{enum:olhS-reflects-isos}
    the unique functor
    $\ol{h}_\mathcal{S} \colon \L\mathcal{S} \ra \mathcal{C}$ such
    $h_\mathcal{S}=\ol{h}_\mathcal{S} \circ \delta$
    reflects isomorphisms.
  \end{enumerate}
\end{theorem}


\begin{proof}
  We can assume that $\gamma \colon \mathcal{M} \ra
  \Ho(\mathcal{M})$ is
  defined as in Proposition~\ref{p:morphisms-in-htpy-cat}, cf.\ 
  Remark~\ref{rem:is-homotopy-cat}.
  Let $\L \mathcal{S}$ be the full subcategory of
  $\Ho(\mathcal{M})$ consisting of the objects $\gamma(S)=S$, for
  $S \in \mathcal{S}$, and let $\delta \colon \mathcal{S} \ra \L
  \mathcal{S}$ be the functor induced by $\gamma$ (which will
  turn out to be the localization we want).  

  \ref{enum:weak-equiv-vs-iso} and
  \ref{enum:represent-morphisms}: This follows from
  Proposition~\ref{p:morphisms-in-htpy-cat} and our assumption
  that the fibrant cofibrant (resp.\ cofibrant fibrant)
  approximations of objects of $\mathcal{S}$ exist in
  $\mathcal{S}$. 

  \ref{enum:localization-of-S-exists} This and the fact that
  $\delta$ is the localization with respect to the set of weak
  equivalences follows from the
  proof of \cite[Thm.~8.3.6]{hirschhorn-model} 
  because
  \ref{enum:represent-morphisms} is known.

  \ref{enum:localization-subcategory} Obvious by construction.

  \ref{enum:f'-we-iff-f-iso} Clear from 
  \ref{enum:weak-equiv-vs-iso}.

  \ref{enum:simplify-rep-if-S-fibrant} Clear since all cofibrant
  fibrant approximations of objects of $\mathcal{S}$ can be taken
  to be the identities.

  \ref{enum:olhS-reflects-isos} The functor
  $\ol{h}_\mathcal{S}$ exists uniquely by the universal property
  of the localization. Let $f$ be a morphism in
  $\L \mathcal{S}$ such that $\ol{h}_\mathcal{S}(f)$ is an
  isomorphism. Represent $f$ as in 
  \ref{enum:represent-morphisms}. Then 
  $\ol{h}_\mathcal{S}(f)=h(q') \circ h(r')\inv \circ h(f') \circ h(r)
  \circ h(q)\inv$, and $h(q)$, $h(q')$, $h(r)$, $h(r')$ are
  isomorphisms by assumption. Therefore $h(f')$ is an
  isomorphism, so $f'$ is a weak equivalence, and 
  \ref{enum:f'-we-iff-f-iso} shows that $f$ is an isomorphism.
\end{proof}

\def\cprime{$'$} \def\cprime{$'$} \def\cprime{$'$} \def\cprime{$'$}
  \def\Dbar{\leavevmode\lower.6ex\hbox to 0pt{\hskip-.23ex \accent"16\hss}D}
  \def\cprime{$'$} \def\cprime{$'$}

\end{document}